\newtheorem{defn}{Definition}
\numberwithin{defn}{section}
\numberwithin{naidefn}{section}
\newtheorem{proppy}{Proposition}
\numberwithin{proppy}{section}
\newtheorem{thm}{Theorem}
\numberwithin{thm}{section}
\newtheorem{lemmy}{Lemma}
\numberwithin{lemmy}{section}
\numberwithin{defnconj}{section}
\newtheorem{rmk}{Remark}
\numberwithin{rmk}{section}
\newtheorem{conj}{Conjecture}
\numberwithin{conj}{section}
\newtheorem{cory}{Corollary}
\numberwithin{cory}{section}
\newtheorem{Ex}{Example}
\numberwithin{Ex}{section}
\newcommand{\dotr}[1]{%
  \mathpalette\@dotr{#1}%
}
\newcommand*{\@dotr}[2]{%
  \sbox0{$\m@th#1#2$}%
  \usebox{0}%
  \raisebox{\dimexpr\ht0-\height}{$\m@th#1\@smallbullet#1\bullet$}%
  \kern\scriptspace
}
\newcommand*{\@smallbullet}[2]{%
  \scalebox{.5}{$\m@th#1#2$}%
}
\newcommand{\Mod}[1]{\ (\mathrm{mod}\ #1)}
\definecolor{shadowcolor}{RGB}{0, 0, 102}
\g@addto@macro\bfseries{\boldmath}
\newcommand*{\sheafhom}{\mathscr{H}\kern -3pt om}
\DeclareMathOperator{\shHom}{\mathscr{H}\text{\kern -3pt {\calligra\large om}}\,}
\newcommand\smvee{\raise0.4ex\hbox{$\scriptscriptstyle\vee$}}
\renewcommand{\@chapapp}{}
\DeclarePairedDelimiter\floor{\lfloor}{\rfloor}
\institution{The University Of British Columbia}
\title{An Introduction to Modern Enumerative Geometry with Applications to the Banana Manifold}
\author{Stephen Pietromonaco}
\begin{document}

\frontmatter


\maketitle                      

The following individuals certify that they have read, and recommend to the Faculty of Graduate and Postdoctoral Studies for acceptance, a thesis entitled:
$$\textbf{An Introduction to Modern Enumerative Geometry with Applications to the Banana Manifold}$$

submitted by \textbf{Stephen Pietromonaco} in partial fulfillment of the requirements of the degree of \textbf{Master's of Science} in \textbf{Mathematics}.\\
\\

\textbf{Examining Committee:}\\
\\
\textbf{Jim Bryan}\\
Supervisor\\
\\
\textbf{Georg Oberdieck}\\
Supervisory Committee Member

\begin{abstract}                

The banana manifold $X_{\text{ban}}$ is a smooth projective Calabi-Yau threefold fibered over $\mathbb{P}^{1}$ by abelian surfaces.  Each singular fiber contains a ``banana configuration of curves" which generates the three-dimensional lattice $\Gamma$ of curve classes supported in the fibers of $X_{\text{ban}} \to \mathbb{P}^{1}$.  The Donaldson-Thomas partition function of $X_{\text{ban}}$ in fiber classes was computed by J. Bryan \cite{bryan_donaldson-thomas_2018} to be the infinite product
\[Z_{\text{DT}}(X_{\text{ban}})_{\Gamma}= \prod_{d_{1}, d_{2}, d_{3} \geq 0} \prod_{k \in \mathbb{Z}} \big(1-Q_{1}^{d_{1}} Q_{2}^{d_{2}} Q_{3}^{d_{3}}t^{k} \big)^{-12 c(||\underline{\bf{d}}||, k)}\]  
where $||\underline{\bf{d}}|| = 2d_{1} d_{2} + 2d_{1} d_{3} + 2d_{2}d_{3} -d_{1}^{2}-d_{2}^{2}-d_{3}^{2}$, and $c(||\underline{\bf{d}}||, k)$ are coefficients of the equivariant elliptic genus of $\mathbb{C}^{2}$.  We observe that under a change of variables, $Z_{\text{DT}}(X_{\text{ban}})_{\Gamma}$ behaves formally like a Borcherds lift of (12 times) the equivariant elliptic genus.  

The main result of this thesis is that the associated Gromov-Witten potentials $F_{g}$ in genus $g \geq 2$ are meromorphic genus two Siegel modular forms of weight $2g-2$.  They arise as Maass lifts 
\[ F_{g} = \text{ML} \bigg(\frac{6 |B_{2g}|}{g (2g-2)!} E_{2g}(\tau) \Theta^{2}(\tau, z)\bigg)\]
of weak Jacobi forms of weight $2g-2$ and index 1 arising in an expansion of the elliptic genus in the equivariant parameter.  Here, $\Theta^{2}$ is the unique weak Jacobi form of weight -2 and index 1.  We show the equivariant elliptic genus of $\mathbb{C}^{2}$ encodes the Gopakumar-Vafa invariants of $X_{\text{ban}}$.  Therefore, one can regard $X_{\text{ban}}$ as an example where the generating functions of Gromov-Witten and Donaldson-Thomas invariants in fiber classes are produced by standard lifts of a modular object encoding the Gopakumar-Vafa invariants.  We note that because this is a Masters thesis, the first six chapters offer an extended introduction to the relevant background material, while the original results are presented in the final chapter.

\end{abstract}

\tableofcontents                

\chapter{Acknowledgements}      

First and foremost, I am indebted to my advisor, Professor Jim Bryan.  I am thankful not only for his tremendous expertise and guidance, but also his patience, enthusiastic support, and encouragement as I learned this material.  I could not have imagined a better supervisor.  Special thanks is due to Georg Oberdieck for reading a draft, providing helpful comments, and also offering a suggestion early on which ultimately became one of the major components of my results.  I am also especially thankful to Hiroki Aoki and Eric Sharpe for their assistance.  I would like to thank the following individuals for their helpful comments and enlightening discussions: Jake Bian, Elliot Cheung, Javier Gonzalez-Anaya, Sheldon Katz, Giorgos Korpas, Oliver Leigh, Yu-Hsiang Liu, and Nina Morishige.

\mainmatter

\chapter{Introduction}

The \emph{banana manifold} $X_{\text{ban}}$ is a smooth projective Calabi-Yau threefold fibered over $\mathbb{P}^{1}$ with generic fiber a smooth abelian surface.  To construct $X_{\text{ban}}$ explicitly, let $r:S \to \mathbb{P}^{1}$ be a generic rational elliptic surface.  There are 12 singular fibers of $r$, each of which is a nodal elliptic curve.  One can form the fibered product $S \times_{\mathbb{P}^{1}} S$ and consider the diagonal $\Delta$ as a Weil divisor.  There are 12 conifold singularities of $S \times_{\mathbb{P}^{1}} S$, all of which lie on $\Delta$.  We define the banana manifold to be
\begin{equation}
X_{\text{ban}} \coloneqq \text{Bl}_{\Delta}(S \times_{\mathbb{P}^{1}} S)
\end{equation}
which is a full conifold resolution of singularities.  There is a natural map $\pi : X_{\text{ban}} \to \mathbb{P}^{1}$ whose generic fibers are $E \times E$, where $E$ is a smooth elliptic curve.  There are 12 singular fibers of $\pi$, each containing a \emph{banana configuration} of curves -- this consists of three rational curves $C_{1}, C_{2}, C_{3}$ all meeting in two distinct points $p,q \in X_{\text{ban}}$ (see Figure \ref{fig: banana configuration}).  The classes in homology of $C_{1}, C_{2}, C_{3}$ generate the lattice of fiber curve classes
\[\Gamma = \text{ker}(\pi_{*}) \subset H_{2}(X_{\text{ban}}, \mathbb{Z}).\]

\begin{figure}[h] 
\centering
\begin{tikzpicture}[xshift=5cm,
		    scale = 1.0
		    ]

\begin{scope}  
\draw (0,0) ellipse (2.4 and 2);
\draw (0,0) ellipse (0.6cm and 2cm);
\draw (0,0) ellipse (1.2cm and 2cm);
\draw (-0.6,0) arc(180:360:0.6 and 0.3);
\draw[dashed](-0.6,0) arc(180:0:0.6cm and 0.3cm);
\draw (1.2,0) arc(180:360:0.6cm and 0.3cm);
\draw[dashed](1.2,0) arc(180:0:0.6cm and 0.3cm);
\draw (-2.4,0) arc(180:360:0.6cm and 0.3cm);
\draw[dashed](-2.4,0) arc(180:0:0.6cm and 0.3cm);

\draw (0,-2) node[below] {$p$};
\draw(0,2) node[above] {$q$};
\draw(0,0.6) node {$C_{2}$};
\draw(-1.8,0.6) node {$C_{1}$};
\draw(1.8,0.6)node {$C_{3}$};
\end{scope}


\begin{scope}[xshift=4.5cm,yshift=-2cm]
\draw (0,1.5)--(1.5,1.5)--(2.5,2.5)--(4,2.5);
\draw (1.5,0)--(1.5,1.5)--(2.5,2.5)--(2.5,4);
\draw (0.5,1.5)node{$||$};
\draw (3.5,2.5)node{$||$};
\draw (1.5,0.5)node{$-$};
\draw (2.5,3.5)node{$-$};
\draw (2.5,3.1)node[left]{$C_{3}$};
\draw (2,2)node[below right]{$C_{2}$};
\draw (0.9,1.5)node[above]{$C_{1}$};
\draw (1.5,1.5)node[below left]{$p$};
\draw (2.5,2.5)node[above right]{$q$};
\end{scope}
\end{tikzpicture}
\caption{A banana configuration of curves}
\label{fig: banana configuration}
\end{figure}
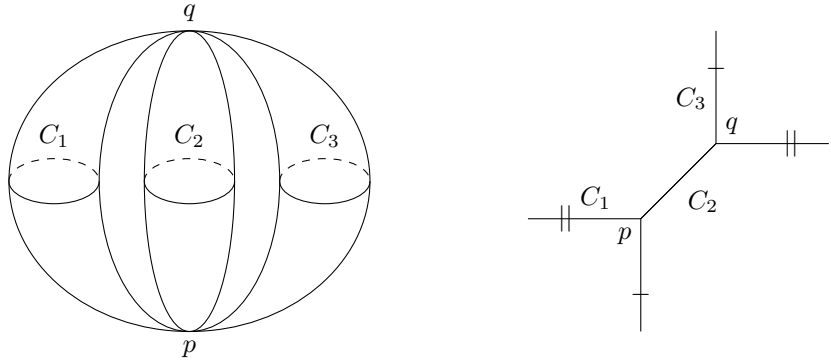
The Donaldson-Thomas partition function of $X_{\text{ban}}$ for the three fiber classes has been computed in a theorem of J. Bryan \cite{bryan_donaldson-thomas_2018}.  \emph{Using this result, the ultimate goal of this thesis is to understand the automorphic and arithmetic properties of the Donaldson-Thomas partition function, and to a much greater extent, the associated Gromov-Witten potentials.}

Since this is a Masters thesis, we take the opportunity to survey some of the necessary background material that a beginner would hopefully find useful.  Our work on the banana manifold not only requires understanding  Gromov-Witten, Donaldson-Thomas, and Gopakumar-Vafa invariants, but also equivariant instanton counting on $\mathbb{C}^{2}$, elliptic genera, automorphic forms, and arithmetic lifts.  As such, Chapters 2-6 offer partial introductions to these subjects, along with some of the interactions with physics, and a guide to the literature.  Our original results then come in Chapter 7.  For the sake of clarity, let us briefly outline the contents of each chapter, and how each topic fits into the thesis as a whole.  

\subsection{Outline of Topics}

We begin in Chapter 2 by studying Yang-Mills theory.  This is an exciting theory in its own right, though for those interested in algebraic or complex geometry, Yang-Mills theory really comes to life when studying holomorphic bundles on K\"{a}hler manifolds (Section \ref{sec:YMThhKahler}).  One of the main goals of this chapter is to understand in this setting how certain spaces of connections can be interpreted as spaces of holomorphic structures on bundles.  By the Donaldson-Uhlenbeck-Yau theorem, the stability of the bundle (a familiar notion to algebraic geometers) translates into the bundle carrying a unique connection solving the Hermitian Yang-Mills equation (something familiar to differential geometers and physicists).    

We also want to understand \emph{instantons}, which are anti-self-dual connections on bundles.  We will eventually show (\ref{eqn:rerrrnBLandSQEEG}, \ref{eqn:DTpartfuncBorLiftTWOO}) that the Donaldson-Thomas partition function of $X_{\text{ban}}$ is very closely related to (framed) instanton counting on $\mathbb{C}^{2}$.  

One reason for introducing Yang-Mills theory, is to consistently extend in Chapter 3 the notion of stability to coherent sheaves on smooth projective varieties.  We will introduce Gieseker stability and slope stability on torsion-free sheaves, as well as Simpson stability on pure sheaves (Section \ref{sec:StabConddCOHsh}).  These stability conditions are imposed to produce moduli schemes of stable or semistable sheaves with fixed topological features, so we also include a brief exposition on moduli problems in general (Section \ref{subsec:RevModProbb}), which will be useful in later chapters as well.  Along the way we hope to acquaint the reader with some basics on coherent sheaves and the Grothendieck group (or K-theory) of coherent sheaves (Section \ref{subsecc:GgrpHRR}).  

In Section \ref{sec:Dbranesstabstrth}, applying much of the previous material, we give a discussion of D-branes which we hope is approachable to mathematicians.  D-branes are objects in string theory which intrinsically carry a Yang-Mills theory on their support.  In algebraic geometry, a moduli space of sheaves with fixed topological type is what a physicist would call a moduli space of D-branes with fixed charges in K-theory.  The algebraic geometers typically impose stability or semistability which the physicist interprets as requiring the D-brane to produce a supersymmetric or BPS state.  We must mention that this story is very much incomplete without passing to the derived category, which we will not do in this thesis.  

Chapter 4 is devoted to introducing Gromov-Witten theory, Donaldson-Thomas theory, as well as the Gopakumar-Vafa invariants, all of which lie at the heart of modern enumerative geometry.  One common feature shared by these theories is that invariants are extracted from moduli spaces which are different compactifications of the space of smooth curves in a projective variety.  In Gromov-Witten theory we study curves via stable maps, while in Donaldson-Thomas theory we study curves as subschemes.  For the Gopakumar-Vafa invariants, the central object is not only a moduli space of pure stable torsion sheaves supported on curves, but also the Hilbert-Chow morphism to the Chow variety.  Clearly we will make use of our discussion in the previous chapter of moduli problems in general, and moduli spaces of sheaves in particular.  The way invariants are extracted from the moduli spaces is by using deformation and obstruction theories along with the existence of a virtual fundamental class.  These are very deep ideas, and we will only scratch the surface.  

We will also explain physical manifestations of these invariants (see Sections \ref{subsec:GWA-modTSSP}, \ref{subsecc:PHYSSDTtherry}, \ref{subsecc:PHYSGVinvv}).  Gromov-Witten theory is equivalent to the A-model topological string theory, and the Donaldson-Thomas invariants are quantities in the B-model topological string.  But one can think of them as a supersymmetric index computing a virtual number of BPS states of particles engineered by bound states of D2-D0 branes inside a single D6-brane in Type IIA string theory.  The Gopakumar-Vafa invariants are a virtual count of M2-branes in M-theory.  One should consult the D-brane section of Section \ref{sec:Dbranesstabstrth} to compliment this material.

One of the insights from physics is that these invariants should be packaged into generating functions.  For a smooth projective Calabi-Yau threefold $X$, the genus $g$ Gromov-Witten potential is (see Section \ref{sec:GWThhhh} for full details)
\begin{equation}
F_{g}(v) = \sum_{\beta \in H_{2}(X, \mathbb{Z})} \text{GW}_{g, \beta}(X) v^{\beta}.
\end{equation}
This is a generating function whose coefficients are virtual counts of genus $g$ curves in homology class $\beta$.  In physics, $F_{g}$ is a genus $g$ topological string amplitude.  Similarly, the Donaldson-Thomas partition function is defined to be (see Section \ref{eqn:DTThhhh})
\begin{equation}
Z_{\text{DT}}(X) = \sum_{\beta \in H_{2}(X, \mathbb{Z})} \sum_{n \in \mathbb{Z}} \text{DT}_{\beta, n}(X) v^{\beta} p^{n}.
\end{equation}
This is a generating function whose coefficients are virtual counts of ideal sheaves of curves and points in $X$.  Physically, $Z_{\text{DT}}(X)$ is the partition function of certain BPS black holes engineered from a single D6-brane, no D4-branes, and bound states of D2-D0 branes.  For the banana manifold, we will be interested in both $F_{g}$ and $Z_{\text{DT}}(X_{\text{ban}})$ restricted to the fiber classes described above.   

A fundamental quantity in our work on the banana manifold is the \emph{elliptic genus}.  As we introduce in Chapter 5, the elliptic genus $\text{Ell}_{q,y}(X)$ of a compact, complex manifold $X$ is a topological index generalizing the Euler characteristic, the Poincar\'{e} polynomial, and the $\chi_{y}$-genus.  We introduce equivariant cohomology and Atiyah-Bott localization as a method for computing these topological indices in certain cases (Section \ref{sec:EquivCohABLoc}).  In fact, we will be primarily interested in the elliptic genus of non-compact toric varieties, which one must \emph{define} via Atiyah-Bott localization.  This is an example of an equivariant index.  Of particular interest to the banana manifold, we will show that in the case of $\mathbb{C}^{2}$ this equivariant elliptic genus is
\begin{equation}\label{eqn:copiedEQELLGENC666}
\text{Ell}_{q, y}(\mathbb{C}^{2}; t) = y^{-1} \prod_{n=1}^{\infty} \frac{(1-yq^{n-1}t)(1-y^{-1}q^{n}t^{-1})(1-yq^{n-1}t^{-1})(1-y^{-1}q^{n}t)}{(1-q^{n-1}t)(1-q^{n}t^{-1})(1-q^{n-1}t^{-1})(1-q^{n}t)}
\end{equation}
where $t$ is a single equivariant parameter.  One can also define and compute the equivariant elliptic genus of Hilbert schemes of points on $\mathbb{C}^{2}$ via localization.  A generating function of the form 
\begin{equation}\label{eqn:NEKKY}
\sum_{m=0}^{\infty} Q^{m} \text{Ell}_{q,y}\big( \text{Hilb}^{m}(\mathbb{C}^{2}); t \big)
\end{equation} 
is an example of a Nekrasov partition function which is (the instanton part of) a partition function in $\mathcal{N}=2$ gauge theory.  In Section \ref{sec:FULLNEkkkSec} we study Nekrasov partition functions on $\mathbb{C}^{2}$ in some generality, replacing the Hilbert scheme by a higher rank instanton moduli space, and replacing the elliptic genus by a more general equivariant index. 

The theory of automorphic forms is becoming an increasingly powerful tool for modern enumerative geometers.  Certain generating functions (like those described above) may exhibit automorphy which could be due to hidden geometrical symmetries, and might motivate conjectures one can make.  To this end, in Chapter 6 we survey just a few kinds of automorphic forms relevant to the thesis: ordinary modular forms, Jacobi forms, and Siegel modular forms.  Of particular interest will be Jacobi forms (Section \ref{sec:IntroJACform}), which are two-variable holomorphic functions $\varphi_{k,m}(\tau, z)$ transforming under $SL_{2}(\mathbb{Z}) \ltimes \mathbb{Z}^{2}$ with weight $k$ and index $m$.  Jacobi forms admit a Fourier expansion
\begin{equation}
\varphi_{k,m}(\tau, z) = \sum_{n, l \in \mathbb{Z}} c(n, l) q^{n} y^{l}
\end{equation}
where we consistently use the change of variables $q = e^{2 \pi i \tau}$ and $y = e^{2 \pi i z}$.  We say the Jacobi form is \emph{weak} if $c(n, l)=0$ unless $n \geq 0$.  To touch base with the previous section, the elliptic genus of a compact Calabi-Yau manifold is a weak Jacobi form of weight zero.  

A genus two Siegel modular form is a three-variable holomorphic function $F(\tau, z, \sigma)$ transforming under the symplectic group $Sp_{4}(\mathbb{Z})$.  The ring of such objects is finitely-generated and we study the generators in Section \ref{subsec:RingofSMF}.  Jacobi forms and genus two Siegel modular forms are closely linked.  In Section \ref{sec:HeckeJacForm} we will be interested in certain arithmetic lifts of Jacobi forms.  Given a weak Jacobi form $\varphi_{k,1}$ of index one, we can define the Hecke operators (\ref{eqn:Heckedefn}, \ref{T0WEAKHOL}) denoted $V_{m}$ for $m \geq 0$, and form the \emph{Maass lift} of $\varphi_{k,1}$
\begin{equation}\label{eqn:IntroMLL}
\text{ML}(\varphi_{k,1}) = \sum_{m=0}^{\infty} Q^{m}\big( \varphi_{k,1} \big| V_{m} \big).  
\end{equation}
It turns out \cite{aoki_formal_2014,aoki_notitle_2018} that $\text{ML}(\varphi_{k,1})$ is a genus two meromorphic Siegel modular form of weight $k$, where $Q=e^{2 \pi i \sigma}$.  One can more generally define Jacobi forms $\Phi_{k}(\tau, \bm{w})$ of matrix index and many elliptic variables.  Generalizing the Maass lift, we can define the \emph{formal Borcherds lift} in the case of weight zero
\begin{equation}
\text{BL}(\Phi_{0}) = \text{exp} \big( \text{ML}(\Phi_{0}) \big).  
\end{equation}
This lift behaves formally like a Borcherds lift \cite{borcherds_automorphic_1995}, and takes the form of an infinite product, but we make no claims about the automorphy of the resulting quantity. 

Finally, in Chapter 7 we use much of the expositional material above to present our original results on the partition functions of the banana manifold $X_{\text{ban}}$.  The central quantity in our proposal is the equivariant elliptic genus (\ref{eqn:copiedEQELLGENC666})  
\begin{equation}\label{eqn:introPhi}
\Phi_{0}(\tau, z, x) = \text{Ell}_{q,y}(\mathbb{C}^{2}; t) = \sum_{g=0}^{\infty} \lambda^{2g-2} \psi_{2g-2}(\tau, z)
\end{equation}
where $t = e^{i \lambda}$ and $\lambda = 2 \pi x$.  We will show that $\Phi_{0}$ is a weak Jacobi form of weight zero and matrix index.  Expanding in $\lambda$ as shown above, the coefficients $\psi_{2g-2}$ are weak Jacobi forms of weight $2g-2$ and index one \cite{zhou_regularized_2015}, which for genus $g \geq 2$ are given explicitly as
\begin{equation}\label{eqn:psi2gm2intro}
\psi_{2g-2}(\tau, z) = \frac{|B_{2g}|}{2g (2g-2)!} \, E_{2g}(\tau) \Theta(\tau, z)^{2}
\end{equation}
where $B_{2g}$ is a Bernoulli number, $E_{2g}(\tau)$ is the Eisenstein series of weight $2g$, and $\Theta(\tau, z)^{2}$ is the unique (up to scale) weak Jacobi form of weight -2 and index 1 (\ref{eqn:varThetaa}).  Using results of J. Bryan \cite{bryan_donaldson-thomas_2018}, we show that the Donaldson-Thomas partition function of $X_{\text{ban}}$ restricted to the lattice of fiber classes $\Gamma$, is the formal Borcherds lift of $12 \Phi_{0}$ 
\begin{equation}
Z_{\text{DT}}(X_{\text{ban}})_{\Gamma} = \text{BL}(12 \Phi_{0}) = \prod_{(m,n,l,k)>0} \big( 1-Q^{m}q^{n}y^{l}t^{k}\big)^{-12 c(4nm-l^{2}, k)}
\end{equation}
where $c(4nm-l^{2}, k)$ are the Fourier coefficients of $\Phi_{0}$, and there is a simple change of variables (\ref{eqn:changeofvarss}) from the K\"{a}hler classes of the banana curves to $Q, q, y$.  We note that there are other geometries where a weight zero automorphic object lifts to produce the Donaldson-Thomas partition function \cite{kawai_string_2000, oberdieck_holomorphic_2018}. 

Our main result is the following: assuming the GW/DT correspondence for $X_{\text{ban}}$, the genus $g$ Gromov-Witten potentials $F_{g}$ for $g \geq 2$ are the Maass lifts of (12 times) the weak Jacobi forms $\psi_{2g-2}$
\begin{equation}
F_{g}(\tau, z, \sigma) = \text{ML}(12 \psi_{2g-2}).
\end{equation} 
As discussed above, it follows that $F_{g}$ is a meromorphic genus two Siegel modular form of weight $2g-2$.  This result can be partially explained through mirror symmetry (see Remark \ref{rmk:MIRRORSYYMM}).  We describe completely the denominators of the $F_{g}$ and can therefore in principal, compute $F_{g}$ explicitly for arbitrary $g$.  

We observe that the Gopakumar-Vafa invariants of $X_{\text{ban}}$ are encoded non-trivially into the equivariant elliptic genus $12 \Phi_{0}$.  We can summarize these phenomena on the banana manifold as follows: \emph{there exists a weight zero modular object encoding the Gopakumar-Vafa invariants which is lifted in standard ways to product the Donaldson-Thomas and Gromov-Witten theories in fiber classes.  The formal Borcherds lift produces the Donaldson-Thomas partition function.  Expanding in the equivariant parameter and taking the Maass lift of the coefficient Jacobi forms, we get the Gromov-Witten potentials, which are Siegel modular forms.  These results are compatible via the asymptotic statement of the GW/DT correspondence.}  

\begin{equation}
\begin{tikzcd}
Z_{\text{DT}}(X_{\text{ban}})_{\Gamma} \arrow[dashed, swap]{rrr}{\text{(Asymptotic) GW/DT Corresopndence}}     &     &      &      \sum_{g=0}^{\infty} \lambda^{2g-2} \text{ML}(12 \psi_{2g-2}) \\
                                                            &      &      &        \\
                                                            &      &      &        \\
                                                            &      &      &        \\
                                                            &      &  12 \Phi_{0}(\tau, z, x) = \sum_{g=0}^{\infty} \lambda^{2g-2} 12\psi_{2g-2}(\tau, z) \arrow{uuuull}{\text{Formal Borcherds Lift of}\, 12\Phi_{0}} \arrow[swap]{uuuur}{\text{Maass Lift of the}\, 12 \psi_{2g-2}}    &
\end{tikzcd}
\end{equation}


\chapter{Introduction to Yang-Mills Theory and Instantons}

Yang-Mills theory (also known as gauge theory) refers to the study of connections on principal bundles or associated vector bundles which solve the Yang-Mills equations.  Equivalently, these are special connections which locally minimize a natural action functional in physics.  Identifying connections up to gauge equivalence, one can construct moduli spaces of such connections, and study their topology and geometry.  As we will explain, the holomorphic structures on a Hermitian vector bundle over a complex manifold are in one-to-one correspondence with integrable Hermitian connections.  Therefore, Yang-Mills theory on complex manifolds can produce moduli spaces of bundles, and this fact is the origin of the contact made with algebraic geometry.  

Aside from being a beautiful theory itself, there are at least two related reasons an algebraic geometer should care about Yang-Mills theory.  First, Yang-Mills theory often provides a more intuitive and physical realization of stable bundles and sheaves, which we introduce in the next chapter.  This culminates in the Donaldson-Uhlenbeck-Yau theorem where the notion of a stable bundle is reinterpreted as the existence of an irreducible Hermitian Yang-Mills connection on the bundle.  In addition, D-branes are objects carrying a Yang-Mills theory which play a large role in modern mathematics and physics.  

The second major application of Yang-Mills theory to algebraic geometry comes in the form of instantons on smooth algebraic surfaces.  Fixing discrete invariants, we get finite dimensional moduli spaces of instantons which may be compactified (or partially compactified) by adding torsion-free sheaves.  A phenomenon known as geometric engineering is a highly non-trivial relationship between instantons on a surface and curves on a threefold.  One aspect of this thesis in the final chapter, will be studying the relationship between an instanton partition function and a Donaldson-Thomas partition function of a Calabi-Yau threefold.

\section{The Differential Geometry of Yang-Mills Theory}
In this section, we assume the reader is familiar with some foundational ideas in differential topology and geometry, specifically with regards to Lie groups and Lie algebras.  For more details on the topics to follow, one can consult \cite{huybrechts_complex_2004,friedman_gauge_1997,donaldson_geometry_1997,freed_instantons_1991}.

\subsection{Principal $\boldmath{G}$-Bundles}

Let $X$ be a smooth manifold, and let $G$ be a Lie group with Lie algebra $\mathfrak{g}$.  The most fundamental object in a Yang-Mills theory is a principal $G$-bundle, which we define now to be a special type of fiber bundle over $X$ with fiber $G$.  

\begin{defn}
A principal $G$-bundle (often shortened to principal bundle) is a fiber bundle $\pi : \mathcal{P} \to X$ with fiber diffeomorphic to $G$ such that the total space $\mathcal{P}$ is smooth, $\pi$ is a smooth surjection, and there is a smooth free and transitive right $G$-action on $\mathcal{P}$ preserving the fibers of $\pi$.  We will refer to $X$ as the base space and to $G$ as the structure group.  
\end{defn}

Until emphasized otherwise, we will take $X$ to be an arbitrary smooth manifold of any dimension.  In various contexts to follow, we may impose Riemannian or complex structure, as well as a specialization to two or four dimensions.  We will also eventually ask $G$ to be compact, but that is not necessary for now.  It is convenient to sometimes denote the right action on a principal bundle by $R_{g}$, for example when considering the pushforward of this action.  At other times, it will be denoted simply as multiplication by $g$ on the right.  

From the definition of a principal bundle, there are two main components to specify: the projection map $\pi$ and the free action.  Therefore, when defining morphisms of principal bundles, we expect them to be bundle morphisms commuting with the projection maps, but also satisfying an \emph{equivariance} property with respect to the group action.  

\begin{defn}
Let $\mathcal{P}$ and $\mathcal{P}'$ be two principal $G$-bundles with the same base space $X$ and projection maps $\pi$ and $\pi'$, respectively.  A principal bundle morphism from $\mathcal{P}$ to $\mathcal{P}'$ consists of a smooth map $\varphi: \mathcal{P} \to \mathcal{P}'$ compatible with the projections, and equivariant with respect to the group actions.  This is summarized in the commutative diagram (\ref{eqn:commdiag1}) where the equivariance condition is $R'_{g} \circ \varphi = \varphi \circ R_{g}$.  

\begin{equation}\label{eqn:commdiag1}
\begin{tikzcd}
\mathcal{P} \arrow{rr}{\varphi} & & \mathcal{P}' \\
\mathcal{P} \arrow{rr}{\varphi} \arrow{dr}{\pi} \arrow{u}{R_{g}} & & \mathcal{P}'\arrow[swap]{dl}{\pi'}\arrow[swap]{u}{R'_{g}} \\
& X &
\end{tikzcd}
\end{equation}
A principal bundle isomorphism is an equivariant diffeomorphism $\varphi: \mathcal{P} \to \mathcal{P}'$ compatible with projections.  We call a principal bundle $\mathcal{P}$ trivial if it is isomorphic as a principal bundle to $X \times G$.  
\end{defn}

\noindent This definition can be generalized to allow for different base spaces and even different structure groups.  For our purposes, the above definition will suffice.  

\begin{rmk}
The set of principal $G$-bundle automorphisms $\text{Aut}\mathcal{P}$ forms a group under composition.  The precise structure of $\text{Aut}\mathcal{P}$ as well as its crucial role in Yang-Mills theory will be established in subsequent sections.  
\end{rmk}

Vector bundles always have global sections, but the following proposition shows this to be false for principal bundles.    

\begin{proppy}
A principal bundle $\mathcal{P}$ admits a global section if and only if it is trivial.  
\end{proppy}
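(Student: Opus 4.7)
The proof naturally splits along the biconditional, and the easy direction should be dispatched first. If $\mathcal{P} \cong X \times G$ as a principal bundle, then transporting the obvious section $x \mapsto (x, e)$ through the isomorphism produces a global section of $\mathcal{P}$. This uses only that the isomorphism is a diffeomorphism compatible with $\pi$, so no work is needed beyond writing it down.

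For the harder direction, suppose $s : X \to \mathcal{P}$ is a global smooth section. The plan is to construct an explicit isomorphism $\varphi : X \times G \to \mathcal{P}$ by exploiting the $G$-action, namely
\begin{equation*}
\varphi(x, g) \defeq s(x) \cdot g = R_g(s(x)).
\end{equation*}
I would verify in order: (i) smoothness of $\varphi$ from smoothness of $s$ and of the action; (ii) compatibility with projections, $\pi(\varphi(x,g)) = \pi(s(x)) = x$, since the right action preserves fibers; (iii) equivariance, $\varphi(x, gh) = s(x)\cdot(gh) = (s(x)\cdot g)\cdot h = R_h(\varphi(x,g))$, which is immediate from associativity of the action.

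The bijectivity of $\varphi$ is exactly where the defining properties of a principal bundle pay off: on each fiber $\pi^{-1}(x)$, the $G$-action is transitive (so $\varphi$ is surjective) and free (so $\varphi$ is injective). Concretely, given $p \in \pi^{-1}(x)$ there is a unique $g(p) \in G$ with $s(x)\cdot g(p) = p$, and the set-theoretic inverse is $\varphi^{-1}(p) = (\pi(p), g(p))$.

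The main obstacle is showing that $\varphi^{-1}$ is smooth, since the assignment $p \mapsto g(p)$ is defined only implicitly. I would handle this by passing to a local trivialization $\psi_U : \pi^{-1}(U) \xrightarrow{\sim} U \times G$ on an open $U \subset X$. Writing $\psi_U \circ s(x) = (x, \sigma(x))$ for a smooth map $\sigma : U \to G$, the equivariance of $\psi_U$ gives $\psi_U(s(x) \cdot g) = (x, \sigma(x) g)$, so in the trivialization the map $\varphi$ becomes $(x,g) \mapsto (x, \sigma(x) g)$, whose inverse $(x,h) \mapsto (x, \sigma(x)^{-1} h)$ is smooth because multiplication and inversion in $G$ are smooth. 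Piecing these local inverses together (they agree by the uniqueness of $g(p)$) yields a smooth global inverse, completing the verification that $\varphi$ is a principal bundle isomorphism. Hence $\mathcal{P}$ is trivial.
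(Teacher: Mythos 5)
Your proof is correct and follows the same route as the paper: both directions use the section $x \mapsto \varphi(x,e)$ for triviality implies section, and the map $\varphi(x,g) = R_g s(x)$ with inverse $p \mapsto (\pi(p), g(p))$ for the converse. The only difference is that you carefully verify smoothness of the inverse via local trivializations, a point the paper dismisses as ``a simple verification,'' so your write-up is a strictly more complete version of the same argument.
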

 
\begin{proof}
If $\mathcal{P}$ is trivial, there exists an equivariant diffeomorphism $\varphi: X \times G \to \mathcal{P}$, and $\varphi( -, e): X \to \mathcal{P}$ defines a global section.  Conversely, assume $s: X \to \mathcal{P}$ is a global section.  We define equivariant maps $\varphi: X \times G \to \mathcal{P}$ by $\varphi(x,g) = R_{g}s(x)$ and $\widetilde{\varphi}: \mathcal{P} \to X \times G$ by $\widetilde{\varphi}(p) = \big(\pi(p), g\big)$ where $g \in G$ is the unique group element such that $R_{g}s\big(\pi(p)\big) =p$.  A simple verification shows that $\varphi$ and $\widetilde{\varphi}$ are equivariant diffeomorphisms and mutual inverses.  
\end{proof}

Because a principal bundle is in particular a fiber bundle, it must trivialize on some open cover of $X$.  

\begin{defn}
A local trivialization of $\mathcal{P}$ consists of an open cover $\{U_{\alpha} \}$ of $X$, and equivariant maps $g_{\alpha}: \pi^{-1}U_{\alpha} \to G$ making the following diagram commute:
\begin{equation}
\begin{tikzcd}
\pi^{-1}U_{\alpha} \arrow{rr}{\Psi_{\alpha}=(\pi, g_{\alpha})} \arrow[swap]{dr}{\pi} & & U_{\alpha} \times G \arrow{dl}{\text{pr}_{1}} \\
& U_{\alpha} &
\end{tikzcd}
\end{equation}
\end{defn}

\noindent On overlapping trivializing open sets $U_{\alpha \beta} = U_{\alpha} \cap U_{\beta}$, we have two different ways of locally identifying the bundle with $U_{\alpha \beta} \times G$, using $\Psi_{\alpha} =(\pi, g_{\alpha})$ or $\Psi_{\beta} = (\pi, g_{\beta})$.  Over $\pi^{-1}U_{\alpha \beta}$, the two trivializations $\Psi_{\alpha}(\, p)$ and $\Psi_{\beta}(\, p)$ do not have to agree, but there must exist functions $\widetilde{g}_{\alpha \beta}: \pi^{-1}U_{\alpha \beta} \to G$ defined by $g_{\alpha}(\, p) = \widetilde{g}_{\alpha \beta}(\, p) g_{\beta}(\, p)$ for all $p \in \pi^{-1}U_{\alpha \beta}$.  This is summarized in the following diagram.

\begin{equation}
\begin{tikzcd}
U_{\alpha \beta} \times G \arrow[swap]{dr}{\text{pr}_{1}}  & \pi^{-1}U_{\alpha \beta} \arrow[swap]{l}{\Psi_{\alpha}} \arrow{r}{\Psi_{\beta}}\arrow{d}{\pi} & U_{\alpha \beta} \times G \arrow{dl}{\text{pr}_{1}} \\
& U_{\alpha \beta} &
\end{tikzcd}
\end{equation}
By the equivariance of $g_{\alpha}$ and $g_{\beta}$ it is easy to see that $\widetilde{g}_{\alpha \beta}$ is constant on each fiber, $\widetilde{g}_{\alpha \beta}(p \cdot g) = \widetilde{g}_{\alpha \beta}(\, p)$.  Therefore, $\widetilde{g}_{\alpha \beta}$ descends to $G$-valued functions
\begin{equation} \label{eqn:PBundTrFunc}
g_{\alpha \beta} : U_{\alpha \beta} \to G
\end{equation}
which we call the \emph{transition functions}.  One should read $g_{\alpha \beta}$ as transforming a group element in the $\beta$ trivialization to a group element in the $\alpha$ trivialization.  The transition functions define cocycles
\begin{equation}
\{g_{\alpha \beta}\} \in \check{H}^{1}(X, \underbar{G})
\end{equation}
in the \v{C}ech cohomology of the sheaf $\underbar{G}$ of $G$-valued functions.  Explicitly, this means that $g_{\alpha \beta}(x)g_{\beta \alpha}(x) =e$ for all $x \in U_{\alpha \beta}$, and $g_{\alpha \beta}(x) g_{\beta \gamma}(x) g_{\gamma \alpha}(x) =e$ for all $x \in U_{\alpha \beta \gamma}$.

\subsection{Associated Vector Bundles}

The goal of this section is to introduce vector bundles which are canonically constructed from a principal $G$-bundle along with a particular representation of $G$.  These are called associated vector bundles.  It is really the transition functions of the principal bundle and the representation which determine the properties of the resulting vector bundle.  We will also introduce the frame bundle as a way of recovering a principal bundle from a vector bundle, and argue that at least for all structure groups of interest, this gives a bijection between the two types of objects.  

Let $V$ be a complex vector space called the \emph{fiber} and let $\rho: G \to GL(V)$ be a representation of the Lie group $G$ by invertible linear transformations of $V$.

\begin{defn}
Given a principal $G$-bundle $\pi: \mathcal{P} \to X$ and a representation $\rho : G \to GL(V)$, the associated vector bundle $\mathcal{P} \times_{\rho} V$ is defined by
\[\mathcal{P} \times_{\rho} V = \mathcal{P} \times V / \sim,\]
where $(p_{1},v_{1}) \sim (p_{2}, v_{2})$ when $p_{2} = p_{1}g$ and $v_{2} = \rho(g^{-1})v_{1}$ for some $g \in G$.  Denote the equivalence classes by $[p,v] \in \mathcal{P} \times_{\rho} V$.  Finally, the projection map $\pi_{V}: \mathcal{P} \times_{\rho} V \to X$ is defined by $\pi_{V}\big([p,v]\big) = \pi(p)$.  
\end{defn}
\noindent One can show that $\mathcal{P} \times_{\rho}V$ is indeed a vector bundle with fiber $V$ and trivializes over the same open cover of $M$ as $\mathcal{P}$ with transition functions,  
\[\varphi_{\alpha \beta} = \rho(g_{\alpha \beta}): U_{\alpha \beta} \longrightarrow G \overset{\rho}{\longrightarrow}  GL(V)\]

\noindent The \emph{structure group} of the bundle $\mathcal{P} \times_{\rho} V$ is the group $\rho(G) \subseteq GL(V)$.

Conversely, given a complex vector bundle $E \to X$ of rank $n$ with fibers $E_{x}$, a frame at $x \in X$ is a choice of ordered basis of $E_{x}$, and the set of all frames $\text{Fr}_{x}$ at $x$ has a natural right action by $GL_{n}(\mathbb{C})$.  The \emph{frame bundle} $\text{Fr}(E)$ of $E$ is the principal $GL_{n}(\mathbb{C})$-bundle over $X$ where the fiber at $x \in X$ is $\text{Fr}_{x}$.  If $E$ is a real vector bundle or has reduced structure group, the same construction is valid.

\subsubsection{The Associated Adjoint Bundles}

Given a Lie group $G$, we have a group homomorphism $\text{Ad}: G \to \text{Aut}(G)$ such that $\text{Ad}(g)$ acts on $G$ by conjugation  
\begin{equation}
\text{Ad}(g)(h) = ghg^{-1}.
\end{equation}
We call $\text{Ad}$ the adjoint action.  Note that for all $g \in G$, $\text{Ad}(g)$ preserves the identity, so we get an induced map $\text{ad}: G \to GL(\mathfrak{g})$ defined by $\text{ad}(g) = \big(\text{Ad}(g)\big)_{*} : \mathfrak{g} \to \mathfrak{g}$.  In general, for all $x \in \mathfrak{g}$ we have
\begin{equation}
\text{ad}(g)(x) = \frac{d}{dt}\big( g e^{tx} g^{-1}\big) \big|_{t=0}.
\end{equation}
In the case of a matrix group, one can make sense of group elements acting on Lie algebra elements.  The adjoint action can be written in this case as
\begin{equation}
\text{ad}(g)(x) = g x g^{-1},
\end{equation}
and called the adjoint representation.  Associated to a principal $G$-bundle $\mathcal{P}$ along with the adjoint representation, we get the following adjoint bundle via the associated bundle construction,
\begin{equation}
\text{ad} \mathcal{P} = \mathcal{P} \times_{\text{ad}} \mathfrak{g}.
\end{equation}

The adjoint bundle is a real or complex vector bundle depending on whether $G$ is a real or complex Lie group.  A fiber of $\text{ad}\mathcal{P}$ is of course $\mathfrak{g}$ itself and therefore, the rank of the adjoint bundle is the dimension of $G$.  In the case of a matrix group $G$, over the intersection $U_{\alpha \beta}$ of two trivializing open sets, the transition functions of $\text{ad}\mathcal{P}$ are given by
\begin{equation}
\text{ad}\big(g_{\alpha \beta}(u)\big)(x) = g_{\alpha \beta}(u) \,x \, g_{\alpha \beta}^{-1}(u)
\end{equation}
for all $u \in U_{\alpha \beta}$ and $x \in \mathfrak{g}$, where $g_{\alpha \beta}$ is as in (\ref{eqn:PBundTrFunc}).  This example illustrates how the transition functions of the principal bundle along with the representation determine the associated vector bundle.  
  
One can construct more general \emph{associated fiber bundles} with a manifold $F$ as a fiber, and homomorphism $\rho : G \to \text{Aut}(F)$.  Given a principal $G$-bundle $\mathcal{P}$, choosing $F = G$ and homomorphism $\text{Ad}: G \to \text{Aut}(G)$, we get a fiber bundle
\begin{equation}
\text{Ad}\mathcal{P} = \mathcal{P} \times_{\text{Ad}} G
\end{equation}
with fiber $G$.  Note that $\text{Ad}\mathcal{P}$ is \emph{not} a principal $G$-bundle, however the space $\Omega^{0}(\text{Ad}\mathcal{P})$ of smooth sections of $\text{Ad}\mathcal{P}$ naturally inherits a group structure given by fiberwise multiplication.  This in fact turns out to be isomorphic to a familiar group.  A proof of the following lemma can be found in \cite{friedman_gauge_1997}.

\begin{lemmy} \label{lemmy:autgauge}
The group $\Omega^{0}(\text{Ad}\mathcal{P})$ is naturally isomorphic to the group $\text{Aut}\mathcal{P}$ of principal bundle automorphisms.  
\end{lemmy}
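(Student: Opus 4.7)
The plan is to exhibit a natural bijection between $\Omega^{0}(\text{Ad}\mathcal{P})$ and $\text{Aut}\mathcal{P}$ by first reinterpreting sections of the associated fiber bundle as equivariant $G$-valued functions on $\mathcal{P}$, and then showing that such functions are precisely the data needed to specify fiber-preserving equivariant self-diffeomorphisms of $\mathcal{P}$.

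First I would recall the standard fact (valid for any associated fiber bundle) that $\Omega^{0}(\text{Ad}\mathcal{P})$ is canonically identified with the space of smooth maps $\tilde{s}: \mathcal{P} \to G$ satisfying the $\text{Ad}$-equivariance property $\tilde{s}(pg) = g^{-1}\tilde{s}(p)g$ for all $p \in \mathcal{P}$ and $g \in G$. The identification sends $\tilde{s}$ to the section $x \mapsto [p, \tilde{s}(p)]$ for any $p \in \pi^{-1}(x)$, which is well-defined precisely because of the equivariance condition and the equivalence relation defining $\text{Ad}\mathcal{P}$.

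Next, given such an equivariant $\tilde{s}$, I would define $\varphi_{\tilde{s}}: \mathcal{P} \to \mathcal{P}$ by $\varphi_{\tilde{s}}(p) = p \cdot \tilde{s}(p)$ and verify that $\varphi_{\tilde{s}} \in \text{Aut}\mathcal{P}$: it is smooth (composition of smooth maps and the right action), commutes with $\pi$ (since the action preserves fibers), and is $G$-equivariant via a direct computation using the equivariance of $\tilde{s}$:
\begin{equation*}
\varphi_{\tilde{s}}(pg) = pg \cdot \tilde{s}(pg) = pg \cdot g^{-1}\tilde{s}(p)g = p\tilde{s}(p) \cdot g = \varphi_{\tilde{s}}(p) \cdot g.
\end{equation*}
The smooth inverse is $\varphi_{\tilde{s}^{-1}}$, where $\tilde{s}^{-1}(p) := \tilde{s}(p)^{-1}$ is again equivariant.

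Conversely, given $\varphi \in \text{Aut}\mathcal{P}$, the fact that $\varphi$ preserves fibers combined with the freeness and transitivity of the $G$-action yields a unique $\tilde{s}_{\varphi}(p) \in G$ with $\varphi(p) = p \cdot \tilde{s}_{\varphi}(p)$, and smoothness of $\tilde{s}_{\varphi}$ can be checked in local trivializations. Applying $\varphi$ to $pg$ in two ways (via equivariance of $\varphi$ and via the defining relation) produces $\tilde{s}_{\varphi}(pg) = g^{-1}\tilde{s}_{\varphi}(p)g$, so $\tilde{s}_{\varphi}$ corresponds to a section of $\text{Ad}\mathcal{P}$. The assignments $\tilde{s} \mapsto \varphi_{\tilde{s}}$ and $\varphi \mapsto \tilde{s}_{\varphi}$ are manifestly mutual inverses.

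Finally, I would verify compatibility of group structures: given equivariant functions $\tilde{s}_{1}, \tilde{s}_{2}$, a short calculation using equivariance gives
\begin{equation*}
(\varphi_{\tilde{s}_{1}} \circ \varphi_{\tilde{s}_{2}})(p) = p\tilde{s}_{2}(p) \cdot \tilde{s}_{1}(p\tilde{s}_{2}(p)) = p \cdot \tilde{s}_{1}(p)\tilde{s}_{2}(p),
\end{equation*}
so composition on the automorphism side matches pointwise multiplication on the section side (possibly up to an order convention, which one fixes at the outset). The only place requiring care is the smoothness of $\tilde{s}_{\varphi}$, which I expect to be the main technical point: it is handled by trivializing $\mathcal{P}$ locally as $U \times G$, writing $\varphi$ there as $(x, h) \mapsto (x, h \cdot f(x, h))$, and observing that equivariance forces $f(x, h) = h^{-1} f(x, e) h$ so that $\tilde{s}_{\varphi}$ is smooth in the base coordinate. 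Everything else is formal manipulation of the defining relations.
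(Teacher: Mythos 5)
Your proof is correct and complete; the paper itself does not prove this lemma but defers to Friedman--Morgan's gauge theory text, and your argument is precisely the standard one given there: identify $\Omega^{0}(\text{Ad}\mathcal{P})$ with $\text{Ad}$-equivariant maps $\tilde{s}:\mathcal{P}\to G$, send $\tilde{s}$ to $p\mapsto p\cdot\tilde{s}(p)$, and check equivariance, invertibility, smoothness in a local trivialization, and compatibility with the group structures. All of your computations (in particular the verification that composition of automorphisms corresponds to pointwise multiplication of the equivariant functions) are correct, so there is nothing to add.
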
 

\noindent The automorphism group $\text{Aut}\mathcal{P} \cong \Omega^{0}(\text{Ad}\mathcal{P})$ is an infinite-dimensional Lie group under fiberwise multiplication, with Lie algebra $\Omega^{0}(\text{ad}\mathcal{P})$.

\subsection{Relationship Between Principal Bundles and Vector Bundles}

Let $\mathcal{P}$ be a principal $GL_{n}(\mathbb{C})$-bundle over base space $X$.  Choosing $\rho$ to be the fundamental representation of $GL_{n}(\mathbb{C})$ on $\mathbb{C}^{n}$, we get an associated complex vector bundle $E$ of rank $n$.  Conversely, given a rank $n$ complex vector bundle $E$, we recover a principal $GL_{n}(\mathbb{C})$-bundle as the frame bundle $\text{Fr}(E)$.  These two constructions are mutually inverse and therefore, for structure group $GL_{n}(\mathbb{C})$, we have the following equivalence

\[ \begin{Bmatrix*}[l]
 \,\,\,\,\,\,\,\,\,\,\, \text{Principal}\\ GL_{n}(\mathbb{C})\text{-bundles}
\end{Bmatrix*}  \Longleftrightarrow  \begin{Bmatrix*}\text{Complex vector bundles}\\ \text{of rank }n \end{Bmatrix*}.\]

\noindent In practice, this allows one to work with complex vector bundles instead of principal $GL_{n}(\mathbb{C})$-bundles.

A principal $SL_{n}(\mathbb{C})$-bundle gives rise to a complex vector bundle $E$ of rank $n$ with trivial determinant, $\Lambda^{n} E \cong X \times \mathbb{C}$.  Conversely, given a rank $n$ complex vector bundle $E$ with trivial determinant, the frame bundle $\text{Fr}(E)$ is a principal $SL_{n}(\mathbb{C})$-bundle.  As above, these two constructions are mutually inverse and give rise to the correspondence

\[ \begin{Bmatrix*}[l]
 \,\,\,\,\,\,\,\,\, \text{Principal}\\ SL_{n}(\mathbb{C})\text{-bundles}
\end{Bmatrix*}  \Longleftrightarrow  \begin{Bmatrix*}\text{Complex vector bundles} \,\, E \\ \text{of rank }n \,\, \text{with} \,\, \Lambda^{n}E \cong X \times \mathbb{C} \end{Bmatrix*}.\]

The two additional structure groups we will be interested in are $U(n)$ and $SU(n)$, which arise as the compact real forms of $GL_{n}(\mathbb{C})$ and $SL_{n}(\mathbb{C})$, respectively.  Both groups $G=U(n)$ or $G=SU(n)$ are defined as the group of symmetries preserving a Hermitian form on a complex vector space $V$ of dimension $n$, where for $SU(n)$ it must also preserve a volume form.  We therefore have a Hermitian form $\mathfrak{q} : V \times V \to \mathbb{C}$ such that $\mathfrak{q}(g v, g w) = \mathfrak{q}(v,w)$ for all $g \in G$ and all $v,w \in V$.  One can show that for all Lie algebra elements $a \in \mathfrak{g} \subseteq \text{End}(V)= \mathfrak{gl}(V)$ and all $v,w \in V$ the following important constraint must be satisfied
\begin{equation}\label{eqn:Liealgconstr}
\mathfrak{q}(a v, w) + \mathfrak{q}(v, a w) =0.
\end{equation}
In the above equation, it is important to interpret $a \in \mathfrak{g}$ as an endomorphism of $V$.  An important notion in what follows will be that of a Hermitian metric on a complex vector bundle.

\begin{defn}
Let $E$ be a complex vector bundle.  A Hermitian metric $h$ on $E$ is a collection of Hermitian forms $h_{x} : E_{x} \times E_{x} \to \mathbb{C}$ for all $x \in X$ which varies smoothly with $x$.  A Hermitian vector bundle $(E,h)$ is a complex vector bundle with a Hermitian metric $h$.  
\end{defn}

One can show that a Hermitian vector bundle has its structure group reduced from $GL_{n}(\mathbb{C})$ to $U(n)$ by the metric, and further to $SU(n)$ if $\Lambda^{n}E \cong X \times \mathbb{C}$.  Moreover, we have the following correspondence via the associated bundle and frame bundle constructions
 
\[ \begin{Bmatrix*}[l]
 \,\,\,\,\,\,\, \text{Principal}\\ U(n)\text{-bundles}
\end{Bmatrix*}  \Longleftrightarrow  \begin{Bmatrix*}\text{Hermitian vector bundles}\\ \text{of rank }n \end{Bmatrix*}.\]

\noindent In the obvious manner, Hermitian vector bundles with trivial determinant give rise to the similar equivalence

\[ \begin{Bmatrix*}[l]
 \,\,\,\,\,\,\,\, \text{Principal}\\ SU(n)\text{-bundles}
\end{Bmatrix*}  \Longleftrightarrow  \begin{Bmatrix*}\text{Hermitian vector bundles} \,\, (E,h) \\ \text{of rank }n \,\, \text{with} \,\, \Lambda^{n}E \cong X \times \mathbb{C} \end{Bmatrix*}.\]

For readers interested in complex and algebraic geometry, the takeaway from this section should be that for these structure groups, instead of considering principal bundles, one can work entirely with complex vector bundles with a metric, and possibly trivial determinant.  This leads us to an important remark.

\begin{rmk}
In this chapter we will be primarily interested in structure groups $U(n)$ or $SU(n)$, so we will consistently let $G$ denote one of these two groups.  At times we will say ``let $(E, h)$ be a Hermitian vector bundle with structure group $G$" which simply means that $(E, h)$ is a Hermitian vector bundle, and it has trivial determinant if $G = SU(n)$.  We will also always take trivializing charts such that the transition functions take values in $G$.  
\end{rmk}

\subsubsection{Bundle Automorphisms and Endomorphisms}

Given a complex vector bundle $E$ of rank $n$, the bundle of endomorphisms $\text{End}E \cong E \otimes E^{\vee}$ is a complex vector bundle of rank $n^{2}$ with fiber $\text{End}E_{x} \cong E_{x} \otimes E_{x}^{\vee} \cong \mathfrak{gl}_{n}(\mathbb{C})$ for all $x \in X$.  In addition, the set of automorphisms $\text{Aut}E$ of $E$ is a group under composition.  In fact, $\text{Aut}E$ is a fiber bundle over the base space $X$ with fiber $GL_{n}(\mathbb{C})$ -- it is \emph{not} however a principal $GL_{n}(\mathbb{C})$-bundle.  

If $\mathcal{P}_{E}$ is the principal $GL_{n}(\mathbb{C})$-bundle corresponding uniquely to $E$, then $\text{Aut}E$ and $\text{End}E$ are both bundles associated to $\mathcal{P}_{E}$ coinciding with the adjoint bundles introduced earlier
\begin{equation} \label{eqn:adjautend}
\setlength{\jot}{10pt}
\begin{split}
& \text{Aut}E = \text{Ad}\mathcal{P}_{E} = \mathcal{P}_{E} \times_{\text{Ad}} GL_{n}(\mathbb{C}) \\
& \text{End}E = \text{ad}\mathcal{P}_{E} = \mathcal{P}_{E} \times_{\text{ad}} \mathfrak{gl}_{n}(\mathbb{C})
\end{split}
\end{equation}
where both adjoint representations above are taken with respect to the group $GL_{n}(\mathbb{C})$.  Indeed, if the transition functions of $E$ are $\{\varphi_{\alpha \beta} \}$, the transition functions of $\text{End}E$ are $\{ \text{ad}(\varphi_{\alpha \beta})\}$, consistent with the transition functions of the associated bundle, and similarly for $\text{Aut}E$.  Comparing (\ref{eqn:adjautend}) and Lemma \ref{lemmy:autgauge} we see that automorphisms of $\mathcal{P}_{E}$ correspond to global sections of $\text{Aut}E$.    

Let $(E,h)$ be a Hermitian vector bundle with structure group $G$, assumed to be either $U(n)$ or $SU(n)$, and let $\mathcal{P}_{E}$ be the uniquely corresponding principal $G$-bundle.  We denote by
\begin{equation}
\setlength{\jot}{8pt}
\begin{split}
& G_{E} \coloneqq \text{Ad}\mathcal{P}_{E} \subset \text{Aut}E \\
& \mathfrak{g}_{E} \coloneqq \text{ad}\mathcal{P}_{E} \subset \text{End}E
\end{split}
\end{equation}
respectively, the vector bundle automorphisms and endomorphisms, compatible with the structure group $G$.  Note that $G_{E}$ is a subbundle of $\text{Aut}E$ with fiber $G \subset GL_{n}(\mathbb{C})$ and $\mathfrak{g}_{E}$ is a subbundle of $\text{End}E$ with fiber $\mathfrak{g} \subset \mathfrak{gl}_{n}(\mathbb{C})$.  Just as in the case of $GL_{n}(\mathbb{C})$, the automorphisms of $\mathcal{P}_{E}$ correspond to global sections of $G_{E}$.

\subsection{Connections on Complex and Hermitian Vector Bundles}

A smooth function on a manifold $X$ valued in $\mathbb{C}^{n}$ can be thought of as a smooth section of the trivial bundle $X \times \mathbb{C}^{n}$.  Such sections can be differentiated in a standard way using the exterior derivative.  One introduces the notion of a \emph{connection} to generalize the differentiation of smooth functions to sections of an arbitrary vector bundle.  A choice of a connection provides a way of identifying nearby fibers in a vector bundle, which allows one to then make sense of a directional derivative of a section.  

Let $E$ be a complex vector bundle of rank $n$ on a smooth manifold $X$.  Throughout, we denote by $\Omega^{p}(E)$ the $C^{\infty}(X)$-module of $p$-forms on $X$ valued in $E$.  More specifically, this is the space of global sections of the bundle $\Lambda^{p}T^{\smvee}_{X} \otimes E$.  For an open set $U \subset X$, we will denote the local $p$-forms valued in $E$ by $\Omega^{p}_{U}(E)$.  

\begin{defn}
A connection on a complex vector bundle $E$ is a $\mathbb{C}$-linear map 
\begin{equation}
d_{A} : \Omega^{0}(E) \to \Omega^{1}(E)
\end{equation}
satisfying the Leibniz rule $d_{A}(f \cdot s) = f \cdot d_{A}(s) + s \cdot df$, for all functions $f \in C^{\infty}(X)$ and sections $s \in \Omega^{0}(E)$.  
\end{defn}
One can use a connection to make sense of differentiating sections of $E$ in the direction of a particular tangent vector field to $X$.  If $\langle \cdot , \cdot \rangle$ is the natural contraction of vector fields and one-forms on $X$, then given any section $s \in \Omega^{0}(E)$ and vector field $v \in \Omega^{0}(T_{X})$, one can define the derivative of $s$ along $v$ to be $\langle d_{A}(s) , v \rangle \in \Omega^{0}(E)$.  Because of this property, a connection as we have defined it, is sometimes called a covariant derivative.  

If $\mathcal{P}_{E}$ is the principal $GL_{n}(\mathbb{C})$-bundle corresponding uniquely to $E$, then a connection on $E$ is equivalent to a connection on $\mathcal{P}_{E}$.  For details, we refer the reader to \cite[Section 2.10]{friedman_gauge_1997}.

The origin of the notation $d_{A}$ can be understood as follows.  Suppose $E$ trivializes over an open set $U \subset X$.  We can identify local sections of $E$ with $C^{\infty}(U, \mathbb{C}^{n})$.  A local frame $(e_{1}, \ldots, e_{n})$ of $E$ over $U$ is a collection of functions $e_{i} \in C^{\infty}(U, \mathbb{C}^{n})$ which at each point of $U$, form a basis of the fiber $\mathbb{C}^{n}$.  For each $e_{i}$ we can write
\begin{equation}\label{eqn:Conn1formframe}
d_{A}(e_{i}) = \sum_{j=1}^{n} A_{i j} e_{j}
\end{equation}
for a matrix $A = (A_{i j})$ of one-forms on $U$.  We refer to $A$ as the \emph{connection one-form} associated to $d_{A}$ on $U$.  In physics, the $A_{i j}$ are called \emph{gauge fields}.

\begin{proppy}
If $d_{A}$ and $d'_{A'}$ are connections on a complex vector bundle $E$, then $d_{A} - d'_{A'} \in \Omega^{1}(\text{End}E)$.  Conversely, given $a \in \Omega^{1}(\text{End}E)$ and any connection $d_{A}$, then $d_{A} + a$ is again a connection.  
\end{proppy}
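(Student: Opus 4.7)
The plan is to prove both directions by reducing everything to the tensorial (i.e.\ $C^\infty(X)$-linear) character of the difference $d_A - d'_{A'}$, and then invoking the standard correspondence between $C^\infty(X)$-linear maps of sections and bundle homomorphisms.

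For the forward direction, I would first verify that $d_A - d'_{A'} : \Omega^0(E) \to \Omega^1(E)$ is $C^\infty(X)$-linear. This is a short calculation: given $f \in C^\infty(X)$ and $s \in \Omega^0(E)$, expanding $(d_A - d'_{A'})(f\cdot s)$ using the Leibniz rule for each connection shows that the two $s \cdot df$ terms cancel, leaving $f \cdot (d_A - d'_{A'})(s)$. Once $C^\infty(X)$-linearity is in hand, the usual bundle-theoretic dictionary says that any $C^\infty(X)$-linear map between the global sections of two vector bundles is induced pointwise by a bundle homomorphism. Here this produces a bundle map $E \to T^{\smvee}_X \otimes E$, i.e.\ an element of
\begin{equation*}
\Omega^0\bigl(\mathrm{Hom}(E, T^{\smvee}_X \otimes E)\bigr) \;\cong\; \Omega^0\bigl(T^{\smvee}_X \otimes \mathrm{End}\,E\bigr) \;=\; \Omega^1(\mathrm{End}\,E),
\end{equation*}
which is exactly the claim.

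For the converse, let $a \in \Omega^1(\mathrm{End}\,E)$ and interpret it in the natural way as a $C^\infty(X)$-linear map $\Omega^0(E) \to \Omega^1(E)$. Then $\mathbb{C}$-linearity of $d_A + a$ is immediate, and checking the Leibniz rule amounts to combining the Leibniz rule for $d_A$ with the $C^\infty(X)$-linearity of $a$: $(d_A + a)(f \cdot s) = f \cdot d_A(s) + s \cdot df + f \cdot a(s) = f \cdot (d_A+a)(s) + s \cdot df$, as required.

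I do not expect a serious obstacle here; the only subtle step is the invocation of the principle that $C^\infty(X)$-linearity of a map on sections is equivalent to being induced by a bundle homomorphism, and this is a standard fact that can simply be cited. One minor care point is being explicit about the identification $\mathrm{Hom}(E, T^{\smvee}_X \otimes E) \cong T^{\smvee}_X \otimes \mathrm{End}\,E$ so that the output genuinely lies in $\Omega^1(\mathrm{End}\,E)$ as stated; a very short remark in the proof should suffice.
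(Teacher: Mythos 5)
Your proposal is correct and follows essentially the same route as the paper's own proof: verify $C^{\infty}(X)$-linearity of the difference via cancellation of the $s \cdot df$ terms, and check the Leibniz rule for $d_{A}+a$ directly. The only difference is that you make explicit the standard dictionary between $C^{\infty}(X)$-linear maps on sections and bundle homomorphisms, which the paper leaves implicit; that is a harmless (and arguably welcome) elaboration.
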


\begin{proof}
To show that $d_{A} - d'_{A'}$ is a $\text{End}E$-valued one-form, we simply need to show it to be $C^{\infty}(X)$-linear.  But by the Leibniz rule, it is indeed clear that
\begin{equation}
(d_{A} - d'_{A'})(f \cdot s) = f \cdot (d_{A} - d'_{A'})(s)
\end{equation}
for all $f \in C^{\infty}(X)$ and $s \in \Omega^{0}(E)$.  For the second claim, note that $a \in \Omega^{1}(\text{End}E)$ can act on $\Omega^{0}(E)$ by multiplication in the form part, and evaluation in the endomorphism part.  Therefore
\begin{equation}
\setlength{\jot}{12pt}
\begin{split}
(d_{A} + a)(f \cdot s) & = f \cdot d_{A} (s) + s \cdot df + a(f \cdot s)\\
& = f \cdot (d_{A} + a)(s) + s \cdot df
\end{split}
\end{equation}
which verifies that $d_{A} + a$ satisfies the Leibniz rule.  
\end{proof}

Because a connection is not itself $C^{\infty}(X)$-linear, it is not a tensor.  But by the proposition, \emph{differences} between connections are indeed one-forms on $X$ valued in the endomorphism bundle.  We typically denote by $\mathscr{A}(E)$ the space of all connections on a fixed complex vector bundle $E$.  The above proposition implies the following corollary establishing the structure of $\mathscr{A}(E)$.

\begin{cory}
The space of connections $\mathscr{A}(E)$ on a complex vector bundle $E$ is an infinite-dimensional affine space modeled on $\Omega^{1}(\text{End}E)$.  In particular, there is no canonically distinguished connection.  
\end{cory}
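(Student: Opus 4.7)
The plan is to deduce this corollary almost immediately from the preceding proposition, once one establishes that $\mathscr{A}(E)$ is nonempty. The three things to verify are: (i) existence of a connection, (ii) that $\Omega^{1}(\text{End}E)$ acts on $\mathscr{A}(E)$ freely and transitively by $(a, d_{A}) \mapsto d_{A} + a$, and (iii) that the modeling vector space is infinite-dimensional.

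First I would handle existence by the standard partition-of-unity construction. Choose a trivializing open cover $\{U_{\alpha}\}$ and let $d^{\alpha}$ denote the flat componentwise exterior derivative on $E|_{U_{\alpha}} \cong U_{\alpha} \times \mathbb{C}^{n}$, which is trivially a connection. Pick a subordinate partition of unity $\{\rho_{\alpha}\}$ and set
\[
d_{A}(s) \;=\; \sum_{\alpha} \rho_{\alpha} \, d^{\alpha}(s|_{U_{\alpha}}).
\]
$\mathbb{C}$-linearity is automatic, and the Leibniz rule is checked by expanding $d_{A}(fs)$ and using $\sum_{\alpha} \rho_{\alpha} = 1$, so $\sum_{\alpha} \rho_{\alpha} s \cdot df = s \cdot df$. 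Hence $\mathscr{A}(E) \neq \emptyset$.

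Next I would read off transitivity and freeness from the proposition. Transitivity: given $d_{A}, d'_{A'} \in \mathscr{A}(E)$, the first half of the proposition says $a \defeq d'_{A'} - d_{A} \in \Omega^{1}(\text{End}E)$, and then $d_{A} + a = d'_{A'}$. Freeness: if $d_{A} + a = d_{A}$ as operators, then $a \equiv 0$ as a $C^{\infty}(X)$-linear operator $\Omega^{0}(E) \to \Omega^{1}(E)$; since such an operator is a genuine section of $\Lambda^{1}T_{X}^{\smvee} \otimes \text{End}E$, evaluated pointwise on local frames, this forces $a = 0$ in $\Omega^{1}(\text{End}E)$. Combined with the second half of the proposition (which says $d_{A} + a \in \mathscr{A}(E)$), this exhibits $\mathscr{A}(E)$ as a torsor over $\Omega^{1}(\text{End}E)$, i.e.\ an affine space modeled on it.

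Finally, infinite-dimensionality is immediate: $\Omega^{1}(\text{End}E)$ is the space of smooth sections of a nonzero vector bundle, and multiplication of any fixed nonvanishing local section by the infinite-dimensional space of compactly supported bump functions on any chart already produces an infinite-dimensional subspace. The concluding sentence of the corollary---no canonically distinguished connection---is not a separate claim but a restatement of the affine structure: a torsor has no distinguished origin without extra data (such as a Hermitian metric, which will later pick out Chern connections). The only real obstacle is existence, which is why I would put the partition-of-unity step first; once it is in hand, the rest is formal bookkeeping from the proposition.
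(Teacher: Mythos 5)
Your proof is correct and follows the same route the paper intends: the corollary is read off directly from the preceding proposition, which supplies both transitivity (differences of connections lie in $\Omega^{1}(\text{End}E)$) and the fact that $d_{A}+a$ is again a connection. The paper in fact states the corollary without any proof at all, so your partition-of-unity argument for nonemptiness of $\mathscr{A}(E)$ is a welcome addition that the paper leaves implicit (it only remarks later, without proof, that $G$-connections exist); the rest of your write-up is the standard torsor bookkeeping and is fine.
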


Given a connection $d_{A}$ on $E$, one can induce a connection on the standard bundles constructed from $E$.  We choose to also denote these connections by $d_{A}$.  For example, the connection induced on the determinant $\Lambda^{n}E$ is defined by
\begin{equation}\label{eqn:defnconndett}
d_{A}(s_{1} \wedge \ldots, \wedge s_{n}) = \sum_{i=1}^{n} s_{1} \wedge \ldots \wedge d_{A}(s_{i}) \wedge \ldots \wedge s_{n}
\end{equation}
for local or global sections $s_{i}$ of $E$.    

On a Hermitian vector bundle with possibly trivial determinant, we ask that a connection be compatible with these extra structures or properties.  We take an orthonormal frame of $(E, h)$ to mean a local frame $(e_{1}, \ldots, e_{n})$ of $E$, orthonormal with respect to $h$.  That is, for all $i, j$ we have $h(e_{i}, e_{j}) = \delta_{i j}$.   

\begin{defn}\label{defn:Herrrrmconnnn}
A Hermitian connection on a Hermitian vector bundle $(E, h)$ of rank $n$ over $X$ is a connection $d_{A}$ on the complex vector bundle $E$ which additionally satisfies
\begin{equation} \label{eqn:Gconnconstr}
dh(s,t) = h\big(d_{A} (s), t \big) + h\big(s, d_{A} (t)\big)
\end{equation}
for all sections $s,t \in \Omega^{0}(E)$.  If the determinant of $E$ is trivial, there must also exist around each point an orthonormal frame $(e_{1}, \ldots, e_{n})$ of $E$ such that $d_{A}(e_{1} \wedge \ldots \wedge e_{n})=0$.    
\end{defn}

It is helpful to understand what this definition implies locally.  If $(e_{1}, \ldots, e_{n})$ is an orthonormal frame of $E$, then applying (\ref{eqn:Conn1formframe}) and (\ref{eqn:Gconnconstr}) we see
\begin{equation}
\begin{split}
0 = dh(e_{i}, e_{j}) & = \sum_{k=1}^{n} A_{i k} h(e_{k}, e_{j}) + \sum_{k=1}^{n} \overline{A}_{j k} h(e_{i}, e_{k})\\
& =A_{i j} + \overline{A}_{j i}
\end{split}
\end{equation}
which is the statement that the connection one-form $A$ is skew-Hermitian.  In other words, $A$ is a one-form valued in the Lie algebra $\mathfrak{u}_{n}$ of $U(n)$.  Similarly, using also (\ref{eqn:defnconndett}) we have 
\begin{equation}\label{eqn:connonwedgeer}
d_{A}(e_{1} \wedge \ldots \wedge e_{n}) = \text{Tr}(A) (e_{1} \wedge \ldots \wedge e_{n})
\end{equation}
which means that if $d_{A}$ is a connection on a bundle with trivial determinant, $A$ must also be traceless.  This makes $A$ a one-form valued in the Lie algebra $\mathfrak{su}_{n}$ of $SU(n)$.    

\begin{defn}
Assuming $G$ to be either $U(n)$ or $SU(n)$, a $G$-connection on a Hermitian vector bundle $(E, h)$ is a Hermitian connection satisfying the additional condition in Definition \ref{defn:Herrrrmconnnn} if $G = SU(n)$.  We denote the space of $G$-connections on $(E, h)$ by $\mathscr{A}_{G}(E, h)$.  
\end{defn}

\noindent Though we will not prove it, $G$-connections always exist.  Let $\mathfrak{g}_{E} \subset \text{End}E$ be the bundle of Lie algebras associated to the adjoint representation of $G$.  Depending on $G$, the fiber of $\mathfrak{g}_{E}$ is either $\mathfrak{u}_{n}$ or $\mathfrak{su}_{n}$.

\begin{cory}\label{cory:spofGconnnns}
The space of $G$-connections $\mathscr{A}_{G}(E, h)$ is an infinite-dimensional affine space modeled on $\Omega^{1}(\mathfrak{g}_{E})$.  In particular, there is no canonically distinguished $G$-connection.  
\end{cory}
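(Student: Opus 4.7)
The plan is to mirror the proof of the previous corollary (for $\mathscr{A}(E)$), establishing two directions: that the difference of any two $G$-connections lies in $\Omega^{1}(\mathfrak{g}_{E})$, and that adding an element of $\Omega^{1}(\mathfrak{g}_{E})$ to a $G$-connection produces another $G$-connection. Granting the author's remark that $G$-connections exist, these two facts identify $\mathscr{A}_{G}(E,h)$ with a torsor over $\Omega^{1}(\mathfrak{g}_{E})$, and the absence of a canonically distinguished element follows from the absence of a canonical zero in an affine space.

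For the first direction, take $d_{A}, d_{A'} \in \mathscr{A}_{G}(E,h)$. By the earlier proposition, $a \defeq d_{A} - d_{A'}$ is an element of $\Omega^{1}(\text{End}\,E)$, so it suffices to show it actually takes values in the subbundle $\mathfrak{g}_{E}$. Subtracting the two instances of the Hermitian compatibility (\ref{eqn:Gconnconstr}) yields
\begin{equation*}
h(as, t) + h(s, at) = 0
\end{equation*}
for all sections $s,t$ of $E$, which pointwise is precisely the Lie algebra constraint (\ref{eqn:Liealgconstr}); equivalently, the endomorphism part of $a$ is skew-Hermitian in any orthonormal frame, so it lies fiberwise in $\mathfrak{u}_{n}$. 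When $G = SU(n)$, applying the determinant clause of Definition \ref{defn:Herrrrmconnnn} to both connections and using (\ref{eqn:connonwedgeer}) forces $\mathrm{Tr}(A) = \mathrm{Tr}(A') = 0$ in a compatible orthonormal frame, hence $a$ is additionally traceless, i.e.\ valued in $\mathfrak{su}_{n}$.

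For the second direction, fix $d_{A} \in \mathscr{A}_{G}(E,h)$ and $a \in \Omega^{1}(\mathfrak{g}_{E})$. The previous proposition already makes $d_{A} + a$ a connection on $E$, so only the $G$-conditions need to be checked. For Hermiticity,
\begin{equation*}
h\bigl((d_{A}+a)s, t\bigr) + h\bigl(s, (d_{A}+a)t\bigr) = dh(s,t) + h(as,t) + h(s,at) = dh(s,t),
\end{equation*}
where the last equality uses that $a$ lies fiberwise in $\mathfrak{g} \subset \mathfrak{gl}(V)$ and therefore satisfies (\ref{eqn:Liealgconstr}) pointwise. When $G = SU(n)$, tracelessness of $a$ together with (\ref{eqn:connonwedgeer}) gives $(d_{A}+a)(e_{1} \wedge \ldots \wedge e_{n}) = 0$ in an orthonormal frame, so the determinant condition is preserved.

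The only genuinely non-obvious point — the analogue of the main obstacle — is justifying that the fiberwise characterization of $\mathfrak{g}$ as skew-Hermitian (and, for $SU(n)$, traceless) endomorphisms glues into a well-defined subbundle $\mathfrak{g}_{E} \subset \text{End}\,E$ so that the pointwise computations above make global sense. This is legitimate because $\mathfrak{g}_{E} = \text{ad}\,\mathcal{P}_{E}$ with transition functions given by conjugation by $g_{\alpha\beta} \in G$, and both skew-Hermiticity (with respect to $h$) and tracelessness are $G$-invariant properties; the trivializations compatible with the structure group preserve them. With this identification in hand, the two directions above combine with the assumed existence of one $G$-connection to give the desired affine structure.
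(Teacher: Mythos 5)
Your proof is correct and follows essentially the same route as the paper: the core computation showing that $d_{A}+a$ remains a $G$-connection via $h(as,t)+h(s,at)=0$ and the trace argument through (\ref{eqn:connonwedgeer}) is exactly the paper's argument. You additionally verify the converse direction (that differences of $G$-connections land in $\Omega^{1}(\mathfrak{g}_{E})$), which the paper leaves implicit in the local computations preceding the corollary, so your write-up is if anything slightly more complete.
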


\begin{proof}
Given any $G$-connection $d_{A}$ and any $a \in \Omega^{1}(\mathfrak{g}_{E})$, we want to show that $d_{A} + a$ is again a $G$-connection.  We have previously shown that $d_{A} + a$ is at least a connection on $E$, so it remains to verify that (\ref{eqn:Gconnconstr}) is satisfied.  We have
\begin{equation}
h\big( (d_{A} + a)(s), t\big) + h\big(s, (d_{A} + a)(t) \big) = dh(s,t) + h(as, t) + h(s, at)
\end{equation}
using that $d_{A}$ is a $G$-connection as well as the bilinearity of $h$.  However, we have $h(as, t) + h(s, at)=0$, as can be seen locally, where this becomes the statement that $a_{i j} + \overline{a}_{j i} =0$.  If the determinant of $E$ is trivial, the connection one-form $A$ as well as $a$ are one-forms valued in traceless skew-Hermitian matrices.  We must then verify that 
\begin{equation}
(d_{A} + a)(e_{1} \wedge \ldots \wedge e_{n}) =0.
\end{equation} 
Because we have already shown that $d_{A} + a$ is again a connection, we know by (\ref{eqn:connonwedgeer})
\begin{equation}
(d_{A} + a)(e_{1} \wedge \ldots \wedge e_{n}) = \text{Tr}(A + a) (e_{1} \wedge \ldots \wedge e_{n})
\end{equation}
and this indeed vanishes, completing the proof.  
\end{proof}

\subsubsection{The Local Description of Connections}

Let $(E,h)$ be a Hermitian vector bundle with structure group $G$ trivializing on an open cover $\{U_{\alpha}\}$ of $X$ through isomorphisms $E|_{U_{\alpha}} \cong U_{\alpha} \times \mathbb{C}^{n}$.  Then on intersecting open sets $U_{\alpha \beta} = U_{\alpha} \cap U_{\beta}$, we have transition functions $\varphi_{\alpha \beta} : U_{\alpha \beta} \to G$.  Let $(e_{1}, \ldots, e_{n})$ be an orthonormal frame of $(E,h)$ over $U_{\alpha}$, and let $s_{\alpha}$ be a local section, with components $s_{\alpha}^{(i)} : U_{\alpha} \to \mathbb{C}$ relative to the frame for all $i=1, \ldots, n$.  Using the Leibniz rule, we can compute the action of $d_{A}$ on $s_{\alpha}$ to be
\begin{equation}
\begin{split}
d_{A}(s_{\alpha}) = d_{A} \big( \sum_{i=1}^{n} s_{\alpha}^{(i)}e_{i}\big) & = \sum_{i=1}^{n}\bigg( d(s_{\alpha}^{(i)})e_{i} + s_{\alpha}^{(i)}d_{A}(e_{i})\bigg) \\
& = \sum_{i=1}^{n} \bigg( d(s_{\alpha}^{(i)})e_{i} + s_{\alpha}^{(i)} \sum_{j=1}^{n}(A_{\alpha})_{i j} e_{j} \bigg)
\end{split}
\end{equation}
where we have also used the definition (\ref{eqn:Conn1formframe}) of the connection one-form $A_{\alpha} \in \Omega^{1}(U_{\alpha}) \otimes \mathfrak{g}$.  We therefore see that the local description of the connection on $U_{\alpha}$ is 
\begin{equation}
d_{A}|_{U_{\alpha}} = d + A_{\alpha}.
\end{equation}  

\begin{proppy}
The connection one-forms $A_{\alpha}$ and $A_{\beta}$ are related on $U_{\alpha \beta}$ by  
\begin{equation} \label{eqn:localcon}
A_{\beta} = \varphi_{\alpha \beta}^{-1} A_{\alpha} \varphi_{\alpha \beta} + \varphi_{\alpha \beta}^{-1} \, d \varphi_{\alpha \beta}.
\end{equation}
\end{proppy}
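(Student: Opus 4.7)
The plan is to compare the local expressions of $d_{A}$ in the two trivializations over $U_{\alpha \beta}$. First I will choose orthonormal frames $\mathbf{e}^{\alpha} = (e_{1}^{\alpha}, \ldots, e_{n}^{\alpha})$ on $U_{\alpha}$ and $\mathbf{e}^{\beta}$ on $U_{\beta}$ and recall from the transition-function conventions fixed around (\ref{eqn:PBundTrFunc}) and the associated-bundle construction that on the overlap these frames are related by $\mathbf{e}^{\beta} = \mathbf{e}^{\alpha} \varphi_{\alpha \beta}$, where $\mathbf{e}^{\alpha}$ is regarded as a row vector of sections and $\varphi_{\alpha \beta}$ is a matrix of smooth $G$-valued functions on $U_{\alpha \beta}$. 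The connection one-forms are then characterized by $d_{A} \mathbf{e}^{\alpha} = \mathbf{e}^{\alpha} A_{\alpha}$ and $d_{A} \mathbf{e}^{\beta} = \mathbf{e}^{\beta} A_{\beta}$, which is the matrix repackaging of the defining relation (\ref{eqn:Conn1formframe}).

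Second, I will apply $d_{A}$ to the identity $\mathbf{e}^{\beta} = \mathbf{e}^{\alpha} \varphi_{\alpha \beta}$ entry by entry using the Leibniz rule, noting that the components of $\varphi_{\alpha \beta}$ are ordinary $C^{\infty}$ functions on $U_{\alpha \beta}$:
\[
d_{A} \mathbf{e}^{\beta} = (d_{A}\mathbf{e}^{\alpha}) \varphi_{\alpha \beta} + \mathbf{e}^{\alpha}\, d \varphi_{\alpha \beta} = \mathbf{e}^{\alpha}\bigl( A_{\alpha} \varphi_{\alpha \beta} + d \varphi_{\alpha \beta}\bigr).
\]
On the other hand $d_{A} \mathbf{e}^{\beta} = \mathbf{e}^{\beta} A_{\beta} = \mathbf{e}^{\alpha} \varphi_{\alpha \beta} A_{\beta}$, and since $\mathbf{e}^{\alpha}$ is pointwise a basis of the fiber I can cancel it to obtain $\varphi_{\alpha \beta} A_{\beta} = A_{\alpha} \varphi_{\alpha \beta} + d \varphi_{\alpha \beta}$. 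Left-multiplication by $\varphi_{\alpha \beta}^{-1}$, which makes sense because the transition functions take values in $G \subseteq GL_{n}(\mathbb{C})$, then yields (\ref{eqn:localcon}).

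The only step that is not purely mechanical is getting the direction of the frame transformation correct: the paper's convention for transition functions was phrased on the principal bundle via $g_{\alpha}(p) = \widetilde{g}_{\alpha \beta}(p)\, g_{\beta}(p)$, and one must unpack the associated-bundle construction to confirm that it is frames, rather than component vectors, that transform by right multiplication by $\varphi_{\alpha \beta}$. Once this convention is pinned down, the remainder reduces to the Leibniz rule applied to a matrix product and a single cancellation in the chosen frame, both of which are routine.
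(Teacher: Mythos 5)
Your proof is correct and is essentially the same argument as the paper's: both compare the two local descriptions of $d_{A}$ on $U_{\alpha\beta}$ and apply the Leibniz rule once, the only cosmetic difference being that you differentiate the frame (which transforms by right multiplication by $\varphi_{\alpha\beta}$) while the paper differentiates the component vector of an arbitrary section (which transforms by left multiplication, via $s_{\alpha} = \varphi_{\alpha\beta}s_{\beta}$). You correctly flag the one point of care — pinning down the direction of the frame transformation — though note that with the paper's index convention $d_{A}(e_{i}) = \sum_{j}A_{ij}e_{j}$ the row-vector identity is really $d_{A}\mathbf{e} = \mathbf{e}A^{T}$, a transpose the paper itself glosses over and which does not affect the final formula.
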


\begin{proof}
Let $s \in \Omega^{0}(E)$ be a global section of $E$ which restricts to local sections $s_{\alpha}: U_{\alpha} \to \mathbb{C}^{n}$.  For all $x \in U_{\alpha \beta}$, we have an invertible linear map $\varphi_{\alpha \beta}(x) \in G$ on fiber $E_{x}$ and $s_{\alpha}(x) = \varphi_{\alpha \beta}(x) s_{\beta}(x)$.  The condition we require to hold is,
\begin{equation}
\varphi_{\alpha \beta} \big( d + A_{\beta}\big) s_{\beta} = \big(d + A_{\alpha}\big) s_{\alpha}
\end{equation}
from which the result follows after a straightforward computation.  
\end{proof}

The term $\varphi_{\alpha \beta}^{-1} d\varphi_{\alpha \beta}$ in (\ref{eqn:localcon}) is the local manifestation of a $G$-connection \emph{not} being a tensor.  However, because this term depends only on $\varphi_{\alpha \beta}$, taking the difference of two $G$-connections $d_{A} - d'_{A'}$ gives the transformation
\begin{equation}
A_{\beta} - A'_{\beta} = \varphi^{-1}_{\alpha \beta}(A_{\alpha} - A'_{\alpha}) \varphi_{\alpha \beta}
\end{equation}
which is precisely how a one-form valued in $\mathfrak{g}_{E}$ should transform (see Corollary \ref{cory:spofGconnnns}).

\subsection{The Curvature of a Connection}

Given a connection $d_{A}$ on a complex vector bundle $E$ over a smooth manifold $X$ of dimension $m$, we have seen that $d_{A}$ defines a covariant differentiation of sections of $E$.  Using $d_{A}$, we may generalize the de Rham complex
\begin{equation} \label{eqn:deRhamcomp}
\Omega^{0}(X) \overset{d}{\longrightarrow} \Omega^{1}(X) \overset{d}{\longrightarrow} \cdots \overset{d}{\longrightarrow} \Omega^{m}(X) \longrightarrow 0
\end{equation}
to differential forms on $X$ valued in the bundle $E$
\begin{equation} \label{eqn:bundvalseq}
\Omega^{0}(E) \overset{d_{A}}{\longrightarrow} \Omega^{1}(E) \overset{d_{A}}{\longrightarrow} \cdots \overset{d_{A}}{\longrightarrow} \Omega^{m}(E)\longrightarrow 0.
\end{equation}
The covariant derivative $d_{A}$ is uniquely defined by requiring that it coincides with the connection on $\Omega^{0}(E)$ and for $d_{A}: \Omega^{p}(E) \to \Omega^{p+1}(E)$, we have the Leibniz rule
\begin{equation} \label{eqn:curvLeibrulle}
d_{A}( \omega \wedge s ) = d \omega \otimes s + (-1)^{p} \omega \wedge d_{A}(s)
\end{equation}
for all $\omega \in \Omega^{p}(X)$ and $s \in \Omega^{0}(E)$. The exterior derivative of course satisfies $d^{2} =0$, but we do \emph{not} in general have $d_{A} \circ d_{A} =0$.

\begin{defn}
The curvature $F_{A}$ of the connection $d_{A}$ on a complex vector bundle $E$ is defined by
\begin{equation}\label{eqn:curvmap}
F_{A} \coloneqq d_{A} \circ d_{A} : \Omega^{0}(E) \to \Omega^{2}(E).
\end{equation}
The connection $d_{A}$ is said to be flat if the curvature is identically zero, $F_{A} = 0$.  
\end{defn}  

\noindent Slightly generalizing the Leibniz rule (\ref{eqn:curvLeibrulle}), one can show that 
\begin{equation}
d_{A}( \beta \wedge s) = d \beta \wedge s + (-1)^{k} \beta \wedge d_{A}(s)
\end{equation}
for all $\beta \in \Omega^{k}(X)$ and $s \in \Omega^{l}(E)$.  It follows from this that $F_{A}$ is $C^{\infty}(X)$-linear, meaning we have $F_{A} \in \Omega^{2}(\text{End}E)$.  If $(E, h)$ is a Hermitian vector bundle with structure group $G$, assumed to be either $U(n)$ or $SU(n)$, we simply have $F_{A} \in \Omega^{2}(\mathfrak{g}_{E})$.

We therefore interpret the map (\ref{eqn:curvmap}) to be given as action by the endomorphism part of $F_{A}$, and multiplication by the form part.  The curvature should be thought to measure the obstruction to the sequence (\ref{eqn:bundvalseq}) being a complex.  It is then evidently the flat connections on which the covariant derivative behaves analogously to the exterior derivative.

Given a covariant derivative $d_{A}$ on a complex vector bundle $E$, there is a canonical way to induce a covariant derivative on $\text{End}E$, which we will also denote by $d_{A}$.  Given sections $\sigma \in \Omega^{k}(\text{End}E)$ and $s \in \Omega^{0}(E)$, the condition uniquely determining $d_{A}: \Omega^{k}(\text{End}E) \to \Omega^{k+1}(\text{End}E)$ is the Leibniz-like rule 
\begin{equation}
d_{A}(\sigma \cdot s) = d_{A} (\sigma)(s) + \sigma \cdot  d_{A}(s) 
\end{equation}
which can be rearranged and taken as the definition of $d_{A}(\sigma)$.  We evidently have
\begin{equation}
d_{A} (\sigma) \coloneqq d_{A} \circ \sigma - \sigma \circ d_{A} = [ d_{A}, \sigma ]
\end{equation}
which one may see defined as $d_{A} \coloneqq [d_{A}, - \, ]$.  It should be clear from the context whether $d_{A}$ is the covariant derivative on $E$ or that on $\text{End}E$.  If $(E, h)$ is a Hermitian vector bundle with structure group $G$, one may similarly induce a covariant derivative on the adjoint bundle $\mathfrak{g}_{E}$.  

Understanding now how to differentiate endomorphism-valued forms, the following result, known as the Bianchi identity, is an important global constraint on the curvature.

\begin{proppy}[\bfseries Bianchi Identity]
The curvature $F_{A} \in \Omega^{2}(\mathfrak{g}_{E})$ satisfies
\begin{equation} \label{eqn:BianchiId}
d_{A} (F_{A}) = 0,
\end{equation}
where $d_{A}$ is interpreted as the induced covariant derivative on $\mathfrak{g}_{E}$.  
\end{proppy}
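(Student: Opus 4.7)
The plan is to reduce the Bianchi identity to the tautology that composition of maps is associative, exploiting the fact that $F_A$ is literally defined as the operator $d_A \circ d_A$.

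First I would unpack both sides using the definitions introduced in the preceding discussion. The curvature $F_A$ is defined in (\ref{eqn:curvmap}) as the operator $d_A \circ d_A: \Omega^0(E) \to \Omega^2(E)$, reinterpreted as an $\mathfrak{g}_E$-valued 2-form acting by multiplication on the form part and evaluation on the endomorphism part. A short computation using the graded Leibniz rule (\ref{eqn:curvLeibrulle}) shows that the same composition $d_A \circ d_A: \Omega^k(E) \to \Omega^{k+2}(E)$ is $C^\infty(X)$-linear for every $k$, and that the induced operator coincides with the action of $F_A$ as an endomorphism-valued 2-form on $\Omega^k(E)$. The induced covariant derivative on endomorphism-valued forms is given by the excerpt's formula $d_A(\sigma) = [d_A, \sigma] = d_A \circ \sigma - \sigma \circ d_A$.

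Second, I would apply this commutator formula to $\sigma = F_A$ and test the result against an arbitrary section $s \in \Omega^0(E)$. This gives
\begin{equation*}
d_A(F_A)(s) \;=\; d_A\bigl(F_A(s)\bigr) \;-\; F_A\bigl(d_A(s)\bigr) \;=\; (d_A \circ d_A \circ d_A)(s) \;-\; (d_A \circ d_A \circ d_A)(s) \;=\; 0,
\end{equation*}
where in the middle equality I have used the identification of $F_A$ with $d_A \circ d_A$ on both $\Omega^0(E)$ and $\Omega^1(E)$ established in the previous step. Since $s$ was arbitrary, $d_A(F_A) = 0$ as an element of $\Omega^3(\mathfrak{g}_E)$.

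The main obstacle, such as it is, lies entirely in the first step: verifying that the two descriptions of $d_A$ on endomorphism-valued forms (the graded Leibniz extension on the one hand, the commutator prescription on the other) are mutually compatible, and that $F_A$ acts on higher-degree bundle-valued forms by the same composition $d_A \circ d_A$ that defines it on sections. Both statements are formal consequences of the Leibniz rule and involve only bookkeeping of signs. Once they are in place, the Bianchi identity is a one-line consequence of the associativity of composition of linear maps, with no further input from the geometry of the connection.
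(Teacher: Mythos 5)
Your proof is correct and follows essentially the same route as the paper: apply the commutator formula $d_A(\sigma) = [d_A,\sigma]$ to $\sigma = F_A$, test against a section $s$, and observe that both terms equal $(d_A \circ d_A \circ d_A)(s)$. The extra care you take in checking that $F_A$ acts on $\Omega^1(E)$ by the same composition $d_A \circ d_A$ is exactly the content the paper delegates to "the above discussion," so the two arguments coincide.
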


\begin{proof}
By the above discussion, for all sections $s \in \Omega^{0}(E)$ we have
\begin{equation}
\begin{split}
d_{A}(F_{A})(s) & = d_{A}\big(F_{A}(s)\big) - F_{A}\big(d_{A}(s)\big) \\
& = (d_{A} \circ d_{A} \circ d_{A}) (s) - (d_{A} \circ d_{A} \circ d_{A}) (s) =0.
\end{split}
\end{equation}
\end{proof}

\begin{rmk}
Despite the fact that $F_{A} = d_{A} \circ d_{A}$, one must not interpret the Bianchi identity to say $d_{A} \circ d_{A} \circ d_{A}=0$.  The $d_{A}$ appearing in (\ref{eqn:BianchiId}) is the covariant derivative on $\mathfrak{g}_{E}$.  
\end{rmk}

Recall that because the set of $G$-connections $\mathscr{A}_{G}(E,h)$ on a Hermitian vector bundle $(E,h)$ is an affine space, given a connection $d_{A}$, any other is of the form $d_{A} + a$ for some $a \in \Omega^{1}(\mathfrak{g}_{E})$, which we can abbreviate as $A + a$.  One can ask how the curvature interacts with this affine structure of $\mathscr{A}_{G}(E,h)$.  A direct calculation shows that
\begin{equation} \label{eqn:connaff}
F_{A + a} = F_{A} + d_{A}(a) + a \wedge a
\end{equation}
where in this context, $d_{A}(a) \in \Omega^{2}(\mathfrak{g}_{E})$ and $a \wedge a$ is interpreted as the exterior product of the form parts and the composition of the endomorphism parts.

\subsubsection{The Local Description of the Curvature}

Locally, the curvature of a $G$-connection is a matrix-valued two-form.  More specifically, on a trivializing chart, in terms of the connection one-form $A$ the curvature takes the form
\begin{equation}
F_{A} = dA + A \wedge A.  
\end{equation}
On intersecting trivializing charts $U_{\alpha \beta} = U_{\alpha} \cap U_{\beta}$, let $\varphi_{\alpha \beta}$ be the transition functions.  The transformation of the curvature on $U_{\alpha \beta}$ is
\begin{equation}\label{eqn:curvchangtriv}
F_{A_{\beta}} = \varphi_{\alpha \beta}^{-1} F_{A_{\alpha}} \varphi_{\alpha \beta}
\end{equation}
where $A_{\alpha}$ is the connection one-form on $U_{\alpha}$, and similarly for $A_{\beta}$.  This is precisely how one would expect a two-form valued in $\mathfrak{g}_{E}$ to transform.

\subsection{Gauge Transformations as Bundle Automorphisms}  

Let $(E,h)$ be a Hermitian vector bundle with structure group $G$, assumed to be either $U(n)$ or $SU(n)$, and corresponding principal $G$-bundle $\mathcal{P}_{E}$.  We recall that the automorphism groups of $(E,h)$ and $\mathcal{P}_{E}$ are \emph{not} the same.  The automorphism group of $(E,h)$ is the fiber bundle $Ad \mathcal{P}_{E}$ with fiber $G$, and the automorphisms of $\mathcal{P}_{E}$ are global sections of $Ad \mathcal{P}_{E}$ (see Lemma \ref{lemmy:autgauge}).  

\begin{defn}
The group of gauge transformations (or gauge group\footnote{One must beware that in some contexts, particularly in physics, the term \emph{gauge group} is used to mean what we are calling the structure group $G$.}) of a Hermitian vector bundle $(E,h)$ as above is defined by 
\begin{equation}
\mathscr{G} \coloneqq \text{Aut}\mathcal{P}_{E} \cong \Omega^{0}(Ad \mathcal{P}_{E}).
\end{equation}
We refer to an element $\sigma \in \mathscr{G}$ as a (global) gauge transformation.  Over a trivializing open set $U_{\alpha}$, a local gauge transformation is a map $\sigma_{\alpha} : U_{\alpha} \to G$ and $C^{\infty}(U_{\alpha}, G)$ is the group of local gauge transformations.  
\end{defn}

The group of gauge transformations is an infinite-dimensional Lie group with associated Lie algebra $\Omega^{0}(\mathfrak{g}_{E}) \cong \Omega^{0}(\text{ad} \mathcal{P}_{E})$.  One should think of $\mathscr{G}$ as the group of \emph{symmetries} of the bundle $(E,h)$ which forms, along with the base space $X$, the geometrical background of a Yang-Mills theory.  

\begin{defn}\label{defn:comprexgrpgTRANY}
The complex group of gauge transformations $\mathscr{G}^{\mathbb{C}}$ is the group of automorphisms of the principal bundle associated to the complexification of $(E,h)$.  
\end{defn}  

The general philosophy is that objects related by gauge transformations should be identified, in a suitable sense.  In future sections, we will consider moduli spaces of objects where the natural notion of equivalence is that two objects be related by gauge transformations.  We therefore need to know how the group of gauge transformations acts on connections and curvatures.  Given a covariant derivative or connection $d_{A}$, a gauge transformation $\sigma \in \mathscr{G}$ acts on $d_{A}$ by pullback
\begin{equation}\label{eqn:gaugetransCONN}
\sigma^{*} d_{A} = \sigma^{-1} \circ d_{A} \circ \sigma
\end{equation}
and if $F_{A}$ is the curvature of $A$, $\sigma \in \mathscr{G}$ acts again by pullback
\begin{equation}\label{eqn:gautranyFAA}
\sigma^{*} F_{A} = \sigma^{-1} F_{A} \sigma.
\end{equation}
These actions on connections and curvatures hold also for the complex group of gauge transformations $\mathscr{G}^{\mathbb{C}}$.  

Let $U_{\alpha}$ be a trivializing open chart, and let $\sigma_{\alpha} : U_{\alpha} \to G$ be a local gauge transformation.  If $d + A_{\alpha}$ is the local description of the connection on $U_{\alpha}$, one can show that the action of $\sigma_{\alpha}$ on the connection one-form is
\begin{equation} \label{eqn:conngaugetrans}
\sigma_{\alpha}^{*} A_{\alpha} = \sigma_{\alpha}^{-1} A_{\alpha} \sigma_{\alpha} + \sigma_{\alpha}^{-1} d \sigma_{\alpha}
\end{equation}
while the action on the local connection two-form 
\begin{equation}\label{eqn:curvgaugetranss}
\sigma_{\alpha}^{*} F_{A_{\alpha}} = \sigma_{\alpha}^{-1} F_{A_{\alpha}} \sigma_{\alpha} 
\end{equation}
can be written in terms of the adjoint representation $\text{ad} : G \to GL(\mathfrak{g})$.  Similar statements of course hold for $\mathscr{G}^{\mathbb{C}}$.  Noting the likeness of (\ref{eqn:conngaugetrans}) to (\ref{eqn:localcon}) and (\ref{eqn:curvgaugetranss}) to (\ref{eqn:curvchangtriv}), we conclude that a local connection one-form and curvature two-form transform the same way under a change of trivialization and under a gauge transformation, i.e. a bundle automorphism.  This is analogous to the case of \emph{finite} dimensional Lie groups acting on Euclidean space where one can equivalently transform a vector or transform a coordinate system.

\subsection{Characteristic Classes of Bundles} \label{sec:chercrassbundre}

Given a complex vector bundle $E \to X$, characteristic classes are cohomology classes of $X$ associated to $E$ which describe some (but not all) of the global topological features of the vector bundle.  The content of this section will later be generalized to apply to coherent sheaves.  In practice, we will often want to fix as many topological features of a bundle or sheaf as possible, which we will do by prescribing fixed characteristic classes.  

One standard construction of characteristic classes on complex vector bundles is via Chern-Weil theory.  To sketch the idea, let $E$ be a complex vector bundle of rank $n$ on a smooth manifold $X$ with connection $d_{A}$ and corresponding curvature $F_{A}$.  The \emph{total Chern class} is a de Rham cohomology class on $X$ defined by
\begin{equation}\label{eqn:totCHCRASS}
c(E) = \bigg[ \text{det} \big(1 + \frac{i}{2 \pi} F_{A}\big) \bigg]
\end{equation}
where the brackets denote the cohomology class of the form.  It follows from the Bianchi identity that the form is closed.  The polynomial $\text{det}(1+x)$ is invariant under conjugation of the matrix $x$ which implies that the total Chern class is invariant under gauge transformations of $F_{A}$.  In fact, something much stronger is true.  One can show that $c(E)$ is independent of the choice of connection $d_{A}$, and therefore is a topological invariant of the bundle.  When we say a characteristic class is a topological invariant, we mean that if two vector bundles have different characteristic classes, then they are not isomorphic.  The converse is in general, not true.  For detailed proofs of these claims, see \cite[Section 4.4]{huybrechts_complex_2004}.  

Thanks to the independence of $c(E)$ on the connection, we may choose it to be a Hermitian connection, which necessarily exists.  It follows that the total Chern class defines a \emph{real} cohomology class $c(E) \in H^{*}(X, \mathbb{R})$ on the base space $X$.  Actually, the normalization $i/2 \pi$ is chosen in the Chern-Weil theory such that the cohomology classes are in fact integral.  We will use the notation
\begin{equation}
c(E) = \big( 1,  c_{1}(E), c_{2}(E), \cdots, c_{n}(E) \big)
\end{equation}
where $c_{k}(E) \in H^{2k}(X, \mathbb{Z})$ is the $k$-th Chern class.  The class $c_{k}(E)$ vanishes if $k$ is larger than the rank of $E$.  We can explicitly give the first few Chern classes in terms of the curvature $F_{A}$,
\begin{equation}\label{curvfirstchcrasss}
c_{1}(E) = \frac{i}{2 \pi} \big[ \text{Tr}(F_{A}) \big], \,\,\,\,\,\,\,\,\,\, c_{2}(E) = \frac{1}{8 \pi^{2}} \big[ \text{Tr}( F_{A} \wedge F_{A}) - \text{Tr}(F_{A})^{2}\big].
\end{equation}

\noindent The Chern classes and total Chern class satisfy some nice properties, which we summarize below:

\begin{enumerate}
\item Because the trivial bundle $X \times \mathbb{C}^{n}$ of rank $n$ admits the trivial connection which is flat, it follows from (\ref{eqn:totCHCRASS}) that all Chern classes $c_{k}$ of $X \times \mathbb{C}^{n}$ vanish for $k>0$.    

\item For complex vector bundles $E$ and $E'$, the \emph{Whitney product formula} says that
\begin{equation}
c(E \oplus E') = c(E) \cdot c(E').
\end{equation}
This can be seen from (\ref{eqn:totCHCRASS}) noting that the determinant is multiplicative on direct sums.  

\item It follows directly from the Whitney product formula that the first Chern class is additive on direct sums
\begin{equation}
c_{1}(E \oplus E') = c_{1}(E) + c_{1}(E').  
\end{equation}

\item If $F_{A}$ and $F'_{A'}$ are the curvatures of connections on $E$ and $E'$ respectively, then the curvature of the induced connection on $E \otimes E'$ is $F_{A} \otimes 1 + 1 \otimes F'_{A'}$.  It therefore follows from (\ref{curvfirstchcrasss}) that
\begin{equation}
c_{1}(E \otimes E') = \text{rk}(E')c_{1}(E) + \text{rk}(E) c_{1}(E')
\end{equation}
where $\text{rk}(E)$ and $\text{rk}(E')$ are the ranks of $E$ and $E'$, respectively.  
\end{enumerate}

One drawback to the total Chern class is that it doesn't behave nicely on tensor products.  In many ways, a preferable topological invariant of a complex vector bundle $E$ is the Chern character, defined as follows.  By the splitting principle \cite[Section 4.4]{huybrechts_complex_2004}, in a specific sense we can \emph{formally} decompose a bundle $E$ as a direct sum of line bundles 
\begin{equation}
E = L_{1} \oplus \cdots \oplus L_{n}
\end{equation}
where $n$ is the rank of $E$.  The formal Chern roots of $E$ are given by $x_{1}, \ldots, x_{n}$ where $x_{i} = c_{1}(L_{i})$.  The Chern class $c_{k}(E)$ can be expressed as the $k$-th symmetric function of the formal Chern roots.  

We define the \emph{Chern character} of the bundle $E$ in terms of the formal Chern roots as
\begin{equation}
\text{ch}(E) = \sum_{i=1}^{n} e^{x_{i}} = \big(n, \text{ch}_{1}(E), \text{ch}_{2}(E), \cdots \big)
\end{equation}
where the $k$-th Chern character $\text{ch}_{k}(E) \in H^{2k}(X, \mathbb{Q})$ is a rational cohomology class.  The exponential is defined via Taylor expansion, and the $k$-th Chern character can therefore be written as a polynomial in the lower Chern classes.  For example, we have $\text{ch}_{1}(E) = c_{1}(E)$ and
\begin{equation}
\text{ch}_{2}(E) = \frac{1}{2}\big( c_{1}(E)^{2} - 2c_{2}(E)\big) = - \frac{1}{8 \pi^{2}}\big[ \text{Tr}(F_{A} \wedge F_{A})\big].
\end{equation}
Unlike the Chern classes, $\text{ch}_{k}(E)$ will typically not vanish for $k$ larger than the rank of $E$.  For example, if $L$ is a line bundle, then for all $k \geq 0$, we have $\text{ch}_{k}(L) = \frac{1}{k!}c_{1}(L)^{k}$.  

The following two properties of the Chern character are the main reasons one might prefer it to the total Chern class.  For all vector bundles $E$ and $E'$, we have
\begin{equation}
\text{ch}\big( E \oplus E' \big) = \text{ch}(E) + \text{ch}(E'), \,\,\,\,\,\,\,\,\,\,\,\,\,\,\,\,\,\,\, \text{ch}\big( E \otimes E' \big) = \text{ch}(E) \, \text{ch}(E').
\end{equation}
These in fact show that the Chern character is a ring homomorphism from what is called \emph{topological K-theory} into $H^{*}(X, \mathbb{Q})$.  In the following chapter on coherent sheaves, we will encounter algebraic K-theory, where the Chern character will play a similar role (see Section \ref{subsecc:GgrpHRR}).  

Because the characteristic classes discussed above are topological invariants of the bundle, they are insensitive to the introduction of a bundle metric.  Therefore Chern classes and Chern characters on complex vector bundles are equivalent to those on $U(n)$-bundles.  However, an $SU(n)$-bundle $E$ has not only a Hermitian metric, but also a trivial determinant.  Because $c_{1} (\Lambda^{n}E) = c_{1}(E)$, the first Chern class of $SU(n)$-bundles must vanish.  

We mentioned above that part of the importance of characteristic classes is that by fixing them, one may classify certain topological features of a bundle.  We now give a few results in this direction.  For example, if $L$ is a complex line bundle (a $U(1)$-bundle) on an arbitrary smooth manifold $X$, then $c_{1}(L) \in H^{2}(X, \mathbb{Z})$ is the only non-vanishing Chern class, and one can show the following \cite[Theorem E.5]{freed_instantons_1991}.  

\begin{proppy}
The first Chern class $c_{1}(L)$ completely topologically classifies complex line bundles or $\text{U}(1)$-bundles on any base space $X$.  In other words, two line bundles are isomorphic if and only if their first Chern classes agree.  In addition, each element of $H^{2}(X, \mathbb{Z})$ is realized as the first Chern class of some line bundle on $X$.  
\end{proppy}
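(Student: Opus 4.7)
The strategy is the standard Čech-cohomological classification of line bundles, followed by identification of the classifying class with $c_1$. First I would show that isomorphism classes of complex line bundles on $X$ are in natural bijection with the Čech cohomology group $\check{H}^1(X, \underline{\mathbb{C}^*})$, where $\underline{\mathbb{C}^*}$ is the sheaf of smooth nowhere-vanishing $\mathbb{C}$-valued functions: every line bundle determines transition cocycles $\{g_{\alpha\beta}\} \in \check{H}^1(X,\underline{\mathbb{C}^*})$ as discussed in (\ref{eqn:PBundTrFunc}), and conversely any such cocycle reconstructs a line bundle, with coboundaries corresponding exactly to isomorphisms. The analogous statement for $\mathrm{U}(1)$-bundles uses $\underline{U(1)}$ in place of $\underline{\mathbb{C}^*}$, and the two classifications agree because the inclusion $\underline{U(1)} \hookrightarrow \underline{\mathbb{C}^*}$ induces an isomorphism on $H^1$ (by deformation retraction of $\mathbb{C}^*$ onto $U(1)$, or equivalently because every complex line bundle admits a Hermitian metric).

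Next I would invoke the exponential sheaf sequence of smooth sheaves
\begin{equation*}
0 \longrightarrow \underline{\mathbb{Z}} \longrightarrow \underline{\mathbb{C}} \xrightarrow{\exp(2\pi i \cdot)} \underline{\mathbb{C}^*} \longrightarrow 0.
\end{equation*}
Because $\underline{\mathbb{C}}$ is a fine sheaf (smooth partitions of unity exist on $X$), its higher Čech cohomology vanishes, and the associated long exact sequence collapses to a connecting isomorphism
\begin{equation*}
\delta : \check{H}^1(X, \underline{\mathbb{C}^*}) \xrightarrow{\;\sim\;} \check{H}^2(X, \underline{\mathbb{Z}}) \cong H^2(X,\mathbb{Z}),
\end{equation*}
with the last identification being the standard comparison between sheaf and singular cohomology with constant coefficients. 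This already yields a bijection between isomorphism classes of line bundles and $H^2(X,\mathbb{Z})$, which gives surjectivity (every class in $H^2(X,\mathbb{Z})$ is the image of some line bundle) and, granting the identification of $\delta([L])$ with $c_1(L)$, injectivity of $c_1$ on isomorphism classes.

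The main obstacle is precisely that last identification: matching the Čech class $\delta([L])$ with the Chern-Weil class $c_1(L)$ from (\ref{curvfirstchcrasss}). To handle this, I would choose a good cover $\{U_\alpha\}$ (so all intersections are contractible), pick local logarithms $f_{\alpha\beta} = \tfrac{1}{2\pi i}\log g_{\alpha\beta}$ of the transition functions, and compute $\delta([L])$ explicitly as the integer cocycle $n_{\alpha\beta\gamma} = f_{\beta\gamma} - f_{\alpha\gamma} + f_{\alpha\beta}$. Then I would pick a Hermitian connection with local one-forms $A_\alpha$ satisfying (\ref{eqn:localcon}) and curvature $F_A$, and use a Čech--de Rham bi-complex argument to show that the de Rham class $\bigl[\tfrac{i}{2\pi}F_A\bigr]$ corresponds under the Čech--de Rham isomorphism to the integer class $\{n_{\alpha\beta\gamma}\}$. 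The computation reduces to the identity $\tfrac{1}{2\pi i}(A_\beta - A_\alpha) = d f_{\alpha\beta}$ on $U_{\alpha\beta}$, which feeds through the standard zig-zag in the double complex. Once $\delta([L]) = c_1(L)$ is established, the proposition follows: line bundles $L_1, L_2$ with $c_1(L_1) = c_1(L_2)$ satisfy $\delta([L_1 \otimes L_2^\vee]) = 0$, hence $L_1 \otimes L_2^\vee$ is trivial, hence $L_1 \cong L_2$, and surjectivity of $c_1$ is the surjectivity of $\delta$.
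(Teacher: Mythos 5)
Your argument is correct and is the standard proof; the paper itself does not prove this proposition but simply cites \cite[Theorem E.5]{freed_instantons_1991}, so there is no internal argument to compare against. The Čech classification by $\check{H}^{1}(X,\underline{\mathbb{C}^{*}})$, the collapse of the exponential sequence using fineness of $\underline{\mathbb{C}}$, and the Čech--de Rham zig-zag identifying $\delta([L])$ with $\frac{i}{2\pi}[F_{A}]$ are all sound, including the key local identity $A_{\beta}-A_{\alpha}=d\log g_{\alpha\beta}$ in the abelian case. The only caveat worth flagging is the phrase ``any base space $X$'': your partition-of-unity and good-cover arguments require $X$ to be (at least) a paracompact smooth manifold, which is the standing assumption in this chapter; for arbitrary topological spaces one would instead argue via the classifying space $BU(1)\simeq K(\mathbb{Z},2)$.
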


If $X$ is now a compact, orientable four-dimensional manifold and $E$ is a complex vector bundle on $X$, then the only non-vanishing topological invariants are $c_{1}(E)$ and $\text{ch}_{2}(E)$.  The \emph{topological charge} is typically defined to be the half-integer
\begin{equation}\label{eqn:topcharge4man}
k = \int_{X} \text{ch}_{2}(E) = - \frac{1}{8 \pi^{2}} \int_{X} \big[ \text{Tr} (F_{A} \wedge F_{A}) \big] \in \tfrac{1}{2} \mathbb{Z}.
\end{equation}
If $E$ is an $SU(n)$-bundle, then $c_{1}(E)=0$ and the topological charge is the only invariant.  Moreover, it is in fact an integer.  The following is an important result in the case of $n=2$.

\begin{proppy}{\bfseries \cite[Theorem E.5]{freed_instantons_1991}}
The topology of an $SU(2)$-bundle on a compact, oriented four-dimensional manifold is completely classified by the topological charge $k$.  
\end{proppy}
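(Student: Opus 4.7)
The plan is to reduce the classification to a homotopy-theoretic calculation using $BSU(2)$ and exploit its particularly simple CW structure. The key inputs are: (i) isomorphism classes of principal $SU(2)$-bundles on a paracompact space $X$ are in bijection with $[X, BSU(2)]$ via pullback of the universal bundle; (ii) $BSU(2) \simeq \mathbb{HP}^{\infty}$, which admits a CW decomposition with exactly one cell in each dimension $4n$, $n \geq 0$; and (iii) Hopf's classification theorem: for a connected oriented CW complex $X$ of dimension $n$, the degree map gives $[X, S^n] \cong H^n(X, \mathbb{Z})$.

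First I would note that since the $4$-skeleton of $\mathbb{HP}^{\infty}$ is $\mathbb{HP}^1 \simeq S^4$ and the next cell appears in dimension $8$, cellular approximation implies that the inclusion $S^4 \hookrightarrow \mathbb{HP}^{\infty}$ induces a bijection $[X, S^4] \xrightarrow{\sim} [X, BSU(2)]$ whenever $\dim X \leq 4$. Composing with Hopf's theorem and Poincar\'{e} duality $H^4(X, \mathbb{Z}) \cong \mathbb{Z}$ for $X$ compact, connected and oriented (otherwise argue componentwise), one obtains a natural bijection between isomorphism classes of principal $SU(2)$-bundles on $X$ and $\mathbb{Z}$.

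Second I would identify this integer with the topological charge. Under the classification, a bundle $E$ corresponds to a classifying map $f\colon X \to BSU(2)$, and by naturality $c_2(E) = f^{*}(c_2^{\mathrm{univ}})$, where $c_2^{\mathrm{univ}} \in H^4(BSU(2), \mathbb{Z})$ generates that group. Restricting to the $4$-skeleton yields a generator of $H^4(S^4, \mathbb{Z})$, realized as the second Chern class of the tautological (Hopf) $SU(2)$-bundle over $\mathbb{HP}^1$ of unit charge. Therefore the bijection is given by $E \mapsto \int_X c_2(E)$, which by the Chern-Weil formulae (\ref{curvfirstchcrasss}) and the vanishing $c_1(E) = 0$ coincides, up to sign, with the topological charge $k$ defined in (\ref{eqn:topcharge4man}).

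The main obstacle is not the classification itself, which is largely formal once the CW structure of $\mathbb{HP}^{\infty}$ is in hand, but the final step of matching the homotopy-theoretic integer with the Chern-Weil topological charge: one must carefully track the normalization $i/(2\pi)$, the relation $\mathrm{ch}_2 = -c_2$ when $c_1 = 0$, the orientation of $X$, and the identification of the generator of $H^4(\mathbb{HP}^1, \mathbb{Z})$ with $c_2$ of the Hopf bundle. A parallel route that avoids $BSU(2)$ altogether is pure obstruction theory: since $\pi_i(SU(2)) = 0$ for $i \leq 2$ and $\pi_3(SU(2)) = \mathbb{Z}$, every $SU(2)$-bundle trivializes on the $3$-skeleton, and the single obstruction to trivialization on the $4$-skeleton lives in $H^4(X, \pi_3(SU(2))) \cong \mathbb{Z}$ and is again (up to sign) $c_2(E)$.
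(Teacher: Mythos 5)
The paper does not actually prove this proposition; it is quoted verbatim from Freed--Uhlenbeck with a citation and no argument, so there is no ``paper's proof'' to compare against. Your proposal is a correct and complete proof, and it is essentially the standard one (Freed--Uhlenbeck's own argument in Appendix E runs along the same homotopy-theoretic lines). The only two points worth making explicit: in the cellular-approximation step, surjectivity of $[X,S^4]\to[X,BSU(2)]$ uses that $X$ is $4$-dimensional, but injectivity requires pushing a homotopy, whose domain $X\times I$ is $5$-dimensional, into the $4$-skeleton --- this works precisely because the next cell of $\mathbb{HP}^{\infty}$ sits in dimension $8$, so you should say so rather than leave it implicit. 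And the sign you flag is genuinely there: with the paper's convention (\ref{eqn:topcharge4man}) one has $k=\int_X \mathrm{ch}_2(E)=-\int_X c_2(E)$ since $c_1(E)=0$, so the charge is minus the Hopf--Whitney integer; since $n\mapsto -n$ is a bijection of $\mathbb{Z}$ this does not affect the classification statement. Your alternative obstruction-theoretic route (triviality over the $3$-skeleton from $\pi_{\leq 2}(SU(2))=0$, single obstruction in $H^4(X,\pi_3(SU(2)))\cong\mathbb{Z}$) is equally valid and arguably cleaner, as it avoids invoking the full CW structure of $\mathbb{HP}^{\infty}$.
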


\noindent One should not get the impression that the Chern classes completely classify vector bundles topologically; this fails in general.

\subsection{The Yang-Mills Action Functional}\label{sec:YMACfunnn}
Up to this point, the base space $X$ was not assumed to have any additional structure beyond that of a smooth manifold.  In order to introduce the Yang-Mills functional and the Yang-Mills equations, we need further structure on $X$.  Let $(X,g)$ be an $m$-dimensional, oriented, Riemannian manifold with Riemannian metric $g$.  An action functional will require integrating over spacetime, so we will further assume $X$ is compact.  To provide some perspective, let us briefly describe an analogy in basic Hodge theory.

\subsubsection{Hodge Theory in Riemannian Geometry}

It is well-known that the de Rham cohomology groups of a smooth $m$-dimensional manifold $X$ are isomorphic to the singular cohomology groups $H^{p}(X, \mathbb{R})$ with real coefficients, and are finite-dimensional real vector spaces if $X$ is compact.  If $X$ additionally has a Riemannian metric $g$, then we can introduce the Hodge star operator on differential forms $\star : \Omega^{k}(X) \to \Omega^{m-k}(X)$ defined for all $k=1, \ldots, m$ by 
\begin{equation}
\alpha \wedge \star \beta = \langle \alpha, \beta \rangle \text{dvol}_{g}
\end{equation}
for all $\alpha, \beta \in \Omega^{k}(X)$ where $\langle \alpha, \beta \rangle$ is the natural pairing on $k$-forms producing a smooth function on $X$, and $\text{dvol}_{g}$ is the Riemannian volume element.  The Hodge star is an involution up to a sign: for $\alpha \in \Omega^{k}(X)$, we have $\star^{2} \alpha = (-1)^{k(m-k)}\alpha$.  By integrating over the compact manifold $X$, we get a symmetric positive-definite non-degenerate $L^{2}$-inner product on $k$-forms
\begin{equation}\label{eqn:innprod}
(\alpha, \beta) = \int_{X} \alpha \wedge \star \beta, \,\,\,\,\,\,\,\,\,\, \alpha, \beta \in \Omega^{k}(X).
\end{equation}

Making use of the Hodge star, one typically defines the differential operator lowering the degree of $k$-forms $d^{*} : \Omega^{k}(X) \to \Omega^{k-1}(X)$ by
\begin{equation}
d^{*} = (-1)^{m(k-1) + 1} \star d \star.
\end{equation}

\begin{lemmy}\label{lemmy:adjjjoplema}
The operator $d^{*}$ is adjoint to the exterior derivative $d$ with respect to (\ref{eqn:innprod}) in the sense that $(\alpha, d^{*} \beta) = (d \alpha, \beta)$ for all $\alpha \in \Omega^{k-1}(X)$ and $\beta \in \Omega^{k}(X)$.  
\end{lemmy}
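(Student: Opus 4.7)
The plan is to reduce the claim to a Stokes' theorem argument. I would assume throughout that $X$ is a closed manifold (compact and without boundary), which is the standard Hodge-theoretic setting implicit in the prior inner-product formulas. The strategy is to differentiate the wedge product $\alpha \wedge \star\beta$, integrate, and then rearrange the result into the form $(\alpha, d^*\beta)$ by judiciously inserting a pair of Hodge stars.

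First I would write $(d\alpha, \beta) = \int_X d\alpha \wedge \star\beta$ directly from the definition of the $L^2$-inner product. Applying the graded Leibniz rule to the $(m-1)$-form $\alpha \wedge \star\beta$ yields $d(\alpha \wedge \star\beta) = d\alpha \wedge \star\beta + (-1)^{k-1} \alpha \wedge d\star\beta$, since $\alpha$ is a $(k-1)$-form. Stokes' theorem kills the integral of the left-hand side, so $\int_X d\alpha \wedge \star\beta = (-1)^k \int_X \alpha \wedge d\star\beta$.

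Next I would convert $d\star\beta$ into $\star(d^*\beta)$. Since $d\star\beta$ is an $(m-k+1)$-form, the involution identity $\star^2 = (-1)^{p(m-p)}\mathrm{id}$ on $p$-forms gives $d\star\beta = (-1)^{(m-k+1)(k-1)} \star\star d\star\beta$, and plugging into the definition $d^*\beta = (-1)^{m(k-1)+1} \star d\star\beta$ rewrites this expression as $\star(d^*\beta)$ up to a sign. A short parity check on the combined exponent $m(k-1) + 1 + (m-k+1)(k-1) = (k-1)(2m-k+1) + 1$ — which is congruent to $k$ modulo $2$ since $(k-1)(k+1) \equiv k-1 \pmod{2}$ — shows that $d\star\beta = (-1)^k \star(d^*\beta)$. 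Substituting back gives $(d\alpha, \beta) = (-1)^{2k}(\alpha, d^*\beta) = (\alpha, d^*\beta)$, as required.

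The only genuine obstacle is the sign bookkeeping: the exponent $m(k-1)+1$ in the definition of $d^*$ is engineered precisely so that the parity contributions coming from Leibniz, from Stokes, and from the two applications of $\star\star$ all cancel. Once that arithmetic is pinned down, the rest is formal. This same template — Leibniz plus Stokes plus reinsertion of $\star\star$ — will reappear shortly when the Yang-Mills Euler-Lagrange equations are derived by integrating against endomorphism-valued forms using the twisted derivative $d_A$, so it is worth executing the bookkeeping cleanly in this scalar warm-up.
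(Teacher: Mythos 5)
Your proof is correct and follows essentially the same route as the paper's: apply the Leibniz rule to $d(\alpha\wedge\star\beta)$, kill the exact term by Stokes' theorem on the compact manifold $X$, and reinsert $\star\star$ with the sign $\star^2=(-1)^{p(m-p)}$ to recognize $d^*\beta$. Your sign bookkeeping checks out — the combined exponent is indeed congruent to $k$ modulo $2$, so the two factors of $(-1)^k$ cancel — and you have simply written out the ``simple computation'' the paper leaves to the reader.
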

\begin{proof}
The key is to note that because $d(\alpha \wedge \star \beta)$ is an exact top-form and $X$ is compact, by Stokes theorem we know $\int_{X} d(\alpha \wedge \star \beta) =0$.  The conclusion follows by a simple computation making use of the Leibniz rule as well as the sign in $\star^{2}$.  
\end{proof}

Because the above inner product is positive-definite, for all $k$ we may introduce the following $L^{2}$-norm via the functional $\mathcal{E}: \Omega^{k}(X) \to \mathbb{R}_{\geq 0}$ defined by
\begin{equation} \label{eqn:funcHodgeDeRhTh}
\mathcal{E}(\alpha) = (\alpha, \alpha) = \int_{X} \alpha \wedge \star \alpha.
\end{equation}
We want to study the differential of $\mathcal{E}$, which we denote $d\mathcal{E}$, and interpret as a global section of the cotangent bundle of the infinite-dimensional vector space $\Omega^{k}(X)$.  For all $\alpha \in \Omega^{k}(X)$ and $\zeta_{\alpha} \in T_{\alpha}\Omega^{k}(X) \cong \Omega^{k}(X)$
\begin{equation}
d \mathcal{E}(\alpha, \zeta_{\alpha}) = \frac{d}{d \epsilon} \mathcal{E}(\alpha + \epsilon \zeta_{\alpha})\big|_{\epsilon=0}
\end{equation}
represents the derivative of $\mathcal{E}$ at $\alpha$ in the $\zeta_{\alpha}$ direction.  This is also called a functional derivative or a variational derivative.  A direct computation verifies that
\begin{equation} \label{eqn:harmycond}
d \mathcal{E}(\alpha, \zeta_{\alpha}) = 2(\alpha, \zeta_{\alpha}).
\end{equation}
Given a functional like $\mathcal{E}$, one can study its \emph{critical locus} which is defined by
\begin{equation}
\text{Crit}(\mathcal{E}) \coloneqq \bigg\{ \alpha \in \Omega^{k}(X) \, \bigg| \, d \mathcal{E}(\alpha, \zeta_{\alpha}) =0, \,\, \text{for all} \,\, \zeta_{\alpha} \in T_{\alpha}\Omega^{k}(X)\bigg\}.
\end{equation}
It follows from (\ref{eqn:harmycond}) and the non-degeneracy of the inner product $( \cdot, \cdot)$ that $\text{Crit}(\mathcal{E})= \{ 0 \}$.  In other words, there are no genuine critical points of $\mathcal{E}$.  

However, we can look for critical points corresponding to variations in only certain directions.  Let $\alpha \in \Omega^{k}(X)$ be a closed form representing the cohomology class $[\alpha]$.  We want the condition that $\alpha$ minimizes $\mathcal{E}$ within the class $[\alpha]$.  Because any other representative of $[ \alpha]$ is of the form $\alpha + \epsilon d \beta$ for $\beta \in \Omega^{k-1}(X)$, the condition we want is that $d \mathcal{E}( \alpha, d \beta)=0$ for all $\beta$.  By (\ref{eqn:harmycond}) and Lemma \ref{lemmy:adjjjoplema} we have
\begin{equation}
d \mathcal{E}(\alpha, d \beta) = 2( \alpha, d \beta) = 2(d^{*} \alpha, \beta)
\end{equation}
and in order for this to vanish for all $\beta$, the following \emph{Euler-Lagrange equation} must be satisfied
\begin{equation}\label{eqn:ELeqqn}
d^{*}\alpha =0.
\end{equation}
However, because $\alpha$ defines a cohomology class and is hence closed, the Euler-Lagrange equation (\ref{eqn:ELeqqn}) is equivalent to $\Delta \alpha=0$, where $\Delta = d d^{*} + d^{*} d$ is the Laplacian.  Therefore, given a closed form $\alpha \in \Omega^{k}(X)$, the condition that $\alpha$ minimizes $\mathcal{E}$ within $[\alpha]$ is that it is a harmonic form.  Conversely, the Hodge-de Rham theorem says that solutions to $\Delta \alpha=0$, which are a priori just $L^{2}$-forms, are in fact smooth.  If $\mathcal{H}^{k}(X, g)$ denotes the space of harmonic $k$-forms on $X$, this shows we have the isomorphism of vector spaces
\begin{equation}
H^{k}(X, \mathbb{R}) \cong \mathcal{H}^{k}(X, g)
\end{equation}
which says that every real cohomology class has a unique harmonic representative.  One may hear harmonic forms described as \emph{zero modes} of the Laplacian which by the discussion above, is equivalent to minimizing the functional $\mathcal{E}$ (within a cohomology class).

\subsubsection{Generalization to Yang-Mills Theory}

The reason for reviewing Hodge theory above is that in some sense, Yang-Mills theory provides a beautiful \emph{non-linear} generalization to differential forms on a compact Riemannian manifold valued in an adjoint or endomorphism bundle.  As above, let $(X, g)$ be a smooth compact orientable Riemannian manifold of dimension $m$.  

To begin with some amount of generality, let $G$ be an arbitrary compact Lie group with Lie algebra $\mathfrak{g}$ and $\mathcal{P} \to X$ a principal $G$-bundle.  Of course the adjoint bundle $\text{ad}\mathcal{P}$ is constructed as an associated vector bundle, as we have seen.  We want an analogous inner product to (\ref{eqn:innprod}) on bundle-valued forms $\Omega^{k}(\text{ad}\mathcal{P})$.  Given two sections $\omega, \eta \in \Omega^{k}(\text{ad}\mathcal{P})$ we get an $\text{ad}\mathcal{P}$-valued top-form $\omega \wedge \star \eta \in \Omega^{m}(\text{ad}\mathcal{P})$, noting that the Hodge star applies in the obvious way on the form part, and we interpret the wedge product to be the exterior product on the forms and the composition of the bundle-valued part.  

In order to get a well-defined inner product by integrating $\omega \wedge \star \eta$ over $X$, we clearly must introduce a fiberwise inner product between vectors in the fibers of the adjoint bundle.  Recalling that the fibers of $\text{ad}\mathcal{P}$ are isomorphic to the Lie algebra $\mathfrak{g}$, what we need is an inner product $\langle \cdot , \cdot \rangle_{\mathfrak{g}}$ on $\mathfrak{g}$.  However, given an inner product on the fiber, to get a well-defined associated vector bundle, one must have the inner product invariant with respect to the chosen representation.  For the case at hand, we must have
\begin{equation}
\langle \text{ad}(g) \, a , \text{ad}(g) \, b \rangle_{\mathfrak{g}} = \langle a, b \rangle_{\mathfrak{g}}
\end{equation}
for all $g \in G$ and $a,b \in \mathfrak{g}$.  We refer to such an inner product as \emph{ad-invariant}.  If we ultimately want a positive-definite inner product on sections $\Omega^{k}(\text{ad}\mathcal{P})$, then $\langle \cdot , \cdot \rangle_{\mathfrak{g}}$ must be positive-definite.  It is this condition which requires that $G$ be a compact Lie group.

From here on, we will assume $G$ is one of the compact groups $U(n)$ or $SU(n)$ with positive definite non-degenerate ad-invariant inner product $\langle \cdot , \cdot \rangle_{\mathfrak{g}} = -\text{Tr}_{\mathfrak{g}}(\cdot \,\, \cdot)$, where $\text{Tr}_{\mathfrak{g}}$ denotes the trace in the Lie algebra $\mathfrak{g}$, which we are assuming is either $\mathfrak{u}_{n}$ or $\mathfrak{su}_{n}$.  Recall that $\omega \wedge \star \eta \in \Omega^{m}(\text{ad}\mathcal{P})$ for sections $\omega, \eta \in \Omega^{k}(\text{ad}\mathcal{P})$.  We now want to apply the inner product $\langle \cdot , \cdot \rangle_{\mathfrak{g}}$ fiberwise to get an honest volume form on $X$.  Mildly abusing notation, we will write 
\begin{equation}\label{eqn:traceadeqn}
\langle \omega , \eta \rangle_{\mathfrak{g}} = -\text{Tr}_{\mathfrak{g}}(\omega \wedge \star \eta) \in \Omega^{m}(X)
\end{equation}
to mean that $\langle \cdot , \cdot \rangle_{\mathfrak{g}}$ is applied fiberwise on the bundle-valued parts while the exterior product and Hodge star combine to give a volume form, as above.  We can finally define the symmetric positive-definite inner product
\begin{equation} \label{eqn:forminnprod}
(\omega, \eta) = - \int_{X} \text{Tr}_{\mathfrak{g}}(\omega \wedge \star \eta)
\end{equation}
for $\omega, \eta \in \Omega^{k}(\text{ad}\mathcal{P})$.  Moreover, defining $|\eta|^{2} \coloneqq \langle \eta, \eta \rangle_{\mathfrak{g}}$, we have the following $L^{2}$-norm for all $\eta \in \Omega^{k}(\text{ad}\mathcal{P})$
\begin{equation}
( \eta, \eta) = \int_{X} | \eta |^{2} = - \int_{X} \text{Tr}_{\mathfrak{g}}(\eta \wedge \star \eta).
\end{equation}

The bundle-valued forms which are of the most interest to us are curvature two-forms of $G$-connections on complex vector bundles with metrics.  Let $(E,h)$ be a Hermitian vector bundle with structure group $G$ and $\mathcal{P}_{E}$ the uniquely corresponding principal $U(n)$ or $SU(n)$-bundle.  We have seen that $\mathfrak{g}_{E} \cong \text{ad}\mathcal{P}_{E}$ is a real subbundle of $\text{End}E$ with fiber $\mathfrak{u}_{n}$ or $\mathfrak{su}_{n}$.  Given a $G$-connection $d_{A}$, the curvature is a section $F_{A} \in \Omega^{2}(\mathfrak{g}_{E})$.  In terms of the curvature, we define the Yang-Mills action functional $S_{\text{YM}} : \mathscr{A}_{G}(E,h) \to \mathbb{R}_{\geq 0}$ by
\begin{equation}\label{eqn:TANGMILLSFUNC}
S_{\text{YM}}(d_{A}) = \int_{X} |F_{A}|^{2} = - \int_{X} \text{Tr}_{\mathfrak{g}}(F_{A} \wedge \star F_{A}).
\end{equation}

We will now show that the Yang-Mills functional is invariant under gauge transformations of the connection $d_{A}$.  In physics parlance, we refer to this as the \emph{gauge invariance} of an action functional.  

\begin{proppy}
Given a global gauge transformation $\sigma \in \mathscr{G}$, we have $S_{\text{YM}}(\sigma^{*}d_{A}) = S_{\text{YM}}(d_{A})$ for all $G$-connections $d_{A}$.    
\end{proppy}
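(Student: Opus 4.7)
The plan is to reduce the claim to two well-known facts: the covariant transformation rule of the curvature under gauge transformations, and the ad-invariance of the inner product $\langle\cdot,\cdot\rangle_{\mathfrak{g}}=-\mathrm{Tr}_{\mathfrak{g}}(\cdot\,\,\cdot)$ on $\mathfrak{g}$. Since $S_{\text{YM}}$ is defined pointwise as an integrand before being integrated over $X$, it suffices to prove the integrand itself is invariant under the action of $\sigma\in\mathscr{G}$ at each point of $X$.

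First I would recall from (\ref{eqn:gautranyFAA}) that the curvature transforms as $F_{\sigma^{*}A}=\sigma^{*}F_{A}=\sigma^{-1}F_{A}\sigma$, where this expression is to be read as pointwise conjugation on the $\mathfrak{g}_{E}$-valued part, leaving the two-form part of $F_{A}$ untouched. Since the Hodge star $\star$ acts only on the form part of a bundle-valued differential form, and the gauge transformation $\sigma$ acts only on the endomorphism/fiber part, the two operations commute. Consequently
\begin{equation*}
F_{\sigma^{*}A}\wedge\star F_{\sigma^{*}A}
=(\sigma^{-1}F_{A}\sigma)\wedge\star(\sigma^{-1}F_{A}\sigma)
=\sigma^{-1}\bigl(F_{A}\wedge\star F_{A}\bigr)\sigma,
\end{equation*}
where the wedge on the right is interpreted as exterior product on forms and composition on the endomorphism part, and the inner $\sigma\sigma^{-1}$ cancels.

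Second, I would invoke the ad-invariance of $\langle\cdot,\cdot\rangle_{\mathfrak{g}}$, which for the matrix groups $U(n)$ or $SU(n)$ reduces to the cyclicity of the trace: $\mathrm{Tr}_{\mathfrak{g}}(\sigma^{-1}M\sigma)=\mathrm{Tr}_{\mathfrak{g}}(M)$ for any $M$ in the fibered Lie algebra (applied pointwise and compatibly with the form coefficients). Applying this to $M=F_{A}\wedge\star F_{A}$ (viewed fiberwise as a $\mathfrak{g}_{E}$-valued top form, as in (\ref{eqn:traceadeqn})) yields
\begin{equation*}
\mathrm{Tr}_{\mathfrak{g}}\bigl(F_{\sigma^{*}A}\wedge\star F_{\sigma^{*}A}\bigr)
=\mathrm{Tr}_{\mathfrak{g}}\bigl(F_{A}\wedge\star F_{A}\bigr).
\end{equation*}
Integrating both sides over $X$ and using (\ref{eqn:TANGMILLSFUNC}) gives $S_{\text{YM}}(\sigma^{*}d_{A})=S_{\text{YM}}(d_{A})$, as desired.

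There is essentially no serious obstacle here; the entire content is that the defining data of $S_{\text{YM}}$ were chosen precisely to ensure gauge invariance. The only point requiring a moment of care is the bookkeeping between the form part and the bundle-valued part of $F_{A}$, namely that $\star$ and the pointwise conjugation by $\sigma$ act on orthogonal factors and therefore commute, so that the ad-invariance of $\mathrm{Tr}_{\mathfrak{g}}$ can be applied directly to the combined expression $F_{A}\wedge\star F_{A}$. Once this is noted, the argument is immediate.
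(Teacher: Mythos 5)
Your proof is correct and follows essentially the same route as the paper's: both reduce the claim to the conjugation law $F_{\sigma^{*}A}=\sigma^{-1}F_{A}\sigma$, the fact that $\sigma$ and $\star$ act on different factors of a bundle-valued form, and the ad-invariance of $\langle\cdot,\cdot\rangle_{\mathfrak{g}}$ applied fiberwise. The only cosmetic difference is that the paper re-derives $F_{\sigma^{*}A}=\sigma^{*}F_{A}$ from $\sigma^{*}d_{A}=\sigma^{-1}\circ d_{A}\circ\sigma$ rather than quoting it, which you may take as given.
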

\begin{proof}
Recall from (\ref{eqn:gaugetransCONN}) and (\ref{eqn:gautranyFAA}) that $\sigma$ acts on both $d_{A}$ and $F_{A}$ by pullback.  More specifically, $\sigma^{*}d_{A} = \sigma^{-1} \circ d_{A} \circ \sigma$ and $\sigma^{*} F_{A} = \sigma^{-1} F_{A} \sigma$.  By a direct computation we have
\begin{equation}
F_{\sigma^{*}A} = (\sigma^{*} d_{A}) \circ (\sigma^{*} d_{A}) = \sigma^{-1} \circ (d_{A} \circ d_{A}) \circ \sigma = \sigma^{*} F_{A}.
\end{equation}
Applying this within the Yang-Mills functional, we get
\begin{equation}
S_{\text{YM}}(\sigma^{*} d_{A}) = - \int_{X} \text{Tr}_{\mathfrak{g}}\big( \sigma^{*}F_{A} \wedge \star \sigma^{*}F_{A}\big).
\end{equation}
The gauge transformation has no interaction with the form part or the Hodge star.  Recall that the notation $\text{Tr}_{\mathfrak{g}}\big( \sigma^{*}F_{A} \wedge \star \sigma^{*}F_{A}\big)$ is understood to involve a fiberwise application of the inner product $\langle \cdot , \cdot \rangle_{\mathfrak{g}}$.  But by the local description of gauge transformations as local $G$-valued functions $\sigma_{\alpha} : U_{\alpha} \to G$, we have at all $u \in U_{\alpha}$
\begin{equation}
\big\langle \text{ad}\big( \sigma_{\alpha}(u)\big) \cdot F_{A}(u) \, ,  \, \text{ad}\big( \sigma_{\alpha}(u)\big) \cdot F_{A}(u) \, \big\rangle_{\mathfrak{g}} = \big\langle F_{A}(u) \, , \, F_{A}(u) \big\rangle_{\mathfrak{g}}
\end{equation}
by the ad-invariance of the inner product, where $F_{A}(u) \in \mathfrak{g}$ is the value in the bundle of $F_{A} \in \Omega^{2}(\mathfrak{g}_{E})$ at $u \in U_{\alpha}$.  Therefore, $S_{\text{YM}}(\sigma^{*}d_{A}) = S_{\text{YM}}(d_{A})$.  
\end{proof}

\begin{cory}
The Yang-Mills action functional descends to a functional on the quotient of $\mathscr{A}_{G}(E,h)$ by gauge transformations $S_{\text{YM}} : \mathscr{A}_{G}(E,h)/\mathscr{G} \to \mathbb{R}_{\geq 0}$.  
\end{cory}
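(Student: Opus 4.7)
The plan is to deduce this corollary directly from the gauge invariance established in the preceding proposition. First I would make explicit the equivalence relation on $\mathscr{A}_{G}(E,h)$ under which we form the quotient: declare two $G$-connections $d_{A}$ and $d_{A'}$ equivalent if there exists $\sigma \in \mathscr{G}$ with $d_{A'} = \sigma^{*}d_{A}$. Since $\mathscr{G} \cong \text{Aut}\mathcal{P}_{E}$ is a group acting on $\mathscr{A}_{G}(E,h)$ via the pullback formula (\ref{eqn:gaugetransCONN}), reflexivity, symmetry, and transitivity follow from the identity element, the existence of inverses in $\mathscr{G}$, and the fact that composition of gauge transformations acts as $(\sigma \tau)^{*} = \tau^{*} \circ \sigma^{*}$ on connections. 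This gives the quotient set $\mathscr{A}_{G}(E,h)/\mathscr{G}$.

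Next, I would define the candidate descended functional by the rule
\begin{equation}
\widetilde{S}_{\text{YM}}\bigl([d_{A}]\bigr) \coloneqq S_{\text{YM}}(d_{A}),
\end{equation}
where $[d_{A}]$ denotes the gauge equivalence class of $d_{A}$. The only thing to verify is that this assignment does not depend on the choice of representative: if $d_{A'} = \sigma^{*} d_{A}$ for some $\sigma \in \mathscr{G}$, then by the preceding proposition we have $S_{\text{YM}}(\sigma^{*}d_{A}) = S_{\text{YM}}(d_{A})$, so $\widetilde{S}_{\text{YM}}$ is well defined on equivalence classes. Non-negativity is inherited from the pointwise positivity of the integrand $|F_{A}|^{2} = -\text{Tr}_{\mathfrak{g}}(F_{A} \wedge \star F_{A})$, which follows from the positive-definiteness of the ad-invariant inner product $\langle \cdot , \cdot \rangle_{\mathfrak{g}}$ on $\mathfrak{g}$.

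There is no substantive obstacle here; the corollary is a formal consequence of gauge invariance together with the group structure of $\mathscr{G}$. The only mildly technical point worth noting is that the quotient $\mathscr{A}_{G}(E,h)/\mathscr{G}$ is in general a badly behaved topological space (not Hausdorff, with singularities at reducible connections), but the statement at hand concerns only the existence of $\widetilde{S}_{\text{YM}}$ as a set-theoretic function to $\mathbb{R}_{\geq 0}$, which is insensitive to these issues. Accordingly the entire proof should occupy only a few lines, essentially a single invocation of the preceding proposition.
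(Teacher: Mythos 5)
Your proof is correct and is exactly the argument the paper intends: the corollary is stated without proof precisely because it is the immediate formal consequence of the gauge-invariance proposition $S_{\text{YM}}(\sigma^{*}d_{A}) = S_{\text{YM}}(d_{A})$ that you invoke. Your added remarks on the equivalence relation, non-negativity, and the set-theoretic nature of the quotient are accurate but not needed beyond that single invocation.
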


\subsubsection{The Yang-Mills Equations}

Given a $G$-connection $d_{A}$, we get a covariant derivative $d_{A} : \Omega^{1}(\mathfrak{g}_{E}) \to \Omega^{2}(\mathfrak{g}_{E})$ on the endomorphism bundle.  With respect to the inner product (\ref{eqn:forminnprod}) we define the formal adjoint operator to $d_{A}$ as $d_{A}^{*} : \Omega^{2}(\mathfrak{g}_{E}) \to \Omega^{1}(\mathfrak{g}_{E})$, with explicit form
\begin{equation}\label{eqn:covariantADJ}
d_{A}^{*} = (-1)^{m+1} \star d_{A} \star
\end{equation}
in terms of the Hodge star operator.  One can verify that for all $\eta \in \Omega^{1}(\mathfrak{g}_{E})$ and $\omega \in \Omega^{2}(\mathfrak{g}_{E})$ we have
\begin{equation} \label{eqn:covadj}
(\omega, d_{A} \eta) = (d_{A}^{*} \omega, \eta).
\end{equation}
Just as we did with the functional $\mathcal{E}$ in (\ref{eqn:funcHodgeDeRhTh}), we consider the derivative $dS_{\text{YM}}$ which we interpret as a global section of the cotangent bundle of $\mathscr{A}_{G}(E,h)$.  Given a $G$-connection $d_{A}$ on $E$ and $a \in \Omega^{1}(\mathfrak{g}_{E})$ we have
\begin{equation}
d S_{\text{YM}}(d_{A}, a) \coloneqq \frac{d}{d \epsilon} S_{\text{YM}}(d_{A} + \epsilon a) \big|_{\epsilon =0}
\end{equation}
which we interpret as the derivative of $S_{\text{YM}}$ at $d_{A}$ in the direction of $a$.  We may also call this the functional derivative or variational derivative of $S_{\text{YM}}$.  The \emph{critical locus} of $S_{\text{YM}}$ is defined as
\begin{equation}
\text{Crit}(S_{\text{YM}}) \coloneqq \bigg\{ d_{A} \in \mathscr{A}_{G}(E,h) \, \bigg| \, dS_{\text{YM}}(d_{A}, a) =0 \,\, \text{for all} \,\, a \in \Omega^{1}(\mathfrak{g}_{E})\bigg\}.
\end{equation}  
We interpret a connection lying in $\text{Crit}(S_{\text{YM}})$ to be one which locally minimizes the Yang-Mills functional.  We now want to ask what constraint must a $G$-connection $d_{A}$, or its corresponding curvature $F_{A}$, satisfy if it is to locally minimize the value of the functional?  Recall that equation (\ref{eqn:connaff}) provides the expression for $F_{A + \epsilon a}$, which allows us to compute
\begin{equation} \label{eqn:YMvariation}
\setlength{\jot}{12pt}
\begin{split}
dS_{\text{YM}}(d_{A}, a) = \frac{d}{d \epsilon } S_{\text{YM}}(d_{A} + \epsilon a) \big|_{\epsilon=0} & = - 2 \int_{X} \text{Tr}_{\mathfrak{g}}\big( d_{A}(a) \wedge \star F_{A}\big)\\
& = 2\big(d_{A}(a), F_{A}\big) = 2\big(F_{A}, d_{A}(a)\big) = 2\big(d_{A}^{*}F_{A}, a\big)
\end{split}
\end{equation}
where in the final equality we have used (\ref{eqn:covadj}).  In order for (\ref{eqn:YMvariation}) to vanish for all $a \in \Omega^{1}(\mathfrak{g}_{E})$ we must have $d^{*}_{A}F_{A} =0$, which is called the Euler-Lagrange equation for the Yang-Mills functional.  By (\ref{eqn:covariantADJ}) it is clear that $d_{A}^{*}F_{A}=0$ if and only if $d_{A} \star F_{A}=0$, so the Euler-Lagrange equation can equivalently be written either way.  Together with the Bianchi identity (\ref{eqn:BianchiId}) which always holds, we define the \emph{Yang-Mills equations} to be
\begin{equation}\label{eqn:YMequations}
\boxed{ d_{A} \star F_{A} =0 \,\,\,\,\,\,\,\,\,\,\,\,\,\, d_{A} F_{A}=0.}
\end{equation}  
A $G$-connection $d_{A}$ satisfying (\ref{eqn:YMequations}) is referred to as a \emph{Yang-Mills connection} while $F_{A}$ is referred to as a \emph{Yang-Mills field}.  The Yang-Mills connections correspond exactly to the critical points of $S_{\text{YM}}$,
\begin{equation}
\text{Crit}(S_{\text{YM}}) = \bigg\{ \text{Yang-Mills Connections}\bigg\} \subset \mathscr{A}_{G}(E,h).
\end{equation}

Mathematically, the Yang-Mills equations are non-linear analogs of the conditions for a two-form to be harmonic.  Replacing the covariant derivative $d_{A}$ by the exterior derivative $d$, and replacing the curvature $F_{A}$ by an ordinary two-form $\alpha$, we recover the conditions for $\alpha$ to be a harmonic form.  Moreover, Yang-Mills fields minimize the Yang-Mills functional $S_{\text{YM}}$ while harmonic forms minimize the functional $\mathcal{E}$ (within a fixed cohomology class).

One consequence of the Yang-Mills functional being gauge invariant is that given any Yang-Mills connection $d_{A}$, for all $\sigma \in \mathscr{G}$ the gauge transformed connection $\sigma^{*} d_{A}$ is also Yang-Mills.  Therefore, $\text{Crit}(S_{\text{YM}})$ inherits a well-defined action by the gauge group.  We define the \emph{Yang-Mills moduli space} to be 
\begin{equation}
\mathscr{M}^{(\text{YM})}_{G}(E,h) \coloneqq \text{Crit}(S_{\text{YM}}) / \mathscr{G}.  
\end{equation}
In general, it is infinite dimensional but it has finite dimensional subspaces which are of interest.  This leads us to a discussion of instantons.

\subsection{Instantons on Four-Manifolds}  

In the previous section we saw that for $(X,g)$ a compact, oriented Riemannian manifold and $G$ a compact Lie group, the Yang-Mills functional is the natural functional on the space of $G$-connections on vector bundles over $X$.  However, we did not place any restrictions on the dimension of $X$.  It turns out, that four-dimensional oriented Riemannian manifolds (called four-manifolds) hold a special place in Yang-Mills theory.  It is in such a case where one can study \emph{instantons}, which we will define to be a certain class of Yang-Mills connections.  To an algebraic geometer, the most important examples of four-manifolds are complex K\"{a}hler surfaces and smooth algebraic surfaces.  In fact, one theme of this thesis is that in some of these examples, instantons bridge remarkable connections between differential geometry, algebraic geometry, enumerative geometry, and physics.

Given a four-manifold $(X, g)$, we can consider the Hodge star operator acting on the vector space of two-forms $\Omega^{2}(X)$.  In such a case, applying the operator twice yields the identity $\star^{2} = 1$, and the eigenvalues of $\star$ are therefore $\pm1$.  This induces a decomposition of $\Omega^{2}(X)$ into eigenspaces with respect to $\star$
\begin{equation}\label{eqn:instdecomp}
\Omega^{2}(X) = \Omega^{2}_{+}(X) \oplus \Omega_{-}^{2}(X)
\end{equation}
where $\Omega^{2}_{+}(X)$ is defined as the vector space of \emph{self-dual} (SD) two-forms $\eta$ satisfying $\star \eta=\eta$ and $\Omega_{-}^{2}(X)$ is the space of \emph{anti-self-dual} (ASD) two-forms satisfying $\star \eta = - \eta$.  Given any two-form $\omega$ on $X$, we can uniquely write it as a sum $\omega = \omega_{+} + \omega_{-}$ of a SD and ASD form.  

\begin{lemmy}
The decomposition (\ref{eqn:instdecomp}) of $\Omega^{2}(X)$ is orthogonal with respect to the natural inner product $\langle\, , \,\rangle$ on forms.
\end{lemmy}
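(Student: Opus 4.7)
The plan is to use the general principle that eigenspaces of a symmetric operator corresponding to distinct eigenvalues are orthogonal, applied here to the involution $\star$ on two-forms of a four-manifold. Since the inner product in question is the one defined in (\ref{eqn:innprod}), namely $(\alpha,\beta) = \int_X \alpha \wedge \star\beta$, the claim reduces to a short manipulation using $\star^2 = 1$ on two-forms in dimension four.

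Concretely, I would take $\alpha \in \Omega^2_+(X)$ and $\beta \in \Omega^2_-(X)$, so that $\star\alpha = \alpha$ and $\star\beta = -\beta$. First I would compute directly
\[
(\alpha,\beta) = \int_X \alpha \wedge \star \beta = -\int_X \alpha \wedge \beta.
\]
Then, using the symmetry of the inner product (\ref{eqn:innprod}) together with the fact that on a four-manifold the wedge product of two two-forms is symmetric ($\alpha\wedge\beta = \beta\wedge\alpha$ since $(-1)^{2\cdot 2}=1$), I would compute
\[
(\alpha,\beta) = (\beta,\alpha) = \int_X \beta \wedge \star\alpha = \int_X \beta \wedge \alpha = \int_X \alpha \wedge \beta.
\]
Combining the two expressions gives $(\alpha,\beta) = -(\alpha,\beta)$, hence $(\alpha,\beta)=0$.

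There is no genuine obstacle here; the only thing to be careful about is not to conflate the two uses of symmetry. The first computation uses only the eigenvalue condition $\star\beta = -\beta$, while the second uses the symmetry of $(\cdot,\cdot)$ on real-valued forms combined with the commutativity of the wedge pairing on two-forms in dimension four. An equivalent formulation of the same argument, which I might note in passing, is that $\star$ is self-adjoint with respect to $(\cdot,\cdot)$ (this follows immediately from $\star^2 = 1$ on $\Omega^2$ and the definition of the inner product), and orthogonality of eigenspaces of distinct eigenvalues $\pm 1$ is then automatic. This alternative phrasing makes clear that the decomposition (\ref{eqn:instdecomp}) is precisely the spectral decomposition of the self-adjoint involution $\star$.
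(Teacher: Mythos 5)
Your proof is correct and follows essentially the same route as the paper: apply self/anti-self-duality on each side of the symmetric pairing, use commutativity of the wedge product of two-forms, and conclude from $x=-x$. The only cosmetic difference is that the paper runs the argument pointwise with $\langle\omega_{+},\omega_{-}\rangle\,\mathrm{dvol}_{g}=\omega_{+}\wedge\star\omega_{-}$ (so orthogonality holds fiberwise, before integrating), whereas you work with the integrated $L^{2}$-pairing; your closing remark that this is just the spectral decomposition of the self-adjoint involution $\star$ is a correct and tidy reformulation.
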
  
\begin{proof}
By definition, $\langle \omega_{+}, \omega_{-} \rangle \text{dvol}_{g} = \omega_{+} \wedge \star \omega_{-}$ and the inner product is symmetric,
\begin{equation}
\langle \omega_{+}, \omega_{-} \rangle \text{dvol}_{g}=\langle \omega_{-}, \omega_{+} \rangle \text{dvol}_{g}.
\end{equation}
Using anti-self-duality, the lefthand side simplifies to $-\omega_{+} \wedge \omega_{-}$ and using self-duality the righthand side simplifies to $\omega_{-} \wedge \omega_{+}$.  But since $\omega_{\pm}$ are two-forms, we have $\omega_{-} \wedge \omega_{+} = \omega_{+} \wedge \omega_{-}$, which implies that $\langle \omega_{+}, \omega_{-} \rangle =0$.  
\end{proof}

The above discussion, including the definition of self-duality and anti-self-duality, holds as well for bundle-valued forms.  In particular, let $(E,h)$ be a Hermitian vector bundle with structure group $G$ assumed to be either $U(n)$ or $SU(n)$.  

\begin{defn}
On a four-manifold, we say $F \in \Omega^{2}(\mathfrak{g}_{E})$ is self-dual (SD) if $\star F = F$ and anti-self-dual (ASD) if $\star F = - F$.  We then have the deomposition
\begin{equation}
\Omega^{2}(\mathfrak{g}_{E}) = \Omega^{2}_{+}(\mathfrak{g}_{E}) \oplus \Omega_{-}^{2}(\mathfrak{g}_{E})
\end{equation}
orthogonal with respect to $\langle \cdot , \cdot \rangle_{\mathfrak{g}} = -\text{Tr}_{\mathfrak{g}}( \cdot \wedge \star \, \cdot)$ in (\ref{eqn:traceadeqn}).  Here, $\Omega^{2}_{+}(\mathfrak{g}_{E})$ is the space of SD two-forms valued in $\mathfrak{g}_{E}$ and $\Omega^{2}_{-}(\mathfrak{g}_{E})$ is the space of ASD two-forms valued in $\mathfrak{g}_{E}$.  
\end{defn}

On any Riemannian manifold of dimension $2n$ with $n$ even, the Hodge star operator induces a similar decomposition on the middle-dimensional forms.  Nonetheless, one reason dimension four is distinguished in Yang-Mills theory is because the curvature two-form admits a unique decomposition $F_{A} = F_{A}^{+} + F_{A}^{-}$ into SD and ASD components.  In the case of a four-manifold, we will refer to a connection as SD or ASD if its curvature two-form has that property, as defined above.

Suppose further that $(X,g)$ is a compact four-manifold.  Because $\Omega_{+}^{2}(\mathfrak{g}_{E})$ and $\Omega_{-}^{2}(\mathfrak{g}_{E})$ are orthogonal with respect to $\langle \cdot , \cdot \rangle_{\mathfrak{g}} = - \text{Tr}_{\mathfrak{g}}(\cdot \wedge \star \, \cdot)$ for $\mathfrak{g}$ equal to $\mathfrak{u}_{n}$ or $\mathfrak{su}_{n}$, by (\ref{eqn:TANGMILLSFUNC}) we have the following decomposition of the Yang-Mills functional,
\begin{equation} \label{eqn:YMplusminus}
S_{\text{YM}}(d_{A}) = \int_{X} |F_{A}^{+}|^{2} + |F_{A}^{-}|^{2}
\end{equation}
which is of course non-negative.  Recall from (\ref{eqn:topcharge4man}) that $\text{Tr}_{\mathfrak{g}}(F_{A} \wedge F_{A})$ gives rise to a topological invariant of a compact four-manifold called the topological charge $k$.  We can compute 
\begin{equation}
\setlength{\jot}{12pt}
\begin{split}
\text{Tr}_{\mathfrak{g}}(F_{A} \wedge F_{A}) & = \text{Tr}_{\mathfrak{g}}(F_{A}^{+} \wedge F_{A}^{+}) + \text{Tr}_{\mathfrak{g}}(F_{A}^{-} \wedge F_{A}^{-}) \\
& = \text{Tr}_{\mathfrak{g}}(F_{A}^{+} \wedge \star F_{A}^{+}) - \text{Tr}_{\mathfrak{g}}(F_{A}^{-} \wedge \star F_{A}^{-})
\end{split}
\end{equation}
where the first equality follows from the orthogonality of $\Omega_{+}^{2}(\mathfrak{g}_{E})$ and $\Omega_{-}^{2}(\mathfrak{g}_{E})$ while the second equality is by self-duality and anti-self-duality.  Integrating over $X$, and paying careful attention to minus signs, we therefore get
\begin{equation} \label{eqn:topboundd}
- 8 \pi^{2} k = \int_{X} \text{Tr}_{\mathfrak{g}}(F_{A} \wedge F_{A}) = \int_{X} |F_{A}^{-}|^{2} - |F_{A}^{+}|^{2}.
\end{equation}

Comparing (\ref{eqn:YMplusminus}) and (\ref{eqn:topboundd}) we conclude that for all $G$-connections, $S_{\text{YM}}(d_{A}) \geq |8 \pi^{2} k|$, which is a topological bound on the Yang-Mills functional.  There are two cases to consider:

\begin{enumerate}
\item For $k>0$, we have $S_{\text{YM}}(d_{A}) \geq 8 \pi^{2} k$ with equality if and only if $F_{A}$ is SD, i.e. $F_{A}^{-} =0$.  

\item For $k<0$, we have $S_{\text{YM}}(d_{A}) \geq - 8 \pi^{2} k$ with equality if and only if $F_{A}$ is ASD, i.e $F_{A}^{+} =0$.  
\end{enumerate}

\noindent Recall that the Yang-Mills equations (\ref{eqn:YMequations}) are made up of the Bianchi identity $d_{A} F_{A} =0$, which holds for all connections, as well as $d_{A} \star F_{A} =0$.  In the case of SD or ASD connections, $d_{A} \star F_{A}=0$ is satisfied and clearly follows from the Bianchi identity.  Therefore, on a compact four-manifold the Yang-Mills equations specialize to the \emph{SD or ASD Yang-Mills equations}
\begin{equation}
\star F_{A} = \pm F_{A}.
\end{equation}

\begin{defn}
Let $(X,g)$ be a compact four-manifold and let $(E,h)$ be a Hermitian vector bundle with structure group $G$, assumed to be either $U(n)$ or $SU(n)$.  An instanton\footnote{The name \emph{instanton} comes from physics, and refers to a field configuration which is localized in both time and space.  In other words, the configuration exists only for an ``instant".} is simply a SD or ASD $G$-connection on $E$.  By the discussion above, instantons are examples of Yang-Mills connections which globally minimize the Yang-Mills functional $S_{\text{YM}}$.  
\end{defn}

\noindent By (\ref{eqn:topboundd}), the proof of the following proposition is trivial.  

\begin{proppy}
If $(E,h)$ is a Hermitian vector bundle on a compact four-manifold and if the topological charge $k$ vanishes, then an instanton on $E$ is a flat connection.  
\end{proppy}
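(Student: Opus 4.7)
The plan is to invoke the topological identity (\ref{eqn:topboundd}) together with the non-negativity and orthogonality of the self-dual/anti-self-dual decomposition of the curvature. Recall that for any $G$-connection $d_{A}$ on a Hermitian bundle $(E,h)$ over a compact four-manifold,
\begin{equation*}
-8\pi^{2}k = \int_{X}|F_{A}^{-}|^{2} - |F_{A}^{+}|^{2},
\end{equation*}
and that by hypothesis the connection is an instanton, meaning either $F_{A}^{+}=0$ (the ASD case) or $F_{A}^{-}=0$ (the SD case).

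First I would handle the ASD case. Setting $F_{A}^{+}=0$ in the displayed identity leaves $-8\pi^{2}k = \int_{X}|F_{A}^{-}|^{2}$, and imposing $k=0$ forces $\int_{X}|F_{A}^{-}|^{2}=0$. Since $|F_{A}^{-}|^{2} = \langle F_{A}^{-},F_{A}^{-}\rangle_{\mathfrak{g}}$ is pointwise non-negative (this is where positive-definiteness of the ad-invariant inner product on $\mathfrak{g}$ matters, which is why the assumption that $G$ is compact entered the picture in Section \ref{sec:YMACfunnn}), the integrand must vanish identically, so $F_{A}^{-}=0$. Combined with $F_{A}^{+}=0$ and the decomposition $F_{A}=F_{A}^{+}+F_{A}^{-}$, this gives $F_{A}=0$. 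The SD case is entirely symmetric: setting $F_{A}^{-}=0$ and $k=0$ yields $\int_{X}|F_{A}^{+}|^{2}=0$, hence $F_{A}^{+}=0$, and again $F_{A}=0$.

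Equivalently, one can package both cases at once by noting that the Yang-Mills functional satisfies $S_{\text{YM}}(d_{A}) = \int_{X}|F_{A}^{+}|^{2}+|F_{A}^{-}|^{2} \geq |8\pi^{2}k|$, with equality exactly on instantons. When $k=0$ this forces $S_{\text{YM}}(d_{A})=0$, and positive-definiteness of the norm then yields $F_{A}\equiv 0$.

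There is essentially no obstacle here: the result is a direct consequence of the identity (\ref{eqn:topboundd}), the orthogonal decomposition $\Omega^{2}(\mathfrak{g}_{E}) = \Omega_{+}^{2}(\mathfrak{g}_{E})\oplus \Omega_{-}^{2}(\mathfrak{g}_{E})$, and positive-definiteness of $\langle \cdot,\cdot \rangle_{\mathfrak{g}}$. The only thing worth being careful about is sign bookkeeping in (\ref{eqn:topboundd}), and the (implicit) use of compactness of $X$ to make the integral of $|F_{A}|^{2}$ finite and well-defined so that the pointwise vanishing conclusion is legitimate.
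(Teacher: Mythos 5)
Your proof is correct and follows exactly the route the paper intends: the paper simply remarks that the proposition is trivial by the identity (\ref{eqn:topboundd}), and your argument spells out precisely that computation, using the vanishing of one self-dual component (the instanton hypothesis), the vanishing of $k$, and the positive-definiteness of $\langle \cdot,\cdot\rangle_{\mathfrak{g}}$ to force the other component to vanish pointwise. Nothing is missing.
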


\section{Yang-Mills Theory on K\"{a}hler Manifolds}\label{sec:YMThhKahler}

For a complex or algebraic geometer, the Yang-Mills theory introduced in the previous section primarily becomes of interest as a theory of holomorphic vector bundles over K\"{a}hler manifolds.  Some features of the general theory (for example, the infinite-dimensional affine space of connections) are nicer in the more restrictive setting.  Indeed, we will see that there is a unique connection compatible with the extra structure in such a way that we can translate a space of such connections to a space of holomorphic structures on a Hermitian vector bundle.  We will introduce the Hermitian Yang-Mills equation, which arises in the study of instantons as well as D-branes in string theory.  By the Donaldson-Uhlenbeck-Yau theorem, irreducible connections solving the Hermitian Yang-Mills equation correspond to stable holomorphic vector bundles.  This section is a short prequel in some sense of our study in the following chapter on stability of coherent sheaves.  We will not provide full details, and we will often refer to the literature for proofs.  Some great resources for various parts of this material are \cite{huybrechts_complex_2004,friedman_gauge_1997,donaldson_geometry_1997,lubke_kobayashi-hitchin_1995}.  In fact, much of what follows in this brief survey is modelled on the book \cite{lubke_kobayashi-hitchin_1995}.

\subsection{From Connections to Holomorphic Structures on Bundles}

Let $X$ be a complex manifold, and let $E \to X$ be a complex vector bundle.  It is a standard fact that the complex structure of $X$ induces the decomposition of the exterior derivative
\begin{equation}
d = \partial + \overline{\partial}
\end{equation}
and allows us to define $\Omega^{p,q}(E)$ as the vector space of smooth $(p,q)$-forms valued in $E$.  A holomorphic vector bundle of rank $r$ is typically thought of as a complex manifold $\mathcal{E}$ called the total space, with a surjective holomorphic map $\mathcal{E} \to X$ such that the fiber at each point is a complex vector space of dimension $r$.  Equivalently, it is a complex vector bundle whose transition functions are holomorphic.  As shown in \cite[Theorem 5.1]{atiyah_self-duality_1978}, there is yet another equivalent characterization which will be useful in what follows.  
\begin{defn}\label{defn:defnholstrbundre}
A holomorphic structure on the complex vector bundle $E$ over $X$ is a $\mathbb{C}$-linear map 
\begin{equation}
\overline{\delta} : \Omega^{0}(E) \to \Omega^{0,1}(E)
\end{equation}
such that $\overline{\delta} \circ \overline{\delta} =0$, and the Leibniz rule $\overline{\delta}(f s) = f \overline{\delta}(s) + s \overline{\partial}(f)$ holds for all $f \in C^{\infty}(X)$ and $s \in \Omega^{0}(E)$.  We will typically denote by $\mathcal{E}_{\bar{\delta}}$ a complex vector bundle $E$ with a choice of a holomorphic structure $\overline{\delta}$, and refer to it as a holomorphic bundle.
\end{defn}

We say a holomorphic vector bundle $\mathcal{E}$ is \emph{simple} if $H^{0}(X, \text{End}\mathcal{E})$ is one-dimensional and generated over $\mathbb{C}$ by $\text{id}_{\mathcal{E}}$.  We call $\overline{\delta}$ a simple holomorphic structure if $\mathcal{E}_{\bar{\delta}}$ is simple.  One should think that simple holomorphic vector bundles are those with the minimal number of global holomorphic endomorphisms.  
  
\begin{defn}
We define $\mathscr{H}(E)$ to be the space of holomorphic structures on the underlying complex vector bundle $E$, and $\mathscr{H}_{\text{simp}}(E)$ to be the space of simple holomorphic structures on $E$.  
\end{defn}  

Recall from Definition \ref{defn:comprexgrpgTRANY} the complex group of gauge transformations $\mathscr{G}^{\mathbb{C}}$, which acts on $\mathscr{H}(E)$ from the right as follows.  For all $\overline{\delta} \in \mathscr{H}(E)$ and $\sigma \in \mathscr{G}^{\mathbb{C}}$ we define
\begin{equation}
\overline{\delta} \cdot \sigma \coloneqq \sigma^{-1} \circ \overline{\delta} \circ \sigma. 
\end{equation}
Indeed, it is clear that if $\overline{\delta} \circ \overline{\delta} =0$, then $(\overline{\delta} \cdot \sigma) \circ (\overline{\delta} \cdot \sigma)=0$ as well.  The complex gauge transformations also act in a well-defined way on $\mathscr{H}_{\text{simp}}(E)$.  We want to identify holomorphic structures related by gauge transformations, as in the following definition.  

\begin{defn}
Two holomorphic structures $\overline{\delta}_{1}, \overline{\delta}_{2} \in \mathscr{H}(E)$ are said to be isomorphic if there exists $\sigma \in \mathscr{G}^{\mathbb{C}}$ such that $\overline{\delta}_{1} = \overline{\delta}_{2} \cdot \sigma$.  Equivalently, we will say that $\mathcal{E}_{\bar{\delta}_{1}}$ and $\mathcal{E}_{\bar{\delta}_{2}}$ are isomorphic holomorphic vector bundles.  We make the same definition for simple holomorphic structures.  
\end{defn}
  
Taking the quotient by the right action of $\mathscr{G}^{\mathbb{C}}$ on $\mathscr{H}_{\text{simp}}(E)$, we get the moduli space of isomorphism classes of simple holomorphic structures on $E$, which we denote
\begin{equation}\label{eqn:SImpHolMOD}
\mathcal{M}_{\text{simp}}(E) \coloneqq \mathscr{H}_{\text{simp}}(E) \big/ \mathscr{G}^{\mathbb{C}}.
\end{equation}
This has the structure of a complex analytic space, though it is generally non-reduced and non-Hausforff \cite[Lemma 4.3.5]{lubke_kobayashi-hitchin_1995}.  We do not want non-separated points in a moduli space, which is part of the reason why in the next section we will restrict attention to the stable locus in $\mathscr{H}_{\text{simp}}(E)$.  

Having laid some foundations on holomorphic structures on bundles, we will now see that a Hermitian connection $d_{A}$ on a Hermitian vector bundle $(E,h)$ determines a holomorphic structure if an integrability condition is satisfied.  The complex structure on $X$ induces a decomposition on the connection
\begin{equation}
d_{A} = \partial_{A} + \overline{\partial}_{A},
\end{equation}
such that $\partial_{A} : \Omega^{0}(E) \to \Omega^{1,0}(E)$ and $\overline{\partial}_{A} : \Omega^{0}(E) \to \Omega^{0,1}(E)$ are the holomorphic and anti-holomorphic components.  There is the corresponding splitting of the curvature two-form
\begin{equation}
F_{A} = F_{A}^{2,0} + F_{A}^{1,1} + F_{A}^{0,2} \in \Omega^{2,0}(\mathfrak{g}_{E}) \oplus \Omega^{1,1}(\mathfrak{g}_{E}) \oplus \Omega^{0,2}(\mathfrak{g}_{E})
\end{equation}
where $F_{A}^{2,0} = \partial_{A} \circ \partial_{A}$ and $F_{A}^{0,2} = \overline{\partial}_{A} \circ \overline{\partial}_{A}$.  

\begin{defn}
We say the connection $d_{A}$ is integrable if $F_{A} \in \Omega^{1,1}(\mathfrak{g}_{E})$.  In addition, we say the connection is irreducible if the kernel of the induced connection on $\text{End}E$ is one-dimensional and generated over $\mathbb{R}$ by $i \cdot \text{id}_{E}$.    
\end{defn}

\begin{defn}
On a Hermitian vector bundle $(E, h)$ we denote the infinite-dimensional affine space of Hermitian connections by $\mathscr{A}(E,h)$.  The spaces of integrable and irreducible Hermitian connections on $(E,h)$ are denoted by $\mathscr{A}_{\text{int}}(E,h)$ and $\mathscr{A}^{*}(E, h)$, respectively.  
\end{defn}

One should think of $\mathscr{A}_{\text{int}}(E,h)$ as an infinite-dimensional \emph{non-affine} space cut out of $\mathscr{A}(E,h)$ by the conditions $\partial_{A} \circ \partial_{A}=0$ and $\overline{\partial}_{A} \circ \overline{\partial}_{A}=0$.  

If $d_{A}$ is an integrable connection then by Definition \ref{defn:defnholstrbundre}, $\overline{\partial}_{A}$ is a holomorphic structure and $\mathcal{E}_{\overline{\partial}_{A}}$ is a holomorphic vector bundle.  The following theorem establishes the converse to this observation.  A proof can be found in \cite[Proposition 4.2.14]{huybrechts_complex_2004}.  

\begin{thm}
Let $(E, h)$ be a Hermitian vector bundle with holomorphic structure $\overline{\delta}$.  There is a unique integrable Hermitian connection $d_{A}$ on $(E,h)$, called the Chern connection, such that $\overline{\partial}_{A} = \overline{\delta}$.  Equivalently, we have a bijection 
\begin{equation}\label{eqn:corrintconnholstr}
\begin{split}
& \Psi : \mathscr{A}_{\text{int}}(E,h) \longrightarrow \mathscr{H}(E) \\
& \,\,\,\,\,\,\,\,\,\,\,\,\,\,\,\,\,\,\,\,\,\,\,\,\,\,\,\, d_{A} \longmapsto \overline{\partial}_{A}
\end{split}
\end{equation}
between integrable Hermitian connections and holomorphic structures.  
\end{thm}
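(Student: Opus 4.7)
The plan is to prove uniqueness and existence separately; together they yield the bijection $\Psi$. The key observation is that the Hermitian compatibility condition (\ref{eqn:Gconnconstr}), once decomposed by $(p,q)$-type, couples $\partial_{A}$ to $\overline{\partial}_{A}$ and forces a single formula for the $(1,0)$-piece of the connection. I would then read off integrability by a local computation in a holomorphic frame.

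For uniqueness, suppose $d_{A} = \partial_{A} + \overline{\partial}_{A}$ is a Hermitian connection with $\overline{\partial}_{A} = \overline{\delta}$. Since $h$ is $\mathbb{C}$-antilinear in its second slot, $h(s, \overline{\delta} t) \in \Omega^{1,0}(X)$ while $h(s, \partial_{A} t) \in \Omega^{0,1}(X)$, so matching types on both sides of
\[ dh(s,t) = \partial h(s,t) + \overline{\partial}h(s,t) = h(d_{A} s, t) + h(s, d_{A} t) \]
forces $\partial h(s,t) = h(\partial_{A} s, t) + h(s, \overline{\delta} t)$. Fiberwise non-degeneracy of $h$ then determines $\partial_{A}$, and hence $d_{A}$, uniquely.

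For existence, I would take this relation as the \emph{definition} of $\partial_{A}$: for each $s$, $\partial_{A} s$ is the unique $E$-valued $(1,0)$-form making the identity hold for every $t$. A routine check shows $\partial_{A}$ is $\mathbb{C}$-linear and obeys the Leibniz rule, so $d_{A} \coloneqq \partial_{A} + \overline{\delta}$ is a connection on $E$; the $(0,1)$-type compatibility is then automatic from the $(1,0)$-type by conjugate symmetry of $h$. Integrability requires $F_{A} \in \Omega^{1,1}(\mathfrak{g}_{E})$, i.e.\ $F_{A}^{2,0} = 0$ and $F_{A}^{0,2} = 0$. The vanishing $F_{A}^{0,2} = \overline{\delta} \circ \overline{\delta} = 0$ is built into the definition of a holomorphic structure. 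For $F_{A}^{2,0} = \partial_{A} \circ \partial_{A} = 0$, I would pass to a local holomorphic frame in which $\overline{\delta}$ is ordinary $\overline{\partial}$ on components and $h$ is encoded by a Hermitian matrix $H$; the defining relation then forces the connection matrix $A^{1,0} = H^{-1} \partial H$, and expanding $\partial H^{-1} = -H^{-1}(\partial H) H^{-1}$ gives
\[ \partial(H^{-1}\partial H) + (H^{-1}\partial H) \wedge (H^{-1}\partial H) = 0, \]
so $F_{A}^{2,0}=0$, as required.

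The bijection $\Psi$ of (\ref{eqn:corrintconnholstr}) is then immediate: surjectivity from existence, injectivity from uniqueness, with inverse sending $\overline{\delta}$ to its Chern connection. The step I expect to require the most care is verifying $F_{A}^{2,0} = 0$: although a one-line local calculation, it is the only place where the holomorphic structure, the metric, and the integrability condition genuinely interact, and the global statement requires checking that the local prescription $A^{1,0} = H^{-1}\partial H$ transforms correctly under change of holomorphic frame (which it does, precisely because both $H$ and $\partial H$ transform by the holomorphic transition functions of $\mathcal{E}_{\overline{\delta}}$).
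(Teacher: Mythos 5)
The paper does not prove this theorem itself; it defers to the literature (citing Proposition 4.2.14 of Huybrechts, \emph{Complex Geometry}). Your argument is correct and is essentially the standard proof given there: type-decomposing the metric compatibility to isolate $\partial h(s,t) = h(\partial_{A}s, t) + h(s, \overline{\delta}t)$ pins down $\partial_{A}$ by non-degeneracy of $h$, the local formula $A^{1,0} = H^{-1}\partial H$ in a holomorphic frame gives $F_{A}^{2,0} = 0$ via $\partial H^{-1} = -H^{-1}(\partial H)H^{-1}$, and $F_{A}^{0,2} = \overline{\delta}\circ\overline{\delta} = 0$ is built into Definition \ref{defn:defnholstrbundre}. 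Your closing remarks correctly identify the two points that actually need checking — the frame-independence of the local prescription and the $(2,0)$-curvature cancellation — so nothing is missing.
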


\noindent This theorem represents the preliminary link between spaces of connections (of interest to differential geometers and physicists) and spaces of holomorphic bundles (of interest to complex and algebraic geometers).  

As we defined in (\ref{eqn:gaugetransCONN}), there is an action of the (uncomplexified) group of gauge transformations $\mathscr{G}$ on $\mathscr{A}(E, h)$, which extends to $\mathscr{A}_{\text{int}}(E,h)$ and $\mathscr{A}^{*}(E,h)$.  We want to quotient $\mathscr{A}_{\text{int}}(E,h)$ by the action of $\mathscr{G}$, and quotient $\mathscr{H}(E)$ by $\mathscr{G}^{\mathbb{C}}$.  But because $\mathscr{G} \subset \mathscr{G}^{\mathbb{C}}$, to have any hope of (\ref{eqn:corrintconnholstr}) inducing an isomorphism of moduli spaces, there must be additional conditions on the connections.  It turns out that we must impose irreducibility, as well as the Hermitian Yang-Mills equation.  Moreoever, we will be primarily interested in simple holomorphic bundles, as these are more amenable to parameterization in a moduli problem.  The inverse image of $\mathscr{H}_{\text{simp}}(E)$ under $\Psi$ is contained in $\mathscr{A}_{\text{int}}(E,h) \cap \mathscr{A}^{*}(E,h)$.  Equivalently, the Chern connection associated to a simple holomorphic structure is irreducible.  The converse is not true, but it will be true for irreducible connections solving the Hermitian Yang-Mills equation, to which we now turn.

\subsection{The Hermitian Yang-Mills Equation and Stability of Bundles}

We have now seen that integrable Hermitian connections on Hermitian vector bundles over complex manifolds are equivalent to holomorphic structures on the bundle.  Moreover, connections related by gauge transformations precisely define isomorphic holomorphic structures.  However, just as with Yang-Mills connections and instantons, we do not want to consider all integrable Hermitian connections, but only those satisfying a condition known as the \emph{Hermitian Yang-Mills equation}.  

Let us specialize to the case where $(X,J)$ is a compact K\"{a}hler manifold of dimension $n$ with K\"{a}hler form $J$.  Let $(E,h)$ be a Hermitian vector bundle on $X$, and let $\mathcal{E}$ be the holomorphic bundle corresponding uniquely to Chern connection $d_{A}$ on $(E,h)$ via (\ref{eqn:corrintconnholstr}).  Noting that $F_{A} \in \Omega^{1,1}(\mathfrak{g}_{E})$ and $J \in \Omega^{1,1}(X)$, the Hermitian Yang-Mills equation is a constraint relating these two $(1,1)$-forms on $X$.

\begin{defn}
The connection $d_{A}$ described above is called a Hermitian Yang-Mills connection if
\begin{equation}\label{eqn:HermEin}
i \, F_{A} \wedge J^{n-1} = \frac{\lambda}{n} \, J^{n} \cdot \text{id}_{E}
\end{equation}
for some constant $\lambda \in \mathbb{R}$.  This condition is called the Hermitian Yang-Mills equation.  
\end{defn}

\noindent This definition is often given instead for the metric $h$, and is called a Hermitian-Einstein metric if the Hermitian Yang-Mills equation is satisfied.  
  
\begin{defn}
Given a holomorphic vector bundle $\mathcal{E}$ on $X$ with rank $\text{rk}(\mathcal{E})$, we define the slope of $\mathcal{E}$ with respect to the K\"{a}hler form $J$ as
\begin{equation} \label{eqn:slopevect}
\mu(\mathcal{E}) = \frac{1}{\text{rk}(\mathcal{E})} \int_{X} c_{1}(\mathcal{E}) \wedge J^{n-1}.
\end{equation}
In addition, we say that $\mathcal{E}$ is a $\mu$-stable (or slope stable) vector bundle with respect to $J$ if for all subbundles $\mathcal{F} \hookrightarrow \mathcal{E}$ with $\text{rk}(\mathcal{F}) < \text{rk}(\mathcal{E})$, we have $\mu(\mathcal{F}) < \mu(\mathcal{E})$.  
\end{defn}

We can use the slope to give an explicit expression for the constant $\lambda$.  Both sides of (\ref{eqn:HermEin}) are $(n,n)$-forms valued in $\mathfrak{g}_{E}$, so let us take the trace of both sides and integrate over the compact manifold $X$.  Noting that $i \text{Tr}(F_{A}) = 2 \pi c_{1}(\mathcal{E})$ as well as $\int_{X} J^{n} = n! \cdot \text{Vol}_{J}(X)$, after standard simplification we find 
\begin{equation}
\lambda =  \frac{2 \pi \, \mu(\mathcal{E})}{(n-1)! \, \text{Vol}_{J}(X)}.
\end{equation} 

Let us define $\mathscr{A}^{\text{HYM}}_{J}(E,h)$ to be the space of \emph{irreducible} Hermitian Yang-Mills connections on a Hermitian vector bundle $(E,h)$ over a compact K\"{a}hler manifold $(X, J)$.  Recall that there is not a well-defined map from $\mathscr{A}_{\text{int}}(E,h) \cap \mathscr{A}^{*}(E,h)$ to $\mathscr{H}_{\text{simp}}(E)$ because an irreducible integrable connection might not give rise to a simple holomorphic bundle.  However, if the connection is additionally Hermitian Yang-Mills, then we do get a map \cite[Corollary 2.3.4]{lubke_kobayashi-hitchin_1995}
\begin{equation}\label{eqn:HYMsimpmap}
\mathscr{A}^{\text{HYM}}_{J}(E,h) \longrightarrow \mathscr{H}_{\text{simp}}(E)  
\end{equation}
which is equivariant \cite[Remark 2.1.9]{lubke_kobayashi-hitchin_1995} with respect to the action of $\mathscr{G}$ on $\mathscr{A}^{\text{HYM}}_{J}(E,h)$ and $\mathscr{G}^{\mathbb{C}}$ on $\mathscr{H}_{\text{simp}}(E)$.  Recall from (\ref{eqn:SImpHolMOD}) that we denote by $\mathcal{M}_{\text{simp}}(E)$ the moduli space of simple holomorphic structures on $E$ up to isomorphism.  We define the moduli space of Hermitian Yang-Mills connections 
\begin{equation}
\mathcal{M}^{\text{HYM}}_{J}(E, h) \coloneqq \mathscr{A}^{\text{HYM}}_{J}(E, h) \big/\mathscr{G}
\end{equation}
which carries the structure of a complex analytic space.  The map (\ref{eqn:HYMsimpmap}) then induces an open embedding 
\begin{equation}
\mathcal{M}^{\text{HYM}}_{J}(E, h) \longrightarrow \mathcal{M}_{\text{simp}}(E).  
\end{equation}

The Donaldson-Uhlenbeck-Yau theorem says that the image of the above injective map is precisely the locus of $\mu$-stable holomorphic bundles.  

\begin{thm}[\bfseries Donaldson-Uhlenbeck-Yau]
Let $(X, J)$ be a compact K\"{a}hler manifold, and let $(E,h)$ be a Hermitian vector bundle on $X$.  If $\mathcal{M}_{J}^{s}(E)$ denotes the coarse moduli space\footnote{In the following Chapter (Section \ref{subsec:RevModProbb}) we will give a proper treatment of moduli problems, and in particular discuss fine and coarse moduli spaces.} of holomorphic structures on $E$ up to isomorphism, stable with respect to $J$, then we have the complex analytic isomorphism 
\begin{equation}
\mathcal{M}^{\text{HYM}}_{J}(E, h) \cong \mathcal{M}_{J}^{s}(E).
\end{equation}
In particular, a holomorphic structure on $(E,h)$ is $\mu$-stable with respect to $J$ if and only if the unique Chern connection is irreducible and Hermitian Yang-Mills.    
\end{thm}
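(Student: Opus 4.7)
The plan is to prove the pointwise equivalence ``$\mathcal{E}_{\overline{\delta}}$ is $\mu$-stable $\Longleftrightarrow$ the Chern connection of $\overline{\delta}$ is irreducible and Hermitian Yang-Mills'', and then combine this with the open embedding of (\ref{eqn:HYMsimpmap}) to obtain the stated complex analytic isomorphism of moduli spaces. The two implications are of vastly different difficulty: HYM $+$ irreducible $\Rightarrow$ $\mu$-stable is a Chern--Weil integration-by-parts argument going back to Kobayashi and L\"{u}bke, whereas $\mu$-stable $\Rightarrow$ existence of an HYM Chern connection is the analytically deep half of the theorem, due to Donaldson for projective surfaces and Uhlenbeck--Yau for arbitrary compact K\"{a}hler manifolds.

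For the easy direction, fix a holomorphic structure $\overline{\delta}$ on $(E,h)$ whose Chern connection $d_{A}$ is irreducible and solves (\ref{eqn:HermEin}), and let $\mathcal{F} \hookrightarrow \mathcal{E}_{\overline{\delta}}$ be any proper coherent subsheaf. Away from the singular set of $\mathcal{F}$, the smooth orthogonal projection $\pi \in \Omega^{0}(\text{End}E)$ onto $\mathcal{F}$ satisfies a Chern--Weil identity of the schematic form
\begin{equation}
\mu(\mathcal{F}) = \mu(\mathcal{E}) - \frac{1}{c\, \text{rk}(\mathcal{F})} \, \big\| \overline{\partial}_{A} \pi \big\|_{L^{2}}^{2}
\end{equation}
for a positive constant $c$, obtained by substituting (\ref{eqn:HermEin}) into the standard Chern--Weil formula for $c_{1}(\mathcal{F})$ pulled back via $\pi$. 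Hence $\mu(\mathcal{F}) \leq \mu(\mathcal{E})$ with equality iff $\overline{\partial}_{A}\pi = 0$; in that case $\pi$ is $d_{A}$-parallel and splits $\mathcal{E}_{\overline{\delta}}$ as a direct sum of holomorphic subbundles, contradicting irreducibility of $d_{A}$. Thus the inequality is strict and $\mathcal{E}_{\overline{\delta}}$ is $\mu$-stable.

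For the hard direction I would follow the Uhlenbeck--Yau continuity method. Given a $\mu$-stable $\overline{\delta}$ with reference metric $h$, consider the family of perturbed Hermitian--Einstein equations
\begin{equation}
i \, \Lambda_{J} F_{A(h_{\epsilon})} + \epsilon \log \big( h_{\epsilon} h^{-1} \big) = c \cdot \text{id}_{E}, \qquad \epsilon \in (0, 1],
\end{equation}
where $A(h_{\epsilon})$ is the Chern connection of the new metric $h_{\epsilon}$ and $c$ is fixed by tracing. Standard elliptic theory plus a continuity argument produce a solution $h_{\epsilon}$ for every $\epsilon > 0$. The main obstacle of the entire theorem is the limit $\epsilon \to 0$: one must establish a uniform $C^{0}$ bound on $\log(h_{\epsilon} h^{-1})$. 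Uhlenbeck--Yau's crucial insight is that if no such bound holds, a suitable rescaling of $\log(h_{\epsilon} h^{-1})$ converges weakly to a nonzero self-adjoint endomorphism whose eigen-projections define a weakly holomorphic subbundle; their regularity theorem then upgrades this to a genuine coherent subsheaf $\mathcal{F} \subsetneq \mathcal{E}_{\overline{\delta}}$ with $\mu(\mathcal{F}) \geq \mu(\mathcal{E})$, contradicting $\mu$-stability. Irreducibility of the resulting HYM connection is automatic, since a nontrivial parallel endomorphism would decompose $\mathcal{E}$ into HYM summands, each a proper subbundle of slope $\mu(\mathcal{E})$, again violating stability.

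Finally, the gauge-equivariant map of (\ref{eqn:HYMsimpmap}) descends to an open embedding $\mathcal{M}^{\text{HYM}}_{J}(E,h) \hookrightarrow \mathcal{M}_{\text{simp}}(E)$, and the two implications above identify its image set-theoretically with the stable locus $\mathcal{M}^{s}_{J}(E)$. The complex analytic structures match because both sides are locally cut out by the same elliptic system: the Kuranishi slice for deformations of $\overline{\partial}_{A}$ coincides with the linearization of $i\Lambda_{J} F_{A} = (\lambda/n)\cdot \text{id}_{E}$ around any HYM solution. This upgrades the bijection to the desired complex analytic isomorphism.
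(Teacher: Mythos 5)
The paper does not prove this theorem at all: it states it and immediately defers to the literature, citing Narasimhan--Seshadri for curves, Donaldson for surfaces, and Uhlenbeck--Yau for general compact K\"{a}hler manifolds, so there is no in-paper argument to compare yours against. Your sketch accurately reproduces the standard proof strategy, and the easy direction (irreducible HYM $\Rightarrow$ $\mu$-stable) is essentially complete as written: the Chern--Weil identity for the second fundamental form of a saturated subsheaf, combined with the Hermitian Yang--Mills equation, does give $\mu(\mathcal{F}) \leq \mu(\mathcal{E})$ with equality only when the orthogonal projection is parallel, which irreducibility forbids. (One small caveat: you need to work with the saturation of $\mathcal{F}$ and justify that the $L^{2}$-norm of $\overline{\partial}_{A}\pi$ is finite despite the singular set of $\mathcal{F}$; this is the Kobayashi--L\"{u}bke regularization and is standard but not free.)

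For the hard direction, however, your write-up is an outline rather than a proof: the two steps you invoke --- the uniform $C^{0}$ bound on $\log(h_{\epsilon}h^{-1})$ as $\epsilon \to 0$, and the regularity theorem promoting a weakly holomorphic subbundle (an $L^{2}_{1}$ projection-valued endomorphism $\pi$ with $(1-\pi)\overline{\partial}\pi = 0$ a.e.) to a genuine coherent destabilizing subsheaf --- constitute essentially the entire analytic content of Uhlenbeck--Yau, and the second of these is notoriously delicate (the argument in the original paper required later completion). Citing them is perfectly reasonable in a thesis of this kind, but you should be explicit that you are quoting them rather than proving them. Similarly, your final sentence identifying the complex analytic structures because ``both sides are locally cut out by the same elliptic system'' is the right idea but is asserted rather than argued; the careful matching of Kuranishi models on the gauge-theoretic and holomorphic sides is carried out in L\"{u}bke--Teleman's treatment of the Kobayashi--Hitchin correspondence, which is the reference the paper itself leans on for the moduli-space statement.
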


This theorem was proven initially by Narasimhan-Seshadri \cite{narasimhan_stable_1965} in the case of a curve, and then by Donaldson \cite{donaldson_anti_1985} for compact K\"{a}hler surfaces.  Finally, Uhlenbeck and Yau \cite{uhlenbeck_existence_1986} proved the theorem for K\"{a}hler manifolds of arbitrary dimension.  

In the next chapter we will see that stability of vector bundles is really a concept belonging to algebraic geometry -- the stability conditions we will define will specialize consistently to $\mu$-stability defined above.  Therefore, the Donaldson-Uhlenbeck-Yau theorem is a relationship between differential geometry and physics (special connections on Hermitian bundles) and algebraic geometry (stable holomorphic bundles).  We will also see in Section \ref{sec:Dbranesstabstrth} that this relationship arises when studying D-branes in string theory.  

Let us close this chapter by briefly discussing a few important examples.

\begin{Ex}[\bfseries Narasimhan-Seshadri]
Let $C$ be a smooth projective curve with K\"{a}hler form $J$.  In this case, the Hermitian Yang-Mills equation reads
\begin{equation}
\frac{i}{2 \pi} F_{A} = \frac{\mu(\mathcal{E})}{\text{Vol}_{J}(C)} J \cdot \text{id}_{E}.  
\end{equation}
We refer to this as the condition that the connection $d_{A}$ be projectively flat -- the curvature does not vanish, but up to scale it is $J \cdot \text{id}_{E}$.  Therefore, in this case the Donaldson-Uhlenbeck-Yau theorem says that for all Hermitian metrics $h$, a holomorphic structure on $(E,h)$ is stable if and only if the resulting Chern connection is irreducible and projectively flat.  This is the Narasimhan-Seshadri theorem \cite{narasimhan_stable_1965}.  In particular, the moduli space of stable holomorphic bundles with vanishing first Chern class on a curve is isomorphic to the moduli space of irreducible flat Hermitian connections.   
\end{Ex}

\begin{Ex}[\bfseries Instantons on K\"{a}hler Surfaces]
We saw in (\ref{eqn:instdecomp}) that on a Riemannian four-manifold $X$, we have a decomposition of two-forms
\begin{equation}\label{eqn:instdecomp222}
\Omega^{2}(X) = \Omega^{2}_{+}(X) \oplus \Omega_{-}^{2}(X)
\end{equation}
into self-dual and anti-self-dual (ASD) forms via the Hodge star operator.  Letting $(X,J)$ be a smooth compact K\"{a}hler surface, we furthermore have that $J \in \Omega^{2}_{+}(X)$, while the orthogonal compliment to $J$ lies in $\Omega^{2}_{-}(X)$.  If we now let $(E,h)$ be a Hermitian vector bundle on $X$ with $c_{1}(E)=0$, and let $d_{A}$ be a Hermitian Yang-Mills connection, we have
\begin{equation}
F_{A} \wedge J =0.
\end{equation}
But the curvature $F_{A}$ is a $(1,1)$-form (valued in $\mathfrak{g}_{E}$) and it is orthogonal to $J$ by the Hermitian Yang-Mills equation above.  Therefore, $F_{A} \in \Omega^{2}_{-}(X)$ which is the statement that a Hermitian Yang-Mills connection on a bundle with vanishing first Chern class is an instanton; more specifically, an ASD connection.  Conversely, an ASD connection on a bundle over a K\"{a}hler surface is Hermitian Yang-Mills.  Therefore, by the Donaldson-Uhlenbeck-Yau theorem, the moduli space of irreducible $SU(n)$ instantons on a K\"{a}hler surface is isomorphic to the moduli space of rank $n$ holomorphic bundles with vanishing first Chern class.  
\end{Ex}

\chapter{Stability Conditions on Coherent Sheaves and D-branes}

Partially motivated by the appearance of stable vector bundles in Yang-Mills theory, the main goal of this chapter is to work entirely in the world of algebraic geometry, and to extend in a consistent way our understanding of stability to more general coherent sheaves.  We will introduce Gieseker and slope stability of torsion-free sheaves (which one can use to compactify moduli spaces in Yang-Mills theory, though we will not do so) as well as Simpson stability of pure torsion sheaves.  Along the way, we will review some basic material on coherent sheaves in general, and the Grothendieck group.  We close the chapter with an application to D-branes in string theory, which we hope is approachable to mathematicians.

\section{Generalities on Coherent Sheaves}

In this chapter, all schemes $(X, \mathcal{O}_{X})$ will be Noetherian schemes over $\mathbb{C}$, unless otherwise mentioned.  We assume the reader is familiar with some of the basics of sheaves of $\mathcal{O}_{X}$-modules and coherent sheaves.  We denote by $\text{Coh}(X)$ the abelian category of coherent sheaves whose objects are coherent sheaves on $X$, and whose morphisms are morphisms of $\mathcal{O}_{X}$-modules.  Given a sheaf $\mathscr{E}$ on $X$, the stalk at $x \in X$ is denoted by $\mathscr{E}_{x}$ and the support of $\mathscr{E}$ is defined by
\begin{equation}
\text{Supp}(\mathscr{E}) \coloneqq \big\{ \, x \in X \, \big| \, \mathscr{E}_{x} \neq 0 \, \big\} \subseteq X.  
\end{equation}
If $X$ is a Noetherian scheme and $\mathscr{E}$ is coherent, then the support of $\mathscr{E}$ is a closed subscheme of $X$.

\subsection{Ideal Sheaves and their Subschemes}\label{subsec:idsheee}

Let us begin by introducing an important class of coherent sheaves known as ideal sheaves.  As we will see, the name stems from the fact that they locally correspond to ideals in the ring of local functions.  For the purposes of this thesis, ideal sheaves will primarily arise in the study of Donaldson-Thomas theory in the next chapter.     

\begin{defn}
An ideal sheaf on an arbitrary scheme $X$ is an $\mathcal{O}_{X}$-submodule $\mathscr{I}$ of $\mathcal{O}_{X}$.  That is to say, we have an injection of coherent sheaves $\mathscr{I} \hookrightarrow \mathcal{O}_{X}$.  
\end{defn}

\noindent If $\mathscr{I}$ is an ideal sheaf, then for all open sets $U \subset X$, the sections $\mathscr{I}(U)$ are an $\mathcal{O}_{X}(U)$-submodule of $\mathcal{O}_{X}(U)$.  This is equivalent to $\mathscr{I}(U)$ being an ideal of $\mathcal{O}_{X}(U)$, and is clearly the origin of the name.

Let $R$ be a Noetherian ring.  Imagining $R$ as a module over itself, any $R$-submodule (equivalently, an ideal of $R$) is finitely generated.  The sheaf-theoretic analog of this statement is that on a Noetherian scheme $X$, any $\mathcal{O}_{X}$-submodule of $\mathcal{O}_{X}$ is coherent.  In other words, ideal sheaves on Noetherian schemes are necessarily coherent.  
 
\begin{proppy}
Let $X$ be a Noetherian scheme.  Given an ideal sheaf which we denote $\mathscr{I}_{Z}$, there is a unique closed subscheme $Z \subset X$ such that we have the following short exact sequence of $\mathcal{O}_{X}$-modules
\begin{equation} \label{eqn:idshexseq}
0 \longrightarrow \mathscr{I}_{Z} \longrightarrow \mathcal{O}_{X} \longrightarrow \mathcal{O}_{Z} \longrightarrow 0
\end{equation}
known as the ideal sheaf exact sequence.  We will sometimes call $\mathscr{I}_{Z}$ the ideal sheaf of the subscheme $Z$.  
\end{proppy}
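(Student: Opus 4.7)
The plan is to construct $Z$ locally on affine opens, glue, and then verify uniqueness by showing that a closed subscheme is completely determined by its ideal sheaf.

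First I would work affine-locally. Cover $X$ by affine opens $U_\alpha = \operatorname{Spec}(R_\alpha)$; since $\mathscr{I}_Z$ is a quasi-coherent subsheaf of $\mathcal{O}_X$, the restriction $\mathscr{I}_Z|_{U_\alpha}$ corresponds to an ideal $I_\alpha \subset R_\alpha$ (here I use that $X$ Noetherian makes $\mathscr{I}_Z$ automatically coherent, so the $I_\alpha$ are finitely generated, though quasi-coherence is all that is strictly needed for the construction). On each $U_\alpha$, define the closed subscheme $Z_\alpha = \operatorname{Spec}(R_\alpha / I_\alpha)$, which comes equipped with the tautological short exact sequence $0 \to I_\alpha \to R_\alpha \to R_\alpha/I_\alpha \to 0$. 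Sheafifying yields the desired sequence $0 \to \mathscr{I}_Z|_{U_\alpha} \to \mathcal{O}_{U_\alpha} \to \mathcal{O}_{Z_\alpha} \to 0$.

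Next I would glue the $Z_\alpha$. On an overlap $U_{\alpha\beta} = U_\alpha \cap U_\beta$, the two ideals $I_\alpha$ and $I_\beta$ restrict to the same ideal sheaf on $U_{\alpha\beta}$ (namely $\mathscr{I}_Z|_{U_{\alpha\beta}}$), so the closed subschemes $Z_\alpha \cap U_{\alpha\beta}$ and $Z_\beta \cap U_{\alpha\beta}$ agree as closed subschemes of $U_{\alpha\beta}$. The universal gluing property of schemes then produces a global closed subscheme $Z \hookrightarrow X$ whose underlying space is $\operatorname{Supp}(\mathcal{O}_X/\mathscr{I}_Z)$, together with a surjection $\mathcal{O}_X \twoheadrightarrow i_* \mathcal{O}_Z$ (where $i : Z \hookrightarrow X$) whose kernel, by construction, is exactly $\mathscr{I}_Z$. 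This yields the short exact sequence in the statement, with the mild notational convention that $\mathcal{O}_Z$ secretly denotes its pushforward $i_* \mathcal{O}_Z$ to $X$.

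For uniqueness, suppose $Z'$ is another closed subscheme of $X$ fitting into $0 \to \mathscr{I}_Z \to \mathcal{O}_X \to \mathcal{O}_{Z'} \to 0$. Any closed immersion $i' : Z' \hookrightarrow X$ is determined up to unique isomorphism by the kernel $\mathscr{I}_{Z'} := \ker(\mathcal{O}_X \to i'_* \mathcal{O}_{Z'})$: on affine opens $\operatorname{Spec}(R_\alpha)$ the closed subscheme is recovered as $\operatorname{Spec}(R_\alpha / I'_\alpha)$. Since by hypothesis $\mathscr{I}_{Z'} = \mathscr{I}_Z$, these affine recoveries agree with those used to build $Z$, so $Z' = Z$ as closed subschemes of $X$.

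The only mildly subtle step is checking that the local pieces $Z_\alpha$ really do glue compatibly, which boils down to the fact that $\operatorname{Spec}(R_\alpha/I_\alpha)$ and $\operatorname{Spec}(R_\beta/I_\beta)$ restrict to the same closed subscheme on any common distinguished affine in $U_{\alpha\beta}$ — this is immediate from $\mathscr{I}_Z$ being a single sheaf on $X$ whose sections on that distinguished affine are unambiguously defined. Everything else is formal.
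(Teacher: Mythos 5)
Your proof is correct and is essentially the paper's argument unpacked: the paper simply takes the cokernel $\mathcal{O}_X/\mathscr{I}_Z$ of the injection $\mathscr{I}_Z \hookrightarrow \mathcal{O}_X$ and defines $Z$ to be its support equipped with that quotient as structure sheaf, which is exactly what your affine-local construction and gluing produce. Your version spells out the gluing and the uniqueness check (that a closed subscheme is recovered from its ideal sheaf) which the paper asserts without detail, but the underlying construction is the same.
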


\begin{proof}
Because $\mathscr{I}_{Z}$ is an ideal sheaf, we have an injection $\mathscr{I}_{Z} \hookrightarrow \mathcal{O}_{X}$ of coherent sheaves, which implies the cokernel $\mathcal{O}_{X}/\mathscr{I}_{Z}$ is coherent, and fits into the short exact sequence
\begin{equation}
0 \longrightarrow \mathscr{I} \longrightarrow \mathcal{O}_{X} \longrightarrow \mathcal{O}_{X} / \mathscr{I} \longrightarrow 0.
\end{equation}
We define $Z$ uniquely as the support of $\mathcal{O}_{X}/\mathscr{I}_{Z}$ (which is a closed subscheme since $X$ is Noetherian) with structure sheaf $\mathcal{O}_{Z} \coloneqq \mathcal{O}_{X}/\mathscr{I}_{Z}$.  
\end{proof}

One should think of $\mathcal{O}_{X}$ as the sheaf of functions on $X$, and of $\mathscr{I}_{Z}$ as the subsheaf consisting locally of functions vanishing on the subscheme $Z$.  In other words, $\mathscr{I}_{Z}$ consists locally of equations cutting out $Z \subset X$ and accordingly, one should identify $\mathcal{O}_{Z}$ as the sheaf of functions on $Z$.  

A converse to the above proposition can be understood as follows.  One standard fact in algebraic geometry is that every closed subscheme $Z$ of a Noetherian scheme is locally cut out by finitely many equations.  These local equations correspond to local sections of a coherent $\mathcal{O}_{X}$-module.  This is precisely the ideal sheaf $\mathscr{I}_{Z}$ of $Z$.  However, there is a $\mathbb{C}^{*}$ ambiguity in recovering $\mathscr{I}_{Z}$ from $Z$ essentially because a vanishing locus is unchanged upon multiplying each equation by a non-zero constant.  Modulo this subtlety, there is a bijection between ideal sheaves on a Noetherian scheme and closed subschemes.

\subsection{Dimension and Purity of Coherent Sheaves}\label{subsec:dimpurCohhShh}

Unless otherwise noted, in this section $X$ will be an integral Noetherian scheme -- the notions we will discuss are well-behaved primarily in this setting.  Given a coherent sheaf $\mathscr{E}$ on $X$, there is a canonical ideal sheaf we can associate to it.  The \emph{annihilator ideal sheaf} of $\mathscr{E}$ is defined to be the kernel of the morphism
\begin{equation}\label{eqn:annnidshh}
\mathcal{O}_{X} \longrightarrow \shHom_{\mathcal{O}_{X}}(\mathscr{E}, \mathscr{E})
\end{equation}
defined on an open set $U \subset X$ by mapping a local function $f \in \mathcal{O}_{X}(U)$ to the $\mathcal{O}_{X}|_{U}$-module morphism $\mathscr{E}|_{U} \to \mathscr{E}|_{U}$ given by multiplication by $f$.  Because it injects into $\mathcal{O}_{X}$, the annihilator ideal sheaf is indeed an ideal sheaf and its local sections are functions vanishing on the support of $\mathscr{E}$.  It is therefore the ideal sheaf of the closed subscheme $\text{Supp}(\mathscr{E})$.  In fact, $\text{Supp}(\mathscr{E})$ is a priori just a closed \emph{subset} of $X$, and its scheme structure is induced from the annihilator ideal sheaf.  

\begin{defn}
The dimension $\text{dim}(\mathscr{E})$ of a coherent sheaf $\mathscr{E}$ is defined to be the dimension of $\text{Supp}(\mathscr{E})$.  
\end{defn}

\noindent For example, if $\mathscr{I}_{Z}$ is the ideal sheaf of a closed subscheme $Z \subset X$, then $\text{dim}(\mathscr{I}_{Z}) = \text{dim} \, X$ and $\text{dim}(\mathcal{O}_{Z}) = \text{dim} \, Z$.  Locally-free sheaves are also supported on all of $X$, and hence have dimension equal to the dimension on $X$.  

Ideal sheaves and locally-free sheaves are examples of the following important class of coherent sheaves, characterized by having support on all of $X$.   

\begin{defn}
A coherent sheaf $\mathscr{E}$ on $X$ is called torsion-free if the canonical morphism (\ref{eqn:annnidshh}) restricted to stalks
\begin{equation}\label{eqn:torfreeedefn}
\mathcal{O}_{X, x} \longrightarrow \text{Hom}_{\mathcal{O}_{X, x}}(\mathscr{E}_{x}, \mathscr{E}_{x})
\end{equation}
is injective for all $x \in X$.  Or equivalently, if the morphism (\ref{eqn:annnidshh}) is itself injective. 
\end{defn}

\noindent Throughout this thesis, if using torsion-free sheaves we will assume the underlying scheme is integral.  Some authors might not require this assumption, but our philosophy is that torsion-free modules should only be defined over rings which are integral domains.  

A coherent sheaf is said to be \emph{torsion} if it is not torsion-free.  Let us now prove two basic results giving an intuitive characterization of torsion-free sheaves in terms of their dimension, and a lack of torsion subsheaves.  

\begin{proppy}
A coherent sheaf $\mathscr{E}$ on $X$ is torsion-free if and only if $\text{Supp}(\mathscr{E}) = X$.  
\end{proppy}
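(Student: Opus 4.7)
The plan is to reduce both directions to statements about the annihilator ideal sheaf $\mathscr{J} \coloneqq \ker\!\big(\mathcal{O}_{X} \to \shHom_{\mathcal{O}_{X}}(\mathscr{E}, \mathscr{E})\big)$ introduced just before the definition. The text has already recorded the two ingredients I need: the closed subscheme $\text{Supp}(\mathscr{E}) \subseteq X$ carries its scheme structure as $V(\mathscr{J})$, and, since injectivity of a sheaf map is detected on stalks, the definition of torsion-free is precisely the statement $\mathscr{J} = 0$. The proposition therefore reduces to showing $\mathscr{J} = 0 \Leftrightarrow V(\mathscr{J}) = X$.

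The forward direction is immediate: if $\mathscr{J} = 0$ then $V(\mathscr{J}) = V(0) = X$. For the converse, I would assume $\text{Supp}(\mathscr{E}) = X$ and first observe that $\mathscr{J}_{x} \subseteq \mathfrak{m}_{x}$ for every $x \in X$, because any element of $\mathscr{J}_{x} \setminus \mathfrak{m}_{x}$ would be a unit of $\mathcal{O}_{X,x}$ annihilating $\mathscr{E}_{x}$, forcing $\mathscr{E}_{x} = 0$ and contradicting $x \in \text{Supp}(\mathscr{E})$. Passing to an affine open $U = \text{Spec}\, R$ this translates to the statement that the ideal $I = \mathscr{J}(U) \subseteq R$ is contained in every prime of $R$, hence $I \subseteq \mathrm{nil}(R)$. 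Here I would invoke the running hypothesis that $X$ is integral, so $R$ is a domain and $\mathrm{nil}(R) = 0$; thus $I = 0$, and gluing over an affine cover yields $\mathscr{J} = 0$, i.e.\ $\mathscr{E}$ is torsion-free.

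The only nontrivial link in the chain is the step $\mathrm{nil}(R) = 0$, and the real conceptual obstacle is recognising that integrality is \emph{indispensable} in the converse: without it, nilpotent local sections of $\mathcal{O}_{X}$ could contribute to $\mathscr{J}$ while leaving $\text{Supp}(\mathscr{E})$ equal to $X$ set-theoretically, and the equivalence would break. Since integrality is the standing hypothesis of this subsection, the sketch above goes through cleanly.
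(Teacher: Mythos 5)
Your proof is correct and takes essentially the same route as the paper's: both directions are run through the annihilator ideal sheaf, using that $\text{Supp}(\mathscr{E})$ is its vanishing locus. Your converse is in fact more careful than the paper's one-line ``the annihilator ideal sheaf is the ideal sheaf of $X$ itself, and must therefore vanish,'' since you make explicit the step from ``contained in every prime'' to ``zero'' via the nilradical, which is exactly where the standing integrality hypothesis is used.
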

\begin{proof}
For the forward direction, if $\mathscr{E}$ is a torsion-free sheaf with $\mathscr{E}_{x} =0$ for some $x \in X$, then by (\ref{eqn:torfreeedefn}) we have an injection $\mathcal{O}_{X, x} \to 0$, which is a contradiction.  Conversely, if $\text{Supp}(\mathscr{E}) = X$, then the annihilator ideal sheaf is the ideal sheaf of $X$ itself, and must therefore vanish.  We conclude that $\mathcal{O}_{X} \to \shHom_{\mathcal{O}_{X}}(\mathscr{E}, \mathscr{E})$ is an injection.  
\end{proof}

\begin{proppy}
A coherent sheaf $\mathscr{E}$ is torsion-free if and only if it has no torsion subsheaves.  
\end{proppy}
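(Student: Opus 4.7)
My plan is to pivot the argument on the canonical torsion subsheaf $T(\mathscr{E}) \subset \mathscr{E}$. Because $X$ is integral and Noetherian, the ring of regular functions has no zero divisors on any open set, so one can coherently define $T(\mathscr{E})$ by taking as its sections over $U \subset X$ those $s \in \mathscr{E}(U)$ annihilated by some non-zero $f \in \mathcal{O}_X(U)$. This presheaf is already a sheaf, and as a subsheaf of the coherent sheaf $\mathscr{E}$ on a Noetherian scheme it is itself coherent. By construction $T(\mathscr{E})$ is a torsion subsheaf of $\mathscr{E}$, and it vanishes precisely when $\mathscr{E}$ has no torsion elements.

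Given $T(\mathscr{E})$, the reverse implication follows by contrapositive: if $\mathscr{E}$ is not torsion-free, then some stalk $\mathscr{E}_x$ contains a non-zero element killed by a non-zero function, so $T(\mathscr{E})$ is a non-zero subsheaf that is itself torsion, contradicting the hypothesis. The forward direction amounts to the observation that torsion-freeness is inherited by subsheaves: a non-zero element of $\mathscr{F}_x$ is, via the inclusion $\mathscr{F} \hookrightarrow \mathscr{E}$, a non-zero element of $\mathscr{E}_x$, and any $f \in \mathcal{O}_{X,x}$ that would kill it would then kill a non-zero element of $\mathscr{E}_x$, contradicting torsion-freeness of $\mathscr{E}$. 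Hence a non-zero subsheaf of a torsion-free $\mathscr{E}$ cannot itself be torsion.

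The one subtlety I anticipate is matching the stalk-wise condition of the preceding Definition (injectivity of $\mathcal{O}_{X,x} \to \mathrm{Hom}_{\mathcal{O}_{X,x}}(\mathscr{E}_x, \mathscr{E}_x)$, which literally says only that the annihilator of the whole module $\mathscr{E}_x$ is trivial) with the element-wise assertion that no non-zero section of $\mathscr{E}$ is killed by a non-zero function, on which the argument above really rests. To bridge the two, I would insert a short preliminary observation: for coherent sheaves on an integral Noetherian scheme, applying the Definition to each cyclic submodule $\mathcal{O}_{X,x} \cdot s \subset \mathscr{E}_x$ (which is again coherent on a neighborhood of $x$) converts the trivial-annihilator condition into the element-wise condition. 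I expect this reconciliation, rather than either direction of the biconditional itself, to be the step that requires the most care.
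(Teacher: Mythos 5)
Your overall architecture is sound, and your converse direction is a genuinely different route from the paper's: you build the canonical torsion subsheaf $T(\mathscr{E})$ and argue by contrapositive, whereas the paper deduces from the absence of torsion subsheaves that $\text{Supp}(\mathscr{E})=X$ and then cites the preceding proposition. Your version is arguably cleaner, though one small imprecision is worth noting: the presheaf of sections killed by a single non-zero $f\in\mathcal{O}_X(U)$ need not obviously satisfy gluing (the local annihilating functions $f_i$ need not patch), so $T(\mathscr{E})$ is better defined stalk-wise as the subsheaf whose stalks are the torsion submodules of $\mathscr{E}_x$; for the contrapositive one only needs that the torsion germs form a non-zero subsheaf, which that construction gives. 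Your forward direction is in substance the same as the paper's, which takes a torsion subsheaf $\mathscr{F}$, produces non-zero germs $f$ and $s\in\mathscr{F}_x\subset\mathscr{E}_x$ with $f\cdot s=0$, and contradicts torsion-freeness of $\mathscr{E}$.

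The real problem is exactly the ``bridge'' you flag at the end, and it is worse than a delicate step: it cannot be repaired as you describe. The Definition asks that the annihilator of the whole stalk $\mathscr{E}_x$ be trivial, and this does \emph{not} imply that the annihilator of each cyclic submodule $\mathcal{O}_{X,x}\cdot s$ is trivial. ``Applying the Definition to each cyclic submodule'' is not a consequence of applying it to $\mathscr{E}$; it is an additional hypothesis, and in fact it is precisely the element-wise statement you are trying to derive, so the proposed reconciliation is circular. It is also genuinely false as an implication: for a non-empty proper closed subscheme $Z\subsetneq X$, the coherent sheaf $\mathcal{O}_X\oplus\mathcal{O}_Z$ has trivial annihilator at every stalk (because of the $\mathcal{O}_X$ summand), hence is torsion-free in the literal sense of the Definition, yet it contains the torsion subsheaf $\mathcal{O}_Z$. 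So no preliminary lemma can convert the whole-stalk-annihilator condition into the element-wise one; the forward implication of the proposition only holds if ``torsion-free'' is read element-wise from the outset (no non-zero germ is killed by a non-zero function). That is what the paper's own proof tacitly does when it concludes $s=0$ from $f\cdot s=0$, so you diagnosed the correct weak point --- but the honest fix is to strengthen the reading of the definition, not to interpose a bridging observation, which does not exist.
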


\begin{proof}
Beginning with the forward direction, let $\mathscr{F} \hookrightarrow \mathscr{E}$ be a torsion subsheaf.  By definition, there must exist $x \in X$ such that $\mathcal{O}_{X,x} \to \text{Hom}_{\mathcal{O}_{X,x}}(\mathscr{F}_{x}, \mathscr{F}_{x})$ is not injective. In other words, there must exist non-zero germs $f \in \mathcal{O}_{X,x}$ and $s \in \mathscr{F}_{x}$ such that $f \cdot s=0$.  But because $\mathscr{F}_{x} \hookrightarrow \mathscr{E}_{x}$ is an injective morphism of $\mathcal{O}_{X, x}$-modules, and $\mathscr{E}$ is torsion-free we must have $s=0$, a contradiction.  For the converse, if $\mathscr{E}$ has no torsion subsheaves, then $\text{Supp}(\mathscr{E})=X$.  Applying the above proposition, this implies $\mathscr{E}$ is torsion-free.  
\end{proof}

There are important examples of torsion sheaves which will be of interest in this thesis.  Given a subscheme $Z \subset X$, one example of a torsion sheaf is the structure sheaf $\mathcal{O}_{Z}$.  One can also produce a torsion sheaf on $X$ by pushing forward a locally-free sheaf on $Z$ by the inclusion.  Notice from these examples that if $Z$ has connected components of various dimensions, a torsion sheaf supported on $Z$ will accordingly have subsheaves of various dimensions.  The following notion of \emph{purity} is a generalization of torsion-free meant to distinguish sheaves that are essentially torsion-free on their support.  

\begin{defn}
Let $X$ be a Noetherian scheme, not necessarily integral.  A coherent sheaf $\mathscr{E}$ is said to be pure of dimension $d$ if $\text{dim}(\mathscr{E})=d$, and if $\mathscr{F} \hookrightarrow \mathscr{E}$ is a non-zero subsheaf, then $\text{dim}(\mathscr{F})=d$.    
\end{defn}

\noindent If we reinstate the assumption that $X$ is integral, then a pure sheaf of dimension $\text{dim} \, X$ is simply a torsion-free sheaf.  Moreover, pure sheaves supported on integral subschemes are torsion-free restricted to their support.

\subsection{Grothendieck Group and the Hirzebruch-Riemann-Roch Theorem}\label{subsecc:GgrpHRR}

In this section we will state, mostly without details, some important constructions and results pertaining to coherent sheaves on smooth varieties.  We will introduce the Chern character of a coherent sheaf and discuss some topological features it encodes.  We will also see that the Chern character relates an object called the Grothendieck group with either the Chow groups or cohomology, and we will state the powerful Hirzebruch-Riemann-Roch theorem.  Many of these results rely on the following \cite[III, Ex. 6.9]{hartshorne_algebraic_1997}.  

\begin{proppy}
Let $X$ be a smooth variety of dimension $n$ and $\mathscr{E}$ a coherent sheaf on $X$.  There exists a locally-free resolution of $\mathscr{E}$ of length $n$.  That is to say, there exists an exact sequence of $\mathcal{O}_{X}$-modules
\begin{equation}\label{eqn:cohres}
E_{n} \longrightarrow  \cdots \longrightarrow E_{1} \longrightarrow \mathscr{E} \longrightarrow 0
\end{equation}
such that each $E_{i}$ is a locally-free sheaf on $X$.
\end{proppy}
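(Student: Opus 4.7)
The plan is to build the resolution inductively from the right and then prove that after $n$ steps the iterated kernel is automatically locally-free, at which point the sequence can be truncated. Throughout, I will use that on a smooth (quasi-projective) variety any coherent sheaf admits a surjection from a locally-free sheaf of finite rank --- concretely, some finite direct sum $\bigoplus \mathcal{O}_X(-d)$ maps onto $\mathscr{E}$ after twisting sufficiently. The key algebraic input is the Auslander--Buchsbaum--Serre theorem: a Noetherian local ring is regular if and only if it has finite global dimension, in which case $\mathrm{gl.dim} = \dim$. Since $X$ is smooth of dimension $n$, every stalk $\mathcal{O}_{X,x}$ is a regular local ring of dimension at most $n$.

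First, I would pick a surjection $E_1 \twoheadrightarrow \mathscr{E}$ with $E_1$ locally-free of finite rank, let $\mathscr{K}_1 = \ker(E_1 \to \mathscr{E})$, and note that $\mathscr{K}_1$ is coherent because $\text{Coh}(X)$ is an abelian subcategory of $\mathcal{O}_X$-modules (this uses Noetherianness). Iterating yields coherent sheaves $\mathscr{K}_1, \mathscr{K}_2, \ldots$ fitting into short exact sequences
\[
0 \longrightarrow \mathscr{K}_{i+1} \longrightarrow E_{i+1} \longrightarrow \mathscr{K}_i \longrightarrow 0,
\]
so that splicing produces an a priori infinite locally-free exact sequence $\cdots \to E_2 \to E_1 \to \mathscr{E} \to 0$.

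The heart of the argument is to show that $\mathscr{K}_{n-1}$ is \emph{already} locally-free, so that we can set $E_n := \mathscr{K}_{n-1}$ and stop. At each point $x \in X$, localizing the sequence gives a length-$(n-1)$ resolution of $\mathscr{E}_x$ by free $\mathcal{O}_{X,x}$-modules whose $(n-1)$-th syzygy is $(\mathscr{K}_{n-1})_x$. Since $\mathcal{O}_{X,x}$ is regular of dimension $\leq n$, Auslander--Buchsbaum--Serre forces the projective dimension of $\mathscr{E}_x$ to be at most $n$, hence the $(n-1)$-th syzygy is projective. Over a local ring, finitely generated projective modules are free, so $(\mathscr{K}_{n-1})_x$ is free for every $x$.

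The final --- and most delicate --- step is upgrading stalkwise freeness to locally-free \emph{as a sheaf}. For a coherent sheaf on a Noetherian scheme, having free stalk at $x$ means the sheaf is free on some open neighborhood of $x$: choose a basis of $(\mathscr{K}_{n-1})_x$, lift the basis elements to sections on a neighborhood $U$, and use Nakayama plus coherence to shrink $U$ until the induced map $\mathcal{O}_U^{\oplus r} \to \mathscr{K}_{n-1}|_U$ is an isomorphism (injectivity uses that the determinant of the presentation matrix is a unit at $x$, hence on a neighborhood). The main obstacle is precisely this last local-to-global promotion, together with being careful that the initial surjection $E_1 \twoheadrightarrow \mathscr{E}$ exists globally on $X$ --- which is why a quasi-projective (or at least suitably nice) variety assumption is implicit in how this proposition is typically used. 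Once these are handled, renaming $\mathscr{K}_{n-1}$ as $E_n$ yields the desired resolution $E_n \to \cdots \to E_1 \to \mathscr{E} \to 0$.
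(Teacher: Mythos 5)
The paper does not actually prove this proposition; it cites it to Hartshorne [III, Ex.\ 6.9] and moves on, so the only meaningful comparison is with the standard argument --- which is exactly the one you are running: iterate surjections from locally-free sheaves, then use Auslander--Buchsbaum--Serre to see that a high enough syzygy is projective, and promote stalkwise freeness to local freeness via coherence. All three supporting ingredients you flag (the resolution property needed for the initial surjection $E_1 \twoheadrightarrow \mathscr{E}$, coherence of the kernels $\mathscr{K}_i$, and the local-to-global step for a coherent sheaf with free stalk) are correctly identified, and your caution about where the surjection comes from on a general smooth variety is well placed.

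There is, however, one concrete error: the syzygy index is off by one. Since $\mathcal{O}_{X,x}$ is regular of dimension at most $n$, Auslander--Buchsbaum--Serre gives $\mathrm{pd}(\mathscr{E}_x) \leq n$, and the dimension-shifting lemma then says that the \emph{$n$-th} syzygy $\mathscr{K}_n = \ker(E_n \to E_{n-1})$ is projective --- not $\mathscr{K}_{n-1}$. Your claim that $(\mathscr{K}_{n-1})_x$ is free for every $x$ fails already for $\mathscr{E}$ the skyscraper sheaf at a closed point $x$ of a smooth $n$-fold: localizing the Koszul resolution, $(\mathscr{K}_{n-1})_x$ is the cokernel of the top Koszul map $\mathcal{O}_{X,x} \to \mathcal{O}_{X,x}^{\oplus n}$, which is not free (if it were, the map would split and some coordinate would be a unit). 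The fix is simply to run the construction one step further and set the last term equal to $\mathscr{K}_n$; the resulting resolution has $n+1$ locally-free terms, which is the usual meaning of ``length $n$'' when one indexes from $0$. Note that the paper's own display, with terms $E_n, \ldots, E_1$, carries the same off-by-one, so your count matches the statement as printed rather than the correct bound --- worth flagging rather than reproducing.
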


For the purposes of this thesis, we will use the above locally-free resolution to define the determinant and Chern character of coherent sheaves on smooth varieties.  It is worth noting that when one studies moduli spaces of sheaves (as we will shortly) these two quantities are frequently part of the data fixed in the moduli problem.  Recall that the determinant $\text{det}(E)$ of a locally-free sheaf $E$ of rank $r$ on a smooth variety is an invertible sheaf given by the top exterior power $\Lambda^{r}E$.  We can extend this definition to arbitrary coherent sheaves in the following way.  

\begin{defn}\label{defn:dettofcohshref}
Let $\mathscr{E}$ be a coherent sheaf on a smooth variety $X$ of dimension $n$.  The determinant of $\mathscr{E}$ is defined by
\begin{equation}
\text{det}(\mathscr{E}) = \bigotimes_{i=1}^{n} \text{det}(E_{i})^{(-1)^{i}} \in \text{Pic}(X)
\end{equation}
where $E_{i}$ are the entries of the resolution (\ref{eqn:cohres}) and $\text{Pic}(X)$ is the Picard group -- the group of isomorphism classes of invertible sheaves on $X$.  
\end{defn}  

In algebraic geometry, the Chow groups $A_{k}(X)$ are abelian groups of algebraic $k$-cycles modulo rational equivalence \cite{fulton_intersection_1998}.  If $X$ is smooth of dimension $n$, we define $A^{k}(X) \coloneqq A_{n-k}(X)$.  If $X$ is additionally projective, by associating an $(n-k)$-cycle to its class in homology, we get the \emph{cycle map}
\begin{equation}\label{eqn:cycreemap}
\text{cl} : A^{k}(X) \longrightarrow H^{2k}(X, \mathbb{Z}) \cong H_{2n-2k}(X, \mathbb{Z})
\end{equation}  
where we have applied Poincar\'{e} duality\footnote{Strictly speaking, the target of the cycle map is the Borel-Moore homology of $X$ where Poincar\'{e} duality holds without the hypothesis of projectivity.  This hypothesis can therefore be dropped if one is content to work with Borel-Moore homology, which coincides with ordinary homology when $X$ is projective.}.  One of the goals of \emph{intersection theory} is to enhance $A^{k}(X)$ to an associative, commutative graded ring such that the cycle map becomes a graded ring homomorphism with respect to the cup product in cohomology.   

The Chern character of a coherent sheaf is constructed to define an element of $A^{*}(X)_{\mathbb{Q}} \coloneqq A^{*}(X) \otimes_{\mathbb{Z}} \mathbb{Q}$.  We define it using the locally-free resolution (\ref{eqn:cohres}) noting that the Chern character of a locally-free sheaf is essentially a Chow-valued version of what we introduced in Section \ref{sec:chercrassbundre}.  
  
\begin{defn}
Let $X$ be a smooth variety of dimension $n$ and $\mathscr{E}$ a coherent sheaf on $X$.  The Chern character $\text{ch}(\mathscr{E})$ is a cycle in the Chow group with rational coefficients defined as
\begin{equation}\label{eqn:defnnnChcarssd}
\text{ch}(\mathscr{E}) = \sum_{i=1}^{n} (-1)^{i} \text{ch}(E_{i}) \in A^{*}(X)_{\mathbb{Q}}
\end{equation}
where the $E_{i}$ are the entries of the locally-free resolution (\ref{eqn:cohres}).
\end{defn}

\noindent If $X$ is projective, by applying the cycle map (\ref{eqn:cycreemap}) we can view $\text{ch}(\mathscr{E})$ as taking values in $H^{2*}(X, \mathbb{Q})$, as is frequently done.  The Chern character as defined is additive on short exact sequences and multiplicative on tensor products.  We will typically expand $\text{ch}(\mathscr{E})$ into degrees with the following notation
\begin{equation}\label{eqn:chcarrrcompp}
\text{ch}(\mathscr{E}) = \big(\text{ch}_{0}(\mathscr{E}), \text{ch}_{1}(\mathscr{E}), \ldots, \text{ch}_{n}(\mathscr{E}) \big), \,\,\,\,\,\,\,\,\,\,\,\, \text{ch}_{k}(\mathscr{E}) \in A^{k}(X)_{\mathbb{Q}}
\end{equation}
where $\text{ch}_{k}(\mathscr{E})$ is called the $k$-th Chern character of $\mathscr{E}$.  The higher Chern characters do not generally vanish, though it is standard to neglect those in degree higher than the dimension.  

Let $X$ be a smooth variety of dimension $n$, and let $\mathscr{E}$ be a coherent sheaf of dimension $d$ on $X$.  Let $\{ Z_{i} \}_{i=1}^{s}$ be the irreducible components of the \emph{reduced} support of $\mathscr{E}$ -- each $Z_{i}$ is an integral subscheme with generic point $\eta_{i}$.  

\begin{defn}\label{defn:Suuprcrassdefn}
Let $X$ and $\mathscr{E}$ be as above.  The support cycle of $\mathscr{E}$ is an effective cycle defined by
\begin{equation}\label{eqn:supportcycledefnnn}
[\mathscr{E}] \coloneqq \sum_{i=1}^{s} \text{length}(\mathscr{E}_{\eta_{i}}) [Z_{i}] \in A^{n-d}(X)
\end{equation}
where $\text{length}(\mathscr{E}_{\eta_{i}})$ is called the multiplicity of $\mathscr{E}$ along $Z_{i}$ and is defined as the length of the module $\mathscr{E}_{\eta_{i}}$ over the local ring $\mathcal{O}_{Z_{i}, \eta_{i}}$.  Here, $[Z_{i}] \in A^{n-d}(X) = A_{d}(X)$ is the class of the subvariety $Z_{i}$ .  
\end{defn}

As one might expect, the Chern character of a coherent sheaf vanishes in degree below the codimension of the support, and in the degree of the codimension it is precisely given by the support cycle.  One can find the following in \cite{kawai_string_2000} or \cite[sec 5.9]{chriss_representation_2010}.  

\begin{proppy}\label{proppy:Cherncharrvannnishingss}
Let $X$ and $\mathscr{E}$ be as above.  For the Chern character valued in the Chow groups we have
\begin{equation}\label{eqn:Cherncarrrcopppps}
\text{ch}_{k}(\mathscr{E}) = 
\begin{cases}
\begin{aligned}
&  0 & k < n-d  \\[1ex]
& [\mathscr{E}], & k=n-d
\end{aligned}
\end{cases}
\end{equation}
\end{proppy}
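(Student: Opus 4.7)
The strategy is to reduce by Noetherian devissage to the case where $\mathscr{E}$ is the structure sheaf of an integral closed subscheme, and then to compute the Chern character locally using a Koszul resolution. Both claims of the proposition — the vanishing $\text{ch}_k(\mathscr{E}) = 0$ for $k < n-d$ and the identity $\text{ch}_{n-d}(\mathscr{E}) = [\mathscr{E}]$ — are additive on short exact sequences, so the filtration argument propagates them simultaneously in every degree $\leq n-d$.

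First, I would invoke the standard devissage: any coherent sheaf on a Noetherian scheme admits a finite filtration
\[
0 = \mathscr{E}_0 \subset \mathscr{E}_1 \subset \cdots \subset \mathscr{E}_m = \mathscr{E}
\]
whose successive quotients are isomorphic to $\mathcal{O}_{Y_i}$ for integral closed subschemes $Y_i \subseteq \text{Supp}(\mathscr{E})$. Because $\dim Y_i \leq d$, each such $Y_i$ has codimension $\geq n-d$ in $X$. Localizing the filtration at a generic point $\eta_j$ of an irreducible component $Z_j$ of the support produces a composition series of the finite-length $\mathcal{O}_{X,\eta_j}$-module $\mathscr{E}_{\eta_j}$, so those $Y_i$ equal to $Z_j$ must appear with total multiplicity $\text{length}(\mathscr{E}_{\eta_j})$. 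By additivity of $\text{ch}$ under short exact sequences (Definition \ref{defn:dettofcohshref} and the discussion after (\ref{eqn:defnnnChcarssd})), it therefore suffices to prove the proposition when $\mathscr{E} = \mathcal{O}_Y$ for $Y \subset X$ integral of codimension $c$, with the target formula being $\text{ch}_k(\mathcal{O}_Y) = 0$ for $k < c$ and $\text{ch}_c(\mathcal{O}_Y) = [Y]$.

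Second, for the structure-sheaf case I would work on a dense open $U \subset X$ on which $Y \cap U$ is a local complete intersection cut out by a regular sequence of $c$ equations with associated divisors $D_1,\ldots,D_c$. The Koszul complex on these equations gives an explicit finite locally-free resolution of $\mathcal{O}_{Y\cap U}$, and the splitting-principle computation yields
\[
\text{ch}(\mathcal{O}_{Y\cap U}) \;=\; \prod_{i=1}^{c}\bigl(1 - e^{-D_i}\bigr),
\]
which has no terms in degrees $< c$ and whose degree-$c$ piece is $D_1 \cdots D_c = [Y \cap U] \in A^c(U)$. To globalize, I would use that the non-LCI locus of $Y$ in $X$ has codimension strictly greater than $c$, so the restriction $A^k(X)_{\mathbb{Q}} \to A^k(U)_{\mathbb{Q}}$ is an isomorphism for $k \leq c$ by the excision exact sequence for Chow groups; thus the local formula forces the global one.

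The main obstacle is precisely this globalization step: while the LCI computation is transparent, extending it past the singular (non-LCI) locus of $Y$ requires a careful excision argument, and in the course of this one must check that the cycle $[Y \cap U]$ patches back to $[Y]$ (as defined in Definition \ref{defn:Suuprcrassdefn}). A cleaner but heavier alternative is to deduce the result from Grothendieck-Riemann-Roch applied to the closed immersion $i: Y \hookrightarrow X$, since the lowest-degree part of $\text{ch}(i_*\mathcal{O}_Y) \cdot \text{td}(T_X)$ is forced by the formula $i_*(\text{td}(T_Y))$ to be $[Y]$ in $A^c(X)_{\mathbb{Q}}$; however this requires setting up GRR in a form that accommodates singular $Y$. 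Combining either approach with the devissage step gives
\[
\text{ch}_{n-d}(\mathscr{E}) \;=\; \sum_{\dim Y_i = d} [Y_i] \;=\; \sum_{j=1}^{s} \text{length}(\mathscr{E}_{\eta_j})\,[Z_j] \;=\; [\mathscr{E}],
\]
together with vanishing in all lower degrees, completing the proposition.
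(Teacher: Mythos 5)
First, a point of comparison: the paper does not actually prove this proposition --- it is quoted with a pointer to Kawai--Yoshioka and to \cite[Sec.\ 5.9]{chriss_representation_2010} --- so your argument is being measured against the standard literature proof rather than against anything in the text. Your overall architecture (devissage to structure sheaves of integral subschemes, then a local computation propagated by excision) is the right one, and your bookkeeping at the generic points correctly produces the multiplicities in $[\mathscr{E}]$. Two caveats on the first step: the global devissage produces quotients of the form $\iota_{*}\mathscr{I}$ for $\mathscr{I}$ a nonzero ideal sheaf on an integral $Y_{i}$, not $\iota_{*}\mathcal{O}_{Y_{i}}$ on the nose; this is repaired by a secondary induction on the dimension of the support, since the two differ by a sheaf supported in strictly smaller dimension. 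Note also that the vanishing statement needs none of this machinery: a locally free resolution of $\mathscr{E}$ restricts to an exact complex of locally free sheaves on $V = X \setminus \text{Supp}(\mathscr{E})$, so $\text{ch}(\mathscr{E})|_{V}=0$, and $A^{k}(X)_{\mathbb{Q}} \to A^{k}(V)_{\mathbb{Q}}$ is injective for $k < n-d$ because $A_{n-k}\big(\text{Supp}(\mathscr{E})\big)=0$ in that range.

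The genuine gap is in the Koszul step. For the formula $\prod_{i}(1-e^{-D_{i}})$ you need $Y \cap U$ to be, globally on $U$, the zero scheme of a regular section of a direct sum of line bundles $\mathcal{O}_{U}(D_{i})$; this is much stronger than being a local complete intersection, and it is not achievable on an open set whose complement has codimension $> c$ for a general integral $Y$. If instead you shrink $U$ until the ideal of $Y \cap U$ is generated by a regular sequence of functions, the $D_{i}$ become principal, the formula degenerates to $0=0$ in $A^{c}(U)_{\mathbb{Q}}$, and --- fatally --- the excision map $A^{c}(X)_{\mathbb{Q}} \to A^{c}(U)_{\mathbb{Q}}$ is no longer injective, so nothing about $\text{ch}_{c}(\mathcal{O}_{Y})$ in $A^{c}(X)_{\mathbb{Q}}$ can be recovered. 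The repair is to combine your two fallbacks rather than choose between them: excise the singular locus of $Y$ (a closed subset of codimension $> c$ in $X$, so $A_{d}(X)_{\mathbb{Q}} \to A_{d}(U)_{\mathbb{Q}}$ is still an isomorphism), after which $\iota : Y \cap U \hookrightarrow U$ is a closed embedding of smooth varieties, and the classical Grothendieck--Riemann--Roch theorem of Borel--Serre for such an embedding gives $\text{ch}(\iota_{*}\mathcal{O}_{Y \cap U}) = \iota_{*}\big(\text{td}(N_{Y\cap U/U})^{-1}\big)$, whose lowest-degree term is $\iota_{*}(1) = [Y \cap U]$. No Riemann--Roch for singular $Y$ is needed. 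As written, however, the degree-$(n-d)$ identity is not established.
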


\begin{Ex}\label{Ex:inclHOLVBss}
With $X$ as above, let $\iota : Z \hookrightarrow X$ be the inclusion of the $d$-dimensional integral subscheme $Z$ into $X$, and let $E$ be a locally-free sheaf of rank $r$ on $Z$.  Applying the above proposition to $\iota_{*}E$, we know
\begin{equation}
\text{ch}_{k}(\iota_{*}E ) = 
\begin{cases}
\begin{aligned}
&  0 & k < n-d  \\[1ex]
& r \, [Z], & k=n-d
\end{aligned}
\end{cases}
\end{equation}
\end{Ex}

\subsubsection{The Grothendieck Group of Coherent Sheaves}  

One groundbreaking insight of Grothendieck is that it is often useful to consider not all objects in $\text{Coh}(X)$, but rather only classes of objects in what is called the \emph{Grothendieck group} of coherent sheaves.  Basic questions pertaining to the Chern character of the sheaf, or dimensions of sheaf cohomology groups are potentially easier to answer in this simpler setting.  What we call the Grothendieck group is also known as K-theory, and we will sometimes use this terminology.  For more details, we refer the reader to \cite[Ch. 15]{fulton_intersection_1998}.  

\begin{defn}
Given an algebraic variety $X$, the Grothendieck group of coherent sheaves on $X$ is denoted $K_{0}(X)$ and is defined to be the free abelian group generated by isomorphism classes $[\mathscr{E}]$ of coherent sheaves modulo the relations
\[ [\mathscr{E}] = [\mathscr{E}'] + [\mathscr{E}''] \]
for all short exact sequences $0 \to \mathscr{E}' \to \mathscr{E} \to \mathscr{E}'' \to 0$ in $\text{Coh}(X)$.  The Grothendieck group $K^{0}(X)$ of locally-free sheaves is defined in the same way using isomorphism classes $[E]$ of locally-free sheaves.  
\end{defn}

Tensoring by a locally-free sheaf is exact, so the multiplication $[E] \cdot [F] \coloneqq [E \otimes_{\mathcal{O}_{X}} F]$ turns $K^{0}(X)$ into a commutative ring.  Tensoring by objects of $\text{Coh}(X)$ need not be exact, so it is not immediately obvious that $K_{0}(X)$ is a ring.  However, if $X$ is smooth the locally-free resolution (\ref{eqn:cohres}) of a coherent sheaf gives rise to the following result \cite{fulton_intersection_1998}.  

\begin{proppy}
If $X$ is a smooth variety, then $K_{0}(X) \cong K^{0}(X)$.  It follows that the Grothendieck group of coherent sheaves is a commutative ring with product coming from $\otimes_{\mathcal{O}_{X}}$.  
\end{proppy}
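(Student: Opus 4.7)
The plan is to construct mutually inverse ring homomorphisms between $K^{0}(X)$ and $K_{0}(X)$ using the finite locally-free resolutions guaranteed by the proposition preceding Definition \ref{defn:dettofcohshref}. First I would define the forgetful map
\[
\varphi : K^{0}(X) \longrightarrow K_{0}(X), \qquad [E] \longmapsto [E],
\]
which views a locally-free sheaf as a coherent sheaf. Since any short exact sequence of locally-free sheaves remains exact in $\mathrm{Coh}(X)$, the defining relations of $K^{0}(X)$ are killed in $K_{0}(X)$, so $\varphi$ is a well-defined group homomorphism.

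Next I would define the candidate inverse
\[
\psi : K_{0}(X) \longrightarrow K^{0}(X), \qquad [\mathscr{E}] \longmapsto \sum_{i=0}^{n} (-1)^{i}[E_{i}],
\]
where $0 \to E_{n} \to \cdots \to E_{0} \to \mathscr{E} \to 0$ is any finite locally-free resolution, whose existence is the only place smoothness of $X$ enters. Two things must be checked: (a) the sum is independent of the chosen resolution, and (b) it respects the defining short-exact-sequence relations of $K_{0}(X)$. For (b), given $0 \to \mathscr{E}' \to \mathscr{E} \to \mathscr{E}'' \to 0$, the Horseshoe Lemma produces a locally-free resolution of $\mathscr{E}$ fitting into a termwise split exact sequence of complexes with chosen resolutions of $\mathscr{E}'$ and $\mathscr{E}''$; the alternating sum is then additive on such sequences, giving $\psi([\mathscr{E}]) = \psi([\mathscr{E}']) + \psi([\mathscr{E}''])$.

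Step (a) is the main obstacle and the one place the argument has real content. Given two finite locally-free resolutions $E_{\bullet} \to \mathscr{E}$ and $F_{\bullet} \to \mathscr{E}$, I would reduce to the case where one dominates the other by a quasi-isomorphism of complexes; this reduction uses the standard fact that any two projective-like resolutions admit a common refinement, constructed by iteratively lifting the identity on $\mathscr{E}$. In the dominating case one obtains an exact sequence $0 \to K_{\bullet} \to E_{\bullet} \to F_{\bullet} \to 0$ whose total complex is acyclic, and a termwise induction (using smoothness of $X$ to see that iterated kernels of surjections of locally-free sheaves eventually become locally-free) shows the alternating sums match in $K^{0}(X)$.

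Finally I would verify $\varphi \circ \psi = \mathrm{id}_{K_{0}(X)}$ by breaking a resolution $E_{\bullet} \to \mathscr{E}$ into the short exact sequences of syzygies $0 \to Z_{i} \to E_{i} \to Z_{i-1} \to 0$ and telescoping in $K_{0}(X)$; the equality $\psi \circ \varphi = \mathrm{id}_{K^{0}(X)}$ is immediate because a locally-free sheaf is its own length-zero resolution. The ring structure on $K_{0}(X)$ is then transported from $K^{0}(X)$, where $[E]\cdot[F] := [E \otimes_{\mathcal{O}_{X}} F]$ is well-defined and commutative because $\otimes_{\mathcal{O}_{X}}$ is exact on locally-free sheaves.
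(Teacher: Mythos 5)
The paper itself offers no proof of this proposition; it simply cites Fulton, so there is nothing internal to compare against. Your architecture is the classical Borel--Serre argument and the overall strategy (forgetful map one way, alternating sum of a finite locally-free resolution the other way, telescoping syzygies to check one composite, exactness of $\otimes$ on locally frees for the ring structure) is the right one. Those parts of your sketch are sound.

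The soft spot is that twice you lean on locally-free sheaves behaving like projective objects of $\text{Coh}(X)$, and they do not. Concretely, $\mathrm{Hom}(E,-)$ is not exact for $E$ locally free (on $\mathbb{P}^{1}$ the identity of $\mathcal{O}$ does not lift through the Euler surjection $\mathcal{O}(-1)^{\oplus 2}\to\mathcal{O}$, since $\Gamma(\mathcal{O}(-1)^{\oplus 2})=0$), so ``iteratively lifting the identity on $\mathscr{E}$'' to compare two resolutions, and the Horseshoe Lemma in its usual projective form, both fail as stated. The standard repair is to replace lifting by \emph{surjecting}: given two resolutions $E_{\bullet}\to\mathscr{E}$ and $F_{\bullet}\to\mathscr{E}$, build degree by degree a third resolution $G_{\bullet}$ with termwise \emph{surjective} maps onto both, by choosing a locally free sheaf surjecting onto the coherent fiber product $E_{i}\times_{(\cdot)}F_{i}$ at each stage; similarly, for additivity one constructs a surjection of resolutions $E_{\bullet}\to E''_{\bullet}$ covering $\mathscr{E}\to\mathscr{E}''$ and takes its termwise kernel as the resolution of $\mathscr{E}'$, rather than invoking the Horseshoe Lemma. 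Once this is done the comparison reduces to the fact that a bounded acyclic complex of locally free sheaves has vanishing alternating sum in $K^{0}(X)$, because the kernel of a surjection of locally free sheaves is \emph{automatically} locally free (the surjection splits locally). This also corrects a misattribution in your sketch: smoothness is not what makes the iterated kernels locally free --- that is free of hypotheses --- it is what guarantees, via the resolution of length $n$ quoted before Definition 3.1.7, that a \emph{finite} locally-free resolution exists at all. With those two adjustments your proof goes through.
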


\noindent The Chern character is additive on short exact sequences and multiplicative on tensor products.  We therefore observe that for $X$ smooth, it respects both operations on $K_{0}(X)$ and descends from $\text{Coh}(X)$ to a ring homomorphism from $K_{0}(X)$ into the Chow group after tensoring by the rational numbers.  Colloquially, we say that the Chern character descends to K-theory.  Let us define $K_{0}(X)_{\mathbb{Q}} \coloneqq K_{0}(X) \otimes_{\mathbb{Z}} \mathbb{Q}$.

\begin{proppy}
If $X$ is a smooth variety, the Chern character provides the following ring isomorphism,
\begin{equation} \label{eqn:chcharisom}
\text{ch}: K_{0}(X)_{\mathbb{Q}} \overset{\sim}{\longrightarrow} A^{*}(X)_{\mathbb{Q}}
\end{equation}
between the Grothendieck group of coherent sheaves on $X$ and the Chow group, after tensoring by $\mathbb{Q}$.    
\end{proppy}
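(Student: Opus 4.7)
The plan is to decompose the proof into two main tasks: first, confirm that $\text{ch}$ is a well-defined ring homomorphism $K_0(X)_{\mathbb{Q}} \to A^*(X)_{\mathbb{Q}}$; second, establish that this map is bijective. The first task is nearly formal from material already in the excerpt. By the Whitney product formula, the Chern character of locally-free sheaves is additive on direct sums and multiplicative on tensor products; using the splitting principle one promotes additivity to arbitrary short exact sequences of locally-free sheaves. By Definition 3.1.7 (Chern character via the locally-free resolution (\ref{eqn:cohres})), these properties propagate to all of $\text{Coh}(X)$, and in particular $\text{ch}$ kills the defining relations of $K_0(X)$. Combined with the isomorphism $K_0(X) \cong K^0(X)$ for smooth $X$, the ring structure on $K_0(X)_{\mathbb{Q}}$ is inherited from $\otimes_{\mathcal{O}_X}$, and multiplicativity of $\text{ch}$ on tensor products makes the induced map a ring homomorphism.

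The main content is bijectivity, and I would prove this via a filtration argument. Define the topological (codimension of support) filtration
\begin{equation*}
K_0(X) = F^0 K_0(X) \supseteq F^1 K_0(X) \supseteq \cdots \supseteq F^{n+1} K_0(X) = 0,
\end{equation*}
where $F^p K_0(X)$ is generated by classes $[\mathscr{F}]$ of coherent sheaves with $\text{codim}_X \text{Supp}(\mathscr{F}) \geq p$. By Proposition \ref{proppy:Cherncharrvannnishingss}, $\text{ch}$ maps $F^p K_0(X)$ into $\bigoplus_{k \geq p} A^k(X)_{\mathbb{Q}}$, so it respects the analogous codimension filtration on the Chow side. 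Passing to associated gradeds yields
\begin{equation*}
\text{gr}^p \text{ch}: F^p K_0(X)/F^{p+1} K_0(X) \otimes_{\mathbb{Z}} \mathbb{Q} \longrightarrow A^p(X)_{\mathbb{Q}},
\end{equation*}
which by the same proposition sends the class of $\mathscr{F}$ to its support cycle $[\mathscr{F}]$ defined in (\ref{eqn:supportcycledefnnn}).

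The strategy is then to prove that each $\text{gr}^p \text{ch}$ is an isomorphism and lift this back to the filtered objects by an induction on $p$ combined with a short-five-lemma style argument. Surjectivity is the easier half: for any integral subvariety $\iota\colon Z \hookrightarrow X$ of codimension $p$, Example \ref{Ex:inclHOLVBss} gives $\text{gr}^p \text{ch}([\mathcal{O}_Z]) = [Z]$, and the classes of integral subvarieties generate $A^p(X)$. The hard part — and this is the main obstacle — is injectivity of the associated graded map. This requires the nontrivial input that rational equivalence between codimension-$p$ cycles lifts to an equality modulo $F^{p+1} K_0(X)$ of the corresponding structure-sheaf classes; the standard route is Grothendieck–Riemann–Roch, which provides the inverse $\tau(X) = \text{ch}(-) \cdot \text{td}(X)$ from $K_0(X)_{\mathbb{Q}}$ to $A^*(X)_{\mathbb{Q}}$, and since $\text{td}(X)$ is a unit (leading term $1$), inverting this also inverts $\text{ch}$. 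Once the graded statement is in hand, an induction on $p$ (starting from $F^{n+1} = 0$ and walking upward through the short exact sequences $0 \to F^{p+1} \to F^p \to \text{gr}^p \to 0$) yields the isomorphism at every filtration level, and in particular the full isomorphism $\text{ch}: K_0(X)_{\mathbb{Q}} \overset{\sim}{\to} A^*(X)_{\mathbb{Q}}$.
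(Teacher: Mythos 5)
The paper itself offers no proof of this proposition --- it is quoted as a standard result and the reader is referred to Fulton's \emph{Intersection Theory} --- so there is no in-paper argument to compare against. Your outline is, in architecture, exactly the standard proof from that reference: reduce to the associated graded of the topological (codimension-of-support) filtration, use Proposition \ref{proppy:Cherncharrvannnishingss} to see that $\text{gr}^{p}\,\text{ch}$ sends $[\mathscr{F}]$ to its support cycle, get surjectivity from $[\mathcal{O}_{Z}] \mapsto [Z]$ for integral $Z$ of codimension $p$, and conclude by a finite induction up the filtration. That part is sound.

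The one genuine soft spot is your justification of injectivity on the graded pieces. You write that Grothendieck--Riemann--Roch ``provides the inverse $\tau(X) = \text{ch}(-)\cdot\text{td}(X)$ from $K_{0}(X)_{\mathbb{Q}}$ to $A^{*}(X)_{\mathbb{Q}}$.'' But $\tau$ goes in the \emph{same} direction as $\text{ch}$ and differs from it only by cap product with the unit $\text{td}(X)$; it is not an inverse, and the statement that $\tau$ is an isomorphism is (up to that unit) precisely the proposition you are trying to prove, so invoking it here is circular. The actual content you need --- and which you correctly name but do not supply --- is the lemma that the assignment $Z \mapsto [\mathcal{O}_{Z}]$ from codimension-$p$ cycles to $F^{p}K_{0}(X)/F^{p+1}K_{0}(X)$ kills rational equivalence, so that it descends to a map $A^{p}(X) \to \text{gr}^{p}K_{0}(X)$ inverting $\text{gr}^{p}\,\text{ch}$ after tensoring with $\mathbb{Q}$. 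In Fulton this is proved directly (Example 15.1.5), by expressing a cycle rationally equivalent to zero as a sum of divisors of rational functions $f$ on $(p-1)$-codimensional subvarieties $W$ and checking that $[\mathcal{O}_{\text{div}(f)}]$ lies in $F^{p+1}$ via the exact sequence relating $\mathcal{O}_{W}$ twisted by the zero and pole divisors of $f$; it is not a consequence of GRR. With that lemma supplied, the rest of your argument closes up correctly.
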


\noindent If $X$ is projective, by applying the cycle map (\ref{eqn:cycreemap}) we can think of the Chern character as inducing a ring homomorphism into $H^{2*}(X, \mathbb{Q})$ in the following sense 
\begin{equation}
\begin{tikzcd}
K_{0}(X)_{\mathbb{Q}} \arrow[]{rr}{\sim}\arrow[swap, dashed]{drr}{\text{ch}} & & A^{*}(X)_{\mathbb{Q}} \arrow{d}{\text{cl}}\\
  &   & H^{2*}(X, \mathbb{Q})
\end{tikzcd}
\end{equation}

\begin{defn}
For a projective variety $X$ of dimension $n$ and a coherent sheaf $\mathscr{E}$ on $X$, the Euler characteristic (or holomorphic Euler characteristic) is the integer defined by
\begin{equation}
\chi(X, \mathscr{E}) = \sum_{k=0}^{n} (-1)^{k} \text{dim}\, H^{k}(X, \mathscr{E}).
\end{equation}
\end{defn}

\noindent The dimensions of the sheaf cohomology groups $H^{k}(X, \mathscr{E})$ are extremely difficult to compute in general, yet they may encode useful information.  If one is lucky, knowing $\chi(X, \mathscr{E})$ and applying certain additional theorems, the relevant dimensions might be within reach.  If $X$ is also smooth, the following theorem gives a powerful method for computing $\chi(X, \mathscr{E})$ in terms of topological data associated to $X$ and $\mathscr{E}$.  

\begin{thm}[\bfseries Hirzebruch-Riemann-Roch]
If $X$ is a smooth projective variety, and $\mathscr{E}$ is a coherent sheaf on $X$, then we have
\begin{equation}\label{eqn:HirzzzBBRoch}
\chi(X, \mathscr{E}) = \int_{X} \text{ch}(\mathscr{E}) \text{td}(X)
\end{equation}
where $\text{ch}(\mathscr{E})$ is as defined in (\ref{eqn:defnnnChcarssd}), and $\text{td}(X) = \text{td}(T_{X})$ is the Todd class of the tangent bundle
\begin{equation}
\text{td}(X) = \big( 1, \frac{1}{2}c_{1}(X), \frac{1}{12}\big(c_{1}(X)^{2} + c_{2}(X)\big), \ldots \big).
\end{equation}
\end{thm}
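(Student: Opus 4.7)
The plan is to prove Hirzebruch-Riemann-Roch by a cascade of reductions, going from arbitrary coherent sheaves on an arbitrary smooth projective variety down to line bundles on projective space, where the identity can be verified by hand.

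First, I would observe that both sides of (\ref{eqn:HirzzzBBRoch}) are additive on short exact sequences in $\text{Coh}(X)$. The left-hand side $\chi(X, -)$ is additive because a short exact sequence $0 \to \mathscr{E}' \to \mathscr{E} \to \mathscr{E}'' \to 0$ induces a long exact sequence in sheaf cohomology, and on a projective variety the cohomology groups are finite-dimensional, so the alternating sum telescopes to zero. The right-hand side is additive because the Chern character is additive on short exact sequences, while the Todd class $\text{td}(X)$ depends only on $X$. Consequently both sides descend to homomorphisms $K_{0}(X) \to \mathbb{Z}$, and it suffices to verify the identity on generators.

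Next, since $X$ is smooth, we have the isomorphism $K_{0}(X) \cong K^{0}(X)$ coming from the finite locally-free resolution (\ref{eqn:cohres}), so HRR for coherent sheaves is equivalent to HRR for locally-free sheaves. I would then invoke the splitting principle: for any locally-free $E$ of rank $r$, there is a flag bundle $p: Y \to X$ with $Y$ smooth projective such that $p^{*}E$ splits as a direct sum of line bundles on $Y$, and $p^{*}: A^{*}(X)_{\mathbb{Q}} \to A^{*}(Y)_{\mathbb{Q}}$ is injective. Since $p_{*}\mathcal{O}_{Y} = \mathcal{O}_{X}$ and $R^{i}p_{*}\mathcal{O}_{Y} = 0$ for $i > 0$, the Leray spectral sequence and the projection formula allow one to deduce HRR on $X$ for $E$ from HRR on $Y$ for $p^{*}E$. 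Additivity on direct sums then reduces the problem to verifying HRR for line bundles.

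For the line bundle case, the cleanest route is to prove Grothendieck's stronger statement: for a proper morphism $f: X \to Y$ of smooth varieties and any coherent sheaf $\mathscr{E}$ on $X$,
\begin{equation}
f_{*}\bigl(\text{ch}(\mathscr{E}) \, \text{td}(X)\bigr) = \text{ch}(f_{!}\mathscr{E}) \, \text{td}(Y),
\end{equation}
and then recover HRR as the special case $Y = \text{Spec}\,\mathbb{C}$. The argument factors any projective $f$ through a closed embedding and a projection $\mathbb{P}^{n} \times Y \to Y$: the projection case is handled by the Koszul resolution of the relative diagonal together with the Euler sequence on $\mathbb{P}^{n}$, and the closed embedding case is reduced via deformation to the normal cone to the embedding of the zero section of the normal bundle.

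The main obstacle is the closed-embedding step in the proof of GRR, where one must match the Chern character of the Koszul complex of a regular embedding against the Todd class of the normal bundle; this is the technical heart of the argument, while everything preceding it is formal. An alternative bypassing the full generality of GRR is Hirzebruch's original cobordism proof, which uses the fact that the rational cobordism ring of stably almost complex manifolds is generated by products of projective spaces, thereby reducing the whole theorem to the explicit calculation $\chi(\mathbb{P}^{n}, \mathcal{O}(d)) = \binom{d+n}{n}$, which one matches against the right-hand side using $\text{td}(\mathbb{P}^{n}) = \bigl(\tfrac{H}{1-e^{-H}}\bigr)^{n+1}$ where $H$ is the hyperplane class.
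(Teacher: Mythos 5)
The paper does not prove this theorem: Hirzebruch--Riemann--Roch is stated as a known result (the surrounding text explicitly defers Grothendieck--Riemann--Roch to the literature), so there is no in-paper argument to compare against. Judged on its own, your outline is the standard one (Borel--Serre/Fulton, with Hirzebruch's cobordism proof as the named alternative) and its skeleton is sound: additivity of both sides on short exact sequences, descent to $K_{0}(X)\cong K^{0}(X)$ via the finite locally-free resolution, and reduction to line bundles on projective space. Two caveats. First, a small imprecision: the splitting principle gives $p^{*}E$ a full flag of subbundles with line-bundle quotients, not a literal direct-sum decomposition; this is harmless since only the class in $K^{0}(Y)$ matters, but the reduction from $Y$ back to $X$ also quietly requires $p_{*}\mathrm{td}(T_{Y/X})=1$ for the flag bundle, which is itself a GRR-type computation rather than a formal consequence of the Leray spectral sequence and projection formula. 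Second, your architecture is somewhat redundant: if you are going to prove full GRR for proper morphisms (closed embedding plus projection, deformation to the normal cone, Koszul resolution of the diagonal), the earlier splitting-principle reduction to line bundles is unnecessary, since GRR applied to $X\to\mathrm{Spec}\,\mathbb{C}$ already handles every coherent sheaf. As you acknowledge, the genuinely hard step --- matching the Chern character of the Koszul complex of a regular embedding against the Todd class of the normal bundle, or equivalently the cobordism-ring generation statement plus the computation $\chi(\mathbb{P}^{n},\mathcal{O}(d))=\binom{d+n}{n}$ against $\mathrm{td}(\mathbb{P}^{n})=\bigl(\tfrac{H}{1-e^{-H}}\bigr)^{n+1}$ --- is identified but not carried out, so this remains a correct roadmap rather than a proof.
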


Because the Chern character descends from $\text{Coh}(X)$ to K-theory, the Euler characteristic does as well.  We can think of $\chi(X, -)$ as a ring homomorphism from $K_{0}(X)_{\mathbb{Q}}$ to $\mathbb{Q}$ landing in $\mathbb{Z}$, and one fundamental insight of Grothendieck was that one should think of $\mathbb{Z}$ as $K_{0}(\text{pt})$.  We then have
\[ \chi(X, -) : K_{0}(X)_{\mathbb{Q}} \longrightarrow K_{0}(\text{pt})_{\mathbb{Q}}\]
as the map induced on K-theory from the unique map $X \to \text{pt}$.  Grothendieck therefore noticed that the Hirzebruch-Riemann-Roch theorem should be an example of a more general statement associated to a proper morphism $f : X \to Y$.  This is \emph{Grothendieck-Riemann-Roch}, but we will not discuss this further.

\begin{Ex}
Let $X$ be a smooth projective Calabi-Yau variety of dimension $n$, and let $\mathscr{E}$ be a one-dimensional sheaf on $X$.  By Hirzebruch-Riemann-Roch along with the Calabi-Yau condition, we have
\[ \chi(X, \mathscr{E}) = \int_{X} \text{ch}_{n}(\mathscr{E}).\] 
It follows by Proposition \ref{proppy:Cherncharrvannnishingss} that the Chern character of $\mathscr{E}$ valued in cohomology (not Chow) is given as
\begin{equation}\label{eqn:CherncharrrCY3onedish}
\text{ch}(\mathscr{E}) = \big(0,0, \ldots, 0, [\mathscr{E}], \chi(X, \mathscr{E})\big).  
\end{equation} 
\end{Ex}

\section{Stability Conditions on Coherent Sheaves}\label{sec:StabConddCOHsh}

In certain settings, one should restrict attention to only special objects in $\text{Coh}(X)$ which we call \emph{stable} or \emph{semistable}.  The goal is typically to construct a nice moduli space of sheaves.  When $X$ is a curve, there is an essentially unique notion of stability while in higher dimensions, there are various stability conditions which involve the choice of an ample class on $X$.  The definitions can seem quite unmotivated at first, but one can roughly think of stability conditions as the right ones to impose to get well-behaved moduli spaces.  One of the miracles in the world of `physical mathematics' is that these definitions of stability are brought to life in a sense by physical objects in Yang-Mills theory and string theory.  We saw in the previous chapter that stable holomorphic vector bundles are equivalently bundles admitting a unique irreducible Hermitian Yang-Mills connection.  In the same spirit, we will study D-branes as they relate to certain stable sheaves.

For a quick historical survey, stable vector bundles on a curve were originally studied by Narasimhan and Seshadri \cite{narasimhan_stable_1965, seshadri_space_1967} and generalized by Gieseker \cite{gieseker_moduli_1977} to torsion-free sheaves on surfaces.  Maruyama \cite{maruyama_moduli_1977, maruyama_moduli_1978} generalized the results to torsion-free sheaves on more general varieties.  Ultimately, Simpson \cite{simpson_moduli_1994} generalized the story further to allow for torsion sheaves.  Let us begin by understanding the case of torsion-free sheaves before pushing on to torsion sheaves.

\subsection{Stability Conditions on Torsion-free Sheaves}

Let $X$ be a smooth, irreducible projective variety of dimension $n$ with an ample line bundle $\mathcal{O}_{X}(1)$.  Under the induced embedding $X \hookrightarrow \mathbb{P}^{N}$, the hyperplane bundle $\mathcal{O}_{\mathbb{P}^{N}}(1)$ pulls back to $\mathcal{O}_{X}(1)$.  If $H \subset X$ is a divisor in the linear system $|\mathcal{O}_{X}(1)|$, we define the \emph{degree} of $X$, denoted $\text{deg}(X)$, to be the self-intersection $H^{n}$ which is equivalently understood as the degree of the image of $X$ in $\mathbb{P}^{N}$.  For the duration of this section, a polarized variety $(X,H)$ will mean a variety with an ample divisor $H$, understood to correspond to the line bundle $\mathcal{O}_{X}(1)$.  

Fix a coherent sheaf $\mathscr{E}$ on $X$ of dimension $\text{dim}(\mathscr{E}) = d$.  We define the \emph{Hilbert polynomial} to be
\begin{equation}
P(\mathscr{E}, m) \coloneqq \chi(X, \mathscr{E}(m))
\end{equation}
where $\mathscr{E}(m) = \mathscr{E} \otimes \mathcal{O}_{X}(m)$.  Combining Hirzebruch-Riemann-Roch with Proposition \ref{proppy:Cherncharrvannnishingss}, one can see that $P(\mathscr{E}, m)$ is indeed a polynomial in $m$ of degree $d$ with rational coefficients, which can be expanded as
\begin{equation} \label{eqn:HilbPolyExp}
P(\mathscr{E}, m) = \sum_{i=0}^{d} \alpha_{i}(\mathscr{E}) \frac{m^{i}}{i!}
\end{equation}
for some integers $\alpha_{i}(\mathscr{E})$.  When we want to suppress the variable, we will write $P(\mathscr{E})$ for the Hilbert polynomial.  It is an easy computation to see that the first non-vanishing coefficient is given by
\begin{equation}\label{eqn:firnonvannncoeffHilbpoly}
\alpha_{d}(\mathscr{E}) = H^{d} \cdot [\mathscr{E}] > 0
\end{equation}
where $[\mathscr{E}] = \text{ch}_{n-d}(\mathscr{E})$ is the support cycle defined in (\ref{eqn:supportcycledefnnn}), and related to the Chern character in (\ref{eqn:Cherncarrrcopppps}).  The intersection number (\ref{eqn:firnonvannncoeffHilbpoly}) is positive because $[\mathscr{E}]$ is an effective class (possibly reducible) and $H$ is an ample class.  With this result, one can define the \emph{reduced Hilbert polynomial} $p(\mathscr{E})$ by
\begin{equation}
p(\mathscr{E}, m) = \frac{P(\mathscr{E}, m)}{\alpha_{d}(\mathscr{E})}.
\end{equation}
There is a natural ordering of polynomials where we say $p \leq q$ if $p(m) \leq q(m)$ for $m \gg 0$, and similarly for strict inequality.

Let us now specialize to the case of a torsion-free coherent sheaf $\mathscr{E}$.  Recall this means $\text{dim}(\mathscr{E}) =n$, and note that we require $X$ to be irreducible so that our notion of torsion-free makes sense.  We wish to provide the definitions of the rank and degree of such a sheaf which are closely related to the Hilbert polynomial coefficients $\alpha_{n}(\mathscr{E})$ and $\alpha_{n-1}(\mathscr{E})$, respectively.  The following lemma will be useful.  

\begin{lemmy}
Given a polarized variety $(X, H)$ as above, the first two coefficients of the Hilbert polynomial of $\mathcal{O}_{X}$ encode the following intersection numbers
\begin{equation}
\alpha_{n}(\mathcal{O}_{X}) = H^{n} = \text{deg}(X), \,\,\,\,\,\,\,\,\,\, \alpha_{n-1}(\mathcal{O}_{X}) = -\frac{1}{2} H^{n-1} \cdot K_{X}
\end{equation} 
where $K_{X}$ is the canonical divisor of $X$.  
\end{lemmy}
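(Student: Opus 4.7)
The plan is to simply apply the Hirzebruch–Riemann–Roch theorem (\ref{eqn:HirzzzBBRoch}) to the sheaf $\mathcal{O}_X(m)$ and read off the top two coefficients of the resulting polynomial in $m$. Since $\mathcal{O}_X(m)$ is a line bundle with first Chern class $mH$, its Chern character is the exponential $\mathrm{ch}(\mathcal{O}_X(m)) = e^{mH}$, and the Todd class begins $\mathrm{td}(X) = 1 + \tfrac{1}{2}c_1(X) + \cdots$. Using $c_1(X) = -K_X$ (via the definition of the canonical divisor), Hirzebruch–Riemann–Roch gives
\begin{equation*}
P(\mathcal{O}_X, m) = \int_X e^{mH}\,\mathrm{td}(X) = \int_X \bigg( \sum_{k=0}^{n} \frac{m^k H^k}{k!}\bigg)\bigg(1 - \tfrac{1}{2}K_X + \cdots\bigg).
\end{equation*}

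Next I would isolate the top-degree part of this product (the only part that survives integration over the $n$-dimensional $X$). The coefficient of $m^n$ comes only from pairing $\frac{H^n}{n!}$ with the degree-zero piece $1$ of $\mathrm{td}(X)$, giving $\int_X \frac{H^n}{n!}$. The coefficient of $m^{n-1}$ comes only from pairing $\frac{H^{n-1}}{(n-1)!}$ with the degree-one piece $-\tfrac{1}{2}K_X$ of $\mathrm{td}(X)$, yielding $-\frac{1}{2(n-1)!}\,H^{n-1}\cdot K_X$. So
\begin{equation*}
P(\mathcal{O}_X, m) = \frac{H^n}{n!}\, m^n \;-\; \frac{H^{n-1}\cdot K_X}{2\,(n-1)!}\, m^{n-1} \;+\; (\text{lower order in } m).
\end{equation*}

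Finally I would compare this expression term-by-term with the standard expansion (\ref{eqn:HilbPolyExp}), $P(\mathcal{O}_X, m) = \sum_i \alpha_i(\mathcal{O}_X)\, m^i/i!$. Matching the $m^n$ coefficient gives $\alpha_n(\mathcal{O}_X) = H^n$, which by definition is $\deg(X)$. Matching the $m^{n-1}$ coefficient gives $\alpha_{n-1}(\mathcal{O}_X)/(n-1)! = -H^{n-1}\cdot K_X/(2(n-1)!)$, hence $\alpha_{n-1}(\mathcal{O}_X) = -\tfrac{1}{2}\, H^{n-1}\cdot K_X$, as claimed.

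There is no real obstacle: the argument is a bookkeeping exercise in expanding $e^{mH}\,\mathrm{td}(X)$ and discarding everything but the degree-$n$ component. The only points to be careful about are the sign convention $c_1(X) = -K_X$ (which is responsible for the minus sign in the second formula) and the factorial normalization built into the definition (\ref{eqn:HilbPolyExp}) of the $\alpha_i$, which must be tracked when reading coefficients off the polynomial.
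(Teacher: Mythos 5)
Your proof is correct and follows essentially the same route as the paper: both arguments apply Hirzebruch--Riemann--Roch to $\mathcal{O}_X(m)$ and read off the $m^n$ and $m^{n-1}$ coefficients, using $\mathrm{td}_1(X) = \tfrac{1}{2}c_1(X)$ and $K_X = -\mathrm{PD}(c_1(X))$ for the second one. The only cosmetic difference is that the paper deduces the leading coefficient by citing its earlier formula $\alpha_d(\mathscr{E}) = H^d\cdot[\mathscr{E}]$ rather than re-expanding $e^{mH}$, which amounts to the same computation.
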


\begin{proof}
The first claim follows from (\ref{eqn:firnonvannncoeffHilbpoly}) since $\mathcal{O}_{X}$ is a rank one sheaf of dimension $n$.  For the second claim, noting that $\text{td}_{1}(X) = c_{1}(X)/2$ we have
\begin{equation}
\alpha_{n-1}(\mathcal{O}_{X}) = \frac{1}{2} \int_{X} c_{1}\big(\mathcal{O}_{X}(1)\big)^{n-1} \, c_{1}(X) = -\frac{1}{2} H^{n-1} \cdot K_{X} 
\end{equation}
where the canonical divisor is related to the first Chern class by $K_{X} = -\text{PD}\big(c_{1}(X)\big)$.  
\end{proof}

\begin{defn}
Given a torsion-free sheaf $\mathscr{E}$ on a polarized variety $(X, H)$ as above, we define the degree $\text{deg}(\mathscr{E})$ and rank $\text{rk}(\mathscr{E})$ of $\mathscr{E}$ as follows.  First,
\begin{equation} \label{eqn:rankdef}
\text{rk}(\mathscr{E}) = \frac{\alpha_{n}(\mathscr{E})}{\alpha_{n}(\mathcal{O}_{X})} = \frac{\alpha_{n}(\mathscr{E})}{\text{deg}(X)}
\end{equation}
where the second equality follows by the above lemma.  Using this definition of rank, we define the degree   
\begin{equation} \label{eqn:degdef}
\setlength{\jot}{12pt}
\begin{split}
\text{deg}(\mathscr{E}) & = \alpha_{n-1}(\mathscr{E}) - \text{rk}(\mathscr{E}) \alpha_{n-1}(\mathcal{O}_{X}) \\
& = \alpha_{n-1}(\mathscr{E}) + \frac{\text{rk}(\mathscr{E})}{2} H^{n-1} \cdot K_{X},
\end{split}
\end{equation}
again using the above lemma.  
\end{defn}

These definitions of rank and degree seem unmotivated.  From a theoretical perspective, it is preferable to have them arise in terms of coefficients of the Hilbert polynomial, but they can in fact be shown to coincide with more familiar quantities. 

\begin{proppy}
For a torsion-free sheaf $\mathscr{E}$ of rank $r$ (meaning $\text{ch}_{0}(\mathscr{E}) =r$) on the polarized variety $(X, H)$, we have $\text{rk}(\mathscr{E}) = r$, and
\begin{equation} \label{eqn:slopdeg}
\text{deg}(\mathscr{E}) = \int_{X} c_{1}(\mathscr{E}) \, \text{PD}(H)^{n-1} = \beta \cdot H^{n-1}
\end{equation}
where $\beta$ is Poincar\'{e} dual to $c_{1}(\mathscr{E})$.  
\end{proppy}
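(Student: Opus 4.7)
The plan is to compute the Hilbert polynomial $P(\mathscr{E}, m)$ explicitly via Hirzebruch-Riemann-Roch and then simply read off the coefficients $\alpha_n(\mathscr{E})$ and $\alpha_{n-1}(\mathscr{E})$. Since twisting by $\mathcal{O}_X(m)$ is multiplicative on the Chern character and $\mathrm{ch}(\mathcal{O}_X(m)) = e^{mH}$ (where I identify $H$ with $c_1(\mathcal{O}_X(1))$), (\ref{eqn:HirzzzBBRoch}) yields
\[
P(\mathscr{E},m) \;=\; \int_X \mathrm{ch}(\mathscr{E})\, e^{mH}\, \mathrm{td}(X).
\]
Expanding $e^{mH} = \sum_{j\geq 0} m^j H^j/j!$ and collecting by powers of $m$, the coefficient of $m^n/n!$ in the integrand that survives integration (i.e.\ lies in top degree) is $\mathrm{ch}_0(\mathscr{E}) H^n = r H^n$, while the coefficient of $m^{n-1}/(n-1)!$ is $\mathrm{ch}_1(\mathscr{E}) H^{n-1} + r H^{n-1}\cdot \mathrm{td}_1(X)$.

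For the first claim, this gives $\alpha_n(\mathscr{E}) = r H^n = r\,\mathrm{deg}(X)$, so by the definition (\ref{eqn:rankdef}) we obtain $\mathrm{rk}(\mathscr{E}) = r$ immediately. For the second claim, I would substitute $\mathrm{td}_1(X) = \tfrac{1}{2}c_1(X)$ and use the identification $c_1(X) = -K_X$ (so that $\tfrac12 H^{n-1}\cdot c_1(X) = -\tfrac12 H^{n-1}\cdot K_X$) to get
\[
\alpha_{n-1}(\mathscr{E}) \;=\; \int_X c_1(\mathscr{E})\,\mathrm{PD}(H)^{n-1} \;-\; \frac{r}{2}\, H^{n-1}\cdot K_X.
\]
Plugging this into the definition (\ref{eqn:degdef}) of $\mathrm{deg}(\mathscr{E})$, the correction term $\tfrac{r}{2}H^{n-1}\cdot K_X$ cancels exactly, leaving $\mathrm{deg}(\mathscr{E}) = \int_X c_1(\mathscr{E})\,\mathrm{PD}(H)^{n-1}$. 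Rewriting this as an intersection number via $\beta = \mathrm{PD}(c_1(\mathscr{E}))$ gives $\beta \cdot H^{n-1}$, as desired.

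There is no real obstacle here; the result is essentially an unwinding of definitions once HRR is applied. The only points requiring care are bookkeeping: (i) tracking the $1/n!$ and $1/(n-1)!$ factors when matching against the expansion (\ref{eqn:HilbPolyExp}); (ii) using the sign convention $c_1(X) = -K_X$ (equivalently $K_X = -\mathrm{PD}(c_1(X))$ as stated in the lemma above) so that the Todd correction has the right sign to cancel against the correction built into the definition of $\mathrm{deg}(\mathscr{E})$; and (iii) remembering that torsion-freeness is implicitly used only to ensure $\mathrm{dim}(\mathscr{E}) = n$, so that $\alpha_n$ and $\alpha_{n-1}$ are indeed the top two coefficients of the Hilbert polynomial. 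Everything else is a direct computation.
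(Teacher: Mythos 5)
Your proposal is correct and follows essentially the same route as the paper: apply Hirzebruch--Riemann--Roch to $\mathscr{E}(m)$, read off $\alpha_{n}(\mathscr{E}) = r\,\mathrm{deg}(X)$ and $\alpha_{n-1}(\mathscr{E}) = H^{n-1}\cdot\big(\beta - \tfrac{r}{2}K_{X}\big)$, and substitute into the definitions (\ref{eqn:rankdef}) and (\ref{eqn:degdef}). Your bookkeeping of the Todd-class contribution and the sign convention $K_{X} = -\mathrm{PD}(c_{1}(X))$ is exactly what the paper's computation relies on, just spelled out more explicitly.
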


\begin{proof}
By an application of Hirzebruch-Riemann-Roch we have,
\begin{equation}
\chi\big(X, \mathscr{E}(m)\big) = \frac{m^{n}}{n!} r \, \text{deg}(X) + \frac{m^{n-1}}{(n-1)!} H^{n-1} \cdot \big(\beta - \frac{r}{2} K_{X}\big) + \ldots
\end{equation}
Therefore, $\alpha_{n}(\mathscr{E}) = r \, \text{deg}(X)$ which implies $\text{rk}(\mathscr{E}) = r$.  With this, one can read off $\alpha_{n-1}(\mathscr{E})$ above and applying the definition of degree (\ref{eqn:degdef}), the claim is immediate.  
\end{proof}

\begin{defn}\label{eqn:Gieeekstab}
Given a torsion-free coherent sheaf $\mathscr{E}$ on a smooth, irreducible projective polarized variety $(X, H)$, with reduced Hilbert polynomial
\begin{equation}
p(\mathscr{E}, m) = \frac{P(\mathscr{E}, m)}{\text{rk}(\mathscr{E})}
\end{equation}
we say that $\mathscr{E}$ is Gieseker semistable if for all proper subsheaves $\mathscr{F} \hookrightarrow \mathscr{E}$, we have $p(\mathscr{F}) \leq p(\mathscr{E})$.  We say that $\mathscr{E}$ is Gieseker stable if it is Gieseker semistable and the inequality above is strict for all subsheaves.    
\end{defn}

\begin{defn}
With the same assumptions on $(X, H)$, we define the slope of a torsion-free coherent sheaf
\begin{equation} \label{eqn:torfreeslope}
\mu(\mathscr{E}) = \frac{\text{deg}(\mathscr{E})}{\text{rk}(\mathscr{E})} = \frac{1}{\text{rk}(\mathscr{E})} \int_{X} c_{1}(\mathscr{E}) \text{PD}(H)^{n-1}  
\end{equation}
and we say $\mathscr{E}$ is $\mu$-semistable if $\mu(\mathscr{F}) \leq \mu(\mathscr{E})$ for all proper subsheaves $\mathscr{F} \hookrightarrow \mathscr{E}$ with $\text{rk}(\mathscr{F}) < \text{rk}(\mathscr{E})$.  We define $\mathscr{E}$ to be $\mu$-stable if it is $\mu$-semistable with strict inequality.  
\end{defn}

These definitions provide the two related stability conditions we will consider on the torsion-free objects of $\text{Coh}(X)$.  Notice that we are relying on $\text{Coh}(X)$ being an abelian category, since we require a notion of subsheaf; and because subsheaves of torsion-free sheaves are torsion-free, these conditions are well-defined.  It is straightforward to show the following relationship between the stability conditions above
\[\mu-\text{stability}  \Longrightarrow \text{Gieseker stability}  \Longrightarrow \text{Gieseker semistability}  \Longrightarrow \mu-\text{semistability}.\]

\begin{Ex}
Notice that torsion-free sheaves of rank one are automatically $\mu$-stable, and therefore Gieseker stable.  This is because $\mu$-stability requires subsheaves to be of strictly smaller rank.  But torsion-free sheaves cannot have rank zero (torsion) subsheaves.  For example, invertible sheaves and ideal sheaves are stable.  
\end{Ex}

\begin{Ex}[\bfseries Torsion-free Sheaves on Smooth Curves]
Historically, the notion of stability originated with vector bundles on curves \cite{narasimhan_stable_1965, seshadri_space_1967}.  Let $X$ be a smooth, projective curve of genus $g$ polarized by an ample line bundle $\mathcal{O}_{X}(1)$.  On a curve, every torsion-free sheaf $E$ is locally-free and is determined topologically by the rank $r$ and degree $\text{deg}(E)$.  The degree of a bundle on a curve is typically defined as $\text{deg}(E) = \int_{X}c_{1}(E)$, which is consistent with (\ref{eqn:slopdeg}).  By Riemann-Roch on curves we compute
\begin{equation}
P(E, m) = \text{deg}\big( E \otimes \mathcal{O}_{X}(m)\big) + r(1-g) = r \, \text{deg}(X) \, m + \text{deg}(E)+ r(1-g)
\end{equation}
where we have used the standard fact that for bundles $E_{1}$ and $E_{2}$ on a curve, the above definition of degree implies $\text{deg}(E_{1} \otimes E_{2}) = \text{rk}(E_{1}) \text{deg}(E_{2}) + \text{rk}(E_{2}) \text{deg}(E_{1})$.  The reduced Hilbert polynomial of $E$ is
\begin{equation}
p(E,m) = m + \frac{1}{\text{deg}(X)}\big(1-g + \mu(E)\big)
\end{equation}
where the slope of $E$ is $\mu(E) = \text{deg}(E)/r$.  Notice that $\mu(E)$ is the only term of $p(E)$ depending on the bundle $E$.  This shows that for vector bundles on a curve, Gieseker stability is equivalent to slope stability.  Therefore, there exists a unique notion of stability on a curve measured by the slope $\mu$.  Moreover, this unique notion of stability is independent of a choice of polarization. 
\end{Ex}

Recall in the previous chapter we introduced the Hermitian Yang-Mills equation (\ref{eqn:HermEin}) on a compact K\"{a}hler manifold $(X, J)$, which is a relationship between the K\"{a}hler form $J$ and the curvature of the Chern connection associated to a holomorphic bundle.  By the Donaldson-Uhlenbeck-Yau theorem, the bundle is $\mu$-stable if and only if the Chern connection is an irreducible Hermitian Yang-Mills connection.  If $J = \text{PD}(H)$ for an ample divisor $H$, we can study $\mu$-stability of bundles, and notice that the slope (\ref{eqn:slopevect}) of a vector bundle with respect to $J$ coincides precisely with the slope (\ref{eqn:torfreeslope}) of a torsion-free sheaf with respect to $H$.  

Before discussing moduli spaces of torsion-free sheaves and pushing on to torsion sheaves, let us pause to give a brief overview of moduli problems in general.  This will be helpful not just in this chapter, but throughout the thesis.

\subsection{A Review of Moduli Problems in Algebraic Geometry} \label{subsec:RevModProbb}

One problem which has been of interest to mathematicians in all eras is the classification of geometric objects of a fixed type up to some notion of equivalence.  In the language of modern algebraic geometry, such a problem is known as a \emph{moduli problem}.  To give some simple examples, one can study linear subspaces of $\mathbb{C}^{n}$ up to honest equality (which leads to the Grassmanian varieties), or subvarieties of $\mathbb{P}^{n}$ up to $PGL_{n+1}(\mathbb{C})$ transformations.  

Given a moduli problem, one would like to extract a geometric space known as a \emph{moduli space}.  As a first pass at what such an object should be, one would like a moduli space to be a scheme such that the points over an algebraically closed field $k$ are in bijection with the equivalence classes of objects one is classifying.  The problem is that this is not refined enough to encode the scheme structure of the moduli space -- it might as well consist just of a disjoint union of points $\text{Spec} \, k$, one for each class.  

To get a properly refined notion of a moduli space, one should understand how objects deform in a family.  As we will see shortly, it is therefore natural to associate to a moduli problem a moduli \emph{functor} which associates a scheme to the set of families parameterized by that scheme.  In this formulation, a moduli functor is closely related to what is called a stack.  However, one frequently prefers to recover a scheme from the functor (for example, when defining enumerative invariants).  By imposing stability conditions, this may be possible as we will discuss, and leads to fine moduli spaces or coarse moduli spaces.  

The goal of this section is to provide a brief and readable overview of a very deep subject, so we refer the interested reader to \cite{hartshorne2009deformation, huybrechts_geometry_2010, neumann_algebraic_2009} for more details.  We hope this section provides some background to understanding not only moduli spaces of stable sheaves (arising in Donaldson-Thomas and Gopakumar-Vafa invariants) but also moduli spaces of stable curves and stable maps in Gromov-Witten theory.

\subsubsection{Flat Families and the Moduli Functor}  

The heuristic picture of a \emph{family} of objects $(\star)$ parameterized by a base scheme $S$ is that for all closed points of $S$, one has an object $(\star)$ and that these objects vary algebraically as one moves in the base.  Inevitably, the precise definition of a family will depend on what category the objects $(\star)$ come from -- the definition will be slightly different for families of varieties, maps, and sheaves.  These will all be of interest in this thesis at some point.  We will primarily use \emph{flat families} which one should think of as roughly families such that the objects don't jump too wildly as one moves in $S$.  We will not include here a proper treatment of flatness, and instead refer the reader to \cite{eisenbud_geometry_2000}.  

To start in the simplest setting, we define a family of algebraic varieties of a fixed type parameterized by a scheme $S$ (always of finite type over an algebraically closed field $k$) to be a morphism
\[ \pi : \mathcal{X} \longrightarrow S,\]
such that for all closed points $s \in S$, the fiber $\pi^{-1}(s)$ is an algebraic variety of the fixed type.  One should think of a flat family of varieties as one where the fibers have some fixed discrete invariants.  In particular, the Hilbert polynomial is locally constant in a flat family.  In a moduli problem, one also needs a notion of equivalence $\sim_{S}$ for each scheme $S$.  We say two such families $\pi : \mathcal{X} \to S$ and $\pi' : \mathcal{X}' \to S$ are equivalent if there exists an isomorphism $f : \mathcal{X} \to \mathcal{X}'$ such that the following diagram commutes 
\begin{equation}
\begin{tikzcd}
\mathcal{X} \arrow{rr}{f} \arrow[swap]{dr}{\pi} & & \mathcal{X}' \arrow{dl}{\pi'} \\
& S  &
\end{tikzcd}
\end{equation}
The main example of this type we will see is with flat families of stable curves in Section \ref{subsec:ModSpStCURVV}.  In Section \ref{eqn:secssonStabMapss} we will see an example of the similar notion of a flat family of maps between varieties.  

Let us briefly describe the corresponding picture for sheaves.  For a smooth projective variety $X$, we want to set up the moduli problem for coherent sheaves on $X$ satisfying some properties $(\star)$.  Here, $(\star)$ can represent a specification of purity, fixed determinant, stability with respect to a polarization, or as we will address shortly, a fixed Chern character or Hilbert polynomial.  

\begin{defn}
A family of coherent sheaves on $X$ with properties $(\star)$ parameterized by a base scheme $T$ is a coherent $\mathcal{O}_{X \times T}$-module $\mathcal{F}$ such that for all closed points $t \in T$, $\mathcal{F}_{t} \coloneqq \mathcal{F} |_{X \times \{t\}}$ is a coherent $\mathcal{O}_{X}$-module with properties $(\star)$.  The family is said to be flat if $\mathcal{F}$ is flat over $T$.  
\end{defn}

\noindent As above, we have omitted a proper discussion of flatness.  But in particular, a flat family is such that the Hilbert polynomial of $\mathcal{F}_{t}$ is locally constant as one moves in the base \cite{huybrechts_geometry_2010}.  The converse is also true when $T$ is reduced.  Let $\pi_{T} : X \times T \to T$ be the canonical projection.  We define $\mathcal{E} \sim_{T} \mathcal{F}$ for two flat families $\mathcal{E}, \mathcal{F}$ over $T$ if $\mathcal{E} \cong \mathcal{F} \otimes \pi_{T}^{*}L$ for some line bundle $L$ on $T$.  

We have remarked above that it is natural to associate a functor to a moduli problem in order to encode how objects deform in a family.  Let $\mathbf{Set}$ denote the category of sets, and $\mathbf{Sch}$ the category of schemes of finite type over an algebraically closed field.  

\begin{defn}
Given a moduli problem, a moduli functor is the functor $\mathcal{M}: \mathbf{Sch}^{op} \to \mathbf{Set}$ defined on objects by
\begin{equation}\label{eqn:defnnnmoddspppp}
\mathcal{M}(S) \coloneqq \bigg\{ \text{Flat families of objects} \, (\star)\, \text{parameterized by} \, S \, \text{up to} \, \sim_{S}\bigg\}. 
\end{equation}
Associated to a morphism $f : S \to T$ of schemes, we define $\mathcal{M}(f)$ by the pullback
\begin{equation}
\mathcal{M}(f) \coloneqq f^{*} : \mathcal{M}(T) \longrightarrow \mathcal{M}(S).
\end{equation}
\end{defn}

\noindent In whatever specific category the objects in a family live, there is clearly a well-defined notion of the pullback of a family by a morphism of base schemes.  

\begin{rmk}\label{rmk:speckpointsmodspp}
Notice that in the special case of families over the point $\text{Spec} \, k$, the set $\mathcal{M}(\text{Spec} \, k)$ is simply the set of isomorphism classes of the objects one is parameterizing.  
\end{rmk}

\noindent We will see many examples of moduli functors in coming sections and chapters so for now, we proceed with introducing some of the formal concepts.

\subsubsection{Fine and Coarse Moduli Spaces}  

In more categorical language, a moduli functor $\mathcal{M} : \mathbf{Sch}^{op} \to \mathbf{Set}$ is an example of what is called a presheaf on $\mathbf{Sch}$.  These are objects in the category $\text{Fun}(\mathbf{Sch}^{op}, \mathbf{Set})$.  Given a scheme $M$, another well-known example of a presheaf on $\mathbf{Sch}$ is the functor of points $h_{M} \coloneqq \text{Hom}( -, M)$.  By mapping $M \mapsto h_{M}$, we get a faithful functor known as the Yoneda embedding 
\begin{equation}
\begin{tikzcd}
\mathbf{Sch} \arrow[hookrightarrow]{r}{} & \text{Fun}(\mathbf{Sch}^{op}, \mathbf{Set}). 
\end{tikzcd}
\end{equation}

\begin{defn}
A scheme $M$ is a fine moduli space for a moduli functor $\mathcal{M}$ if there is an isomorphism of functors $\mathcal{M} \cong h_{M}$.  In other words, a fine moduli space is a scheme representing $\mathcal{M}$.    
\end{defn} 

\noindent We therefore see that fine moduli spaces exist precisely for those moduli functors lying in the image of the Yoneda embedding.  If such a scheme exists, a fine moduli space is unique up to unique isomorphism.   

One frequently hears a fine moduli space described in terms of a universal family.  Let us now define such an object and see how it relates to the representability of the moduli functor.  We assume $M$ to be a fine moduli space for a moduli functor $\mathcal{M}$.  For all schemes $S$ we get a bijection of sets
\begin{equation}
\begin{tikzcd}
\eta_{S} : \mathcal{M}(S) \arrow[rightarrow]{r}{\sim} & h_{M}(S) = \text{Hom}(S, M).
\end{tikzcd}
\end{equation}
In particular for $S = \text{Spec} \, k$, by Remark \ref{rmk:speckpointsmodspp} we see that the closed points of $M$ are in bijection with the isomorphism classes of the objects one is parameterizing.  This is indeed one of the properties we would ask a moduli space to satisfy.  Moreover, associated to a morphism $f : S \to T$ we get a commutative diagram
\begin{equation}
\begin{tikzcd}
\mathcal{M}(T)\arrow[rightarrow]{r}{\eta_{T}} \arrow[rightarrow]{d}   &    \text{Hom}(T, M) \arrow[rightarrow]{d} \\
\mathcal{M}(S)\arrow[rightarrow]{r}{\eta_{S}}      &  \text{Hom}(S, M)
\end{tikzcd}
\end{equation}
In particular, one can choose the identity morphism $\text{id}_{M} : M \to M$ on the fine moduli space itself which induces the bijection $\eta_{M} : \mathcal{M}(M)  \to \text{Hom}(M, M)$.  
\begin{defn}
The universal family $\mathcal{U} \to M$ over a fine moduli space is a flat family over $M$ up to equivalence defined by $\mathcal{U} \coloneqq \eta_{M}^{-1}(\text{id}_{M}) \in \mathcal{M}(M)$.  
\end{defn}

\noindent Associated to a family $\mathcal{X} \to S$ over a scheme $S$, we get an induced map $f : S \to M$ by sending a closed point of $S$ to the isomorphism class of its fiber.  Pulling back the universal family $\mathcal{U}$ by $f$, we get a family $f^{*}\mathcal{U} \to S$ with the same induced map into $M$.  We therefore must have the equivalence
\[ \mathcal{X} \sim_{S} f^{*} \mathcal{U}.\]
In other words, assuming a fine moduli space exists, every family is pulled back from the universal family.  

\begin{Ex}
Most of the moduli problems we will encounter in this thesis will not admit a fine moduli space, so let us briefly state without details some important examples of ones which do.
\begin{enumerate}
\item The moduli problem which parameterizes families of subschemes of a projective scheme $X$ with a fixed Hilbert polynomial $P$ admits a fine moduli space which is a projective scheme called the Hilbert scheme $\text{Hilb}_{P}(X)$.  For us, this will arise when we study Donaldson-Thomas theory.  

\item There is a fine moduli space of stable holomorphic vector bundles with fixed coprime rank and degree on a smooth projective curve $C$.  The moduli space is projective, and is simply the Jacobian $\text{Jac}(C)$ in the case of degree 0 line bundles.  

\item The dual abelian variety $\text{Pic}^{0}(A)$ is a fine moduli space for degree 0 line bundles on a smooth abelian variety $A$.  The moduli space is again projective, and the universal family is known as the Poincar\'{e} line bundle.  
\end{enumerate}
\end{Ex}

Given a moduli problem, the ideal solution is a fine moduli space, though one might not exist.  If a fine moduli space doesn't exist there are a few options.  A moduli functor is nearly defined to be a \emph{stack}, so depending on the goal one may be content to work with the stack (as we will do in Gromov-Witten theory).  Alternatively, one can \emph{rigidify} the moduli problem by specifying additional data and constraints in the hope that the new moduli problem admits a fine moduli space.  

The third option is to weaken the demands on a scheme solving a moduli problem.  We can ask for a scheme whose closed points are in bijection with isomorphism classes of the objects, and which satisfies a universal property.  This leads to the following definition.  

\begin{defn}
Given a moduli functor $\mathcal{M}$, a coarse moduli space for $\mathcal{M}$ is a scheme $M$ along with a morphism of functors $\eta : \mathcal{M} \to h_{M}$ such that:
\begin{enumerate}
\item $\eta_{\text{Spec}k} : \mathcal{M}(\text{Spec} \, k) \longrightarrow \text{Hom}( \text{Spec} \, k, M)$ is a bijection.  
\item For any scheme $S$ with a morphism of functors $\nu : \mathcal{M} \to h_{S}$, there exists a unique morphism of schemes $f : M \to S$ such that $\nu = h_{f} \circ \eta$, where $h_{f} : h_{M} \to h_{S}$.  
\end{enumerate}
\end{defn}

\noindent One can show that if a coarse moduli space exists, then it is unique up to unique isomorphism.  

\subsection{Moduli Spaces of Torsion-free Sheaves}

With the foundational ideas in hand, we would now like to study the moduli problem of parameterizing torsion-free coherent sheaves on $X$ with fixed discrete data.  The issue is that considering all torsion-free sheaves will not result in well-behaved moduli spaces (though this may not actually be a concern if one is content to work with stacks).  The stability conditions discussed previously were invented by Mumford, Gieseker and others precisely to construct moduli spaces (using geometric invariant theory) which are schemes.  

In order to specify some of the discrete, topological features of a sheaf one typically fixes either the Chern character or the Hilbert polynomial.  By Hirzebruch-Riemann-Roch, the choice of a Chern character along with an ample class $H$ clearly determines the Hilbert polynomial.  For generic $H$ one can recover the Chern character, though this will fail for special choices of $H$.  We prefer to have the ample class only arising in the stability condition, not in the specification of the topological features of the sheaf.  We will therefore fix the Chern character (not as a class in Chow, but in cohomology).  
  
Let $(X, H)$ be a smooth irreducible projective variety polarized by an ample divisor $H$, and let $v \in H^{2*}(X, \mathbb{Q})$ be a cohomology class with lowest graded entry $v_{0} > 0$.  Following (\ref{eqn:defnnnmoddspppp}), and assuming the notion of stability is that of Gieseker in Definition \ref{eqn:Gieeekstab}, we define the moduli functor $\mathcal{M}_{H,v}^{ss}(X)$ by
\begin{equation}\label{eqn:moddsppssss}
\mathcal{M}_{H,v}^{ss}(X)(T) \coloneqq 
\begin{Bmatrix*}[l]
\text{Flat families over}\,\, T \,\, \text{of coherent sheaves on} \,\, X \,\, \text{with Chern} \\  \,\,\,\,\,\,\,\, \text{character} \, v, \, \text{semistable with respect to} \, H, \, \text{up to} \, \sim_{T}
\end{Bmatrix*}.
\end{equation}
By our assumption $v_{0} > 0$, we are for the time being working with torsion-free sheaves.  Of course, we will drop this condition on $v$ when we generalize to torsion sheaves.  Replacing semistability with stability, we can correspondingly define the open subfunctor $\mathcal{M}^{s}_{H, v}(X)$.  Because stable sheaves are simple, their automorphism group is $\mathbb{C}^{*}$, and the moduli functor is nearly representable.  On the other hand, strictly semistable sheaves typically have large automorphism groups, and the moduli functor therefore has no hope of being representable.  

One can content themselves to work with a coarse moduli space of semistable sheaves at the expense of introducing the notion of \emph{S-equivalence} \cite{simpson_moduli_1994,huybrechts_geometry_2010}.  Every semistable sheaf $\mathscr{E}$ has a Jordan-H\"{o}lder filtration
\[ 0 = \mathscr{E}_{0} \subset \cdots \subset \mathscr{E}_{n-1} \subset \mathscr{E}_{n} = \mathscr{E}\]
such that all quotients $\mathscr{E}_{k}/\mathscr{E}_{k-1}$ are stable with slope equal to the slope of $\mathscr{E}$.  We define 
\[ \text{gr} (\mathscr{E}) \coloneqq \bigoplus_{k=1}^{n} \mathscr{E}_{k}/\mathscr{E}_{k-1}\]
and we say that two semistable sheaves $\mathscr{E}$ and $\mathscr{F}$ are S-equivalent if $\text{gr}(\mathscr{E}) \cong \text{gr}(\mathscr{F})$.  This condition is vacuous in the case of stable sheaves as two stable sheaves are S-equivalent if and only if they are isomorphic.

\begin{thm}
Let $(X, H)$ be as above, and let $v \in H^{2*}(X, \mathbb{Q})$ be a cohomology class with $v_{0} >0$.    
\begin{enumerate}
\item The moduli functor $\mathcal{M}_{H,v}^{ss}(X)$ has a coarse moduli space $M_{H,v}^{ss}(X)$ which is a projective scheme, and whose closed points are S-equivalence classes of torsion-free sheaves, semistable with respect to $H$, and with Chern character $v$.  

\item There is a quasi-projective scheme $M_{H,v}^{s}(X) \subseteq M_{H,v}^{ss}(X)$ whose closed points correspond to isomorphism classes of torsion-free sheaves, stable with respect to $H$, and with Chern character $v$.  

\item If there are no strictly semistable torsion-free sheaves with Chern character $v$, then the moduli space $M_{H,v}^{s}(X)$ is a projective scheme which we typically denote $M_{H,v}(X)$.  
\end{enumerate} 
Because the automorphism group of all stable sheaves is $\mathbb{C}^{*}$, the moduli functor $\mathcal{M}_{H, v}^{s}(X)$ is what is called a $\mathbb{C}^{*}$-gerbe over the scheme $M_{H, v}^{s}(X)$.    
\end{thm}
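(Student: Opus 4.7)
The plan is to follow the classical Geometric Invariant Theory (GIT) construction of Gieseker, Maruyama, and Simpson; the moduli space will arise as a GIT quotient of an open subscheme of a Quot scheme.

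First I would establish \emph{boundedness}: the collection of Gieseker semistable torsion-free sheaves $\mathscr{E}$ on $(X,H)$ with Chern character $v$ forms a bounded family. Equivalently, there exists an integer $m_{0}$, depending only on $v$ and $H$, such that every such $\mathscr{E}$ is $m_{0}$-regular in the sense of Castelnuovo--Mumford. This is the Grothendieck--Maruyama--Simpson boundedness theorem, and I would invoke it as a black box. Once this is in hand, for any $m \geq m_{0}$ the twist $\mathscr{E}(m)$ is globally generated with vanishing higher cohomology, so choosing a $\mathbb{C}$-vector space $V$ of dimension $P(v,m)$ one obtains a surjection
\begin{equation}
V \otimes \mathcal{O}_{X}(-m) \twoheadrightarrow \mathscr{E},
\end{equation}
and hence a closed point of the Grothendieck Quot scheme $\mathcal{Q} := \mathrm{Quot}\bigl(V \otimes \mathcal{O}_{X}(-m), P\bigr)$, where $P$ is the Hilbert polynomial determined by $v$ and $H$. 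By Grothendieck's theorem, $\mathcal{Q}$ is a projective scheme.

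Next I would let $R \subset \mathcal{Q}$ be the open subscheme parameterizing quotients $V \otimes \mathcal{O}_{X}(-m) \twoheadrightarrow \mathscr{E}$ such that $\mathscr{E}$ is torsion-free with Chern character $v$ and the induced map $V \to H^{0}\bigl(X, \mathscr{E}(m)\bigr)$ is an isomorphism. The group $GL(V)$ acts naturally on $R$ by automorphisms of $V$, and the orbits are precisely the isomorphism classes of sheaves; the center $\mathbb{C}^{*} \subset GL(V)$ acts trivially, so we work with $SL(V)$. The key technical step is to linearize this action using a Pl\"ucker-type embedding of $\mathcal{Q}$ into a Grassmannian $\mathrm{Gr}\bigl(V \otimes H^{0}(\mathcal{O}_{X}(l-m)), P(l)\bigr)$ for $l \gg m \gg 0$, and then to show that under this linearization the GIT (semi)stable locus coincides exactly with the locus $R^{ss}$ (respectively $R^{s}$) of Gieseker (semi)stable sheaves. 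This identification, which rests on a careful numerical comparison of Hilbert polynomials of subsheaves with dimensions of spanned subspaces of $V$, is the main obstacle and the heart of the construction; I would carry it out in the style of \cite{huybrechts_geometry_2010}.

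Granted the GIT identification, part (1) follows at once: Mumford's GIT theorem produces a good projective quotient $M_{H,v}^{ss}(X) := R^{ss}/\!/SL(V)$, which corepresents the moduli functor and whose closed points parameterize closed orbits. Since the closed orbits in $R^{ss}$ correspond to polystable sheaves, and two semistable sheaves have the same polystable representative iff they are S-equivalent, the closed points of $M_{H,v}^{ss}(X)$ are exactly S-equivalence classes. For part (2), the GIT stable locus $R^{s}$ is open in $R^{ss}$ with a geometric quotient, giving the quasi-projective open subscheme $M_{H,v}^{s}(X) \subseteq M_{H,v}^{ss}(X)$ whose closed points are isomorphism classes. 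Finally, (3) is immediate: when there are no strictly semistable sheaves, $R^{s} = R^{ss}$, so $M_{H,v}^{s}(X) = M_{H,v}^{ss}(X)$ is projective. The gerbe statement follows because all stable sheaves are simple, so $\mathrm{Aut}(\mathscr{E}) = \mathbb{C}^{*}$ acts trivially on $R^{s}$ while stabilizing each point, realizing $\mathcal{M}_{H,v}^{s}(X)$ as a $\mathbb{C}^{*}$-gerbe over its coarse moduli space.
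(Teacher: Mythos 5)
The paper states this theorem without proof, as a classical result of Gieseker, Maruyama, and Simpson whose GIT construction it only alludes to in the surrounding discussion. Your outline is exactly that standard argument (boundedness, Quot-scheme rigidification, GIT linearization, and the matching of GIT semistability with Gieseker semistability as in Huybrechts--Lehn), so it is the same approach the paper implicitly relies on, and the sketch is correct.
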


Moduli spaces of irreducible Hermitian Yang-Mills connections or equivalently, stable holomorphic vector bundles, give examples of moduli spaces of torsion-free sheaves.  However, these are typically non-compact, as they only account for moduli of locally-free sheaves.  It turns out that in certain settings one can compactify the moduli space by adding torsion-free sheaves -- these are known as Gieseker compactifications in algebraic geometry.  We will not provide a full discussion of Gieseker compactifications, though we will see a very similar construction in Section \ref{sec:FULLNEkkkSec} in the context of instanton counting on the non-compact four-manifold $\mathbb{C}^{2}$.  Aside from this, the central object in Donaldson-Thomas theory is a moduli space of torsion-free sheaves which we will study in detail in Section \ref{eqn:DTThhhh}.

\subsection{Simpson Stability: Generalization to Torsion Sheaves}\label{eqn:subsecSimppsStablllty}

We have now discussed stability conditions on torsion-free sheaves, and seen that it specializes to the notion of stability of vector bundles in Yang-Mills theory.  We saw in Section \ref{subsec:dimpurCohhShh} that pure sheaves are the natural generalization of torsion-free sheaves.  One can therefore study stability conditions on pure torsion sheaves.  This was done by Simpson \cite{simpson_moduli_1994} and goes under the name of \emph{Simpson stability}.  The moduli spaces of Simpson stable sheaves will play an integral role in parts of this thesis.  

In what follows $(X, H)$ will be a smooth projective variety (not necessarily irreducible) polarized by the ample divisor $H$.  Let $\mathscr{E}$ be a coherent sheaf of dimension $d$ on $X$ with corresponding reduced Hilbert polynomial $p(\mathscr{E})$, of degree $d$.  

\begin{defn}
We say the sheaf $\mathscr{E}$ is Gieseker-Simpson semistable if it is pure, and if for all proper subsheaves $\mathscr{F} \hookrightarrow \mathscr{E}$, we have $p(\mathscr{F}) \leq p(\mathscr{E})$.  We say $\mathscr{E}$ is Gieseker-Simpson stable if it is Gieseker-Simpson semistable and the inequality is strict.  
\end{defn}

\begin{defn}
Using coefficients of the Hilbert polynomial, one can define the Simpson slope to be 
\begin{equation}\label{eqn:simmmpssrope}
\mu_{S}(\mathscr{E}) = \frac{\alpha_{d-1}(\mathscr{E})}{\alpha_{d}(\mathscr{E})}.
\end{equation}
The sheaf $\mathscr{E}$ is said to be Simpson slope semistable if it is pure, and if for any proper subsheaf $\mathscr{F} \hookrightarrow \mathscr{E}$ with $\alpha_{d}(\mathscr{F}) < \alpha_{d}(\mathscr{E})$, we have $\mu_{S} (\mathscr{F}) \leq \mu_{S} (\mathscr{E})$.  We say $\mathscr{E}$ is Simpson slope stable if the inequality is strict. 
\end{defn}

Just as in the case of torsion-free sheaves, these stability conditions depend on the ample class $H$, as well as the notion of a subsheaf in the abelian category $\text{Coh}(X)$.  Assuming $X$ is irreducible, it is clear that Gieseker-Simpson stability specializes consistently to Definition \ref{eqn:Gieeekstab} when applied to torsion-free sheaves.  Comparing (\ref{eqn:torfreeslope}) and (\ref{eqn:simmmpssrope}), we also see that if $\mathscr{E}$ is torsion-free   
\begin{equation}
\mu(\mathscr{E}) = \text{deg}(X) \mu_{S}(\mathscr{E}) - \alpha_{n-1}(\mathcal{O}_{X}).
\end{equation}
So $\mu(\mathscr{E})$ and $\mu_{S}(\mathscr{E})$ do not exactly coincide, but they clearly produce the same numerical ordering and therefore, define the same stability condition.

\subsubsection{Moduli Spaces of Simpson Stable Sheaves}

Just as we did with torsion-free sheaves, we want to study the moduli problem of parameterizing pure sheaves with fixed Chern character on a polarized variety.  We will use (\ref{eqn:moddsppssss}) as the definition of $\mathcal{M}^{ss}_{H,v}(X)$ where we allow $X$ to be reducible, we drop the condition that $v_{0} >0$, and we understand stability to be that of Gieseker-Simpson.  The following theorem was proven in \cite{simpson_moduli_1994}, which we record here without proof.  

\begin{thm}[\bfseries Simpson]
Let $(X,H)$ be a projective variety polarized by the ample divisor $H$, and let $v \in H^{2*}(X, \mathbb{Q})$ be a cohomology class.
\begin{enumerate}
\item The moduli functor $\mathcal{M}_{H,v}^{ss}(X)$ has a coarse moduli space $M_{H,v}^{ss}(X)$ which is a projective scheme, and whose closed points are S-equivalence classes of pure sheaves, semistable with respect to $H$, and with Chern character $v$.  

\item There is a quasi-projective scheme $M_{H,v}^{s}(X) \subseteq M_{H,v}^{ss}(X)$ whose closed points correspond to isomorphism classes of pure sheaves, stable with respect to $H$, and with Chern character $v$.  

\item If there are no strictly semistable pure sheaves with Chern character $v$, then the moduli space $M_{H,v}^{s}(X)$ is a projective scheme which we typically denote $M_{H,v}(X)$.  
\end{enumerate} 
Because the automorphism group of all stable sheaves is $\mathbb{C}^{*}$, the open subfunctor $\mathcal{M}_{H, v}^{s}(X)$ is what is called a $\mathbb{C}^{*}$-gerbe over the scheme $M_{H, v}^{s}(X)$.   
\end{thm}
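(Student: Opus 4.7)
The plan is to adapt Simpson's original GIT construction, generalizing Gieseker-Maruyama from torsion-free to pure sheaves. First I would establish boundedness: via Simpson's restriction theorem applied inductively to generic hyperplane sections of the support, the family of pure Gieseker-Simpson semistable sheaves on $(X,H)$ with Hilbert polynomial $P$ (determined by $v$ and $H$) is bounded. Boundedness yields a uniform integer $m \gg 0$ such that every such $\mathscr{E}$ is Castelnuovo-Mumford $m$-regular: $H^i(X,\mathscr{E}(m))=0$ for $i>0$, the twist $\mathscr{E}(m)$ is globally generated, and $h^0(X,\mathscr{E}(m))=P(m)=:N$.

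Set $V=\mathbb{C}^N$. The Quot scheme $Q=\mathrm{Quot}\bigl(V\otimes\mathcal{O}_X(-m),P\bigr)$ is projective and carries a natural $SL(V)$-action; let $R\subset Q$ be the locally closed subscheme on which the universal quotient $\mathscr{E}$ is pure, Gieseker-Simpson semistable, and the canonical map $V\to H^0(X,\mathscr{E}(m))$ is an isomorphism. By $m$-regularity every semistable sheaf with Chern character $v$ occurs as a point of $R$, and two points of $R$ lie in the same $SL(V)$-orbit iff the corresponding sheaves are isomorphic. To engage Mumford's GIT machine I would construct, for each $\ell\gg m$, an $SL(V)$-linearized ample line bundle $\mathcal{L}_\ell$ on $Q$ via the Pl\"ucker embedding $Q\hookrightarrow\mathrm{Grass}\bigl(V\otimes H^0(\mathcal{O}_X(\ell-m)),P(\ell)\bigr)$.

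The central technical step is the equivalence of GIT stability with Gieseker-Simpson stability. For a one-parameter subgroup of $SL(V)$ corresponding to a weighted filtration of $V$, the Hilbert-Mumford weight of $\mathcal{L}_\ell$ admits an explicit expression in $P(\mathscr{F}_\bullet,\ell)$ and $P(\mathscr{E},\ell)$ for the induced subsheaves $\mathscr{F}_\bullet\subset\mathscr{E}$. Choosing $\ell$ large (uniformly over the bounded family), the sign of this weight matches the comparison of reduced Hilbert polynomials $p(\mathscr{F})$ versus $p(\mathscr{E})$ on the saturations, so GIT (semi)stability on $R$ corresponds exactly to Gieseker-Simpson (semi)stability. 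Mumford's theorem then produces the projective GIT quotient $M_{H,v}^{ss}(X)=R^{ss}/\!/SL(V)$ as a coarse moduli space whose closed points are closed orbits, i.e. S-equivalence classes, while $M_{H,v}^{s}(X)=R^s/SL(V)$ is a geometric quotient, hence quasi-projective. Since stable sheaves are simple with stabilizer $\mathbb{C}^*\subset SL(V)$, both the identification of closed points of $M_{H,v}^s(X)$ with isomorphism classes and the $\mathbb{C}^*$-gerbe structure of $\mathcal{M}_{H,v}^s(X)$ follow automatically. When no strictly semistables exist, $R^s=R^{ss}$ and the GIT quotient itself is projective, giving $M_{H,v}(X)$.

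The main obstacle will be the asymptotic GIT/sheaf-stability equivalence in the pure case $d<n$. Unlike for torsion-free sheaves, the leading coefficient $\alpha_d$ of the Hilbert polynomial can vary among subsheaves of the same support dimension, so the Hilbert-Mumford numerical criterion must distinguish the full polynomial inequality $p(\mathscr{F})\leq p(\mathscr{E})$ from the strictly weaker Simpson-slope condition involving only $\mu_S$. Handling this uniformly demands Simpson's refined asymptotic analysis of the weight as $\ell\to\infty$, together with a boundedness statement controlling saturations across the entire family; this is where the proof does its real work. Properness of $M^{ss}_{H,v}(X)$ then reduces to Langton's semistable reduction theorem, extended to pure sheaves, which is standard in spirit but fills a nontrivial appendix.
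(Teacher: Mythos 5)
The paper records this theorem without proof, citing Simpson's \emph{Moduli of representations of the fundamental group I}, so there is no in-paper argument to compare against; your sketch is a faithful outline of exactly the strategy in that cited source (boundedness and uniform $m$-regularity, the Quot-scheme parameter space with its $SL(V)$-action, the asymptotic Hilbert--Mumford analysis identifying GIT (semi)stability with Gieseker--Simpson (semi)stability, and Langton-type semistable reduction for properness). You have also correctly isolated where the real work lies in the pure case, namely the uniform asymptotic comparison of GIT weights with reduced Hilbert polynomials when the support has dimension $d<n$.
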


\subsubsection{One-dimensional Sheaves on a Projective Surface}

Let $X$ be a smooth, projective surface polarized by $\mathcal{O}_{X}(1)$ with $H$ an element in the linear system $|\mathcal{O}_{X}(1)|$, and let $\mathscr{E}$ be a one-dimensional sheaf on $X$.  By (\ref{eqn:Cherncarrrcopppps}) we can write
\[ \text{ch}(\mathscr{E}) = \big(0, \beta, \text{ch}_{2}(\mathscr{E})\big)\]
where $\beta$ is the support cycle of $\mathscr{E}$, and we interpret the Chern character as valued in cohomology.  By a simple application of Hirzebruch-Riemann-Roch, the Hilbert polynomial is
\begin{equation}
P(\mathscr{E}, m) = H \cdot \beta \, m  + \int_{X} \text{ch}_{2}(\mathscr{E}) - \frac{1}{2} K_{X} \cdot \beta
\end{equation}
where $K_{X}$ is the canonical divisor of $X$.  Indeed, the degree of $P(\mathscr{E})$ is 1 since $\text{dim}(\mathscr{E})=1$.  Therefore
\begin{equation}
p(\mathscr{E}, m) = m + \frac{\int_{X} \text{ch}_{2}(\mathscr{E}) -\frac{1}{2}K_{X} \cdot \beta}{H \cdot \beta} = m + \mu_{S}(\mathscr{E}).  
\end{equation}
This shows that for one-dimensional pure torsion sheaves on a smooth projective surface, Gieseker-Simpson stability is equivalent to Simpson slope stability, and there is explicit dependence on the divisor $H$.  

\begin{rmk}
An extremely important example is when $X$ is in addition Calabi-Yau, which means $K_{X} =0$ and $X$ must therefore be a K3 surface or an abelian surface.  In this case, by Hirzebruch-Riemann-Roch it is clear that $\int_{X} \text{ch}_{2}(\mathscr{E}) = \chi(X, \mathscr{E})$, and the slope $\mu_{S}(\mathscr{E})$ reduces to
\begin{equation}\label{eqn:sloopeCY2ford}
\mu_{S}(\mathscr{E}) = \frac{\chi(X, \mathscr{E})}{H \cdot \beta}.
\end{equation}
\end{rmk}

\subsubsection{One-dimensional Sheaves on a Calabi-Yau Threefold}

Now let $X$ be a smooth projective Calabi-Yau threefold polarized by an ample divisor $H$, and let $\mathscr{E}$ be a pure one-dimensional sheaf.  By (\ref{eqn:CherncharrrCY3onedish}), for the Chern character valued in cohomology, we know
\[\text{ch}(\mathscr{E}) = \big( 0, 0, \beta, \chi(X, \mathscr{E}) \big).\]
Just as above, it is straightforward to use this to compute the Hilbert polynomial 
\begin{equation}
P(\mathscr{E},m)  =  H \cdot \beta \, m  + \chi(X, \mathscr{E}).
\end{equation}
Clearly, the reduced Hilbert polynomial takes the form $p(\mathscr{E},m) = m + \mu_{S}(\mathscr{E})$ where the slope is just as shown in (\ref{eqn:sloopeCY2ford}).  In particular, Gieseker-Simpson and Simpson slope stability are equivalent.    

\begin{rmk}
Moduli spaces of stable or semistable sheaves on Calabi-Yau threefolds are remarkable in that they carry what is called a symmetric obstruction theory.  We will discuss these in some small detail in the following chapter in the context of Donaldson-Thomas and Gopakumar-Vafa invariants.  
\end{rmk}

As one can easily see from (\ref{eqn:CherncharrrCY3onedish}), the results on one-dimensional sheaves on Calabi-Yau surfaces and Calabi-Yau threefolds generalize to higher dimensions.   

\begin{proppy}\label{proppy:onedshGieSimp}
Let $X$ be a smooth, projective $n$-dimensional Calabi-Yau variety polarized by $H$.  If $\mathscr{E}$ is a pure one-dimensional coherent sheaf with support cycle $\beta$ and Euler characteristic $\chi(X, \mathscr{E}) = \int_{X} \text{ch}_{n}(\mathscr{E})$, then Gieseker-Simpson stability is equivalent to Simpson slope stability and measured by
\begin{equation} \label{eqn:slopestab}
\mu_{S}(\mathscr{E}) = \frac{\chi(X, \mathscr{E})}{H \cdot \beta}.
\end{equation}
\end{proppy}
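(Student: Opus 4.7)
The plan is to generalize the computations done in the immediately preceding examples for Calabi--Yau surfaces and threefolds to arbitrary dimension $n$, the point being that in the Calabi--Yau setting the Todd class simplifies dramatically, and for a one-dimensional sheaf the Hilbert polynomial has only two nontrivial coefficients, which forces both Gieseker--Simpson and Simpson slope stability to be controlled by the single invariant $\mu_S(\mathscr{E})$.

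First I would compute the Chern character of $\mathscr{E}$ in cohomology. Since $\mathscr{E}$ is pure of dimension $1$ on a smooth variety of dimension $n$, Proposition \ref{proppy:Cherncharrvannnishingss} gives $\mathrm{ch}_k(\mathscr{E}) = 0$ for $k < n-1$ and $\mathrm{ch}_{n-1}(\mathscr{E}) = [\mathscr{E}] = \beta$, while $\mathrm{ch}_n(\mathscr{E})$ is an unspecified top-degree class. Thus
\[
\mathrm{ch}(\mathscr{E}) = \bigl(0, \ldots, 0, \beta, \mathrm{ch}_n(\mathscr{E})\bigr),
\]
generalizing (\ref{eqn:CherncharrrCY3onedish}).

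Next I would apply Hirzebruch--Riemann--Roch (\ref{eqn:HirzzzBBRoch}) to $\mathscr{E}(m) = \mathscr{E} \otimes \mathcal{O}_X(m)$, using $\mathrm{ch}(\mathscr{E}(m)) = \mathrm{ch}(\mathscr{E}) \cdot e^{mH}$. Because $X$ is Calabi--Yau, $c_1(X) = 0$, so $\mathrm{td}_1(X) = 0$ and the degree $n$ component of $\mathrm{ch}(\mathscr{E}) \cdot e^{mH} \cdot \mathrm{td}(X)$ collapses to the contributions from pairing $\beta$ with $mH$ (which gives $H\cdot\beta\, m$) and from $\mathrm{ch}_n(\mathscr{E})$ (which integrates to $\chi(X,\mathscr{E})$, using the hypothesis). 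All intermediate Todd-class contributions vanish either because $\mathrm{ch}_k(\mathscr{E}) = 0$ for $k < n-1$ or because $\mathrm{td}_1(X) = 0$ kills the only term that could otherwise pair with $\beta$. The outcome is
\[
P(\mathscr{E}, m) = (H \cdot \beta)\, m + \chi(X, \mathscr{E}),
\]
a polynomial of degree $d = 1$ with $\alpha_1(\mathscr{E}) = H\cdot\beta$ and $\alpha_0(\mathscr{E}) = \chi(X,\mathscr{E})$.

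Finally, I would divide by $\alpha_1(\mathscr{E}) = H\cdot\beta$ to obtain the reduced Hilbert polynomial
\[
p(\mathscr{E}, m) = m + \frac{\chi(X,\mathscr{E})}{H\cdot\beta} = m + \mu_S(\mathscr{E}),
\]
exactly matching the Simpson slope of (\ref{eqn:simmmpssrope}). Since $p(\mathscr{F}) \leq p(\mathscr{E})$ (for $m \gg 0$) is equivalent to $\mu_S(\mathscr{F}) \leq \mu_S(\mathscr{E})$ once both reduced polynomials are monic of degree $1$, and since a subsheaf $\mathscr{F} \hookrightarrow \mathscr{E}$ of a pure one-dimensional sheaf is itself one-dimensional (so the condition $\alpha_d(\mathscr{F}) < \alpha_d(\mathscr{E})$ in Simpson slope stability is automatic whenever $\mathscr{F}$ is proper and both are nonzero), the two stability conditions cut out the same sheaves. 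I do not anticipate a serious obstacle: the only subtle point is verifying that the Todd-class contributions from degrees between $1$ and $n-1$ really do vanish, but this follows directly from $c_1(X)=0$ together with the vanishing of $\mathrm{ch}_k(\mathscr{E})$ below $n-1$, so the computation is essentially mechanical.
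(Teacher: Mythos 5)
Your computation is the same one the paper intends: the proposition is presented there as the evident generalization of the explicit Hirzebruch--Riemann--Roch calculations carried out for Calabi--Yau surfaces and threefolds in the two preceding subsections, and your degree-by-degree analysis of $\text{ch}(\mathscr{E})\, e^{mH}\, \text{td}(X)$ (using $\text{ch}_{k}(\mathscr{E})=0$ for $k<n-1$ from Proposition \ref{proppy:Cherncharrvannnishingss} together with $\text{td}_{1}(X)=0$) correctly yields $P(\mathscr{E},m) = H\cdot\beta\, m + \chi(X,\mathscr{E})$ and hence $p(\mathscr{E},m) = m + \mu_{S}(\mathscr{E})$, which is the whole content of the formula (\ref{eqn:slopestab}).

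One parenthetical claim in your final step is false, although the conclusion survives. You assert that the condition $\alpha_{d}(\mathscr{F}) < \alpha_{d}(\mathscr{E})$ in the definition of Simpson slope stability is automatic for every nonzero proper subsheaf of a pure one-dimensional sheaf. It is not: take $\mathscr{E} = \mathcal{O}_{C}$ for a curve $C$ and $\mathscr{F} = \mathcal{O}_{C}(-p)$ for a point $p \in C$; then $\mathscr{F}$ is a proper subsheaf with the same support cycle, so $\alpha_{1}(\mathscr{F}) = H\cdot\beta = \alpha_{1}(\mathscr{E})$. The correct observation is that such subsheaves can never violate either stability condition: if $\alpha_{1}(\mathscr{F}) = \alpha_{1}(\mathscr{E})$, then since support cycles are additive in top dimension and $H$ is ample, the quotient $\mathscr{E}/\mathscr{F}$ is a nonzero zero-dimensional sheaf, whence $\chi(X,\mathscr{F}) = \chi(X,\mathscr{E}) - \chi(X,\mathscr{E}/\mathscr{F}) < \chi(X,\mathscr{E})$ and so $p(\mathscr{F}) < p(\mathscr{E})$ holds automatically. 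With that one-line substitution the equivalence of Gieseker--Simpson and Simpson slope stability goes through; the paper itself glosses over this point entirely, so your write-up is otherwise faithful to its argument.
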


\section{D-branes and Stability in String Theory}\label{sec:Dbranesstabstrth}

D-branes are objects emerging from string theory which have recently become of great interest in many parts of mathematics.  They will arise at various points in this thesis, so we hope to sketch a brief (and very much incomplete) overview of D-branes.  Along the way, we will very briefly describe some key features of physical and topological string theories one should keep in mind.  We must warn that the goal of this section is to give mathematicians a flavor of the subject -- precise physical details will be omitted.  For a readable account of parts of what will follow, we suggest \cite[Section 7]{ruiperez_fourier_2005}.

Let us build up to the modern interpretation of D-branes by starting with the most na\"{i}ve description.  In string theory, one can have both closed and open strings propagating in spacetime.  One must specify boundary conditions on the endpoints of the open strings.  Quite simply, D-branes (named after \emph{Dirichlet} boundary conditions) are submanifolds $Y$ of spacetime on which the endpoints of open strings live.  We call $Y$ the worldvolume of the brane.  A D-brane is called a D$p$-brane if the \emph{real} dimension of $Y$ is $p+1$.  Necessarily one of the directions in $Y$ is along the time direction, so a D$p$-brane has $p$ spatial dimensions.  If $Y$ has components of various dimensions, then the D-brane is a bound state of D$p$-branes for various $p$.  

It turns out that the endpoints of open strings appear as charged particles in a Yang-Mills theory with structure group $U(r)$, for some $r$.  This means that the submanifold $Y$ in fact supports a rank $r$ Hermitian vector bundle of $E \longrightarrow Y$.  Therefore, one of the interesting features of D-branes is their connection to Yang-Mills theory, which we studied in the previous chapter.  We will build on this connection shortly.  

D-branes are much deeper, but before continuing we must pause to record some basic ideas in string theory.  The consistency of string theory requires that spacetime be ten-dimensional.  Given that we only observe four dimensions, we model the ten-dimensional spacetime as
\[\mathbb{R}^{1,3} \times X\]
where $\mathbb{R}^{1,3}$ is four-dimensional spacetime with a Lorentzian metric, and we take $X$ to be a smooth compact Calabi-Yau threefold.  If $X$ is a generic such Calabi-Yau (not one with exceptional holonomy) then the low-energy limit of the theory will induce an $\mathcal{N}=2$ supersymmetric theory in four dimensions.  There are five consistent \emph{physical string theories}, two of which are called Type IIA and Type IIB.  Both theories contain D-branes.  In Type IIA, there are D$p$-branes where $p$ must be even or equivalently, the worldvolume must be odd-dimensional.  On the other hand, Type IIB theory admits D$p$-branes for $p$ odd.  

\begin{rmk}
Certainly some of the spatial dimensions of a D-brane are allowed to live in $\mathbb{R}^{1,3}$, but this clearly cannot be interpreted as a particle in four dimensions.  One common application is to use D-branes to engineer particles in four-dimensional spacetime so from here on, we will assume that a D$p$-brane is such that there are $p$ spatial dimensions in $X$, and only the one-dimensional worldline supported in $\mathbb{R}^{1,3}$.  
\end{rmk}

Building on the above remark, not only are we interested in using D-branes to engineer particles in four dimensions, but in order to make contact with algebraic geometry we would like to engineer \emph{BPS particles}.  Let us assume we are in the large-volume limit, meaning if $J$ is the K\"{a}hler form, the integral of $J^{3}$ over the Calabi-Yau threefold $X$ tends to infinity.  The mass of a particle engineered by a D-brane is simply the mass of the brane which is proportional to its volume.  The BPS condition is that the mass, and therefore the volume, of the brane is minimized and coincides with a central charge.  

Consider a D$p$-brane whose underlying support defines a class in $H_{p}(X, \mathbb{Z})$.  Calibrated submanifolds are those that minimize volume within their homology class.  For K\"{a}hler manifolds, the calibrated submanifolds are the holomorphic submanifolds, while in the Calabi-Yau case, special Lagrangians are also calibrated.  It follows that the BPS condition restricts attention D6, D4, D2, and D0-branes in Type IIA wrapping holomorphic submanifolds of $X$, and D3-branes in Type IIB wrapping special Lagrangians in $X$.  

However, these BPS D-branes in the large-volume limit of Type IIA are more than just the data of a holomorphic submanifold.  We have previously established a na\"{i}ve definition of a D-brane as a Hermitian vector bundle supported on a submanifold.  Let $X$ be a smooth compact Calabi-Yau threefold with K\"{a}hler form $J$.  Let $Z \subseteq X$ be a holomorphic submanifold of dimension $n \leq 3$ with $E \to Z$ a Hermitian vector bundle of rank $r$.  There is a unique connection $d_{A}$ on $E$ compatible with the Hermitian structure as well as the holomorphic structure on $Z$.  The curvature two-form satisfies 
\begin{equation}
F_{A} \in \Omega^{1,1}(\mathfrak{g}_{E})
\end{equation}
where $\mathfrak{g}_{E} \subset \text{End}E$ is the bundle of Lie algebras $\mathfrak{u}_{r}$ over $Z$.  It turns out that in order for the particle engineered by the D-brane to be BPS, not only must $Z$ be holomorphic, but the curvature two-form must satisfy the following additional condition \cite{sharpe_d-branes_1999}
\begin{equation}\label{eqn:STABconddDbrane}
i \, F_{A} \wedge J^{n-1} = \frac{\lambda}{n} \, J^{n} \cdot \text{id}_{E}
\end{equation}
for some constant $\lambda \in \mathbb{R}$.  If $n <3$, there are further conditions on the normal bundle \cite{harvey_algebras_1998} which we will not discuss.  We recognize (\ref{eqn:STABconddDbrane}) as precisely the Hermitian Yang-Mills equation (\ref{eqn:HermEin}), and by the Donaldson-Uhlenbeck-Yau theorem, we conclude that $E \to Z$ is a $\mu$-stable holomorphic vector bundle\footnote{The bundle will only be stable if the connection $d_{A}$ is irreducible -- otherwise it will be polystable.  One can safely ignore this subtlety on a first pass.}.  In other words, the physical BPS condition connects D-branes to slope stable bundles in mathematics.  

Associated to a smooth compact Calabi-Yau threefold $X$, Witten discovered two \emph{topological string theories} known as the A and B-models, respectively.  The open string sectors of both theories contain D-branes.  In the A-model, there are \emph{A-branes} wrapping a special Lagrangian three-cycle in $X$, and in the B-model there are \emph{B-branes} wrapping holomorphic cycles of $X$.  By convention, in both cases there can be any number of dimensions of the brane supported outside of $X$, and depending on this number it is unclear whether the topological configurations can be mapped to a brane configuration in either Type IIA or Type IIB.  However, if we insist that the only support outside of $X$ is along a worldline, then an A-brane comes from a BPS D3-brane in Type IIB and a B-brane comes from bound states of BPS D6, D4, D2, or D0-branes in Type IIA.  

Combining all of the above discussion, we are finally in a position to give a simple definition of the D-branes of interest to us in the large-volume limit of the Calabi-Yau.    

\begin{defn}[\bfseries D-branes of B-type at large volume]
 On a smooth compact Calabi-Yau threefold $X$ in the large-volume limit, we define a BPS D-brane of B-type to be a pair $(Z, E)$ where $Z \subseteq X$ is a holomorphic submanifold, and $E \to Z$ is a polystable holomorphic vector bundle.  The BPS D-branes of B-type are either D-branes in Type IIA engineering a BPS particle, or B-branes in the B-model topological string theory.  
\end{defn}

\noindent The following theorem states that the compactification spaces of most interest in string theory are necessarily algebraic.  

\begin{thm}{\bfseries \cite[Proposition 5.3]{joyce_lectures_2001}}
If $X$ is a smooth compact Calabi-Yau threefold with $H^{1}(X, \mathcal{O}_{X})=0$, then $X$ is a projective algebraic variety.  
\end{thm}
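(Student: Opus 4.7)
The plan is to produce an ample line bundle on $X$ and then invoke the Kodaira embedding theorem. Since a compact Calabi--Yau threefold is by definition K\"ahler, fix a K\"ahler form $\omega$ representing a class $[\omega] \in H^{1,1}(X, \mathbb{R}) \cap H^2(X, \mathbb{R})$. To promote $[\omega]$ (or a nearby class) to the first Chern class of a line bundle, I would analyze the exponential short exact sequence
\begin{equation*}
0 \longrightarrow \mathbb{Z} \longrightarrow \mathcal{O}_X \longrightarrow \mathcal{O}_X^{*} \longrightarrow 0
\end{equation*}
whose associated long exact sequence reads
\begin{equation*}
H^1(X, \mathcal{O}_X) \longrightarrow \mathrm{Pic}(X) \overset{c_1}{\longrightarrow} H^2(X, \mathbb{Z}) \longrightarrow H^2(X, \mathcal{O}_X).
\end{equation*}

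The first task is to trade the hypothesis $H^1(X, \mathcal{O}_X) = 0$ for the vanishing of $H^2(X, \mathcal{O}_X)$. This is where the Calabi--Yau condition enters: since $K_X \cong \mathcal{O}_X$, Serre duality gives $H^2(X, \mathcal{O}_X) \cong H^1(X, K_X)^{\smvee} = H^1(X, \mathcal{O}_X)^{\smvee} = 0$. Combined with the exponential sequence, this shows $c_1 \colon \mathrm{Pic}(X) \to H^2(X, \mathbb{Z})$ is surjective; equivalently, \emph{every} integral cohomology class on $X$ is the first Chern class of some holomorphic line bundle. (In particular the Lefschetz $(1,1)$--constraint is vacuous here, since every integral class is automatically of Hodge type $(1,1)$ once $H^{0,2} = 0$.)

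Next I would produce an integral K\"ahler class. Because the K\"ahler cone is an open subset of $H^{1,1}(X, \mathbb{R})$ and $H^2(X, \mathbb{Q})$ is dense in $H^2(X, \mathbb{R})$, I can perturb $[\omega]$ within the K\"ahler cone to a rational class $[\omega'] \in H^2(X, \mathbb{Q})$; clearing denominators, $N [\omega'] \in H^2(X, \mathbb{Z})$ for some positive integer $N$ and still represents a K\"ahler class. By the surjectivity of $c_1$ established above, there exists a holomorphic line bundle $L \to X$ with $c_1(L) = N[\omega']$. Since $c_1(L)$ is represented by a positive $(1,1)$-form, $L$ is a positive line bundle in the sense of Kodaira, and the Kodaira embedding theorem then yields an embedding $X \hookrightarrow \mathbb{P}^M$ via sections of $L^{\otimes k}$ for $k \gg 0$. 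By Chow's theorem, $X$ is therefore a projective algebraic variety.

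The subtle step, and the one I would handle most carefully, is the approximation argument: one must verify that the rational class $[\omega']$ can be chosen close enough to $[\omega]$ to remain in the K\"ahler cone, which requires knowing the cone is genuinely open in $H^{1,1}(X, \mathbb{R})$ (a standard consequence of the fact that positivity of a $(1,1)$-form is an open condition). Everything else is essentially a bookkeeping exercise linking Serre duality, the exponential sequence, and Kodaira embedding; the substantive input is the Calabi--Yau trivialization of $K_X$, which converts the single cohomological hypothesis $H^1(X, \mathcal{O}_X) = 0$ into the full vanishing needed to lift rational K\"ahler classes to Picard classes.
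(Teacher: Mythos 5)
The paper states this result only as a citation to Joyce and gives no proof of its own, so there is nothing internal to compare against; your argument is the standard one from the cited source and is correct. The key points — Serre duality with $K_{X} \cong \mathcal{O}_{X}$ forcing $H^{2}(X, \mathcal{O}_{X}) = 0$, hence $H^{1,1}(X,\mathbb{R}) = H^{2}(X,\mathbb{R})$ so the K\"ahler cone is open in the full real second cohomology, surjectivity of $c_{1}$ from the exponential sequence, and Kodaira embedding applied to an integral K\"ahler class — are all in place and correctly assembled.
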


\noindent Since $X$ is algebraic, by Chow's theorem \cite{chow_compact_1949} holomorphic submanifolds $Z \subseteq X$ are themselves algebraic.  \emph{Therefore, taking the large-volume limit on the smooth compact Calabi-Yau threefold, we observe that the BPS sector of Type IIA string theory, as well as the B-model topological string theory, are entirely in the world of algebraic geometry!}

For an ample bundle $\mathcal{O}_{X}(1)$ on $X$, if $E \to Z$ is a $\mu$-stable holomorphic vector bundle with respect to $\mathcal{O}_{X}(1)|_{Z}$, then pushing forward by the inclusion $\iota : Z \hookrightarrow X$, we get a coherent sheaf $\iota_{*}E$ on $X$.  By a straightforward computation, one can show that $\iota_{*}E$ is a stable sheaf with respect to Simpson slope (\ref{eqn:simmmpssrope}) and ample class $\mathcal{O}_{X}(1)$.  Therefore, a BPS D-brane of B-type can be modeled in the large-volume limit as a Simpson stable coherent sheaf in algebraic geometry.


As one moves away from the large-volume limit, the (possibly non-BPS) D-branes of B-type are conjecturally \cite{aspinwall_d-branes_2004, sharpe_lectures_2003, sharpe_derived_2008} given by objects in the derived category $D^{b}(X)$.  Given an object in the derived category, one should think of the morphisms in the complex as representing open strings stretching between a brane and an anti-brane \cite{sen_tachyon_1998, sen_stable_1998}.  One non-trivial aspect of the conjecture is that quasi-isomorphisms correspond to renormalization group flow.  The homological mirror symmetry conjecture of Kontsevich \cite{kontsevich_homological_1994} is that $D^{b}(X)$ is isomorphic to the Fukaya category of A-branes in $X$.  The current understanding is that a BPS brane away from the large-volume limit should be a $\Pi$-stable object \cite{douglas_dirichlet_2002,douglas_stability_2005,aspinwall_d-brane_2002}, which is called a Bridgeland stable object \cite{bridgeland_stability_2007} in the math literature.  Indeed, if one takes the large-volume limit in the right way, $\Pi$-stability or Bridgeland stability reduces to $\mu$-stability \cite{douglas_stability_2005}, consistent with the above discussion.

\subsection{The Mukai Vector and D-brane Charges}

Being extended objects in spacetime, D-branes have certain characteristic quantities, known as \emph{charges} in physical jargon.  To name a few examples, D-branes have a mass, as well as potentially spin, and electric or magnetic charge.  Let us consider (possibly non-BPS) D-branes of B-type and possibly not in the large-volume limit.  By \cite{sharpe_d-branes_1999,witten_d-branes_1998,minasian_k-theory_1997} we understand these charges, known as \emph{D-brane charges}, to lie in the Grothendieck group $K_{0}(X)$, also called the K-theory group of $X$.  Since $X$ is smooth, by (\ref{eqn:chcharisom}) the Grothendieck group is isomorphic to the Chow group $A^{*}(X)_{\mathbb{Q}}$ via the Chern character. Indeed, passing to D-brane charges is related to taking the Chern character, but there is an important subtlety.  
\begin{defn}
On a smooth $n$-dimensional variety $X$ the Mukai vector of a coherent sheaf $\mathscr{E}$ is defined by\footnote{We define $\sqrt{\text{td}(X)}$ via a power series expansion, which is well defined as $\text{td}_{0}(X) =1$.}  
\begin{equation}\label{eqn:MuukaiVector}
\mathcal{Q}(\mathscr{E}) = \text{ch}(\mathscr{E}) \sqrt{\text{td}(X)} \in A^{*}(X)_{\mathbb{Q}}
\end{equation}
and the individual components are defined by $\mathcal{Q}(\mathscr{E}) = \big(\mathcal{Q}_{n}(\mathscr{E}), \ldots, \mathcal{Q}_{0}(\mathscr{E})\big)$ where $\mathcal{Q}_{k}(\mathscr{E}) \in A_{k}(X)_{\mathbb{Q}}$. 
\end{defn}

The Mukai vector $\mathcal{Q}(-)$ also induces a ring isomorphism from $K_{0}(X)_{\mathbb{Q}}$ to $A^{*}(X)_{\mathbb{Q}}$, and since $X$ is projective one can still apply the cycle map (\ref{eqn:cycreemap}) to recover an element in cohomology $H^{2*}(X, \mathbb{Q})$.  Though we will not discuss it in this thesis, there is a Grothendieck group $K_{0}\big( D^{b}(X) \big)$ of the derived category of $X$, and one can use the inclusion of categories $\text{Coh}(X) \hookrightarrow D^{b}(X)$ to induce the isomorphism
\begin{equation}
K_{0}\big( D^{b}(X) \big) \cong K_{0}(X).
\end{equation}
Of course, one can also extend the definition of the Chern character and the Mukai vector to complexes of coherent sheaves in the derived category.  

Let $X$ be a smooth Calabi-Yau threefold, and let us think of a D-brane as a $\Pi$-stable or Bridgeland stable object $\dotr{\mathscr{E}}$ in the derived category.  By \cite{minasian_k-theory_1997, harvey_algebras_1998} the D-brane charge is given by\footnote{Strictly speaking, the Mukai vector is only an approximation to the D-brane charge.  In \cite{halverson_perturbative_2015}, the authors introduce `Gamma classes' which provide corrections to the factor of $\sqrt{\text{td}(X)}$ in the Mukai vector.} the Mukai vector $\mathcal{Q}(\dotr{\mathscr{E}})$ such that the D$(2k)$-brane charge is $\mathcal{Q}_{k}(\dotr{\mathscr{E}}) \in A_{k}(X)_{\mathbb{Q}}$.  In particular, if a D-brane configuration can be modeled as a single coherent sheaf $\mathscr{E}$, then the D-brane charge is the Mukai vector $\mathcal{Q}(\mathscr{E})$.

\begin{Ex}
For a Calabi-Yau threefold $X$, let $\iota : Z \hookrightarrow X$ be the inclusion of an integral subscheme of dimension $d \leq 3$, and let $E$ be a $\mu$-stable holomorphic vector bundle of rank $r$ on $Z$.  We know $\iota_{*}E$ is a coherent sheaf on $X$ modeling a B-type D-brane configuration in the large-volume limit.  Using the results of Example \ref{Ex:inclHOLVBss}, we know that $\mathcal{Q}_{k}(\iota_{*}E)=0$ for all $k>d$, and additionally because $\text{td}_{0}(X)=1$, we have
\begin{equation}\label{eqn:D2dbranechh}
\mathcal{Q}_{d}(\iota_{*}E) = r[Z] \in A_{d}(X).  
\end{equation}
Equivalently, the D$(2d)$-brane charge of $\iota_{*}E$ is $r[Z]$, which is what we called (Definition \ref{defn:Suuprcrassdefn}) the support cycle of $\iota_{*}E$.  We are seeing an explanation of some common jargon in the physics literature:  the phrase ``a stack of $r$ D-branes wrapping a $d$-dimensional subvariety $Z$" is physical jargon meaning a stable holomorphic vector bundle of rank $r$ supported on $Z$.  By (\ref{eqn:D2dbranechh}), the D$(2d)$-brane charge appears as $r$ copies of the fundamental cycle of $Z$, which explains the terminology.  
\end{Ex}

Let us assume we can model a D$(2d)$-brane as a coherent sheaf $\mathscr{E}$ of dimension $d$ with smooth support.  The mass of the D-brane is proportional to the volume, which is given by the integral of suitable powers of the K\"{a}hler form over the support cycle $[\mathscr{E}]$.  In the case of $\iota_{*}E$ as in (\ref{eqn:D2dbranechh}), evidently the mass scales with the rank of the bundle.  

\begin{Ex}
Let us consider a configuration of $r$ D0-branes in a smooth projective Calabi-Yau threefold $X$.  Such a configuration can always be modeled as a zero-dimensional coherent sheaf $\mathscr{E}$ with Euler characteristic $\chi(X, \mathscr{E})=r$.  Forgetting no data at all, the system is given by skyscraper sheaves supported on specific points of $X$ such that the ranks add up to $r$.  Taking the Mukai vector, we get the D-brane charges, all of which vanish except the D0-brane charge 
\begin{equation}
\text{ch}_{3}(\mathscr{E}) = r_{1}p_{1} + \cdots + r_{s}p_{s} \in A_{0}(X), \,\,\,\,\,\,\,\,\,\,\,\,\,\,\,\,\,\,\,\,\, \sum_{k=1}^{s}r_{k} = r.  
\end{equation}
Taking values in the Chow group of zero-cycles, we see that we have lost data differentiating between two points in the support connected by a rational curve.  Notice that in this case, $\text{ch}_{3}(\mathscr{E})$ is the support cycle of $\mathscr{E}$.  In general, the Chow groups are much more complicated than the homology groups.  Applying the cycle map (\ref{eqn:cycreemap}) we get a class in $H_{0}(X, \mathbb{Z}) \cong \mathbb{Z}$, which is simply $\chi(X, \mathscr{E})=r$.  Put differently, the D0-brane charge as an element of Chow distinguishes points up to rational equivalence, while the map to homology forgets the points and only encodes the sum of multiplicities $\sum_{k=1}^{s} r_{k}=r$.  
\end{Ex}

Partially motivated by the physics, modern day algebraic geometers are often interested in developing theories which enumerate bound states of D-branes.  For example, in the next chapter we will introduce Donaldson-Thomas invariants which are a supersymmetric index virtually counting bound states of D2-D0 branes inside a single D6-brane in a Calabi-Yau threefold.  Joyce and Song \cite{joyce_theory_2012} developed a theory of generalized Donaldson-Thomas invariants which allows one to study more general bound states of D6-D4-D2-D0 branes in a Calabi-Yau threefold.  Finally, we will also discuss the Gopakumar-Vafa invariants which are counts of bound states of D2-D0 branes coming from M2-branes in M-theory.  

The unifying feature of all of these D-brane counting theories is that the invariants are extracted from a moduli space of stable or semistable sheaves with fixed Chern character on a Calabi-Yau threefold.  These moduli spaces were introduced in Section \ref{subsec:RevModProbb}.  We have seen in this section that the physical charges of a D-brane are encoded into the Mukai vector (\ref{eqn:MuukaiVector}).  However, because the characteristic class $\sqrt{\text{td}(X)}$ is invertible as a power series, fixing the Chern character is equivalent to fixing the D-brane charges.  \emph{Therefore, a specification of the physical charges of a D-brane serves to fix some of the topological features of a sheaf in the moduli problem.}

\chapter{Introduction to Modern Enumerative Geometry and String Theory}

Enumerative geometry is a subfield of algebraic geometry made up of theories attempting to count objects or families of objects in a fixed algebraic variety.  It is born out of the following distinguishing feature of algebraic geometry: many objects you may want to count come in finite, or at least finite-dimensional families.  This property is rarely achieved in the smooth or topological categories.  For one simple example, vector bundles in the algebraic category frequently have a finite-dimensional space of sections, a fact which never holds for $C^{\infty}$-bundles.  Another feature is that the moduli spaces parameterizing the objects of interest are themselves algebraic varieties (or schemes, or stacks), and one can use the moduli spaces to define invariants of the counting problem.  

An interesting enumerative problem in both mathematics and physics is to count curves of a fixed type in a smooth projective variety $X$.  One piece of data which is typically fixed in all theories is the homology class $\beta \in H_{2}(X, \mathbb{Z})$ of the curve.  For example, if $X \subset \mathbb{P}^{3}$ is a cubic surface, there are famously exactly 27 rational curves in the homology class of a line.  However, one cannot always perform such a count without specifying additional data.  Even in the simple example of $X = \mathbb{P}^{2}$, and $\beta$ the class of a line, there are infinitely many such curves.  In these cases, in order to extract a number, one must impose the correct number of incidence conditions -- these are cycles in $X$ in generic position which we require curves to intersect.  Indeed, specifying two distinct points in $\mathbb{P}^{2}$, there is a unique line through them.  

Consider the moduli stack $\mathcal{C}(X, \beta)$ of smooth curves in $X$ in the homology class $\beta$.  This is a space one might na\"{i}vely work with to construct a curve-counting theory on $X$, but it has at least two main drawbacks.  First, $\mathcal{C}(X, \beta)$ is generally not compact.  Moreover, it is not clear how to work directly with a smooth embedded curve in $X$ in a practical manner.  To remedy these issues, one should find a \emph{compactification} of $\mathcal{C}(X, \beta)$ using objects one can control in practice, and define curve-counting invariants using intersection theory on the compact moduli space.  Doing so, will require some understanding of obstruction theories and the virtual fundamental class.  

In this chapter we will describe three modern curve-counting invariants (Gromov-Witten, Donaldson-Thomas, and Gopakumar-Vafa invariants) and each arise in some form from different compactifications of $\mathcal{C}(X, \beta)$ using different objects.  All three are conjecturally equivalent, and all three are deformation invariants, meaning they are unchanged under deformations of the complex structure of $X$.  In addition, each of these theories finds a natural home in some part of string theory, as we will discuss.  

It turns out that more interesting than the invariants themselves is the generating function one can package them into.  This object not only coincides with the partition function of the associated physical theory, but often enjoys remarkable automorphic properties.  This automorphy may reflect hidden symmetries of the moduli space at hand.  This is among the common features shared by modern enumerative theories -- a few others which we will see are the use of deformation and obstruction theories on a moduli space, as well as a close interaction with physics.

\section{Gromov-Witten Theory} \label{sec:GWThhhh}

Given a smooth projective variety $X$, we consider a fixed homology class $\beta \in H_{2}(X, \mathbb{Z})$ and cycles $Z_{1}, \ldots, Z_{n}$ in $X$ of any dimension.  We are interested in counting (in some sense of the word) the curves $C$ of genus $g$ with homology class $\beta$ such that $C \cap Z_{i} \neq \varnothing$ for all $i$.  In other words, we want to count curves in $X$ with fixed discrete invariants $g$ and $\beta$ intersecting each of the $Z_{i}$.  The insight of Kontsevich \cite{kontsevich_enumeration_1995} was to replace embedded curves $C \subset X$ with abstract $n$-pointed curves $(C, p_{1}, \ldots, p_{n})$ of genus $g$ along with a holomorphic map $f : C \to X$ such that $f_{*}[C] = \beta$ and $f(p_{i}) \in Z_{i}$ for all $i$.  

It turns out that this change of perspective harmonizes the mathematical theory with the idea of a sigma model in string theory.  Mathematically, \emph{Gromov-Witten theory} is a rigorous counting of curves in $X$ as interpreted by Kontsevich, and is the underlying foundation of the \emph{A-model topological string theory} on $X$.  The relevant moduli space of stable maps will carry a perfect obstruction theory and a virtual class.  The Gromov-Witten invariants are then deformation invariants defined as integrals over this virtual class.  They will depend only on the K\"{a}hler classes, i.e. the (complexified) volumes of curves.  Finally, we mention that what has been described above is the algebro-geometric approach to the theory -- there is a purely symplectic version replacing $X$ by an arbitrary symplectic manifold, and $f$ by a J-holomorphic map \cite{mcduff_j-holomorphic_2004}.

\subsection{The Moduli Space of Stable Curves} \label{subsec:ModSpStCURVV}

As a warmup to understanding stable maps, let us briefly review the moduli of stable curves.  We will give just a rough overview of the theory, so for more details we refer the reader to \cite{deligne_irreducibility_1969, arbarello_geometry_2011}.  We will also make use of our discussion in Section \ref{subsec:RevModProbb} outlining some generalities in moduli problems.    

For the duration of the section on Gromov-Witten theory, a curve will mean a connected one-dimensional, possibly singular projective variety.  An $n$-pointed curve $(C, p_{1}, \ldots, p_{n})$ is a curve $C$ with ordered marked points $p_{i}$ lying at smooth points of $C$.  An automorphism of $(C, p_{1}, \ldots, p_{n})$ is an automorphism of $C$ preserving the marked points.  

\begin{defn}
An $n$-pointed curve $(C, p_{1}, \ldots, p_{n})$ is called prestable if it has at worst nodal singularities, and it is called stable if it is prestable with a finite automorphism group.  
\end{defn}

\noindent A stable curve is equivalently seen to be a nodal $n$-pointed curve satisfying the following two conditions:
\begin{enumerate}
\item If smooth of genus 1, then the curve has at least 1 marked point.

\item All rational components of the normalization contain at least 3 points lying over special points (either nodes or marked points).  
\end{enumerate}

\noindent If the arithmetic genus of $C$ is $g$, a necessary condition for stability is $2g-2+n>0$, a condition which clearly holds automatically for $g \geq 2$.

We can now introduce the notion of a \emph{flat family of stable curves} parameterized by a scheme, always assumed to be of finite type over $\mathbb{C}$.  

\begin{defn}
A flat family of stable $n$-pointed curves of arithmetic genus $g$ parameterized by a scheme $S$ is a flat and proper morphism of finite type
\[ \pi: \mathcal{C} \longrightarrow S\]
with $n$ disjoint sections $\sigma_{1}, \ldots, \sigma_{n} : S \to \mathcal{C}$ such that for all closed points $s \in S$,
\[ \big( C_{s}, \sigma_{1}(s), \ldots, \sigma_{n}(s) \big)\]
is a stable $n$-pointed curve of arithmetic genus $g$, where $C_{s} = \pi^{-1}(s)$.  
\end{defn}

\noindent Note that a flat family over $\text{Spec}\, \mathbb{C}$ is simply a stable curve.  We say that two such families $\mathcal{C}$ and $\mathcal{C}'$ over $S$ are equivalent, denoted $\mathcal{C} \sim_{S} \mathcal{C}'$, if there is an isomorphism of schemes $\mu: \mathcal{C} \to \mathcal{C}'$ along with the following diagram for all $i=1, \ldots, n$
\begin{equation}
\begin{tikzcd}
\mathcal{C} \arrow{rr}{\mu} \arrow{dr}{\pi} & & \mathcal{C}' \arrow[swap]{dl}{\pi'} \\
& S \arrow[bend left=40]{ul}{\sigma_{i}} \arrow[swap, bend right=37]{ur}{\sigma'_{i}} &
\end{tikzcd}
\end{equation}
By including the sections in the diagram, we require $\mu \circ \sigma_{i} = \sigma'_{i}$.  In other words, the isomorphism must preserve the marked points.  

One can study the moduli problem of parameterizing stable $n$-pointed curves of a fixed arithmetic genus.  The corresponding moduli functor (or moduli stack) is defined by $\overline{\mathcal{M}}_{g,n} : \mathbf{Sch}^{op} \to \mathbf{Set}$ such that
\begin{equation}
\overline{\mathcal{M}}_{g,n}(S) \coloneqq 
\begin{Bmatrix*}[l]
\text{Flat families over}\, S\, \text{of} \, n\text{-pointed stable}\\  \text{curves of arithmetic genus} \, g\, \text{up to} \, \sim_{S}
\end{Bmatrix*}
\end{equation}
and on morphisms of schemes, the functor acts in the obvious way by pullback.  It turns out that $\overline{\mathcal{M}}_{g,n}$ is not representable.  There is however, a coarse moduli space, typically denoted $\overline{M}_{g,n}$.  We regard $\overline{\mathcal{M}}_{g,n}$ as a compactification of the moduli stack $\mathcal{M}_{g,n}$ of smooth $n$-pointed curves of genus $g$, and likewise for the corresponding coarse moduli spaces.  

\begin{thm}[\bfseries Deligne-Mumford \cite{deligne_irreducibility_1969}]
The moduli stack $\overline{\mathcal{M}}_{g,n}$ is a smooth, proper, irreducible Deligne-Mumford stack of dimension $3g-3 + n$, assuming we have $2g-2+n>0$.  The coarse moduli space $\overline{M}_{g,n}$ is a projective variety of the same dimension with finite quotient singularities corresponding to stable curves with finite, but non-zero automorphisms.  
\end{thm}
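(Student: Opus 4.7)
The plan is to establish the theorem by independently verifying five properties: smoothness and dimension, algebraicity as a Deligne--Mumford stack, properness, irreducibility, and existence and projectivity of the coarse moduli space.

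First, I would work out the deformation theory of a stable $n$-pointed nodal curve $(C, p_1, \ldots, p_n)$ with $D = \sum p_i$. First-order deformations are classified by $\mathrm{Ext}^1(\Omega_C(\log D), \mathcal{O}_C)$ and obstructions by $\mathrm{Ext}^2$. Because $C$ has only nodal singularities and $D$ lies in the smooth locus, the local-to-global spectral sequence decomposes the first Ext group into a global piece $H^1(C, \mathcal{T}_C(-D))$ parametrizing locally trivial deformations together with a local piece $\bigoplus_{\text{nodes}} \mathbb{C}$ that smooths each node. The $\mathrm{Ext}^2$ vanishes since $C$ is a nodal curve, yielding formal smoothness; a Riemann--Roch computation on the normalization, accounting for the markings and nodes, then produces total dimension $3g - 3 + n$. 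The stability hypothesis $2g - 2 + n > 0$ is precisely what guarantees finite automorphism groups and hence that this tangent count reflects the stack dimension.

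Next I would verify that $\overline{\mathcal{M}}_{g,n}$ is a Deligne--Mumford stack. The standard route is to exploit that $\omega_C(D)^{\otimes m}$ is very ample with fixed Hilbert polynomial on any stable $(C, D)$ once $m$ is sufficiently large (say $m \geq 3$), which embeds every stable curve in a common projective space. This realizes $\overline{\mathcal{M}}_{g,n}$ as a quotient stack $[H / \mathrm{PGL}_N]$ of a locally closed subscheme $H$ of the relevant Hilbert scheme cut out by the conditions defining stable pointed curves. The stabilizers of the $\mathrm{PGL}_N$-action at closed points are precisely the finite groups $\mathrm{Aut}(C, D)$, establishing the Deligne--Mumford property. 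Properness then follows from the stable reduction theorem applied to the valuative criterion: any family over the generic point of a DVR extends, after a finite base change, to a unique stable family over the whole DVR.

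The main obstacle, in my view, is irreducibility, which does not follow from the smoothness and dimension analysis alone. The classical argument of Deligne and Mumford reduces the problem to connectedness and then exploits the boundary stratification: the locus of maximally degenerate stable curves (trees of rational curves with $3g - 3 + n$ nodes) is connected and lies in the closure of every potential component, so an inductive smoothing argument through strata with fewer nodes yields connectedness of the whole stack. Over $\mathbb{C}$ one can alternatively invoke Teichm\"uller theory, where connectedness of Teichm\"uller space immediately gives connectedness of $\mathcal{M}_{g,n}$ and hence of its compactification. Finally, Keel--Mori produces the coarse moduli space $\overline{M}_{g,n}$ as a proper algebraic space; it has finite quotient singularities since \'etale-locally $\overline{\mathcal{M}}_{g,n}$ looks like $[U / \mathrm{Aut}(C, D)]$ with $U$ smooth, and projectivity follows either from Mumford's GIT construction using the $\mathrm{PGL}_N$-linearization on $H$, or from ampleness of an explicit class such as a positive combination of the Hodge class $\lambda$ and the boundary divisors.
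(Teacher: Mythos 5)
The paper does not prove this theorem at all: it is quoted verbatim from Deligne--Mumford with a citation, and the only ingredient the thesis later touches on (the vanishing of $\text{Ext}^{2}_{C}(\Omega_{C}(D), \mathcal{O}_{C})$ and the resulting dimension count $3g-3+n$) is likewise stated without proof. So there is nothing in the paper to compare your argument against line by line; I can only assess your sketch on its own terms.

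As an outline of the standard proof it is essentially correct and well organized. The deformation-theoretic step is right (note that for reduced marked points in the smooth locus $\Omega_{C}(\log D)=\Omega_{C}(D)$, so your formulation agrees with the one the thesis uses), the local-to-global spectral sequence correctly splits first-order deformations into locally trivial ones and one smoothing parameter per node, and $\text{Ext}^{2}$ vanishes because $C$ is a curve with lci singularities. The Hilbert-scheme presentation via the tricanonical (log-)embedding, stable reduction for properness, and Keel--Mori plus Cornalba/Koll\'ar or GIT for the projective coarse space are all the standard route. Your identification of irreducibility as the genuinely hard step is accurate; the one place your sketch is thinner than the actual Deligne--Mumford argument is there. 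Their proof does not work purely by smoothing maximally degenerate curves within a fixed characteristic: they establish the result over $\mathbb{C}$ (where Teichm\"uller theory gives connectedness of $\mathcal{M}_{g,n}$, hence of its compactification since the boundary has positive codimension and the stack is smooth), and then transport irreducibility to arbitrary characteristic using smoothness and properness of the stack over $\mathbb{Z}$ together with a Zariski-connectedness/Stein-factorization argument. If you intend the boundary-stratification induction as a self-contained characteristic-free proof of connectedness, you would need to justify that every irreducible component meets the deepest stratum, which is exactly the nontrivial content; as written that step is asserted rather than proved.
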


One can also study the moduli problem for prestable curves.  As we will see in the next section, the domain of a stable map in Gromov-Witten theory is a prestable curve, so the following remark will be of some importance.  

\begin{rmk}\label{rmk:PreStabCurvvv}
The moduli stack $\mathfrak{M}_{g,n}$ of prestable $n$-pointed curves of arithmetic genus $g$ is a smooth Artin stack of dimension $3g-3+n$.  Note that because we allow infinitesimal automorphisms, this dimension may be negative.  
\end{rmk}

\subsection{The Moduli Space of Stable Maps}\label{eqn:secssonStabMapss}

Let $X$ be a smooth projective variety, and let $\mathcal{C}_{g,n}(X, \beta)$ be the (possibly empty) moduli space of smooth $n$-pointed curves in $X$ of genus $g$ and class $\beta \in H_{2}(X, \mathbb{Z})$.  As mentioned above, an insight of Kontsevich was to concretely realize a point in this moduli space as an embedding $f : C \hookrightarrow X$, where $(C, p_{1}, \ldots, p_{n})$ is a smooth $n$-pointed curve of genus $g$, and $f_{*}[C] = \beta$.  Though one can work explicitly with such an embedding, the moduli space is not compact.  One natural compactification arises by allowing for prestable curves $C$, and more general morphisms $f$.  

Given an $n$-pointed curve $(C, p_{1}, \ldots, p_{n})$ and a morphism $f: C \to X$, an automorphism of $f$ is an automorphism of the pointed curve which commutes with $f$ and preserves the marked points.  The group of automorphisms of $f$ is the subgroup $\text{Aut}(f)$ of the automorphism group of the pointed curve. 

\begin{defn}
We say the map $f$ is stable if $\text{Aut}(f)$ is finite, and denote the data by $(C, p_{1}, \ldots, p_{n}, f)$.    
\end{defn}

A \emph{collapsing component} of a stable map $(C, p_{1}, \ldots, p_{n}, f)$ is a component of $C$ mapping by $f$ to a point in $X$.  The condition that $\text{Aut}(f)$ is finite is equivalent to the requirement that any collapsing components of genus 0 must have at least 3 special points (nodes or marked points) while collapsing components of genus 1 must have at least 1 special point.  Notice that if $f$ is an embedding, there clearly are no collapsing components and $f$ is therefore stable.  
  
We must now understand how stable maps deform in families.  For a scheme $S$, always of finite type over $\mathbb{C}$, let $\pi: \mathcal{C} \to S$ be a flat family of prestable $n$-pointed curves of arithmetic genus $g$ over $S$, and let $\sigma_{1}, \ldots, \sigma_{n} : S \to \mathcal{C}$ be the disjoint sections.    
\begin{defn}
Let $X$ be a smooth projective variety.  A flat family over $S$ of stable maps into $X$ with a prestable $n$-pointed domain curve of genus $g$ is a diagram 
\begin{equation}
\begin{tikzcd}
\mathcal{C} \arrow{r}{f} \arrow{d}{\pi} & X \\
S &  &
\end{tikzcd}
\end{equation}
such that for all closed points $s \in S$, $\big( C_{s}, \sigma_{1}(s), \ldots, \sigma_{n}(s), f_{s} \big)$ is a stable map where $C_{s} = \pi^{-1}(s)$ and $f_{s} = f|_{C_{s}}$.  Moreover, we use the following terminology to describe such families:
\begin{enumerate}
\item A flat family of stable maps $f: \mathcal{C} \to X$ over $S$ is said to be $n$-pointed of genus $g$ if for all closed points $s \in S$, the curve $C_{s}$ is $n$-pointed with arithmetic genus $g$.
\item The family is said to represent the class $\beta \in H_{2}(X, \mathbb{Z})$ if $(f_{s})_{*}[C_{s}]=\beta$ for all closed points $s \in S$.  
\end{enumerate}
\end{defn}

\noindent We say that two flat families $f: \mathcal{C} \to X$ and $f': \mathcal{C}' \to X$ are isomorphic $\sim_{S}$ over $S$ if there exists an isomorphism $\mu: \mathcal{C} \to \mathcal{C}'$ with the following diagram for all $i=1, \ldots, n$  
\begin{equation}
\begin{tikzcd}
& \mathcal{C} \arrow{dl}{\pi}\arrow{dd}{\mu} \arrow{dr}{f} &   \\
S \arrow[bend left=40]{ur}{\sigma_{i}} \arrow[swap, bend right=37]{dr}{\sigma'_{i}} & & X \\
& \mathcal{C'}\arrow[swap]{ul}{\pi'} \arrow[swap]{ur}{f'}  &
\end{tikzcd}
\end{equation}  
By including the sections in the diagram, we require compatibility with marked points.  That is, $\mu \circ \sigma_{i} = \sigma'_{i}$ and $f \circ \sigma_{i} = f' \circ \sigma'_{i}$.  

We can now define the \emph{moduli stack of stable maps} by the functor $\overline{\mathcal{M}}_{g,n}(X, \beta) : \mathbf{Sch}^{op} \to \mathbf{Set}$, such that for all schemes $S$
\begin{equation}
\overline{\mathcal{M}}_{g,n}(X, \beta)(S) \coloneqq
\begin{Bmatrix*}[l]
\text{Flat families over}\, S\, \text{of} \,n\text{-pointed stable maps of} \\ \text{\, genus} \, g \, \text{representing}\, \beta \in H_{2}(X, \mathbb{Z}) \, \text{up to} \, \sim_{S}
\end{Bmatrix*}
\end{equation}
and such that the functor acts on morphisms in the obvious way by pullback.  This moduli space is the main object of interest in Gromov-Witten theory.  Let us briefly discuss some of its nice properties, as well as some not so nice.  

\begin{thm}
If $X$ is a smooth projective variety, then the moduli stack $\overline{\mathcal{M}}_{g,n}(X, \beta)$ of stable maps is a (generally singular) proper Deligne-Mumford stack.  The corresponding coarse moduli space is a projective scheme over $\mathbb{C}$.  
\end{thm}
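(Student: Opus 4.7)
The strategy is to verify each of the three assertions (stack, Deligne-Mumford, proper) in turn, and then invoke a GIT-type construction for the coarse moduli space. First I would check that $\overline{\mathcal{M}}_{g,n}(X,\beta)$ is a stack in the \'etale topology on $\mathbf{Sch}/\mathbb{C}$: descent for the underlying curve $\mathcal{C}/S$ follows from flat descent for flat proper families, descent for the sections $\sigma_i$ follows from descent for closed subschemes, and descent for the morphism $f$ follows from descent for morphisms of schemes. The gluing is effective because effective descent holds for proper morphisms. Next, to show the diagonal is representable, separated, and of finite type one notes that for two families $(f,\{\sigma_i\})$ and $(f',\{\sigma_i'\})$ over $S$, the sheaf $\mathrm{Isom}_S$ is a closed subscheme of $\mathrm{Isom}_S(\mathcal{C},\mathcal{C}')$, which is itself representable by classical Hilbert-scheme arguments applied to the family of pointed nodal curves.

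The Deligne-Mumford property then reduces to showing that the diagonal is unramified, equivalently that the group scheme of automorphisms of each stable map $(C,p_\bullet,f)$ is \emph{reduced and finite}. Finiteness is precisely the stability hypothesis $|\mathrm{Aut}(f)|<\infty$, and reducedness follows from the vanishing of infinitesimal automorphisms, which in turn follows from the vanishing of $H^0$ of a suitable sheaf of logarithmic vector fields on $C$ that have to be killed by $df$; this vanishing is a direct consequence of the combinatorial stability criterion on collapsing components. To get finite type, I would construct an atlas: choose an ample line bundle $L$ on $X$ and observe that the relative dualizing sheaf twisted by markings and by $f^*L$, namely $\omega_{C/S}(\sum \sigma_i)\otimes f^*L^{\otimes 3}$, is relatively very ample with fiberwise Hilbert polynomial determined by $g,n,\beta$ and the degree of $L$. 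This embeds every family into a fixed projective space, and the locus of stable maps is then cut out as a locally closed substack of a Hilbert scheme, showing finite type.

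The main obstacle is \textbf{properness}, which I would establish via the valuative criterion. Given a DVR $R$ with fraction field $K$ and a family of stable maps over $\mathrm{Spec}\, K$, one must extend (after finite base change) to a family over $\mathrm{Spec}\, R$. The input is the stable reduction theorem of Deligne-Mumford for the pointed domain curve: after a finite \'etale base change $R\to R'$, the $K$-curve extends to a stable pointed nodal family $\mathcal{C}/\mathrm{Spec}\, R'$. The $K$-map $f_K:\mathcal{C}_K\to X$ then extends to a rational map $\mathcal{C}\dashrightarrow X$, and by properness of $X$ this rational map is defined outside a codimension-two locus contained in the central fiber. Resolving the indeterminacy by successively blowing up the nodes and smooth points of the special fiber where $f$ is not defined (this is the Kontsevich stable reduction algorithm) produces a flat family $\widetilde{\mathcal{C}}/\mathrm{Spec}\, R'$ of prestable curves together with a morphism $\widetilde f:\widetilde{\mathcal{C}}\to X$ extending $f_K$. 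Finally one contracts all unstable rational components of the special fiber (those collapsed by $\widetilde f$ with too few special points) to obtain the desired limiting stable map. Uniqueness of the limit, again up to finite base change, follows from the same argument applied to any two competing extensions, reducing to uniqueness of the stable limit of pointed curves.

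Once separatedness and properness are in hand, the existence of a coarse moduli space as a proper algebraic space follows from the Keel-Mori theorem applied to our Deligne-Mumford stack of finite type. To upgrade to a projective scheme, one exhibits an ample line bundle on the coarse space, for instance by taking a high tensor power of the determinant of the pushforward of $\omega_{\mathcal{C}/\overline{\mathcal{M}}_{g,n}(X,\beta)}(\sum\sigma_i)\otimes f^*L^{\otimes 3}$ for $L$ ample on $X$; this line bundle descends to the coarse space and is ample there by a standard Koll\'ar-type positivity argument, using that the fibers of the universal family are canonically polarized after twisting and that the map to $X$ is nontrivial along $\beta$. Projectivity of the coarse space follows.
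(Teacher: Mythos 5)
The paper does not actually prove this theorem: immediately after the statement it attributes the stack-theoretic part to Kontsevich and the coarse moduli space to Fulton--Pandharipande and moves on. So there is no internal proof to match against; what you have written is a reconstruction of the standard argument from the literature, and it is essentially the Fulton--Pandharipande/Behrend--Manin strategy. Your finite-type argument via the relatively very ample sheaf $\omega_{C/S}(\sum\sigma_i)\otimes f^*L^{\otimes 3}$ and the embedding into a Hilbert scheme is exactly their rigidification step; your properness argument is the Kontsevich stable-reduction algorithm (stabilize the domain, extend the map as a rational map, resolve indeterminacy by blowups, contract unstable components). The one place you genuinely diverge is the coarse moduli space: Fulton--Pandharipande construct it directly as a GIT quotient of a locally closed subscheme of a Hilbert scheme by $PGL$, which yields projectivity for free, whereas you invoke Keel--Mori to get a proper algebraic space and then argue projectivity separately via ampleness of a determinant pushforward. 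Both routes work; yours is more modern but shifts the burden onto a K\"oll\'ar-type semipositivity statement that you only gesture at, while the GIT route makes projectivity automatic.

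Two concrete gaps. First, stable reduction for the domain curve requires a finite, generally \emph{ramified} extension of the DVR (e.g.\ adjoining an $n$-th root of the uniformizer), not a finite \'etale one; as written your base change cannot produce the needed reduction. Second, and more substantively, your properness argument begins by taking the stable model of the pointed domain curve over $K$, which does not exist when $2g-2+n\le 0$ (e.g.\ for $\overline{\mathcal{M}}_{0,0}(X,\beta)$ the unpointed genus-zero domain has no stable model). The standard fix is to add auxiliary marked points by intersecting the image with general hyperplane sections of $X$ (possible since $\beta\ne 0$ forces $f$ nonconstant on some component), run stable reduction for the augmented pointed curve, and then forget the extra points at the end; without this step your argument simply does not get off the ground in the low $(g,n)$ range. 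A smaller caveat: blowing up a node of the special fiber of a family over a DVR can produce a non-reduced exceptional component, so the indeterminacy resolution should be arranged (after further base change) to occur only at smooth points of the special fiber.
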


\noindent The stack portion of the above theorem was proven by Kontsevich \cite{kontsevich_enumeration_1995}, while the result for the coarse moduli space can be found in Fulton-Pandharipande \cite{fulton_notes_1996}.  One consequence of properness is the following fact.  If two marked points come together on a domain curve, a new $\mathbb{P}^{1}$ ``bubbles" off, on which the two points are then separated.  Notice that this preserves the stability, and the new $\mathbb{P}^{1}$ component is collapsed by the map. 

We denote by $\mathcal{M}_{g,n}(X, \beta) \hookrightarrow \overline{\mathcal{M}}_{g,n}(X, \beta)$ the (possibly empty) open substack parameterizing stable maps with smooth domain curves.  Because embeddings are stable, we furthermore have the following inclusions
\begin{equation}
\begin{tikzcd}
\mathcal{C}_{g,n}(X, \beta) \arrow[hookrightarrow]{r}{} & \mathcal{M}_{g,n}(X, \beta) \arrow[hookrightarrow]{r}{} &  \overline{\mathcal{M}}_{g,n}(X, \beta).
\end{tikzcd}
\end{equation}
It is in this sense that the moduli space of stable maps is a compactification of the substacks $\mathcal{C}_{g,n}(X, \beta)$ and $\mathcal{M}_{g,n}(X, \beta)$.  However, note that it is such that the initial space may be empty or small, while the resulting space after compactifying is very large.  

The geometry of $\overline{\mathcal{M}}_{g,n}(X, \beta)$ is intimately connected with that of stable or prestable curves.  Because the domain curve of a stable map is a prestable curve, by forgetting the data of the map we get a morphism
\begin{equation}\label{eqn:morphprestabmap}
\overline{\mathcal{M}}_{g,n}(X, \beta) \longrightarrow \mathfrak{M}_{g,n}.
\end{equation}
If $2g-2+n>0$, we have a stabalization morphism $\mathfrak{M}_{g,n} \to \overline{\mathcal{M}}_{g,n}$ collapsing rational components with too few special points (nodes and marked points).  If this inequality is satisfied, we can therefore compose (\ref{eqn:morphprestabmap}) with the stabalization morphism to get
\begin{equation}
\overline{\mathcal{M}}_{g,n}(X, \beta) \longrightarrow \overline{\mathcal{M}}_{g,n}.
\end{equation}

One special case is that of $\beta=0$.  Clearly, the stability of a map collapsing the entire curve requires the domain curve to itself be stable.  We have an isomorphism 
\begin{equation} \label{eqn:DEG0GW}
\overline{\mathcal{M}}_{g,n}(X, 0) \cong \overline{\mathcal{M}}_{g,n} \times X.
\end{equation}
One should think that a moduli point in $\overline{\mathcal{M}}_{g,n}(X, 0)$ is determined by the choice of a stable curve along with a point in $X$ to which the curve collapses. 

In comparison to stable curves, the moduli space of stable maps is quite poorly behaved in the sense of classical geometry.  In addition to being usually singular, it is typically non-reduced, and contains many irreducible components of arbitrary dimensions.  The redeeming feature however, is that the moduli space of stable maps is rather nice from the perspective of modern enumerative geometry -- it carries a perfect obstruction theory and a virtual class.  We will now explain this part of the story.

\subsection{Perfect Obstruction Theory of Stable Maps and the Virtual Fundamental Class}

For many moduli spaces of interest in algebraic geometry, the local structure at a point is encoded by cohomological data associated to an object representing that point in the moduli space.  This data is interpreted as deformations of the object and obstructions to lifting deformations to higher order.  These deformation and obstruction theories are very deep (and we will hardly scratch the surface) so before jumping directly to stable maps, it is helpful to first consider the simpler case of $\overline{\mathcal{M}}_{g,n}$.

\subsubsection{The Deformation Theory of Stable Curves}

It is well-known that the infinitesimal deformations of a compact complex manifold $V$ with a finite automorphism group are given by the cohomology group $H^{1}(V, T_{V})$, while the obstructions to lifting deformations to higher order live in $H^{2}(V, T_{V})$.  The group $H^{0}(V, T_{V})$ corresponds to infinitesimal automorphisms of $V$, and vanishes by assumption.  It follows that the expected dimension of the corresponding moduli space at the point $V$ is 
\begin{equation}
h^{1}(V, T_{V}) - h^{2}(V, T_{V}),
\end{equation}
and a sufficient (but not necessary) condition for the smoothness of the moduli space at $V$ is that $h^{2}(V, T_{V})$ vanishes.  

This would suffice to understand the non-compact moduli space $\mathcal{M}_{g,0}$ (at least for $g \geq 2$), but we would like to generalize the discussion to account for arbitrary stable curves $(C, p_{1}, \ldots, p_{n})$ with marked points.  First, if $C$ is smooth one can introduce marked points by replacing $T_{C}$ by $T_{C}(-D)$ where we will use the notation $D \coloneqq \sum_{i=1}^{n} p_{i}$ for the sum of marked points as a divisor.  Of course, $T_{C}(-D)$ is the sheaf of holomorphic vector fields vanishing at the marked points.  On a smooth variety $X$, the dual of the tangent bundle is defined to agree with the sheaf $\Omega_{X}$ of K\"{a}hler differentials \cite[II, Sec. 8]{hartshorne_algebraic_1997}.  Because the K\"{a}hler differentials exist on a general scheme, if the curve $C$ is not smooth, it is natural to replace $T_{C}^{\smvee}(D)$ by $\Omega_{C}(D)$.  

The infinitesimal deformations of $\overline{\mathcal{M}}_{g,n}$ are given by $\text{Ext}^{1}_{C}(\Omega_{C}(D), \mathcal{O}_{C})$ with the obstructions living in $\text{Ext}^{2}_{C}(\Omega_{C}(D), \mathcal{O}_{C})$.  The infinitesimal automorphisms are given by $\text{Ext}^{0}_{C}(\Omega_{C}(D), \mathcal{O}_{C})$, and this group vanishes because stable curves have finite automorphisms \cite{deligne_irreducibility_1969}.  The expected dimension of $\overline{\mathcal{M}}_{g,n}$ is therefore given by 
\begin{equation}\label{eqn:verexpppdimMgn}
\text{dim} \, \text{Ext}_{C}^{1}\big( \Omega_{C}(D), \mathcal{O}_{C}\big) - \text{dim} \, \text{Ext}_{C}^{2}\big( \Omega_{C}(D), \mathcal{O}_{C}\big).
\end{equation}
If $C$ is in fact smooth, then $\Omega_{C}$ is locally-free and dual to the tangent bundle $T_{C}$.  By the two following elementary isomorphisms
\begin{equation}
\text{Ext}^{i}_{C}(\Omega_{C}(D), \mathcal{O}_{C}) \cong \text{Ext}_{C}^{i}(\mathcal{O}_{C}, T_{C}(-D)) \cong H^{i}(C, T_{C}(-D)),
\end{equation}
the automorphisms, deformations, and obstructions of a smooth $n$-pointed curve specialize consistently.  

For all stable curves $(C, p_{1}, \ldots, p_{n})$ it turns out the obstruction space $\text{Ext}_{C}^{2}\big( \Omega_{C}(D), \mathcal{O}_{C}\big)$ vanishes \cite{deligne_irreducibility_1969}.  This means that all deformations of stable curves are unobstructed and the moduli space $\overline{\mathcal{M}}_{g,n}$ is a smooth Deligne-Mumford stack, as we have stated in Section \ref{subsec:ModSpStCURVV}.  We say $\overline{\mathcal{M}}_{g,n}$ is smooth of the expected dimension, (\ref{eqn:verexpppdimMgn}) which is given numerically by \cite[Thm 3.17]{arbarello_geometry_2011}
\begin{equation}
\text{dim} \, \text{Ext}_{C}^{1}\big( \Omega_{C}(D), \mathcal{O}_{C}\big) = 3g -3 + n.
\end{equation}

We make one final comment which will be of importance in the deformation theory of stable maps.  One can also study the deformation theory of the Artin stack $\mathfrak{M}_{g,n}$ of prestable curves (see Remark \ref{rmk:PreStabCurvvv}).  The dimension is computed by
\begin{equation}\label{eqn:orbidimrangsp}
\text{dim} \, \text{Ext}_{C}^{1}\big( \Omega_{C}(D), \mathcal{O}_{C}\big) -  \text{dim} \, \text{Ext}_{C}^{0}\big( \Omega_{C}(D), \mathcal{O}_{C}\big)= 3g -3 + n,
\end{equation}
though the value can possibly be negative due to automorphisms.  In the case of a smooth curve, this result is clear by a simple application of Riemann-Roch.

\subsubsection{The Perfect Obstruction Theory of Stable Maps}

We now want to consider the deformations and obstructions of a stable map $(C, p_{1}, \ldots, p_{n}, f)$.  Unlike the case of $\overline{\mathcal{M}}_{g,n}$, the deformation theory of $\overline{\mathcal{M}}_{g,n}(X, \beta)$ is generally obstructed, resulting in either a singular moduli space or a moduli space of an unexpected dimension.  With stable maps, we can now deform both the curve $C$ and the map $f$, though we will not deform the target space $X$ which is always taken to be smooth and projective.  For a readable account of what it means to deform a map, we refer the reader to \cite{ran_deformations_1989}.  In what follows, we will draw from the exposition of \cite{cox_mirror_1999, pandharipande_13/2_2014}.  

Let us begin in the simple setting where $C$ is a smooth genus $g$ curve without marked points, and $f$ is an embedding such that $f_{*}[C] = \beta$.  The moduli in such a case are controlled by the normal bundle exact sequence
\begin{equation}\label{eqn:SESnormbundlseq}
0 \to T_{C} \to f^{*}T_{X} \to \nu_{C} \to 0
\end{equation}
where $\nu_{C}$ is the normal bundle of $C$ in $X$, and we have identified $f^{*}T_{X}$ with $T_{X}|_{C}$.  We can then pass to the long exact sequence in cohomology
\begin{equation}\label{eqn:LESINCOHOMGW}
\begin{split}
0 & \longrightarrow H^{0}(C, T_{C}) \longrightarrow H^{0}(C, f^{*}T_{X}) \longrightarrow H^{0}(C, \nu_{C}) \longrightarrow \\
& \longrightarrow H^{1}(C, T_{C}) \longrightarrow H^{1}(C, f^{*}T_{X}) \longrightarrow H^{1}(C, \nu_{C}) \longrightarrow 0.  
\end{split}
\end{equation}
Each of the cohomology groups above has a geometrical interpretation.  We have seen that $H^{0}(C, T_{C})$ and $H^{1}(C, T_{C})$ correspond to the infinitesimal automorphisms and deformations of a smooth curve, respectively.  

\begin{rmk}
Beware that the domain curve of an arbitrary stable map need only be prestable, not necessarily stable.  Therefore, $H^{0}(C, T_{C})$ might not vanish in general.  
\end{rmk}

\noindent We interpret $H^{0}(C, f^{*}T_{X})$ as the space of infinitesimal deformations of the map $f$ with fixed domain curve $C$, and $H^{1}(C, f^{*}T_{X})$ as the obstruction space to deforming of $f$.  Ultimately in this simple case, it is the sections of the normal bundle which we understand as the infinitesimal deformations of a stable map with obstruction space $H^{1}(C, \nu_{C})$.  With this in mind, the long exact sequence (\ref{eqn:LESINCOHOMGW}) is often written
\begin{equation}
\begin{split}
0 & \longrightarrow \text{Aut}(C) \longrightarrow \text{Def}(f) \longrightarrow \text{Def}(C, f) \longrightarrow \\
& \longrightarrow \text{Def}(C) \longrightarrow \text{Ob}(f) \longrightarrow \text{Ob}(C, f) \longrightarrow 0.  
\end{split}
\end{equation}
There is no entry $\text{Aut}(C, f)$ because the infinitesimal automorphisms of a stable map vanish, by definition.  

Noting that the alternating sum of dimensions in a long exact sequence of cohomology groups vanishes, we can write
\begin{equation}
h^{0}(C, \nu_{C}) - h^{1}(C, \nu_{C}) = \chi(C, f^{*}T_{X}) - \chi(C, T_{C}).
\end{equation}
A simple application of Riemann-Roch shows that $\chi(C, T_{C}) = 3-3g$ and
\begin{equation}\label{eqn:EulCharpullbcktabu}
\chi(C, f^{*}T_{X}) = \int_{\beta}c_{1}(X) + \text{dim}X(1-g)
\end{equation}
where $\beta = f_{*}[C]$.  We therefore have
\begin{equation}
h^{0}(C, \nu_{C}) - h^{1}(C, \nu_{C}) = \int_{\beta}c_{1}(X) + (\text{dim}X - 3)(1-g).
\end{equation}
Although we will see that this agrees numerically with what will soon be called the expected dimension of $\overline{\mathcal{M}}_{g,0}(X, \beta)$, it may happen that there are no moduli points corresponding to a smooth embedded curve.  In other words, $\mathcal{C}_{g,0}(X, \beta)$ may be empty.  We therefore need to mimic the above discussion in the case of general stable maps.  

Given a stable map $(C, p_{1}, \ldots, p_{n}, f)$ in $\overline{\mathcal{M}}_{g,n}(X, \beta)$, it remains true that $f^{*}T_{X}$ is locally-free but if $C$ is singular, $T_{C}$ no longer exists.  One should replace $T_{C}^{\smvee}$ by the sheaf of K\"{a}hler differentials $\Omega_{C}$, which coincides with the canonical bundle $\omega_{C}$ when $C$ is smooth.  Of course, since $X$ is smooth, $\Omega_{X}$ is simply the cotangent bundle.  

Recalling the notation $D = \sum_{i=1}^{n} p_{i}$ for the sum of marked points, the moduli are no longer controlled by the sequence (\ref{eqn:SESnormbundlseq}), but rather the complex
\begin{equation}\label{eqn:genstabmapcompll}
f^{*}\Omega_{X} \longrightarrow \Omega_{C}(D).
\end{equation}
In the case where $C$ is smooth and $f$ is an embedding, (\ref{eqn:genstabmapcompll}) is simply the dual of the canonical map $T_{C}(-D) \hookrightarrow f^{*}T_{X}$ whose kernel is the conormal bundle $\nu_{C}^{\smvee}$.  As explained in \cite{cox_mirror_1999} the automorphisms, deformations, and obstructions of a general stable map are governed by the following hyperext groups 
\[\mathbb{E}\text{xt}_{C}^{i}\big(\big\{ f^{*}\Omega_{X} \to \Omega_{C}(D)\big\}, \mathcal{O}_{C}\big).\]
The group with $i=0$ corresponds to automorphisms and vanishes by stability of the map.  Of course, $i=1$ and $i=2$ correspond to the tangent space and obstruction space, respectively.  We can now define the expected dimension of the moduli space at a point as the difference of the dimensions of the tangent space and obstruction space.  

\begin{defn}
The expected (or virtual) dimension of the moduli space of stable maps $\overline{\mathcal{M}}_{g,n}(X, \beta)$ at a stable map $(C, p_{1}, \ldots, p_{n}, f)$ is defined by
\begin{equation}\label{eqn:virdim}
\begin{split}
\text{vdim}\big(\overline{\mathcal{M}}_{g,n}(X, \beta)\big) & =  \text{dim} \, \mathbb{E}\text{xt}_{C}^{1}\big(\big\{ f^{*}\Omega_{X} \to \Omega_{C}(D)\big\}, \mathcal{O}_{C}\big)\\
& - \text{dim} \, \mathbb{E}\text{xt}_{C}^{2}\big(\big\{ f^{*}\Omega_{X} \to \Omega_{C}(D)\big\}, \mathcal{O}_{C}\big).
\end{split}
\end{equation} 
\end{defn}

Although we have not provided an honest definition, practically speaking one can take the following long exact sequence as the defining characterization of the hyperext groups introduced above
\begin{equation}\label{eqn:LESINCOHOMhyperext}
\begin{split}
0 & \to \text{Ext}_{C}^{0}(\Omega_{C}(D), \mathcal{O}_{C}) \to \text{Ext}_{C}^{0}(f^{*}\Omega_{X}, \mathcal{O}_{C}) \to \mathbb{E}\text{xt}_{C}^{1}\big(\big\{ f^{*}\Omega_{X} \to \Omega_{C}(D)\big\}, \mathcal{O}_{C}\big) \to \\
& \to \text{Ext}_{C}^{1}(\Omega_{C}(D), \mathcal{O}_{C}) \to \text{Ext}_{C}^{1}(f^{*}\Omega_{X}, \mathcal{O}_{C}) \to \mathbb{E}\text{xt}_{C}^{2}\big(\big\{ f^{*}\Omega_{X} \to \Omega_{C}(D)\big\}, \mathcal{O}_{C}\big) \to 0.  
\end{split}
\end{equation}

\begin{rmk}
The entries in the above long exact sequence have the following geometrical interpretations and specializations to known cases:
\begin{enumerate}
\item The automorphisms of the $n$-pointed curve are given by $\text{Ext}_{C}^{0}(\Omega_{C}(D), \mathcal{O}_{C})$ with the deformations given by $\text{Ext}_{C}^{1}(\Omega_{C}(D), \mathcal{O}_{C})$.  If $C$ is smooth, we have the isomorphisms
\begin{equation}
\text{Ext}_{C}^{i}(\Omega_{C}(D), \mathcal{O}_{C}) \cong H^{i}(C, T_{C}(-D)).
\end{equation}

\item Because $X$ is smooth, $f^{*}\Omega_{X}$ is a locally-free sheaf on $C$, whether the curve is smooth or not.  We therefore have the isomorphisms
\begin{equation}\label{eqn:isomEXTHi}
\text{Ext}_{C}^{i}(f^{*}\Omega_{X}, \mathcal{O}_{C}) \cong H^{i}(C, f^{*}T_{X})
\end{equation}
and we have seen that $H^{0}(C, f^{*}T_{X})$ corresponds to deformations of the map $f$ with the curve $C$ fixed, while $H^{1}(C, f^{*}T_{X})$ is the obstructions to deforming the map.  This remains true whether $C$ is smooth or not.  

\item We have already interpreted the terms $\mathbb{E}\text{xt}_{C}^{i}\big(\big\{ f^{*}\Omega_{X} \to \Omega_{C}(D)\big\}, \mathcal{O}_{C}\big)$, but we note here that if $C$ is smooth without marked points, and $f$ is an embedding, then the complex (\ref{eqn:genstabmapcompll}) is quasi-isomorphic to its kernel $\nu_{C}^{\smvee}$.  In such a case we therefore have
\begin{equation}
\mathbb{E}\text{xt}_{C}^{i}\big(\big\{ f^{*}\Omega_{X} \to \Omega_{C}(D)\big\}, \mathcal{O}_{C}\big) \cong H^{i}(C, \nu_{C})
\end{equation}
consistent with the discussion above.  
\end{enumerate}
\end{rmk}

\noindent With these geometrical interpretations in mind, one will often see the long exact sequence (\ref{eqn:LESINCOHOMhyperext}) written as
\begin{equation}
\begin{split}
0 & \longrightarrow \text{Aut}(C, p_{1}, \ldots, p_{n}) \longrightarrow \text{Def}(f) \longrightarrow \text{Def}(C, p_{1}, \ldots, p_{n}, f) \longrightarrow \\
& \longrightarrow \text{Def}(C, p_{1}, \ldots, p_{n}) \longrightarrow \text{Ob}(f) \longrightarrow \text{Ob}(C, p_{1}, \ldots, p_{n}, f) \longrightarrow 0.  
\end{split}
\end{equation}
As we warned earlier, the automorphisms of the curve might not vanish because the domain curve of a stable map might only be prestable.  The consequence of stability of the map is that a term like $\text{Aut}(C, p_{1}, \ldots, p_{n}, f)$ vanishes in the above sequence.  

Because the alternating sum of dimensions in a long exact sequence vanishes, by (\ref{eqn:virdim}) and (\ref{eqn:LESINCOHOMhyperext}) we can write the expected dimension of $\overline{\mathcal{M}}_{g,n}(X, \beta)$ as
\begin{equation}
\text{vdim}\big(\overline{\mathcal{M}}_{g,n}(X, \beta)\big) = \chi(C, f^{*}T_{X}) + \text{dim} \, \text{Ext}_{C}^{1}(\Omega_{C}(D), \mathcal{O}_{C}) - \text{dim} \, \text{Ext}_{C}^{0}(\Omega_{C}(D), \mathcal{O}_{C}).
\end{equation}
Here, we used the isomorphism (\ref{eqn:isomEXTHi}) resulting in the term $\chi(C, f^{*}T_{X})$.  This term can be computed by Riemann-Roch on a possibly singular curve and agrees numerically with (\ref{eqn:EulCharpullbcktabu}).  Finally, the remaining terms were recorded previously in (\ref{eqn:orbidimrangsp}).  In all, we now have the following result for the expected dimension
\begin{equation}\label{eqn:virDIMexpll}
\text{vdim}\big( \overline{\mathcal{M}}_{g,n}(X, \beta)\big) = \int_{\beta} c_{1}(X) + (\text{dim}X-3)(1-g) + n.
\end{equation}
Though not at all obvious from its definition (\ref{eqn:virdim}), the expected dimension does not depend on a particular stable map, and is therefore an invariant of the moduli space.

\begin{proppy}
The Zariski tangent space to $\overline{\mathcal{M}}_{g,n}(X, \beta)$ at $(C, p_{1}, \ldots, p_{n}, f)$ is given by the hyperext group $\mathbb{E}\text{xt}_{C}^{1}\big(\big\{ f^{*}\Omega_{X} \to \Omega_{C}(D)\big\}, \mathcal{O}_{C}\big)$.  It is clear from (\ref{eqn:virdim}) that the expected dimension is a lower bound for the actual dimension at each point.  The moduli space is smooth if and only if the Zariski tangent space doesn't jump in dimension over $\overline{\mathcal{M}}_{g,n}(X, \beta)$.  
\end{proppy}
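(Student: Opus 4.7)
The proposition packages together three closely related standard facts from deformation theory; my plan is to treat each in turn, leaning on the hyperext framework already set up in the preceding paragraphs.

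First, for the identification of the Zariski tangent space, I would appeal to the standard principle that for any moduli functor $\mathcal{M}$ representable (or a stack) at a point $x$, the Zariski tangent space $T_x\mathcal{M}$ is canonically isomorphic to $\mathcal{M}(\mathrm{Spec}\,\mathbb{C}[\epsilon]/(\epsilon^2))$ restricted to maps whose reduction is $x$ — that is, the space of first-order deformations. The exposition immediately preceding the proposition identifies these first-order deformations of a stable map $(C,p_1,\ldots,p_n,f)$ with $\mathbb{E}\mathrm{xt}^1_C(\{f^*\Omega_X \to \Omega_C(D)\},\mathcal{O}_C)$, using the two-term complex that interpolates between deforming the pointed curve and deforming the map. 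So modulo citing this standard correspondence (with reference to e.g.\ \cite{cox_mirror_1999, ran_deformations_1989}), this first claim is immediate.

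Second, for the bound $\mathrm{vdim}\big(\overline{\mathcal{M}}_{g,n}(X,\beta)\big) \leq \dim_{(C,\underline{p},f)}\overline{\mathcal{M}}_{g,n}(X,\beta)$, the strategy is to combine the identification above with the very definition (\ref{eqn:virdim}):
\begin{equation*}
\mathrm{vdim} = \dim\,\mathbb{E}\mathrm{xt}^1 - \dim\,\mathbb{E}\mathrm{xt}^2 \leq \dim\,\mathbb{E}\mathrm{xt}^1 = \dim T_{(C,\underline{p},f)}\overline{\mathcal{M}}_{g,n}(X,\beta).
\end{equation*}
Since the dimension of a scheme (or DM stack) at a closed point is bounded above by its Zariski tangent dimension, the displayed inequality gives the stated lower bound. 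I would remark that equality holds exactly when the obstruction space $\mathbb{E}\mathrm{xt}^2$ vanishes, i.e.\ when the deformation theory is unobstructed at that point.

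Third, for the smoothness criterion, I would invoke two standard facts: (i) upper semicontinuity of the tangent space dimension on any scheme (or DM stack), and (ii) the characterization that a (reduced, equidimensional) scheme is smooth at $x$ iff $\dim T_x = \dim_x$, and smooth globally iff this holds at every point. Combining (i) and (ii), the locus where the tangent dimension exceeds the expected dimension is closed, and coincides with the singular locus; hence $\overline{\mathcal{M}}_{g,n}(X,\beta)$ is smooth precisely when the tangent dimension is constant (equivalently, doesn't jump), in which case it must equal $\mathrm{vdim}$ by the bound in part two. The main subtlety — the only real obstacle — is being careful in the stacky setting: one should pass to an étale atlas so that the notions of "Zariski tangent space," "dimension at a point," and "smoothness" have their classical scheme-theoretic meanings, and verify that the hyperext identification is compatible with this étale-local description. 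This is routine but is where one must be precise.
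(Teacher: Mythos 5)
The paper states this proposition without any proof, treating it as a standard fact, so the comparison is against the intended justification rather than a written argument. Your first part is fine. The genuine gap is in your second part: from $\text{vdim} = \dim \mathbb{E}\text{xt}^{1} - \dim \mathbb{E}\text{xt}^{2} \leq \dim \mathbb{E}\text{xt}^{1} = \dim T$ together with $\dim_{x} \overline{\mathcal{M}}_{g,n}(X,\beta) \leq \dim T$, you cannot conclude $\text{vdim} \leq \dim_{x}\overline{\mathcal{M}}_{g,n}(X,\beta)$ --- both quantities are merely bounded above by the same number, which says nothing about how they compare to one another. The correct argument uses the local Kuranishi model (which this thesis only spells out later, in the Donaldson--Thomas chapter): near the given stable map the moduli space is the zero locus of a map $\kappa\colon \mathbb{E}\text{xt}^{1} \to \mathbb{E}\text{xt}^{2}$, i.e.\ it is cut out by $\dim \mathbb{E}\text{xt}^{2}$ equations on a smooth germ of dimension $\dim \mathbb{E}\text{xt}^{1}$, so by Krull's height theorem every component through the point has dimension at least $\dim \mathbb{E}\text{xt}^{1} - \dim \mathbb{E}\text{xt}^{2} = \text{vdim}$. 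Without this cutting-out description the lower bound simply does not follow from the inequalities you wrote.

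Two smaller points. Your side remark that equality $\dim_{x} = \text{vdim}$ holds \emph{exactly} when $\mathbb{E}\text{xt}^{2}$ vanishes is wrong in the ``only if'' direction: a nodal curve cut out by one equation in a smooth surface has the expected dimension at the node while its obstruction space there is nonzero, so vanishing of $\mathbb{E}\text{xt}^{2}$ is sufficient but not necessary for having the expected dimension. And in your third part you invoke the smoothness criterion for \emph{reduced} equidimensional schemes, but the moduli space of stable maps is typically non-reduced --- the paper emphasizes exactly this --- so that criterion does not apply as stated; the ``tangent dimension doesn't jump'' characterization again has to be justified through the local model (or at minimum you must say what replaces the reducedness hypothesis), rather than quoted from the theory of varieties.
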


The obstruction theory described here is a rather special kind.  We say the moduli space of stable maps carries a \emph{perfect obstruction theory}, defined in \cite{behrend_intrinsic_1997}.  Roughly speaking, such a theory is given by a perfect two-term complex governing deformations and obstructions.  Given a perfect obstruction theory, there is always an expected dimension which is constant over connected components of the moduli space, just as we observed in (\ref{eqn:virDIMexpll}).  Such a structure also leads to what is called the virtual fundamental class, which we will now briefly introduce.

\subsubsection{The Virtual Fundamental Class}

Recall that one motivation for Gromov-Witten theory is to count curves of a fixed genus $g$ and class $\beta$ in $X$ with $n$ generic incidence conditions.  The schematic way one might try to achieve this is by integrating certain cohomology classes (related to the incidence conditions) over $\overline{\mathcal{M}}_{g,n}(X, \beta)$.  However, we have seen that $\overline{\mathcal{M}}_{g,n}(X, \beta)$ is generally singular with many irreducible components of arbitrary dimensions; it does not have a fundamental class in the classical sense.  Instead, one would hope to find in the homology or Chow ring of $\overline{\mathcal{M}}_{g,n}(X, \beta)$ a special class in the degree of the expected dimension playing the role of the fundamental class.  Following from work of Behrend and Fantechi \cite{behrend_intrinsic_1997} such a class exists, and is called the virtual fundamental class.  

\begin{thm}
Given a proper scheme, or Deligne-Mumford stack $\mathcal{M}$ carrying a perfect obstruction theory of expected dimension $d$, there exists a cycle in the Chow ring
\[[\mathcal{M}]^{\text{vir}} \in A_{d}(\mathcal{M}, \mathbb{Q}) \longrightarrow H_{2d}(\mathcal{M}, \mathbb{Q})\]
or a class in homology called the virtual fundamental class (sometimes called just the virtual class).  
\end{thm}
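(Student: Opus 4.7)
The plan is to follow the construction of Behrend-Fantechi \cite{behrend_intrinsic_1997}, which produces the virtual fundamental class as a refined intersection of an intrinsic cone with the zero section of a vector bundle stack determined by the obstruction theory. First I would unpack the given data: a perfect obstruction theory on $\mathcal{M}$ is a morphism $\phi: E^{\bullet} \to L_{\mathcal{M}}^{\bullet}$ in the derived category, where $L_{\mathcal{M}}^{\bullet}$ is the (truncated) cotangent complex, $E^{\bullet}$ is globally represented by a two-term complex $[E^{-1} \to E^{0}]$ of locally free sheaves, and $\phi$ induces an isomorphism on $h^{0}$ and a surjection on $h^{-1}$. The expected dimension is then the locally constant function $d = \text{rk}(E^{0}) - \text{rk}(E^{-1})$, matching (\ref{eqn:virDIMexpll}) in the Gromov-Witten case.

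Next I would construct the \emph{intrinsic normal cone} $\mathfrak{C}_{\mathcal{M}}$, a cone stack over $\mathcal{M}$ of pure dimension zero which packages the deformation theory of $\mathcal{M}$ independently of any embedding. Locally, if $U \hookrightarrow W$ is an étale chart realized as a closed embedding into a smooth scheme $W$, one forms the usual normal cone $C_{U/W}$, which has pure dimension $\dim W$, and then defines $\mathfrak{C}_{U} = [C_{U/W} / T_{W}|_{U}]$ as the quotient stack by the natural action of the tangent bundle. The key step here is to verify that these local quotients glue to give a well-defined Artin stack $\mathfrak{C}_{\mathcal{M}}$ that sits as a closed substack of the intrinsic normal sheaf $\mathfrak{N}_{\mathcal{M}} = h^{1}/h^{0}((L_{\mathcal{M}}^{\bullet})^{\vee})$.

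The perfect obstruction theory then enters as follows. Dualizing $\phi$ and taking the vector bundle stack $\mathfrak{E} = h^{1}/h^{0}((E^{\bullet})^{\vee})$, the surjectivity and isomorphism conditions on $\phi$ translate into a closed immersion of cone stacks
\[ \mathfrak{C}_{\mathcal{M}} \hookrightarrow \mathfrak{E}. \]
Because $\mathfrak{E}$ is a vector bundle stack over $\mathcal{M}$, it carries a zero section $0_{\mathfrak{E}}: \mathcal{M} \to \mathfrak{E}$ and an associated refined Gysin homomorphism $0_{\mathfrak{E}}^{!}: A_{*}(\mathfrak{E}) \to A_{*-d}(\mathcal{M})$, extending Fulton's construction for honest vector bundles to the stacky setting. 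The virtual fundamental class is defined by
\[ [\mathcal{M}]^{\text{vir}} \coloneqq 0_{\mathfrak{E}}^{!}\, [\mathfrak{C}_{\mathcal{M}}] \in A_{d}(\mathcal{M}, \mathbb{Q}), \]
and pushing forward via the cycle map (\ref{eqn:cycreemap}) yields the stated homology class in $H_{2d}(\mathcal{M}, \mathbb{Q})$. Independence of the chosen presentation of $E^{\bullet}$ follows from functoriality of the Gysin pullback under morphisms of vector bundle stacks.

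The main obstacle, and the technical heart of the argument, is the construction of $\mathfrak{C}_{\mathcal{M}}$ as a global Artin stack and the verification that the refined Gysin map $0_{\mathfrak{E}}^{!}$ is well-defined on its cycle, since both steps require going beyond Fulton's scheme-theoretic intersection theory to a stack-theoretic framework (Kresch's Chow groups for Artin stacks, or Behrend-Fantechi's direct approach through double deformation to the normal cone). Once these foundations are in place the existence of $[\mathcal{M}]^{\text{vir}}$ in the claimed degree is formal, and specializes to the classical fundamental class when $\mathcal{M}$ is smooth of the expected dimension (in which case $\mathfrak{C}_{\mathcal{M}} = \mathcal{M}$ sitting as the zero section of $\mathfrak{E}$).
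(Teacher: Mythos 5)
Your sketch is essentially correct, but it is worth noting that the paper does not actually prove this theorem at all: it states the result, cites Behrend--Fantechi, and offers only the heuristic that $[\mathcal{M}]^{\text{vir}}$ is ``the fundamental class $\mathcal{M}$ would have if one could deform it to be smooth of the expected dimension.'' What you have written is a faithful outline of the construction in the cited reference --- the intrinsic normal cone $\mathfrak{C}_{\mathcal{M}}$, its closed immersion into the vector bundle stack $\mathfrak{E} = h^{1}/h^{0}((E^{\bullet})^{\vee})$ furnished by the obstruction theory, and the definition $[\mathcal{M}]^{\text{vir}} = 0^{!}_{\mathfrak{E}}[\mathfrak{C}_{\mathcal{M}}]$ --- so it supplies precisely the content the paper elides. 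Two small points to tighten. First, your indexing of the Gysin map is off by a sign: $\mathfrak{E} = [E_{1}/E_{0}]$ has virtual rank $\operatorname{rk}(E^{-1}) - \operatorname{rk}(E^{0}) = -d$, so intersecting with the zero section sends $A_{0}(\mathfrak{E})$ to $A_{0-(-d)}(\mathcal{M}) = A_{d}(\mathcal{M})$; as written, $A_{*}(\mathfrak{E}) \to A_{*-d}(\mathcal{M})$ would land the dimension-zero cycle $[\mathfrak{C}_{\mathcal{M}}]$ in $A_{-d}(\mathcal{M})$, contradicting your (correct) conclusion. Second, the assertion that $E^{\bullet}$ is \emph{globally} represented by a two-term complex of locally free sheaves is an extra hypothesis (the resolution property), satisfied for the quasi-projective Deligne--Mumford stacks relevant here but not automatic from perfection of the complex; Behrend--Fantechi handle this either by assuming a global resolution or by working locally and gluing. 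Neither point undermines the argument, and your identification of the genuinely hard steps --- gluing the local normal cones into a global Artin stack and extending the refined Gysin formalism to vector bundle stacks --- is exactly where the real work in the reference lies.
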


This theorem may read strangely since we have not defined or constructed the virtual class.  However, the heuristic picture is that if one could deform the moduli data defining $\mathcal{M}$ so that it was smooth of the expected dimension, then $[\mathcal{M}]^{\text{vir}}$ would be its fundamental class.  It is an honest class, but only a \emph{virtual} fundamental class.  

Applying the seminal work of Behrend-Fantechi on perfect obstruction theories and virtual classes, Behrend introduced the virtual class of the moduli space of stable maps, and proved some basic axioms \cite{behrend_gromov-witten_1997}.  Underlying Gromov-Witten theory, we have the virtual class
\[[\overline{\mathcal{M}}_{g,n}(X, \beta)]^{\text{vir}} \in A_{d}\big(\overline{\mathcal{M}}_{g,n}(X, \beta), \mathbb{Q}\big)   \longrightarrow  H_{2d}\big(\overline{\mathcal{M}}_{g,n}(X, \beta), \mathbb{Q}\big)\]
where $d = \text{vdim}\big(\overline{\mathcal{M}}_{g,n}(X, \beta)\big)$ is the expected dimension (\ref{eqn:virDIMexpll}).  There are a few easy examples where we can understand the virtual class explicitly.  

\begin{Ex}[\bfseries Smooth of the Expected Dimension]
The simplest case is when $\overline{\mathcal{M}}_{g,n}(X, \beta)$ is smooth of the expected dimension.  By smoothness, the dimension of the tangent space $\mathbb{E}\text{xt}_{C}^{1}\big(\big\{ f^{*}\Omega_{X} \to \Omega_{C}(D)\big\}, \mathcal{O}_{C}\big)$ at a point does not jump over the moduli space.  Because $\overline{\mathcal{M}}_{g,n}(X, \beta)$ has the expected dimension, by (\ref{eqn:virdim}) it is clear that the obstruction space $\mathbb{E}\text{xt}_{C}^{2}\big(\big\{ f^{*}\Omega_{X} \to \Omega_{C}(D)\big\}, \mathcal{O}_{C}\big)$ vanishes.  In this case, the virtual class coincides with the ordinary fundamental class
\begin{equation}
[\overline{\mathcal{M}}_{g,n}(X, \beta)]^{\text{vir}} = [\overline{\mathcal{M}}_{g,n}(X, \beta)].
\end{equation}
By (\ref{eqn:LESINCOHOMhyperext}) it is clear that the obstruction space vanishes if $\text{Ext}_{C}^{1}(f^{*}\Omega_{X}, \mathcal{O}_{C}) \cong H^{1}(C, f^{*}T_{X})$ vanishes.  

Although rare, this case does occur in practice.  For example, if $X = \mathbb{P}^{N}$, then $\overline{\mathcal{M}}_{0,n}(X, \beta)$ is smooth of the expected dimension.  For any $X$, the same is true for $\overline{\mathcal{M}}_{0,n}(X, 0)$, and if $X$ is a point, then $\overline{\mathcal{M}}_{g,n}(X, 0) \cong \overline{\mathcal{M}}_{g,n}$ is also smooth of the expected dimension.  As a final example, the moduli space $\overline{\mathcal{M}}_{1,0}(E, d)$ of degree $d>0$ covers of an elliptic curve $E$ is also smooth of the expected dimension.  
\end{Ex}

\begin{Ex}[\bfseries Smooth of the Wrong Dimension]
It is also possible that the moduli space is smooth but of a dimension larger than the expected dimension.  By smoothness, the dimension of the tangent space $\mathbb{E}\text{xt}_{C}^{1}\big(\big\{ f^{*}\Omega_{X} \to \Omega_{C}(D)\big\}, \mathcal{O}_{C}\big)$ does not jump, and because the expected dimension is an invariant of the moduli space, the dimension of $\mathbb{E}\text{xt}_{C}^{2}\big(\big\{ f^{*}\Omega_{X} \to \Omega_{C}(D)\big\}, \mathcal{O}_{C}\big)$ therefore also does not jump.  This fact gives rise to a canonical vector bundle called the obstruction bundle
\begin{equation}
\mathcal{O}b \longrightarrow \overline{\mathcal{M}}_{g,n}(X, \beta)
\end{equation}
whose fiber at $(C, p_{1}, \ldots, p_{n}, f)$ is $\mathbb{E}\text{xt}_{C}^{2}\big(\big\{ f^{*}\Omega_{X} \to \Omega_{C}(D)\big\}, \mathcal{O}_{C}\big)$.  The rank of $\mathcal{O}b$ measures the difference between the actual and expected dimensions of $\overline{\mathcal{M}}_{g,n}(X, \beta)$.  Here, the virtual class is given by
\begin{equation}
[\overline{\mathcal{M}}_{g,n}(X, \beta)]^{\text{vir}} = \text{PD}\big( e(\mathcal{O}b) \big)
\end{equation}
where $e(\mathcal{O}b)$ is the Euler class and defines an element of $H^{\text{rk}(\mathcal{O}b)}\big(\overline{\mathcal{M}}_{g,n}(X, \beta), \mathbb{Q}\big)$.  The primary example of this case which we will consider is $\overline{\mathcal{M}}_{g,n}(X, 0) \cong \overline{\mathcal{M}}_{g,n} \times X$ for $2g-2+n>0$.  
\end{Ex}

\subsection{The Gromov-Witten Invariants}  

Throughout this chapter, the primary motivation for studying Gromov-Witten theory was to count curves in $X$ with fixed discrete invariants, and intersecting fixed cycles $Z_{1}, \ldots, Z_{n}$.  We now have enough technology to rigorously assign such invariants to $X$.  For each $i = 1, \ldots, n$ we have an evaluation map 
\begin{equation}
\text{ev}_{i} : \overline{\mathcal{M}}_{g,n}(X, \beta) \longrightarrow X
\end{equation}
defined by sending a stable map $(C, p_{1}, \ldots, p_{n}, f)$ to $f(p_{i})$.  We can therefore define the composition
\begin{equation}\label{eqn:compositionGWI}
H^{*}(X)^{\otimes n} \overset{\text{ev}_{1}^{*} \smile \cdots \smile \text{ev}_{n}^{*}}{\xrightarrow{\hspace*{1.7cm}}}H^{*}\big(\overline{\mathcal{M}}_{g,n}(X, \beta) \big) \overset{\int_{[\overline{\mathcal{M}}_{g,n}(X, \beta)]^{\text{vir}}}}{\xrightarrow{\hspace*{1.7cm}}}\mathbb{Q}
\end{equation}
where the first map is given by pullback $\text{ev}_{1}^{*} \smile \cdots \smile \text{ev}_{n}^{*}$ and the second map by integration against the virtual fundamental class.  Let $\alpha_{i} \in H^{*}(X)$ represent the Poincar\'{e} dual in $X$ of the homology class of the cycle $Z_{i}$.  To get a sensible invariant we want the cycles to be in generic position which is why we are using the data of their homology classes as opposed to the cycles themselves.  The classes $\alpha_{i}$ are typically referred to as \emph{insertions}.  

We define the \emph{Gromov-Witten invariants} $\langle \alpha_{1} \cdots \alpha_{n}\rangle^{X}_{g, \beta}$ as the image of $\alpha_{1} \smile \cdots \smile \alpha_{n}$ under the composition (\ref{eqn:compositionGWI}).  This is concretely expressed as
\begin{equation}\label{eqn:GWintegral}
\langle \alpha_{1} \cdots \alpha_{n}\rangle^{X}_{g, \beta} = \int_{[\overline{\mathcal{M}}_{g, n}(X, \beta)]^{\text{vir}}} \text{ev}_{1}^{*} (\alpha_{1}) \smile \cdots \smile \text{ev}_{n}^{*} (\alpha_{n}) \in \mathbb{Q}. 
\end{equation}   
The Gromov-Witten invariants are deformation invariants of $X$, and one should interpret $\langle \alpha_{1} \cdots \alpha_{n}\rangle^{X}_{g, \beta}$ to be a virtual count of stable maps from curves of arithmetic genus $g$, landing in class $\beta$, and intersecting each of the classes $[Z_{i}] = \text{PD}(\alpha_{i})$.  The invariants are rational precisely because the moduli space of stable maps is a Deligne-Mumford stack; the non-integrality is traced back to stable maps with finite, but non-vanishing automorphism group.    

By definition, the integral (\ref{eqn:GWintegral}) vanishes unless the sum of the degrees of the insertions $\alpha_{i}$ coincide with the (real) expected dimension.  This implies the constraint
\begin{equation}\label{eqn:GWconstr}
\int_{\beta} c_{1}(X) + (\text{dim}X-3)(1-g) + n = \frac{1}{2} \sum_{i=1}^{n} \text{deg}(\alpha_{i}) = \frac{1}{2}\sum_{i=1}^{n} \text{codim}(Z_{i}).
\end{equation}
Notice Calabi-Yau threefolds appear to be special in that (\ref{eqn:GWconstr}) is satisfied for all $g$ and $\beta$ without insertions.  For arbitrary smooth projective varieties, the virtual dimension may vanish for special $g$ and $\beta$, but one generally needs insertions.  

The following remark describes another trivial constraint on the Gromov-Witten invariants.  

\begin{rmk}\label{rmk:effrmk}
We say $\beta \in H_{2}(X, \mathbb{Z})$ is not an effective class if it is non-zero and if it cannot be represented by an algebraic curve in $X$.  In such a case, $\langle \alpha_{1} \cdots \alpha_{n}\rangle^{X}_{g, \beta}=0$ for all $g,n$ because the moduli space $\overline{\mathcal{M}}_{g,n}( X, \beta)$ is empty.  
\end{rmk}

We can define a tautological line bundle $\mathbb{L}_{i}$ on $\overline{\mathcal{M}}_{g,n}(X, \beta)$ whose fiber at a point $(C, p_{1}, \ldots, p_{n}, f)$ is the one-dimensional vector space $T_{C}^{\smvee}|_{p_{i}}$.  The classes $\psi_{i} = c_{1}(\mathbb{L}_{i})$ give cohomology classes on $\overline{\mathcal{M}}_{g,n}(X, \beta)$.  We can therefore define the \emph{descendent Gromov-Witten invariants} or \emph{gravitational descendants} to be
\begin{equation}\label{eqn:gravdesccccinv}
\big\langle \tau_{a_{1}}(\alpha_{1}) \cdots \tau_{a_{n}}(\alpha_{n})\big\rangle^{X}_{g, \beta} = \int_{[\overline{\mathcal{M}}_{g, n}(X, \beta)]^{\text{vir}}} \text{ev}_{1}^{*} (\alpha_{1}) \smile \psi_{1}^{a_{1}} \cdots \smile \text{ev}_{n}^{*} (\alpha_{n}) \smile \psi_{n}^{a_{n}} 
\end{equation}
where $\alpha_{i} \in H^{*}(X)$.  These invariants vanish unless $\frac{1}{2}\sum_{i=1}^{n} (\text{deg}(\alpha_{i}) + a_{i})$ coincides with the expected dimension of $\overline{\mathcal{M}}_{g,n}(X, \beta)$.  An enumerative interpretation of the gravitational descendants is not so clear in general, though they ultimately contribute to the correlation functions of the A-model topological string theory on $X$.

In the final sections, we will introduce the generating functions of Gromov-Witten invariants on Calabi-Yau threefolds as well as connections to string theory.  There are a number of deep topics within Gromov-Witten theory which we are forced to omit.  Most notably, we refer the reader to \cite{cox_mirror_1999} for a full discussion of quantum cohomology.  There are also certain axioms defining Gromov-Witten theory as well as connections to cohomological field theory \cite{kontsevich_gromov-witten_1994, hori_mirror_2003} which we must omit.  Finally, we have gone without many examples, so we refer the reader to \cite{hori_mirror_2003}.

\subsection{The Partition Function and Free Energy on a Calabi-Yau Threefold}

One departure of modern enumerative geometry from that of past centuries is that much less interest lies with the individual enumerative invariants.  Rather, the fundamental objects appear to be various generating functions of the invariants.  One the one hand, these generating functions often exhibit automorphic properties, hinting at hidden symmetries of a moduli space.  But they also seem to coincide with partition functions or correlation functions in physics, perhaps after twisting the physical theory and localizing the path integral.  We now introduce some of these ideas in the context of Gromov-Witten theory.  

Let $(X, \omega)$ be a smooth projective Calabi-Yau threefold with $\omega= B + iJ$ a complexified K\"{a}hler form.  Here $J$ is an honest K\"{a}hler form, and $B \in H^{2}(X, \mathbb{R})$ is called the B-field.  Let $\{S_{1}, \ldots, S_{k}\}$ be a positive basis of $H_{2}(X, \mathbb{Z})$, always modulo torsion.  This means that for any effective curve class $\beta \in H_{2}(X, \mathbb{Z})$, there exists non-negative integers $\{n_{a}\}_{a=1}^{k}$ such that $\beta = \sum_{a=1}^{k} n_{a}S_{a}$.  We call $n_{a}$ the degree of $\beta$ along $S_{a}$.  Using these definitions, we introduce the formal parameter $Q^{\beta}$ to be 
\begin{equation}\label{eqn:GWdegreetrack}
Q^{\beta} = \prod_{a=1}^{k} Q_{a}^{n_{a}}, \,\,\,\, \text{where} \,\,\,\, Q_{a} = e^{2 \pi i \int_{S_{a}} \omega}.
\end{equation}
We will think of $Q$, or more precisely $\{Q_{1}, \ldots, Q_{k}\}$, as degree-tracking parameters in Gromov-Witten (and also Donaldson-Thomas) theory.  Notice that $\int_{S_{a}} \omega $ has the interpretation of a (complexified) volume of the cycle $S_{a}$.  The limit $\int_{S_{a}} J \to \infty$ sends the volume of all curves in the class $[S_{a}]$ to infinity and by (\ref{eqn:GWdegreetrack}), clearly corresponds to $Q_{a} \to 0$.  If the limit is taken for all $a$, we effectively have $Q \to 0$.  This is known as the degree zero limit or the classical limit, for reasons which will become clear. 

We notice from (\ref{eqn:virDIMexpll}) that if $X$ is a Calabi-Yau threefold, the expected dimension of $\overline{\mathcal{M}}_{g,0}(X, \beta)$ vanishes for all $g$ and $\beta$.  We therefore do not require insertions to get a non-vanishing invariant (\ref{eqn:GWintegral}).  Changing notation slightly for convenience, the Gromov-Witten invariants are then given by  
\begin{equation}
\text{GW}_{g, \beta}(X) \coloneqq \langle \,\, \rangle_{g, \beta}^{X} = \int_{[\overline{\mathcal{M}}_{g,0}(X, \beta)]^{\text{vir}}} 1
\end{equation}
which is simply the degree of the zero-dimensional virtual class.  This is simply a sum over the multiplicities in a Chow zero-cycle, and is interpreted as a virtual count of the number of stable maps from genus $g$ curves into $X$ lying in the class $\beta$.  

We want to begin introducing the standard generating functions of the Gromov-Witten invariants on a Calabi-Yau threefold.  For a fixed genus $g$, the \emph{Gromov-Witten potential} $F_{g}$ is defined to be the generating function of Gromov-Witten invariants $\text{GW}_{g, \beta}(X)$ summing over all homology classes
\begin{equation}
F_{g}(Q) = \sum_{\beta \in H_{2}(X, \mathbb{Z})} \text{GW}_{g, \beta}(X) Q^{\beta}.
\end{equation}
Reminding the reader of Remark \ref{rmk:effrmk}, it would suffice to sum only over effective classes since the invariants vanish otherwise.  The \emph{Gromov-Witten free energy} $F_{\text{GW}}(X)$ is defined as the generating function of Gromov-Witten potentials $F_{g}$, weighted by a factor of $\lambda^{2g-2}$
\begin{equation}
F_{\text{GW}}(X) = \sum_{g=0}^{\infty} \lambda^{2g-2} F_{g}(Q)
\end{equation}
where $\lambda$ is a single parameter known as the \emph{string coupling constant}.  One can also define the Gromov-Witten potentials or free energy by summing only over a sublattice $\Gamma \subset H_{2}(X, \mathbb{Z})$.  For all classes $\beta \in H_{2}(X, \mathbb{Z})$, we define the free energy with fixed $\beta$ as
\begin{equation}
F_{\text{GW}}(X)_{\beta} = \sum_{g=0}^{\infty} \lambda^{2g-2} \text{GW}_{g, \beta}(X).  
\end{equation}
One finally then defines the \emph{Gromov-Witten partition function} to be the exponential of the free energy
\begin{equation}
Z_{\text{GW}}(X) = \text{exp}\big(F_{\text{GW}}(X)\big),
\end{equation}  
where the coefficients of the $\lambda$ and $Q$ expansions are interpreted as \emph{disconnected} Gromov-Witten invariants.  Both the Gromov-Witten partition function and free energy are functions of $\lambda$ and $Q$, which we will suppress from the notation, unless needed in a particular context.  

The degree-zero Gromov-Witten invariants $\text{GW}_{g, 0}(X)$ correspond to stable maps collapsing an entire genus $g$ curve to a point in $X$.  These contributions to the generating functions must be handled somewhat carefully, which we will do in the next section.  For now, we mention that by dividing away degree-zero contributions, we produce the \emph{reduced} Gromov-Witten partition function
\begin{equation}
Z'_{\text{GW}}(X) = \frac{Z_{\text{GW}}(X)}{Z_{\text{GW}}(X)_{0}}
\end{equation}
where $Z_{\text{GW}}(X)_{0} = \text{exp}\big( F_{\text{GW}}(X)_{0}\big)$.  One similarly may speak of reduced Gromov-Witten potentials or reduced Gromov-Witten free energy in the obvious manner.  Though instead of dividing, one must instead subtract the degree zero contributions from the Gromov-Witten free energy or potentials.

We have commented that $Q_{a} \to 0$ corresponds to sending the volume of all curves in the class $S_{a}$ to infinity, and $Q \to 0$ is the degree zero limit.  The reason for the name is the following facts
\begin{equation}\label{eqn:degreezerrlm}
\lim_{Q \to 0} Z_{\text{GW}}(X)  = Z_{\text{GW}}(X)_{0}, \,\,\,\,\,\,\,\,\,\,\,\,\,\,\,  \lim_{Q \to 0} Z'_{\text{GW}}(X) = 1.
\end{equation}
One should think very heuristically that if the volumes of all curve classes are sent to infinity, it would take infinite ``energy" for a map from a curve to wrap those classes.  Therefore, the only contributions would be in degree zero.

\subsubsection{Degree Zero Contributions to the Gromov-Witten Potentials}

In this section we will give a completely explicit description of the degree zero contributions to the Gromov-Witten potentials in all genus.  We know that for a map to be stable in degree zero, the domain curve must be stable.  Stability presents no problems for $g \geq 2$, but it requires marked points for $g=0,1$.  This means we have to deal explicitly with insertions in the generating functions.  Following the notation of \cite{pandharipande_three_2003}, we can let $X$ be a non-singular projective threefold, not necessarily Calabi-Yau.  For indexing sets $A$ and $D_{2}$, let $\{\gamma_{a}\}_{a \in A}$ denote a basis of $H^{*}(X, \mathbb{Z})$ modulo torsion, and let $\{\gamma_{a}\}_{a \in D_{2}}$ be the classes of degree two.  We also define formal variables $\{t_{a}\}_{a \in A}$ corresponding to $\{\gamma_{a}\}_{a \in A}$.  

Let $F_{g}^{(0)}(t)$ denote the degree zero contributions to the genus $g$ Gromov-Witten potential, where we abbreviate the collection of $t_{a}$ by $t$.  The degree zero contributions for $g=0$ simply encode the classical intersection numbers of $X$ as
\begin{equation}
F^{(0)}_{0}(t) = \sum_{a_{1}, a_{2}, a_{3} \in A} \frac{1}{3!} \, t_{a_{1}} t_{a_{2}} t_{a_{3}} \int_{X} \gamma_{a_{1}} \smile  \gamma_{a_{2}} \smile  \gamma_{a_{3}}.
\end{equation}
The three insertions correspond to the three marked points stabalizing a degree zero map from a rational curve.  Using that $\overline{\mathcal{M}}_{1,1}(X, 0) \cong \overline{\mathcal{M}}_{1,1} \times X$ is smooth of the wrong dimension, a virtual class computation shows that the degree zero contribution in $g=1$ takes the form
\begin{equation}
F^{(0)}_{1}(t) = \sum_{a \in D_{2}} t_{a} \langle \gamma_{a} \rangle^{X}_{1,0} = - \frac{1}{24} \sum_{a \in D_{2}} t_{a} \int_{X} \gamma_{a} \smile c_{2}(X).
\end{equation}
For $g \geq 2$, we have no need for insertions, so the degree zero contribution to $F_{g}$ is just a constant, independent of the variables $t_{a}$.  We have
\begin{equation}\label{eqn:deg0gGEN}
F^{(0)}_{g} = \frac{(-1)^{g}}{2} \int_{X} \big(c_{3}(X) - c_{1}(X) \smile c_{2}(X) \big) \int_{\overline{\mathcal{M}}_{g,0}} \lambda_{g-1}^{3}.
\end{equation}
The final integral is known as a Hodge integral.  Here, $\lambda_{g-1} = c_{g-1}(\mathbb{E})$ where $\mathbb{E} \to \overline{\mathcal{M}}_{g,0}$ is the Hodge bundle with fiber $H^{0}(C, \omega_{C})$ at a point $C \in \overline{\mathcal{M}}_{g,0}$.  The Hodge integrals have been explicitly computed by Faber and Pandharipande \cite{faber_hodge_1998} 
\begin{equation}
\int_{\overline{\mathcal{M}}_{g,0}} \lambda_{g-1}^{3} = -\frac{B_{2g}}{2g} \frac{B_{2g-2}}{2g-2} \frac{1}{(2g-2)!} \,\,\,\,\,\,\,\,\,\,\,\,\,\,\,\,\,\,\,\,\,\,\,\, (g \geq 2)
\end{equation}
where $B_{n}$ are the Bernoulli numbers\footnote{There are important signs to keep track of with Bernoulli numbers.  It turns out that $B_{2g}$ and $B_{2g-2}$ differ by a sign, so $|B_{2g}| \, |B_{2g-2}| = - B_{2g} B_{2g-2}$, for all $g \geq 2$}.  If $X$ is additionally Calabi-Yau with topological Euler characteristic $\chi(X) = \int_{X} c_{3}(X)$, then (\ref{eqn:deg0gGEN}) clearly specializes to
\begin{equation}\label{eqn:Deg0gCY3}
F^{(0)}_{g} = \frac{1}{2}\frac{|B_{2g}|}{2g} \frac{B_{2g-2}}{2g-2} \frac{\chi(X)}{(2g-2)!} \,\,\,\,\,\,\,\,\,\,\,\,\,\,\,\,\,\,\,\, (g \geq 2)
\end{equation}
where $|B_{2g}| = (-1)^{g-1} B_{2g}$.  

In the analysis of degree zero contributions to Gromov-Witten theory, an important function is the MacMahon function defined by 
\begin{equation} \label{eqn:McMahon}
M(q) = \prod_{n=1}^{\infty} \big(1-q^{n}\big)^{-n}.
\end{equation}
This is the generating function of plane partitions -- colloquially speaking, the coefficient of $q^{d}$ is the number of ways of stacking $d$ boxes into a corner of three-dimensional space.  One can study the asymptotic expansion of $\log M(e^{i \lambda})$ around $\lambda =0$, and it turns out that it very nearly encodes the $g \geq 2$ contributions (\ref{eqn:Deg0gCY3}) in degree zero.  For a Calabi-Yau threefold $X$, multiplying the result of \cite[Eqn. 42]{koshkin_quantum_2009} by $\frac{1}{2} \chi(X)$, we find 
\begin{equation}\label{eqn:asympEXPMcMahon}
\tfrac{1}{2} \chi(X) \log M(e^{i \lambda}) \sim - \frac{\chi(X)}{2} \frac{\zeta(3)}{\lambda^{2}} + \frac{\chi(X)}{24} \log (-i \lambda) + \frac{\chi(X)}{2} \zeta'(1) + \sum_{g=2}^{\infty} \lambda^{2g-2} F_{g}^{(0)}.  
\end{equation}
So the MacMahon function clearly plays a role in degree zero Gromov-Witten theory.  The strongest statement we can make is that we have an asymptotic equivalence 
\begin{equation}\label{eqn:AsymptExprGWW}
Z_{\text{GW}}(X)_{0} \sim M(e^{i\lambda})^{\frac{\chi(X)}{2}}
\end{equation}
where the precise meaning of $\sim$ here is that the logarithm of both sides agree identically in powers of $\lambda^{2}$ and higher.  Of course, the logarithm of the lefthand side contains the terms $F_{0}^{(0)}(t)$ and $F_{1}^{(0)}(t)$ discussed above, which clearly will not be encoded by the MacMahon function.

\subsection{Gromov-Witten Theory and Topological String Theory}\label{subsec:GWA-modTSSP}

Gromov-Witten theory is the rigorous mathematical formulation of what is called the A-model topological string theory.  One begins with a (closed string) non-linear sigma model, which is a two-dimensional quantum field theory of continuous maps
\begin{equation}\label{eqn:nonlinsigmmmmod}
\phi : C \longrightarrow X
\end{equation}
from a smooth projective connected curve $C$ called the \emph{worldsheet} into a K\"{a}hler manifold $X$ called the \emph{target space}.  Because the curve has no boundaries, we interpret the worldsheet to represent closed strings propagating through $X$, and interacting by joining and splitting.    

There are two possible topological twists, resulting in the A and B-model topological field theories \cite{witten_mirror_1991}, which are conjecturally exchanged by mirror symmetry.  The A-model correlation functions depend only on the K\"{a}hler moduli of $X$, and localize onto a moduli space of holomorphic maps from a \emph{fixed} curve into $X$.  The A-model topological string theory is simply the A-model topological field theory coupled to \emph{worldsheet gravity}, which means we also integrate over the moduli space of complex structures on the curve, resulting in a theory of quantum gravity.  The contact with Gromov-Witten theory is clear, and the correlation functions ultimately coincide with the gravitational descendants defined in (\ref{eqn:gravdesccccinv}).

\subsubsection{Introduction to A-model Topological Field Theory}

Let us begin with a brief discussion of the A-model twist.  The bosonic scalar fields are the holomorphic coordinate functions $\phi^{i}$ of the map (\ref{eqn:nonlinsigmmmmod}).  We also have anti-holomorphic fields $\overline{\phi^{i}} = \phi^{\bar{i}}$.  Both $i$ and $\bar{i}$ range over the complex dimension of $X$.  Let us break convention with previous sections and write $T_{X} = T_{X}^{(1,0)} \oplus T_{X}^{(0,1)}$ for the complexified tangent bundle with its decomposition into holomorphic and anti-holomorphic parts.  Also define $K_{C}$ to be the canonical bundle of $C$, with $\overline{K}_{C}$ its complex conjugate.  The fermionic fields are sections of certain bundles on $C$ as follows
\begin{equation}
\begin{split}
& \psi_{+}^{i} \in \Gamma( \phi^{*} T_{X}^{(1,0)}), \,\,\,\,\,\,\,\,\,\,\,\,\,\,\,\, \psi_{+}^{\bar{i}} = \Gamma( K_{C} \otimes \phi^{*}T_{X}^{(0,1)}) \\
& \psi_{-}^{\bar{i}} \in \Gamma( \phi^{*} T_{X}^{(0,1)}), \,\,\,\,\,\,\,\,\,\,\,\,\,\,\,\, \psi_{-}^{i} = \Gamma( \overline{K}_{C} \otimes \phi^{*}T_{X}^{(1,0)}).
\end{split}
\end{equation}
It is conventional to package $\psi_{+}^{i}$ and $\psi_{-}^{\bar{i}}$ into sections of $\phi^{*}T_{X}$.  Let $\chi^{I}$, with index $I$ ranging over the \emph{real} dimension of $X$, denote such sections whose holomorphic and anti-holomorphic parts are given by $\chi^{i} = \psi_{+}^{i}$ and $\chi^{\bar{i}} = \psi_{-}^{\bar{i}}$.  

If we choose coordinates $(z, \bar{z})$ on $C$ with volume form $d^{2}z$, we can write the A-model action as
\begin{equation}
S = \int_{C} d^{2}z \bigg( \frac{1}{2} (g_{IJ} + iB_{IJ}) \partial_{z} \phi^{I} \partial_{\bar{z}}\phi^{J} + i g_{i \bar{j}} \psi_{+}^{\bar{j}} D_{\bar{z}}\chi^{i} + i g_{i \bar{j}}\psi_{-}^{i} D_{z}\chi^{\bar{j}} - R_{i \bar{i} j \bar{j}} \psi_{-}^{i} \psi_{+}^{\bar{i}} \chi^{j} \chi^{\bar{j}}\bigg)
\end{equation}
where $R$ is the Riemann tensor on $X$ associated to metric $g$, and $D_{z}, D_{\bar{z}}$ are covariant derivatives on the appropriate bundles.  The first term is simply the complexified volume $\int_{C} \phi^{*}(B + iJ)$, where $J$ is the K\"{a}hler form produced from $g$.  

There is a supersymmetry operator $Q$ along with an infinitesimal fermionic parameter $\alpha$ which associates to any field $\mathcal{O}$ its variation $\delta \mathcal{O} \coloneqq - i \alpha \{Q, \mathcal{O}\}$.  Recall that supersymmetries exchange bosonic and fermionic fields.  The variations of the fields in the A-model described above are given by
\begin{equation}\label{eqn:Amodvarrrrs}
\begin{split}
& \delta \phi^{i} = i \alpha \chi^{i} \,\,\,\,\,\,\,\,\,\,\,\, \delta \chi^{i} =0 \,\,\,\,\,\,\,\,\,\,\,\, \delta \psi^{i}_{-} = - \alpha \partial_{\bar{z}} \phi^{i}-i \alpha \chi^{k} \Gamma^{i}_{km} \psi^{m}_{-} \\
& \delta \phi^{\bar{i}} = i \alpha \chi^{\bar{i}} \,\,\,\,\,\,\,\,\,\,\,\, \delta \chi^{\bar{i}} =0 \,\,\,\,\,\,\,\,\,\,\,\, \delta \psi^{\bar{i}}_{+} = - \alpha \partial_{z} \phi^{\bar{i}}-i \alpha \chi^{\bar{k}} \Gamma^{\bar{i}}_{\bar{k} \bar{m}} \psi^{\bar{m}}_{+}.
\end{split}
\end{equation}
The first column indicates for example that $\phi^{i}$ and $\chi^{i}$ are superpartners; likewise for their complex conjugates.  One can show that the action $S$ is invariant under these variations, and that $\delta^{2} = 0$.  In such a case, $Q$ is called a BRST operator, and we conventionally write $Q^{2}=0$ instead of $\delta^{2}=0$.

\subsubsection{The Physical Operators}  

The fact that $Q^{2}=0$ for a BRST operator means we get a complex whose entries are spaces of fields or operators.  The cohomology of this complex is called the BRST cohomology, and each class is represented by a $Q$-closed (or $Q$-invariant) operator.  The $Q$-closed operators are what we regard as the physical operators in the A-model, and from the variation (\ref{eqn:Amodvarrrrs}) we see that they must be formed out of only $\phi^{i}$ and $\chi^{i}$.  

Given a de Rham cohomology class $A \in H^{p}(X, \mathbb{C})$ described locally as $A_{i_{1} \cdots i_{p}}(x)dx^{i_{1}} \cdots dx^{i_{p}}$, we construct a local operator defined by
\begin{equation}
\mathcal{O}_{A}(x) \coloneqq A_{i_{1} \cdots i_{p}}(x) \chi^{i_{1}} \cdots \chi^{i_{p}}.  
\end{equation}
We can define a map from de Rham cohomology to the BRST cohomology by $A \mapsto \mathcal{O}_{A}$, but we really want this to be an isomorphism of cohomologies -- in other words, we want it to respect the differentials.  The differential in BRST cohomology is $\delta$, and one can show that
\begin{equation}
\delta \mathcal{O}_{A} = - \mathcal{O}_{dA}.
\end{equation}  
This means the above map takes closed forms to $Q$-closed forms, and likewise for exact forms.  We therefore identify the BRST cohomology of the A-model with the de Rham cohomology of $X$.  In what follows, we will write the A-model physical operators as $\mathcal{O}_{A}$, where $A \in H^{p}(X, \mathbb{C})$ and we will say that $\mathcal{O}_{A}$ has \emph{ghost number} $p = \text{deg}(A)$.

\subsubsection{Correlation Functions and the Ghost Number Anomaly}

The correlation functions of the A-model topological field theory are given by path integrals of the form
\begin{equation}\label{eqn:Amodddcorrrrfuncc}
\langle \mathcal{O}_{A_{1}} \cdots \mathcal{O}_{A_{n}} \rangle \coloneqq \int \mathcal{D} \phi \mathcal{D} \psi e^{-S}  \mathcal{O}_{A_{1}} \cdots \mathcal{O}_{A_{n}}.
\end{equation}
The topological nature of the theory is manifest in the invariance of the correlation functions under deformations of the worldsheet metric.  It is natural to decompose a correlation function in terms of the class $\beta \in H_{2}(X, \mathbb{Z})$ of the image of $\phi$.  We have
\begin{equation}
\langle \mathcal{O}_{A_{1}} \cdots \mathcal{O}_{A_{n}} \rangle = \sum_{\beta \in H_{2}(X, \mathbb{Z})} \langle \mathcal{O}_{A_{1}} \cdots \mathcal{O}_{A_{n}} \rangle_{\beta},
\end{equation}
where $\langle \mathcal{O}_{A_{1}} \cdots \mathcal{O}_{A_{n}} \rangle_{\beta}$ is simply the path integral (\ref{eqn:Amodddcorrrrfuncc}) restricted to configurations satisfying $\phi_{*}[C] = \beta$.  

There is a powerful localization principle in supersymmetric physics: given a fermionic symmetry $Q$, the path integral defining any correlation function of $Q$-invariant operators localizes onto field configurations such that all variations of fermionic fields vanish.  It turns out that both terms in $\delta \psi_{\pm}$ (\ref{eqn:Amodvarrrrs}) must vanish individually.  So in particular, 
\begin{equation}
\partial_{\bar{z}} \phi^{i} = \partial_{z} \phi^{\bar{i}} =0.   
\end{equation} 
This is simply the local statement that $\phi: C \to X$ is holomorphic.  In this context, we refer to such a holomorphic map as a \emph{worldsheet instanton} and say that the A-model localizes onto worldsheet instantons.\footnote{The name worldsheet instanton arises because the entire worldsheet lies in the target space $X$, and from the point of view of four-dimensional spacetime, appears as a configuration localized in time and space -- an instanton.}  If $X$ is compact in addition to K\"{a}hler, then a worldsheet instanton with $\phi_{*}[C] = \beta$ wraps a minimal volume cycle in $X$ amongst all representatives of the homology class $\beta$.    

One typically writes $\#( \chi \, \text{zero modes})$ for the dimension of the solution space of the differential equation 
\[D_{\bar{z}}\chi^{i} = D_{z} \chi^{\bar{i}}=0.\]  
Similarly, one writes $\#(\psi \, \text{zero modes})$ for the dimension of the solution space of $D_{\bar{z}} \psi_{+}^{\bar{i}} = D_{z} \psi_{-}^{i}=0$.  Individually, these two integers are hard to compute, but their difference can be computed as the index of the differential operator $D$.  It turns out that for a worldsheet instanton $\phi: C \to X$ where $C$ is a smooth genus $g$ curve and $\phi_{*}[C] = \beta$, we have
\begin{equation}\label{eqn:chizeropsizeroo}
\begin{split}
\#( \chi \, \text{zero modes}) - \#(\psi \,  \text{zero modes}) & = \chi(C, \phi^{*}T_{X}^{(1,0)})\\
& = \int_{\beta} c_{1}(T_{X}^{(1,0)}) + \text{dim}X(1-g).  
\end{split}
\end{equation}
A phenomenon called the \emph{ghost number anomaly} imposes a powerful constraint relating the ghost number of operators in a correlation function with the index (\ref{eqn:chizeropsizeroo}).  The quantity $\langle \mathcal{O}_{A_{1}} \cdots \mathcal{O}_{A_{n}} \rangle_{\beta}$ has total ghost number $\sum_{i=1}^{n} \text{deg}(A_{i})$ and vanishes unless we have
\begin{equation}\label{eqn:ghostnummmconddd}
\frac{1}{2} \sum_{i=1}^{n} \text{deg}(A_{i}) = \int_{\beta} c_{1}(T_{X}^{(1,0)}) + \text{dim}X(1-g).  
\end{equation}

Let $M_{C}(X, \beta)$ denote the moduli space of holomorphic maps $\phi : C \to X$ with \emph{fixed} curve $C$ such that $\phi_{*}[C] = \beta$.  Having localized to worldsheet instantons, we will find the correlation functions $\langle \mathcal{O}_{A_{1}} \cdots \mathcal{O}_{A_{n}} \rangle_{\beta}$ satisfying (\ref{eqn:ghostnummmconddd}) will be given as integrals over $M_{C}(X, \beta)$, at least when the following technical assumption holds.  If there are no $\psi$ zero modes, then the moduli space $M_{C}(X, \beta)$ is smooth and
\begin{equation}
\text{dim} \, M_{C}(X, \beta) = \#( \chi \, \text{zero modes}) =  \int_{\beta} c_{1}(T_{X}^{(1,0)}) + \text{dim}X(1-g).  
\end{equation}
In such a case, assuming (\ref{eqn:ghostnummmconddd}) to hold, the correlation functions can be computed as
\begin{equation}
\langle \mathcal{O}_{A_{1}} \cdots \mathcal{O}_{A_{n}} \rangle_{\beta} = e^{i \omega \cdot \beta} \int_{M_{C}(X, \beta)} \text{ev}_{1}^{*}(A_{1}) \smile \cdots \smile \text{ev}_{n}^{*}(A_{n})
\end{equation}
where $\text{ev}_{i}$ are the evaluation maps, and $\omega = B + iJ$ is the complexified K\"{a}hler class on $X$.

\subsubsection{Coupling to Worldsheet Gravity}  

What we have surveyed above is an overview of the A-model topological field theory, which is formulated as a topological twist of a non-linear sigma model on a fixed smooth curve $C$.  It should be clear that we are approaching Gromov-Witten theory.  The correlation functions in the A-model are reminiscient of the Gromov-Witten invariants, and the ghost number anomaly imposes the mathematical constraint that the degrees of the insertions are compatible with the expected dimension of the moduli space.  But a theory of quantum gravity should necessarily involve dynamical fluctuations of the metric on the domain of the sigma model.  This is known as coupling a topological field theory to \emph{worldsheet gravity}, resulting in a topological string theory.  Physically, we are only interested in Riemannian metrics on $C$ up to conformal equivalence, which is simply the data of a complex structure on $C$, since the dimension is one.  It is this topological string theory which coincides with Gromov-Witten theory.  

Schematically speaking, the correlation functions in topological string theory are integrals over $\overline{M}_{g,n}$ of correlation functions in the topological \emph{field} theory valued in a top-form on $M_{g,n}$
\begin{equation}\label{eqn:AMODCORRFUNC}
\int_{\overline{M}_{g,n}} \big\langle \prod_{i=1}^{n} \tau_{a_{i}}(\mathcal{O}_{A_{i}}) \prod_{j=1}^{3g-3+n} | (G, \mu_{j})|^{2} \big\rangle.
\end{equation}
Let us briefly define some of these components.  The $\mu_{j}$ are called Beltrami differentials, and they are valued in $H^{1}(C, T_{C}(-D))$ which we identify as the tangent space to $\overline{M}_{g,n}$ at $(C, p_{1}, \ldots, p_{n})$.  Recall the notation $D = \sum_{i=1}^{n}p_{i}$ for the sum of marked points.  The operator $G$ is defined to be the $Q$-variation of the energy-momentum tensor $G(z) = \{ Q, T(z)\}$, and we define the operator-valued one-forms on $\overline{M}_{g,n}$ given by $(G, \mu_{j}) \coloneqq \int_{C} G(z) \mu_{j}(z, \bar{z})dz d\bar{z}$.  

The operators $\tau_{a_{i}}(\mathcal{O}_{A_{i}})$ are known as gravitational descendants and have ghost number $\text{deg}(A_{i}) + a_{i}$.  Since the brackets above denote a correlation function in the topological field theory, the total ghost number must satisfy the constraint (\ref{eqn:ghostnummmconddd}).  It turns out that $G$ has ghost number $-1$, so we have
\begin{equation}
\frac{1}{2} \sum_{i=1}^{n} \big( \text{deg}(A_{i}) + a_{i}\big) -3g + 3 -n = \int_{\beta} c_{1}(T_{X}^{(1,0)}) + \text{dim}X(1-g).
\end{equation}  
Moving the $-3g+3-n$ to the other side, we recover precisely the expected dimension (\ref{eqn:virDIMexpll}) of Gromov-Witten theory.  Moreover, the correlation function (\ref{eqn:AMODCORRFUNC}) is exactly the descendent Gromov-Witten invariant presented in (\ref{eqn:gravdesccccinv}).  In the same manner one can construct the topological string free energy and partition function without insertions $\tau_{a_{i}}(\mathcal{O}_{A_{i}})$ on a Calabi-Yau threefold, and they will agree with their counterparts in Gromov-Witten theory.

\vskip4ex
\subsubsection{Applications of Gromov-Witten Theory to Physical String Theory}

At the face of it, topological string theory is quite physically unrealistic.  To describe one oddity, the worldsheet (which should model closed strings propagating in time) is sitting in the Calabi-Yau fibers, while the time direction lies in $\mathbb{R}^{1,3}$.  Nonetheless, the topological toy theory is attractive for several reasons.  One advantage is its mathematical tractability, but another is that it \emph{computes} certain quantities in a full physical superstring theory.  In certain models, the Gromov-Witten potentials contribute to observable effects in four-dimensional physics.  

One can compactify the Type IIA superstring theory on a Calabi-Yau threefold $X$.  The ten-dimensional spacetime is then written as $\mathbb{R}^{1,3} \times X$.  If $X$ is a generic Calabi-Yau threefold, the Type IIA theory famously gives rise to an effective $\mathcal{N}=2$ theory on $\mathbb{R}^{1,3}$.  It turns out that there are so-called \emph{F-terms} in the effective four-dimensional action of the form
\begin{equation}
\int d^{4}x  F_{g}(Q) R_{+}^{2} F_{+}^{2g-2}
\end{equation}
where the integral is over $\mathbb{R}^{1,3}$, $R_{+}$ is the self-dual part of the Riemann tensor, and $F_{+}$ is the self-dual part of the field strength of what is called the graviphoton field \cite{antoniadis_topological_1993, bershadsky_holomorphic_1993, gopakumar_m-theory_1998}.  We interpret this to mean that the Gromov-Witten potential $F_{g}(Q)$ provides worldsheet instanton corrections to the effective theory in four dimensions.

\section{Donaldson-Thomas Theory} \label{eqn:DTThhhh}

Recall that Gromov-Witten theory grew out of one possible compactification of the stack $\mathcal{C}(X, \beta)$ of smooth embedded curves in $X$ lying in the class $\beta$.  Instead of parameterizations, one can study curves as pure one-dimensional subschemes $(C, \mathcal{O}_{C})$ of $X$.  If $C$ is smooth, reduced, and connected, we have
\[\chi(X, \mathcal{O}_{C})=1-g.\]
One might therefore expect the holomorphic Euler characteristic of $\mathcal{O}_{C}$ to replace the genus in Gromov-Witten theory.  Taking the support of $\mathcal{O}_{C}$, we also get a homology class $[C] \in H_{2}(X, \mathbb{Z})$.  

The goal is to find a compactification of smooth curves in $X$ with fixed homology class and holomorphic Euler characteristic.  In order to construct a compact moduli space, one must account for arbitrary degenerations of the objects.  We must allow the curves to be singular and non-reduced.  Moreover, it turns out that under certain degenerations a pure one-dimensional subscheme may acquire zero-dimensional components.  The compact moduli space is the Hilbert scheme $\text{Hilb}_{\beta, n}(X)$ of one-dimensional subschemes with fixed discrete invariants, defined below.  

If $X$ is a threefold, the Hilbert scheme $\text{Hilb}_{\beta, n}(X)$ is isomorphic to the moduli space $M_{\beta, n}(X)$ of coherent sheaves $\mathscr{I}$ with trivial determinant and Chern character $\text{ch}(\mathscr{I}) = (1,0, -\beta, -n-\tfrac{1}{2} K_{X} \cdot \beta)$.  Despite the isomorphism as classical schemes, $\text{Hilb}_{\beta, n}(X)$ and $M_{\beta, n}(X)$ admit very different deformation-obstruction theories.  We will see that for Calabi-Yau threefolds, $M_{\beta, n}(X)$ carries a \emph{symmetric obstruction theory} and a zero-dimensional virtual class.  Using this structure, one can compute Donaldson-Thomas invariants as Behrend-weighted Euler characteristics and package them into partition functions.  The GW/DT correspondence says that under a non-trivial change of variables, the reduced Gromov-Witten and Donaldson-Thomas partition functions are equal.  

In physics, Donaldson-Thomas invariants are an example of a \emph{supersymmetric index}.  In Type IIA string theory on a Calabi-Yau threefold $X$ in the large volume limit, the invariants compute a virtual number of BPS states of particles arising from bound states of D6-D2-D0 branes in $X$.  Using known relations between IIA and IIB, we will sketch an argument interpreting the GW/DT correspondence as a consequence of the S-duality of Type IIB string theory.  We will also realize the Donaldson-Thomas invariants as quantities in the B-model topological string theory on $X$.

\subsection{From the Hilbert Scheme to Ideal Sheaves}

Let $X$ be a non-singular projective variety.  We begin by introducing the \emph{Hilbert scheme} $\text{Hilb}_{\beta, n}(X)$ of points and curves in $X$.  For all $\beta \in H_{2}(X, \mathbb{Z})$ and $n \in \mathbb{Z}$ we have
\begin{equation}
\text{Hilb}_{\beta, n}(X) = \bigg\{ Z \subset X \,\, \text{one-dimensional subscheme} \,\, \bigg| \,\,  [Z] = \beta, \,\, \chi(X, \mathcal{O}_{Z})=n \bigg\}.
\end{equation}
This Hilbert scheme is a classically known space parameterizing subschemes of $X$ consisting of unions of possibly singular and non-reduced curves, as well as a finite number of points.  Its nice features are that it is a projective scheme, and in fact a fine moduli space parameterizing flat families of one-dimensional subschemes with the fixed discrete invariants.  The Hilbert scheme is badly behaved in that it can be non-reduced and it may contain many irreducible components of arbitrary dimension with bad singularities.  We can however, give the following two examples where the Hilbert scheme is quite nice and easy to describe.  

\begin{Ex}\label{Ex:HilbSchPointsEX}
The most immediate example of the Hilbert scheme comes by setting $\beta =0$.  This gives the Hilbert scheme of points which we denote by $\text{Hilb}^{n}(X)$, changing the notation slightly.  Because $\beta=0$, there are no curve components allowed, which means $\text{Hilb}^{n}(X)$ parameterizes zero-dimensional subschemes of $X$ of length $n$.  In other words,
\begin{equation}
\text{Hilb}^{n}(X) = \bigg\{ Z \subset X \,\, \text{zero-dimensional subscheme} \,\, \bigg| \,\,  \chi(X, \mathcal{O}_{Z})=n \bigg\}.
\end{equation}
Generically, a subscheme $Z \in \text{Hilb}^{n}(X)$ consists of $n$ distinct, reduced points in $X$.  When points come together, data is not lost but rather encoded as a scheme-theoretic thickening, which preserves the length of the subscheme.  

If $X$ is a smooth projective curve or surface, then $\text{Hilb}^{n}(X)$ is a smooth projective variety of dimension $n$ when $X$ is a curve, and $2n$ when $X$ is a surface.  When $X$ is a surface, $\text{Hilb}^{n}(X)$ is a crepant resolution of singularities of $\text{Sym}^{n}(X)$.  In higher dimensions, the Hilbert scheme of points will typically be singular.  
\end{Ex}

\begin{Ex}
Let $X$ be a non-singular projective surface.  In this case, the one-dimensional components of a subscheme in the Hilbert scheme $\text{Hilb}_{\beta, n}(X)$ are simply divisors.  We therefore get a factorization 
\begin{equation}
\text{Hilb}_{\beta, n}(X) \cong \text{Div}_{\beta}(X) \times \text{Hilb}^{n - n_{\beta}}(X)
\end{equation}
where $\text{Div}_{\beta}(X) = \text{Hilb}_{\beta, n_{\beta}}(X)$ is the smooth projective moduli space of divisors in $X$ with class $\beta$, and $\text{Hilb}^{n - n_{\beta}}(X)$ is the Hilbert scheme of points introduced above.  Here, $n_{\beta}$ is the intersection number
\begin{equation}
n_{\beta} = - \frac{1}{2} \beta \cdot (K_{X} + \beta)
\end{equation}
which gives the contribution of a divisor to the holomorphic Euler characteristic.  If we require the total Euler characteristic to be $n$, then the difference $n - n_{\beta}$ must come from zero-dimensional subschemes in $X$.  In this example, $\text{Hilb}_{\beta, n}(X)$ is a smooth projective variety.
\end{Ex}

For multiple reasons to be illustrated below, it is desirable to consider, instead of subschemes, a certain class of ideal sheaves.  Recall from Definition \ref{defn:dettofcohshref} how to define the determinant of a coherent sheaf.  Let $\mathscr{I}$ be a torsion-free rank one coherent sheaf on a non-singular projective variety $X$.  In such a case, we get the following embedding
\begin{equation}
\mathscr{I} \lhook\joinrel\longrightarrow \mathscr{I}^{\smvee \smvee} \cong \text{det}(\mathscr{I}) \in \text{Pic}(X)
\end{equation}
where the assumption of torsion-free guarantees an embedding $\mathscr{I} \hookrightarrow \mathscr{I}^{\smvee \smvee}$, and the assumption of rank one further implies that $\text{det}(\mathscr{I}) \cong \mathscr{I}^{\smvee \smvee}$.  Therefore, \emph{torsion-free, rank one, and trivial determinant} are sufficient conditions to guarantee an embedding
\begin{equation}
\mathscr{I} \lhook\joinrel\longrightarrow \mathcal{O}_{X}.
\end{equation}
This establishes that such an $\mathscr{I}$ is an ideal sheaf in the sense familiar to algebraic geometers, and reviewed in Section \ref{subsec:idsheee}.  Note that not every ideal sheaf has trivial determinant.  For example, if $D \subset X$ is a divisor, $L = \mathcal{O}_{X}(-D)$ is an ideal sheaf and satisfies $\text{det}(L) = L$.  Effectively, the hypothesis of trivial determinant produces ideal sheaves $\mathscr{I}_{Z}$ whose corresponding subscheme $Z$ has only support in codimension two and above.

If we specialize to $\text{dim}(X)=3$, and let $\mathscr{I}$ be as above, the condition $\text{det}(\mathscr{I}) \cong \mathcal{O}_{X}$ implies that the Chern character of $\mathscr{I}$ can be expressed as 
\[ \text{ch}(\mathscr{I}) = \big(1,0,-\beta, -n-\tfrac{1}{2}K_{X} \cdot \beta \big)\]
for some effective class $\beta \in H_{2}(X, \mathbb{Z})$ and $n \in \mathbb{Z}$.  Let us denote by $M_{\beta, n}(X)$ the moduli space of sheaves with trivial determinant and Chern character $\big(1,0,-\beta, -n-\tfrac{1}{2}K_{X} \cdot \beta \big)$.  We have not mentioned stability or chosen an ample class because a rank one torsion-free sheaf is automatically (slope, therefore Gieseker) stable, independent of a polarization.  There are also no strictly semistable sheaves which implies that $M_{\beta, n}(X)$ is a projective scheme.  

For a threefold $X$, it is tempting to identify $M_{\beta, n}(X)$ with the scheme we began the discussion with: the Hilbert scheme $\text{Hilb}_{\beta, n}(X)$.  At the level of sets it is easy to establish a bijection.  Recalling the ideal sheaf exact sequence (\ref{eqn:idshexseq}) given an ideal sheaf $\mathscr{I}$ with trivial determinant, we get a uniquely determined subscheme $Z \subset X$ supported on points and curves.  By the additivity of the Chern character on short exact sequences, we know
\[\text{ch}(\mathscr{I}) + \text{ch}(\mathcal{O}_{Z}) = \text{ch}(\mathcal{O}_{X}) = (1,0, 0, 0).\]
Therefore, if $\mathscr{I} \in M_{\beta, n}(X)$, then $\text{ch}(\mathcal{O}_{Z}) = \big(0,0, \beta, n + \tfrac{1}{2}K_{X} \cdot \beta \big)$ and we conclude $Z \in \text{Hilb}_{\beta, n}(X)$.  We see that it is elementary to give a bijection between $M_{\beta, n}(X)$ and $\text{Hilb}_{\beta, n}(X)$.  The fact that the two are isomorphic as schemes is a very deep and non-trivial result.  

\begin{thm}[\bfseries {\cite[Thm 2.7]{pandharipande_curve_2009}}]
Let $X$ be a non-singular projective threefold.  We have the following isomorphism\footnote{If we had defined $M_{\beta, n}(X)$ to parameterize sheaves with Chern character $\big(1,0,-\beta, -n-\tfrac{1}{2}K_{X} \cdot \beta \big)$ without requiring trivial determinant, then we could have contributions from degree zero line bundles as well.  To get an isomorphism with the Hilbert scheme one can either impose trivial determinant as we did, or require $H^{1}(X, \mathcal{O}_{X})=0$, which is equivalent to $\text{Pic}^{0}(X)=0$.} of projective schemes
\begin{equation}\label{eqn:HilbIdShIso}
\text{Hilb}_{\beta, n}(X) \cong M_{\beta, n}(X)
\end{equation} 
between the Hilbert scheme $\text{Hilb}_{\beta, n}(X)$ and the moduli space $M_{\beta, n}(X)$ of coherent sheaves on $X$ with trivial determinant and Chern character $\big(1,0, -\beta, -n -\tfrac{1}{2}K_{X} \cdot \beta \big)$.  
\end{thm}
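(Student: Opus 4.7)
The plan is to establish the isomorphism by constructing mutually inverse morphisms between the two moduli functors, then invoking Yoneda to conclude. Both sides represent fine moduli problems (the Hilbert scheme is classical, while $M_{\beta,n}(X)$ is a fine moduli space because rank-one torsion-free sheaves are automatically slope-stable with no strictly semistable objects, so the corresponding $\mathbb{C}^*$-gerbe is trivialized by the fixed-determinant condition). The bijection on $\mathbb{C}$-points is already clear from the short discussion in the excerpt via the ideal sheaf exact sequence and additivity of the Chern character; the content of the theorem is that this bijection upgrades to a scheme isomorphism, so the real work is in universal families.

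First, I would construct a natural transformation $\Phi : \text{Hilb}_{\beta,n}(X) \to M_{\beta,n}(X)$ as functors on $\mathbf{Sch}^{op}$. Given a flat family $\mathcal{Z} \subset X \times T$ of one-dimensional subschemes parameterized by $T$, form the ideal sheaf $\mathscr{I}_{\mathcal{Z}} \subset \mathcal{O}_{X \times T}$. One needs to verify: (i) $\mathscr{I}_{\mathcal{Z}}$ is $T$-flat, which follows from the flatness of $\mathcal{O}_{\mathcal{Z}}$ and the ideal sheaf exact sequence $0 \to \mathscr{I}_{\mathcal{Z}} \to \mathcal{O}_{X \times T} \to \mathcal{O}_{\mathcal{Z}} \to 0$ together with the long exact sequence of $\mathrm{Tor}$; (ii) the fiber $\mathscr{I}_{\mathcal{Z}} |_{X \times \{t\}}$ is isomorphic to $\mathscr{I}_{Z_t}$, where flatness is needed to ensure the restriction remains exact and the fiber of the ideal sheaf coincides with the ideal sheaf of the fiber; (iii) each fiber is rank one torsion-free with trivial determinant and Chern character $(1,0,-\beta,-n-\tfrac12 K_X\cdot \beta)$, all of which follow immediately from the pointwise discussion in the excerpt combined with the locally constant Hilbert polynomial.

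Next, I would construct the inverse transformation $\Psi$. Given a $T$-flat family $\mathcal{F}$ of rank-one torsion-free sheaves on $X$ with trivial determinant and the prescribed Chern character, the argument in the excerpt shows that on each geometric fiber we get a canonical inclusion $\mathcal{F}_t \hookrightarrow \mathcal{O}_X$ well-defined up to $\mathbb{C}^*$. To globalize this over $T$, I would use that $\mathrm{Hom}_{X \times T}(\mathcal{F}, \mathcal{O}_{X \times T})$ behaves well in families for $\mathcal{F}$ flat and of the specified type, and that a choice of trivialization of $\det \mathcal{F}$ rigidifies the embedding. Concretely, $\mathcal{F}^{\smvee\smvee} \cong \det(\mathcal{F})$ is a line bundle on $X \times T$ with trivial restriction to each $X \times \{t\}$; pulling back by a relative twist and using the double-dual map $\mathcal{F} \to \mathcal{F}^{\smvee\smvee}$, one obtains a family of ideal sheaves, and then takes the cokernel as the desired family of one-dimensional subschemes $\mathcal{Z}_{\mathcal{F}} \subset X \times T$. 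Checking $T$-flatness and verifying the fiberwise discrete invariants again reduces to the pointwise computation already done.

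The main obstacle I expect is step (iii) of the globalization of the inclusion $\mathcal{F} \hookrightarrow \mathcal{O}_{X \times T}$: one must ensure that the $\mathbb{C}^*$-ambiguity in $\mathrm{Hom}(\mathcal{F}_t, \mathcal{O}_X)$ on each fiber glues to a nowhere-vanishing section over $T$, which is precisely where the trivial-determinant hypothesis does its work. Once $\Phi$ and $\Psi$ are constructed, verifying $\Phi \circ \Psi = \mathrm{id}$ and $\Psi \circ \Phi = \mathrm{id}$ on families reduces to the pointwise bijection from the excerpt, which already established that the ideal sheaf exact sequence determines $Z$ from $\mathscr{I}_Z$ and vice versa. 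By Yoneda, the mutually inverse natural transformations between the fine moduli functors yield the desired scheme isomorphism $\text{Hilb}_{\beta,n}(X) \cong M_{\beta,n}(X)$.
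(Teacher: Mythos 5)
A preliminary remark: the paper does not prove this theorem. It explicitly calls the scheme-theoretic isomorphism ``a very deep and non-trivial result'' and cites \cite[Thm 2.7]{pandharipande_curve_2009} for it, so there is no in-paper argument to compare yours against. Judged on its own, your outline follows the standard route in the literature (ideal-sheaf exact sequence in families in one direction, rigidification of the embedding $\mathcal{F}_t \hookrightarrow \mathcal{O}_X$ via the determinant in the other), and you have correctly isolated where the trivial-determinant hypothesis does its work. The direction $\Phi$ is essentially complete: flatness of $\mathscr{I}_{\mathcal{Z}}$ and the identification of its fibers both fall out of the $\mathrm{Tor}$ long exact sequence exactly as you say.

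Two points need repair. First, your framing ``both sides represent fine moduli problems \dots the $\mathbb{C}^{*}$-gerbe is trivialized by the fixed-determinant condition'' is not right and is mildly circular: fixing the isomorphism class of the determinant does not kill the $\mathbb{C}^{*}$ of scalar automorphisms, and the representability of the sheaf-side functor (with families taken up to twist by line bundles pulled back from $T$) is precisely what the functor isomorphism you are constructing would establish --- you cannot assume it at the outset and then ``invoke Yoneda.'' The fix is to reorder the logic: build the isomorphism of functors first, note that the Hilbert functor is representable, and then conclude $M_{\beta,n}(X)\cong \mathrm{Hilb}_{\beta,n}(X)$ from the uniqueness (up to unique isomorphism) of coarse moduli spaces, since a representable functor's representing scheme is in particular its coarse moduli space. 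Second, in the construction of $\Psi$ you assert that $\mathcal{F}^{\smvee\smvee}\cong\det(\mathcal{F})$ is a line bundle on $X\times T$ restricting to $(\mathcal{F}_t)^{\smvee\smvee}$ on fibers. The identification of the double dual with the determinant is a fiberwise statement on a smooth variety; relatively, over a possibly non-reduced base, neither the formation of $\mathcal{F}^{\smvee\smvee}$ nor the natural map $\mathcal{F}\to\mathcal{F}^{\smvee\smvee}$ obviously commutes with base change, and this is where the genuine technical content lives. The cleaner path is to work with $\det(\mathcal{F})$ directly (which does commute with base change for a $T$-flat family on a smooth projective $X$), apply seesaw to write $\det(\mathcal{F})\cong\pi_T^{*}M$, and then produce and control the map $\mathcal{F}\to\pi_T^{*}M$; one must still check this map is nonzero, hence injective, on every geometric fiber, which by the $\mathrm{Tor}$ sequence is exactly the condition for the cokernel to be $T$-flat. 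You gesture at all of this, but as written these steps are asserted rather than proved, and they are the whole reason the paper defers to Pandharipande--Thomas rather than giving the argument itself.
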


Despite the isomorphism (\ref{eqn:HilbIdShIso}), the Hilbert scheme and moduli space of sheaves carry very different deformation-obstruction theories.  We will comment more in the next section, but the deformation space is in general \emph{intrinsic} to a moduli scheme: it corresponds to the tangent space at a point.  Because $\text{Hilb}_{\beta, n}(X)$ and $M_{\beta, n}(X)$ are isomorphic as schemes, they have the same deformation spaces but different obstruction spaces.  One should think that $\text{Hilb}_{\beta, n}(X) \cong M_{\beta, n}(X)$ is an isomorphism of \emph{classical schemes}, but the two have very different derived structures.  We far prefer to work with sheaves partly because the deformation-obstruction theory is canonical and leads to a virtual class.  In particular, if $X$ is not only a threefold but also Calabi-Yau, we get a \emph{symmetric} obstruction theory.  Let us now turn to a discussion of this.

\subsection{Symmetric Obstruction Theories and the Virtual Fundamental Class}

Motivated by the discussion in the previous section, we would like to understand the canonical deformation-obstruction theory on $M_{\beta, n}(X)$ for a smooth projective threefold $X$.  But to lay some foundation, we begin in more generality.  We will let $X$ be a smooth projective scheme with $M$ a moduli space of stable or semistable sheaves on $X$ with fixed discrete invariants.  If $X$ is a smooth variety, we will also be interested in the subscheme $M(\mathcal{Q})$ of $M$ parameterizing sheaves with fixed determinant $\mathcal{Q} \in \text{Pic}(X)$.  

As we saw with $M_{\beta, n}(X)$, the moduli space $M$ or $M(\mathcal{Q})$ may not have a well-defined dimension: there may be many components of arbitrary dimensions.  But by the following theorem, the dimension of a moduli space of sheaves makes sense locally in terms of the Zariski tangent space, which can be identified with a familiar quantity.  A proof may be found in \cite[Thm. 2.7]{hartshorne2009deformation}.

\begin{thm}\label{thm:genmodZar}
Let $X$ be a projective scheme, and $M$ a moduli space of stable or semistable sheaves on $X$.  If $\mathscr{I} \in M$ is a stable moduli point, then the Zariski tangent space at $\mathscr{I}$ is given by
\begin{equation}
T_{\mathscr{I}}M \cong \text{Ext}^{1}(\mathscr{I}, \mathscr{I})
\end{equation}
and the canonical obstruction space for $M$ is $\text{Ext}^{2}(\mathscr{I}, \mathscr{I})$.  If $\text{Ext}^{2}(\mathscr{I}, \mathscr{I}) =0$, then $M$ is smooth at $\mathscr{I}$.  
\end{thm}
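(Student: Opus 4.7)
The plan is to work with the local deformation functor $\text{Def}_{\mathscr{I}}$ that sends an Artinian local $\mathbb{C}$-algebra $A$ with residue field $\mathbb{C}$ to the set of isomorphism classes of $A$-flat coherent sheaves on $X \times \text{Spec}\, A$ restricting to $\mathscr{I}$ over the closed point. Because $\mathscr{I}$ is stable, its automorphism group is just $\mathbb{C}^{*}$, so the natural map from the moduli functor to its underlying coarse space is formally \'etale at $\mathscr{I}$, and the Zariski tangent space $T_{\mathscr{I}}M$ is canonically identified with $\text{Def}_{\mathscr{I}}(\mathbb{C}[\epsilon]/(\epsilon^{2}))$. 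The first key step is to show that this set is naturally a $\mathbb{C}$-vector space isomorphic to $\text{Ext}^{1}(\mathscr{I},\mathscr{I})$.

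To carry that out, I would observe that a $\mathbb{C}[\epsilon]/(\epsilon^{2})$-flat deformation $\widetilde{\mathscr{I}}$ is exactly a short exact sequence $0 \to \mathscr{I} \to \widetilde{\mathscr{I}} \to \mathscr{I} \to 0$ on $X$, where the first map is multiplication by $\epsilon$ and flatness forces both copies of $\mathscr{I}$ to be identified with the image and cokernel. Isomorphism classes of such sequences are by definition $\text{Ext}^{1}(\mathscr{I},\mathscr{I})$, with the zero class corresponding to $\mathscr{I} \oplus \epsilon\mathscr{I}$. For the obstruction theory, given a small extension $0 \to (t) \to A' \to A \to 0$ of Artin rings with $(t)$ annihilated by the maximal ideal and a deformation $\mathscr{I}_{A}$ over $A$, I would cover $X$ by affine opens on which lifts to $A'$ exist, compare these lifts on double overlaps to build a \v{C}ech 2-cocycle with values in $\shHom(\mathscr{I},\mathscr{I}) \otimes (t)$, and show that its class in $\text{Ext}^{2}(\mathscr{I},\mathscr{I}) \otimes (t)$ vanishes if and only if a global lift to $A'$ exists. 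The main obstacle is proving that this class is well-defined, independent of all local choices, and functorial in morphisms of small extensions; the cleanest way to handle this is to reinterpret the cocycle construction as the connecting homomorphism in a derived-category exact triangle coming from the truncated cotangent complex of $\mathscr{I}_{A} \to \mathscr{I}_{A'}$.

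Granting the above, the final assertion follows from Schlessinger's criterion: vanishing of $\text{Ext}^{2}(\mathscr{I},\mathscr{I})$ forces every small extension to lift unobstructedly, so $\text{Def}_{\mathscr{I}}$ is formally smooth, and combined with the finite-dimensional tangent space $\text{Ext}^{1}(\mathscr{I},\mathscr{I})$ this implies $M$ is smooth at $\mathscr{I}$ of dimension $\dim_{\mathbb{C}}\text{Ext}^{1}(\mathscr{I},\mathscr{I})$. Throughout I would quote the foundational deformation-theoretic machinery from Hartshorne or Huybrechts-Lehn rather than reproduce it; the point of the theorem is precisely to identify explicitly the sheaf-theoretic Ext groups that feed into the symmetric obstruction theory and virtual fundamental class on $M_{\beta,n}(X)$ for a Calabi-Yau threefold in the subsequent sections.
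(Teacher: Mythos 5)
The paper does not actually prove this theorem; it states it and refers to \cite[Thm.\ 2.7]{hartshorne2009deformation}, so there is no in-paper argument to compare against. Your outline is the standard proof found in that reference and in Huybrechts--Lehn: identify $T_{\mathscr{I}}M$ with first-order deformations via the prorepresentability of $\text{Def}_{\mathscr{I}}$ at a stable (hence simple, up to scalars) point, identify flat deformations over $\mathbb{C}[\epsilon]/(\epsilon^{2})$ with self-extensions to get $\text{Ext}^{1}(\mathscr{I},\mathscr{I})$, build an obstruction class for small extensions, and conclude formal smoothness when $\text{Ext}^{2}(\mathscr{I},\mathscr{I})=0$. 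The tangent-space half and the final smoothness deduction are correct as written.

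The one place you should be more careful is the obstruction class. Your \v{C}ech construction --- local lifts on affines, differences on overlaps, a $2$-cocycle valued in $\shHom(\mathscr{I},\mathscr{I})\otimes(t)$ --- only produces a class in $H^{2}\big(X,\shHom(\mathscr{I},\mathscr{I})\big)$, which is merely one graded piece of $\text{Ext}^{2}(\mathscr{I},\mathscr{I})$ under the local-to-global spectral sequence. It also presupposes that flat local lifts exist on affines, which is automatic for locally free sheaves but not for general coherent sheaves; and in the intended application the sheaves are ideal sheaves of curves on a threefold, which are torsion-free of rank one but not locally free. For such sheaves there are genuinely local contributions from $H^{0}\big(X,\mathscr{E}xt^{2}(\mathscr{I},\mathscr{I})\big)$ and $H^{1}\big(X,\mathscr{E}xt^{1}(\mathscr{I},\mathscr{I})\big)$, so the obstruction must be built either from a locally free resolution (lifting the differentials and measuring the failure of $d^{2}=0$) or, as you suggest in passing, from the cotangent-complex/derived-category formalism. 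Your final sentence gestures at the right fix, but as stated the \v{C}ech argument alone does not land the obstruction in the full $\text{Ext}^{2}(\mathscr{I},\mathscr{I})$; promoting that remark from an afterthought to the actual construction is what makes the proof complete.
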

We think of elements of $\text{Ext}^{1}(\mathscr{I}, \mathscr{I})$ as \emph{infinitesimal deformations} of the sheaf $\mathscr{I}$.  There are \emph{obstructions} to lifting an infinitesimal deformation to infinite order, which live in the obstruction space $\text{Ext}^{2}(\mathscr{I}, \mathscr{I})$.  By a general result, there is a formal function called the \emph{Kuranishi map}
\begin{equation}
\kappa : \text{Ext}^{1}(\mathscr{I}, \mathscr{I}) \longrightarrow \text{Ext}^{2}(\mathscr{I}, \mathscr{I})
\end{equation}
such that the moduli space $M$ is locally the zero locus (not critical locus!) of $\kappa$.  The general principle is that a moduli space $M$ is smooth if and only if all infinitesimal deformations are unobstructed.  Therefore, as we saw in Theorem \ref{thm:genmodZar} if $\text{Ext}^{2}(\mathscr{I}, \mathscr{I})$ vanishes, then $M$ is smooth, but the converse is not true.  We can have $\text{Ext}^{2}(\mathscr{I}, \mathscr{I}) \neq 0$, but if $\kappa =0$ then $M$ will be smooth.

The above theorem is a generalization of a well-known fact about deformations of vector bundles.  Given a vector bundle $E$ on $X$, one can explicitly show that a first order deformation in the transition functions corresponds to an element of $H^{1}(X, \text{End}E)$.  When $E$ is a vector bundle, it is indeed true that
\[H^{1}(X, \text{End}E) \cong H^{1}(X, E^{\smvee} \otimes E) \cong \text{Ext}^{1}(E, E).\]
One can also verify that obstructions to lifting an infinitesimal deformation to infinite order live in the obstruction space $\text{Ext}^{2}(E, E) = H^{2}(X, \text{End}E)$.  
 
Recall that we are ultimately interested in $M_{\beta, n}(X)$ which is a moduli space of sheaves with fixed determinant.  In such a case, the deformation and obstruction spaces will be slightly different.  On a smooth projective scheme $X$, the Picard group $\text{Pic}(X)$ parameterizing isomorphism classes of invertible sheaves on $X$, is smooth.  If $X$ is in fact a smooth variety, given any coherent sheaf $\mathscr{I}$, we get a finite locally-free resolution $\dotr{E} \to \mathscr{I}$.  Therefore, given any moduli space $M$ of sheaves on $X$, we define a determinant map
\begin{equation}\label{eqn:detmappp}
\text{det}: M \longrightarrow \text{Pic}(X)
\end{equation}
by sending $\mathscr{I} \in M$ to $\text{det}(\mathscr{I}) \coloneqq \bigotimes_{i} \text{det}(E_{i})^{(-1)^{i}}$.  Given $M$, we can study the subscheme $M(\mathcal{Q})$ parameterizing sheaves with fixed determinant $\mathcal{Q} \in \text{Pic}(X)$, by way of the map (\ref{eqn:detmappp}).  

Given a sheaf $\mathscr{I}$ one can define trace maps \cite[Section 10.1]{huybrechts_geometry_2010}
\begin{equation}
\text{tr}^{i} : \text{Ext}^{i}(\mathscr{I}, \mathscr{I}) \longrightarrow H^{i}(X, \mathcal{O}_{X}).
\end{equation}  
This is once again a generalization of something which is straightforward in the case of a vector bundle $E$.  In such a case, $\text{Ext}^{i}(E, E) = H^{i}(X, \text{End}E)$ and the trace maps $\text{tr}^{i}$ are the morphisms on cohomology induced from the honest trace map $\text{End}E \to \mathcal{O}_{X}$.  

\begin{defn}
The traceless Ext group denoted $\text{Ext}^{i}(\mathscr{I}, \mathscr{I})_{0}$ is the kernel of the map $\text{tr}^{i}$.  
\end{defn}

Recall from Theorem \ref{thm:genmodZar}, given a stable moduli point $\mathscr{I} \in M$, the Zariski tangent space at $\mathscr{I}$ is given by $T_{\mathscr{I}}M \cong \text{Ext}^{1}(\mathscr{I}, \mathscr{I})$.  It follows that the tangent space to $\text{Pic}(X)$ at any point is $H^{1}(X, \mathcal{O}_{X})$.  But let us give an additional proof of this.  Because $X$ is compact and K\"{a}hler, the exponential sequence implies
\begin{equation}
0 \longrightarrow H^{1}(X, \mathcal{O}_{X})/H^{1}(X, \mathbb{Z}) \longrightarrow \text{Pic}(X)  \xrightarrow{\hspace*{0.1cm} c_{1} \hspace*{0.1cm}} \text{NS}(X) \longrightarrow 0
\end{equation}
where $\text{NS}(X) \coloneqq H^{2}(X, \mathbb{Z}) \cap H^{1,1}(X)$ is the Neron-Severi group and the first Chern class $c_{1}$ classifies line bundles on $X$ topologically.  It follows that for any fixed topological type, $H^{1}(X, \mathcal{O}_{X})/H^{1}(X, \mathbb{Z})$ is the moduli space of algebraic structures.  We think of $\text{NS}(X)$ as the discrete part of the Picard group and $H^{1}(X, \mathcal{O}_{X})/H^{1}(X, \mathbb{Z})$ as the continuous part.  Because the tangent space is a local object, it is insensitive to the discrete components of $\text{Pic}(X)$ as well as the lattice $H^{1}(X, \mathbb{Z})$.  Therefore, for all $L \in \text{Pic}(X)$
\begin{equation}
T_{L}\text{Pic}(X) \cong H^{1}(X, \mathcal{O}_{X}).
\end{equation}
With this in mind, it is tempting to expect that the map $\text{tr}^{1}$ is simply the differential, or pushforward, of the determinant map (\ref{eqn:detmappp}).  For all stable points $\mathscr{I} \in M$, this is indeed the case:
\begin{equation}\label{eqn:tr1differential}
\text{tr}^{1} : \text{Ext}^{1}(\mathscr{I}, \mathscr{I}) \cong T_{\mathscr{I}} M \longrightarrow T_{\text{det}(\mathscr{I})}\text{Pic}(X) \cong H^{1}(X, \mathcal{O}_{X}).
\end{equation}

\begin{thm}
Let $M$ be a moduli space of stable or semistable torsion-free sheaves on a smooth projective variety $X$, and let $M(\mathcal{Q})$ be the subscheme of such sheaves with fixed determinant $\mathcal{Q} \in \text{Pic}(X)$.  The Zariski tangent space to $M(\mathcal{Q})$ at a stable point $\mathscr{I} \in M(\mathcal{Q})$ is the kernel of the map (\ref{eqn:tr1differential}).  That is
\begin{equation}
T_{\mathscr{I}} M(\mathcal{Q}) = \text{Ext}^{1}(\mathscr{I}, \mathscr{I})_{0}.
\end{equation}
Moreover, the canonical obstruction space for $M(\mathcal{Q})$ is $\text{Ext}^{2}(\mathscr{I}, \mathscr{I})_{0}$, and $M(\mathcal{Q})$ is therefore smooth when $\text{Ext}^{2}(\mathscr{I}, \mathscr{I})_{0} =0$.  
\end{thm}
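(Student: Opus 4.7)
The plan is to realize $M(\mathcal{Q})$ as a scheme-theoretic fiber of the determinant morphism and then transport the deformation theory of $M$ through this construction. First I would regard $M(\mathcal{Q})$ as the fiber $\mathrm{det}^{-1}(\mathcal{Q})$ of the map $\mathrm{det}:M\to\mathrm{Pic}(X)$ introduced in (\ref{eqn:detmappp}). Since $\mathrm{Pic}(X)$ is smooth at every point, a standard fact from deformation theory says that the Zariski tangent space of the fiber over $\mathcal{Q}$ at a point $\mathscr{I}$ is the kernel of the differential $d\,\mathrm{det}_{\mathscr{I}}:T_{\mathscr{I}}M\to T_{\mathcal{Q}}\mathrm{Pic}(X)$. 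The preceding discussion already identifies this differential with the trace map $\mathrm{tr}^{1}$ via (\ref{eqn:tr1differential}), so combining these inputs yields
\begin{equation*}
T_{\mathscr{I}}M(\mathcal{Q}) \;=\; \ker\bigl(\mathrm{tr}^{1}\bigr) \;=\; \mathrm{Ext}^{1}(\mathscr{I},\mathscr{I})_{0}.
\end{equation*}

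For the obstruction space I would work at the functor-of-points level. A first-order deformation of $\mathscr{I}$ with fixed determinant $\mathcal{Q}$ corresponds to a class in $\mathrm{Ext}^{1}(\mathscr{I},\mathscr{I})_{0}$ by the tangent space computation above. To lift such a deformation to higher order over a small extension $A'\twoheadrightarrow A$ of Artinian local rings with kernel $I$, the obstruction to lifting $\mathscr{I}$ as a coherent sheaf lives in $\mathrm{Ext}^{2}(\mathscr{I},\mathscr{I})\otimes I$, as recorded by Theorem \ref{thm:genmodZar}. I would then check that the obstruction to lifting $\mathscr{I}$ \emph{with trivial (or prescribed) determinant} is precisely the image of this class under the traceless projection, i.e.\ lies in $\mathrm{Ext}^{2}(\mathscr{I},\mathscr{I})_{0}\otimes I$. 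This is the step that carries the real content: one must verify that the canonical obstruction class is compatible with the trace map, so that the obstruction vanishes modulo the trace part exactly when the deformation can be chosen to respect the fixed determinant. I would argue this by functoriality of the Atiyah/Kodaira--Spencer class under the determinant construction, using that $\mathrm{det}(\mathscr{I})$ is governed by the Atiyah class composed with the trace.

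Granting the compatibility, the rest is formal. A Kuranishi-type argument local to $\mathscr{I}$ gives a formal model for $M(\mathcal{Q})$ as the zero locus of a map
\begin{equation*}
\kappa_{0}:\mathrm{Ext}^{1}(\mathscr{I},\mathscr{I})_{0}\longrightarrow \mathrm{Ext}^{2}(\mathscr{I},\mathscr{I})_{0},
\end{equation*}
the ``traceless Kuranishi map'' obtained from the Kuranishi map $\kappa$ for $M$ by the trace-splitting $\mathrm{Ext}^{i}(\mathscr{I},\mathscr{I})=\mathrm{Ext}^{i}(\mathscr{I},\mathscr{I})_{0}\oplus H^{i}(X,\mathcal{O}_{X})$. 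When $\mathrm{Ext}^{2}(\mathscr{I},\mathscr{I})_{0}=0$ the target is zero, so $\kappa_{0}\equiv 0$, and $M(\mathcal{Q})$ is formally smooth at $\mathscr{I}$ of dimension equal to $\dim\mathrm{Ext}^{1}(\mathscr{I},\mathscr{I})_{0}$; combined with the fact that the tangent space has this same dimension, smoothness follows.

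The main obstacle I anticipate is not the tangent space computation, which is essentially formal once we know $\mathrm{Pic}(X)$ is smooth and that $\mathrm{tr}^{1}$ is the differential of $\mathrm{det}$, but rather the verification that the canonical obstruction class in $\mathrm{Ext}^{2}(\mathscr{I},\mathscr{I})$ projects correctly under $\mathrm{tr}^{2}$ to the obstruction for deforming $\mathrm{det}(\mathscr{I})$. This requires a careful cocycle or Atiyah-class argument and is the point at which the trace-splitting of the deformation theory is genuinely invoked rather than merely conjectured.
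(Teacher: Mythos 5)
Your proposal is correct and follows exactly the route the paper sets up: the paper states this theorem without proof, but its preceding identification of $\mathrm{tr}^{1}$ with the differential of $\det : M \to \mathrm{Pic}(X)$ in (\ref{eqn:tr1differential}) is precisely the input your fiber argument uses, and the traceless-obstruction statement is the content of the cited work of Thomas. Two minor remarks: the Zariski tangent space to a scheme-theoretic fiber is always the kernel of the differential (no smoothness of $\mathrm{Pic}(X)$ is needed for that half), and your flagging of the Atiyah-class compatibility of the obstruction class with $\mathrm{tr}^{2}$ as the genuinely nontrivial step is exactly where the real work lies.
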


\subsubsection{The Canonical Perfect Obstruction Theory}

What we have outlined above are some generalities on the canonical deformation-obstruction theory carried by a moduli space of sheaves with and without fixed determinant.  It was shown by Richard Thomas \cite{p._thomas_holomorphic_2000} that in many cases, this canonical deformation-obstruction theory is in fact a \emph{perfect obstruction theory}, similar to that in Gromov-Witten theory.  The following theorem is paraphrased from \cite{p._thomas_holomorphic_2000}.   

\begin{thm}\label{thm:ThomasTHM}
Let $X$ be a smooth projective polarized variety with $M$ a moduli space of stable sheaves with fixed discrete invariants and $M(\mathcal{Q})$ the subscheme with fixed determinant $\mathcal{Q} \in \text{Pic}(X)$.  If the integers
\[ \text{dim} \, \text{Ext}^{i}(\mathscr{I}, \mathscr{I}), \,\,\,\,\,\,\,\,\,\,\,\, (i \geq 3),\]
are independent of $\mathscr{I} \in M$, then there is a canonical perfect obstruction theory on $M$ governed by $\text{Ext}^{1}(\mathscr{I}, \mathscr{I})$ and $\text{Ext}^{2}(\mathscr{I}, \mathscr{I})$.  For non-zero rank, if the same condition holds then $M(\mathcal{Q})$ carries a canonical perfect obstruction theory governed by $\text{Ext}^{1}(\mathscr{I}, \mathscr{I})_{0}$ and $\text{Ext}^{2}(\mathscr{I}, \mathscr{I})_{0}$.
\end{thm}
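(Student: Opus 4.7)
The plan is to follow the construction of Richard Thomas by producing a two-term complex on $M$ whose cohomology encodes the Ext groups in question, and to map it into the truncated cotangent complex $L_{M}^{\bullet}$ via the Atiyah class. First I would construct (at least étale-locally, or after noting that $M^{s}$ is a $\mathbb{C}^{*}$-gerbe over its coarse space) a universal family $\mathbb{I}$ on $M \times X$ such that the restriction to $\{[\mathscr{I}]\} \times X$ recovers $\mathscr{I}$. Letting $\pi_{M} : M \times X \to M$ denote the projection, I would form the complex
\[
F^{\bullet} \;\coloneqq\; \bigl(R\pi_{M*} \, R\sheafhom(\mathbb{I}, \mathbb{I})\bigr)^{\smvee}[-1],
\]
which by base change and Grothendieck duality has its cohomology sheaves at a point $[\mathscr{I}] \in M$ dual to $\text{Ext}^{i+1}(\mathscr{I}, \mathscr{I})$. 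The hypothesis that $\dim \text{Ext}^{i}(\mathscr{I}, \mathscr{I})$ is constant in $\mathscr{I}$ for $i \geq 3$ is exactly what I need to guarantee, via cohomology and base change, that $F^{\bullet}$ is quasi-isomorphic to a two-term complex $[F^{-1} \to F^{0}]$ of locally-free sheaves on $M$ in amplitude $[-1, 0]$, using that the $i \geq 3$ contributions can be split off and discarded (or absorbed into the locally-free term).

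Next I would construct the morphism $\varphi : F^{\bullet} \to L_{M}^{\bullet}$. The natural candidate is the reduced Atiyah class of $\mathbb{I}$: the universal Atiyah class lives in $\text{Ext}^{1}(\mathbb{I}, \mathbb{I} \otimes L_{M \times X}^{\bullet})$, and after projecting to the $M$-direction and dualising it yields the required map. To verify that this is an obstruction theory in the sense of Behrend–Fantechi, I would check that $h^{0}(\varphi)$ is an isomorphism and $h^{-1}(\varphi)$ is surjective, which reduces pointwise to the statement that the Kodaira–Spencer map identifies $\text{Ext}^{1}(\mathscr{I}, \mathscr{I}) \cong T_{\mathscr{I}}M$ (Theorem~\ref{thm:genmodZar}) and that every obstruction to lifting an infinitesimal deformation is captured in $\text{Ext}^{2}(\mathscr{I}, \mathscr{I})$ via the Kuranishi map — both are the standard deformation-theoretic facts already invoked.

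For the fixed-determinant statement, I would use the trace decomposition. The maps $\text{tr}^{i} : \text{Ext}^{i}(\mathscr{I}, \mathscr{I}) \to H^{i}(X, \mathcal{O}_{X})$ globalise to a morphism of complexes $F^{\bullet} \to (R\pi_{M*} \mathcal{O}_{M \times X})^{\smvee}[-1]$, whose fiber is the differential of the determinant map $\det : M \to \text{Pic}(X)$ identified in equation~(\ref{eqn:tr1differential}). In the non-zero rank case, the trace splits (using $\tfrac{1}{r}\,\text{id}$), so I can take the kernel $F^{\bullet}_{0}$ of the trace and obtain a two-term complex whose cohomology sheaves are the traceless Ext groups. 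Restricting the Atiyah-class morphism $\varphi$ to $M(\mathcal{Q})$ and composing with the projection to $F^{\bullet}_{0}$ yields the desired perfect obstruction theory on $M(\mathcal{Q})$.

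The main obstacle will be the last part of the first step: ensuring that $F^{\bullet}$ actually admits a two-term locally-free resolution in the claimed amplitude. This requires a careful application of cohomology and base change using the constancy of $\dim \text{Ext}^{i}$ for $i \geq 3$, together with an argument (originally due to Thomas) that allows one to roll up the higher-Ext contributions without disturbing the $\text{Ext}^{1}$ and $\text{Ext}^{2}$ cohomology. A secondary technical point will be the non-existence of a global universal sheaf when $M$ is only a $\mathbb{C}^{*}$-gerbe; here I would invoke the fact that $R\sheafhom(\mathbb{I}, \mathbb{I})$ is insensitive to the gerbe twist, so the construction descends to the coarse moduli scheme.
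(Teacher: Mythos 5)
The paper does not actually prove this theorem: it is stated as a paraphrase of Thomas's result \cite{p._thomas_holomorphic_2000} and cited without argument, so there is no internal proof to compare against. Your outline faithfully reproduces the strategy of the cited source (and of Huybrechts--Thomas): dualise and shift $R\pi_{M*}R\sheafhom(\mathbb{I},\mathbb{I})$, truncate to amplitude $[-1,0]$ using constancy of the higher Ext dimensions, map to the cotangent complex via the (reduced) Atiyah class, check the Behrend--Fantechi conditions, and split off the trace with $\tfrac{1}{r}\,\text{id}$ in the fixed-determinant, non-zero-rank case. One point your truncation step leaves implicit: besides the $i\geq 3$ contributions, the complex $F^{\bullet}$ as you define it also carries a summand in degree $+1$ coming from $\text{Ext}^{0}(\mathscr{I},\mathscr{I})$, which must likewise be split off; this is harmless because stability forces $\text{Ext}^{0}(\mathscr{I},\mathscr{I})\cong\mathbb{C}$, hence constant, but a complete write-up needs to say so (and in the fixed-determinant case it is killed automatically by passing to the traceless part). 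With that caveat, the proposal is a correct sketch of the standard proof.
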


Let us now specialize to the case of $X$ a threefold.  Clearly, $\text{dim} \, \text{Ext}^{i}(\mathscr{I}, \mathscr{I})=0$ for all $i >3$, so to apply Theorem \ref{thm:ThomasTHM} to a given moduli space, the only technical condition to check is that $\text{dim} \, \text{Ext}^{3}(\mathscr{I}, \mathscr{I})$ is independent of $\mathscr{I}$.  In fact, we will be primarily interested in a moduli space with fixed determinant, which means we will use the traceless Ext groups.  We will need the following lemma.  

\begin{lemmy} \label{lemmy:threefoldExtVan}
If $X$ is a non-singular projective threefold, with $\mathscr{I}$ a rank one torsion-free sheaf, then $\text{dim} \, \text{Ext}^{3}(\mathscr{I}, \mathscr{I})$ is independent of $\mathscr{I}$, and $\text{dim} \, \text{Ext}^{0}(\mathscr{I}, \mathscr{I})_{0}, \text{dim} \, \text{Ext}^{3}(\mathscr{I}, \mathscr{I})_{0}$ both vanish.
\end{lemmy}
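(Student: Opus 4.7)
The plan is to combine two classical ingredients: (a) a rank-one torsion-free sheaf on a smooth variety has only scalar endomorphisms, and (b) Serre duality on the threefold $X$ converts $\operatorname{Ext}^{3}$ questions into $\operatorname{Hom}$ questions. First I would establish the sheaf-level identity $\mathscr{H}om(\mathscr{I},\mathscr{I})\cong\mathcal{O}_{X}$. Set $L:=\mathscr{I}^{\smvee\smvee}$; because $X$ is smooth and $\mathscr{I}$ is rank-one torsion-free, $L$ is a line bundle, and since $\mathscr{I}^{\smvee}$ is reflexive we have $\mathscr{I}^{\smvee}=L^{\smvee}$. Applying $\mathscr{H}om(\mathscr{I},-)$ to the inclusion $\mathscr{I}\hookrightarrow L$ (whose cokernel is torsion) produces an injection
\[
\mathscr{H}om(\mathscr{I},\mathscr{I})\;\lhook\joinrel\longrightarrow\;\mathscr{H}om(\mathscr{I},L)=\mathscr{I}^{\smvee}\otimes L=L^{\smvee}\otimes L=\mathcal{O}_{X}.
\]
Composing the canonical map $\mathcal{O}_{X}\to\mathscr{H}om(\mathscr{I},\mathscr{I})$, $1\mapsto\mathrm{id}_{\mathscr{I}}$, with the above injection gives the identity of $\mathcal{O}_{X}$, so both are isomorphisms. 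Taking $H^{0}$ (and using that $X$ is connected) yields $\operatorname{Hom}(\mathscr{I},\mathscr{I})=\mathbb{C}$, generated by $\mathrm{id}_{\mathscr{I}}$. The trace map $\mathrm{tr}^{0}$ sends $\mathrm{id}_{\mathscr{I}}$ to the generic rank of $\mathscr{I}$, namely $1\in H^{0}(X,\mathcal{O}_{X})=\mathbb{C}$, so $\mathrm{tr}^{0}$ is an isomorphism and $\operatorname{Ext}^{0}(\mathscr{I},\mathscr{I})_{0}=\ker(\mathrm{tr}^{0})=0$.

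Next I would compute $\operatorname{Ext}^{3}(\mathscr{I},\mathscr{I})$ using Serre duality on the smooth projective threefold $X$, which gives
\[
\operatorname{Ext}^{3}(\mathscr{I},\mathscr{I})\;\cong\;\operatorname{Hom}(\mathscr{I},\mathscr{I}\otimes\omega_{X})^{\smvee}.
\]
Since $\omega_{X}$ is locally free, the first step of the proof gives $\mathscr{H}om(\mathscr{I},\mathscr{I}\otimes\omega_{X})=\mathscr{H}om(\mathscr{I},\mathscr{I})\otimes\omega_{X}=\omega_{X}$. Hence $\operatorname{Hom}(\mathscr{I},\mathscr{I}\otimes\omega_{X})=H^{0}(X,\omega_{X})$, and a further application of Serre duality on $X$ yields
\[
\operatorname{Ext}^{3}(\mathscr{I},\mathscr{I})\;\cong\;H^{0}(X,\omega_{X})^{\smvee}\;\cong\;H^{3}(X,\mathcal{O}_{X}),
\]
which depends only on $X$, proving the first assertion of the lemma.

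Finally, to show $\operatorname{Ext}^{3}(\mathscr{I},\mathscr{I})_{0}=0$ I would verify that $\mathrm{tr}^{3}\colon\operatorname{Ext}^{3}(\mathscr{I},\mathscr{I})\to H^{3}(X,\mathcal{O}_{X})$ is an isomorphism. The standard naturality of the trace with respect to Serre duality identifies $\mathrm{tr}^{3}$, under the two dualities above, with the transpose of the map $H^{0}(X,\omega_{X})\to\operatorname{Hom}(\mathscr{I},\mathscr{I}\otimes\omega_{X})$ that sends $s\mapsto s\cdot\mathrm{id}_{\mathscr{I}}$; but by the identification $\mathscr{H}om(\mathscr{I},\mathscr{I})\otimes\omega_{X}=\omega_{X}$ this is just the identity on $H^{0}(X,\omega_{X})$. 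Dualizing, $\mathrm{tr}^{3}$ is an isomorphism, so its kernel vanishes. The main technical obstacle is this last point: the Serre-duality compatibility of the trace map is classical but slightly delicate to spell out (it is cleanest in the derived category, where the trace pairing $\mathscr{I}\otimes^{\mathrm{L}}\mathscr{I}^{\smvee}\to\mathcal{O}_{X}$ manifestly exchanges $\mathrm{tr}^{i}$ and $\mathrm{tr}^{3-i}$ under Grothendieck--Serre duality), and in the writeup I would either invoke it as a known functorial property or pass to a locally free resolution and check it directly on traces of matrices of forms.
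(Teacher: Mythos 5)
Your proof is correct and follows essentially the same route as the paper's: Serre duality to convert $\operatorname{Ext}^{3}$ into a $\operatorname{Hom}$ group, the observation that a rank-one torsion-free sheaf has only scalar endomorphisms, and the identification of the trace maps $\mathrm{tr}^{0}$ and $\mathrm{tr}^{3}$ as isomorphisms. You supply more detail than the paper at two points it passes over quickly (the double-dual argument for $\mathscr{H}om(\mathscr{I},\mathscr{I})\cong\mathcal{O}_{X}$ and the Serre-duality compatibility of the trace), but the underlying argument is the same.
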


\begin{proof}
To prove the first claim, we can make use of Serre duality to write,
\begin{equation}
\text{Ext}^{3}(\mathscr{I}, \mathscr{I}) \cong \text{Ext}^{0}(\mathscr{I}, \mathscr{I} \otimes K_{X})^{\smvee} \cong \text{Hom}(\mathscr{I}, \mathscr{I} \otimes K_{X})^{\smvee}.
\end{equation}
Because $\mathscr{I}$ is rank one and torsion-free, we have $\text{Hom}(\mathscr{I}, \mathscr{I} \otimes K_{X})^{\smvee} \cong \text{Hom}(\mathcal{O}_{X}, K_{X})^{\smvee}$.  Therefore, we see that $\text{dim} \, \text{Ext}^{3}(\mathscr{I}, \mathscr{I})$ is independent of $\mathscr{I}$.  To prove the second claim, we apply Serre duality once more to see $\text{Hom}( \mathcal{O}_{X}, K_{X})^{\smvee} \cong H^{3}(X, \mathcal{O}_{X})$, which implies that the trace map $\text{tr}^{3}$ is an isomorphism, and $\text{dim} \, \text{Ext}^{3}(\mathscr{I}, \mathscr{I})_{0}=0$.  By the simplicity of $\mathscr{I}$, we know $\text{Ext}^{0}(\mathscr{I}, \mathscr{I}) = \text{Hom}(\mathscr{I}, \mathscr{I}) = \mathbb{C}$.  Hence, the trace map $\text{tr}^{0}$ from $\mathbb{C}$ to $H^{0}(X, \mathcal{O}_{X}) = \mathbb{C}$ is simply the identity.  The kernel is therefore trivial, and $\text{dim} \, \text{Ext}^{0}(\mathscr{I}, \mathscr{I})_{0}=0$.  
\end{proof}

\noindent Making use of this lemma, we regard the following result as a corollary of Theorem \ref{thm:ThomasTHM} above.

\begin{cory}
Let $X$ be a non-singular projective threefold and let $M_{\beta, n}(X)$ be the moduli space of sheaves on $X$ with trivial determinant and Chern character $\big(1,0, -\beta, -n -\tfrac{1}{2}K_{X} \cdot \beta \big)$.  There is a canonical perfect obstruction theory on $M_{\beta, n}(X)$ governed by $\text{Ext}^{1}(\mathscr{I}, \mathscr{I})_{0}$ and $\text{Ext}^{2}(\mathscr{I}, \mathscr{I})_{0}$ with expected dimension
\begin{equation}\label{eqn:virtdimDT}
\text{vdim}\big(M_{\beta, n}(X) \big) = \text{dim} \, \text{Ext}^{1}(\mathscr{I}, \mathscr{I})_{0} - \text{dim} \, \text{Ext}^{2}(\mathscr{I}, \mathscr{I})_{0} = \int_{\beta} c_{1}(X).
\end{equation}
Moreover, there exists a virtual class $[M_{\beta, n}(X)]^{\text{vir}}$ living in the Chow ring $A_{*}\big(M_{\beta, n}(X)\big)$, with degree equal to the virtual dimension.  
\end{cory}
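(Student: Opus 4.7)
The plan is to obtain the result as a direct corollary of Theorem \ref{thm:ThomasTHM} together with Lemma \ref{lemmy:threefoldExtVan}, followed by a Hirzebruch-Riemann-Roch computation to extract the numerical value of the virtual dimension. First I would verify that the hypotheses of Theorem \ref{thm:ThomasTHM} (in the fixed-determinant, rank one case) are satisfied. Every $\mathscr{I} \in M_{\beta,n}(X)$ is a stable rank one torsion-free sheaf with trivial determinant, so the relevant technical input is that $\dim \text{Ext}^{i}(\mathscr{I}, \mathscr{I})$ is independent of $\mathscr{I}$ for $i \geq 3$. On a non-singular projective threefold, $\text{Ext}^{i}(\mathscr{I}, \mathscr{I}) = 0$ for $i \geq 4$ automatically, and the case $i = 3$ is precisely the first claim of Lemma \ref{lemmy:threefoldExtVan}. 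Theorem \ref{thm:ThomasTHM} therefore produces a canonical perfect obstruction theory on $M_{\beta,n}(X)$ governed by $\text{Ext}^{1}(\mathscr{I}, \mathscr{I})_{0}$ and $\text{Ext}^{2}(\mathscr{I}, \mathscr{I})_{0}$.

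Next I would compute the numerical virtual dimension. By the second statement of Lemma \ref{lemmy:threefoldExtVan}, both $\text{Ext}^{0}(\mathscr{I}, \mathscr{I})_{0}$ and $\text{Ext}^{3}(\mathscr{I}, \mathscr{I})_{0}$ vanish, so the alternating sum collapses to
\begin{equation}
\text{vdim}\big(M_{\beta,n}(X)\big) = \dim \text{Ext}^{1}(\mathscr{I}, \mathscr{I})_{0} - \dim \text{Ext}^{2}(\mathscr{I}, \mathscr{I})_{0} = -\chi(\mathscr{I}, \mathscr{I})_{0}.
\end{equation}
The trace decomposition gives $\chi(\mathscr{I}, \mathscr{I})_{0} = \chi(\mathscr{I}, \mathscr{I}) - \chi(X, \mathcal{O}_{X})$, and Hirzebruch-Riemann-Roch yields
\begin{equation}
\chi(\mathscr{I}, \mathscr{I}) = \int_{X} \text{ch}(\mathscr{I}^{\smvee}) \, \text{ch}(\mathscr{I}) \, \text{td}(X).
\end{equation}
Since $\text{ch}(\mathscr{I}) = \big(1, 0, -\beta, -n - \tfrac{1}{2}K_{X} \cdot \beta\big)$, the odd components of $\text{ch}(\mathscr{I}^{\smvee})$ flip sign, and a short computation shows that the product $\text{ch}(\mathscr{I}^{\smvee}) \text{ch}(\mathscr{I})$ has degree 0 part equal to 1, degree 2 part equal to $-2\beta$, and vanishing degrees 1 and 3. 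Pairing with $\text{td}(X) = \big(1, \tfrac{1}{2}c_{1}(X), \tfrac{1}{12}(c_{1}^{2} + c_{2}), \tfrac{1}{24} c_{1} c_{2}\big)$ and extracting the top degree gives $\chi(\mathscr{I}, \mathscr{I}) = \int_{X} \tfrac{1}{24} c_{1}(X) c_{2}(X) - \int_{\beta} c_{1}(X)$. Since $\chi(X, \mathcal{O}_{X}) = \int_{X} \tfrac{1}{24} c_{1}(X) c_{2}(X)$ by the Todd genus formula on threefolds, subtracting gives $\chi(\mathscr{I}, \mathscr{I})_{0} = -\int_{\beta} c_{1}(X)$ and hence $\text{vdim}\big(M_{\beta,n}(X)\big) = \int_{\beta} c_{1}(X)$.

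Finally, the existence of the virtual fundamental class $[M_{\beta,n}(X)]^{\text{vir}} \in A_{*}\big(M_{\beta,n}(X)\big)$ in degree equal to the virtual dimension follows from the general Behrend-Fantechi machinery \cite{behrend_intrinsic_1997}, applicable because $M_{\beta,n}(X)$ is projective (being a moduli space of stable sheaves with no strictly semistable objects in the rank one case) and carries a perfect obstruction theory by the first step. None of the steps presents a genuine obstacle: the perfect obstruction theory is handed to us by Thomas's theorem once the Ext-dimension hypothesis is checked, and the only calculation required is the Riemann-Roch expansion. The mildest point of care is keeping track of the sign conventions in $\text{ch}(\mathscr{I}^{\smvee})$ and confirming that the degree 3 part of $\text{ch}(\mathscr{I}^{\smvee}) \text{ch}(\mathscr{I})$ cancels, which is what makes the final expression depend on $\beta$ but not on $n$.
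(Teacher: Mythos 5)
Your proposal is correct and follows essentially the same route the paper intends: the corollary is deduced from Theorem \ref{thm:ThomasTHM} once Lemma \ref{lemmy:threefoldExtVan} supplies the required independence of $\dim \text{Ext}^{3}(\mathscr{I},\mathscr{I})$ and the vanishing of the traceless $\text{Ext}^{0}$ and $\text{Ext}^{3}$. The Hirzebruch--Riemann--Roch computation of $-\chi(\mathscr{I},\mathscr{I})_{0} = \int_{\beta} c_{1}(X)$, which the paper states but does not write out, is carried out correctly, including the cancellation of the degree-three part of $\text{ch}(\mathscr{I}^{\smvee})\,\text{ch}(\mathscr{I})$ that removes the dependence on $n$.
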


\noindent Notice that individually, $\text{dim} \, \text{Ext}^{1}(\mathscr{I}, \mathscr{I})_{0}$ and $\text{dim} \, \text{Ext}^{1}(\mathscr{I}, \mathscr{I})_{0}$ depend on the sheaf $\mathscr{I}$, but their difference (\ref{eqn:virtdimDT}) does not.  The virtual dimension is an invariant of the moduli space $M_{\beta, n}(X)$.

\subsubsection{The Symmetric Obstruction Theory for $\bm{X}$ a Calabi-Yau Threefold}

When $X$ is a smooth projective Calabi-Yau threefold, the story enriches nicely.  Built into the Calabi-Yau assumption, one often assumes $H^{1}(X, \mathcal{O}_{X})=0$ which implies that $H^{2}(X, \mathcal{O}_{X})=0$, by Serre duality.  It follows that the groups $\text{Ext}^{1}(\mathscr{I}, \mathscr{I})$ and $\text{Ext}^{2}(\mathscr{I}, \mathscr{I})$ coincide with their traceless counterparts.  By Lemma \ref{lemmy:threefoldExtVan}, it remains true that $\text{Ext}^{0}(\mathscr{I}, \mathscr{I})_{0}$ and $\text{Ext}^{3}(\mathscr{I}, \mathscr{I})_{0}$ both vanish for a simple sheaf $\mathscr{I}$.  Most importantly, by Serre duality we have
\begin{equation}
\text{Ext}^{1}(\mathscr{I}, \mathscr{I}) \cong \text{Ext}^{2}(\mathscr{I}, \mathscr{I})^{\smvee}
\end{equation}
noting that $K_{X} \cong \mathcal{O}_{X}$.  Therefore, on a Calabi-Yau threefold, the deformations are dual to the obstructions.  This is what Behrend called a \emph{symmetric obstruction theory} \cite{behrend_donaldson-thomas_2009}.  By (\ref{eqn:virtdimDT}), it is clear that the expected dimension vanishes, in this case.    

Although there need not be a canonical linear map from $\text{Ext}^{1}(\mathscr{I},\mathscr{I})$ to $\text{Ext}^{2}(\mathscr{I},\mathscr{I})$, by a general result of Kuranishi, there exists a non-linear map called the \emph{Kuranishi map}
\begin{equation}
\kappa: \text{Ext}^{1}(\mathscr{I},\mathscr{I}) \longrightarrow \text{Ext}^{2}(\mathscr{I},\mathscr{I})
\end{equation}
which associates to an infinitesimal deformation of $\mathscr{I}$ the obstruction to lifting the deformation.  The moduli space is given locally by the zeros of the Kuranishi map, which corresponds to directions with unobstructed deformations.  If $X$ is a Calabi-Yau threefold, we have
\begin{equation}
\kappa: \text{Ext}^{1}(\mathscr{I},\mathscr{I}) \longrightarrow \text{Ext}^{1}(\mathscr{I},\mathscr{I})^{\smvee}
\end{equation}
and we can interpret $\kappa$ as a global section of the cotangent bundle of $\text{Ext}^{1}(\mathscr{I},\mathscr{I})$.  We know that a one-form on a vector space is exact, so there exists a function
\[f : \text{Ext}^{1}(\mathscr{I},\mathscr{I}) \longrightarrow \mathbb{C}\]
such that $\kappa = df$ and the moduli space is locally the critical locus of $f$.  In the case of Donaldson-Thomas theory, the function $f$ is called the \emph{holomorphic Chern-Simons functional} or the \emph{Chern-Simons superpotential}.  We can summarize these results in the following theorem.

\begin{thm}
For $X$ a smooth projective Calabi-Yau threefold, there is a canonical symmetric obstruction theory on $M_{\beta, n}(X)$ with virtual dimension zero, and a virtual fundamental class
\begin{equation}
[M_{\beta, n}(X)]^{\text{vir}} \in A_{0}\big(M_{\beta, n}(X)\big)
\end{equation}
giving a zero-cycle in the Chow ring.  In addition, $M_{\beta, n}(X)$ is locally the critical locus of the holomorphic Chern-Simons functional.  
\end{thm}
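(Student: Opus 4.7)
The plan is to assemble the theorem by stringing together the preceding corollary (giving the perfect obstruction theory with traceless Ext groups) with the Calabi--Yau condition, which upgrades perfectness to symmetry. First I would invoke the corollary applied to $M_{\beta,n}(X)$: we already have a canonical perfect obstruction theory governed by $\mathrm{Ext}^1(\mathscr{I},\mathscr{I})_0$ and $\mathrm{Ext}^2(\mathscr{I},\mathscr{I})_0$ for all stable $\mathscr{I}$, the verification of the hypotheses being exactly Lemma~\ref{lemmy:threefoldExtVan}. Next I would exploit the standing convention that a Calabi--Yau threefold satisfies $H^1(X,\mathcal{O}_X)=0$, so Serre duality forces $H^2(X,\mathcal{O}_X)=0$ as well; consequently the trace maps $\mathrm{tr}^1$ and $\mathrm{tr}^2$ land in zero, and the traceless Ext groups coincide with the full Ext groups. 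Thus the deformation theory of $M_{\beta,n}(X)$ is governed by $\mathrm{Ext}^1(\mathscr{I},\mathscr{I})$ and $\mathrm{Ext}^2(\mathscr{I},\mathscr{I})$ with no traceless decorations to track.

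The symmetry statement is then pure Serre duality: since $K_X \cong \mathcal{O}_X$, one has
\[
\mathrm{Ext}^2(\mathscr{I},\mathscr{I}) \;\cong\; \mathrm{Ext}^{3-2}(\mathscr{I},\mathscr{I}\otimes K_X)^{\smvee} \;\cong\; \mathrm{Ext}^1(\mathscr{I},\mathscr{I})^{\smvee}.
\]
This is the defining feature of a symmetric obstruction theory in the sense of Behrend; for the purposes of this survey-style statement I would simply cite \cite{behrend_donaldson-thomas_2009} rather than reproducing the technical check that the duality is compatible with the obstruction map. Numerically, symmetry forces
\[
\mathrm{vdim}\,M_{\beta,n}(X) \;=\; \dim\mathrm{Ext}^1 - \dim\mathrm{Ext}^2 \;=\; 0,
\]
consistent with $\int_\beta c_1(X)=0$ on a Calabi--Yau. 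Existence of the virtual fundamental class $[M_{\beta,n}(X)]^{\mathrm{vir}} \in A_0\big(M_{\beta,n}(X)\big)$ then follows from the general Behrend--Fantechi construction associated to any perfect two-term obstruction theory on a proper Deligne--Mumford stack (here a projective scheme), applied in expected dimension zero.

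For the final claim, I would pass to an analytic neighborhood of a stable point $\mathscr{I}$ and invoke the Kuranishi model. By the general formalism, the germ $(M_{\beta,n}(X),\mathscr{I})$ is cut out inside $\mathrm{Ext}^1(\mathscr{I},\mathscr{I})$ by the zero locus of a holomorphic map
\[
\kappa : \mathrm{Ext}^1(\mathscr{I},\mathscr{I}) \longrightarrow \mathrm{Ext}^2(\mathscr{I},\mathscr{I}).
\]
Using Serre duality we identify the target with $\mathrm{Ext}^1(\mathscr{I},\mathscr{I})^{\smvee}$, so $\kappa$ is a holomorphic $1$-form on the finite-dimensional vector space $\mathrm{Ext}^1(\mathscr{I},\mathscr{I})$. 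The main obstacle is showing that this $1$-form is actually closed and hence exact, and that its primitive coincides (up to additive constant) with the holomorphic Chern--Simons functional $f$; once this is established the equality of germs $M_{\beta,n}(X) \;\cong\; \{df=0\}$ near $\mathscr{I}$ is immediate, since the zero locus of $\kappa=df$ is by definition the critical locus of $f$. Closedness of $\kappa$ is the genuinely non-trivial input and is where the Calabi--Yau hypothesis is essential; I would cite the construction of the holomorphic Chern--Simons functional in \cite{p._thomas_holomorphic_2000} and the analytic-local description of DT moduli due to Joyce--Song and Behrend, rather than attempt a derivation here.
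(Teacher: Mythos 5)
Your proposal follows essentially the same route as the paper: invoke the corollary giving the perfect obstruction theory via traceless Ext groups, use $H^{1}(X,\mathcal{O}_{X})=0$ plus Serre duality to drop the traceless decoration, apply Serre duality with $K_{X}\cong\mathcal{O}_{X}$ to get $\text{Ext}^{2}(\mathscr{I},\mathscr{I})\cong\text{Ext}^{1}(\mathscr{I},\mathscr{I})^{\smvee}$, read off $\text{vdim}=0$, and then interpret the Kuranishi map as a holomorphic one-form on $\text{Ext}^{1}(\mathscr{I},\mathscr{I})$ whose primitive is the holomorphic Chern--Simons functional.

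The one place you genuinely depart from the paper is the last step, and your version is the more careful one. The paper asserts flatly that ``a one-form on a vector space is exact,'' which is false without the hypothesis that the form is closed (e.g.\ $x\,dy$ on $\mathbb{C}^{2}$); the Poincar\'e lemma on a contractible domain only converts \emph{closed} forms into exact ones. You correctly isolate closedness of $\kappa$ as the non-trivial input --- it is exactly where the compatibility of the Serre-duality pairing with the obstruction map enters --- and you defer it to the construction of the holomorphic Chern--Simons functional in the literature rather than sweeping it under the rug. For a survey-level statement that is the right call, and it repairs a genuine (if standard) gap in the paper's own exposition. Everything else in your write-up matches the paper's argument step for step.
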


\subsection{The Donaldson-Thomas Invariants and Partition Function}

Given a proper scheme $Y$ carrying a perfect obstruction theory with expected dimension zero, we can apply the machinery of Behrend-Fantechi \cite{behrend_intrinsic_1997} to extract invariants.  We define a Donaldson-Thomas type invariant by the degree of the virtual class
\begin{equation}\label{eqn:DTinvschemeY}
\text{deg}[Y]^{\text{vir}} \coloneqq \int_{[Y]^{\text{vir}}} 1 \in \mathbb{Z}.
\end{equation}

\begin{defn}
In the case of the moduli space $M_{\beta, n}(X)$, the Donaldson-Thomas invariants denoted
\begin{equation}
\text{DT}_{\beta, n}(X) = \text{deg} [M_{\beta, n}(X)]^{\text{vir}} \in \mathbb{Z}
\end{equation}
are virtual counts of ideal sheaves with Chern character $(1, 0, -\beta, -n)$ on the smooth projective Calabi-Yau threefold $X$.
\end{defn}

Just as in Gromov-Witten theory, the Donaldson-Thomas invariants are deformation invariants of $X$.  They are however \emph{integral} invariants in contrast to Gromov-Witten theory.  This is because $M_{\beta, n}(X)$ is a scheme whereas the moduli space of stable maps is a Deligne-Mumford stack, and the non-integrality of $\text{GW}_{g, \beta}(X)$ arises from stable maps with finite, but non-vanishing automorphisms.  

The idea pioneered by Behrend is that if an obstruction theory on a proper scheme $Y$ is \emph{symmetric}, then the invariants should be computed as a weighted Euler characteristic \cite{behrend_donaldson-thomas_2009}.  In the simplest case, if $Y$ is smooth, the Donaldson-Thomas type invariants are given by integrating the Euler class of the obstruction bundle over $Y$.  By symmetry, the deformations are dual to the obstructions, so the obstruction bundle is simply the cotangent bundle $\Omega_{Y}$, and the invariants can be expressed by Chern-Gauss-Bonnet as
\begin{equation}
\text{deg}[Y]^{\text{vir}} = \int_{Y} e\big(\Omega_{Y}\big) = (-1)^{\text{dim}  Y} \chi(Y).  
\end{equation}
This formula was extended by Behrend to more general schemes.  Note that the symmetry of the obstruction theory is crucial, and no such formula exists in Gromov-Witten theory.

The fundamental quantity introduced in \cite{behrend_donaldson-thomas_2009} is a canonical constructible function $\nu_{Y}: Y \to \mathbb{Z}$ associated to any scheme $Y$ over $\mathbb{C}$, known today as the \emph{Behrend function}.  We will not provide an adequate discussion of the Behrend function, though we do mention that if $Y = Z(df)$ is the critical locus of a regular function $f$ on a smooth scheme $M$, then
\begin{equation}
\nu_{Y}(p) = (-1)^{\text{dim}  M} \big(1 - \chi(F_{p})\big)
\end{equation}
where $F_{p}$ is the Milnor fiber at $p \in Y$.  The importance of the Behrend function lies in the following result.  

\begin{thm}[\bfseries Behrend]
If $Y$ is a proper scheme over $\mathbb{C}$ carrying a symmetric obstruction theory, the Donaldson-Thomas type invariants (\ref{eqn:DTinvschemeY}) are expressed as
\begin{equation}
\text{deg}[Y]^{\text{vir}} = \chi(Y, \nu_{Y}) \coloneqq \sum_{k \in \mathbb{Z}} k \, \chi\big( \nu_{Y}^{-1}(k) \big)
\end{equation}
where $\chi(Y, \nu_{Y})$ is called the Behrend-weighted Euler characteristic of $Y$.  
\end{thm}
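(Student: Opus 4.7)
The plan is to reduce the global identity $\deg[Y]^{\text{vir}} = \chi(Y, \nu_Y)$ to a local computation in Zariski (or analytic) charts, then exploit the fact that a symmetric obstruction theory on $Y$ admits a local Darboux-type model as the critical locus of a regular function on a smooth scheme. The key observation is that both sides of the claimed identity have good additivity and functoriality properties: on a stratification $Y = \bigsqcup_\alpha Y_\alpha$ into locally closed strata on which $\nu_Y$ is constant with value $k_\alpha$, the right side is manifestly $\sum_\alpha k_\alpha \chi(Y_\alpha)$, while the left side will be shown to decompose compatibly once the virtual class is analyzed chart by chart. I would first recall the microlocal definition of the Behrend function via MacPherson's local Euler obstruction applied to the signed support cycle of the intrinsic normal cone, which makes $\nu_Y$ a canonical constructible function depending only on the scheme $Y$ itself.

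Next, I would verify the formula in the universal local model. Suppose $M$ is a smooth scheme of dimension $n$ and $f \colon M \to \mathbb{C}$ a regular function with $Y = Z(df) \subset M$ the critical scheme. Here the symmetric obstruction theory is tautological: the truncated cotangent complex of $Y$ sits in a symmetric two-term resolution built from $\Omega_M$ and $T_M$, paired via the Hessian of $f$. In this setting $[Y]^{\text{vir}}$ can be computed by an excess intersection formula applied to the section $df$ of $\Omega_M$, and its degree is the Euler characteristic of the vanishing cycle sheaf $\varphi_f \mathbb{Q}_M$ summed over components, which by Milnor's theorem equals $\sum_{p \in Y} (-1)^n (1 - \chi(F_p))$ stratified appropriately. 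On the other hand, the local Euler obstruction computation in the microlocal picture gives $\nu_Y(p) = (-1)^n(1 - \chi(F_p))$, so equality of the two sides reduces to the stratified Gauss-Bonnet identity $\chi(Y, \nu_Y) = \sum_\alpha k_\alpha \chi(Y_\alpha)$ matching the Euler-characteristic-of-vanishing-cycles computation.

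The hard part will be the local-to-global reduction, i.e.\ showing that any scheme $Y$ equipped with a symmetric obstruction theory is, at least formally or analytically around each closed point, isomorphic to the critical locus of some holomorphic function on a smooth ambient space of dimension $\dim \mathrm{Ext}^1(\mathscr{I}, \mathscr{I})_0$. For the Donaldson-Thomas moduli space $M_{\beta,n}(X)$ this is the content of the holomorphic Chern-Simons description sketched in the excerpt just before the theorem, and one should phrase it at the level of formal neighborhoods using the Kuranishi map $\kappa = df$ together with the symmetric pairing induced by Serre duality. Formally, this step needs the existence of a Darboux chart for the shifted symplectic structure on the derived enhancement of $Y$; in Behrend's original treatment one avoids the derived language by proving directly that the cone of $\nu_Y$ against the virtual class is computed étale-locally from the critical chart, using deformation to the normal cone.

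Finally, I would globalize by patching. Since $\nu_Y$ is canonical, the local identifications of the Kuranishi charts only need to agree up to automorphism preserving $\nu_Y$, and since $\deg[Y]^{\text{vir}}$ is defined by intersection theory on a proper scheme it is computed by the Chern-Gauss-Bonnet formula for the intrinsic normal cone. A partition of unity argument at the level of constructible functions, combined with the additivity of Euler characteristics with respect to a locally closed stratification, then promotes the local equalities to the global identity $\deg[Y]^{\text{vir}} = \chi(Y, \nu_Y)$. The two auxiliary facts I would want to invoke without reproof are Kashiwara's index theorem (to identify $\nu_Y$ with the local Euler obstruction) and MacPherson's construction of the Chern-Schwartz-MacPherson class, both of which allow one to treat the weighted Euler characteristic as a genuine intersection number against a characteristic cycle.
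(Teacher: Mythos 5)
The paper does not actually prove this theorem; it is quoted from Behrend's paper \cite{behrend_donaldson-thomas_2009}, so your proposal has to be measured against the argument there. The step you yourself flag as ``the hard part'' is a genuine gap, and it is exactly the step Behrend's proof is engineered to avoid. A symmetric obstruction theory on a scheme $Y$ does \emph{not} imply that $Y$ is, even formally or analytically locally, the critical locus of a function on a smooth ambient space: that conclusion requires the strictly stronger datum of a $(-1)$-shifted symplectic structure together with the Darboux theorem of Brav--Bussi--Joyce, which postdates Behrend's theorem and is not a consequence of the symmetry $E^{\bullet} \cong (E^{\bullet})^{\smvee}[1]$ of the two-term complex alone. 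The holomorphic Chern--Simons description is special to the moduli space $M_{\beta,n}(X)$ and cannot be invoked for an arbitrary proper scheme carrying a symmetric obstruction theory, which is the stated generality. So your reduction to the ``universal local model'' never gets off the ground for the theorem as stated.

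Even granting critical charts, the globalization step fails as described. The degree of the virtual class is a single global intersection number --- $0^{!}$ applied to the class of the obstruction cone inside $\Omega_{M}|_{Y}$ --- and it is not additive over a locally closed stratification, nor can it be assembled by a ``partition of unity at the level of constructible functions''; there is no mechanism by which local computations of $\text{deg}[Y]^{\text{vir}}$ patch. Behrend's actual route replaces both of your problematic steps: he (i) defines $\nu_{Y}$ as the local Euler obstruction of the signed support cycle of the normal cone (as you correctly recall), (ii) proves that the symmetry of the obstruction theory forces the obstruction cone to be a conic \emph{Lagrangian} cycle in $\Omega_{M}$, and (iii) applies MacPherson's transformation together with the global Dubson--Kashiwara index theorem, which says that for proper $Y$ the intersection of a conic Lagrangian cycle with the zero section of $\Omega_{M}$ computes the weighted Euler characteristic $\chi(Y, \text{Eu}(\cdot))$. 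The Milnor-fibre formula $\nu_{Y}(p) = (-1)^{\dim M}(1-\chi(F_{p}))$ on critical loci is then a corollary of this machinery, not an input to the proof. If you want to salvage your strategy, you must either restrict the theorem to d-critical loci (where critical charts exist by fiat) or supply the Lagrangian-cone argument in place of the local-to-global patching.
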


The Behrend function depends only on the scheme structure of $Y$, not on the obstruction theory.  It therefore follows from the above theorem that the Donaldson-Thomas type invariants do not depend on the particular symmetric obstruction theory $Y$ carries.  

If $Y$ is not proper, the invariant (\ref{eqn:DTinvschemeY}) is not well-defined.  One advantage of the Behrend-weighted Euler characteristic is that it makes sense for non-proper schemes as well, so one can \emph{define} a virtual count by $\chi(Y, \nu_{Y})$ in such a case.  

In the case of $M_{\beta, n}(X)$, we now know the Donaldson-Thomas invariants $\text{DT}_{\beta, n}(X)$ can be computed as a Behrend-weighted Euler characteristic.  We define the \emph{Donaldson-Thomas partition function} to be the generating function of these invariants
\begin{equation}
Z_{\text{DT}}(X) = \sum_{\beta \in H_{2}(X, \mathbb{Z})} Z_{\text{DT}}(X)_{\beta} Q^{\beta} = \sum_{\beta \in H_{2}(X, \mathbb{Z})} \sum_{n \in \mathbb{Z}} \text{DT}_{\beta, n}(X) Q^{\beta} q^{n}
\end{equation}
where $Z_{\text{DT}}(X)_{\beta}$ is the generating function of Donaldson-Thomas invariants with fixed class $\beta$, and the notation $Q^{\beta}$ was defined in (\ref{eqn:GWdegreetrack}).  Just as in Gromov-Witten theory, we can equivalently sum over only effective classes, because if $\beta$ is not effective, $\text{DT}_{\beta, n}(X)=0$.  

Unlike the asymptotic expression (\ref{eqn:AsymptExprGWW}) in Gromov-Witten theory, the degree zero Donaldson-Thomas partition function has been shown \cite{behrend_symmetric_2008, levine_algebraic_2006, li_zero_2006} to have the following exact form
\begin{equation}\label{eqn:DTdegggzeroo}
Z_{\text{DT}}(X)_{0} = M(-q)^{\chi(X)}
\end{equation}
where $M(q)$ is the MacMahon function defined in (\ref{eqn:McMahon}).  This was originally conjectured in \cite{maulik_gromovwitten_2006}.  The \emph{reduced} Donaldson-Thomas partition function is the full partition function divided by the degree zero contributions
\begin{equation}
Z'_{\text{DT}}(X) = \frac{Z_{\text{DT}}(X)}{Z_{\text{DT}}(X)_{0}}.
\end{equation}

\subsection{The GW/DT Correspondence}  

Recall that the relevant moduli spaces of Gromov-Witten and Donaldson-Thomas theory are different compactifications and stratifications of the moduli space $\mathcal{C}(X, \beta)$ of smooth curves in $X$ of class $\beta$.  Both invariants are therefore somehow related to counting curves in $X$, though the objects utilized are totally different in the two cases.  Gromov-Witten theory studies curves through parameterized maps, while Donaldson-Thomas theory uses structure sheaves of subschemes.  The degenerate contributions in Gromov-Witten theory come from multiple covers, while those in Donaldson-Thomas theory come from embedded points and non-reduced structure.  Nevertheless, a remarkable conjecture \cite{maulik_gromovwitten_2006} known as the \emph{GW/DT correspondence} is that the two reduced partition functions are closely related.  

\begin{conj}
On a smooth projective Calabi-Yau threefold $X$, the reduced Gromov-Witten partition function and reduced Donaldson-Thomas partition functions are equal
\begin{equation} \label{eqn:GW/DT-Corr}
Z'_{\text{DT}}(X; q, Q) = Z'_{\text{GW}}(X; \lambda, Q)
\end{equation}
under the change of variables $-q = e^{i \lambda}$.  Because both theories incorporate the homology class in the same way, the conjecture also stands if we restrict to a non-zero class $\beta$ or a lattice $\Gamma \subseteq H_{2}(X, \mathbb{Z})$.    
\end{conj}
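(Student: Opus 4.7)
The plan is to attack the GW/DT correspondence via the degeneration-localization strategy pioneered in \cite{maulik_gromovwitten_2006} and developed in subsequent work on the topological vertex. Both sides $Z'_{\text{DT}}(X; q, Q)$ and $Z'_{\text{GW}}(X; \lambda, Q)$ are deformation invariants of $X$, so the first step is to upgrade each to a \emph{relative} theory over pairs $(X, D)$ where $D \subset X$ is a smooth divisor. On the Gromov-Witten side, this means stable maps with prescribed tangency data along $D$ in the sense of J.~Li; on the Donaldson-Thomas side, one uses ideal sheaves on expanded degenerations of $(X, D)$ as in Maulik-Pandharipande-Thomas. The key point is that both enlarged theories satisfy a \emph{degeneration formula} expressing invariants of a one-parameter smoothing in terms of relative invariants of the central fiber.

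The second step is to use these degeneration formulas to reduce the conjecture to a universal building block. The strategy is to degenerate $X$ into a union of simpler pieces which themselves degenerate further until the problem is localized on Calabi-Yau toric threefolds, since any smooth projective Calabi-Yau threefold can be connected to such model geometries through successive degenerations, and the reduced partition functions are multiplicative across the pieces in the degeneration formula. On each toric piece, one deploys virtual $T$-equivariant localization with respect to the three-dimensional torus action, reducing both sides to sums over fixed loci labeled by combinatorial data: three-dimensional partitions for Donaldson-Thomas theory, and formal power series attached to edges and vertices of the toric diagram for Gromov-Witten theory.

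The heart of the proof is then the match of local contributions at toric vertices. On the DT side the vertex is computed explicitly as a sum over box configurations, while on the GW side it is captured by the topological vertex formula of Aganagic-Klemm-Mari\~no-Vafa. The required identity, known as the \emph{equivariant GW/DT vertex correspondence}, then follows under $-q = e^{i\lambda}$ from a combinatorial identity equating these two power series, which one verifies by induction on box count after controlling the MacMahon prefactors that precisely correspond to the degree zero contributions $Z_{\text{DT}}(X)_0 = M(-q)^{\chi(X)}$ and the asymptotic $Z_{\text{GW}}(X)_0 \sim M(e^{i\lambda})^{\chi(X)/2}$ we divided out to form the reduced partition functions. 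Taking $\beta$ to lie in a sublattice $\Gamma \subseteq H_2(X,\mathbb{Z})$ does not affect the argument, as both degeneration formulas and the vertex match respect the curve-class grading.

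The principal obstacle is the vertex matching itself: matching rational-valued Hodge integrals on $\overline{\mathcal{M}}_{g,n}$ (appearing through the topological vertex) with integer-valued Euler characteristics of Hilbert schemes of monomial ideals in $\mathbb{C}^3$ is highly non-trivial and is exactly the place where the mysterious change of variables $-q = e^{i\lambda}$ enters. For the banana manifold $X_{\text{ban}}$ of interest in later chapters, one expects this vertex-level matching to be circumvented because the restriction to fiber classes permits an independent computation of both sides, so it suffices to use the correspondence only \emph{asymptotically} in $\lambda$ to relate the formal Borcherds lift on the DT side to the sum of Maass lifts on the GW side.
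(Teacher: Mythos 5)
The statement you are asked to prove is presented in the thesis as a \emph{conjecture}: the text offers no proof, attributes the statement to \cite{maulik_gromovwitten_2006}, and records only that it has been established by Pandharipande--Pixton \cite{pandharipande_gromov-witten/pairs_2012} for compact Calabi--Yau threefolds that are complete intersections in products of projective spaces. Your proposal is a faithful sketch of the standard degeneration--localization--vertex strategy, but it does not close the argument, and the place where it fails is exactly the reason the statement remains open. The critical unsupported claim is that ``any smooth projective Calabi-Yau threefold can be connected to such model geometries through successive degenerations.'' This is not known. The degeneration formula lets you trade $X$ for the relative theories of the components of a chosen semistable degeneration, but for a general projective Calabi--Yau threefold there is no known chain of degenerations terminating in toric (or otherwise fully computable) building blocks whose relative theories are accessible to both GW and DT localization. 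Without that reduction, the vertex-level match --- which is itself a genuine theorem only in the equivariant toric setting --- never gets invoked, and the argument does not reach an arbitrary $X$.

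Two further points deserve care. First, even in the cases where the correspondence \emph{is} proven, the route is not the one you describe: Pandharipande--Pixton work through stable pairs (the GW/P and DT/P correspondences) and a descendent correspondence for relative theories, precisely because the direct toric-degeneration scheme does not cover complete intersections. Second, your final paragraph conflates proving the conjecture with using it: for the banana manifold the thesis explicitly \emph{assumes} the correspondence (in its asymptotic form, since the degree-zero terms of $Z_{\text{GW}}$ are only asymptotically captured by the MacMahon function) in order to pass from the Donaldson--Thomas product to the Maass lifts; no independent verification of both sides is carried out there, so that observation cannot substitute for the missing general argument. If you want a result you can actually prove, you should either restrict the statement to the toric/local setting, or to the Pandharipande--Pixton class, and cite the corresponding theorems rather than reproving them.
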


One should be careful to note that this conjecture does \emph{not} provide a relationship between the full Gromov-Witten and Donaldson-Thomas partition functions, including degree zero contributions.  The conjecture has been proven by Pandharipande-Pixton \cite{pandharipande_gromov-witten/pairs_2012} for all compact Calabi-Yau threefolds which are complete intersections in products of projective spaces.  

In order for the GW/DT correspondence to make sense, we must have the following result which began as a conjecture of \cite{maulik_gromovwitten_2006}, and was later proven by Bridgeland \cite{bridgeland_hall_2011}.   

\begin{thm}
For all classes $\beta \neq 0$, $Z_{\text{DT}}(X)_{\beta}$ is the Laurent expansion of a rational function in $q$ invariant under $q \leftrightarrow 1/q$.  
\end{thm}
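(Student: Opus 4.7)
The plan is to reduce to the analogous statement for the Pandharipande-Thomas (PT) partition function, for which both rationality and the functional equation are more tractable, and then transfer back to DT via the DT/PT correspondence.

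First, I would introduce the moduli space $P_n(X, \beta)$ of stable pairs $(F, s)$, where $F$ is a pure one-dimensional sheaf on $X$ with $[F] = \beta$ and $\chi(X, F) = n$, and $s: \mathcal{O}_X \to F$ has zero-dimensional cokernel. By arguments parallel to those summarized in this chapter for ideal sheaves, $P_n(X, \beta)$ carries a symmetric perfect obstruction theory on the Calabi-Yau threefold $X$, yielding PT invariants $P_{\beta,n}(X) = \deg [P_n(X, \beta)]^{\text{vir}}$ assembled into the generating series $Z_{\text{PT}}(X)_\beta = \sum_n P_{\beta, n}(X) q^n$.

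The core of the argument is the DT/PT correspondence, which asserts that for $\beta \neq 0$ the reduced DT series equals the PT series, $Z'_{\text{DT}}(X)_\beta = Z_{\text{PT}}(X)_\beta$. I would establish this via Joyce's motivic Hall algebra $H(\text{Coh}(X))$. The plan is to construct Hall algebra elements representing families of ideal sheaves and of stable pairs; derive an algebraic identity relating them using a torsion pair on $\text{Coh}(X)$ that separates pure one-dimensional sheaves from zero-dimensional sheaves; and then apply the Joyce-Song integration map to extract the corresponding identity among Behrend-weighted Euler characteristics, which by the previous section computes the DT and PT invariants. The technical difficulty is verifying that the relevant Hall algebra elements lie in the Lie subalgebra on which the integration map is a ring homomorphism, an ``integrability'' check that is the heart of Bridgeland's original argument.

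With DT/PT in hand, the theorem reduces to showing that $Z_{\text{PT}}(X)_\beta$ is a rational function of $q$ invariant under $q \leftrightarrow q^{-1}$. Rationality follows from the boundedness $P_n(X, \beta) = \varnothing$ for $n$ below a class-dependent lower bound $n_{\min}(\beta)$, together with a further Hall algebra manipulation producing a closed-form rational expression in $q$. The functional equation arises from the derived duality $\mathbb{D}: F \mapsto \mathbb{R}\shHom(F, \mathcal{O}_X)[2]$, which on the Calabi-Yau threefold $X$ (using $\omega_X \cong \mathcal{O}_X$ and $c_1(X) = 0$, so no shift is introduced) exchanges stable pairs of Euler characteristic $n$ with stable pairs of Euler characteristic $-n$ in the same class $\beta$ and is compatible with the symmetric obstruction theory. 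Taking Behrend-weighted Euler characteristics yields $P_{\beta, n}(X) = P_{\beta, -n}(X)$, hence the desired symmetry $Z_{\text{PT}}(X)_\beta(q) = Z_{\text{PT}}(X)_\beta(q^{-1})$.

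The main obstacle is the DT/PT correspondence itself: the motivic Hall algebra, the torsion pair constructions, and especially the integrability verification for the integration map are substantial pieces of machinery beyond what has been developed in this thesis, and they constitute the bulk of Bridgeland's work. By contrast, once PT is available, rationality follows from boundedness plus standard manipulations, and the functional equation is a direct consequence of the Calabi-Yau derived duality acting on stable pairs.
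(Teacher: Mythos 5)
The paper does not actually prove this theorem: it is stated as a result of Bridgeland (\emph{Hall algebras and curve-counting invariants}) needed for the GW/DT correspondence to make sense, and only a citation is given. So your proposal is being measured against the literature rather than against an argument in the thesis. Your overall architecture --- pass to stable pairs, prove DT/PT via motivic Hall algebras and the Joyce--Song integration map, then establish rationality and the functional equation on the PT side --- is indeed the correct skeleton of the Bridgeland/Toda proof, and you correctly identify the integrability check for the integration map as the hard technical core of the DT/PT step.

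However, your functional-equation step contains a genuine error. You claim that the derived duality $\mathbb{D}\colon F \mapsto \mathbb{R}\shHom(F,\mathcal{O}_{X})[2]$ exchanges stable pairs of Euler characteristic $n$ with stable pairs of Euler characteristic $-n$, so that $P_{\beta,n}(X) = P_{\beta,-n}(X)$. This cannot be right in general: since $P_{n}(X,\beta)$ is empty for $n \ll 0$, coefficient symmetry would force it to be empty for $n \gg 0$ as well, making $Z_{\text{PT}}(X)_{\beta}$ a Laurent \emph{polynomial}; but for imprimitive classes (e.g.\ multiple covers of a rigid rational curve) the PT series is a genuine rational function with poles at roots of unity. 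The coefficient symmetry $P_{\beta,n}=P_{\beta,-n}$ is a theorem of Pandharipande--Thomas only for \emph{irreducible} $\beta$, where the pairs moduli space is a relative Hilbert scheme and duality can be implemented fiberwise. In general the derived dual of the complex $[\mathcal{O}_{X}\to F]$ has the Chern character of a stable pair with $\chi=-n$ but is not itself quasi-isomorphic to a stable pair, and the correct statement --- which is what Bridgeland actually proves, again via a Hall algebra identity --- is that duality identifies the dual counting series with the Laurent expansion of the \emph{same} rational function at $q=\infty$; only in combination with rationality does this yield invariance under $q\leftrightarrow 1/q$. Relatedly, your rationality step understates what is needed: boundedness below gives only a Laurent series, and the ``closed-form rational expression'' is itself a nontrivial Hall algebra computation expressing $Z_{\text{PT}}(X)_{\beta}$ in terms of counting invariants of pure one-dimensional sheaves. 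So the proposal is a reasonable roadmap, but as written the final step proves a false statement and must be replaced by the expansion-at-two-cusps argument.
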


One notable consequence of the change of variables $-q = e^{i \lambda}$ is that the region of small $q$ corresponds to the region of large pure imaginary $\lambda$, and visa versa.  One can think of the reduced Gromov-Witten partition function as an asymptotic expansion around $\lambda =0$, while the reduced Donaldson-Thomas partition function is an asymptotic expansion around $q=0$.  Therefore, one aspect of the above conjecture is that the two partition functions are simply asymptotic expansions of \emph{the same function} about different points.

\subsection{The Physics of Donaldson-Thomas Theory}\label{subsecc:PHYSSDTtherry}

By compactifying Type IIA superstring theory on a smooth projective Calabi-Yau threefold $X$, one induces an $\mathcal{N}=2$ theory in the four-dimensional spacetime $\mathbb{R}^{1,3}$.  At this point, the reader is advised to review Section \ref{sec:Dbranesstabstrth} where we survey D-branes in string theory.  The Donaldson-Thomas invariants are deformation invariants of $X$ which compute the virtual number of BPS states of particles in $\mathbb{R}^{1,3}$ with a fixed charge vector.  Recall that D$p$-branes in Type IIA have a $(p+1)$-dimensional worldvolume where $p$ must be even.  Therefore, to engineer a particle in four dimensions, the branes should wrap a real even-dimensional cycle in $X$; the remaining dimension corresponds to the worldline in $\mathbb{R}^{1,3}$.  Because the mass will be proportional to the volume of the brane, BPS particles arise from D-branes wrapping \emph{algebraic} cycles in $X$, as these are volume-minimizing within their homology class.  We model part of the following discussion on \cite{cirafici_curve_2013}.  

In the large volume limit $\text{Vol}(X) \to \infty$, the BPS states are labelled by a charge vector $\gamma$ lying in the charge lattice $\Gamma$ defined by
\begin{equation}
\Gamma = H^{0}(X, \mathbb{Z}) \oplus H^{2}(X, \mathbb{Z}) \oplus H^{4}(X, \mathbb{Z}) \oplus H^{6}(X, \mathbb{Z})
\end{equation}  
where $H^{6-2k}(X, \mathbb{Z})$ corresponds to the D$(2k)$-brane charge through Poincar\'{e} duality.  The BPS states lie in a Hilbert space graded by charge vector 
\begin{equation}
\mathcal{H}_{\text{BPS}}^{X} = \bigoplus_{\gamma \in \Gamma} \mathcal{H}_{\gamma, \text{BPS}}^{X}.
\end{equation}
In a supersymmetric theory the Witten index counts BPS states by tracing a suitable operator over the Hilbert space of states.  For BPS states of fixed charge $\gamma$, the Witten index is given by
\begin{equation}
\Omega_{X}(\gamma) = \text{Tr}_{\mathcal{H}_{\gamma, \text{BPS}}^{X}} (-1)^{F}.
\end{equation}

In the large volume limit $\text{Vol}(X) \to \infty$, consider the charge vector $\gamma = (1,0, -\beta, -n)$.  Because we assume $H^{1}(X, \mathcal{O}_{X})=0$, there are no degree zero line bundles on $X$, and $\gamma$ corresponds to the Chern character of an ideal sheaf $\mathscr{I}_{Z}$ of a one-dimensional subscheme $Z$.  In such a case, the Witten index is precisely the Donaldson-Thomas invariant
\begin{equation}\label{eqn:DTBPSindTHM}
\sum_{k \in \mathbb{Z}} k \, \chi\big(\nu^{-1}(k)\big) = \Omega_{X}\big((1,0,-\beta, -n)\big) = \int_{[M_{\beta, n}(X)]^{\text{vir}}} 1.
\end{equation}
We think of $\mathcal{O}_{Z}$ as a bound state of D2-D0 branes, and $\mathcal{O}_{X}$ as a single D6-brane.  Recalling the ideal sheaf exact sequence (\ref{eqn:idshexseq}), one therefore concludes that Donaldson-Thomas invariants are virtual counts of BPS states of D6-D2-D0 branes in Type IIA string theory.  Packaging these integers into a generating function, we recover the familiar Donaldson-Thomas partition function 
\begin{equation}\label{eqn:BPSinstDTpartfuccc}
Z_{\text{DT}}(X) = \sum_{\beta \in H_{2}(X, \mathbb{Z})} \sum_{n \in \mathbb{Z}} \Omega_{X}\big((1,0,-\beta, -n)\big) Q^{\beta} q^{n}
\end{equation}
which is physically interpreted as the partition function of certain BPS black holes engineered from a single D6-brane, no D4-branes, and bound states of D2-D0 branes.

\subsubsection{Passing from Type IIA to Type IIB and the B-model Topological String}

Compactifying the time direction of $\mathbb{R}^{1,3}$ into a circle of radius $R$, in the limit $R \to \infty$, we recover the D-brane picture in Type IIA.  Performing T-duality on this time circle, we exchange Type IIA in the limit $R \to \infty$ with Type IIB in the limit $R \to 0$.  Therefore, a configuration of D6-D2-D0 branes in Type IIA is equivalent to a bound state of D5-D1-D(-1) branes in Type IIB, where the support in the Calabi-Yau is completely unchanged.  Recall that D$p$-branes in Type IIB are $(p+1)$-dimensional objects where $p$ must be odd.  But because in this particular case, the time direction has been contracted, a D5-branes wraps a 6-cycle in the Calabi-Yau, a D1-brane wraps a 2-cycle, and a D(-1)-brane is supported on a point.  Another consequence of time being contracted is that these brane configurations engineer BPS \emph{instantons} in three dimensions.  Here, BPS instanton refers to branes having pointlike (or localized) support in spacetime, whereas a BPS particle is supported along a worldline.  

One also hears about Donaldson-Thomas invariants as quantities in the B-model topological string theory with target space $X$.  This is because the B-branes in this theory wrap algebraic submanifolds.  \emph{We therefore have the following equivalent interpretations of Donaldson-Thomas invariants in string theory: they are virtual counts of B-branes in the B-model topological string theory on a smooth compact Calabi-Yau threefold.  In the large volume limit, they can also be thought of either as counts of states of BPS particles arising from D6-D2-D0 branes in Type IIA or as counts of states of BPS instantons arising from D5-D1-D(-1) branes in Type IIB, where the equivalence is induced by T-duality.}

\subsubsection{The GW/DT Correspondence as S-duality}

In Section \ref{subsec:GWA-modTSSP} we saw that Gromov-Witten theory corresponds to the A-model topological string, while we saw just above that Donaldson-Thomas theory corresponds to the B-model.  The GW/DT correspondence is a mathematical conjecture that the two reduced partition functions are equivalent.  One must note that this is \emph{not} a statement of mirror symmetry!  Indeed, the A and B-models are exchanged under mirror symmetry, but only after also exchanging the target space with its mirror partner.  To the contrary, the GW/DT correspondence is a relation between two theories on the same target.  This is understood roughly as follows.  

In physics, the GW/DT correspondence is a statement of S-duality \cite{nekrasov_s-duality_2004}, which is a highly non-trivial strong-weak coupling symmetry in the Type IIB superstring.  We have seen that the Donaldson-Thomas invariants are virtual counts of BPS instantons coming from bound states of D5-D1-D(-1) branes in Type IIB.  If the variable $q$ tracking the D(-1)-brane charge is small, the BPS instanton partition function (\ref{eqn:BPSinstDTpartfuccc}) is a perturbative expansion in $q$.  It turns out that $q$ is related to the string coupling constant by $q = e^{i \lambda}$.  As $q$ becomes large, we enter the region of small $\lambda$ and the correct physical description is to imagine the D1-branes as worldsheet instantons in Gromov-Witten theory.  

The principal of S-duality is that these strong and weak coupling descriptions are equivalent, and we get the conjecture
\begin{equation}
Z'_{\text{GW}}(X) = Z'_{\text{DT}}(X),
\end{equation}
under the change of variables\footnote{Whether one takes $q= e^{i \lambda}$ or $q = - e^{i \lambda}$ for the change of variables turns out to be not so important.} $q = e^{i \lambda}$.  To compute the contribution of a worldsheet instanton of fixed genus, we need to know the virtual counts of BPS instantons for \emph{all} values of the D(-1)-brane charge.  Similarly, one must know the contributions of worldsheet instantons of all genera to compute a single virtual count of BPS states with a fixed D(-1)-brane charge.

\section{The Gopakumar-Vafa (BPS) Invariants} \label{sec:GVinvvvv}

We have seen in previous sections that the Gromov-Witten and Donaldson-Thomas invariants are not truly enumerative, in that they do not genuinely count curves in a given class.  It has long been expected however, that both the Gromov-Witten and Donaldson-Thomas partition functions encode only finitely many integers for a fixed curve class, and that these integers are closer to honest curve-counting invariants.  Remarkably, such quantities emerged from physical considerations and are called the Gopakumar-Vafa (or BPS) invariants.

Motivated by what is called the M2-brane moduli space in M-theory, Gopakumar and Vafa showed that the reduced Gromov-Witten free energy on a Calabi-Yau threefold can be repackaged and written in terms of the Gopakumar-Vafa invariants.  Physically, these are virtual counts of BPS states of M2-branes in M-theory, or bound states of D2-D0 branes in string theory.  A proper, mathematical definition of the Gopakumar-Vafa invariants had been lacking until the recent proposal of Maulik-Toda \cite{maulik_gopakumar-vafa_2016}, which we will not be able to discuss here.

\subsection{Definition in Terms of GW/DT Invariants}

In the sequence of papers \cite{gopakumar_m-theory_1998, gopakumar_m-theory_1999} Gopakumar and Vafa (with motivations to be sketched shortly) show that a collection of numbers $n_{g, \beta}(X)$ are related to the reduced Gromov-Witten free energy of a Calabi-Yau threefold $X$ by the formula
\begin{equation} \label{eqn:BPSTopStr}
\setlength{\jot}{12pt}
\begin{split}
F'_{\text{GW}}(X) & =  \sum_{g \geq 0, \beta \neq 0} \text{GW}_{g, \beta}(X) \lambda^{2g-2} v^{\beta}  \\
& = \sum_{g \geq 0, \beta \neq 0} n_{g, \beta}(X) \lambda^{2g-2} \sum_{d >0} \bigg( \frac{2 \sin\big(\frac{d \lambda}{2}\big)}{\lambda}\bigg)^{2g-2} \frac{v^{d \beta}}{d}.
\end{split}
\end{equation}
The invariants $n_{g, \beta}(X)$ are known as the \emph{Gopakumar-Vafa invariants} or \emph{BPS invariants}, and for our purposes we take (\ref{eqn:BPSTopStr}) as their \emph{definition}.  Notice this is not a proper geometrical definition of $n_{g, \beta}(X)$.  Rather, one should think of it as simply a repackaging of the information in the Gromov-Witten free energy.  We will not go into the proposal of Maulik-Toda, but this is indeed expected to be a proper definition.  

Given that the Gromov-Witten invariants $\text{GW}_{g, \beta}(X)$ are rational, it appears that the $n_{g,\beta}(X)$ are as well.  However, the primary interest in the Gopakumar-Vafa invariants is generated from the following conjecture.  

\begin{conj}
For a Calabi-Yau threefold $X$, the Gopakumar-Vafa invariants $n_{g, \beta}(X)$ are integers for all genera $g$ and classes $\beta \in H_{2}(X, \mathbb{Z})$.  In addition, for a fixed class $\beta$, there are only finitely many $g$ such that $n_{g, \beta}(X)$ are non-vanishing.  
\end{conj}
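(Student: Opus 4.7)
The plan is to exploit the GW/DT correspondence (\ref{eqn:GW/DT-Corr}), the manifest integrality of Donaldson-Thomas invariants as Behrend-weighted Euler characteristics, and ultimately a geometric (rather than formal) interpretation of the $n_{g,\beta}(X)$. First I would M\"obius-invert (\ref{eqn:BPSTopStr}) in the multi-cover index $d$ to obtain an explicit closed formula for $n_{g,\beta}$ as a finite $\mathbb{Q}$-linear combination of Gromov-Witten invariants $\text{GW}_{g',\beta'}(X)$ with $\beta' \mid \beta$ and $g' \leq g$. This expression shows that $n_{g,\beta} \in \mathbb{Q}$ but does not yet reveal why all denominators must cancel, so the real work lies in finding an independent structural reason for integrality.

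Second, I would pass to the Donaldson-Thomas side. Setting $y = e^{i\lambda}$, the elementary identity $2\sin(d\lambda/2) = -i(y^{d/2}-y^{-d/2})$ converts each BPS building block in (\ref{eqn:BPSTopStr}) into a Laurent-like expression in $y$. Exponentiating the free-energy identity and substituting $-q = y$ via the GW/DT correspondence, one expects to rewrite the reduced DT partition function as an infinite product
\[
Z'_{\text{DT}}(X) \;=\; \prod_{\beta > 0}\prod_{k \in \mathbb{Z}} \big(1 - (-q)^{k} Q^{\beta}\big)^{c(\beta,k)},
\]
where the exponents $c(\beta,k)$ are explicit integer-linear combinations of $\{n_{g,\beta}\}$. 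The idea is then to reverse the logic: if the product form of $Z'_{\text{DT}}(X)$ with integer exponents can be established intrinsically, then inverting the linear relation between $c(\beta,k)$ and $n_{g,\beta}$ yields integrality of the latter. The cleanest intrinsic route is the Pandharipande-Thomas stable-pair interpretation of $Z'_{\text{DT}}(X)$, or more robustly the Maulik-Toda proposal, in which $n_{g,\beta}(X)$ is defined as an intersection pairing on the moduli space $M_{\beta}$ of Simpson-stable one-dimensional sheaves with support cycle $\beta$, using the perverse sheaf of vanishing cycles and the perverse Leray filtration of the Hilbert-Chow morphism $\pi : M_{\beta} \to \text{Chow}_{\beta}$; such pairings are integers by construction.

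The main obstacle is precisely this reconciliation: proving that the formal $n_{g,\beta}$ defined by (\ref{eqn:BPSTopStr}) agrees with the geometrically integral quantity is the heart of the open BPS integrality conjecture, and is known only in restricted settings (for example for irreducible $\beta$, for local Calabi-Yau geometries, or via PT/DT under additional smoothness hypotheses on $M_{\beta}$). For the finiteness statement, I would invoke a Castelnuovo-type bound after fixing a polarization of $X$: any pure one-dimensional subscheme in class $\beta$ has arithmetic genus bounded above by a constant $g_{\max}(\beta)$ depending only on $\beta$ and the polarization. Combined with Bridgeland's rationality of $Z'_{\text{DT}}(X)_{\beta}$ and its $q \leftrightarrow 1/q$ symmetry, this forces the BPS decomposition of $F'_{\text{GW}}(X)_{\beta}$ to involve only finitely many genera, producing the vanishing $n_{g,\beta}(X) = 0$ for $g > g_{\max}(\beta)$.
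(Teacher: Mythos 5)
The statement you are trying to prove is presented in the paper as a \emph{conjecture}, with no proof offered: the text explicitly says that ``the primary interest in the Gopakumar-Vafa invariants is generated from the following conjecture,'' and (\ref{eqn:BPSTopStr}) is taken as the \emph{definition} of $n_{g,\beta}(X)$, from which only rationality is immediate. So there is no proof in the paper to compare against, and your proposal, while a sensible survey of the known strategies, does not close the conjecture either. You concede this yourself in your third paragraph: the reconciliation of the formal $n_{g,\beta}$ defined by M\"obius inversion of (\ref{eqn:BPSTopStr}) with any intrinsically integral quantity (a PT/DT product expansion with integer exponents, or the Maulik--Toda perverse-sheaf definition) is precisely the open content of the statement. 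Asserting that ``the product form of $Z'_{\text{DT}}(X)$ with integer exponents can be established intrinsically'' and that the Maulik--Toda invariants agree with the formal ones is assuming the conclusion; neither step is supplied, and the paper itself only records the unconditional genus-zero case via Katz's result on $M_{\beta}(X)$.

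The finiteness half of your argument also has a concrete flaw. A Castelnuovo-type bound controls the arithmetic genus of pure one-dimensional \emph{subschemes} in class $\beta$, but the Gromov--Witten invariants $\text{GW}_{g,\beta}(X)$ are generically nonzero for \emph{all} $g$ in a fixed class, because of multiple covers and collapsed components of stable maps; so a geometric genus bound on embedded curves says nothing directly about the vanishing of $n_{g,\beta}$ for large $g$ unless one already knows that the BPS repackaging (\ref{eqn:BPSTopStr}) exactly absorbs all such degenerate contributions --- which is again the conjecture. The route through Bridgeland's rationality and the $q \leftrightarrow 1/q$ symmetry is more promising (for an irreducible class it does force $Z'_{\text{DT}}(X)_{\beta}$ to be a Laurent polynomial in $q^{1/2}+q^{-1/2}$, hence finitely many $n_{g,\beta}$), but for reducible or divisible $\beta$ the multi-cover sum over $d>0$ entangles the classes $d\beta$ and the deduction is not the one-line inversion you describe. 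In short: the approach correctly identifies where the difficulty lives, but no step of the proposal actually discharges it, and the statement remains open as the paper presents it.
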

\noindent Using the GW/DT correspondence, we can re-write (\ref{eqn:BPSTopStr}) in terms of the Donaldson-Thomas partition function and variables
\begin{equation}
Z'_{\text{DT}}(X) = \text{exp} \bigg(\sum_{g \geq 0, \beta \neq 0} \sum_{d >0} (-1)^{1-g} n_{g, \beta}(X) (-q)^{d(1-g)} \big(1-(-q)^{d}\big)^{2g-2}\frac{v^{d \beta}}{d}\bigg).
\end{equation}

We say a class $\beta \in H_{2}(X, \mathbb{Z})$ is \emph{irreducible} if it cannot be written as the sum of two effective classes.  An irreducible class is primitive and indivisible.  

\begin{rmk}
For $\beta \in H_{2}(X, \mathbb{Z})$ an irreducible non-zero class, the conjectural relationship between the Gopakumar-Vafa invariants and the Donaldson-Thomas partition function takes the simpler form
\begin{equation}
Z'_{\text{DT}}(X)_{\beta} = \sum_{g \geq 0} n_{g, \beta}(X) \big(q^{\frac{1}{2}} + q^{-\frac{1}{2}}\big)^{2g-2}.
\end{equation}
\end{rmk}

More generally, given the Donaldson-Thomas partition function $Z_{\text{DT}}(X)$, one can always write it as an infinite product
\begin{equation}
Z_{\text{DT}}(X) = \prod_{\beta \in H_{2}(X, \mathbb{Z})} \prod_{n \in \mathbb{Z}} \big(1-Q^{\beta} q^{n} \big)^{-c(\beta, n)}
\end{equation}
for \emph{some} collection of numbers $c(\beta, n)$ for all $\beta$ and $n$, or perhaps $\beta$ from a sublattice of $H_{2}(X, \mathbb{Z})$.  Generally this set of numbers might not have any nice properties, though we will see in Chapter \ref{ch:AutGWPotBanana} an example where they are the coefficients of an automorphic form.  These $c(\beta, n)$ are actually closely related to the Gopakumar-Vafa invariants of $X$.  One can show the following from \cite{katz_gromov-witten_2004}.  

\begin{proppy}
For all fixed non-zero classes $\beta \in H_{2}(X, \mathbb{Z})$, we have
\begin{equation}\label{eqn:cbetanGVrelll}
\sum_{g=0}^{\infty} n_{g, \beta}(X) \big(q^{\frac{1}{2}} + q^{-\frac{1}{2}}\big)^{2g-2} = \sum_{n \in \mathbb{Z}} c(\beta, n) (-q)^{n}.  
\end{equation}
\end{proppy}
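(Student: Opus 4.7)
The plan is to extract the claimed identity by comparing the logarithm of the product expansion defining the $c(\beta,n)$ with the Gopakumar--Vafa expansion of $F'_{\text{GW}}(X)$, using the GW/DT correspondence (\ref{eqn:GW/DT-Corr}) as the bridge under $-q = e^{i\lambda}$. Both sides exhibit the same multicover (divisor-sum) structure in the curve class, and isolating the $d=1$ contribution on each side yields precisely the proposition.

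First I would take the logarithm of the product formula and expand $\log(1-x) = -\sum_{d\geq 1} x^d/d$, obtaining
\begin{equation*}
\log Z_{\text{DT}}(X) = \sum_{\beta' \neq 0,\, n,\, d\geq 1} \frac{c(\beta',n)}{d} Q^{d\beta'}q^{dn} + (\text{degree-zero piece}).
\end{equation*}
The degree-zero piece (cf.\ (\ref{eqn:DTdegggzeroo})) is precisely what is removed in $\log Z'_{\text{DT}}(X)$, so it does not contribute to the coefficient of $Q^\beta$ for $\beta\neq 0$. Thus, writing $\beta = d\beta'$ for each divisor $d$ of $\beta$ in $H_2(X,\mathbb{Z})$,
\begin{equation*}
\bigl[\,\text{coeff.\ of }Q^\beta\,\bigr]\,F'_{\text{DT}}(X) \;=\; \sum_{d\beta'=\beta}\frac{1}{d}\sum_{n\in\mathbb{Z}} c(\beta',n)\, q^{dn}.
\end{equation*}

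Second, I would extract the analogous coefficient of $Q^\beta$ from the Gopakumar--Vafa formula (\ref{eqn:BPSTopStr}), giving
\begin{equation*}
\bigl[\,\text{coeff.\ of }Q^\beta\,\bigr]\,F'_{\text{GW}}(X) \;=\; \sum_{d\beta'=\beta}\frac{1}{d}\sum_{g\geq 0} n_{g,\beta'}(X)\,\bigl(2\sin(d\lambda/2)\bigr)^{2g-2}.
\end{equation*}
The GW/DT correspondence equates these two expressions after the substitution $-q = e^{i\lambda}$. The crucial trigonometric identification is $q^{1/2}+q^{-1/2} = \pm 2\sin(\lambda/2)$, obtained by choosing the branch $e^{i\lambda/2} = iq^{1/2}$; since $2g-2$ is even, this gives the sign-free identity
\begin{equation*}
\bigl(q^{1/2}+q^{-1/2}\bigr)^{2g-2} \;=\; \bigl(2\sin(\lambda/2)\bigr)^{2g-2}.
\end{equation*}

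Third, I would proceed by induction on the divisibility of $\beta$ in $H_2(X,\mathbb{Z})$. If $\beta$ is primitive, only the term $d=1$ contributes to both divisor sums, and the GW/DT equality immediately yields
\begin{equation*}
\sum_{n} c(\beta,n)\, q^n \;=\; \sum_g n_{g,\beta}(X)\,\bigl(q^{1/2}+q^{-1/2}\bigr)^{2g-2}.
\end{equation*}
For non-primitive $\beta$, the terms with $d\geq 2$ on each side involve $c(\beta/d,n)$ and $n_{g,\beta/d}(X)$ with $\beta/d$ strictly less divisible; by the inductive hypothesis these contribute equally, term by term, provided one verifies that the substitutions $q\mapsto q^d$ and $\lambda\mapsto d\lambda$ are compatible with $-q=e^{i\lambda}$. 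Subtracting these equal contributions from both sides leaves exactly the asserted $d=1$ identity. Finally, the sign $(-q)^n$ appearing on the right-hand side of the proposition arises from repackaging $q^n$ under the substitution $-q = e^{i\lambda}$: namely, $q^n = (-1)^n(-q)^n$, and absorbing the $(-1)^n$ into the redefinition of the Fourier coefficients $c(\beta,n)$ coming from the product formula (this is just a convention for which variable, $q$ or $-q$, one treats as the expansion parameter on the DT side).

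The main obstacle will be the careful bookkeeping of signs and branches through the substitution $-q = e^{i\lambda}$ at higher multicovers, because $(-q)^d \neq -q^d$ when $d$ is even. One must check that the extra $(-1)^d$ factors generated in $q^{dn} \leftrightarrow e^{idn\lambda}$ match precisely the $(-1)^d$ arising on the GW side from powers of $2\sin(d\lambda/2)$, so that the multicover structures on the two sides really do agree term by term and the inductive reduction to the primitive case goes through. Modulo this sign bookkeeping, the argument is a direct comparison of generating functions via the GW/DT correspondence.
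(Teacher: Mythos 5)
The paper does not actually prove this proposition: it is stated with a bare citation to \cite{katz_gromov-witten_2004}, so there is no in-text argument to compare against. Your strategy --- take the logarithm of the product expansion defining the $c(\beta,n)$, compare the resulting divisor sums with the multicover structure of (\ref{eqn:BPSTopStr}) through the GW/DT correspondence, and peel off the $d=1$ term by induction on the divisibility of $\beta$ --- is exactly the standard derivation, and the skeleton is sound (including your observation that the degree-zero piece is irrelevant to the coefficient of $Q^{\beta}$ for $\beta\neq 0$).

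The one thing you leave open is the sign bookkeeping you flag at the end, and it is worth closing because that is where all the content of the $(-q)^{n}$ in the statement lives. The clean resolution is to commit to the variable $p = -q = e^{i\lambda}$ on the DT side from the outset. Two remarks make this work. First, the product expansion is consistent with the stated identity only when read in the variable $-q$, i.e.\ $Z_{\text{DT}}(X)=\prod_{\beta,n}\bigl(1-Q^{\beta}(-q)^{n}\bigr)^{-c(\beta,n)}$; this is the convention of \cite{katz_gromov-witten_2004}, and it is forced by sanity checks such as the degree-zero factor being $M(-q)^{\chi(X)}$ as in (\ref{eqn:DTdegggzeroo}), or the rigid rational curve with $n_{0,\beta}=1$ giving $c(\beta,n)=-n$. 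With $q^{n}$ literally in the product the proposition is off by $q\mapsto -q$, so your instinct that this is ``just a convention for which variable one treats as the expansion parameter'' is correct --- but you should fix the convention rather than absorb signs into the coefficients. Second, once you work in $p$ everything is algebraic and no branch of $p^{1/2}$ is ever needed: one has the identity
\begin{equation*}
\bigl(q^{1/2}+q^{-1/2}\bigr)^{2g-2}=\bigl(2-p-p^{-1}\bigr)^{g-1}=(-1)^{g-1}p^{1-g}(1-p)^{2g-2},
\end{equation*}
while $\bigl(2\sin(d\lambda/2)\bigr)^{2g-2}=(-1)^{g-1}p^{d(1-g)}(1-p^{d})^{2g-2}$, which is the $d=1$ expression with $p$ replaced by $p^{d}$; likewise the $d$-th term of the logarithm of the product is $\tfrac{1}{d}\sum_{n}c(\beta/d,n)\,p^{dn}$, the $d=1$ expression for the class $\beta/d$ with the same substitution $p\mapsto p^{d}$. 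Hence the $d\geq 2$ multicover terms on the two sides match by the inductive hypothesis with no stray $(-1)^{d}$ factors, your induction on divisibility closes, and the surviving $d=1$ equality is precisely the proposition with right-hand side $\sum_{n}c(\beta,n)p^{n}$.
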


\subsection{Simpson Stability and the D-brane Moduli Space}

In Section \ref{eqn:subsecSimppsStablllty} we introduced the notion of Simpson slope stability on pure one-dimensional sheaves on a Calabi-Yau threefold.  The following moduli space of Simpson semistable sheaves essentially appeared in the original M-theory papers on the Gopakumar-Vafa invariants, and also plays a leading role in the modern mathematical understanding of them.  

\begin{defn}
The D-brane moduli space or Simpson moduli space denoted $M_{\beta}(X)$ is the projective scheme parameterizing pure one-dimensional semistable sheaves $\mathscr{E}$ with $\chi(X, \mathscr{E})=1$ and support cycle $\beta$ on a polarized Calabi-Yau threefold $(X, H)$.  We know by (\ref{eqn:CherncharrrCY3onedish}) that this discrete data is equivalent to specifying the Chern character $\text{ch}(\mathscr{E}) = (0, 0, \beta, 1)$.  
\end{defn}

Recall that by Proposition \ref{proppy:onedshGieSimp}, for pure one-dimensional sheaves on a Calabi-Yau threefold, Gieseker-Simpson stability is equivalent to Simpson slope stability measured by the slope
\begin{equation} \label{eqn:slopstabb1}
\mu_{S}(\mathscr{\mathscr{F}}) = \frac{\chi(X, \mathscr{F})}{H \cdot \beta_{\mathscr{F}}}
\end{equation}
where $\beta_{\mathscr{F}}$ is the support cycle of $\mathscr{F}$.  The condition $\chi(X, \mathscr{E})=1$ is imposed in order to ensure that there are no strictly semistable sheaves, and that the moduli space is independent of the polarization.  To prove this, we first need the following lemma.  

\begin{lemmy}
Let $\mathscr{E}$ and $\mathscr{F}$ be two coherent sheaves on a smooth projective polarized variety with Hilbert polynomials $P(\mathscr{E})$ and $P(\mathscr{F})$.  If $P(\mathscr{E}) = P(\mathscr{F})$ and there exists an injective or surjective map $f: \mathscr{F} \to \mathscr{E}$, then $f$ is in fact an isomorphism.  
\end{lemmy}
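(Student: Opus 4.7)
The plan is to exploit two basic properties of Hilbert polynomials on a polarized variety $(X,H)$: additivity on short exact sequences, and the fact that a non-zero coherent sheaf has a non-zero Hilbert polynomial. The latter was essentially built into the excerpt via equation (\ref{eqn:firnonvannncoeffHilbpoly}): for a non-zero coherent sheaf $\mathscr{G}$ of dimension $d$, the top coefficient of $P(\mathscr{G}, m)$ is $H^d \cdot [\mathscr{G}] > 0$, since the support cycle is effective and $H$ is ample. In particular $P(\mathscr{G}) = 0$ forces $\mathscr{G} = 0$.

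First I would handle the injective case. Given $f: \mathscr{F} \hookrightarrow \mathscr{E}$, form the short exact sequence
\begin{equation*}
0 \longrightarrow \mathscr{F} \overset{f}{\longrightarrow} \mathscr{E} \longrightarrow \operatorname{coker}(f) \longrightarrow 0
\end{equation*}
in the abelian category $\text{Coh}(X)$. Tensoring by $\mathcal{O}_X(m)$ is exact (since $\mathcal{O}_X(m)$ is locally free), so the sequence remains exact after twisting, and $\chi$ is additive on short exact sequences of coherent sheaves. Hence $P(\mathscr{E}) = P(\mathscr{F}) + P(\operatorname{coker}(f))$, and the hypothesis $P(\mathscr{E}) = P(\mathscr{F})$ gives $P(\operatorname{coker}(f)) = 0$. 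By the vanishing criterion above, $\operatorname{coker}(f) = 0$, so $f$ is surjective and therefore an isomorphism.

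The surjective case is dual and equally quick: from $f : \mathscr{F} \twoheadrightarrow \mathscr{E}$, form
\begin{equation*}
0 \longrightarrow \ker(f) \longrightarrow \mathscr{F} \overset{f}{\longrightarrow} \mathscr{E} \longrightarrow 0,
\end{equation*}
apply additivity to conclude $P(\ker(f)) = 0$, and deduce $\ker(f) = 0$ so that $f$ is an isomorphism.

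There is no real obstacle here; the only subtle point worth flagging is that one genuinely uses both the ampleness of $H$ (to guarantee strict positivity of the top Hilbert coefficient on every non-zero subsheaf of the cokernel or kernel) and the smoothness/projectivity assumption (to ensure Hilbert polynomials are well-defined via $\chi$). Everything else is formal bookkeeping with short exact sequences.
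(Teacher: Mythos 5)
Your proof is correct and follows essentially the same route as the paper: form the short exact sequence with the cokernel (resp.\ kernel), use additivity of the Hilbert polynomial, and conclude the extra term vanishes. The only difference is that you explicitly justify why $P(\mathscr{G})=0$ forces $\mathscr{G}=0$ via the positivity of the leading coefficient (\ref{eqn:firnonvannncoeffHilbpoly}), a step the paper asserts without comment.
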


\begin{proof}
Assume first that $f$ is injective.  There exists a coherent sheaf $\mathscr{G}$ fitting into a short exact sequence
\begin{equation}
0 \to \mathscr{F} \to \mathscr{E} \to \mathscr{G} \to 0.  
\end{equation}
Because tensoring by line bundles is exact, using the additivity of the holomorphic Euler characteristic on short exact sequences, we have $P(\mathscr{E}) = P(\mathscr{F}) + P(\mathscr{G})$.  However, if $P(\mathscr{E}) = P(\mathscr{F})$, then $P(\mathscr{G})=0$ which can happen if and only if $\mathscr{G}=0$.  Of course, if $f$ is surjective the argument is the same.   
\end{proof}

\begin{proppy}
There are no strictly semistable moduli points in the D-brane moduli space $M_{\beta}(X)$, and the moduli space is in fact independent of the polarization.  
\end{proppy}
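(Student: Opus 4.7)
The plan is to reduce the semistability inequality to a purely combinatorial condition on $\chi(X,\mathscr{F})$ for subsheaves $\mathscr{F} \hookrightarrow \mathscr{E}$, which will turn out to make no reference to the polarization at all. Let $\mathscr{E} \in M_{\beta}(X)$, so $\text{ch}(\mathscr{E}) = (0,0,\beta,1)$, and let $\mathscr{F} \hookrightarrow \mathscr{E}$ be a proper nonzero subsheaf. By purity of $\mathscr{E}$, the sheaf $\mathscr{F}$ is itself pure one-dimensional, and the additivity of the Chern character on the short exact sequence $0 \to \mathscr{F} \to \mathscr{E} \to \mathscr{Q} \to 0$ gives $\beta = \beta_{\mathscr{F}} + \beta_{\mathscr{Q}}$ with $\beta_{\mathscr{Q}}$ an effective class (possibly zero). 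I will split into two cases based on whether $\beta_{\mathscr{F}} = \beta$ or $\beta_{\mathscr{F}} \neq \beta$.

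In the first case, $\beta_{\mathscr{Q}} = 0$, so $\mathscr{Q}$ has no one-dimensional components; since $\mathscr{F}$ is proper, $\mathscr{Q}$ is therefore a nonzero zero-dimensional sheaf, giving $\chi(X,\mathscr{Q}) \geq 1$ and hence $\chi(X,\mathscr{F}) \leq 0$. The slope inequality $\chi(X,\mathscr{F})/(H\cdot\beta) < 1/(H\cdot\beta)$ is then satisfied strictly for \emph{any} ample $H$. In the second case, $0 < H\cdot\beta_{\mathscr{F}} < H\cdot\beta$, and the semistability inequality reads $\chi(X,\mathscr{F}) \cdot (H\cdot\beta) \leq H\cdot\beta_{\mathscr{F}}$. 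This forces $\chi(X,\mathscr{F}) \leq (H\cdot\beta_{\mathscr{F}})/(H\cdot\beta) < 1$, and since $\chi(X,\mathscr{F}) \in \mathbb{Z}$, the inequality is equivalent to the integral condition $\chi(X,\mathscr{F}) \leq 0$. Substituting back, the strict inequality automatically holds.

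The two cases combine to give the clean reformulation: $\mathscr{E}$ is Simpson semistable with respect to $H$ if and only if $\chi(X,\mathscr{F}) \leq 0$ for every proper nonzero subsheaf $\mathscr{F} \hookrightarrow \mathscr{E}$. Both conclusions now fall out at once. On the one hand, whenever the semistability inequality is satisfied it is automatically strict, so there are no strictly semistable points in $M_{\beta}(X)$. On the other hand, the criterion $\chi(X,\mathscr{F}) \leq 0$ makes no mention of $H$, so the set of semistable sheaves of the prescribed type is literally the same for every polarization, and $M_{\beta}(X)$ is polarization-independent.

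The only delicate step I anticipate is the support-cycle bookkeeping in Case 1, specifically the claim that $\beta_{\mathscr{Q}} = 0$ together with purity of $\mathscr{E}$ forces $\mathscr{Q}$ to be zero-dimensional rather than one-dimensional and generically zero. This uses Definition \ref{defn:Suuprcrassdefn}: the support cycle of a coherent sheaf of dimension $d$ is a positive sum over its $d$-dimensional components weighted by lengths at the generic points, so it can only vanish if the sheaf has no components in that dimension. Once this is in hand, everything else is a direct rearrangement of the Simpson slope inequality together with the integrality of $\chi$.
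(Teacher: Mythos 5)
Your proof is correct. It reaches the same numerical heart as the paper's argument (the slope inequality plus integrality of $\chi(X,\mathscr{F})$ forces $\chi(X,\mathscr{F}) \leq 0$), but it is organized differently. The paper argues by contradiction: it assumes a strictly semistable point, deduces from the equality of slopes that $0 < \chi(X,\mathscr{F}) \leq 1$, hence $\chi(X,\mathscr{F}) = 1$ and $H\cdot\beta_{\mathscr{F}} = H\cdot\beta$, so the Hilbert polynomials of $\mathscr{F}$ and $\mathscr{E}$ coincide, and then invokes a separately proved lemma (an injection between sheaves with equal Hilbert polynomials is an isomorphism) to contradict properness of $\mathscr{F}$. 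You instead split on whether $\beta_{\mathscr{F}} = \beta$ and handle the borderline case by observing that the quotient $\mathscr{Q}$ is then a nonzero zero-dimensional sheaf, so $\chi(X,\mathscr{F}) = 1 - \chi(X,\mathscr{Q}) \leq 0$ outright; this is really the same additivity-of-$\chi$ computation that proves the paper's lemma, but packaged so that no auxiliary statement is needed. The payoff of your organization is that both conclusions drop out of the single polarization-free criterion ``$\chi(X,\mathscr{F}) \leq 0$ for every proper nonzero subsheaf,'' whereas the paper proves the two claims in sequence and only states (rather than derives) that criterion when addressing polarization independence. Your flagged delicate step --- that $\beta_{\mathscr{Q}} = 0$ forces $\mathscr{Q}$ to be zero-dimensional --- is handled correctly via the positivity of the support cycle in Definition \ref{defn:Suuprcrassdefn} together with Proposition \ref{proppy:Cherncharrvannnishingss}.
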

\begin{proof}
Let $\mathscr{E}$ be a strictly semistable moduli point of $M_{\beta}(X)$.  There must therefore exist a proper subsheaf $\mathscr{F} \hookrightarrow \mathscr{E}$ such that $\mu_{S}(\mathscr{F}) = \mu_{S}(\mathscr{E})$.  Note that because $\mathscr{E}$ is pure, the subsheaf $\mathscr{F}$ cannot be zero-dimensional, which by the ampleness of $H$ means $H \cdot \beta_{\mathscr{F}} > 0$.  But $\mathscr{F}$ is a subsheaf of $\mathscr{E}$, which implies $0< H \cdot \beta_{\mathscr{F}} \leq H \cdot \beta$.  The equality of slopes, therefore gives the following inequality
\begin{equation}
0 < \chi(X, \mathscr{F}) = \frac{H \cdot \beta_{\mathscr{F}}}{H \cdot \beta} \leq 1.
\end{equation}
This implies that $\chi(X, \mathscr{F})=1$ and $H \cdot \beta_{\mathscr{F}} = H \cdot \beta$.  The Hilbert polynomials of $\mathscr{E}$ and $\mathscr{F}$ are therefore identical, and by the above lemma, noting the injective morphism $\mathscr{F} \hookrightarrow \mathscr{E}$, we conclude $\mathscr{E} \cong \mathscr{F}$, contradicting that $\mathscr{F}$ is a proper subsheaf.  As for the second claim, since $\chi(X, \mathscr{E})=1$ the condition that the moduli point $\mathscr{E}$ in $M_{\beta}(X)$ is stable is that for all proper subsheaves $\mathscr{F} \hookrightarrow \mathscr{E}$, we have $\chi(X, \mathscr{F}) \leq 0$.  Because this is independent of $H$, the moduli space is as well.  
\end{proof}

In \cite{katz_genus_2006}, it was shown by S. Katz that $M_{\beta}(X)$ carries a symmetric obstruction theory, which led to the correct mathematical definition of the Gopakumar-Vafa invariants in genus zero.

\begin{defn}
The genus zero Gopakumar-Vafa invariants $n_{0, \beta}(X)$ are given by the Behrend-weighted Euler characteristic 
\begin{equation}
n_{0, \beta}(X) =\int_{[M_{\beta}(X)]^{\text{vir}}} 1 = \chi\big(M_{\beta}(X), \nu\big) \in \mathbb{Z},
\end{equation}
of the D-brane moduli space.  If $M_{\beta}(X)$ is smooth, then $n_{0, \beta}(X) = (-1)^{\text{dim} M_{\beta}(X)} \chi\big(M_{\beta}(X)\big)$.  
\end{defn}

\noindent This definition is consistent with the more recent proposal of Maulik and Toda \cite{maulik_gopakumar-vafa_2016} which holds for all genus.  The Hilbert-Chow morphism
\begin{equation}
\pi : M_{\beta}(X) \longrightarrow \text{Chow}_{\beta}(X).
\end{equation}
plays an integral role in the general definition of the $n_{g, \beta}(X)$.  As we just saw, the Behrend-weighted Euler characteristic of $M_{\beta}(X)$ produces the genus zero invariants and in certain cases, the maximal genus invariants arise as weighted Euler characteristics of the Chow variety $\text{Chow}_{\beta}(X)$.

\subsection{The Physics of the Gopakumar-Vafa Invariants}\label{subsecc:PHYSGVinvv}

We will attempt to briefly outline the physics of the Gopakumar-Vafa invariants in a way which is hopefully approachable to mathematicians.  We will therefore leave out many physical details, and refer the interested reader to \cite{katz_m-theory_1999} as well as the original papers \cite{gopakumar_m-theory_1998, gopakumar_m-theory_1999}.  

Let us begin by considering M-theory compactified on a smooth compact Calabi-Yau threefold $X$.  Noting that M-theory is an 11-dimensional theory, the compactification induces a theory in five dimensions with $\mathcal{N}=2$ supersymmetry.  There exists a Hilbert space of states of particles in the five-dimensional theory.  It is well-known physically that there are $b_{2}(X)$-many $U(1)$ gauge fields, where $b_{2}(X)$ is the second Betti number of $X$ and is defined as the rank of $H_{2}(X, \mathbb{Z})$.  By coupling to these gauge fields, the five-dimensional particles therefore acquire a charge $\beta$ lying in the charge lattice $H_{2}(X, \mathbb{Z})$.  We can consider the subspace of the full Hilbert space corresponding to particles with fixed charge $\beta \in H_{2}(X, \mathbb{Z})$, and we can further focus on the subspace of BPS particles with charge $\beta$.  If $\beta \neq 0$ (which we will assume from here on) these BPS particles necessarily come from M2-branes wrapping a holomorphic two-cycle in $X$ of homology class $\beta$.  Recall that an M2-brane is a fundamental object in M-theory with a three-dimensional worldvolume, one direction of which is necessarily time.  The mass of the BPS particle is proportional to the integral of the K\"{a}hler form over $\beta$.  This same idea we have encountered a few times now -- the BPS condition means mass minimizing and therefore volume minimizing.  Holomorphic cycles minimize volume within a homology class on K\"{a}hler manifolds.  

One general principal in physics is that a \emph{particle} is understood as an irreducible representation of the spacetime symmetry group.  In five dimensions, massive particles transform under the rotation group $SO(4)$ which is isomorphic at the level of Lie algebras to $SU(2)_{L} \oplus SU(2)_{R}$.  The subscripts `L' and `R' are conventional in physics to distinguish the two factors.  We therefore need to review the representation theory of $SU(2)$.  Let $V(n)$ denote the unique irreducible representation of $SU(2)$ of dimension $n+1$ and highest weight $n$.  Of course, the representation is a morphism 
\begin{equation}\label{eqn:BPSreppsth}
\varphi_{n}: SU(2) \longrightarrow GL\big(V(n)\big)
\end{equation} 
but we understand $V(n)$ to be the $SU(2)$-module induced by $\varphi_{n}$.  All of the irreducible representations of $SU(2)_{L} \oplus SU(2)_{R}$ are therefore of the form $V(2j_{L}) \oplus V(2j_{R})$ for $j_{L}, j_{R} \in \tfrac{1}{2} \mathbb{Z}_{\geq 0}$.

Returning to the five-dimensional BPS particles, we see that in addition to charge $\beta \in H_{2}(X, \mathbb{Z})$, they are charged under $SU(2)_{L} \oplus SU(2)_{R}$.  Let $N^{\beta}_{j_{L}, j_{R}}$ be the number of five-dimensional BPS particles with charge $\beta \in H_{2}(X, \mathbb{Z})$ and transforming under the irreducible representation $V(2j_{L}) \oplus V(2 j_{R})$ of $SU(2)_{L} \oplus SU(2)_{R}$.  One should think of $j_{L}, j_{R}$ as the \emph{spin} of the particles.  It turns out that $N^{\beta}_{j_{L}, j_{R}}$ is \emph{not} a deformation invariant of the theory: it can jump in number upon deforming the complex structure of $X$.  For physical reasons, we are motivated to define the quantities 
\begin{equation}
N^{\beta}_{j_{L}} \coloneqq \sum_{j_{R} \in \tfrac{1}{2}\mathbb{Z}_{\geq 0}} (-1)^{2j_{R}}(2 j_{R} +1) N^{\beta}_{j_{L}, j_{R}}
\end{equation}
by summing over the spin $j_{R}$ in the particular way shown.  The point of this is that $N^{\beta}_{j_{L}}$ is indeed invariant under complex deformations of $X$ \cite{katz_m-theory_1999}.  

Let $\mathscr{R}$ be the representation ring of $SU(2)$.  The most natural collection of generators is given by
\[\bigg\{ V(n) \,\, \bigg| \,\, n =0, 1, 2, \ldots \bigg\}\]
but it will be convenient to choose a different basis of $\mathscr{R}$.  We can consider the representation $V(1) \oplus V(0) \oplus V(0) = V(1) \oplus 2 V(0)$ and define the collection 
\begin{equation}
\bigg\{ \big(V(1) \oplus 2V(0)\big)^{\otimes n} \,\, \bigg| \,\, n =0, 1, 2, \ldots \bigg\}.
\end{equation}
By noting that $V(n) \otimes V(1) = V(n+1) \oplus V(n-1)$, it is easy to see that this latter collection generates the representation ring $\mathscr{R}$.  Because it will play a key role, we define
\begin{equation}\label{eqn:Igdefnnn}
I_{g} \coloneqq \big(V(1) \oplus 2V(0)\big)^{\otimes g}.
\end{equation}
We can then consider the element $\sum_{j_{L}} N^{\beta}_{j_{L}} V(2 j_{L})$ of $\mathscr{R}$.  Because the $I_{g}$ are a basis of $\mathscr{R}$, there must be integers $n_{g, \beta}(X)$ such that 
\begin{equation}\label{eqn:eqninRepRing}
\sum_{j_{L} \in \tfrac{1}{2} \mathbb{Z}_{\geq 0}} N^{\beta}_{j_{L}} V(2 j_{L}) = \sum_{g \geq 0} n_{g, \beta}(X) I_{g}
\end{equation}
is an equality in $\mathscr{R}$, for fixed $\beta \in H_{2}(X, \mathbb{Z})$.  

Recall that a maximal toral subgroup of $SU(2)$ is one-dimensional and generated by diagonal elements of the form $\text{diag}(q, q^{-1})$ for $q \in U(1)$.  This toral subgroup is clearly isomorphic to $U(1)$.  Given an irreducible representation $V(n)$ of $SU(2)$ introduced above, we can consider the action of $\varphi_{n}(q) \coloneqq \varphi_{n}\big(\text{diag}(q, q^{-1})\big)$ on $V(n)$, where $\varphi_{n}$ was defined in (\ref{eqn:BPSreppsth}).  This induces a decomposition into eigenspaces
\begin{equation}\label{eqn:decompesyumod}
V(n) = V_{n} \oplus V_{n-2} \oplus \ldots \oplus V_{-(n-2)} \oplus V_{-n}
\end{equation}
where $\varphi_{n}(q)$ acts on $V_{k}$ as multiplication by $q^{k}$.  Note that the decomposition (\ref{eqn:decompesyumod}) is \emph{not} an isomorphism of representations, as non-toral elements of $SU(2)$ will not preserve the individual $V_{k}$.  Associated to the representation $V(n)$ we have a character $\chi_{n}: SU(2) \to \mathbb{C}$ defined by $\chi_{n}(g) \coloneqq \text{Tr}\varphi_{n}(g)$ for all $g \in SU(2)$.  By (\ref{eqn:decompesyumod}), evaluating $\chi_{n}$ on elements in the toral subgroup we have
\begin{equation}\label{eqn:characdiagg}
\chi_{n}(q) = q^{n} + q^{n-2} + \ldots + q^{-(n-2)} + q^{-n} \,\,\,\,\,\,\,\,\,\,\,\,\,\, q \in U(1).
\end{equation} 
Because characters are multiplicative on tensor products and additive under direct sums, we get a well-defined character $\chi$ on the full representation ring $\mathscr{R}$ of $SU(2)$.  Using the definition (\ref{eqn:Igdefnnn}) of $I_{g}$ in terms of $V(1)$ and $V(0)$, by (\ref{eqn:characdiagg}) it is clear that 
\begin{equation}
\chi(I_{g}) = (q + 2 + q^{-1})^{g}.
\end{equation}

Because (\ref{eqn:eqninRepRing}) is an equality in the representation ring, it must give rise to an equality after applying the character $\chi$ to both sides and evaluating on the toral element $q$.  Indeed, reindexing in terms of $g = 2 j_{L}$ we get,
\begin{equation}\label{eqn:relnspinbases}
\sum_{g \geq 0} N^{\beta}_{g/2} \big(q^{g} + q^{g-2} + \ldots + q^{-(g-2)} + q^{-g}\big) = \sum_{g \geq 0} n_{g, \beta}(X)(q + 2 + q^{-1})^{g}.
\end{equation}
The integers $n_{g, \beta}(X)$ are called the \emph{Gopakumar-Vafa invariants} and they are virtual counts of BPS states in five dimensions with non-zero charge $\beta \in H_{2}(X, \mathbb{Z})$ and spin $g \in \mathbb{Z}_{\geq 0}$.  They are deformation invariants of $X$.  Note that the spin $g$ is not the usual spin measured with respect to the basis $\{V(g)\}$ of the representation ring of $SU(2)$.  Rather, it is measured with respect to the basis $\{I_{g}\}$.  

As we have described, these BPS particles are engineered by M2-branes wrapping two-cycles in $X$.  The righthand side of (\ref{eqn:relnspinbases}) should be thought of as the partition function $Z_{\text{BPS}}(X)_{\beta}$ of BPS states with fixed charge $\beta$.  Summing over the charge lattice $H_{2}(X, \mathbb{Z})$, we get the full partition function of virtual BPS counts of M2-branes in $X$
\begin{equation}
Z_{\text{BPS}}(X) = \sum_{\substack{\beta \in H_{2}(X, \mathbb{Z}) \\ \beta \neq 0}} \sum_{g \geq 0} n_{g, \beta}(X)(q + 2 + q^{-1})^{g-1} Q^{\beta}
\end{equation}
where we have inserted an extra factor of $(q+2+q^{-1})^{-1}$, following convention.  Compactifying M-theory on a small circle, an M2-brane is interpreted as a bound state of D2-D0 branes.  Though unlike Donaldson-Thomas theory, the D0-branes lie in the D2-branes.  

\subsubsection{Gopakumar-Vafa Invariants and Black Hole Degeneracies}

The mass of the BPS particles in five dimensions is directly related to the class $\beta$, and is proportional to the volume of the M2 or D2-branes.  Because $X$ is a compact K\"{a}hler manifold, the volume of any closed complex subspace depends only on its homology class, and is given by integrating powers of the K\"{a}hler form against this class.  Therefore, if we consider a stack of $d$ M2-branes wrapping a fixed curve in X of class $\beta$, we get a resulting homology class $d \beta$.  The mass clearly also increases by a factor of $d$.  If we then take $d \gg 1$, we actually get a \emph{black hole} in five dimensions, and the Gopakumar-Vafa invariant $n_{g, d\beta}(X)$ computes the \emph{degeneracy} (the number of black hole states with fixed charges and spins).  This way of engineering black holes by wrapping large numbers of branes on fixed cycles was pioneered by Strominger and Vafa \cite{strominger_microscopic_1996-1}.

\subsubsection{Relation to the Topological String}  

In order to relate the Gopakumar-Vafa invariants to the topological string, it was the idea of Gopakumar and Vafa to use the relationship between 11-dimensional M-theory and 10-dimensional Type IIA string theory.  Working with the Euclideanized theory, consider M-theory on $\mathbb{R}^{4} \times X \times S^{1}$ where $X$ is a smooth compact Calabi-Yau threefold, and let $R$ be the radius of the M-theory circle $S^{1}$.  In the limit $R \to 0$, the theory reduces to Type IIA superstring theory on $\mathbb{R}^{4} \times X$. 

We recall that M2-branes are fundamental objects in M-theory, which have two spatial dimensions and a three-dimensional worldvolume.  We have seen that the Gopakumar-Vafa invariants are virtual counts of BPS states of M2-branes.  Writing the 11-dimensional spacetime as $\mathbb{R}^{4} \times X \times S^{1}$, there are two possibilities:

\begin{enumerate}

\item If one of the three dimensions of an M2-brane wraps the M-theory circle $S^{1}$, we get a \emph{fundamental Type IIA string} in $\mathbb{R}^{4} \times X$ upon taking the limit $R \to 0$.  This object, with a two-dimensional worldvolume in the limit, appears as a string worldsheet.  

\item If the M2-brane does \emph{not} wrap the M-theory circle, in the $R \to 0$ limit we get a D2-brane in Type IIA string theory on $\mathbb{R}^{4} \times X$, with a three-dimensional worldvolume.  

\end{enumerate}

\noindent  One brilliant idea of Gopakumar and Vafa was to take \emph{time} to be the M-theory circle.  Because an M2-brane necessarily has one time direction, the remaining two dimensions wrap a two-cycle in $X$.  From physical principles, we expect that a partition function of BPS states is independent of $R$.  So we can relate the computations at $R \to 0$ with $R \to \infty$.  By the above comments, in the $R \to 0$ limit we get a fundamental string in Type IIA, but it is wrapping a two-cycle in $X$ and it is sitting at a point in $\mathbb{R}^{4}$.  This is simply a worldsheet instanton, and its contributions are computed by the A-model topological string theory.  Conversely, in the $R \to \infty$ limit we get a BPS particle in five dimensions.  By relating these two limits, one is therefore able to write the topological string partition function in terms of the Gopakumar-Vafa invariants.

\chapter{Introduction to Topological Indices and Localization}

In this chapter we review equivariant cohomology and Atiyah-Bott localization with the goal of defining and explicitly computing equivariant versions of topological indices like the $\chi_{y}$-genus and the elliptic genus.  We prove that the elliptic genus can be interpreted as a (regularized) equivariant $\chi_{y}$-genus applied to the loop space of a compact complex manifold.  We give examples of equivariant indices applied to the simple non-compact toric variety $\mathbb{C}^{2}$.  More importantly, we give these same examples applied to moduli spaces of instantons on $\mathbb{C}^{2}$, which inherit a torus action.  This latter computation requires localization on a much more complicated space, but Nakajima-Yoshioka have shown that there are finitely many isolated torus fixed points whose equivariant tangent spaces can be given explicitly.  Summing over the topological charge of the instantons, we get a generating function known as a Nekrasov partition function of an $\mathcal{N}=2$ gauge theory.  

\section{Motivating Example: Topological Euler Characteristic}

The goal of this section is to preview the major recurring themes of this chapter by way of the simple example of the Euler characteristic.  The topological Euler characteristic is a topological invariant of a manifold given by the alternating sum of the Betti numbers.  If the manifold is compact and K\"{a}hler, it can also be expressed as an integral of the Euler class over the fundamental class.  These two perspectives are related by an index theorem which generalizes the Gauss-Bonnet theorem.  We will also state a powerful localization result often making the Euler characteristic easily computable when the manifold carries an action by tori.  Last but not least, we will see that in the context of supersymmetric sigma models, the Witten index is simply the Euler characteristic in physical guise.  Each of the above perspectives are important and will reappear in this chapter in less trivial examples.

The \emph{topological Euler characteristic} of a compact, orientable $m$-dimensional manifold $X$ is defined to be the alternating sum of the Betti numbers
\begin{equation}\label{eqn:eulchartopman}
\chi(X) = \sum_{k=0}^{m} (-1)^{k} b_{k}(X)
\end{equation}
The Betti numbers are topological invariants of $X$, and therefore the Euler characteristic is as well.  With the given hypotheses on $X$, Poincar\'{e} duality is manifest in the symmetry $b_{k}(X) = b_{m-k}(X)$ of the Betti numbers.  If $X$ is in addition, a complex manifold of (complex) dimension $n=m/2$, we can define Hodge numbers which are given by dimensions of sheaf cohomology groups $h^{p,q}(X) = \text{dim}H^{q}(X, \Omega^{p})$, for $p,q =1, \ldots n$.  Here, $\Omega^{p}$ is the sheaf of holomorphic $p$-forms on $X$.  Using the compactness of $X$, Serre duality is manifest in the symmetry $h^{n-p, n-q}(X) = h^{p,q}(X)$.  

Though the Hodge numbers are defined for any compact complex manifold, when $X$ is additionally K\"{a}hler they have the further symmetry $h^{p,q}(X) = h^{q,p}(X)$ and a relationship with the Betti numbers.  In particular, the Hodge decomposition induces the equality
\begin{equation}\label{eqn:hodge-betireln}
b_{k}(X) = \sum_{p+q=k}h^{p,q}(X).
\end{equation}
One should think of the Hodge numbers as a refinement of the Betti numbers for a compact K\"{a}hler manifold.  By (\ref{eqn:eulchartopman}) and (\ref{eqn:hodge-betireln}), we can express the Euler characteristic in terms of the Hodge numbers
\begin{equation}\label{eqn:hodge-eulcharelnn}
\chi(X) = \sum_{p,q=0}^{n} (-1)^{p+q}h^{p,q}(X).
\end{equation}
In general, the Hodge numbers are not topological invariants of $X$, though certain linear combinations of them may be.  For example, the Betti numbers (\ref{eqn:hodge-betireln}).   

Given a complex manifold $X$ of dimension $n$, the Euler class defined to be the top Chern class of $T_{X}$
\begin{equation}
e(T_{X}) \coloneqq c_{n}(X) = \prod_{i=1}^{n} x_{i}
\end{equation}
is an example of what we will define in Section \ref{sec:EquivCohABLoc} to be a multiplicative class associated to the polynomial $f(x)=x$.  Here $x_{1}, \ldots, x_{n}$ are the formal Chern roots of the tangent bundle $T_{X}$.  We will later consider topological indices to be integrals of multiplicative classes over $X$.  By the following proposition, if $X$ is compact and K\"{a}hler, the Euler characteristic is a topological index associated to the Euler class.  

\begin{proppy}[\bfseries Chern-Gauss-Bonnet Theorem]
On a compact K\"{a}hler manifold $X$ of dimension $n$, the topological Euler characteristic is given by
\begin{equation}\label{eqn:GENGBONNTHM}
\chi(X) = \int_{X} e(T_{X}).
\end{equation}
\end{proppy}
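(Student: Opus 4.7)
The plan is to proceed via the Hodge-theoretic expression for $\chi(X)$ together with the Hirzebruch-Riemann-Roch theorem stated in (\ref{eqn:HirzzzBBRoch}). Since $X$ is compact Kähler, the identity (\ref{eqn:hodge-eulcharelnn}) allows me to reorganize the double alternating sum as
\begin{equation*}
\chi(X) \;=\; \sum_{p=0}^{n}(-1)^p \sum_{q=0}^{n}(-1)^q h^{p,q}(X) \;=\; \sum_{p=0}^{n}(-1)^p \chi(X, \Omega^p),
\end{equation*}
where $\Omega^p = \Lambda^p \Omega_X$ is the locally-free sheaf of holomorphic $p$-forms. This reduces the problem to evaluating an alternating sum of holomorphic Euler characteristics.

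Next, I would apply Hirzebruch-Riemann-Roch term-by-term, pulling the sum inside the integral to obtain
\begin{equation*}
\chi(X) \;=\; \int_X \bigg( \sum_{p=0}^n (-1)^p \text{ch}(\Omega^p) \bigg) \text{td}(X).
\end{equation*}
The key task is then to identify the integrand with the Euler class $e(T_X) = c_n(T_X)$. For this I would invoke the splitting principle. If $x_1, \ldots, x_n$ denote the formal Chern roots of $T_X$, then those of $\Omega_X$ are $-x_1, \ldots, -x_n$, and consequently
\begin{equation*}
\text{ch}(\Omega^p) \;=\; \sum_{i_1 < \cdots < i_p} e^{-(x_{i_1} + \cdots + x_{i_p})}.
\end{equation*}
Assembling these with the alternating signs produces the generating function for elementary symmetric polynomials:
\begin{equation*}
\sum_{p=0}^n (-1)^p \text{ch}(\Omega^p) \;=\; \prod_{i=1}^n \big(1 - e^{-x_i}\big).
\end{equation*}

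Finally, combining with the product expression $\text{td}(X) = \prod_{i=1}^n x_i / (1 - e^{-x_i})$ yields an immediate pointwise cancellation,
\begin{equation*}
\bigg( \sum_{p=0}^n (-1)^p \text{ch}(\Omega^p) \bigg)\text{td}(X) \;=\; \prod_{i=1}^n x_i \;=\; c_n(T_X) \;=\; e(T_X),
\end{equation*}
and integrating over $X$ gives the claim. The only non-routine ingredient is the symmetric-function identity $\sum_p (-1)^p [\Lambda^p E] \leftrightarrow \prod_i (1 - e^{y_i})$ in K-theory, but this is classical and does not represent a genuine obstacle. I should remark that an alternative route would be the Chern-Weil proof, relating the integral of the Euler form to the indices of a generic holomorphic vector field via Poincaré-Hopf, but the HRR-based argument above is the most natural in light of the machinery already developed in the exposition.
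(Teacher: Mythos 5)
Your proposal is correct and follows essentially the same route as the paper: reduce $\chi(X)$ to $\sum_p(-1)^p\chi(X,\Omega^p)$ via the Hodge decomposition, apply Hirzebruch--Riemann--Roch, and identify $\big(\sum_p(-1)^p\text{ch}(\Omega^p)\big)\text{td}(X)$ with $c_n(X)$ (the Borel--Serre identity). The only cosmetic difference is that the paper obtains the identity $\sum_p(-1)^p\text{ch}(\Omega^p)=\prod_i(1-e^{-x_i})$ by specializing Lemma \ref{lemmy:lambdaSChern} at $t=-1$, whereas you re-derive it directly from the splitting principle.
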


\begin{proof}
By Lemma \ref{lemmy:lambdaSChern} which is presented and proven in Section \ref{sec:Hirzzchiy}, it follows that
\begin{equation}\label{eqn:Borel-Serre}
\text{ch} \bigg( \bigoplus_{p=0}^{n} (-1)^{p} \Omega^{p}\bigg) \text{td}(X) = c_{n}(X).
\end{equation}
This is known as the Borel-Serre identity.  Because $X$ is K\"{a}hler, using the relationship (\ref{eqn:hodge-eulcharelnn}) between the Hodge numbers and Euler characteristic, we see that
\begin{equation}
\chi(X) =  \sum_{p,q=0}^{n} (-1)^{p+q} h^{p,q}(X) = \sum_{p=0}^{n}(-1)^{p} \sum_{q=0}^{n} (-1)^{q} \text{dim} H^{q}\big(X, \Omega^{p} \big).
\end{equation}
By the additivity of the Chern character on direct sums, we identify the righthand side with the holomorphic Euler characteristic $\chi(X, \oplus_{p} (-1)^{p} \Omega^{p})$ and the claim follows by applying (\ref{eqn:Borel-Serre}) within Hirzebruch-Riemann-Roch (\ref{eqn:HirzzzBBRoch}).  
\end{proof}

The Chern-Gauss-Bonnet theorem is a special case of the powerful Atiyah-Singer index theorem, relating the analytical index of a differential operator to a topological index.  In this thesis, we will not discuss analytical indices of operators.  For a general Riemannian manifold, the Euler characteristic can be computed as the integral of the Pfaffian applied to the curvature of the Levi-Civita connection.  The K\"{a}hler condition ensures compatibility between the complex and Riemannian theories, and identifies this Pfaffian (applied to the complexified tangent bundle) with the Euler class.  

When $X$ is some moduli space of interest, a topological index may be an interesting invariant of an enumerative problem or physical theory.  In the context of supersymmetric quantum field theories, a topological index may be referred to as a supersymmetric index.  A powerful computational tool for these invariants is what we will call \emph{localization}.  In the simple case of the Euler characteristic, we have the following useful localization result.  

\begin{proppy}
Let $X$ be a smooth compact manifold with an action by a real or algebraic torus $T$.  Then the Euler characteristic satisfies $\chi(X) = \chi(X^{T})$, where $X^{T}$ is the fixed point locus of the action.  
\end{proppy}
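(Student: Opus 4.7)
The plan is to reduce the statement to a single circle action and then invoke the Lefschetz fixed point theorem together with the fact that the torus is connected. Here is how I would proceed.

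First, I would reduce the general torus action to a one-parameter subgroup. If $T$ is a compact torus $(S^{1})^{n}$ acting smoothly on $X$, one can choose a topological generator $g \in T$, i.e.\ an element whose closure $\overline{\langle g\rangle}$ is all of $T$; such generators form a dense subset of $T$. Then the fixed locus of the cyclic subgroup generated by $g$ coincides with $X^{T}$ on the nose. For an algebraic torus $(\mathbb{C}^{\ast})^{n}$ acting on a smooth variety, one restricts the action to the maximal compact subtorus $(S^{1})^{n}$, whose fixed locus agrees with $X^{T}$ since a one-parameter subgroup fixes a point iff its compact form does. So it suffices to prove the identity when the acting group is a single circle $S^{1}$ and we replace $g$ by a generator.

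Next I would invoke the smooth Lefschetz fixed point theorem. For any diffeomorphism $g \colon X \to X$ of a compact manifold whose fixed set $X^{g}$ is a (possibly disconnected) smooth submanifold along which $dg - \mathrm{id}$ is invertible on the normal bundle (the ``clean'' or ``nondegenerate'' case), the Lefschetz number equals the Euler characteristic of the fixed submanifold:
\[
L(g,X) \;\coloneqq\; \sum_{i}(-1)^{i}\,\mathrm{tr}\bigl(g^{\ast}\mid H^{i}(X;\mathbb{Q})\bigr) \;=\; \chi(X^{g}).
\]
Applying this to our chosen generator $g$ gives $L(g,X) = \chi(X^{T})$, using that $X^{g} = X^{T}$.

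On the other hand, since $T$ is connected, the path $t \mapsto g^{t}$ (for a suitable one-parameter family in $T$ joining $g$ to the identity) gives a homotopy of $g$ through self-maps of $X$ to $\mathrm{id}_{X}$. Homotopy invariance of the induced maps on cohomology yields $g^{\ast} = \mathrm{id}^{\ast}$ on $H^{\ast}(X;\mathbb{Q})$, hence $L(g,X) = \chi(X)$. Combining the two expressions for $L(g,X)$ gives $\chi(X) = \chi(X^{T})$.

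The main obstacle is verifying the cleanness hypothesis for a generic one-parameter subgroup, so that the Lefschetz formula applies in the submanifold form rather than only producing a signed count of isolated fixed points. This follows from the standard fact that for a compact Lie group action on a smooth manifold, the fixed set of any closed subgroup is a smooth submanifold, combined with the observation that the differential of $g$ acts on the normal bundle to $X^{T}$ by an element of the isotropy representation with no eigenvalue $1$ — this is precisely why one picks $g$ to be a topological generator of $T$. Once this is in place, the argument reduces to bookkeeping: homotopy invariance on one side, clean Lefschetz on the other.
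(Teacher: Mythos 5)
The paper states this proposition without proof, so there is nothing to compare your argument against; on its own terms, your Lefschetz-number argument is the standard proof and is essentially correct. The reduction to a topological generator $g$ of the compact form, the identification $X^{g}=X^{T}$, the smoothness of the fixed locus (via an invariant metric and the slice theorem), and the homotopy of $g$ to $\mathrm{id}_{X}$ through $T$ giving $L(g,X)=\chi(X)$ are all fine.

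One point deserves more care than you give it. The clean Lefschetz fixed point formula does not in general read $L(g,X)=\chi(X^{g})$; it reads
\begin{equation}
L(g,X)=\sum_{i}\mathrm{sign}\,\det\bigl(I-dg\big|_{N_{i}}\bigr)\,\chi(F_{i}),
\end{equation}
where the $F_{i}$ are the components of the fixed set and $N_{i}$ their normal bundles. You verify that $I-dg$ is invertible on $N_{i}$ (cleanness), but you do not address why each sign is $+1$. This is where the torus structure enters a second time: choosing a $T$-invariant metric, the isotropy representation of $T$ on each normal space is orthogonal with no trivial summand, hence decomposes into two-dimensional rotation planes, and on a rotation by angle $\theta\neq 0$ one has $\det(I-R_{\theta})=2-2\cos\theta>0$. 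Without this observation the argument only gives a signed sum of the $\chi(F_{i})$. As an aside, there is a more elementary route that avoids Lefschetz theory altogether: after reducing to an $S^{1}$-action, every point of $X\setminus X^{S^{1}}$ has finite stabilizer, so the complement is fibered in circles over its orbit space and has vanishing compactly supported Euler characteristic; additivity of $\chi_{c}$ then gives $\chi(X)=\chi(X^{S^{1}})$ directly. Either approach is acceptable here.
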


\noindent The moral of this result (which will reappear in less trivial examples) is that as far as the Euler characteristic of a manifold with torus action is concerned, one can localize to the fixed point locus without losing information.

\subsection{The Euler Characteristic as the Witten Index}\label{subsec:EULcharrasWITTIND}

The Euler characteristic has a purely physical manifestation as the \emph{Witten index} in supersymmetric quantum mechanics, and is an example of a supersymmetric index.  In general, a supersymmetric index may be simply a number or it may be given by a more exotic gadget than a numerical invariant, for example the elliptic genus.  We will be brief and omit many physical details, so the interested reader is referred to \cite[Section 10.4]{hori_mirror_2003}.

In general, \emph{supersymmetric quantum mechanics} consists of the following ingredients.  There exists a Hilbert space $\mathscr{H}$ of states of the system, admitting a $\mathbb{Z}/2$-grading
\begin{equation}
\mathscr{H} = \mathscr{H}_{B} \oplus \mathscr{H}_{F}
\end{equation}
where $\mathscr{H}_{B}$ is the Hilbert space of bosonic states and $\mathscr{H}_{F}$ is the Hilbert space of fermionic states.  There must also exist a supersymmetry operator $Q$ with partner $\overline{Q} = Q^{\dagger}$, which both act on $\mathscr{H}$ and exchange bosonic and fermionic states,
\begin{equation}
Q, \overline{Q} : \mathscr{H}_{B} \longleftrightarrow \mathscr{H}_{F}.
\end{equation}
We call $Q$ and $\overline{Q}$ \emph{supersymmetries}.  They satisfy $Q^{2}=0, \overline{Q}^{2} =0$ and their anticommutator is given by
\begin{equation}\label{eqn:anticommHAM}
\big\{ Q, \overline{Q} \big\} = 2H
\end{equation}
where $H$ is the Hamiltonian of the theory.  The above equation implies that $Q$ and $\overline{Q}$ commute with the Hamiltonian.  Thus, they are symmetries in the ordinary quantum mechanical sense.  

The eigenvalues of $H$ are the allowed energies, and states with zero energy are called \emph{ground states}.  The space of ground states is identified with the $Q$-cohomology $H^{*}(Q)$.  If in addition, there exists a fermion operator $F$ commuting with $H$, then we have a grading
\begin{equation}
H^{*}(Q) = \bigoplus_{k=0}^{\infty} H^{k}(Q)
\end{equation}
where the charge $k$ is the eigenvalue of $F$.  This charge $k$ is even for bosonic states and odd for fermionic states.  The \emph{Witten index} is a supersymmetric index defined by the trace $\text{Tr}_{\mathscr{H}}(-1)^{F}e^{-\beta H}$, and turns out to coincide with the difference between the number of bosonic and fermionic ground states
\begin{equation}\label{eqn:WittINDexpo}
\text{Tr}_{\mathscr{H}}(-1)^{F}e^{-\beta H} = n_{B}^{(0)} - n_{F}^{(0)}.
\end{equation}
The Witten index is independent of $\beta$, and the above difference between bosonic and fermionic ground states is insensitive to any changes in the parameters of the theory, though $n_{B}^{(0)}$ and $n_{F}^{(0)}$ may individually vary.

\subsubsection{Supersymmetric Quantum Mechanics on a Riemannian Manifold}  

All of the above structures emerge nicely in the geometric setting of supersymmetric particles moving in a Riemannian manifold $(M,g)$ of dimension $m$.  Such a theory is a (1+0)-dimensional non-linear sigma model, so we study continuous maps $\phi$ from $S^{1}$ or $\mathbb{R}$ into the target space $M$.  The bosonic fields are the local coordinates $\phi^{i}$ and the fermionic fields $\psi$ are local sections of the tangent bundle of $M$ (restricted to the image of $\phi$).  One can write down a classical action $S$, \cite{hori_mirror_2003} and one reason we need $M$ to be Riemannian is that the kinetic term must be proportional to $g_{ij} \tfrac{d\phi^{i}}{dt} \tfrac{d\phi^{j}}{dt}$.  

This system can be canonically quantized and after some details which we omit, one finds that the Hilbert space $\mathscr{H}$ is simply the exterior algebra of $M$
\begin{equation}
\mathscr{H} = \Lambda^{*}(M) \coloneqq \bigoplus_{k=0}^{m} \Omega^{k}(M)
\end{equation}
such that pure bosonic states live in $\Omega^{k}(M)$ with $k$ even, and fermionic states live in $\Omega^{k}(M)$ with $k$ odd.  The $L^{2}$-inner product on forms is the natural one described in Section \ref{sec:YMACfunnn}.  The supersymmetry partners are the operators 
\begin{equation}
Q = d \,\,\,\,\,\,\,\,\,\,\,\,\,\,\,\,\,\, \overline{Q} = d^{*}
\end{equation} 
where $d$ is the exterior derivative and $d^{*}$ is the adjoint with respect to the inner product (see Lemma \ref{lemmy:adjjjoplema}).   

Recalling (\ref{eqn:anticommHAM}), the Hamiltonian is related to the anticommutator $\{d, d^{*} \} = \Delta = 2H$, and we therefore identify the space of ground states of the theory with the zero-modes of the Laplacian $\Delta$, i.e. the harmonic forms $\mathcal{H}^{*}(M,g)$.  The physical fact that $\mathcal{H}^{*}(M,g)$ should be isomorphic to the $Q$-cohomology is consistent with the identification $Q=d$ in addition to the Hodge-de Rham isomorphism $\mathcal{H}^{*}(M,g) \cong H_{dR}^{*}(M)$.  

We can now compute the Witten index (\ref{eqn:WittINDexpo}) in this setting.  Because the ground states are given by the harmonic forms, and the Betti numbers $b_{k}(M)$ encode the number of harmonic $k$-forms, we have
\begin{equation}
\text{Tr}_{\mathscr{H}}(-1)^{F}e^{-\beta H} = \sum_{k-\text{even}} b_{k}(M) - \sum_{k-\text{odd}} b_{k}(M) = \sum_{k=0}^{m} (-1)^{k} b_{k}(M)
\end{equation}
which is none other than the Euler characteristic of $M$.  In addition to the above computation, a fundamental principle of quantum mechanics guarantees the same result by performing a path integral.  This principle gives rise to the equalities
\begin{equation}\label{eqn:pathintBettiredWI}
\sum_{k=0}^{m} (-1)^{k} b_{k}(M) = \text{Tr}_{\mathscr{H}} (-1)^{F} e^{-\beta H} = \int \mathcal{D} \phi \mathcal{D} \psi e^{-\mathcal{S}}
\end{equation}
and one can show that the path integral on the righthand side localizes to the expected integral representation of the Euler class involving the Riemann curvature.  If $M$ is compact and K\"{a}hler, then (\ref{eqn:pathintBettiredWI}) agrees with (\ref{eqn:GENGBONNTHM}).  The Witten index as an integral of the Euler class over $M$ is sometimes called a spacetime index, and one benefit of considering the more general field-theoretic formulation is that the index makes sense even if $M$ is non-compact.  

The takeaway from this brief survey should be that certain supersymmetric indices in physics give rise to interesting mathematical invariants.  Moreover, a physical duality may induce a mathematical conjecture, or an index theorem, such as (\ref{eqn:pathintBettiredWI}).  In fact, we have seen this earlier when interpreting the Donaldson-Thomas invariants as a supersymmetric index; note how analogous (\ref{eqn:pathintBettiredWI}) and (\ref{eqn:DTBPSindTHM}) are!  The Donaldson-Thomas invariants as integrals over the virtual class are analogous to the integral of the Euler class over $M$, while the Behrend-weighted Euler characteristic is analogous to $\text{Tr}_{\mathscr{H}}(-1)^{F}e^{-\beta H}$, and makes sense even when the moduli scheme is not proper.  Many of these principals reappear when studying more exotic indices.

\section{Equivariant Cohomology and Atiyah-Bott Localization} \label{sec:EquivCohABLoc}

Let $X$ be a compact complex manifold of dimension $d$ and let $E$ be a complex vector bundle on $X$ of rank $k$.  A \emph{multiplicative class} $A$ is a characteristic class applied to complex vector bundles on $X$, such that $A(E_{1} \oplus E_{2}) = A(E_{1})A(E_{2})$.  Associated to such a multiplicative class is a formal power series $f(x)$ such that $A(L) = f(c_{1}(L))$ for all line bundles $L$, which implies by the splitting principle $A(E) = \prod f(x_{i})$, where $x_{1}, \ldots, x_{k}$ are the formal Chern roots of $E$.  Motivated by the Euler characteristic example, we will study \emph{topological indices} $\Phi(X)$ defined by integrating a multiplicative class $A(E)$ over $X$
\begin{equation}\label{eqn:generalindex}
\Phi(X) = \int_{X} A(E) = \int_{X} \prod_{i=1}^{k} f(x_{i}).
\end{equation}
One very powerful method for computing such a $\Phi(X)$ in the case where $X$ carries a torus action is Atiyah-Bott localization.  If $X$ is not compact or infinite dimensional, then $\int_{X} A(E)$ does not make sense, but if the fixed locus is finite dimensional and compact, one can still define the topological index $\Phi(X)$ by performing the integral equivariantly.  It will therefore be necessary for us to understand equivariant cohomology and localization in some detail.

\subsection{An Introduction to Equivariant Cohomology}

Let $M$ be a smooth manifold carrying a left action by a group.  Though one can be more general, we will only consider the cases where the group is an algebraic torus $T = (\mathbb{C}^{*})^{n}$ or a real torus $T=(S^{1})^{n}$.  The $T$-equivariant cohomology of $M$, which we denote $H_{T}^{*}(M)$ is an invariant encoding topological data of both $M$ and the manner in which $T$ acts on $M$.  Unless stated otherwise, we will take coefficients in $\mathbb{Q}$.  

The general philosophy of equivariant cohomology is that instead of forgetting the group action, one should set up a cohomology theory remembering this data.  The main application will be that one can localize a problem to the fixed point locus of the action without losing information.  We will also require it to be functorial with respect to equivariant maps.  As an example, if $T$ acts freely on $M$, then $M/T$ is a smooth manifold, and the $T$-equivariant cohomology should agree with the ordinary cohomology of $M/T$
\begin{equation}\label{eqn:equivcohomfreeact}
H^{*}_{T}(M) = H^{*}(M/T).
\end{equation}  
However, when the action is not free, the ordinary cohomology of the orbit space does not retain enough information, and may in fact be trivial.   

The idea of Borel is that one should find a space homotopic to $M$ which encodes the $T$-action on $M$, and on which $T$ acts freely.  Moreover, this space should be canonical up to homotopy.  Equivariant cohomology should then be defined as the ordinary cohomology of the orbit space.  Milnor had proven that for all topological groups $G$, there exists a contractible space $EG$, unique up to homotopy, on which $G$ acts freely from the right \cite{milnor_construction_1956, milnor_construction_1956-1}.  The orbit space $BG \coloneqq EG/G$ is called the \emph{classifying space} of $G$, and $EG \to BG$ is the universal principal $G$-bundle on the classifying space.  

Returning to the case at hand, because $T$ acts freely on $ET$, it also acts freely on $ET \times M$, with diagonal action defined by $t \cdot (x,m) = (xt , t^{-1}m)$, for all $t \in T$.  The quotient is denoted
\begin{equation}
M_{T} \coloneqq ET \times_{T} M = \big(ET \times M\big)/T
\end{equation}
and can also be interpreted as an associated fiber bundle to the universal principal $T$-bundle $ET \to BT$.

\begin{defn}
The (Borel) $T$-equivariant cohomology of $M$ with rational coefficients is defined by
\begin{equation}
H^{*}_{T}(M) \coloneqq H^{*}(M_{T})
\end{equation}
where $H^{*}(M_{T})$ is the ordinary cohomology of the orbit space $M_{T}$.  
\end{defn}

The $T$-equivariant cohomology is independent of the choice of $ET$.  Because $M_{T} \to BT$ is a fiber bundle with fiber $M$, we can consider the inclusion $\iota_{M}: M \hookrightarrow M_{T}$, and we get the following ring homomorphism induced by pullback
\begin{equation}\label{eqn:incposhformwa}
\iota_{M}^{*}: H_{T}^{*}(M) \longrightarrow H^{*}(M).
\end{equation}
Moreover, pulling back by the equivariant map $M \to \text{pt}$ turns the $T$-equivariant cohomology $H_{T}^{*}(M)$ into a $H_{T}^{*}(\text{pt})$-module.  It is clear that in the case of $M=\text{pt}$, we have $M_{T} = BT$.  Therefore, the $T$-equivariant cohomology of a point agrees with the standard group cohomology
\begin{equation}\label{eqn:groupcohompoints}
H^{*}_{T}(\text{pt}) = H^{*}(BT).
\end{equation}
One can also show that in the case where $T$ acts freely on $M$, $M_{T}$ is homotopy equivalent to $M/T$ which verifies that the $T$-equivariant cohomology of $M$ agrees with the ordinary cohomology of $M/T$ (\ref{eqn:equivcohomfreeact}).  

It is important to find an explicit set of generators of the $T$-equivariant cohomology of a point.  First consider $T=\mathbb{C}^{*}$ or $T=S^{1}$.  We clearly have the following two ways of writing $\mathbb{P}^{N}$ as a quotient
\begin{equation}
\big(\mathbb{C}^{N+1}-\{0\}\big) / \mathbb{C}^{*} = \mathbb{P}^{N} = S^{2N+1}/ S^{1}.
\end{equation}
If we formally take $N \to \infty$, then $\mathbb{C}^{\infty}-\{0\}$ is a contractible space on which $\mathbb{C}^{*}$ acts freely from the right and similarly, $S^{\infty}$ is a contractible space on which $S^{1}$ acts freely from the right.  We then have
\begin{equation}\label{eqn:ETBTcrassspace}
\big(\mathbb{C}^{\infty}-\{0\}\big) / \mathbb{C}^{*} = \mathbb{P}^{\infty} = S^{\infty}/ S^{1}.
\end{equation}
Therefore, if $T=\mathbb{C}^{*}$, we should choose $ET = \mathbb{C}^{\infty} - \{0\}$, and for $T = S^{1}$, we choose $ET = S^{\infty}$.  By (\ref{eqn:ETBTcrassspace}), we see that in either case the classifying space is $BT = \mathbb{P}^{\infty}$.  Given that $H^{*}(\mathbb{P}^{\infty})$ is generated by a single element $\epsilon \in H^{2}(\mathbb{P}^{\infty})$ without relations, by (\ref{eqn:groupcohompoints}) we have
\begin{equation}
H^{*}_{T}(\text{pt}) = H^{*}(\mathbb{P}^{\infty}) = \mathbb{Q}[\epsilon].
\end{equation}

It is not hard to see that for either $T = (\mathbb{C}^{*})^{n}$ or $T=(S^{1})^{n}$, the classifying space is $(\mathbb{P}^{\infty})^{n}$, and
\begin{equation}\label{eqn:genequivcohomopppt}
H^{*}_{T}(\text{pt}) = H^{*}((\mathbb{P}^{\infty})^{n}) = \mathbb{Q}[\epsilon_{1}, \ldots, \epsilon_{n}] 
\end{equation}
with generators $\epsilon_{i} \in H^{2}((\mathbb{P}^{\infty})^{n})$.  In this case of $M = \text{pt}$, the map $\iota_{\text{pt}}^{*}$ in (\ref{eqn:incposhformwa}) is given by setting $\epsilon_{1} = \ldots = \epsilon_{n}=0$.  If $T = (\mathbb{C}^{*})^{n}$, then the $\epsilon_{i}$ are complex indeterminates while if $T=(S^{1})^{n}$, they are real indeterminates.  In fact, in either case the $\epsilon_{i}$ may be realized as generators of the Lie algebra $\mathfrak{t}$ of $T$, or equivalently as first Chern classes of (pullbacks of) canonical line bundles on $\mathbb{P}^{\infty}$.  This can be understood as follows.

\begin{defn}\label{defn:equivvvcharrcresass}
For $T$ either a real or algebraic torus, let $\widetilde{V}$ be a $T$-equivariant complex vector bundle on $M$.  This means that the action of $T$ on $M$ lifts to an action on $\widetilde{V}$, linear on each fiber.  Then $V_{T} \coloneqq ET \times_{T} \widetilde{V}$ is a complex vector bundle on $M_{T}$.  If $c_{*}(-)$ is any characteristic class, we define the equivariant class $c_{*}(-)_{T}$ by
\begin{equation}\label{eqn:defnequivclasss}
c_{*}(\widetilde{V})_{T} \coloneqq c_{*}(V_{T}) \in H^{*}_{T}(M).  
\end{equation}
\end{defn}

\noindent Associated to $T$, we can also consider the lattice $\Gamma_{T}$ of characters $\chi: T \to \mathbb{C}^{*}$, or $\chi: T \to S^{1}$ if $T$ is real.  
\begin{defn}\label{defn:constLBB}
Given a character $\chi \in \Gamma_{T}$, denote also by $\chi$ the one-dimensional $T$-module on which $(t_{1}, \ldots, t_{n}) \in T$ acts as multiplication by $\chi \big((t_{1}, \ldots t_{n})\big)$.  We will equivalently consider $\chi$ to be an equivariant line bundle over a point.  Recalling that $ET \to (\mathbb{P}^{\infty})^{n}$ is a principal $T$-bundle, we can use $\chi$ to construct the associated line bundle $\mathscr{L}_{\chi} \coloneqq ET \times_{\chi} \chi$ on $(\mathbb{P}^{\infty})^{n}$.
\end{defn}

\noindent For all $i=1, \ldots, n$ we have canonical characters $(t_{1}, \ldots, t_{n}) \mapsto t_{i}$, and making a mild (but conventional) abuse of notation, we denote also by $t_{i}$ the one-dimensional $T$-module on which $(t_{1}, \ldots, t_{n}) \in T$ acts as multiplication by $t_{i}$.  Denote by $\mathscr{L}_{i}$ the line bundle on $(\mathbb{P}^{\infty})^{n}$ constructed as in Definition \ref{defn:constLBB}, and note that
\begin{equation}
\mathscr{L}_{i}  \cong p_{i}^{*} \mathcal{O}_{\mathbb{P}^{\infty}}(-1)
\end{equation}
where $p_{i} : (\mathbb{P}^{\infty})^{n} \to \mathbb{P}^{\infty}$ is the projection onto the $i$-th factor, and $\mathcal{O}_{\mathbb{P}^{\infty}}(-1)$ is the tautological line bundle on $\mathbb{P}^{\infty}$.  Because $t_{i}$ is an equivariant line bundle over a \emph{point}, it produces no invariants via characteristic classes in classical cohomology.  But noting that for $M =\text{pt}$, $M_{T} = (\mathbb{P}^{\infty})^{n}$, by the definition (\ref{eqn:defnequivclasss}) of equivariant characteristic classes, we have
\begin{equation}
c_{1}(t_{i})_{T} = c_{1}(\mathscr{L}_{i}).
\end{equation}

The connection with the Lie algebra $\mathfrak{t}$ of the torus comes in the form of the isomorphism $\mathfrak{t}^{\smvee} \cong \Gamma_{T} \otimes_{\mathbb{Z}} \mathbb{C}$ between the complexified character lattice and the dual of the Lie algebra.  If $T$ is a real torus, one takes instead the realification of $\Gamma_{T}$.  These results culminate in the following proposition, whose proof is straightforward and left to the reader.

\begin{proppy}
Given a real or algebraic torus $T$ with Lie algebra $\mathfrak{t}$, we have the following isomorphism of lattices
\begin{equation}
\Gamma_{T} \cong H^{2}_{T}(\text{pt}; \mathbb{Z})
\end{equation}
by mapping a character $\chi$ to the first Chern class $c_{1}(\mathscr{L}_{\chi})$.  Using the relationship noted just above between $\mathfrak{t}^{\smvee}$ and $\Gamma_{T}$, we have the degree-doubling isomorphism
\begin{equation}
\mathbb{Q}[\mathfrak{t}] \coloneqq \text{Sym}(\mathfrak{t}^{\smvee}) \xrightarrow{\hspace*{0.2cm} \sim \hspace*{0.2cm}} H^{*}_{T}(\text{pt}).
\end{equation}
\end{proppy}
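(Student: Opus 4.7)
The plan is to establish the first isomorphism on an explicit free basis and verify multiplicativity, then bootstrap to the second isomorphism using the standard cohomology computation for $(\mathbb{P}^{\infty})^{n}$.

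First I would verify that $\chi \mapsto c_{1}(\mathscr{L}_{\chi})$ is a group homomorphism. The associated bundle construction is multiplicative on characters: for $\chi_{1}, \chi_{2} \in \Gamma_{T}$, the one-dimensional $T$-module $\chi_{1} \chi_{2}$ is canonically $\chi_{1} \otimes \chi_{2}$, and hence $\mathscr{L}_{\chi_{1} \chi_{2}} \cong \mathscr{L}_{\chi_{1}} \otimes \mathscr{L}_{\chi_{2}}$ on $(\mathbb{P}^{\infty})^{n}$. Applying $c_{1}$ and using its additivity on tensor products of line bundles gives the homomorphism property.

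Next I would show the map is an isomorphism on a basis. The character lattice is free of rank $n$ with basis given by the coordinate characters $t_{i}$, i.e.\ $\Gamma_{T} \cong \mathbb{Z}^{n}$. By the observation immediately preceding the proposition, $\mathscr{L}_{t_{i}} \cong p_{i}^{*} \mathcal{O}_{\mathbb{P}^{\infty}}(-1)$ where $p_{i}: (\mathbb{P}^{\infty})^{n} \to \mathbb{P}^{\infty}$ is projection onto the $i$-th factor. Taking $\epsilon_{i} \coloneqq -c_{1}(p_{i}^{*}\mathcal{O}_{\mathbb{P}^{\infty}}(-1)) = p_{i}^{*} c_{1}(\mathcal{O}_{\mathbb{P}^{\infty}}(1))$ as the standard generators of $H^{2}_{T}(\text{pt}; \mathbb{Z}) = H^{2}((\mathbb{P}^{\infty})^{n}; \mathbb{Z})$, the map sends $t_{i} \mapsto -\epsilon_{i}$. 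This is manifestly an isomorphism of rank-$n$ free abelian groups, giving $\Gamma_{T} \cong H^{2}_{T}(\text{pt}; \mathbb{Z})$.

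For the second isomorphism, I would invoke the K\"unneth formula together with the standard computation $H^{*}(\mathbb{P}^{\infty}; \mathbb{Q}) = \mathbb{Q}[h]$ to obtain
\[
H^{*}_{T}(\text{pt}) = H^{*}\big((\mathbb{P}^{\infty})^{n}\big) = \mathbb{Q}[\epsilon_{1}, \ldots, \epsilon_{n}],
\]
which is precisely the symmetric algebra on the degree-two piece $H^{2}_{T}(\text{pt}; \mathbb{Q})$. Tensoring the first isomorphism with $\mathbb{Q}$ (or $\mathbb{C}$) and combining with the identification $\mathfrak{t}^{\smvee} \cong \Gamma_{T} \otimes_{\mathbb{Z}} \mathbb{C}$ (respectively $\otimes_{\mathbb{Z}} \mathbb{R}$ for $T = (S^{1})^{n}$) via the differential $\chi \mapsto d\chi|_{e}$, we get $\mathfrak{t}^{\smvee} \cong H^{2}_{T}(\text{pt})$ as vector spaces. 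Taking symmetric algebras yields the degree-doubling ring isomorphism $\mathrm{Sym}(\mathfrak{t}^{\smvee}) \xrightarrow{\sim} H^{*}_{T}(\text{pt})$; the doubling arises because the generators $\mathfrak{t}^{\smvee}$ are placed in cohomological degree two.

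There is no deep obstacle; the statement ultimately reduces to the cohomology of $\mathbb{P}^{\infty}$ and K\"unneth. The only point requiring care is consistent bookkeeping of sign conventions — specifically, the compatibility of the geometric identification $\chi \mapsto c_{1}(\mathscr{L}_{\chi})$ with the Lie-theoretic identification $\chi \mapsto d\chi|_{e}$ — and ensuring that the chosen sign for $\mathcal{O}_{\mathbb{P}^{\infty}}(\pm 1)$ is tracked throughout, which does not affect the fact that one obtains an isomorphism.
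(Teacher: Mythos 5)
Your proof is correct and follows exactly the route the paper intends: the paper leaves this proof to the reader, but the surrounding discussion (the identification $\mathscr{L}_{i} \cong p_{i}^{*}\mathcal{O}_{\mathbb{P}^{\infty}}(-1)$ and the computation $H^{*}_{T}(\text{pt}) = \mathbb{Q}[\epsilon_{1}, \ldots, \epsilon_{n}]$ via the classifying space $(\mathbb{P}^{\infty})^{n}$) is precisely the scaffolding you assemble. The only minor discrepancy is that your $\epsilon_{i}$ is the negative of the paper's convention $\epsilon_{i} = c_{1}(\mathscr{L}_{i})$, which, as you note, does not affect the isomorphism.
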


\begin{rmk}\label{rmk:multintttwarning}
By the above proposition, the generators $\epsilon_{i}$ of the full $T$-equivariant cohomology of a point (\ref{eqn:genequivcohomopppt}) can equivalently be thought of as generators of the Lie algebra $\mathfrak{t}$ or as equivariant first Chern classes $\epsilon_{i} = c_{1}(t_{i})_{T} = c_{1}(\mathscr{L}_{i})$.  This induces an interpretation of the parameters
\begin{equation}
t_{i} = e^{-\epsilon_{i}} = \text{ch}(\mathscr{L}_{i}^{\smvee})
\end{equation}
as equivalently elements of $\mathbb{C}^{*}$, or as the Chern character of the line bundle $\mathscr{L}^{\smvee}_{i} \to (\mathbb{P}^{\infty})^{n}$, or finally as one-dimensional $T$-modules.  The reader should be mindful of the multiple interpretations.    
\end{rmk}

\subsection{An Introduction to Atiyah-Bott Localization}\label{subsec:IntrABBBLOCC}

Atiyah-Bott localization is a powerful tool for computing integrals over smooth compact complex manifolds carrying a torus action.  The general idea is that one can ``localize" the integral to a sum of integrals, one over each of the fixed components on which the torus acts trivially.  Therefore, let us begin with some generalities where $M$ is a smooth manifold on which $T$ acts trivially.  In such a case we have $M_{T} = M \times (\mathbb{P}^{\infty})^{n}$ from which it follows that
\begin{equation}
H_{T}^{*}(M) = H^{*}(M) \otimes_{\mathbb{Q}} \mathbb{Q}[\epsilon_{1}, \ldots, \epsilon_{n}].
\end{equation}
In other words, the $T$-equivariant cohomology of $M$ factors into a product of the ordinary cohomology of $M$ and the $T$-equivariant cohomology of a point.  We allow $T$ to be either a real or algebraic torus.  Since the $\epsilon_{l}$ can be interpreted as generators of the Lie algebra of $T$, they will be either real or complex parameters, depending on which case we have.  

\begin{defn}\label{defn:defnequivLBfpr}
Let $M$ be a smooth manifold on which $T$ acts trivially.  For all $l=1, \ldots, n$ and for $k \in \mathbb{Z}$ we define an equivariant line bundle $\mathbb{C}_{l}(M, k)$ on $M$ via the relation
\[(t_{1}, \ldots, t_{n}) \cdot (m, z) \sim (m, t_{l}^{k} z)\]
for all $m \in M, z \in \mathbb{C}$, and all elements $(t_{1}, \ldots, t_{n})$ of the torus $T$.  
\end{defn}

\noindent One should think of $\mathbb{C}_{l}(M, k)$ as the trivial line bundle in the ordinary sense, but carrying a fiberwise action by $T$ via multiplication by $t_{l}^{k}$.  We refer to the integer $k$ as the \emph{weight} of the equivariant bundle.  In the case of a point, $\mathbb{C}_{l}(\text{pt}, k)$ is simply a $T$-module, and coincides with the one-dimensional $T$-module denoted $t_{l}^{k}$ (we remind the reader of Remark \ref{rmk:multintttwarning} where we warn about common multiple interpretations of symbols).  

Using Definition \ref{defn:equivvvcharrcresass} we can construct from $\mathbb{C}_{l}(M, k)$ an associated line bundle on $M \times (\mathbb{P}^{\infty})^{n}$.  This line bundle is trivial over $M$ and isomorphic to $\mathscr{L}_{l}^{k}$ over $(\mathbb{P}^{\infty})^{n}$.  We therefore see that 
\begin{equation}\label{eqn:firchcrassequivlb}
c_{1}\big(\mathbb{C}_{l}(M, k) \big)_{T} = k \, \epsilon_{l} \in H_{T}^{*}(M).  
\end{equation}

\begin{defn}
Let $T$ act trivially on a smooth manifold $M$ and let $E \to M$ be an ordinary complex vector bundle.  For a choice of $k_{l} \in \mathbb{Z}$ for each $l =1, \ldots, n$ an equivariant lift of $E$ is an equivariant bundle  
\[ \widetilde{E} \coloneqq E \, \bigotimes_{l=1}^{n} \mathbb{C}_{l}(M, k_{l}).\]  
\end{defn}

\noindent Again using Definition \ref{defn:equivvvcharrcresass} we can construct from $\widetilde{E}$ the bundle $E_{T}$ on $M \times (\mathbb{P}^{\infty})^{n}$ and compute Chern classes.  The bundle $E_{T}$ is isomorphic to $E$ along $M$ and isomorphic to $\otimes_{l=1}^{n} \mathscr{L}_{l}^{k_{l}}$ along $(\mathbb{P}^{\infty})^{n}$.  Using (\ref{eqn:firchcrassequivlb}) along with the fact that the first Chern class is additive on tensor products, we see
\begin{equation}
c_{1}(\widetilde{E})_{T} = c_{1}(E) + \sum_{l=1}^{n} k_{l} \epsilon_{l}.
\end{equation}

For the duration of this section we will let $X$ be a compact complex manifold of dimension $d$ carrying a $T$-action.  Define $F_{1}, \ldots, F_{p}$ to be the connected components of the fixed locus on which $T$ acts trivially.  Each $F_{j}$ is itself a smooth compact complex manifold.  There is a canonical lift of the action on $X$ to the tangent bundle $T_{X}$.  By the splitting principle we can \emph{formally} decompose $T_{X} = L_{1} \oplus \cdots \oplus L_{d}$ as a direct sum of line bundles.  Let $x_{i} = c_{1}(L_{i})$ be the Chern roots.  Using the canonical lift of the action to $T_{X}$, we can restrict $L_{i}$ to a fixed component $F_{j}$ and lift the action to get the equivariant line bundle 
\begin{equation}
\widetilde{L_{i} |_{F_{j}}} = L_{i} |_{F_{j}} \, \bigotimes_{l=1}^{n} \mathbb{C}_{l}(F_{j}, k_{l, i}^{(j)})
\end{equation}
for some weights $k_{l, i}^{(j)}$.  We can do this for all $i,j$.  We can therefore give the following \emph{formal} expression for the full equivariant tangent bundle restricted to a fixed component
\begin{equation}
\widetilde{T_{X}|_{F_{j}}} = \bigoplus_{i=1}^{d} \widetilde{L_{i} |_{F_{j}}}.
\end{equation}
Using the additivity of the first Chern class on tensor products as well as (\ref{eqn:firchcrassequivlb}) we can compute the \emph{equivariant Chern roots} of $\widetilde{T_{X}|_{F_{j}}}$ to be
\begin{equation}\label{eqn:equivvChrootequivbundr}
c_{1}\big(\widetilde{L_{i} |_{F_{j}}}\big)_{T} = x_{i} + \sum_{l=1}^{n} k_{l,i}^{(j)} \epsilon_{l} \in H_{T}^{2}(F_{j}, \mathbb{Z}).  
\end{equation}

In order to state and apply the Atiyah-Bott localization theorem, we need to understand the normal bundles $\mathcal{N}_{j}$ to each $F_{j}$ as an equivariant bundle.  Because $T$ acts trivially on $F_{j}$, the directions (labeled by $i=1, \ldots, d$) tangent to $F_{j}$ have weights $k^{(j)}_{l, i}=0$ for all $l =1, \ldots, n$.  Therefore, the equivariant normal bundle $\widetilde{\mathcal{N}_{j}}$ to $F_{j}$ is spanned by directions such that $k^{(j)}_{l, i} \neq 0$ for \emph{some} $l =1, \ldots, n$.  We are in particular interested in the equivariant Euler class of $\widetilde{\mathcal{N}_{j}}$ which we know should be the product of all the equivariant Chern roots computed in (\ref{eqn:equivvChrootequivbundr}).  We therefore get
\begin{equation}
e(\widetilde{\mathcal{N}_{j}})_{T} = \prod_{I}\bigg( x_{i} + \sum_{l=1}^{n} k_{l, i}^{(j)} \epsilon_{l}\bigg)
\end{equation}
where the $I$ is meant to indicate that the product is taken over all $i$ such that $k_{l, i}^{(j)} \neq 0$ for some $l =1, \ldots, n$.

\begin{thm}[\bfseries Atiyah-Bott \cite{atiyah_moment_1984}]
If we denote by $\mathbb{Q}(\epsilon_{1}, \ldots, \epsilon_{n})$ the field of fractions of the $T$-equivariant cohomology ring of a point $H_{T}^{*}(\text{pt}) = \mathbb{Q}[\epsilon_{1}, \ldots, \epsilon_{n}]$, we have the following isomorphism
\begin{equation}\label{eqn:ABlocalllzz}
H_{T}^{*}(X) \otimes_{H_{T}^{*}(\text{pt})}\mathbb{Q}(\epsilon_{1}, \ldots, \epsilon_{n}) \xrightarrow{\hspace*{0.2cm} \simeq \hspace*{0.2cm}} \bigoplus_{j=1}^{p} H_{T}^{*}(F_{j}) \otimes_{H_{T}^{*}(\text{pt})}\mathbb{Q}(\epsilon_{1}, \ldots, \epsilon_{n})
\end{equation}
induced by the map $\alpha \mapsto \sum_{j=1}^{p} \iota_{j}^{*}(\alpha) e(\widetilde{\mathcal{N}_{j}})^{-1}_{T}$, where the equivariant map $\iota_{j} : F_{j} \hookrightarrow X$ is the inclusion.  
\end{thm}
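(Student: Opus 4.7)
The plan is to establish the isomorphism by combining a ``vanishing'' theorem for equivariant cohomology of fixed-point-free spaces with the self-intersection formula, after reducing to the case of a rank-one torus. First, I would set up the long exact sequence of the pair $(X, F)$ where $F = \bigsqcup_{j=1}^{p} F_{j}$ is the full fixed locus: writing $U = X \setminus F$, equivariance of the inclusions yields
\begin{equation}
\cdots \longrightarrow H^{k}_{T}(X, F) \longrightarrow H^{k}_{T}(X) \xrightarrow{\hspace*{0.2cm} \iota^{*} \hspace*{0.2cm}} H^{k}_{T}(F) \longrightarrow H^{k+1}_{T}(X, F) \longrightarrow \cdots
\end{equation}
and by excision and a tubular-neighborhood argument, $H^{*}_{T}(X, F) \cong H^{*}_{T}(U, \partial U)$ is built out of $H^{*}_{T}(U)$. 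The key ingredient will be that $H^{*}_{T}(U)$ becomes \emph{torsion} as a module over $H^{*}_{T}(\text{pt}) = \mathbb{Q}[\epsilon_{1}, \ldots, \epsilon_{n}]$, and therefore vanishes after tensoring with the field of fractions $\mathbb{Q}(\epsilon_{1}, \ldots, \epsilon_{n})$. This would force the localized restriction map $\iota^{*}$ to be an isomorphism.

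To prove the torsion statement, I would first reduce to the rank-one case. If $T = (S^{1})^{n}$ and the theorem is known for each circle subgroup, one can induct on $n$: split $T = T' \times S^{1}$, let $X^{S^{1}} \supset X^{T}$ be the $S^{1}$-fixed locus, apply localization for $S^{1}$ to pass from $X$ to $X^{S^{1}}$ (inverting the $S^{1}$-weight $\epsilon_{n}$), and then apply the inductive hypothesis for $T'$. Thus it suffices to prove: if $S^{1}$ acts on a space $U$ without fixed points, then $H^{*}_{S^{1}}(U)$ is annihilated by some power of $\epsilon \in H^{2}_{S^{1}}(\text{pt})$. This is the main technical obstacle. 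The standard route is a slice/Mayer-Vietoris argument: cover $U$ by finitely many $S^{1}$-invariant open sets, each of which is equivariantly homotopy equivalent to a homogeneous space $S^{1}/H$ with $H \subsetneq S^{1}$ a proper (hence finite) subgroup, so that $H^{*}_{S^{1}}(S^{1}/H) = H^{*}(BH)$ is torsion over $\mathbb{Q}[\epsilon]$. A Mayer-Vietoris induction on the number of pieces then extends this to all of $U$. This compactness-based argument is exactly where care is required.

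Once the localized $\iota^{*}$ is known to be an isomorphism, the remaining task is to identify its inverse with the stated formula $\alpha \mapsto \sum_{j} \iota_{j}^{*}(\alpha)\, e(\widetilde{\mathcal{N}_{j}})_{T}^{-1}$. Here I would invoke the equivariant self-intersection formula: for each fixed component $F_{j}$, the composition $\iota_{j}^{*} \circ \iota_{j*}$ on $H^{*}_{T}(F_{j})$ is multiplication by $e(\widetilde{\mathcal{N}_{j}})_{T}$, while $\iota_{k}^{*} \circ \iota_{j*} = 0$ for $k \neq j$ since the components are disjoint. The weights $k^{(j)}_{l,i}$ appearing in the equivariant Chern roots (\ref{eqn:equivvChrootequivbundr}) are all nonzero on $\widetilde{\mathcal{N}_{j}}$ (that is exactly what it means for $F_{j}$ to be a component of the fixed locus rather than a larger piece), so $e(\widetilde{\mathcal{N}_{j}})_{T}$ is a product of nonzero linear combinations of the $\epsilon_{l}$ together with classes on $F_{j}$, and the $\epsilon$-part is invertible in $\mathbb{Q}(\epsilon_{1}, \ldots, \epsilon_{n})$; one then inverts $e(\widetilde{\mathcal{N}_{j}})_{T}$ as a geometric series. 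Combining these, the map $\beta \mapsto \sum_{j} \iota_{j*}\bigl(\beta_{j}\, e(\widetilde{\mathcal{N}_{j}})_{T}^{-1}\bigr)$ is a two-sided inverse to $\iota^{*}$ up to the vanishing of the kernel/cokernel established in the previous step, finishing the proof.

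I expect the genuine obstacle to be the torsion lemma for fixed-point-free actions: everything else is either formal manipulation of long exact sequences or a standard application of the self-intersection formula, but the torsion statement requires either a careful slice-theorem argument for compact group actions or a Leray spectral sequence computation for the fibration $U_{T} \to BT$, and it is also the step that forces us to pass to the fraction field.
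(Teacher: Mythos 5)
The paper does not prove this theorem at all: it is stated as a black box with a citation to Atiyah--Bott, and the text proceeds directly to the integration formula (\ref{eqn:FullABLOCFORM}) as a corollary. So there is no in-paper argument to compare against, and your proposal has to be judged on its own. On those terms it is the standard and correct proof strategy (essentially Atiyah--Bott/Quillen): localize the long exact sequence of the pair $(X,F)$, kill $H^{*}_{T}$ of the fixed-point-free complement by showing it is torsion over $\mathbb{Q}[\epsilon_{1},\ldots,\epsilon_{n}]$, and then identify the inverse of the localized restriction via the self-intersection formula $\iota_{j}^{*}\iota_{j*}=e(\widetilde{\mathcal{N}_{j}})_{T}$ together with the observation that $e(\widetilde{\mathcal{N}_{j}})_{T}$ is a unit in $H^{*}_{T}(F_{j})\otimes\mathbb{Q}(\epsilon_{1},\ldots,\epsilon_{n})$ because its $\epsilon$-leading term is a product of nonzero linear forms and the positive-degree part of $H^{*}(F_{j})$ is nilpotent. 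You have also correctly isolated the genuine technical content, namely the torsion lemma for fixed-point-free actions. Two places deserve a little more care than the sketch gives them: (i) in the reduction to rank one you need $(X^{S^{1}})^{T'}=X^{T}$ and that $X^{S^{1}}$ is again a compact submanifold with a $T'$-action (or, alternatively, choose a single generic circle with $X^{S^{1}}=X^{T}$, which only requires inverting finitely many weights); and (ii) the slice/Mayer--Vietoris induction needs a \emph{finite} invariant cover, which you get from compactness of $X$ rather than of the open complement $U$ — one covers $X$ by tubes and treats the tubes around $F$ separately. Neither of these is a wrong turn; they are the standard refinements of exactly the argument you describe.
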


\noindent At least for our purposes, the above result is the primary payoff of studying equivariant cohomology: one can localize to the fixed locus of a torus action without losing information.  

The main application will be in evaluating (and in some cases \emph{defining}) integrals over $X$.  As described in an appendix of \cite{gasparim_nekrasov_2010}, we want the integral $\int_{X} : H_{T}^{*}(X) \to H_{T}^{*}(\text{pt})$ to satisfy the following conditions:
\begin{enumerate}
\item If $\alpha \in H^{q}_{T}(X)$ with $q < 2d$, then $\int_{X} \alpha =0$.  
\item If $\alpha \in H^{q}_{T}(X)$ with $q \geq 2d$, then $\int_{X} \alpha \in H^{q-2d}_{T}(\text{pt})$.  
\end{enumerate}
\noindent We note that if $q$ is odd, then $H^{q-2d}_{T}(\text{pt})=0$, while if $q$ is even, then $H^{q-2d}_{T}(\text{pt})$ consists of homogeneous polynomials in $\epsilon_{1}, \ldots, \epsilon_{n}$ of degree $q/2 - d$.  In particular, if $\alpha \in H^{2d}_{T}(X)$, then $\int_{X} \alpha \in \mathbb{Q}$ is independent of the equivariant parameters.

The following corollary of (\ref{eqn:ABlocalllzz}), known as the \emph{Atiyah-Bott localization formula} gives the following expression for the integral over $X$ of a $T$-equivariant cohomology class $\alpha \in H^{*}_{T}(X)$,
\begin{equation}\label{eqn:FullABLOCFORM}
\int_{X} \alpha = \sum_{j=1}^{p} \int_{F_{j}} \frac{\iota_{j}^{*} \alpha}{e(\widetilde{\mathcal{N}_{j}})_{T}}.
\end{equation}
By the discussion above, if $\alpha \in H^{2d}_{T}(X)$, then $\int_{X} \alpha \in \mathbb{Q}$ and the equivariant parameters which appear explicitly on the righthand side of (\ref{eqn:FullABLOCFORM}) must drop out after a simplification.

To end the section where it began, let $\Phi(X)$ be a topological index (\ref{eqn:generalindex}) associated to a multiplicative class with formal power series $f(x)$.  Applying the Atiyah-Bott localization formula (\ref{eqn:FullABLOCFORM}), we can compute
\begin{equation}\label{eqn:Atiyah-BottLocalization}
\Phi(X) = \sum_{j=1}^{p} \int_{F_{j}} \frac{1}{e(\widetilde{\mathcal{N}_{j}})_{T}} \prod_{i=1}^{d} f\bigg(x_{i} - \sum_{l=1}^{n} k_{l, i}^{(j)}\epsilon_{l}\bigg).
\end{equation}
Notice that if $X$ is a compact complex manifold, then $\Phi(X)$ is well-defined and can in theory be computed without localization.  It follows that the equivariant parameters $\epsilon_{l}$ must remarkably drop out of the righthand side of (\ref{eqn:Atiyah-BottLocalization}).  We will see an example of this in the next section.  We will also be interested in topological indices $\Phi(X)$ in the case when $X$ is either non-compact or infinite-dimensional.  However, if $X$ carries a $T$-action such that $X^{T}$ is a finite dimensional, smooth compact complex submanifold of $X$ with components $F_{j}$, then we can \emph{define} $\Phi(X)$ by the righthand side of (\ref{eqn:Atiyah-BottLocalization}).  The price we pay is that $\Phi(X)$ will now be a rational function in the equivariant parameters.

\section{The Hirzebruch $\boldmath{\chi_{y}}$-genus} \label{sec:Hirzzchiy}

Let us begin with some formal preliminaries which will be of use going forward.  Let $X$ be a compact complex manifold with $E$ a holomorphic vector bundle on $X$ of rank $n$.  We define the following elements in the formal power series ring in the variable $t$ with coefficients in holomorphic vector bundles
\begin{equation}\label{eqn:denfformbund}
\Lambda_{t}E \coloneqq \bigoplus_{p=0}^{n} t^{p} \Lambda^{p} E \,\,\,\,\,\,\,\,\,\,\,\,\,\,\,\,\,\,\, S_{t}E \coloneqq \bigoplus_{p=0}^{\infty} t^{p} S^{p} E
\end{equation}
where $\Lambda^{p}E$ and $S^{p}E$ are the $p$-th exterior and symmetric powers of $E$, respectively.  

\begin{lemmy} \label{lemmy:lambdaSChern}
If $E$ is a holomorphic vector bundle on $X$ of rank $n$ and with Chern roots $x_{i}$, then
\begin{equation} \label{eqn:Chcharformbunf}
\text{ch}(\Lambda_{t}E) = \prod_{i=1}^{n} (1 + t e^{x_{i}}) \,\,\,\,\,\,\,\,\,\,\,\,\,\,\,\,\,\,\,\, \text{ch}(S_{t}E) = \prod_{i=1}^{n} \frac{1}{1-te^{x_{i}}}.
\end{equation}
\end{lemmy}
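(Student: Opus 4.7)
The plan is to reduce both identities to the rank-one case via the splitting principle, and then exploit the fact that the Chern character is a ring homomorphism from $K_{0}(X)_{\mathbb{Q}}$ to $H^{*}(X, \mathbb{Q})$, so additive on direct sums and multiplicative on tensor products. The key algebraic input is that exterior and symmetric powers distribute over direct sums in the form
\begin{equation*}
\Lambda^{p}(E \oplus F) = \bigoplus_{k+l=p}\Lambda^{k}E \otimes \Lambda^{l}F, \qquad S^{p}(E \oplus F) = \bigoplus_{k+l=p}S^{k}E \otimes S^{l}F,
\end{equation*}
which translate (after summing against $t^{p}$) into the more convenient multiplicative identities $\Lambda_{t}(E \oplus F) = \Lambda_{t}E \otimes \Lambda_{t}F$ and $S_{t}(E \oplus F) = S_{t}E \otimes S_{t}F$ inside $K_{0}(X)[[t]]$.

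First I would handle line bundles. If $L$ is a line bundle with $c_{1}(L) = x$, then $\Lambda^{0}L = \mathcal{O}_{X}$, $\Lambda^{1}L = L$, and $\Lambda^{p}L = 0$ for $p \geq 2$, so $\Lambda_{t}L = \mathcal{O}_{X} + tL$ and
\begin{equation*}
\text{ch}(\Lambda_{t}L) = 1 + t\,\text{ch}(L) = 1 + te^{x}.
\end{equation*}
Similarly $S^{p}L = L^{\otimes p}$, so $\text{ch}(S^{p}L) = e^{px}$, and summing the geometric series yields
\begin{equation*}
\text{ch}(S_{t}L) = \sum_{p \geq 0} t^{p}e^{px} = \frac{1}{1-te^{x}}.
\end{equation*}

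Next I would apply the splitting principle: there is a space $Y$ with a map $\pi : Y \to X$ such that $\pi^{*}$ is injective on cohomology and $\pi^{*}E$ splits as a direct sum of line bundles $L_{1} \oplus \cdots \oplus L_{n}$ with $c_{1}(L_{i}) = x_{i}$. Using $\Lambda_{t}(\pi^{*}E) = \bigotimes_{i}\Lambda_{t}L_{i}$ and $S_{t}(\pi^{*}E) = \bigotimes_{i}S_{t}L_{i}$ together with multiplicativity of $\text{ch}$ on tensor products, I obtain
\begin{equation*}
\pi^{*}\text{ch}(\Lambda_{t}E) = \prod_{i=1}^{n}(1+te^{x_{i}}), \qquad \pi^{*}\text{ch}(S_{t}E) = \prod_{i=1}^{n}\frac{1}{1-te^{x_{i}}},
\end{equation*}
and injectivity of $\pi^{*}$ then delivers the identities on $X$. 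There is really no obstacle here; the only mild subtlety is to observe that each coefficient of $t^{p}$ in the product formulas lies in a finite-dimensional piece of $H^{*}(X,\mathbb{Q})$, so the formal power series identities make honest sense coefficient by coefficient, and in the exterior case the series truncates at $t^{n}$ since $\Lambda^{p}E = 0$ for $p > n$.
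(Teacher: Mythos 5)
Your proof is correct and follows essentially the same route as the paper: both reduce to line bundles via the splitting principle, use $\Lambda_{t}(E_{1}\oplus E_{2})\cong\Lambda_{t}E_{1}\otimes\Lambda_{t}E_{2}$ and its symmetric analogue, and conclude by multiplicativity of the Chern character. Your added care about the injectivity of $\pi^{*}$ and the convergence of the formal series is a minor refinement of the same argument.
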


\begin{proof}
Some standard facts are $\Lambda_{t}(E_{1} \oplus E_{2}) \cong \Lambda_{t}E_{1} \otimes \Lambda_{t}E_{2}$ and $S_{t}(E_{1} \oplus E_{2}) \cong S_{t}E_{1} \otimes S_{t}E_{2}$.  Applying the splitting principle and formally writing $E = L_{1} \oplus \cdots \oplus L_{n}$, we therefore have
\begin{equation}\label{eqn:lambdaStensor}
\Lambda_{t}E = \bigotimes_{i=1}^{n} \Lambda_{t} L_{i} \,\,\,\,\,\,\,\,\,\,\,\,\,\,\,\,\,\,\,\,  S_{t}E = \bigotimes_{i=1}^{n} S_{t} L_{i}.
\end{equation}
The exterior powers of a line bundle terminate after only two terms, so we have $\Lambda_{t} L_{i} = \mathcal{O}_{X} \oplus t L_{i}$.  But in the case of the symmetric powers we have
\begin{equation}
S_{t} L_{i} = \mathcal{O}_{X} \oplus t L_{i} \oplus t^{2} L_{i}^{2} \oplus \cdots
\end{equation}
By the additivity of the Chern character across direct sums and multiplicativity across tensor products, we have
\begin{equation}
\setlength{\jot}{8pt}
\begin{split}
& \,\,\,\,\,\,\,\,\,\,\,\,\,\,\,\,\,\,\,\,\,\,\,\, \text{ch}(\Lambda_{t} L_{i}) = \text{ch}(\mathcal{O}_{X}) + t \text{ch}(L_{i}) = 1 + te^{x_{i}} \\
& \text{ch}(S_{t}L_{i}) = \text{ch}(\mathcal{O}_{X}) + t \text{ch}(L_{i}) + t^{2} \text{ch}(L_{i})^{2} + \cdots = \frac{1}{1-te^{x_{i}}}.
\end{split}
\end{equation}
Finally, the multiplicativity of the Chern character on tensor products applied to (\ref{eqn:lambdaStensor}) yields the desired conclusions.  
\end{proof}

We can now finally give an example of a topological index, which is a refinement of the Euler characteristic in the case of a compact K\"{a}hler manifold.  

\begin{defn}
Let $X$ be a compact complex manifold.  The $\chi_{y}$-genus of $X$, denoted $\chi_{y}(X)$, is defined to be the holomorphic Euler characteristic of the bundle-valued polynomial $\Lambda_{y}T^{\smvee}_{X}$
\begin{equation} \label{eqn:inddefnchiy}
\chi_{y}(X) = \chi(X, \Lambda_{y}T^{\smvee}_{X}) = \int_{X} \text{ch}(\Lambda_{y}T^{\smvee}_{X}) \text{td}(X)
\end{equation}
where $T^{\smvee}_{X}$ is the holomorphic cotangent bundle.  
\end{defn}

\noindent Note that a priori, $y$ is simply a formal variable, but it can be analytically continued to an honest complex variable.  The $\chi_{y}$-genus is not a topological invariant of $X$ -- it generally depends on an almost complex structure.  However, we will soon see that it encodes certain topological invariants of $X$.  

\begin{proppy}
The $\chi_{y}$-genus is a topological index associated to multiplicative class $\text{ch}(\Lambda_{y}T^{\smvee}_{X}) \text{td}(X)$ and can be expressed in terms of the formal Chern roots $x_{1}, \ldots, x_{n}$ of the tangent bundle $T_{X}$
\begin{equation}\label{eqn:chiygenmultgen}
\chi_{y}(X) = \int_{X} \prod_{i=1}^{n} x_{i} \frac{1+ye^{-x_{i}}}{1-e^{-x_{i}}}
\end{equation}
where $n$ is the dimension of $X$.  It can also be expressed in terms of the Hodge numbers of $X$ as
\begin{equation}\label{eqn:chiygenhodgenumer}
\chi_{y}(X) = \sum_{p,q=0}^{n} (-1)^{q} y^{p} h^{p,q}(X).
\end{equation}
\end{proppy}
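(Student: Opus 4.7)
\noindent The plan is to verify each expression separately, both starting from the definition $\chi_y(X) = \chi(X, \Lambda_y T_X^\vee)$.

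For the multiplicative-class formula, I would combine Lemma \ref{lemmy:lambdaSChern} with Hirzebruch-Riemann-Roch. The Chern roots of $T_X^\vee$ are $-x_1, \ldots, -x_n$, so the first formula in (\ref{eqn:Chcharformbunf}) with $E = T_X^\vee$ and $t = y$ gives
\[
\text{ch}(\Lambda_y T_X^\vee) = \prod_{i=1}^n (1 + y e^{-x_i}).
\]
The Todd class of $X$ is by definition the multiplicative class associated to the series $x/(1-e^{-x})$ applied to $T_X$, so $\text{td}(X) = \prod_{i=1}^n x_i/(1-e^{-x_i})$. Substituting both into the integral representation (\ref{eqn:inddefnchiy}) from Hirzebruch-Riemann-Roch yields (\ref{eqn:chiygenmultgen}) directly.

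For the Hodge-number formula, I would unpack the definition $\Lambda_y T_X^\vee = \bigoplus_{p=0}^n y^p \Lambda^p T_X^\vee$ from (\ref{eqn:denfformbund}) and use the standard identification $\Lambda^p T_X^\vee \cong \Omega^p$, the sheaf of holomorphic $p$-forms. Since the holomorphic Euler characteristic is additive on direct sums and linear in the coefficient ring $\mathbb{Z}[y]$, this gives
\[
\chi_y(X) = \sum_{p=0}^n y^p \chi(X, \Omega^p) = \sum_{p=0}^n y^p \sum_{q=0}^n (-1)^q \dim H^q(X, \Omega^p),
\]
and the inner sum is precisely $\sum_q (-1)^q h^{p,q}(X)$ by the definition of Hodge numbers.

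There is no serious obstacle here: both computations are essentially formal consequences of Hirzebruch-Riemann-Roch, Lemma \ref{lemmy:lambdaSChern}, and the definitions. The only small subtlety worth mentioning explicitly is that the equality (\ref{eqn:chiygenhodgenumer}) does not require $X$ to be K\"ahler, since the Hodge numbers are defined as dimensions of Dolbeault cohomology groups on any compact complex manifold, whereas the link to Betti numbers (as used in the Chern-Gauss-Bonnet derivation earlier) does. In particular, setting $y = -1$ in (\ref{eqn:chiygenhodgenumer}) recovers (\ref{eqn:hodge-eulcharelnn}) whenever $X$ is K\"ahler, providing a consistency check between the two expressions.
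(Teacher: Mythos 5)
Your proof is correct and follows essentially the same route as the paper: both parts apply Lemma \ref{lemmy:lambdaSChern} together with the Todd class expression inside Hirzebruch--Riemann--Roch for the first formula, and use additivity of the Chern character (equivalently of the holomorphic Euler characteristic) over the direct sum $\bigoplus_p y^p \Omega^p$ for the second. Your closing remark that (\ref{eqn:chiygenhodgenumer}) holds without the K\"ahler hypothesis is a correct and worthwhile clarification, but it does not change the argument.
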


\begin{proof}
The Todd class $\text{td}(X)$ is a multiplicative class which can be written in terms of the formal Chern roots, and by (\ref{eqn:Chcharformbunf}) we have an expression for the multiplicative class $\text{ch}(\Lambda_{y}T^{\smvee}_{X})$.  Let us record both
\begin{equation} \label{eqn:ToddCrass}
\text{td}(X) = \prod_{i=1}^{n} \frac{x_{i}}{1-e^{-x_{i}}} \,\,\,\,\,\,\,\,\,\,\,\,\,\,\, \text{ch}(\Lambda_{y}T^{\smvee}_{X}) = \prod_{i=1}^{n} (1+ye^{-x_{i}})
\end{equation}
noting that if $x_{i}$ are the Chern roots of $T_{X}$, then $-x_{i}$ are the Chern roots of $T^{\smvee}_{X}$.  Therefore, by (\ref{eqn:inddefnchiy}), the expression of $\chi_{y}(X)$ in terms of the Chern roots, is as claimed.  To prove the second claim, by the linearity of the Chern character across direct sums we have
\begin{equation}
\chi_{y}(X) = \int_{X} \text{ch}\bigg(\bigoplus_{p=0}^{n} y^{p} \Lambda^{p}T^{\smvee}_{X} \bigg) \text{td}(X) = \sum_{p=0}^{n} y^{p} \chi(X, \Omega^{p})
\end{equation}
where $\Omega^{p} = \Lambda^{p}T^{\smvee}_{X}$ is the sheaf of holomorphic $p$-forms.  Moreover, the holomorphic Euler characteristic $\chi(X, \Omega^{p})$ can be expanded as the alternating sum of Hodge numbers.  This gives
\begin{equation}
\chi_{y}(X) = \sum_{p=0}^{n} y^{p} \chi(X, \Omega^{p}) = \sum_{p,q=0}^{n} (-1)^{q} y^{p} h^{p,q}(X).
\end{equation}
\end{proof}

\begin{cory}\label{cory:speccofchiygenn}
For any compact complex manifold $X$, the $\chi_{y}$-genus specializes to the holomorphic Euler characteristic of $\mathcal{O}_{X}$
\begin{equation}
\chi_{0}(X) = \chi(X, \mathcal{O}_{X}),
\end{equation}
and if $X$ is additionally K\"{a}hler, it can also be specialized to the topological Euler characteristic
\begin{equation}
\chi_{-1}(X) = \chi(X).
\end{equation}
\end{cory}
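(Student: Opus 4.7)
The plan is to derive both specializations directly from the Hodge-number expression (\ref{eqn:chiygenhodgenumer}) for $\chi_y(X)$, which has already been established in the preceding proposition for an arbitrary compact complex manifold. Since that formula is a finite polynomial in $y$ with integer coefficients, specialization to $y=0$ and $y=-1$ is entirely unproblematic, and the heart of the matter is simply identifying the resulting alternating sums with known invariants.

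For the first claim, I would set $y=0$ in (\ref{eqn:chiygenhodgenumer}). Only terms with $p=0$ survive, giving
\begin{equation*}
\chi_0(X) = \sum_{q=0}^n (-1)^q h^{0,q}(X) = \sum_{q=0}^n (-1)^q \dim H^q(X, \Omega^0) = \sum_{q=0}^n (-1)^q \dim H^q(X, \mathcal{O}_X),
\end{equation*}
which is the definition of $\chi(X, \mathcal{O}_X)$ since $\Omega^0 = \mathcal{O}_X$. No K\"ahler hypothesis is needed here because (\ref{eqn:chiygenhodgenumer}) was itself established for arbitrary compact complex $X$ via the additivity of the Chern character and Hirzebruch-Riemann-Roch, without appeal to the Hodge decomposition.

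For the second claim, I would set $y=-1$ in (\ref{eqn:chiygenhodgenumer}) to obtain
\begin{equation*}
\chi_{-1}(X) = \sum_{p,q=0}^n (-1)^{p+q} h^{p,q}(X),
\end{equation*}
and then regroup the sum by the total degree $k = p+q$:
\begin{equation*}
\chi_{-1}(X) = \sum_{k=0}^{2n} (-1)^k \sum_{p+q=k} h^{p,q}(X).
\end{equation*}
This is the step where the K\"ahler hypothesis enters: by the Hodge decomposition (\ref{eqn:hodge-betireln}), one has $b_k(X) = \sum_{p+q=k} h^{p,q}(X)$, which yields $\chi_{-1}(X) = \sum_{k=0}^{2n} (-1)^k b_k(X) = \chi(X)$ by the definition (\ref{eqn:eulchartopman}) of the topological Euler characteristic.

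There is no real obstacle here; the corollary is essentially bookkeeping once (\ref{eqn:chiygenhodgenumer}) is in hand, and the only subtle point worth flagging in the write-up is to emphasize why K\"ahlerness is genuinely needed for the second specialization but not the first---namely, that (\ref{eqn:chiygenhodgenumer}) always holds, but its reorganization into Betti numbers requires the Hodge-to-de Rham degeneration guaranteed by the K\"ahler condition.
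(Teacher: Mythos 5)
Your proposal is correct and is exactly the argument the paper intends: the corollary is stated without proof, as an immediate consequence of the Hodge-number formula (\ref{eqn:chiygenhodgenumer}) together with the relations (\ref{eqn:hodge-betireln}) and (\ref{eqn:hodge-eulcharelnn}) established earlier in the chapter. Your remark on where the K\"ahler hypothesis genuinely enters (only in reorganizing $\sum_{p+q=k} h^{p,q}$ into $b_k$) is accurate and consistent with the paper's own discussion.
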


There is a similar looking one-variable refinement of the topological Euler characteristic called the \emph{Poincar\'{e} polynomial}, which is the generating function of the Betti numbers
\begin{equation}
P(y) \coloneqq \sum_{k=0}^{2n} b_{k}(X) y^{k} = \sum_{p,q=0}^{n} (-1)^{p+q} h^{p,q}(X) y^{p+q}.
\end{equation}
If $X$ is compact, recall that Poincar\'{e} duality implies $b_{k}(X) = b_{2n-k}(X)$.  It is therefore clear that $y^{-n}P(y)$ is invariant under $y \mapsto y^{-1}$.  If $X$ is additionally K\"{a}hler, the $\chi_{y}$-genus and the Poincar\'{e} polynomial are two independent one-variable refinements of $\chi(X)$.  They are essentially different ways of packaging the Hodge numbers into a polynomial.  The coefficients of $y$ in $\chi_{y}(X)$ are (alternating) sums over the diagonals of the Hodge diamond while the coefficients of $y$ in $P(y)$ are sums over the rows.

\begin{Ex}
Though not needed for this simple example, to illustrate its utility let us use Atiyah-Bott localization to compute $\chi_{y}(\mathbb{P}^{2})$.  The projective plane $\mathbb{P}^{2}$ has a canonical action by the torus
\[ T = \big\{ (u:v:w) \in \mathbb{P}^{2} \, \big| \, u,v,w \neq 0 \big\} \subset \mathbb{P}^{2},\]
which is isomorphic to $(\mathbb{C}^{*})^{2}$.  The action is defined by component-wise multiplication, and there are clearly three isolated fixed points
\[F_{1} = (1:0:0) \,\,\,\,\,\,\,\,\,\,\,\, F_{2} = (0:1:0) \,\,\,\,\,\,\,\,\,\,\,\, F_{3}=(0:0:1).\]
In the standard affine open charts centered at each $F_{j}$, it is clear that the torus action is given by coordinate-wise multiplication
\begin{equation}\label{eqn:affacttt}
\big(\zeta_{1}^{(j)}, \zeta_{2}^{(j)}\big) \cdot \big(x^{(j)}, y^{(j)}\big) = \big( \zeta_{1}^{(j)} x^{(j)} , \zeta_{2}^{(j)} y^{(j)}\big),
\end{equation}
for $j=1,2, 3$, where $\big(\zeta_{1}^{(j)}, \zeta_{2}^{(j)}\big) \in (\mathbb{C}^{*})^{2}$.  However, the crucial point to notice in this example is that by projecting the global $T$-action on $\mathbb{P}^{2}$ into the three affine charts, the $\big(\zeta_{1}^{(j)}, \zeta_{2}^{(j)}\big)$ are in fact, not independent.  If we define $(t_{1}, t_{2}) \coloneqq (\zeta_{1}^{(1)}, \zeta_{2}^{(1)})$, then we have
\begin{equation} \label{eqn:alphasss}
\big(\zeta_{1}^{(1)}, \zeta_{2}^{(1)}\big) = \big( t_{1}, t_{2}\big), \,\,\,\,\, \big(\zeta_{1}^{(2)}, \zeta_{2}^{(2)}\big) = \big(t_{1}^{-1}, t_{1}^{-1} t_{2}\big), \,\,\,\,\, \big(\zeta_{1}^{(3)}, \zeta_{2}^{(3)}\big) = \big( t_{2}^{-1}, t_{1} t_{2}^{-1}\big).
\end{equation}  
The next step is to lift the $T$-action to an action on $T_{\mathbb{P}^{2}}$.  Because the fixed points are isolated, we are simply turning the tangent space to $\mathbb{P}^{2}$ at the three fixed points into $T$-modules, precisely in the way prescribed by the action (\ref{eqn:affacttt}) in terms of the $t_{i}$ in (\ref{eqn:alphasss}).  Recall that we understand $t_{i}$, for $i=1,2$ to be one-dimensional $T$-modules where $(t_{1}, t_{2}) \in T$ acts as multiplication by $t_{i}$.  We therefore get equivariant bundles
\begin{equation}\label{eqn:equivCHROOT}
\begin{split}
& \widetilde{T_{\mathbb{P}^{2}}|_{F_{1}}} = t_{1} + t_{2} \\ 
&\widetilde{T_{\mathbb{P}^{2}}|_{F_{2}}} = t_{1}^{-1} + t_{1}^{-1} t_{2} \\ 
& \widetilde{T_{\mathbb{P}^{2}}|_{F_{3}}} = t_{2}^{-1} + t_{1}t^{-1}_{2}.
\end{split}
\end{equation}
The equivariant Chern roots can be easily determined from (\ref{eqn:equivCHROOT}) in terms of the equivariant parameters $\epsilon_{1} = c_{1}(t_{1})_{T}$ and $\epsilon_{2} = c_{1}(t_{2})_{T}$.  We can also record the Euler classes of the equivariant normal bundles $\widetilde{\mathcal{N}_{j}}$ in equivariant cohomology
\begin{equation}
e(\widetilde{\mathcal{N}_{1}})_{T} = \epsilon_{1} \epsilon_{2} \,\,\,\,\,\,\,\,\,\,\, e(\widetilde{\mathcal{N}_{2}})_{T} = \epsilon_{1}(\epsilon_{1}-\epsilon_{2}) \,\,\,\,\,\,\,\,\,\,\,\, e(\widetilde{\mathcal{N}_{3}})_{T} = \epsilon_{2}(\epsilon_{2} - \epsilon_{1}).  
\end{equation}
In terms of the parameters $t_{1} = e^{-\epsilon_{1}}$ and $t_{2} = e^{-\epsilon_{2}}$, we can finally apply Atiyah-Bott localization (\ref{eqn:Atiyah-BottLocalization}) to get
\begin{equation}
\chi_{y}(\mathbb{P}^{2}) = \frac{(1+yt_{1})}{(1-t_{1})} \frac{(1+y t_{2})}{(1-t_{2})} + \frac{(1+yt_{1}^{-1})}{(1-t_{1}^{-1})} \frac{(1+y t_{1}^{-1} t_{2})}{(1-t_{1}^{-1} t_{2})} + \frac{(1+yt_{2}^{-1})}{(1-t_{2}^{-1})} \frac{(1+y t_{1}t_{2}^{-1})}{(1-t_{1}t_{2}^{-1})}.
\end{equation}
Now, because $\mathbb{P}^{2}$ is compact and $\chi_{y}(\mathbb{P}^{2})$ is a polynomial in $y$, there is no dependence on the parameters $t_{1}$ and $t_{2}$, and they must drop out entirely after simplifying.  Indeed, after a trivial but tedious simplification we find
\begin{equation}
\chi_{y}(\mathbb{P}^{2}) = y^{2} -y + 1.
\end{equation}
As a consistency check, one verifies that $\chi_{-1}(\mathbb{P}^{2}) =3$, which matches the Euler characteristic of $\mathbb{P}^{2}$.  
\end{Ex}

\subsection{The \boldmath{$\chi_{y}$}-genus as a Supersymmetric Index}

The $\chi_{y}$-genus arises as an index in a one-dimensional supersymmetric sigma model whose target space is a $2n$-dimensional Riemannian manifold $(X, g)$ which also has a complex structure.  We will build on our discussion in Section \ref{subsec:EULcharrasWITTIND} where we study one-dimensional sigma models on Riemannian manifolds and identify the Witten index as the Euler characteristic.  The complex structure induces a splitting $d = \partial + \overline{\partial}$ of the exterior derivative, which refines the Hilbert space of the theory to 
\begin{equation}
\mathscr{H} = \bigoplus_{p,q=0}^{n} \Omega^{p,q}(X)
\end{equation}
where $\Omega^{p,q}(X)$ is the vector space of $(p,q)$-forms on $X$.  We also get a splitting of the fermion operator $F = F_{+} + F_{-}$ where eigenstates of $F_{+}$ with eigenvalue $p$ are forms in $\mathscr{H}$ whose holomorphic part is a $p$-form.  Similarly for $F_{-}$ and forms whose anti-holomorphic part is a $q$-form.  

We can define the following one-variable supersymmetric index
\begin{equation}\label{eqn:chiyGENSUSYYINDEX}
\text{Tr}_{\mathscr{H}} ( -1)^{F_{-}}y^{F_{+}} e^{- \beta H}
\end{equation}
which turns out to be independent of $\beta$.  In this theory we have supersymmetries which we identify as $\partial$ and $\overline{\partial}$ along with their adjoints.  Similarly to the Witten index, the index (\ref{eqn:chiyGENSUSYYINDEX}) localizes onto the ground states of the theory which we identify as the $\overline{\partial}$-cohomology.  But this is none other than the Dolbeault cohomology 
\begin{equation}\label{eqn:Dolbeaultthhm}
\bigoplus_{p,q=0}^{n} H^{p,q}(X) \cong \bigoplus_{p,q=0}^{n} H^{q}(X, \Omega^{p}).
\end{equation}
This allows us to evaluate the index (\ref{eqn:chiyGENSUSYYINDEX}), which we find to be exactly the $\chi_{y}$-genus of $X$
\begin{equation}
\text{Tr}_{\mathscr{H}}(-1)^{F_{-}}y^{F_{+}} e^{- \beta H} = \sum_{p,q=0}^{n} (-1)^{q}y^{p} h^{p,q}(X)= \chi_{y}(X).  
\end{equation}

If $X$ is compact and K\"{a}hler, then by the Hodge decomposition we can identify (\ref{eqn:Dolbeaultthhm}) with the ordinary de Rham cohomology $H^{*}(X, \mathbb{C})$.  It follows that by setting $y=-1$ we recover the Witten index, which is consistent with the mathematical results in Corollary \ref{cory:speccofchiygenn}.  The $\chi_{y}$-genus is therefore a supersymmetric index, refining the Witten index such that the variable $y$ tracks the charge $F_{+}$.  It is notable that both examples of topological indices presented thus far find a very natural home in the context of supersymmetric sigma models.

\section{The Ordinary Elliptic Genus}

We now want to introduce the (ordinary) elliptic genus which is an invariant of a compact complex manifold $X$.  It will be shown to be a refinement of both the Euler characteristic and the $\chi_{y}$-genus.  We will also interpret the elliptic genus to be an equivariant version of the $\chi_{y}$-genus applied to the infinite dimensional loop space of the manifold.  Just as we saw with the Euler characteristic and the $\chi_{y}$-genus, the elliptic genus can be expressed as a topological index as an integral over $X$ of a multiplicative class.  In physics, it is a supersymmetric index in a superconformal sigma model with target space $X$.  Unlike the previous indices we have seen, if certain geometric conditions on $X$ are satisfied, the elliptic genus will be an automorphic form known as a weak Jacobi form.

\begin{defn}
Let $X$ be a compact complex manifold of dimension $d$ with holomorphic tangent bundle $T_{X}$.  Using (\ref{eqn:denfformbund}) we define the following formal object
\begin{equation}\label{eqn:formbundleval}
\mathbb{E}_{q, y} = (-y)^{-d/2} \bigotimes_{n=1}^{\infty} \bigg( \Lambda_{y q^{n-1}}T^{\smvee}_{X} \otimes \Lambda_{y^{-1}q^{n}} T_{X} \otimes S_{q^{n}}T^{\smvee}_{X} \otimes S_{q^{n}}T_{X}\bigg)
\end{equation}
which aside from the factor $(-y)^{-d/2}$ is a power series in $q$ and $y^{\pm}$ whose coefficients are holomorphic vector bundles.  The elliptic genus of $X$ is simply the holomorphic Euler characteristic of $\mathbb{E}_{q,-y}$ 
\begin{equation}\label{eqn:indtheordefnELLGEN}
\text{Ell}_{q,y}(X) = \chi\big( X, \mathbb{E}_{q,-y} \big) = \int_{X} \text{ch}(\mathbb{E}_{q, -y}) \text{td}(X).
\end{equation}  
\end{defn}

The elliptic genus is a topological index associated to multiplicative class $\text{ch}(\mathbb{E}_{q, -y}) \text{td}(X)$.  Moreover, because the coefficient of a fixed power of $q$ and $y$ in $\mathbb{E}_{q,-y}$ is a holomorphic vector bundle, by Hirzebruch-Riemann-Roch the coefficients of the expansion of $\text{Ell}_{q,y}(X)$ in $q$ and $y$ are integers
\[ \text{Ell}_{q,y}(X) \in y^{-d/2} \, \mathbb{Z} \llbracket q, y^{\pm} \rrbracket. \]
At this stage, the definition of $\mathbb{E}_{q,y}$ may seem completely unmotivated.  We now want to show that the elliptic genus as defined is in fact an equivariant (and regularized) version of the $\chi_{y}$-genus of the loop space of $X$.

\subsection{The Elliptic Genus as an Equivariant Index on Loop Space}

Given any smooth manifold $M$, the \emph{loop space} $\mathcal{L}M$ is defined to be the space of continuous maps
\[ \mathcal{L}M \coloneqq \text{Map}(S^{1}, M). \]
The loop space is an infinite dimensional manifold, but it carries a natural $S^{1}$-action, defined simply by rotation of the domain circle.  The infinite dimensional nature of the loop space makes it difficult to work with, though one redeeming feature is that we can always find an embedded copy of $M$ within $\mathcal{L}M$ as the fixed point locus of this action.  The fixed point locus $\mathcal{L}M^{S^{1}}$ corresponds to constant maps, which is clearly identified with $M$ itself.  We therefore have
\[\mathcal{L}M^{S^{1}} \cong M \lhook\joinrel\longrightarrow \mathcal{L}M.\]

The case we are interested in is when $X$ is a compact complex manifold of dimension $d$.  Let $\mathcal{L}X$ be the loop space, and understand $X \hookrightarrow \mathcal{L}X$ to be the fixed point locus of the natural $S^{1}$-action.  We would like to compute the $\chi_{y}$-genus of $\mathcal{L}X$ making use of Atiyah-Bott localization (\ref{eqn:Atiyah-BottLocalization}) applied to the $S^{1}$-action.  However, because $\mathcal{L}X$ is infinite-dimensional it does not have a well-defined $\chi_{y}$-genus.  Nonetheless, notice that the righthand side of (\ref{eqn:Atiyah-BottLocalization}) \emph{is} well-defined since the fixed-point locus $X$ is smooth and compact.  We can therefore still compute the righthand side of (\ref{eqn:Atiyah-BottLocalization}) at the expense of the equivariant parameters not dropping out of the expression, in general.  This quantity will then be defined to be the equivariant $\chi_{y}$-genus of $\mathcal{L}X$.  We will show that this is precisely the elliptic genus with the variable $q$ arising as the equivariant parameter corresponding to the single generator of $S^{1}$.

\begin{proppy}
The restriction of the tangent bundle $T_{\mathcal{L} X}$ to the fixed locus $X$ can be lifted to an equivariant bundle
\begin{equation}\label{eqn:S1acttttangbundreee}
\widetilde{T_{\mathcal{L} X}|_{X}} = \bigoplus_{n \in \mathbb{Z}} T_{X} \otimes \mathbb{C}(X, n).  
\end{equation}
Here, $\mathbb{C}(X, n)$ is the equivariant line bundle on $X$ where $q = e^{2 \pi i \theta}$ with $\theta \in S^{1}$ acts via multiplication by $q^{n}$.  This is consistent with Definition \ref{defn:defnequivLBfpr} except we have dropped the subscript since $\text{dim}(S^{1})=1$.  
\end{proppy}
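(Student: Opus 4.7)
The plan is to compute the restriction $T_{\mathcal{L}X}|_X$ directly by identifying tangent vectors to the loop space with sections of a pulled-back tangent bundle, and then Fourier-decomposing those sections with respect to the circle-rotation action. First I would fix a constant loop $\gamma_x \in \mathcal{L}X$ at a point $x \in X$. The standard identification of the tangent space to a mapping space says
\begin{equation*}
T_{\gamma_x}\mathcal{L}X \cong \Gamma\big(S^{1}, \gamma_{x}^{*}T_{X}\big),
\end{equation*}
and because $\gamma_x$ is constant, $\gamma_{x}^{*}T_{X}$ is the trivial bundle $S^{1} \times T_{x}X$, so sections are simply smooth maps $v : S^{1} \to T_{x}X$ (after suitable complexification). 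Fourier expansion gives a canonical decomposition
\begin{equation*}
T_{\gamma_x}\mathcal{L}X \cong \bigoplus_{n \in \mathbb{Z}} T_{x}X \cdot e^{2\pi i n\theta},
\end{equation*}
where $\theta$ is the coordinate on $S^{1}$.

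Next I would analyze the $S^{1}$-action on this decomposition. The action on $\mathcal{L}X$ is by rotation, $(\theta_{0} \cdot \gamma)(\theta) = \gamma(\theta + \theta_{0})$, and its differential acts on the tangent space by rotating the section $v(\theta) \mapsto v(\theta + \theta_{0})$. Applied to the mode $v_{n} e^{2\pi i n\theta}$, this yields $e^{2\pi i n\theta_{0}} \cdot v_{n} e^{2\pi i n\theta}$. Setting $q = e^{2\pi i\theta_{0}}$, the $n$-th Fourier mode is precisely an eigenspace of weight $n$. In the language of Definition~\ref{defn:defnequivLBfpr}, this is the equivariant line bundle $\mathbb{C}(X,n)$ (restricted to the point $x$) tensored with $T_{x}X$.

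Finally, I would globalize: as $x$ varies over the fixed locus $X \hookrightarrow \mathcal{L}X$, the fibers $T_{x}X$ assemble into the holomorphic tangent bundle $T_{X}$, while the trivial weight-$n$ character stays constant over $X$ and gives the equivariant line bundle $\mathbb{C}(X,n)$. Assembling the Fourier modes globally produces
\begin{equation*}
\widetilde{T_{\mathcal{L}X}|_{X}} \;=\; \bigoplus_{n \in \mathbb{Z}} T_{X} \otimes \mathbb{C}(X, n),
\end{equation*}
as claimed, with the $n = 0$ summand recovering $T_{X}$ itself (the tangent bundle to the fixed locus) and the $n \neq 0$ summands assembling into the (formal) equivariant normal bundle.

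The main conceptual obstacle is that $\mathcal{L}X$ is genuinely infinite-dimensional, so one has to be slightly careful about which function space on $S^{1}$ is meant, and about the fact that the ``equivariant tangent bundle'' is really an infinite formal direct sum. For our purposes these issues are cosmetic: the expression is only ever used inside the localization formula (\ref{eqn:Atiyah-BottLocalization}), where the decomposition enters mode-by-mode as a formal power series in $q$. It is precisely this formal character that, after applying the $\chi_{y}$-genus machinery to each factor via Lemma~\ref{lemmy:lambdaSChern} (with weights producing the $q^{n}$ shifts), will regenerate the bundle-valued series $\mathbb{E}_{q,y}$ in (\ref{eqn:formbundleval}) and thereby identify the equivariant $\chi_{y}$-genus of $\mathcal{L}X$ with $\text{Ell}_{q,y}(X)$.
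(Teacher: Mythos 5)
Your proof is correct and follows essentially the same route as the paper: identify tangent vectors at a constant loop with maps $S^{1} \to (T_{X})_{x}$, Fourier-decompose, read off that rotation acts on the $n$-th mode with weight $n$, and globalize over the fixed locus. Your treatment of the rotation action on each Fourier mode is in fact slightly more explicit than the paper's, which simply asserts the weight from the expansion.
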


\begin{proof}
Given any loop $f : S^{1} \to X$, the elements of $(T_{\mathcal{L}X})_{f}$ are interpreted as infinitesimal deformations of the loop.  Such deformations are given by sections of $f^{*}T_{X}$.  In the case where $f$ is a constant map with image $x \in X$, the deformations are elements of $f^{*}(T_{X})_{x}$ which are simply maps $\delta f : S^{1} \to (T_{X})_{x}$.  Given an infinitesimal deformation $\delta f : S^{1} \to (T_{X})_{x}$ of a constant map $f$ with image $x \in X$, we can take the Fourier expansion
\begin{equation}\label{eqn:FourexxxpS1actt}
\delta f( \theta) = \sum_{n \in \mathbb{Z}} a(n) q^{n}, \,\,\,\,\,\,\,\,\,\,\,\,\,\,\,\, q = e^{2 \pi i \theta}
\end{equation}
with $a(n) \in (T_{X})_{x}$ for all $n \in \mathbb{Z}$.  By associating an infinitesimal deformation of a constant map to its spectrum of Fourier coefficients, we get a map $(T_{\mathcal{L} X})_{x} \longrightarrow \bigoplus_{n \in \mathbb{Z}} (T_{X})_{x}$ which is an isomorphism.  This then gives the isomorphism of infinite dimensional vector spaces
\begin{equation}\label{eqn:decomptanspls}
T_{\mathcal{L} X}\big|_{X} \cong \bigoplus_{n \in \mathbb{Z}} T_{X}.
\end{equation}   
To give $T_{\mathcal{L}X}|_{X}$ the structure of an equivariant bundle, we want to lift the $S^{1}$-action on $\mathcal{L}X$.  But by (\ref{eqn:FourexxxpS1actt}), the $n$-th summand $T_{X}$ on the righthand side of (\ref{eqn:decomptanspls}) is acted on by $q=e^{2 \pi i \theta}$ as multiplication by $q^{n}$.  This gives the $n$-th summand of (\ref{eqn:decomptanspls}) as the equivariant bundle $T_{X} \otimes \mathbb{C}(X, n)$, proving the proposition.  
\end{proof}

By the splitting principle we can formally write $T_{X} = L_{1} \oplus \cdots \oplus L_{d}$, and define $x_{i} = c_{1}(L_{i})$ to be the Chern roots.  By (\ref{eqn:S1acttttangbundreee}) we therefore have the following decomposition
\begin{equation}
\widetilde{T_{\mathcal{L}X}|_{X}} = \bigoplus_{n \in \mathbb{Z}}\bigg( \bigoplus_{i=1}^{d}L_{i} \otimes \mathbb{C}(X,n)\bigg).
\end{equation}
This allows us to compute the equivariant Chern roots of $\widetilde{T_{\mathcal{L}X}|_{X}}$ to be
\begin{equation}\label{eqn:equivChrootloopsp}
c_{1} \big( L_{i} \otimes \mathbb{C}(X, n) \big)_{S^{1}} = x_{i} + nu
\end{equation}
where we have used that the first Chern class is additive under tensor products, and we have defined $u$ by $H_{S^{1}}^{*}(\text{pt}) = \mathbb{Q}[u]$.  In other words, $u = c_{1}(\mathscr{L})$ where $\mathscr{L} \cong \mathcal{O}_{\mathbb{P}^{\infty}}(-1)$.  Recalling Remark \ref{rmk:multintttwarning} where we warn about multiple interpretations of symbols, $u$ is interpreted as a generator of the Lie algebra $\mathbb{R}$ of $S^{1}$.  Defining $q = e^{-u}$, we moreover have $q = \text{ch}(\mathscr{L}^{\smvee})$. 

Let $\mathcal{N}$ be the normal bundle to $X$ in $\mathcal{L}X$.  We want to lift the $S^{1}$-action on $\mathcal{L}X$ to give the equivariant normal bundle $\widetilde{\mathcal{N}}$.  By (\ref{eqn:S1acttttangbundreee}), the directions with weight $n=0$ correspond to $T_{X}$ itself, while those summands with $n \neq 0$ span the equivariant normal bundle
\begin{equation}
\widetilde{\mathcal{N}} = \bigoplus_{n \in \mathbb{Z} \setminus \{ 0 \}}  \bigg(T_{X} \otimes \mathbb{C}(X, n) \bigg).
\end{equation}
The equivariant Euler class is the product over all of the equivariant Chern roots (\ref{eqn:equivChrootloopsp}).  We therefore find
\begin{equation}\label{eqn:equivvEulclnblpsp}
e(\widetilde{\mathcal{N}})_{S^{1}} = \prod_{i=1}^{d} \prod_{n \in \mathbb{Z} \setminus \{0\}} \big(x_{i} + nu \big).
\end{equation}

\begin{defn}
Let $X$ be a compact complex manifold of dimension $d$.  Motivated by the righthand side of (\ref{eqn:Atiyah-BottLocalization}), and using the equivariant Chern roots (\ref{eqn:equivChrootloopsp}), as well as the equivariant Euler class of $\widetilde{\mathcal{N}}$ (\ref{eqn:equivvEulclnblpsp}), we define the equivariant $\chi_{y}$-genus of the loop space $\mathcal{L}X$ to be
\begin{equation}\label{eqn:EquivChiYGen}
\chi_{y}(\mathcal{L}X ; q)  = \int_{X} \prod_{j=1}^{d} \frac{\prod_{n \in \mathbb{Z}} \frac{x_{j} + nu}{1-e^{-x_{j} -nu}} \big(1 + y e^{-x_{j} -nu}\big)}{\prod_{n \in \mathbb{Z} \setminus \{0\}} (x_{j} + nu)} = \int_{X} \prod_{j=1}^{d} x_{j} \prod_{n \in \mathbb{Z}} \frac{\big(1+ yq^{n} e^{-x_{j}}\big)}{\big(1-q^{n}e^{-x_{j}}\big)}
\end{equation}
where the second equality is a trivial cancellation of terms as well as the definition of the parameter $q = e^{-u}$. 
\end{defn}

There is one subtlety arising from the infinite product in the above definition.  Technically, convergence is problematic in the $y$ variable, and we must impose zeta function regularization to get a well-defined index $\chi_{y}(\mathcal{L}X; q)^{\text{reg}}$.  The details of this regularization will emerge in the proof of the following proposition.  

\begin{proppy}
For any compact complex manifold $X$, the elliptic genus $\text{Ell}_{q,y}(X)$ coincides with the (regularized) equivariant $\chi_{y}$-genus $\chi_{-y}(\mathcal{L}X; q)^{\text{reg}}$ of the loop space
\begin{equation}
\text{Ell}_{q,y}(X) = \chi_{-y}(\mathcal{L}X; q)^{\text{reg}}
\end{equation}
after analytically continuing the equivariant $\chi_{y}$-genus in the variable $q$.  
\end{proppy}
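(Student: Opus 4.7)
The plan is to start from the explicit integral formula for $\chi_{-y}(\mathcal{L}X;q)$ given in (\ref{eqn:EquivChiYGen}) (with $y$ replaced by $-y$), split the product over $n \in \mathbb{Z}$ into three pieces — the $n=0$ term, the $n \geq 1$ terms, and the $n \leq -1$ terms — and then manipulate the $n \leq -1$ piece by sending $n \mapsto -n$ so that the negative-$n$ factors can be matched with the positive-$n$ factors appearing in $\mathbb{E}_{q,-y}$. The key algebraic identity is the ``flip''
\[
\frac{1-y q^{-n} e^{-x_j}}{1 - q^{-n} e^{-x_j}} \;=\; \frac{-yq^{-n}e^{-x_j}(1 - y^{-1} q^n e^{x_j})}{-q^{-n}e^{-x_j}(1-q^n e^{x_j})} \;=\; y \cdot \frac{1 - y^{-1} q^n e^{x_j}}{1-q^n e^{x_j}},
\]
which turns the $n \leq -1$ factors into exactly the ``mirror'' factors $(1-y^{-1}q^n e^{x_j})/(1-q^n e^{x_j})$ that appear in the definition of $\mathbb{E}_{q,y}$ coming from $\Lambda_{y^{-1}q^n} T_X$ and $S_{q^n} T_X$.

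The main obstacle is that after the flip, the product over $n \geq 1$ of the constant $y$ that is pulled out in each factor is formally divergent. This is precisely the place where regularization is required: using the standard zeta-regularization $\prod_{n \geq 1} y = y^{\sum_{n \geq 1} 1} = y^{\zeta(0)} = y^{-1/2}$, each Chern-root index $j$ contributes a factor $y^{-1/2}$, and the $d$ Chern roots combine to give the overall normalization $y^{-d/2}$, matching the $(-y)^{-d/2}$ prefactor (with $y \mapsto -y$) in the definition of $\mathbb{E}_{q,y}$. One should verify that this regularization is consistent with analytic continuation in $q$: for $|q|<1$ the $n \geq 1$ product converges, and the whole identity should be understood as the analytic continuation of the regularized expression.

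To close the argument, I will compute $\text{ch}(\mathbb{E}_{q,-y})\,\mathrm{td}(X)$ directly using Lemma \ref{lemmy:lambdaSChern}: since $T_X$ has Chern roots $x_i$ and $T_X^{\smvee}$ has Chern roots $-x_i$, one obtains
\[
\text{ch}(\mathbb{E}_{q,-y})\,\mathrm{td}(X) \;=\; y^{-d/2} \prod_{i=1}^d \frac{x_i(1-y e^{-x_i})}{1-e^{-x_i}} \prod_{n=1}^\infty \frac{(1 - y q^n e^{-x_i})(1-y^{-1} q^n e^{x_i})}{(1 - q^n e^{-x_i})(1-q^n e^{x_i})},
\]
after reindexing $\prod_{n \geq 1}(1 - y q^{n-1} e^{-x_i}) = (1-y e^{-x_i}) \prod_{n \geq 1}(1 - y q^n e^{-x_i})$ and combining the Todd factor $x_i/(1-e^{-x_i})$ with the $n=0$ exterior-power term. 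Integrating over $X$ gives the elliptic genus $\text{Ell}_{q,y}(X)$. Comparing term by term with the regularized expression for $\chi_{-y}(\mathcal{L}X;q)^{\text{reg}}$ produced by the three-part split above — the $n=0$ factor yields $x_j(1-y e^{-x_j})/(1-e^{-x_j})$, the $n \geq 1$ factor yields the numerator piece $(1-yq^n e^{-x_j})/(1-q^n e^{-x_j})$, and the regularized $n \leq -1$ factor yields $y^{-1/2}(1-y^{-1}q^n e^{x_j})/(1-q^n e^{x_j})$ — shows that the two integrands agree identically, establishing the proposition.
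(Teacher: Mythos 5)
Your proposal is correct and follows essentially the same route as the paper's proof: the same three-way split of the product over $n\in\mathbb{Z}$, the same ``flip'' of the $n\le -1$ factors, the same zeta-function regularization of the divergent $\prod_{n\ge 1}$ constant to produce the $(-y)^{-d/2}$ prefactor, and the same Chern-character computation of $\text{ch}(\mathbb{E}_{q,-y})\,\text{td}(X)$ via Lemma \ref{lemmy:lambdaSChern}. The only cosmetic difference is that you substitute $y\mapsto -y$ at the outset while the paper carries $y$ through and substitutes at the end.
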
  

\begin{proof}
Let us begin by rewriting $\chi_{y}(\mathcal{L}X; q)$ from (\ref{eqn:EquivChiYGen}) in a more convenient fashion.  We can separate the infinite product over $n \in \mathbb{Z}$ into the $n=0$, $n>0$ and $n<0$ contributions.  It is easy to see we have
\begin{equation}
\chi_{y}(\mathcal{L}X; q) = \int_{X} \prod_{j=1}^{d} x_{j} \frac{1+ ye^{-x_{j}}}{1-e^{-x_{j}}} \prod_{n \geq 1} \bigg( \frac{1+q^{n}ye^{-x_{j}}}{1-q^{n}e^{-x_{j}}}\bigg)\bigg( \frac{1+q^{-n}y e^{-x_{j}}}{1-q^{-n}e^{-x_{j}}}\bigg)
\end{equation}
and with a little more work rewriting the second term in parentheses, we have
\begin{equation} \label{eqn:simplASS}
\chi_{y}(\mathcal{L}X; q) = \int_{X} \prod_{j=1}^{d} x_{j} \frac{1+ ye^{-x_{j}}}{1-e^{-x_{j}}} \prod_{n \geq 1} \bigg( \frac{1+q^{n}ye^{-x_{j}}}{1-q^{n}e^{-x_{j}}}\bigg)\bigg( \frac{1+q^{n}y^{-1} e^{x_{j}}}{1-q^{n}e^{x_{j}}}\bigg)(-y).
\end{equation}
It is now clear where we must employ regularization.  The first two terms in parentheses produce no convergence issues, yet thanks to the $(-y)$ factor (which is under the infinite product!) we have a divergent contribution of
\[\prod_{j=1}^{d} \prod_{n \geq 1} (-y) = \prod_{j=1}^{d} (-y)^{1 + 1 + \ldots} = (-y)^{d \zeta(0)} = (-y)^{-d/2}\] 
which we have tamed by a standard zeta function regularization, using $\zeta(0) = -1/2$.  With this, we may define the \emph{regularized} equivariant $\chi_{y}$-genus 
\begin{equation}\label{eqn:finalREGCHIY}
\chi_{y}(\mathcal{L}X; q)^{\text{reg}} = (-y)^{-d/2} \int_{X} \prod_{j=1}^{d} x_{j} \frac{1+ ye^{-x_{j}}}{1-e^{-x_{j}}} \prod_{n \geq 1} \frac{(1+q^{n}ye^{-x_{j}})}{(1-q^{n}e^{-x_{j}})}  \frac{(1+q^{n}y^{-1} e^{x_{j}})}{(1-q^{n}e^{x_{j}})}
\end{equation}
where the $(-y)^{d/2}$ factor arises from the above regularization, while all else is as in (\ref{eqn:simplASS}).  

Recalling the definition of $\mathbb{E}_{q,-y}$ in (\ref{eqn:formbundleval}), we now want to use properties of the Chern character and Todd class to simplify the expression of the elliptic genus $\text{Ell}_{q,y}(X)$.  We can compute
\begin{equation}
\setlength{\jot}{8pt}
\begin{split}
\text{ch}(\mathbb{E}_{q, -y}) & = y^{-d/2} \prod_{n=1}^{\infty} \text{ch}(\Lambda_{-yq^{n-1}}T^{\smvee}_{X}) \, \text{ch}(\Lambda_{-y^{-1}q^{n}}T_{X}) \, \text{ch}(S_{q^{n}}T^{\smvee}_{X}) \, \text{ch}(S_{q^{n}}T_{X})\\
& = y^{-d/2} \prod_{i=1}^{d} \prod_{n=1}^{\infty} \frac{(1-yq^{n-1} e^{-x_{i}})(1-y^{-1}q^{n}e^{x_{i}})}{(1-q^{n}e^{-x_{i}})(1-q^{n}e^{x_{i}})} \\
& = y^{-d/2} \prod_{i=1}^{d} (1-ye^{-x_{i}}) \prod_{n=1}^{\infty} \frac{(1-yq^{n} e^{-x_{i}})(1-y^{-1}q^{n}e^{x_{i}})}{(1-q^{n}e^{-x_{i}})(1-q^{n}e^{x_{i}})} 
\end{split}
\end{equation}
where the first equality follows from the behavior of the Chern character on tensor products, the second follows by applying (\ref{eqn:Chcharformbunf}), while the final equality is trivial reindexing.  Finally, recalling the expression (\ref{eqn:ToddCrass}) for $\text{td}(X)$ in terms of the Chern roots, we can apply the definition (\ref{eqn:indtheordefnELLGEN}) of the elliptic genus $\text{Ell}_{q,y}(X)$ to compute
\begin{equation} \label{eqn:finalELLGEN}
\text{Ell}_{q,y}(X) = y^{-d/2} \int_{X} \prod_{j=1}^{d} x_{j} \frac{1- ye^{-x_{j}}}{1-e^{-x_{j}}} \prod_{n \geq 1}  \frac{(1-q^{n}ye^{-x_{j}})}{(1-q^{n}e^{-x_{j}})}  \frac{(1-q^{n}y^{-1} e^{x_{j}})}{(1-q^{n}e^{x_{j}})}.
\end{equation}
Comparing (\ref{eqn:finalELLGEN}) and (\ref{eqn:finalREGCHIY}) we conclude that $\text{Ell}_{q,y}(X) = \chi_{-y}(\mathcal{L}X; q)^{\text{reg}}$.  
\end{proof}

It is conventional to freely apply the change of variables $q = e^{2 \pi i \tau}$ and $y = e^{2 \pi i z}$.  The elliptic genus $\text{Ell}_{q,y}(X)$ is a holomorphic function on $\mathfrak{H} \times \mathbb{C}$ where $\tau$ is a coordinate on the upper-half plane and $z$ is a coordinate on $\mathbb{C}$.  By the decomposition $\mathbb{E}_{q, -y} = \bigoplus E_{n, l} q^{n} y^{l}$, the elliptic genus admits a Fourier expansion of the form
\begin{equation}\label{eqn:FourDecompELLGEN}
\text{Ell}_{q,y}(X) = \sum_{n \geq 0, l \in \mathbb{Z}} c( n, l) q^{n} y^{l}.
\end{equation}
The integral coefficients $c(n, l)$ have the interpretation of topological indices of the vector bundles $E_{n, l}$
\begin{equation}\label{eqn:topindEllGenCoeff}
c(n, l) = \int_{X} \text{ch}(E_{n, l}) \text{td}(X).
\end{equation}

\begin{cory}\label{cory:corollaryEELLGEN}
For any compact complex manifold $X$, by setting $q=0$ the elliptic genus $\text{Ell}_{q,y}(X)$ specializes to the ordinary $\chi_{y}$-genus, up to a prefactor and a change in the sign of $y$
\begin{equation}
\text{Ell}_{0,y}(X) = y^{-d/2} \chi_{-y}(X).
\end{equation}
By setting $y=1$, it is clear from (\ref{eqn:finalELLGEN}) that all higher powers of $q$ vanish, and we recover the Euler characteristic.  In terms of the Fourier coefficients, we have 
\begin{equation}\label{eqn:EulCharEllGenCoeff}
\text{Ell}_{q,1}(X) = \chi(X) = \sum_{n \geq 0, l \in \mathbb{Z}} c(n, l) q^{n} = \sum_{l \in \mathbb{Z}} c(0,l).
\end{equation}
\end{cory}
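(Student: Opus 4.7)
The plan is to prove both specializations directly from the explicit integral representation (\ref{eqn:finalELLGEN}) of $\text{Ell}_{q,y}(X)$ in terms of the Chern roots $x_1,\ldots,x_d$ of $T_X$, which was established in the course of identifying the elliptic genus with the regularized equivariant $\chi_y$-genus of the loop space. Both claims reduce to elementary manipulations of the integrand followed by comparison with previously established formulas, so there is no real obstacle; the steps are essentially formal.

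For the first claim, I would set $q=0$ in (\ref{eqn:finalELLGEN}). Every factor in the infinite product
\[\prod_{n\geq 1}\frac{(1-q^{n}ye^{-x_{j}})}{(1-q^{n}e^{-x_{j}})}\frac{(1-q^{n}y^{-1}e^{x_{j}})}{(1-q^{n}e^{x_{j}})}\]
collapses to $1$, leaving
\[\text{Ell}_{0,y}(X)=y^{-d/2}\int_{X}\prod_{j=1}^{d}x_{j}\,\frac{1-ye^{-x_{j}}}{1-e^{-x_{j}}}.\]
Comparing directly with the multiplicative-class expression (\ref{eqn:chiygenmultgen}) for the $\chi_y$-genus, the integral is exactly $\chi_{-y}(X)$, so $\text{Ell}_{0,y}(X) = y^{-d/2}\chi_{-y}(X)$.

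For the second claim, I would set $y=1$ in (\ref{eqn:finalELLGEN}). The prefactor $y^{-d/2}$ is $1$, the factor $(1-ye^{-x_{j}})/(1-e^{-x_{j}})$ becomes $1$, and each pair in the infinite product cancels identically. What remains is
\[\text{Ell}_{q,1}(X)=\int_{X}\prod_{j=1}^{d}x_{j}=\int_{X}c_{d}(T_{X})=\chi(X),\]
where the last equality is Chern--Gauss--Bonnet (valid for any compact complex, in fact almost complex, manifold, so the hypothesis of K\"ahlerness used in the proof of (\ref{eqn:GENGBONNTHM}) can be dropped here via the standard extension through the top Chern class). Finally, substituting $y=1$ into the Fourier expansion (\ref{eqn:FourDecompELLGEN}) gives $\text{Ell}_{q,1}(X)=\sum_{n\geq 0}\bigl(\sum_{l\in\mathbb{Z}}c(n,l)\bigr)q^{n}$, and since this sum must equal the constant $\chi(X)$, the coefficient of $q^{n}$ vanishes for $n\geq 1$ and $\sum_{l}c(0,l)=\chi(X)$, yielding the displayed chain of equalities.

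The only point that requires any care is the use of Chern--Gauss--Bonnet in the non-K\"ahler generality of the corollary's hypotheses; this I would handle by invoking the general Hopf theorem identifying $\int_{X}c_{d}(T_{X})$ with $\chi(X)$ for any compact complex manifold, rather than the Hodge-theoretic proof given earlier in the excerpt. Everything else is direct substitution and cancellation.
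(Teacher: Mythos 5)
Your proposal is correct and follows essentially the same route the paper intends: the corollary is asserted as immediate from direct substitution of $q=0$ and $y=1$ into the product formula (\ref{eqn:finalELLGEN}) and comparison with (\ref{eqn:chiygenmultgen}) and the Fourier expansion (\ref{eqn:FourDecompELLGEN}). Your added care about Chern--Gauss--Bonnet is well placed, since the paper's proof of (\ref{eqn:GENGBONNTHM}) uses the K\"ahler hypothesis while the corollary is stated for arbitrary compact complex manifolds; invoking $\int_{X}c_{d}(T_{X})=\chi(X)$ in the almost complex generality closes that gap cleanly.
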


Once we have introduced the relevant automorphic forms in the following chapter, we will note that the elliptic genus of a Calabi-Yau manifold is a weak Jacobi form.

\section{Equivariant Topological Indices on $\boldmath{\mathbb{C}^{2}}$} \label{sec:equivvvindiccsec}

With the foundations of equivariant cohomology and localization in place, as well as some examples of interesting topological indices $\Phi(X)$, we would now like to compute these indices for $X = \mathbb{C}^{2}$.  Because $\mathbb{C}^{2}$ is not compact, we cannot integrate ordinary cohomology classes, and the righthand side of (\ref{eqn:generalindex}) is not well-defined.  However, using the canonical algebraic torus action on $\mathbb{C}^{2}$, we can \emph{define} an equivariant topological index $\Phi(\mathbb{C}^{2})$ by applying Atiyah-Bott localization.  

Let $T_{t} = (\mathbb{C}^{*})^{2}$ be the algebraic torus acting on $\mathbb{C}^{2}$ in the natural way by coordinate-wise multiplication.  Clearly, the origin is the only fixed point of the action, and we should regard the tangent space to $\mathbb{C}^{2}$ at the origin as a $T_{t}$-module.  To briefly rehash the constructions of Section \ref{subsec:IntrABBBLOCC}, we denote by $t_{i}$ for $i=1,2$ the one-dimensional $T_{t}$-module on which $(t_{1}, t_{2}) \in T_{t}$ acts as multiplication by $t_{i}$.  We therefore regard $t_{1}$ and $t_{2}$ as $T_{t}$-equivariant line bundles over a point, with corresponding line bundles $\mathscr{L}_{i} \cong p_{i}^{*} \mathcal{O}_{\mathbb{P}^{\infty}}(-1)$ on $(\mathbb{P}^{\infty})^{2}$.  Recall that we write
\begin{equation}
\epsilon_{i} = c_{1}(t_{i})_{T_{t}} = c_{1}(\mathscr{L}_{i})
\end{equation}
as well as $t_{i} = e^{-\epsilon_{i}} = \text{ch}(\mathscr{L}_{i}^{\smvee})$.  We treat $\epsilon_{1}, \epsilon_{2}$ as the generators of the $T_{t}$-equivariant cohomology of a point, which we denote by $\mathbb{Q}[\epsilon_{1}, \epsilon_{2}]$.  The tangent space to $\mathbb{C}^{2}$ at the origin is clearly just the normal bundle $\mathcal{N}$ of the origin in $\mathbb{C}^{2}$.  As a $T_{t}$-equivariant bundle, it can be written as
\begin{equation}
\widetilde{\mathcal{N}} = t_{1} + t_{2}
\end{equation}
or equivalently, as $\mathscr{L}_{1} \oplus \mathscr{L}_{2}$ as a bundle on $(\mathbb{P}^{\infty})^{2}$.  Either way, it is straightforward to read off the equivariant Chern roots, as well as the equivariant Euler class of $\widetilde{\mathcal{N}}$
\begin{equation}
e(\widetilde{\mathcal{N}})_{T_{t}} = \epsilon_{1} \epsilon_{2}.  
\end{equation}

\begin{defn}
Let $\Phi(X)$ be a topological index (\ref{eqn:generalindex}) associated to a multiplicative class with formal power series $f(x)$.  Because $\mathbb{C}^{2}$ is not compact, we cannot simply apply (\ref{eqn:generalindex}), but we can define an equivariant index $\Phi(\mathbb{C}^{2})(t_{1}, t_{2})$ by the righthand side of (\ref{eqn:Atiyah-BottLocalization})
\begin{equation}\label{eqn:maindefneqiovindex}
\Phi(\mathbb{C}^{2})(t_{1}, t_{2}) \coloneqq \frac{1}{e(\widetilde{\mathcal{N}})_{T_{t}}} f(\epsilon_{1}) f(\epsilon_{2}) = \frac{1}{\epsilon_{1} \epsilon_{2}} f(\epsilon_{1}) f(\epsilon_{2}).  
\end{equation}
\end{defn}

\noindent In general, we expect $\Phi(\mathbb{C}^{2})(t_{1}, t_{2})$ to have poles when $t_{1} = 1$ or $t_{2} =1$ (equivalently, $\epsilon_{1} =0$ or $\epsilon_{2} =0$) due to the non-compactness of $\mathbb{C}^{2}$.  Note that $\Phi(\mathbb{C}^{2})(t_{1}, t_{2})$ may also depend on other parameters, as will be the case for the $\chi_{y}$-genus and elliptic genus.

\begin{Ex}[\bfseries The Euler Characteristic]
Recalling the standard index expression for the Euler characteristic of a compact complex manifold, we get an equivariant Euler characteristic by applying (\ref{eqn:maindefneqiovindex})
\begin{equation}
\chi(\mathbb{C}^{2})(t_{1}, t_{2}) = \frac{\epsilon_{1}\epsilon_{2}}{\epsilon_{1}\epsilon_{2}} = 1.
\end{equation}
The equivariant parameters drop out of the Euler characteristic entirely.  We will simply write $\chi(\mathbb{C}^{2}) =1$, which agrees with the topological computation using that $\mathbb{C}^{2}$ is contractible.  
\end{Ex}

\begin{Ex}[\bfseries The Equivariant $\boldmath{\chi_{y}}$-genus]
Recalling the expression for the $\chi_{y}$-genus of a compact complex manifold, we compute the equivariant version by applying (\ref{eqn:maindefneqiovindex})
\begin{equation}
\chi_{y}(\mathbb{C}^{2})(t_{1}, t_{2}) = \frac{1}{\epsilon_{1} \epsilon_{2}} \frac{\epsilon_{1} (1+ye^{-\epsilon_{1}})}{(1-e^{-\epsilon_{1}})}\frac{\epsilon_{2}(1+ye^{-\epsilon_{2}})}{(1-e^{-\epsilon_{2}})} = \frac{(1+yt_{1})(1+yt_{2})}{(1-t_{1})(1-t_{2})}.
\end{equation}
Note that under the variable specialization $y=-1$ we have $\chi_{-1}(\mathbb{C}^{2})(t_{1}, t_{2}) =1$, recovering the Euler characteristic.  As expected, we have singularities when $t_{1}=1$ or $t_{2}=1$, as a result of the non-compactness of $\mathbb{C}^{2}$.  
\end{Ex}

\begin{Ex}[\bfseries The Equivariant Elliptic Genus]
In exactly the same way as the above two examples, one can use the index expression of the ordinary elliptic genus on a compact two-dimensional complex manifold, and apply (\ref{eqn:maindefneqiovindex}) to get the equivariant version
\begin{equation}
\text{Ell}_{q,y}(\mathbb{C}^{2})(t_{1}, t_{2}) = y^{-1} \prod_{n \geq 1} \frac{(1-yq^{n-1}t_{1})(1-y^{-1}q^{n}t_{1}^{-1})(1-yq^{n-1}t_{2})(1-y^{-1}q^{n}t_{2}^{-1})}{(1-q^{n-1}t_{1})(1-q^{n}t_{1}^{-1})(1-q^{n-1}t_{2})(1-q^{n}t_{2}^{-1})}.
\end{equation}
Of course, by letting $q=0$, we specialize to $y^{-1}\chi_{-y}(\mathbb{C}^{2})$, and therefore also the Euler characteristic by further letting $y=1$.
\end{Ex}

It is often desirable to consider not the full $T_{t} = (\mathbb{C}^{*})^{2}$ action on $\mathbb{C}^{2}$, but rather the \emph{diagonal action} generated by $(t, t^{-1}) \in (\mathbb{C}^{*})^{2}$.  If $t_{1} = t_{2}^{-1}$, then $\epsilon_{1} + \epsilon_{2}=0$ which can be written
\[c_{1}( \mathscr{L}_{1} \oplus \mathscr{L}_{2}) = -(\epsilon_{1} + \epsilon_{2}) =0,\]
and thought of as an action preserving the Calabi-Yau or hyperk\"{a}hler structure of $\mathbb{C}^{2}$.  Of course, $T_{t} = (\mathbb{C}^{*})^{2} \subset GL_{2}(\mathbb{C})$ is a toral subgroup whereas the subgroup  generated by $(t, t^{-1})$ descends to a subgroup of $SL_{2}(\mathbb{C})$.  Instead of having independent parameters $t_{1}$ and $t_{2}$, we can specialize any of the above equivariant indices to the case of $t \coloneqq t_{1} = t_{2}^{-1}$.  The specialization of the equivariant elliptic genus will be especially important, so we record the result here
\begin{equation}\label{eqn:equivELLC2diagspecc}
\text{Ell}_{q,y}(\mathbb{C}^{2}; t) = y^{-1} \prod_{n \geq 1} \frac{(1-yq^{n-1}t)(1-y^{-1}q^{n}t^{-1})(1-yq^{n-1}t^{-1})(1-y^{-1}q^{n}t)}{(1-q^{n-1}t)(1-q^{n}t^{-1})(1-q^{n-1}t^{-1})(1-q^{n}t)}.
\end{equation}

\section{An Introduction to Nekrasov Partition Functions}\label{sec:FULLNEkkkSec}

To motivate the partition functions of Nekrasov, we would like to begin by studying instantons on $\mathbb{C}^{2} = \mathbb{R}^{4}$.  Recall that given a compact four-manifold $M$, a $U(r)$ or $SU(r)$ instanton is an ASD unitary connection on a Hermitian vector bundle over $M$.  Equivalently, it is a solution of the Yang-Mills equations and a global minimum of the Yang-Mills functional.  However, because $\mathbb{C}^{2}$ is not compact, in order for the theory to be well-defined, we must consider only field configurations which decay fast enough at infinity.  Well-defined instantons on $\mathbb{R}^{4}$ therefore come from genuine instantons on $S^{4}$ satisfying this property along with requiring trivial gauge transformations at the point at infinity.  These are called framed instantons and we denote by $M_{0}^{\text{reg}}(r,k)$ the moduli space of $SU(r)$ framed instantons on $S^{4}$ with second Chern class $k$.  This is a smooth non-compact hyper-K\"{a}hler manifold of real dimension $4rk$ and is isomorphic to the moduli space $\mathfrak{M}^{\text{loc}}(r,k)$ of locally-free sheaves of rank $r$ on $\mathbb{P}^{2}$ with second Chern class $k$ framed along the line at infinity (this framing will be defined shortly for general torsion-free sheaves on $\mathbb{P}^{2}$).  

The Uhlenbeck (partial) compactification of the genuine instantons $M_{0}^{\text{reg}}(r,k)$ is defined by
\begin{equation}\label{eqn:UHHHHlcomppppar}
M_{0}(r,k) = \cup_{k'=0}^{k} M_{0}^{\text{reg}}(r,k-k') \times \text{Sym}^{k'}(\mathbb{C}^{2})
\end{equation}
and acquires orbifold singularities.  It admits a resolution of singularities $\pi : \mathfrak{M}(r,k) \to M_{0}(r,k)$, where $\mathfrak{M}(r,k)$ is the moduli space of framed torsion-free sheaves on $\mathbb{P}^{2}$ with fixed discrete invariants, as we will explain shortly.  The moduli space $\mathfrak{M}(r,k)$ is called the Gieseker (partial) compactification and it is smooth, non-compact, and hyper-K\"{a}hler of real dimension $4rk$.  To summarize, we have the following diagram relating the moduli spaces introduced above,
\begin{equation}
\begin{tikzcd}
\mathfrak{M}^{\text{loc}}(r,k) \arrow[hookrightarrow]{rr}{} \arrow[swap]{d}{\simeq} & & \mathfrak{M}(r,k)\arrow{d}{\pi} \\
M_{0}^{\text{reg}}(r,k)\arrow[hookrightarrow]{rr}{}&  & M_{0}(r,k)
\end{tikzcd}
\end{equation}

We will begin by properly defining $\mathfrak{M}(r,k)$, and it will be our main focus throughout this section.  One should keep in mind that $\mathfrak{M}(r,k)$ is related to \emph{genuine} instantons on $\mathbb{C}^{2}$ in the fashion described above.  The Nekrasov partition functions are built from integrals of multiplicative classes over $\mathfrak{M}(r,k)$.  But since the moduli space is not compact, the integral must be defined equivariantly using Atiyah-Bott localization.  Physically, the Nekrasov partition functions are (the instanton parts of) partition functions of an $\mathcal{N}=2$ supersymmetric Yang-Mills theory on $\mathbb{C}^{2}$.  The precise physical theory will depend on the choice of multiplicative class to integrate over the moduli space.  

The original physics source for what follows is the paper of Nekrasov \cite{nekrasov_seiberg-witten_2003}, while the mathematical treatment was given in \cite{nakajima_instanton_2005}.  Gasparim and Liu \cite{gasparim_nekrasov_2010} generalized the results by replacing $\mathbb{P}^{2}$ with an arbitrary compact toric surface.

\subsection{Moduli Space of Framed Torsion-Free Sheaves on \boldmath{$\mathbb{P}^{2}$}}\label{subsec:ModFrInstP2}

Consider the projective plane $\mathbb{P}^{2}$ with line at infinity $\ell_{\infty} \cong \mathbb{P}^{1}$.  We will be interested in instanton counting on $\mathbb{C}^{2}$, thought of as the compliment $\mathbb{P}^{2} \setminus \ell_{\infty}$.  

\begin{defn}
A framed torsion-free sheaf on $\mathbb{P}^{2}$ is a pair $(E, \Phi)$ where $E$ is a torsion-free sheaf of rank $r$ on $\mathbb{P}^{2}$, locally-free in a neighborhood of $\ell_{\infty}$, and
\begin{equation}
\Phi : E \big|_{\ell_{\infty}} \xrightarrow{\hspace*{0.2cm} \sim \hspace*{0.2cm}} \mathcal{O}_{\ell_{\infty}}^{\oplus r}
\end{equation}
is a trivialization of $E$ restricted to the line at infinity $\ell_{\infty}$.  We denote by $\mathfrak{M}(r,k)$ the moduli space parameterizing such pairs $(E, \Phi)$ where $r$ is the rank of $E$ and $k=\int_{\mathbb{P}^{2}} c_{2}(E)$.  The moduli space $\mathfrak{M}(r,k)$ is called the Gieseker (partial) compactification of the genuine $SU(r)$ instantons on $\mathbb{C}^{2}$ with topological charge $k$.    
\end{defn}

\noindent If $(E, \Phi)$ is a framed torsion-free sheaf, then $c_{1}(E)=0$.  To see this, note that because $E$ is locally-free around $\ell_{\infty}$, and trivialized along $\ell_{\infty}$, we must have $\int_{\ell_{\infty}} c_{1}(E)=0$.  Because $\ell_{\infty}$ is a hyperplane in $\mathbb{P}^{2}$, this implies $c_{1}(E)=0$.  Therefore, just as with $SU(r)$ instantons on a surface, the only discrete invariants are the rank $r$ and the topological charge $k$.  

\begin{rmk}
What we are calling the topological charge here differs by a minus sign from the original definition in (\ref{eqn:topcharge4man}).  If $c_{1}(E)=0$, then $c_{2}(E) = -\text{ch}_{2}(E)$ so the discrepancy in sign is due to whether one uses $c_{2}(E)$ or $\text{ch}_{2}(E)$ to define $k$.  Because our sheaves originate from ASD bundles, we will use $c_{2}(E)$ which results in $k \geq 0$.  
\end{rmk}

The local structure of the moduli space $\mathfrak{M}(r,k)$ can be studied by way of the deformation-obstruction theory of sheaves, with a mild modification to account for the framing \cite{huybrechts_stable_1992}.  The infinitesimal automorphisms of a pair $(E, \Phi)$ are given by $\text{Ext}^{0}(E, E(- \ell_{\infty}))$, with the Zariski tangent space to $\mathfrak{M}(r,k)$ at $(E, \Phi)$ corresponding to $\text{Ext}^{1}(E, E(- \ell_{\infty}))$.  The obstruction space is of course $\text{Ext}^{2}(E, E(- \ell_{\infty}))$.  The following is a technical result showing that the deformations of framed torsion-free sheaves on $\mathbb{P}^{2}$ are unobstructed with no infinitesimal automorphisms.  A proof can be found in \cite{gasparim_nekrasov_2010, nakajima_instanton_2005}.

\begin{proppy}
Let $(E, \Phi) \in \mathfrak{M}(r,k)$ be a framed torsion-free sheaf on $\mathbb{P}^{2}$.  We have 
\begin{equation}
\text{Ext}^{0}(E, E(-\ell_{\infty})) = \text{Ext}^{2}(E, E(-\ell_{\infty}))=0.
\end{equation}  
\end{proppy}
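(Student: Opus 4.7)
The plan is to reduce both vanishings to a single injectivity statement for the restriction map $\mathrm{End}(E) \to \mathrm{End}(E|_{\ell_\infty})$, and then establish that injectivity by an iterative restriction argument exploiting the framing. First I would apply Serre duality on $\mathbb{P}^2$, using $\omega_{\mathbb{P}^2} \cong \mathcal{O}(-3\ell_\infty)$, to rewrite
\[
\mathrm{Ext}^2\bigl(E, E(-\ell_\infty)\bigr) \;\cong\; \mathrm{Hom}\bigl(E(-\ell_\infty),\, E(-3\ell_\infty)\bigr)^\vee \;\cong\; \mathrm{Hom}\bigl(E,\, E(-2\ell_\infty)\bigr)^\vee.
\]
Thus it suffices to show $\mathrm{Hom}\bigl(E, E(-n\ell_\infty)\bigr) = 0$ for $n = 1, 2$. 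Any $\varphi \in \mathrm{Hom}\bigl(E, E(-n\ell_\infty)\bigr)$ composes with the natural inclusion $E(-n\ell_\infty) \hookrightarrow E$ to give $\psi \in \mathrm{End}(E) = H^0(\mathbb{P}^2, \mathcal{E}nd(E))$ whose image lies in $E(-n\ell_\infty)$; since the inclusion is injective, it is enough to show that any such $\psi$ vanishes.

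Next I would reduce to proving that the restriction map
\[
\rho \,:\, H^0\bigl(\mathbb{P}^2, \mathcal{E}nd(E)\bigr) \,\longrightarrow\, H^0\bigl(\ell_\infty, \mathcal{E}nd(E)|_{\ell_\infty}\bigr) \;\cong\; \mathrm{End}(\mathbb{C}^r)
\]
is injective, the latter identification coming from the framing $\Phi$. Indeed, if $\psi$ factors through $E(-n\ell_\infty)$ for some $n\ge 1$, then $\rho(\psi)=0$ because $\psi|_{\ell_\infty}$ takes values in $\mathcal{E}nd(\mathbb{C}^r)\otimes \mathcal{O}_{\ell_\infty}(-n)$, and so injectivity of $\rho$ forces $\psi = 0$, which is precisely what we need.

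To prove that $\ker(\rho) = 0$, I would iterate the restriction. Using the short exact sequence $0 \to \mathcal{E}nd(E)(-\ell_\infty) \to \mathcal{E}nd(E) \to \mathcal{E}nd(E)|_{\ell_\infty} \to 0$ (exact since $E$ is locally free near $\ell_\infty$), any $\psi \in \ker(\rho)$ lifts uniquely to a section of $\mathcal{E}nd(E)(-\ell_\infty)$. Restricting that section to $\ell_\infty$ now lands in $\mathcal{E}nd(\mathbb{C}^r) \otimes \mathcal{O}_{\ell_\infty}(-1)$, whose space of global sections vanishes because $H^0(\mathbb{P}^1, \mathcal{O}(-1)) = 0$. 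Hence $\psi$ lifts further to a section of $\mathcal{E}nd(E)(-2\ell_\infty)$; iterating, $\psi \in H^0\bigl(\mathbb{P}^2, \mathcal{E}nd(E)(-n\ell_\infty)\bigr)$ for every $n \ge 0$.

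Finally, I would conclude by observing that for $n$ sufficiently large this space is zero. The cleanest way to see this is to restrict to a generic line $\ell \subset \mathbb{P}^2$ transverse to $\ell_\infty$: since $E$ is locally free on a Zariski-open set containing the generic line and has $c_1(E) = 0$, the generic splitting type $E|_\ell \cong \bigoplus_i \mathcal{O}_\ell(a_i)$ has bounded $|a_i|$, so $\mathcal{E}nd(E)|_\ell(-n)$ is a sum of line bundles of negative degree for $n \gg 0$ and admits no global sections. Thus any such $\psi$ vanishes on a dense family of lines, and torsion-freeness of $\mathcal{E}nd(E)$ near a generic line gives $\psi = 0$ globally. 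The main obstacle I anticipate is the final step of carefully controlling the bundle $\mathcal{E}nd(E)$ away from the locally-free locus of $E$; I would handle this by working with the reflexive hull $E^{\vee\vee}$ in a neighborhood of the generic line, since $E$ and $E^{\vee\vee}$ agree outside a finite set of points on $\mathbb{P}^2 \setminus \ell_\infty$.
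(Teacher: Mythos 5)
Your proof is correct. The paper itself does not supply an argument for this proposition — it defers to the cited references (Nakajima–Yoshioka, Gasparim–Liu) — but what you give is essentially the standard proof found there: Serre duality with $\omega_{\mathbb{P}^2}\cong\mathcal{O}(-3\ell_\infty)$ reduces the $\mathrm{Ext}^2$ vanishing to $\mathrm{Hom}(E,E(-2\ell_\infty))=0$, and the iterated restriction to $\ell_\infty$ (where the framing trivializes $E|_{\ell_\infty}$, so each restricted map lands in a strictly negative twist of $\mathcal{O}_{\ell_\infty}^{\oplus r^2}$ and must vanish), together with the vanishing of $H^0$ of a sufficiently negative twist of the torsion-free sheaf $\mathcal{H}om(E,E)$, finishes the argument.
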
  

Recall that the dimension (or expected dimension) of a moduli space is frequently computed by a Riemann-Roch calculation.  Given any smooth variety $X$ of dimension $d$, for all coherent sheaves $\mathscr{E}$ and $\mathscr{F}$ on $X$ we can define the Euler pairing
\begin{equation}
\chi(\mathscr{E}, \mathscr{F}) \coloneqq \sum_{i=0}^{d} (-1)^{i} \text{dim}\, \text{Ext}^{i}(\mathscr{E}, \mathscr{F}) = \int_{X} \text{ch}^{\smvee}(\mathscr{E}) \text{ch}(\mathscr{F}) \text{td}(X)
\end{equation}
where the second equality requires that $X$ additionally be projective.  When $X$ is a surface, the alternating sum has only three terms, and in the case of $X = \mathbb{P}^{2}$ by the above proposition we have the following
\begin{equation}\label{eqn:dimframedmodspp}
\text{dim} \, \mathfrak{M}(r,k) =  \text{dim} \, \text{Ext}^{1}(E, E(- \ell_{\infty})) = - \chi(E, E(- \ell_{\infty})).
\end{equation}

\begin{cory}
The moduli space $\mathfrak{M}(r, k)$ is a smooth quasi-projective variety of dimension $2rk$.  
\end{cory}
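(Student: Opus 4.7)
The plan is to deduce all three claims (smoothness, quasi-projectivity, dimension) directly from the preceding proposition together with a Hirzebruch–Riemann–Roch calculation. Quasi-projectivity I would simply invoke: $\mathfrak{M}(r,k)$ is constructed as a GIT quotient parametrizing framed torsion-free sheaves on $\mathbb{P}^{2}$ with fixed discrete invariants, and the framing rigidifies the moduli problem enough that the resulting fine moduli space is a quasi-projective scheme. I would cite \cite{nakajima_instanton_2005, huybrechts_stable_1992} for this and not reprove it.

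For smoothness, the proposition gives $\mathrm{Ext}^{2}(E, E(-\ell_{\infty}))=0$ at every point $(E,\Phi)\in \mathfrak{M}(r,k)$, so the canonical obstruction space vanishes and every infinitesimal deformation is unobstructed. The vanishing $\mathrm{Ext}^{0}(E, E(-\ell_{\infty}))=0$ moreover says the framed sheaf has no infinitesimal automorphisms, so the moduli functor is actually representable near $(E,\Phi)$. Combining unobstructedness with representability shows $\mathfrak{M}(r,k)$ is smooth, and the dimension at $(E,\Phi)$ equals $\dim \mathrm{Ext}^{1}(E, E(-\ell_{\infty}))$, which by the proposition is the same as $-\chi(E, E(-\ell_{\infty}))$ as displayed in (\ref{eqn:dimframedmodspp}).

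It then remains to compute $-\chi(E, E(-\ell_{\infty}))$ by Hirzebruch–Riemann–Roch on $\mathbb{P}^{2}$. Since $(E,\Phi)$ is framed we have $c_{1}(E)=0$ and $c_{2}(E)=k$, giving $\mathrm{ch}(E)=(r,0,-k)$ and, because only the degree-one part flips sign under dualization, $\mathrm{ch}^{\vee}(E)=(r,0,-k)$. Writing $H=c_{1}(\mathcal{O}_{\mathbb{P}^{2}}(1))$, we have $\mathrm{ch}(\mathcal{O}(-\ell_{\infty}))=e^{-H}=1-H+\tfrac{1}{2}H^{2}$, and $\mathrm{td}(\mathbb{P}^{2})=1+\tfrac{3}{2}H+H^{2}$. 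I would then multiply out $\mathrm{ch}^{\vee}(E)\,\mathrm{ch}(E(-\ell_{\infty}))\,\mathrm{td}(\mathbb{P}^{2})$, keep the degree-four component, and integrate against $[\mathbb{P}^{2}]$. The contributions are $r(\tfrac{r}{2}-k)-\tfrac{3r^{2}}{2}+r^{2}-rk=-2rk$, so $\chi(E,E(-\ell_{\infty}))=-2rk$ and hence $\dim \mathfrak{M}(r,k)=2rk$.

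The only part that is not essentially a bookkeeping exercise is the quasi-projectivity, which genuinely requires the GIT construction; I would treat it as a citation rather than attempt to reprove it here. Everything else is a direct consequence of the vanishing statements in the preceding proposition combined with the Riemann–Roch calculation above, so the corollary truly is a corollary in the strict sense.
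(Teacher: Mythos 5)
Your proposal is correct and follows essentially the same route as the paper: smoothness from the vanishing of $\text{Ext}^{2}(E,E(-\ell_{\infty}))$ established in the preceding proposition, and the dimension $2rk$ from the Hirzebruch--Riemann--Roch evaluation of $-\chi(E,E(-\ell_{\infty}))$ using $\text{ch}(E)=\text{ch}^{\smvee}(E)=(r,0,-k)$, with your arithmetic checking out. The paper's proof does not even comment on quasi-projectivity, so your explicit citation of the GIT/framed-sheaf construction for that point is, if anything, slightly more careful.
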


\begin{proof}
The smoothness of the moduli space follows from the vanishing of the obstruction space, established in the above proposition.  By (\ref{eqn:dimframedmodspp}), to find the dimension we must compute 
\begin{equation}
\text{dim} \, \mathfrak{M}(r,k) = -\chi(E, E(- \ell_{\infty})) = - \int_{X} \text{ch}(E^{\smvee}) \text{ch}(E) \text{ch}(\mathcal{O}_{\mathbb{P}^{2}}(- \ell_{\infty})) \text{td}(X).
\end{equation}
If $\nu \in H^{4}(\mathbb{P}^{2}, \mathbb{Z})$ is the Poincar\'{e} dual of a point, i.e is normalized by $\int_{\mathbb{P}^{2}} \nu =1$, then we have
\begin{equation}
\text{ch}(E) = \text{ch}(E^{\smvee}) = (r, 0, - k \nu ).
\end{equation}
Likewise, if $\ell \in H^{2}(X, \mathbb{Z})$ is the Poincar\'{e} dual of the class of a line in $\mathbb{P}^{2}$, then it follows that 
\begin{equation}
\text{ch}\big(\mathcal{O}_{\mathbb{P}^{2}}(- \ell_{\infty})\big) = \big(1, -\ell, \tfrac{1}{2} \ell^{2}\big) \,\,\,\,\,\,\,\,\,\,\,\,\,\,\, \text{td}(\mathbb{P}^{2}) = \big(1, \tfrac{3}{2}\ell, \tfrac{1}{12}(9\ell^{2} + 3 \nu)\big)
\end{equation}
noting that $c_{1}(\mathbb{P}^{2}) = 3\ell$.  The claim then follows by a trivial computation.  
\end{proof}

\begin{Ex}
Perhaps the most important example in this thesis will come by considering the case where $E$ is a rank one torsion-free sheaf.  Because we argued above that $E$ cannot vanish on a divisor in $\mathbb{P}^{2}$, we know $E = \mathscr{I}_{Z}$ is an ideal sheaf with $Z$ a zero-dimensional subscheme.  Moreover, because $\mathscr{I}_{Z}$ is locally-free in a neighborhood of $\ell_{\infty}$, the subscheme must be supported in $\mathbb{C}^{2} = \mathbb{P}^{2} \setminus \ell_{\infty}$.  We therefore have
\begin{equation}\label{eqn:Gieeepartcommmp}
\mathfrak{M}(1,k) \cong \text{Hilb}^{k}(\mathbb{C}^{2}).
\end{equation}
The Hilbert scheme\footnote{Strictly speaking there are no abelian instantons, as discussed in Section 6.2 of \cite{hollowood_matrix_2008}.  The typical resolution is to consider instead $U(1)$ instantons on a non-commutative deformation of Euclidean space.  The Hilbert scheme $\text{Hilb}^{k}(\mathbb{C}^{2})$ is the moduli space of abelian instantons on a non-commutative $\mathbb{C}^{2}$.} is smooth, non-compact for $k>0$, and indeed has dimension $2k$.  

Recall the discussion surrounding (\ref{eqn:UHHHHlcomppppar}).  Since rank one vector bundles have vanishing second Chern class, $M_{0}^{\text{reg}}(1, k-k')$ is empty unless $k-k'=0$, in which case it is a point.  Therefore, the Uhlenbeck partial compactification is
\begin{equation}
M_{0}(1,k) \cong \text{Sym}^{k}(\mathbb{C}^{2})
\end{equation}
and the Gieseker partial compactification (\ref{eqn:Gieeepartcommmp}) is indeed a resolution of singularities.  
\end{Ex}

\subsection{The Torus Action on \boldmath{$\mathfrak{M}(r,k)$} and the Fixed Points}

Consider the torus $\widetilde{T} = T_{t} \times T_{e} = (\mathbb{C}^{*})^{2} \times (\mathbb{C}^{*})^{r}$, where $T_{t}$ is the torus acting naturally on $\mathbb{P}^{2}$, and $T_{e}$ is the maximal torus of $GL_{r}(\mathbb{C})$ consisting of diagonal matrices.  In physics, one should think of $\mathbb{C}^{2}$ as flat, four-dimensional spacetime, with $T_{t}$ the maximal torus of the Lorentz group of spacetime symmetries, and $T_{e}$ is the maximal torus of the gauge group.  Assume that $\ell_{\infty}$ is a $T_{t}$-invariant line in $\mathbb{P}^{2}$.  The moduli space $\mathfrak{M}(r,k)$ carries a natural action by the full torus $\widetilde{T}$ and moreover, has finitely many isolated fixed points.  Following closely the notation of \cite{gasparim_nekrasov_2010, nakajima_instanton_2005}, we will now describe this action as well as the fixed points.

Given $(t_{1}, t_{2}) \in T_{t}$, we can define an automorphism $F_{t_{1}, t_{2}}$ of $\mathbb{P}^{2}$ by $F_{t_{1}, t_{2}}(x) = (t_{1}, t_{2}) \cdot x$.  Because it is $T_{t}$-invariant, note that $\ell_{\infty}$ is preserved by all $F_{t_{1}, t_{2}}$.  In addition, for $\vec{e} = \text{diag}(e_{1}, \ldots, e_{r}) \in T_{e}$ we define the isomorphism $G_{\vec{e}}$ of $\mathcal{O}_{\ell_{\infty}}^{\oplus r}$ by
\begin{equation}
G_{\vec{e}}(s_{1}, \ldots, s_{r}) = (e_{1} s_{1}, \ldots, e_{r} s_{r}).
\end{equation}
This allows us to define a natural action of $\widetilde{T}$ on $\mathfrak{M}(r,k)$.  Given a framed torsion-free sheaf $(E, \Phi) \in \mathfrak{M}(r,k)$, we define
\begin{equation}
(t_{1}, t_{2}, \vec{e}\, ) \cdot (E, \Phi) = \big( (F_{t_{1}, t_{2}}^{-1})^{*}E, \Phi'\big)
\end{equation}
where the new framing $\Phi'$ is defined as follows.  Given the initial framing $\Phi$, the action by $F_{t_{1}, t_{2}}^{-1}$ induces the following commuting diagram of vector bundles on the fixed divisor $\ell_{\infty}$
\begin{equation}
\begin{tikzcd}
E\big|_{\ell_{\infty}} \arrow{rr}{\Phi} & & \mathcal{O}_{\ell_{\infty}}^{\oplus r} \\
(F_{t_{1}, t_{2}}^{-1})^{*} E\big|_{\ell_{\infty}}  \arrow{urr}{\sigma_{t_{1}, t_{2}}} \arrow[swap]{rr}{(F_{t_{1}, t_{2}}^{-1})^{*} \Phi} \arrow{u}{} & & (F_{t_{1}, t_{2}}^{-1})^{*} \mathcal{O}_{\ell_{\infty}}^{\oplus r} \arrow[swap]{u}{}
\end{tikzcd}
\end{equation}
In terms of the diagonal morphism $\sigma_{t_{1}, t_{2}}$ in the above diagram, we define the new framing as the composition 
\begin{equation}
\Phi' = G_{\vec{e}} \circ \sigma_{t_{1}, t_{2}}.  
\end{equation}
To describe the torus action intuitively, the sheaf $E$ is pulled back in a straightforward way involving only $(t_{1}, t_{2}) \in T_{t}$.  The framing however, is transformed by first pulling back along $F_{t_{1}, t_{2}}^{-1}$, and then multiplying by the diagonal matrix $\vec{e} = \text{diag}(e_{1}, \ldots, e_{r}) \in T_{e}$.  The action therefore intertwines $T_{t}$ and $T_{e}$ in a precise manner.  

Let $\mathcal{P}_{r,k}$ denote the set of $r$-tuples of one-dimensional partitions $\vec{Y} = (Y_{1}, \ldots, Y_{r})$ such that $\sum_{\alpha=1}^{r} k_{\alpha} =k$, where $k_{\alpha}$ is the number of boxes in $Y_{\alpha}$.  Let $s \in Y$ be a box in the partition $Y$, interpreted as a Young diagram.  The \emph{arm length} $a_{Y}(s)$ of $s$ is the number of boxes strictly to the right of $s$ while the \emph{leg length} $l_{Y}(s)$ is the number of boxes strictly below $s$.  If $s = (i,j)$, then the \emph{hook length} is defined by 
\begin{equation}\label{eqn:hoookleneqn}
h_{ij} = a_{Y}(s) + l_{Y}(s) +1.
\end{equation}

We would now like to understand the fixed point locus of the $\widetilde{T}$-action on $\mathfrak{M}(r,k)$ described above.  It was shown by Nakajima and Yoshioka \cite{nakajima_instanton_2005} that there are finitely many isolated fixed points parameterized by $r$-tuples of partitions $(Y_{1}, \ldots, Y_{r}) \in \mathcal{P}_{r,k}$.  It is first shown in \cite{nakajima_instanton_2005} that $(E, \Phi) \in \mathfrak{M}(r,k)$ is a fixed point if and only if there exists a splitting $E = \mathscr{I}_{1} \oplus \cdots \oplus \mathscr{I}_{r}$, such that for all $\alpha =1, \ldots, r$: 
\begin{enumerate}
\item $\mathscr{I}_{\alpha}$ is an ideal sheaf on $\mathbb{P}^{2}$ corresponding to a zero-dimensional subscheme $Z_{\alpha}$ supported outside $\ell_{\infty}$ and invariant under $T_{t}$.  
\item The action of $\Phi$ takes $\mathscr{I}_{\alpha} \big|_{\ell_{\infty}}$ to the $\alpha$-th factor of $\mathcal{O}_{\ell_{\infty}}^{\oplus r}$.  
\end{enumerate}
The subschemes $Z_{\alpha}$ being supported outside $\ell_{\infty}$ and invariant under $T_{t}$ clearly force the support to lie entirely at the orgin of $\mathbb{C}^{2} = \mathbb{P}^{2} \setminus \ell_{\infty}$.  This corresponds to a pile of $k_{\alpha} = c_{2}(\mathscr{I}_{\alpha}) = -\text{ch}_{2}(\mathscr{I}_{\alpha})$ boxes at the origin of $\mathbb{C}^{2}$.  Accounting for all $\alpha =1, \ldots, r$ we get an $r$-tuple of partitions or Young diagrams $(Y_{1}, \ldots, Y_{r})$ and by the additivity of the Chern character on direct sums, we have $\sum_{\alpha=1}^{r} k_{\alpha} = k$.  

As we have seen many times in this chapter, associated to the torus $\widetilde{T} = T_{t} \times T_{e}$ we can construct the equivariant line bundles $t_{i}$ and $e_{\alpha}$ on a point, or equivalently line bundles $\mathscr{L}_{i}$ and $\mathscr{L}_{\alpha}$ on $(\mathbb{P}^{\infty})^{r+2}$ for all $i =1,2$ and all $\alpha =1, \ldots, r$.  The equivariant first Chern classes of the bundles are defined to be
\begin{equation}\label{eqn:equivparammsNEkk}
\epsilon_{i} = c_{1}(t_{i})_{\widetilde{T}} = c_{1}(\mathscr{L}_{i}), \,\,\,\,\,\,\,\,\,\,\,\,  a_{\alpha} = c_{1}(e_{\alpha})_{\widetilde{T}} = c_{1}(\mathscr{L}_{\alpha}).
\end{equation}
Identifying $(\epsilon_{1}, \epsilon_{2}, a_{1}, \ldots, a_{r})$ as generators of the Lie algebra of $\widetilde{T}$, we have an equivalent interpretation of $(t_{1}, t_{2}, e_{1}, \ldots, e_{r})$ as,
\begin{equation}
t_{i} = e^{-\epsilon_{i}} = \text{ch}(\mathscr{L}_{i}^{\smvee}), \,\,\,\,\,\,\,\,\,\,\,\, e_{\alpha} = e^{-a_{\alpha}} = \text{ch}(\mathscr{L}_{\alpha}^{\smvee}).  
\end{equation}
Remaining mindful of these various interpretations, we now want to express the tangent space to $\mathfrak{M}(r,k)$ at $\vec{Y}$ as an equivariant vector bundle on a point, in terms of $t_{i}, e_{\alpha}$.  The following theorem is proven in \cite{nakajima_instanton_2005}. 

\begin{thm}
Let $(E, \Phi) \in \mathfrak{M}(r,k)$ be a $\widetilde{T}$-fixed point corresponding as above, to the $r$-tuple of partitions $\vec{Y} =(Y_{1}, \ldots, Y_{r}) \in \mathcal{P}_{r,k}$.  The equivariant decomposition of the tangent space to $\mathfrak{M}(r,k)$ at $\vec{Y}$ is
\begin{equation}\label{eqn:equivvdecompNek}
\begin{split}
& \,\,\,\,\,\,\,\,\,\,\,\,\,\,\,\,\,\,\,\,\,\,\,\,\,\,\,\,\,\,\,\,\,\,\,\,\,\,\,\,\,\,\,\,\,\,\,\,\,\,\,\,\,\, T_{\vec{Y}} \mathfrak{M}(r,k) = \sum_{\alpha, \beta =1}^{r} N_{\alpha, \beta}, \\
& N_{\alpha, \beta} = e_{\alpha} e_{\beta}^{-1} \times \bigg\{ \sum_{s \in Y_{\alpha}} t_{1}^{- l_{Y_{\beta}}(s)} t_{2}^{a_{Y_{\alpha}}(s) +1} + \sum_{t \in Y_{\beta}} t_{1}^{l_{Y_{\alpha}}(t) +1} t_{2}^{-a_{Y_{\beta}}(t)} \bigg\}.
\end{split}
\end{equation} 
It is straightforward to see that there are $2kr$ direct summands, consistent with the dimension of $\mathfrak{M}(r,k)$.  
\end{thm}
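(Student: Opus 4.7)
The plan is to reduce the computation of $T_{\vec Y}\mathfrak{M}(r,k)$ to an Euler-characteristic calculation of Ext groups of ideal sheaves, and then identify the resulting equivariant character combinatorially with the arm/leg expression. The starting point is the identification $T_{(E,\Phi)}\mathfrak{M}(r,k)\cong\mathrm{Ext}^{1}(E,E(-\ell_{\infty}))$ from the deformation theory established above, together with the vanishing of $\mathrm{Ext}^{0}$ and $\mathrm{Ext}^{2}$ proven in the preceding proposition.

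First, at a fixed point the sheaf splits as $E=\mathscr{I}_{1}\oplus\cdots\oplus\mathscr{I}_{r}$, where each $\mathscr{I}_{\alpha}$ is the ideal sheaf of a zero-dimensional monomial subscheme $Z_{\alpha}\subset\mathbb{C}^{2}$ supported at the origin and corresponding to the Young diagram $Y_{\alpha}$, and the framing identifies the $\alpha$-th summand with weight $e_{\alpha}$ along $\ell_{\infty}$. Bilinearity of Ext and the compatibility of the splitting with the $\widetilde T$-action then give a $\widetilde T$-equivariant decomposition
\begin{equation*}
T_{\vec Y}\mathfrak{M}(r,k)=\bigoplus_{\alpha,\beta=1}^{r}\mathrm{Ext}^{1}(\mathscr{I}_{\alpha},\mathscr{I}_{\beta}(-\ell_{\infty})),
\end{equation*}
and the scalar factor $e_{\alpha}e_{\beta}^{-1}$ in $N_{\alpha,\beta}$ is read off from the $T_{e}$-weights of the framing. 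The vanishing of $\mathrm{Ext}^{0}$ and $\mathrm{Ext}^{2}$ implies the same vanishing for each summand, so it suffices to identify the character of each $\mathrm{Ext}^{1}(\mathscr{I}_{\alpha},\mathscr{I}_{\beta}(-\ell_{\infty}))$ with the quantity in the braces in the statement.

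Second, I would compute the equivariant Euler characteristic
\begin{equation*}
\chi\bigl(\mathscr{I}_{\alpha},\mathscr{I}_{\beta}(-\ell_{\infty})\bigr)=\sum_{i}(-1)^{i}\mathrm{ch}\,\mathrm{Ext}^{i}(\mathscr{I}_{\alpha},\mathscr{I}_{\beta}(-\ell_{\infty}))
\end{equation*}
via the two ideal-sheaf sequences $0\to\mathscr{I}_{\gamma}\to\mathcal{O}_{\mathbb{P}^{2}}\to\mathcal{O}_{Z_{\gamma}}\to 0$ (for $\gamma=\alpha,\beta$) after twisting by $-\ell_{\infty}$, using the fact that $Z_{\gamma}$ is disjoint from $\ell_{\infty}$ so $\mathcal{O}_{Z_{\gamma}}(-\ell_{\infty})\cong\mathcal{O}_{Z_{\gamma}}$. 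This reduces all computations to characters of structure sheaves of monomial subschemes, where $\mathrm{ch}(\mathcal{O}_{Z_{Y}})$ as a $T_{t}$-module is the character $\sum_{(i,j)\in Y}t_{1}^{i}t_{2}^{j}$ of the standard monomial basis. By the vanishing of the higher Ext's this Euler characteristic equals $-\mathrm{ch}\,\mathrm{Ext}^{1}(\mathscr{I}_{\alpha},\mathscr{I}_{\beta}(-\ell_{\infty}))$ up to the anticipated sign, giving an explicit, if unsimplified, expression for the $T_{t}$-character of the summand as a rational function in $t_{1},t_{2}$.

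The main obstacle is the last step: a purely combinatorial identity showing that this Euler characteristic, originally a rational expression assembled from characters of monomial ideals, simplifies to the honest polynomial
\begin{equation*}
\sum_{s\in Y_{\alpha}}t_{1}^{-l_{Y_{\beta}}(s)}t_{2}^{a_{Y_{\alpha}}(s)+1}+\sum_{t\in Y_{\beta}}t_{1}^{l_{Y_{\alpha}}(t)+1}t_{2}^{-a_{Y_{\beta}}(t)}
\end{equation*}
indexed by boxes of the two Young diagrams and their arm/leg lengths relative to each other. This is the two-partition generalization of the classical Ellingsrud--Str\o mme formula for $T\,\mathrm{Hilb}^{n}(\mathbb{C}^{2})$, which one recovers in the rank one case $r=1$ where $\mathfrak{M}(1,k)\cong\mathrm{Hilb}^{k}(\mathbb{C}^{2})$. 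My approach would be to prove this identity by induction on boxes: add a single removable box to $Y_{\alpha}$ or $Y_{\beta}$, track the change on both sides under the corresponding short exact sequence of ideal sheaves, and match the telescoping contributions using the geometry of hooks (the identity $h_{ij}=a(s)+l(s)+1$ for the box $s=(i,j)$). Once this combinatorial lemma is in hand, reassembling the pieces and multiplying by the framing prefactor $e_{\alpha}e_{\beta}^{-1}$ yields the desired decomposition, and the total box count $\sum_{\alpha}k_{\alpha}=k$ immediately gives $2rk$ summands, consistent with $\dim\mathfrak{M}(r,k)$.
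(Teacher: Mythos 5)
The paper does not actually prove this theorem; it records the statement and cites Nakajima--Yoshioka, so there is no internal proof to compare against. Your outline reproduces the standard sheaf-theoretic argument from that reference: identify $T_{\vec Y}\mathfrak{M}(r,k)$ with $\mathrm{Ext}^{1}(E,E(-\ell_{\infty}))$, use the equivariant splitting $E=\bigoplus_\alpha\mathscr{I}_\alpha$ to decompose into blocks $\mathrm{Ext}^{1}(\mathscr{I}_\alpha,\mathscr{I}_\beta(-\ell_\infty))$ carrying the framing weight, compute each block as an equivariant Euler characteristic via the ideal-sheaf sequences, and finish with a combinatorial identity. That skeleton is sound, and two of your supporting observations are exactly right: the vanishing of $\mathrm{Ext}^{0}$ and $\mathrm{Ext}^{2}$ lets you read off $\mathrm{Ext}^{1}$ from $-\chi$, and the twist by $-\ell_\infty$ is what kills the $\mathcal{O}\otimes\mathcal{O}$ ``vacuum'' contribution, since $\chi(\mathbb{P}^{2},\mathcal{O}(-1))=0$.

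The one place where your plan is thinner than it looks is the combinatorial lemma, which is not a footnote but the entire mathematical content of the theorem: one must show that
\begin{equation*}
Q_{\beta}+\frac{\overline{Q_{\alpha}}}{t_{1}t_{2}}-\frac{(1-t_{1})(1-t_{2})}{t_{1}t_{2}}\,\overline{Q_{\alpha}}\,Q_{\beta},
\qquad Q_{\gamma}=\sum_{(i,j)\in Y_{\gamma}}t_{1}^{i}t_{2}^{j},
\end{equation*}
equals the arm--leg polynomial in the braces. Your proposed induction on removable boxes is a legitimate route, but it is more delicate than ``telescoping'' suggests: adding a single box to $Y_{\alpha}$ changes $a_{Y_{\alpha}}(s)$ for every box in the affected row and changes $l_{Y_{\alpha}}(t)$ for every box in the affected column, so a whole family of terms on the right-hand side shifts simultaneously and must be rematched against the corresponding shift of $\overline{Q_{\alpha}}Q_{\beta}$. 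This can be carried out (it is the two-partition generalization of the Ellingsrud--Str\o mme computation you mention), but as written it is an assertion rather than a proof. You should also fix your weight conventions once and for all: whether $\mathrm{ch}(\mathcal{O}_{Z_{Y}})=\sum t_{1}^{i}t_{2}^{j}$ or $\sum t_{1}^{-i}t_{2}^{-j}$, and whether the block $\mathrm{Ext}^{1}(\mathscr{I}_\alpha,\mathscr{I}_\beta)$ carries $e_{\alpha}e_{\beta}^{-1}$ or its inverse (Hom is contravariant in the first slot), since getting these wrong produces a formula that is the correct one only after relabeling $\alpha\leftrightarrow\beta$ or $t_i\mapsto t_i^{-1}$.
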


Because the equivariant first Chern class is additive under direct sums and tensor products, from (\ref{eqn:equivvdecompNek}) and (\ref{eqn:equivparammsNEkk}) we can extract the equivariant Chern roots.  In preparation to apply Atiyah-Bott localization, we can also record the equivariant Euler class of the normal bundle $T_{\vec{Y}}\mathfrak{M}(r,k)$
\begin{equation}\label{eqn:equivEulClassNekk}
\begin{split}
e\big(T_{\vec{Y}}\mathfrak{M}(r,k)\big)_{\widetilde{T}} = & \prod_{\alpha, \beta =1}^{r}  \prod_{s \in Y_{\alpha}} \bigg( (a_{\alpha}-a_{\beta}) - l_{Y_{\beta}}(s) \epsilon_{1} + \big(a_{Y_{\alpha}}(s) +1\big)\epsilon_{2}\bigg) \\
 & \times \prod_{t \in Y_{\beta}} \bigg((a_{\alpha}-a_{\beta}) + \big(l_{Y_{\alpha}}(t) +1\big)\epsilon_{1} - a_{Y_{\beta}}(t) \epsilon_{2}\bigg).
\end{split}
\end{equation}

\subsection{The Nekrasov Partition Functions}

Let $A$ be a multiplicative class with corresponding formal power series $f(x)$.  We can apply $A$ to the tangent bundle of $\mathfrak{M}(r,k)$, which we abbreviate to $T_{\mathfrak{M}}$.  By formally applying Atiyah-Bott localization with respect to the $\widetilde{T}$-action on $\mathfrak{M}(r,k)$, we can consider
\begin{equation}\label{eqn:genintNekk}
\int_{\mathfrak{M}(r,k)} A\big(T_{\mathfrak{M}}\big)_{\widetilde{T}} \in \mathbb{Q} \llbracket \epsilon_{1}, \epsilon_{2}, \vec{a} \rrbracket_{\mathfrak{m}} \subset \mathbb{Q}( \! (\epsilon_{1}, \epsilon_{2}, \vec{a}) \! )
\end{equation}  
where $A\big(T_{\mathfrak{M}}\big)_{\widetilde{T}} = \prod f(x_{i})$ is a multiplicative equivariant cohomology class and $x_{i}$ are the equivariant Chern roots of $T_{\mathfrak{M}}$.  Also, $\mathbb{Q} \llbracket \epsilon_{1}, \epsilon_{2}, \vec{a} \rrbracket_{\mathfrak{m}}$ is the localization of the formal power series ring $\mathbb{Q} \llbracket \epsilon_{1}, \epsilon_{2}, \vec{a} \rrbracket$ at the maximal ideal $\mathfrak{m}$ generated by $\epsilon_{1}, \epsilon_{2}, a_{1}, \ldots, a_{r}$.  If $f(x)$ happens to be a polynomial, then the integral (\ref{eqn:genintNekk}) lies in $\mathbb{Q} [\epsilon_{1}, \epsilon_{2}, \vec{a}]_{\mathfrak{m}}$.  Applying the Atiyah-Bott localization formula, we see
\begin{equation}
\int_{\mathfrak{M}(r,k)} A\big(T_{\mathfrak{M}}\big)_{\widetilde{T}} = \sum_{\vec{Y} \in \mathcal{P}_{r,k}} \frac{A\big(T_{\mathfrak{M}}\big)_{\widetilde{T}}}{e\big(T_{\vec{Y}} \mathfrak{M}(r,k)\big)_{\widetilde{T}}} = \sum_{\vec{Y} \in \mathcal{P}_{r,k}} \prod \frac{f(x_{i})}{x_{i}}.
\end{equation}

\begin{defn}
Let $r >0$ be a fixed integer and let $A$ be a multiplicative class associated to formal power series $f(x)$.  A Nekrasov partition function on $\mathbb{C}^{2}$ (also called an instanton partition function on $\mathbb{C}^{2}$) is a generating function of the form
\begin{equation}
\begin{split}
\mathcal{Z}^{\mathbb{C}^{2}, r}_{\text{inst}}\big(\epsilon_{1}, \epsilon_{2}, \vec{a}, Q)_{A} & = \sum_{k=0}^{\infty} Q^{k} \int_{\mathfrak{M}(r,k)} A\big(T_{\mathfrak{M}}\big)_{\widetilde{T}} \\
& = \sum_{k=0}^{\infty} Q^{k} \sum_{\vec{Y} \in \mathcal{P}_{r,k}} \prod \frac{f(x_{i})}{x_{i}} \in \mathbb{Q}( \! (\epsilon_{1}, \epsilon_{2}, \vec{a}) \! )\llbracket Q \rrbracket.
\end{split}
\end{equation}
If $f$ is a polynomial, then $\mathcal{Z}^{\mathbb{C}^{2},r}_{\text{inst}}\big(\epsilon_{1}, \epsilon_{2}, \vec{a}, Q)_{A} \in \mathbb{Q}( \epsilon_{1}, \epsilon_{2}, \vec{a})\llbracket Q \rrbracket$.  The class $A$ may possibly depend on other parameters as well (for example, with the $\chi_{y}$-genus or elliptic genus) in which case the partition function will depend on these as well.  Mildly abusing notation, in the examples below we will find a more creative way of decorating the partition function such that the multiplicative class $A$ is clear.    
\end{defn}

\begin{Ex}[\bfseries The Equivariant Volume]
Consider first the case of $A=1$, which means that the Nekrasov partition function is the generating function of equivariant volumes of $\mathfrak{M}(r,k)$
\begin{equation}\label{eqn:partfuncPURENeq2}
\mathcal{Z}^{\mathbb{C}^{2},r}_{\text{inst}}\big(\epsilon_{1}, \epsilon_{2}, \vec{a}, Q\big)_{\text{vol}} = \sum_{k=0}^{\infty} Q^{k} \int_{\mathfrak{M}(r,k)} 1 = \sum_{k=0}^{\infty} Q^{k} \sum_{\vec{Y} \in \mathcal{P}_{r,k}} \frac{1}{e\big(T_{\vec{Y}} \mathfrak{M}(r,k)\big)_{\widetilde{T}}}
\end{equation}
where the equivariant Euler class of the normal bundle is given explicitly in (\ref{eqn:equivEulClassNekk}).  Physically, this choice of $A$ gives the instanton partition function of pure $\mathcal{N}=2$ supersymmetric Yang-Mills theory on $\mathbb{C}^{2}$.  

Specifically in the case of rank $r=1$, the sum over Young diagrams simplifies in a nice way, as was shown in \cite{nakajima_instanton_2005}.  This gives the generating function of equivariant volumes of the Hilbert schemes of points $\text{Hilb}^{k}(\mathbb{C}^{2})$
\begin{equation}
\mathcal{Z}^{\mathbb{C}^{2},1}_{\text{inst}}\big(\epsilon_{1}, \epsilon_{2}, Q)_{\text{vol}} = \text{exp}\bigg(\frac{Q}{\epsilon_{1}\epsilon_{2}}\bigg).
\end{equation}
\end{Ex}

\begin{Ex}[\bfseries The Equivariant Euler Characteristic]
The landmark 1994 paper of Vafa and Witten \cite{vafa_strong_1994-1} proposed that the partition function of topologically twisted $\mathcal{N}=4$ supersymmetric Yang-Mills theory on a four-manifold $M$ is given by the generating function of Euler characteristics of an instanton moduli space on $M$.  Moreover, the partition function is expected to have modular properties inherited from S-duality.  We can choose $M = \mathbb{C}^{2}$ with the moduli space $\mathfrak{M}(r,k)$ of framed instantons and consider the Nekrasov partition function of equivariant Euler characteristics of $\mathfrak{M}(r,k)$.  This corresponds to choosing the multiplicative class $A$ to be the Euler class.  By the localization formula
\begin{equation}\label{eqn:NekkNeqfour}
\mathcal{Z}_{\text{inst}}^{\mathbb{C}^{2}, r}(Q)_{e} = \sum_{k=0}^{\infty} Q^{k} \int_{\mathfrak{M}(r,k)} e\big(T_{\mathfrak{M}}\big)_{\widetilde{T}} = \sum_{k=0}^{\infty} Q^{k} \sum_{\vec{Y} \in \mathcal{P}_{r,k}} 1
\end{equation}
the equivariant Euler classes cancel, leaving no dependence on the equivariant parameters.  In the case of rank one, the above partition function is simply the generating function of partitions of an integer.  Since the $\widetilde{T}$-fixed points of $\mathfrak{M}(r,k)$ are indexed by $r$-tuples of partitions, and the topological charge is the sum of the partition sizes, for $r>1$ we just get $r$ copies of the rank one result
\begin{equation}
\mathcal{Z}_{\text{inst}}^{\mathbb{C}^{2}, r}(Q)_{e} = \prod_{n=1}^{\infty} \frac{1}{(1-Q^{n})^{r}}.
\end{equation}
Notice that because $\chi(\mathbb{C}^{2}) =1$, when $r=1$ this is consistent with G\"{o}ttsche's result on generating functions of Euler characteristics of Hilbert schemes of points \cite{gottsche_betti_1990}.  
\end{Ex}

\begin{Ex}[\bfseries The Equivariant Elliptic Genera]
We can also consider the case where the multiplicative class gives rise to the equivariant elliptic genus, which we will denote $\text{Ell}_{q,y}(\mathfrak{M}(r,k))_{\widetilde{T}}$.  The corresponding Nekrasov partition function is the generating function of elliptic genera of $\mathfrak{M}(r,k)$, which of course only makes sense applying localization
\begin{equation}\label{eqn:genfuncelllgenNekk}
\begin{split}
& \mathcal{Z}^{\mathbb{C}^{2}, r}_{\text{inst}}\big(\epsilon_{1}, \epsilon_{2}, \vec{a}, Q,q,y\big)_{\text{Ell}} = \sum_{k=0}^{\infty} Q^{k} \text{Ell}_{q,y}\big(\mathfrak{M}(r,k)\big)_{\widetilde{T}} = \sum_{k=0}^{\infty} Q^{k} \sum_{\vec{Y} \in \mathcal{P}_{r,k}} \prod_{n=1}^{\infty} \prod_{\alpha, \beta =1}^{r}\\
& \times \prod_{s \in Y_{\alpha}} \frac{(1-y q^{n-1} e_{\alpha} e_{\beta}^{-1} t_{1}^{l_{Y_{\beta}}(s)} t_{2}^{-(a_{Y_{\alpha}}(s)+1)})(1-y^{-1} q^{n} e_{\alpha}^{-1} e_{\beta}t_{1}^{-l_{Y_{\beta}}(s)} t_{2}^{a_{Y_{\alpha}}(s)+1})}{(1-q^{n-1} e_{\alpha} e_{\beta}^{-1} t_{1}^{l_{Y_{\beta}}(s)} t_{2}^{-(a_{Y_{\alpha}}(s)+1)})(1-q^{n} e_{\alpha}^{-1} e_{\beta} t_{1}^{-l_{Y_{\beta}}(s)} t_{2}^{a_{Y_{\alpha}}(s)+1})} \\  
& \times \prod_{t \in Y_{\beta}} \frac{(1-y q^{n-1} e_{\alpha} e_{\beta}^{-1} t_{1}^{-(l_{Y_{\alpha}}(t)+1)} t_{2}^{a_{Y_{\beta}}(t)})(1-y^{-1} q^{n} e_{\alpha}^{-1} e_{\beta}t_{1}^{l_{Y_{\alpha}}(t)+1} t_{2}^{-a_{Y_{\beta}}(t)})}{(1-q^{n-1} e_{\alpha} e_{\beta}^{-1} t_{1}^{-(l_{Y_{\alpha}}(t)+1)} t_{2}^{a_{Y_{\beta}}(t)})(1-q^{n} e_{\alpha}^{-1} e_{\beta} t_{1}^{l_{Y_{\alpha}}(t)+1} t_{2}^{-a_{Y_{\beta}}(t)})}.
\end{split}
\end{equation}
From this general formula, one can specialize in a number of directions.  In particular, one can get the generating functions of the $\chi_{y}$-genera or $\chi_{0}$-genera of the instanton moduli spaces.  For our purposes in the final chapter, we will be particularly interested in the generating function of elliptic genera of the rank one instanton moduli spaces $\text{Hilb}^{k}(\mathbb{C}^{2})$ with the diagonal specialization $t \coloneqq t_{1} = t_{2}^{-1}$ corresponding to $\epsilon_{1} + \epsilon_{2} =0$.  Here we use the notation $\text{Ell}_{q,y}(\text{Hilb}^{k}(\mathbb{C}^{2});t)$.  Recalling the expression (\ref{eqn:hoookleneqn}) for the hook length $h_{ij}$ of the box $s =(i,j)$ in a Young diagram $Y$, it is straightforward to specialize (\ref{eqn:genfuncelllgenNekk}) to
\begin{equation}\label{eqn:sumpartPEllGenC2}
\begin{split}
& \,\,\,\,\,\,\,\,\,\,\,\,\,\,\,\,\,\,\,\,\,\,\,\,\,\,\,\,\,\,\,\,\,\,\,\,\,\,\,\,\,\,\,\,\,\,\,\,\,\,\,\,\,\,\,\, \mathcal{Z}^{\mathbb{C}^{2}, 1}_{\text{inst}}\big(Q,q,y ; t \big)_{\text{Ell}} = \sum_{k=0}^{\infty} Q^{k} \text{Ell}_{q,y}\big( \text{Hilb}^{k}(\mathbb{C}^{2}) ; t \big) \\
& =\sum_{Y \in \mathcal{P}} Q^{|Y|} \prod_{n=1}^{\infty} \prod_{(i,j) \in Y} \frac{(1-yq^{n-1}t^{h_{ij}})(1-y^{-1}q^{n}t^{-h_{ij}})(1-yq^{n-1}t^{h_{ij}})(1-y^{-1}q^{n}t^{h_{ij}})}{(1-q^{n-1}t^{h_{ij}})(1-q^{n}t^{-h_{ij}})(1-q^{n-1}t^{-h_{ij}})(1-q^{n}t^{h_{ij}})}
\end{split}
\end{equation}
where $\mathcal{P}$ is the infinite set of all one-dimensional partitions or Young diagrams.  
\end{Ex}

\begin{Ex}[\bfseries Pure $\boldmath{\mathcal{N}=2}$ SYM with Massive Adjoint Hypermultiplet]
Consider the multiplicative class $E_{m}$ corresponding to the polynomial $f(x) = x+m$, for complex parameter $m \in \mathbb{C}$.  Given a complex vector bundle $V$ of rank $n$
\begin{equation}\label{eqn:multclassEmm}
E_{m}(V) = m^{n} + c_{1}(V)m^{n-1} + \ldots + c_{n-1}(V)m + c_{n}(V).
\end{equation}
Notice for $m=1$ this is the total Chern class.  We can consider the corresponding Nekrasov partition function
\begin{equation} \label{eqn:Nekkeneqtwostar}
\mathcal{Z}_{\text{inst}}^{\mathbb{C}^{2}, r}( \epsilon_{1}, \epsilon_{2} , \vec{a}, Q ; m)_{\mathcal{N}=2^{*}} = \sum_{k=0}^{\infty} Q^{k} \int_{\mathfrak{M}(r,k)} E_{m}(T_{\mathfrak{M}})_{\widetilde{T}}.
\end{equation}
As indicated by our notation, this is the partition function of the $\mathcal{N}=2^{*}$ supersymmetric Yang-Mills theory on $\mathbb{C}^{2}$, which is simply pure $\mathcal{N}=2$ supersymmetric Yang-Mills with a single massive adjoint hypermultiplet of mass $m$.  It is clear from (\ref{eqn:multclassEmm}) that $\lim_{m \to 0}E_{m}(T_{\mathfrak{M}})_{\widetilde{T}} = e(T_{\mathfrak{M}})_{\widetilde{T}}$, which implies that
\begin{equation}
\lim_{m \to 0} \mathcal{Z}_{\text{inst}}^{\mathbb{C}^{2}, r}( \epsilon_{1}, \epsilon_{2} , \vec{a}, Q ; m)_{\mathcal{N}=2^{*}} = \mathcal{Z}_{\text{inst}}^{\mathbb{C}^{2}, r}(Q)_{e}
\end{equation} 
where the righthand side above is given in (\ref{eqn:NekkNeqfour}).  We can trivially rewrite (\ref{eqn:Nekkeneqtwostar}) in the following form  
\begin{equation}
\mathcal{Z}_{\text{inst}}^{\mathbb{C}^{2}, r}( \epsilon_{1}, \epsilon_{2} , \vec{a}, Q ; m)_{\mathcal{N}=2^{*}} = \sum_{k=0}^{\infty} (Qm^{2r})^{k} \int_{\mathfrak{M}(r,k)} \frac{1}{m^{2rk}}E_{m}(T_{\mathfrak{M}})_{\widetilde{T}}.
\end{equation}
Recalling that $\text{dim}(\mathfrak{M}(r,k)) = 2rk$, by (\ref{eqn:multclassEmm}) we have that $\lim_{m \to \infty} m^{-2rk} E_{m}(T_{\mathfrak{M}}) =1$.  If we therefore take the limits $m \to \infty$ and $Q \to 0$ precisely such that the variable $\Lambda \coloneqq Qm^{2r}$ is left finite, then the $\mathcal{N}=2^{*}$ partition function specializes to (\ref{eqn:partfuncPURENeq2}) with parameter $\Lambda$ instead of $Q$.  

This is all consistent with well-known physical facts: given $\mathcal{N}=2^{*}$ theory with mass $m$, in the massless $m \to 0$ limit we recover the $\mathcal{N}=4$ theory, while in the limit $m \to \infty$ and $Q \to 0$, we get the pure $\mathcal{N}=2$ theory with finite variable $\Lambda = Qm^{2r}$.  
\end{Ex}

\section{The Orbifold Elliptic Genera of Symmetric Products}

Given a manifold $X$ carrying an action by a finite group $G$, one can define the \emph{orbifold Euler characteristic}
\begin{equation}\label{eqn:obriiEulCharrr}
\chi_{\text{orb}}(X,G) \coloneqq \frac{1}{|G|} \sum_{gh =hg} \chi(X^{g,h})
\end{equation}
where the sum is over all commuting elements of $G$, and we denote by $X^{g,h}$ the fixed locus of both $g$ and $h$.  It was shown in \cite{hirzebruch_euler_1990} that the generating function of the orbifold Euler characteristic of the symmetric products of $X$ satisfies the product formula
\begin{equation}\label{eqn:OrbEulCherProdForm}
\sum_{m=0}^{\infty} Q^{m} \chi_{\text{orb}}\big(\text{Sym}^{m}(X)\big) = \prod_{n=1}^{\infty} \big(1-Q^{n}\big)^{-\chi(X)}.
\end{equation}
Here, $\chi_{\text{orb}}\big(\text{Sym}^{m}(X)\big) = \chi_{\text{orb}}(X^{m}, \Sigma_{m})$ where $\Sigma_{m}$ is the permutation group.  According to this formula, the generating function of the orbifold Euler characteristics is determined simply by the Euler characteristic of $X$; it is a universal function (the Euler function) raised to the power $\chi(X)$.  It was also shown in \cite{hirzebruch_euler_1990} that if $V$ is a smooth algebraic variety, and a crepant resolution of $V/G$ exists, then the ordinary Euler characteristic of this resolution agrees with the orbifold Euler characteristic.  If $X$ is a smooth algebraic surface, the Hilbert scheme $\text{Hilb}^{m}(X)$ is a crepant resolution of the symmetric product $\text{Sym}^{m}(X)$, and (\ref{eqn:OrbEulCherProdForm}) specializes to a well-known formula of G\"{o}ttsche \cite{gottsche_betti_1990}.  

In 1996 a refinement of the product formula (\ref{eqn:OrbEulCherProdForm}) emerged from string theorists R. Dijkgraaf, G. Moore, E. Verlinde, and H. Verlinde \cite{dijkgraaf_elliptic_1997}.  For any compact K\"{a}hler manifold $X$, they gave a physical derevation of the following formula
\begin{equation}\label{eqn:DMVVPRODFORM}
\sum_{m=0}^{\infty} Q^{m} \text{Ell}_{q,y}^{\text{orb}}\big( \text{Sym}^{m}(X) \big) = \prod_{\substack{ m >0, n \geq 0 \\ l \in \mathbb{Z}}} \big(1-Q^{m}q^{n}y^{l}\big)^{-c(mn, l)}
\end{equation}
where $c(mn, l)$ is the coefficient of $q^{nm}y^{l}$ in the ordinary elliptic genus of $X$, and $\text{Ell}^{\text{orb}}_{q,y}(-)$ is the orbifold elliptic genus, defined in \cite{borisov_elliptic_2003}.  We will refer to the product formula (\ref{eqn:DMVVPRODFORM}) as the \emph{DMVV formula}.  Because $\text{Ell}^{\text{orb}}_{q,1}(-) = \chi_{\text{orb}}(-)$, by (\ref{eqn:EulCharEllGenCoeff}) the DMVV formula indeed specializes to (\ref{eqn:OrbEulCherProdForm}).  One interesting feature of the DMVV formula is that the only information required is the elliptic genus of $X$ itself.  It is therefore sometimes called the \emph{second quantized elliptic genus} of $X$.

The orbifold elliptic genus $\text{Ell}^{\text{orb}}_{q,y}(X, G)$ was defined in \cite{borisov_elliptic_2003} for a finite group $G$ acting on an algebraic variety $X$, and a mathematical proof of the DMVV formula was given.  In addition, the authors prove that if $Y \to X/G$ is a crepant resolution, then $\text{Ell}_{q,y}(Y) = \text{Ell}^{\text{orb}}_{q,y}(X,G)$.  In particular, for a smooth compact algebraic surface $X$, the DMVV formula can be given as
\begin{equation}\label{eqn:DMVVsmmsurfSS}
\sum_{m=0}^{\infty} Q^{m} \text{Ell}_{q,y}\big( \text{Hilb}^{m}(X) \big) = \prod_{\substack{ m >0, n \geq 0 \\ l \in \mathbb{Z}}} \big(1-Q^{m}q^{n}y^{l}\big)^{-c(mn, l)}
\end{equation}
noting that the Hilbert scheme is a crepant resolution of the symmetric product.  For a K3 surface, we will see in the next chapter (\ref{eqn:chi10prodDMVVform}) that the above product formula is related to the Siegel modular form $\chi_{10}(\Omega)$.  

With respect to the natural torus action on $\mathbb{C}^{2}$, the following equivariant version of the DMVV formula was proven by R. Waelder \cite{waelder_equivariant_2008}, which we present for the diagonal specialization $t = t_{1} = t_{2}^{-1}$
\begin{equation}\label{eqn:WaelderDMVVFORM}
\sum_{m=0}^{\infty} Q^{m} \text{Ell}_{q,y}\big( \text{Hilb}^{m}(\mathbb{C}^{2}) ;t \big) = \prod_{\substack{ m >0, n \geq 0 \\ l, k \in \mathbb{Z}}} \big(1-Q^{m}q^{n}y^{l}t^{k}\big)^{-c(mn, l,k)}
\end{equation}
where $c(mn, l, k)$ is the coefficient of $q^{mn}y^{l}t^{k}$ in the Fourier expansion of $\text{Ell}_{q,y}(\mathbb{C}^{2}; t)$, shown in (\ref{eqn:equivELLC2diagspecc}).  For all fixed $k$, these coefficients depend only on the combination $4nm - l^{2}$, so we will from now on write them as $c(4nm - l^{2}, k)$.  

One should recognize the lefthand side of (\ref{eqn:WaelderDMVVFORM}) as one of the examples of a Nekrasov partition function we presented in (\ref{eqn:sumpartPEllGenC2}).  Recall that by way of equivariant localization on the Hilbert scheme $\text{Hilb}^{m}(\mathbb{C}^{2})$, the generating function of equivariant elliptic genera was expressed as a non-trivial sum over partitions.  Combining this with the result of Waelder, we get a remarkable formula relating an infinite product, with a sum over partitions  
\begin{equation}\label{eqn:resultofWaelderr}
\begin{split}
& \,\,\,\,\,\,\,\,\,\,\,\,\,\,\,\,\,\,\,\,\,\,\,\,\,\,\,\,\,\,\,\,\,\,\,\,\,\,\, \sum_{m=0}^{\infty} Q^{m} \text{Ell}_{q,y}\big( \text{Hilb}^{m}(\mathbb{C}^{2}) ; t \big) = \prod_{\substack{ m >0, n \geq 0 \\ l, k \in \mathbb{Z}}} \big(1-Q^{m}q^{n}y^{l}t^{k}\big)^{-c(4nm- l^{2} ,k)} \\
& =\sum_{Y \in \mathcal{P}} Q^{|Y|} \prod_{n=1}^{\infty} \prod_{(i,j) \in Y} \frac{(1-yq^{n-1}t^{h_{ij}})(1-y^{-1}q^{n}t^{-h_{ij}})(1-yq^{n-1}t^{h_{ij}})(1-y^{-1}q^{n}t^{h_{ij}})}{(1-q^{n-1}t^{h_{ij}})(1-q^{n}t^{-h_{ij}})(1-q^{n-1}t^{-h_{ij}})(1-q^{n}t^{h_{ij}})}.
\end{split}
\end{equation}

\chapter{A Brief Survey of Some Automorphic Forms}

Automorphic forms constitute a large and beautiful subject touching many distinct areas in mathematics and physics.  In this chapter we content ourselves to briefly surveying just three related types: ordinary modular forms, Jacobi forms, and Siegel modular forms.  Each of these will arise in our original results presented in the final chapter.  To a modern enumerative geometer, one reason to care about automorphic forms is that generating functions of enumerative invariants may be automorphic.  Having some understanding and control over these objects, one may be able to generate conjectures about the geometry which were otherwise not at all obvious.  One component of what is to follow, which perhaps is not so widely known, is a detailed discussion of Hecke operators on weak Jacobi forms and their use in defining the Maass lift.

\section{Introduction to Modular Forms}

Let $\mathfrak{H}$ be the complex upper-half plane, and consider the natural transitive action by $SL_{2}(\mathbb{R})$ on $\mathfrak{H}$ via fractional linear transformations
\[ \tau \mapsto \frac{a \tau + b}{c \tau + d}.\]
The maximal discrete subgroup of $SL_{2}(\mathbb{R})$ is the modular group $SL_{2}(\mathbb{Z})$ of invertible $2 \times 2$ matrices with integer entries, and unit determinant.  For reasons which will become clear upon introducing Siegel modular forms, we will often use the notation $\Gamma_{1} = SL_{2}(\mathbb{Z})$.  

\begin{defn}
A modular form of weight $k \in \mathbb{Z}$ on $SL_{2}(\mathbb{Z})$ is a holomorphic function $f: \mathfrak{H} \to \mathbb{C}$ satisfying the covariance property 
\begin{equation} \label{eqn:modformtrans}
f \bigg(\frac{a \tau + b}{c \tau +d} \bigg) = (c \tau + d)^{k} f(\tau), \,\,\,\,\,\,\,\,\,\,
\begin{pmatrix}
a & b\\
c & d
\end{pmatrix}
\in SL_{2}(\mathbb{Z}).
\end{equation}
\end{defn}
\noindent We refer to $(c \tau + d)^{k}$ as the \emph{automorphy factor}.  This transformation law implies that modular forms are periodic: $f(\tau + 1) = f(\tau)$.  Therefore, $f(\tau)$ has a Fourier expansion with $q=e^{2 \pi i \tau}$
\begin{equation} \label{eqn:Fourexp}
f(\tau) = \sum_{n=-\infty}^{\infty} a(n) q^{n}, \,\,\,\,\,\,\,\,\,\, a(n) \in \mathbb{Q}.
\end{equation}
There are therefore two different, yet equally important perspectives on a modular form.  One can either think of them as a holomorphic function on the upper-half plane with symmetry group $SL_{2}(\mathbb{Z})$, or equivalently as a Fourier expansion (\ref{eqn:Fourexp}) in $q$ with coefficients $a(n)$.  

\begin{rmk}\label{rmk:CosetDescrrr}
The coset description of the upper-half plane is the biholomorphism 
\begin{equation}\label{eqn:cosetdecrG1}
\mathfrak{H} \cong SL_{2}(\mathbb{R}) \big/ SO(2)
\end{equation}
where $SO(2) \subset SL_{2}(\mathbb{R})$ is a maximal compact subgroup.  That (\ref{eqn:cosetdecrG1}) is a diffeomorphism follows from identifying $SO(2)$ as the stabalizer of $i \in \mathfrak{H}$, but it is not a priori obvious that $SL_{2}(\mathbb{R}) \big/ SO(2)$ even has complex structure, so the biholomorphism takes more work \cite{milne_introduction_2005}.  Ultimately, one would say that the modular forms we have defined are automorphic forms on the Shimura variety
\begin{equation}
SL_{2}(\mathbb{Z}) \big\backslash SL_{2}(\mathbb{R}) \big/ SO(2).
\end{equation}
More general automorphic forms share the two perspectives described above for ordinary modular forms.  
\end{rmk}

Alternatively, instead of being given a modular form, one may have a collection of numbers $\{a(n) \}$ depending on the discrete invariant $n \in \mathbb{Z}$.  These might form an interesting arithmetic function, or these might be invariants coming from a one-parameter counting problem in math or physics.  The natural instinct is to package the invariants into a generating function (\ref{eqn:Fourexp}) and \emph{then} study its analytic and modular properties.  Remarkably, answers to counting problems often arise as the coefficients of a modular form.  An interesting converse problem is, given a modular form with integer coefficients, what exactly are the coefficients counting?  The answer will very often lead one completely away from the original setting of modular forms, down the path of algebraic geometry, representation theory, conformal field theory, and string theory.

Under the change of variables $q=e^{2 \pi i \tau}$ the upper-half plane is taken to the interior of the unit disk, such that the point at infinity is mapped to $q=0$.  As is standard, we will use the $q$ and $\tau$ variables interchangeably.  An additional piece of data defining a modular form is a specification of  the growth of $f(\tau)$ at the point at infinity of $\mathfrak{H}$, which we call the \emph{cusp}.  The growth at the cusp is reflected in the Fourier coefficients which we summarize with the following definitions. 

\begin{enumerate}
\item We say $f(\tau)$ is a holomorphic modular form if $a(n)=0$ for all $n<0$, and we denote by $M_{k}(\Gamma_{1})$ the vector space of holomorphic modular forms of weight $k$.  The ring of modular forms defined by
\begin{equation}\label{eqn:ringmodforms}
M_{*}(\Gamma_{1}) = \bigoplus_{k \in \mathbb{Z}}M_{k}(\Gamma_{1})
\end{equation}   
is clearly a graded ring since $M_{k} M_{l} \subset M_{k + l}$.  

\item We say $f(\tau)$ is a cusp form if $a(n)=0$ for all $n \leq 0$, and we denote by $S_{k}(\Gamma_{1})$ the space of cusp forms of weight $k$.  The ring $S_{*}(\Gamma_{1}) \subset M_{*}(\Gamma_{1})$ defined in the obvious way, is an ideal since the product of a cusp form with an arbitrary modular form is again a cusp form.    

\item More generally, if $f(\tau) = \mathcal{O}(q^{-N})$ for some $N \geq 0$, then $a(n)=0$ for $n < -N$.  Such a modular form is called a weakly holomorphic modular form, and we denote by $M_{k}^{!}(\Gamma_{1})$ the space of weight $k$ weakly holomorphic modular forms.    
\end{enumerate}
For a fixed weight $k$, these various classes of modular forms constitute finite-dimensional vector spaces over $\mathbb{C}$.  It is easy to see the containment
\begin{equation}
S_{k}(\Gamma_{1}) \subset M_{k}(\Gamma_{1}) \subset M_{k}^{!}(\Gamma_{1}).
\end{equation}  
Note that holomorphic modular forms are bounded at the cusp with $a(0)$ being the value attained there.  Cusp forms are characterized by vanishing at the cusp, which explains the name.  Weakly holomorphic modular forms diverge at the cusp, since there are negative powers of $q$, but they do so in a controlled fashion.  

The following two elementary propositions rule out the existence of non-trivial modular forms of certain weights.

\begin{proppy}
There are no non-zero modular forms of odd weight.
\end{proppy}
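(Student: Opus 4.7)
The plan is to exploit the fact that $-I = \begin{pmatrix} -1 & 0 \\ 0 & -1 \end{pmatrix}$ lies in $SL_{2}(\mathbb{Z})$ and acts trivially on $\mathfrak{H}$ by fractional linear transformations, yet contributes a nontrivial automorphy factor when $k$ is odd. This is the standard ``$-I$ trick'' for ruling out odd-weight forms on the full modular group.

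Concretely, I would take any modular form $f(\tau)$ of odd weight $k$, apply the covariance property (\ref{eqn:modformtrans}) to the matrix $-I \in SL_{2}(\mathbb{Z})$, and compute both sides. The fractional linear transformation associated to $-I$ sends $\tau \mapsto \frac{-\tau + 0}{0 \cdot \tau - 1} = \tau$, so the left-hand side is simply $f(\tau)$. The automorphy factor on the right-hand side is $(0 \cdot \tau + (-1))^{k} = (-1)^{k}$, which equals $-1$ since $k$ is odd. This gives the identity
\begin{equation*}
f(\tau) = -f(\tau)
\end{equation*}
for all $\tau \in \mathfrak{H}$, and hence $f \equiv 0$.

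There is no real obstacle here; the entire argument is a single substitution into the transformation law. The only subtlety worth noting is that this vanishing is a feature of working with the full modular group $SL_{2}(\mathbb{Z})$: the statement fails for congruence subgroups not containing $-I$, where odd-weight forms can and do exist. Since the definition in the excerpt fixes the group to be $SL_{2}(\mathbb{Z})$, nothing further is needed.
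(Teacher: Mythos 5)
Your argument is correct and is exactly the paper's own proof: both apply the covariance law (\ref{eqn:modformtrans}) to $-I \in SL_{2}(\mathbb{Z})$ to obtain $f(\tau) = (-1)^{k} f(\tau)$ and conclude $f \equiv 0$ for odd $k$. Your remark about congruence subgroups not containing $-I$ is a nice additional observation but not needed here.
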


\begin{proof}
By the modular transformation law (\ref{eqn:modformtrans}) with $-1 \in SL_{2}(\mathbb{Z})$, we get $f(\tau) = (-1)^{k} f(\tau)$.  Because this must hold for all $\tau$, if $k$ is odd, then $f$ is identically zero.    
\end{proof}

\begin{proppy}
The only holomorphic modular forms of weight zero are the constants.
\end{proppy}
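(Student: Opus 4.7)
The plan is to reduce the statement to the classical fact that a holomorphic function on a compact Riemann surface must be constant. Let $f \in M_0(\Gamma_1)$. By the modular transformation law with $k=0$, the function $f$ is invariant under the action of $SL_2(\mathbb{Z})$ on $\mathfrak{H}$, so it descends to a holomorphic function on the quotient $Y(1) \coloneqq SL_2(\mathbb{Z}) \backslash \mathfrak{H}$. The condition that $f$ be holomorphic (as opposed to merely weakly holomorphic) means its $q$-expansion has only non-negative powers of $q$, so it extends holomorphically across the cusp $\tau = i\infty$.

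Next I would invoke the standard fact that by adjoining the cusp one obtains the compactified modular curve $X(1) \coloneqq \overline{SL_2(\mathbb{Z}) \backslash \mathfrak{H}^*}$, which is a compact Riemann surface (in fact isomorphic to $\mathbb{P}^1$ via the $j$-invariant). The extended function gives a holomorphic map $X(1) \to \mathbb{C}$, and since $X(1)$ is compact and connected, the maximum modulus principle forces this map to be constant. Therefore $f$ is constant on $\mathfrak{H}$.

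The genuinely non-trivial input is the compactness of $X(1)$ together with the verification that $f$ defines a holomorphic (not merely meromorphic) function on it; both are classical but not proved in the excerpt. If one wished to avoid quoting this, an alternative route would proceed via the valence formula: for any non-zero $g \in M_k(\Gamma_1)$ one has
\begin{equation*}
v_\infty(g) + \tfrac{1}{2} v_i(g) + \tfrac{1}{3} v_\rho(g) + \sum_{[\tau] \neq [i],[\rho]} v_\tau(g) \;=\; \frac{k}{12},
\end{equation*}
where the sum runs over $SL_2(\mathbb{Z})$-orbits in the standard fundamental domain. With $k=0$ all vanishing orders must be zero, so $f$ is nowhere vanishing; applying the same identity to $f - f(\tau_0)$ for an arbitrary $\tau_0 \in \mathfrak{H}$ gives a positive left-hand side unless $f - f(\tau_0) \equiv 0$, forcing $f$ to be constant. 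The main obstacle in either route is simply invoking (or reproving) the geometric fact — compactness of $X(1)$, or equivalently the valence formula — which the excerpt has not established; once this is granted, the conclusion is immediate.
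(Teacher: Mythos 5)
Your proof is correct and follows essentially the same route as the paper's: descend to the modular curve using $SL_2(\mathbb{Z})$-invariance and the holomorphy condition at the cusp, then conclude constancy from a global complex-analytic fact. The paper works with the open quotient $\mathfrak{H}/SL_2(\mathbb{Z}) \cong \mathbb{C}$ and applies Liouville to the resulting bounded entire function, whereas you pass to the compactification $X(1)\cong\mathbb{P}^1$ and use compactness (or, in your alternative, the valence formula); these are minor variants of the same argument, each resting on the same unproved-in-the-text geometric input about the modular curve.
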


\begin{proof}
If $f$ is a modular form of weight zero, then $f$ is holomorphic on the upper-half plane, bounded at infinity, and invariant under $SL_{2}(\mathbb{Z})$.  Therefore, it descends to a bounded, holomorphic function on $\mathfrak{H}/ SL_{2}(\mathbb{Z}) \cong \mathbb{C}$.  The only such $f$ are constants.  
\end{proof}

We have seen that a modular form is a holomorphic function on the upper-half plane, transforming covariantly under $SL_{2}(\mathbb{Z})$.  A modular \emph{function}, as opposed to a weight zero modular form, is merely meromorphic on the upper-half plane.  Hence, whereas weight zero modular forms are constant, there are non-trivial modular functions.  The canonical example is the j-invariant $j(\tau)$ which classifies elliptic curves up to isomorphism.  The j-invariant is meromorphic on $\mathfrak{H}$ in a rather tame way: it is holomorphic outside of a simple pole at the cusp.  Therefore, $j(\tau)$ is an example of a weakly holomorphic modular form of weight zero.    

\subsection*{Example: The Eisenstein Series}

The following are important examples of weight $2k$ holomorphic modular forms for $k \geq 2$
\begin{equation}
G_{2k}(\tau) = \sum_{(m,n) \in \mathbb{Z}^{2} \setminus (0,0)}= (m + n \tau)^{-2k}.
\end{equation}
It can be shown that in the Fourier expansion of $G_{2k}(\tau)$, the constant term is $2 \zeta(2k)$, where $\zeta(z)$ is the Riemann $\zeta$-function.  It is convenient to instead work with the following normalized modular forms with constant term equal to one
\begin{equation}\label{eqn:EiiisssSER}
E_{2k}(\tau) = \frac{G_{2k}(\tau)}{2 \zeta(2k)} = 1-\frac{4k}{B_{2k}}\sum_{n=1}^{\infty} \sigma_{2k-1}(n) q^{n}.
\end{equation} 
We refer to these normalized modular forms of weight $2k$ as \emph{Eisenstein Series}.  Here, $B_{2k}$ denotes the Bernoulli numbers and 
\begin{equation}
\sigma_{2k-1}(n) = \sum_{d|n} d^{2k-1}
\end{equation}
is called the divisor function.  The first few Eisenstein series are given explicitly as
\begin{equation}
\begin{split}
& E_{4}(\tau) = 1+ 240 \sum_{n=1}^{\infty} \sigma_{3}(n) q^{n} \\
& E_{6}(\tau) = 1 - 504 \sum_{n=1}^{\infty} \sigma_{5}(n) q^{n}.
\end{split}
\end{equation}
A crucial structural result in this subject, which we will recall shortly, is that $E_{4}(\tau)$ and $E_{6}(\tau)$ actually suffice to generate all modular forms with respect to $SL_{2}(\mathbb{Z})$.  The Eisenstein series $E_{2}(\tau)$ is defined by
\begin{equation}
E_{2}(\tau) = 1- 24 \sum_{n=1}^{\infty} \sigma_{1}(n) q^{n}
\end{equation}
but it is not a modular form -- it is called \emph{quasi-}modular.    

In parts of this thesis we will make use of a function called the \emph{polylogarithm}, defined by
\begin{equation}\label{eqn:polylogrthm}
\text{Li}_{a}(x) \coloneqq \sum_{r=1}^{\infty} r^{-a} x^{r}.  
\end{equation}
The polylogarithm generalizes the ordinary logarithm, which we can recover as $\text{Li}_{1}(x) = - \log(1-x)$.  It is obvious from the definitions that we have the following relationship between the polylogarithm and the divisor function, for all $k \geq 1$
\begin{equation}
\sum_{n=1}^{\infty} \text{Li}_{1-2k}(q^{n}) = \sum_{n=1}^{\infty} \sigma_{2k-1}(n)q^{n}.  
\end{equation}
We can therefore express the Eisenstein series (\ref{eqn:EiiisssSER}) in term of the polylogarithm as
\begin{equation}\label{eqn:EISserexpprPOLY}
E_{2k}(\tau) = 1- \frac{4k}{B_{2k}} \sum_{n=1}^{\infty} \text{Li}_{1-2k}(q^{n})
\end{equation}
a relationship which also holds for the quasi-modular form $E_{2}(\tau)$.  We will make use of this when discussing Hecke operators on Jacobi forms.

\subsection*{Example: The Modular Discriminant Cusp Form}

We will see shortly that in a certain sense, the only cusp form is the modular discriminant $\Delta(\tau)$ defined by
\begin{equation}\label{eqn:JacobiEtaa}
\Delta(\tau) = \eta(\tau)^{24}, \,\,\,\,\,\,\,\,\,\,\,\,\,\,\,\, \eta(\tau) = q^{1/24} \prod_{n=1}^{\infty}(1-q^{n})
\end{equation}
where $\eta(\tau)$ is the Dedekind eta function.  The modular discriminant is a cusp form of weight 12.  In addition to the simple zero at $q=0$, any other zeros must lie on $|q|=1$, which corresponds to the real axis under the change of variables.  Therefore, $\Delta(\tau)$ is non-vanishing on the upper-half plane with a simple zero at infinity.  We will soon see a direct geometrical interpretation of $\Delta$, as well as more non-trivial appearances of $1/\Delta$ in string theory, algebraic geometry, and combinatorics.  

We can use the modular discriminant to prove the following two results.  

\begin{lemmy}
There are no holomorphic modular forms of negative weight.  
\end{lemmy}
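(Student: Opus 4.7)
The plan is to use the modular discriminant $\Delta(\tau)$ as a multiplier to reduce the problem to the already-proven fact that holomorphic modular forms of weight zero must be constant. The key properties to exploit are the two just recorded: $\Delta$ is a weight 12 holomorphic modular form that is non-vanishing on $\mathfrak{H}$ and has a simple zero at the cusp.

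Suppose, for contradiction, that $f \in M_k(\Gamma_1)$ is a non-zero holomorphic modular form of weight $k < 0$. First I would form the auxiliary function
\begin{equation}
g(\tau) \coloneqq f(\tau)^{12} \, \Delta(\tau)^{-k}
\end{equation}
noting that since $-k > 0$, the factor $\Delta^{-k}$ is simply a positive integer power of $\Delta$, hence a holomorphic modular form of weight $-12k$. Because $M_*(\Gamma_1)$ is a graded ring, $g$ is a holomorphic modular form of weight $12k + (-12k) = 0$. By the previous proposition, $g$ must be a constant $c \in \mathbb{C}$.

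Next I would pin down that constant by examining the behavior at the cusp. Since $f$ is holomorphic at infinity, $f^{12}$ is bounded as $\tau \to i\infty$, while $\Delta^{-k} = \Delta^{|k|}$ vanishes at the cusp to order $|k| \geq 1$ (since $\Delta$ has a simple zero there). Hence $g(\tau) \to 0$ as $\tau \to i\infty$, forcing $c = 0$.

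Finally, to conclude, I would use the non-vanishing of $\Delta$ on $\mathfrak{H}$: since $\Delta(\tau) \neq 0$ for every $\tau \in \mathfrak{H}$, so is $\Delta(\tau)^{-k}$, and therefore $f(\tau)^{12} \equiv 0$ on $\mathfrak{H}$ forces $f \equiv 0$, contradicting our assumption. The only conceptual step that requires any care is the use of the cusp behavior to conclude $c=0$; all the rest is formal manipulation in the graded ring $M_*(\Gamma_1)$. No real obstacle arises, since every ingredient has already been established in the preceding discussion.
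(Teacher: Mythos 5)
Your proof is correct and follows essentially the same route as the paper: multiply $f^{12}$ by the appropriate positive power of $\Delta$ to land in weight zero, invoke the fact that weight-zero holomorphic forms are constant, and use the vanishing of $\Delta$ at the cusp together with its non-vanishing on $\mathfrak{H}$ to force $f\equiv 0$. The only cosmetic difference is that the paper phrases the cusp step as ``the product of a modular form with a cusp form is a cusp form, hence has vanishing constant term,'' whereas you argue directly via the limit $\tau\to i\infty$; these are the same observation.
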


\begin{proof}
For $k>0$, let $f$ be a holomorphic modular form of weight $-k$.  By the additivity of the weight, $f^{12} \Delta^{k}$ is a holomorphic modular form of weight zero, and hence is a constant.  Since the product of any modular form and a cusp form is again a cusp form, the constant term of $f^{12} \Delta^{k}$ vanishes, which means $f=0$.  
\end{proof}

\begin{lemmy}
There is an isomorphism $M_{k-12}(\Gamma_{1}) \cong S_{k}(\Gamma_{1})$ of complex vector spaces, induced by multiplication by $\Delta$.  
\end{lemmy}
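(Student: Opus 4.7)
The plan is to exhibit the multiplication-by-$\Delta$ map
\[
\mu_\Delta : M_{k-12}(\Gamma_1) \longrightarrow S_k(\Gamma_1), \qquad f \longmapsto \Delta \cdot f,
\]
and verify it is a well-defined $\mathbb{C}$-linear isomorphism. Linearity is immediate, so the content is in showing (i) the map lands in $S_k(\Gamma_1)$, and (ii) it is bijective.

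First I would check well-definedness. Since weights are additive under multiplication of modular forms, $\Delta \cdot f$ satisfies the weight-$k$ transformation law under $\Gamma_1$. It is holomorphic on $\mathfrak{H}$ because both factors are. At the cusp, $f$ is bounded (since $f \in M_{k-12}$) and $\Delta$ vanishes to first order (as exhibited by the $q^{1/24}\prod(1-q^n)$ product in \eqref{eqn:JacobiEtaa} raised to the $24$-th power, giving leading term $q$), so $\Delta \cdot f$ vanishes at the cusp. Hence $\mu_\Delta(f) \in S_k(\Gamma_1)$.

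Next, injectivity is the easy step: if $\Delta \cdot f \equiv 0$ on $\mathfrak{H}$ then, since $\Delta$ is nowhere-vanishing on $\mathfrak{H}$ (a fact already recalled in the excerpt), we conclude $f \equiv 0$.

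The main point — really the only substantive step — is surjectivity. Given $g \in S_k(\Gamma_1)$, I would define $f := g/\Delta$ on $\mathfrak{H}$. Because $\Delta$ is nowhere zero on $\mathfrak{H}$, $f$ is holomorphic there, and the quotient of the weight-$k$ and weight-$12$ transformation laws yields the weight-$(k-12)$ transformation law for $f$ under $\Gamma_1$. The delicate issue is behavior at the cusp: I would compare $q$-expansions. Writing $g = \sum_{n\ge 1} a(n) q^n$ and $\Delta = q \prod_{n\ge 1}(1-q^n)^{24} = q \cdot u(q)$ with $u(q)$ a unit in $\mathbb{C}\llbracket q \rrbracket$ (constant term $1$), the quotient is
\[
f(\tau) \;=\; \frac{g(\tau)}{\Delta(\tau)} \;=\; u(q)^{-1} \sum_{n\ge 1} a(n)\, q^{n-1} \;=\; \sum_{n\ge 0} b(n)\, q^n
\]
for some $b(n) \in \mathbb{C}$, which shows $f$ extends holomorphically across the cusp. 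Hence $f \in M_{k-12}(\Gamma_1)$ and $\mu_\Delta(f) = g$, proving surjectivity.

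I do not expect any serious obstacles: the argument is essentially formal once one has the two key properties of $\Delta$ — nowhere-vanishing on $\mathfrak{H}$ and a simple zero at the cusp — both of which were recorded just before the statement. If any step requires care, it is only writing the $q$-expansion of $1/\Delta$, which is handled by noting that $\prod(1-q^n)^{24}$ is invertible in $\mathbb{C}\llbracket q \rrbracket$.
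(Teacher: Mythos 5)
Your proposal is correct and follows essentially the same route as the paper's proof: multiplication by $\Delta$, injectivity from the nonvanishing of $\Delta$ on $\mathfrak{H}$, and surjectivity by dividing a cusp form by $\Delta$ and using the simple zero at the cusp. Your explicit $q$-expansion of $g/\Delta$ just spells out the cusp regularity that the paper asserts more briefly.
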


\begin{proof}
Define the map $M_{k-12}(\Gamma_{1}) \to S_{k}(\Gamma_{1})$ by $f \mapsto \Delta f$.  The map is well-defined since the product of any modular form and a cusp form is again a cusp form, and the weights are consistent.  The map is clearly injective.  To show surjectivity, let $g \in S_{k}(\Gamma_{1})$ be a cusp form of weight $k$.  The modular form $g/\Delta$ indeed has weight $k-12$.  Because $\Delta$ is non-vanishing except for a simple zero at $q=0$, and $g$ has a zero of some positive order at $q=0$, it follows that $g/\Delta$ is regular on the upper-half plane.  
\end{proof}

\begin{cory}
There are no non-trivial cusp forms of weight less than 12.  In weight 12, we have the isomorphism $S_{12}(\Gamma_{1}) \cong \mathbb{C}$ induced by $\Delta$.  Finally, any cusp form of weight $k \geq 12$ is a suitable modular form multiplying some power of $\Delta$.  It is in this sense that the modular discriminant $\Delta$ is effectively the only cusp form.
\end{cory}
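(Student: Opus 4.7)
The plan is to deduce all three statements from the isomorphism $M_{k-12}(\Gamma_{1}) \cong S_{k}(\Gamma_{1})$ of the preceding lemma, combined with the vanishing of holomorphic modular forms of negative weight and the fact that $M_{0}(\Gamma_{1}) = \mathbb{C}$.

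First I would handle the range $k < 12$. By the isomorphism $S_{k}(\Gamma_{1}) \cong M_{k-12}(\Gamma_{1})$ induced by multiplication by $\Delta$, any cusp form $g \in S_{k}(\Gamma_{1})$ corresponds to a holomorphic modular form $g/\Delta$ of weight $k - 12 < 0$. Since there are no holomorphic modular forms of negative weight, we conclude $S_{k}(\Gamma_{1}) = 0$ for $k < 12$, establishing (1).

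Next, for the weight 12 case, the same isomorphism gives $S_{12}(\Gamma_{1}) \cong M_{0}(\Gamma_{1}) \cong \mathbb{C}$, where the isomorphism sends a constant $c \in \mathbb{C}$ to $c \Delta$. Hence $S_{12}(\Gamma_{1})$ is one-dimensional and spanned by $\Delta$ itself, proving (2).

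For (3), I would proceed by a descent argument on the weight. Given $g \in S_{k}(\Gamma_{1})$ with $k \geq 12$, the isomorphism yields $g = \Delta \cdot f_{1}$ with $f_{1} \in M_{k-12}(\Gamma_{1})$. If $f_{1}$ is not a cusp form, we are done with $g = \Delta^{1} \cdot f_{1}$. If $f_{1} \in S_{k-12}(\Gamma_{1})$ is itself a cusp form, apply the lemma again to write $f_{1} = \Delta \cdot f_{2}$ with $f_{2} \in M_{k-24}(\Gamma_{1})$, giving $g = \Delta^{2} \cdot f_{2}$. Iterating, the weights $k - 12n$ strictly decrease and remain non-negative only finitely often (since there are no holomorphic modular forms of negative weight, and in particular none in the intermediate weights $<12$ that are cusp forms by (1)); thus the process terminates with $g = \Delta^{n} \cdot h$ for some $h \in M_{k - 12n}(\Gamma_{1})$ with $h$ not a cusp form (or with $k - 12n = 0$, in which case $h$ is a nonzero constant). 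The only step requiring any thought is verifying that this descent truly terminates, but this is immediate since at each stage the weight drops by 12 and the vanishing of $S_{j}(\Gamma_{1})$ for $j < 12$ forces the iteration to stop before the weight becomes negative.
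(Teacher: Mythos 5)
Your proposal is correct and follows the same route the paper intends: all three claims are read off from the isomorphism $S_{k}(\Gamma_{1}) \cong M_{k-12}(\Gamma_{1})$ together with the vanishing of negative-weight forms and $M_{0}(\Gamma_{1}) = \mathbb{C}$ (the paper leaves this implicit by stating the result as a corollary of those two lemmas). Your descent argument for the third claim is a harmless refinement — a single application of the lemma already exhibits $g = \Delta \cdot f$ in the required form — but it is valid and terminates for exactly the reason you give.
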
   

For now, we would like to record an explicit relation determining the dimension of the space of modular forms $M_{k}(\Gamma_{1})$ for all $k$.  In addition, we would hope to find a nice set of generators for the full ring of modular forms (\ref{eqn:ringmodforms}).  To briefly summarize what we have shown so far: $M_{k}(\Gamma_{1})=0$ for $k<0$ and $k$ odd, and $M_{0}(\Gamma_{1}) = \mathbb{C}$.  We have shown that $M_{k}(\Gamma_{1}) \cong S_{k+12}(\Gamma_{1})$ and so computing dimensions of $M_{k}(\Gamma_{1})$ will give dimensions of the spaces of cusp forms.  The only examples of non-cusp modular forms we have seen so far are $E_{4}$ and $E_{6}$.  In fact, the content of the following theorem is that these are all we need.  

\begin{thm}[\bfseries Structure Theorem]\label{eqn:ModformStrTHMMM}
The ring of modular forms is freely generated over $\mathbb{C}$ by the Eisenstein series $E_{4}$ and $E_{6}$
\begin{equation}
M_{*}(\Gamma_{1}) = \mathbb{C} [ E_{4}, E_{6} ].  
\end{equation}
In other words, any modular form over $SL_{2}(\mathbb{Z})$ is simply a polynomial in $E_{4}$ and $E_{6}$ with complex coefficients.     
\end{thm}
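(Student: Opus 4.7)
The plan is to deduce the theorem from the \emph{valence formula} (sometimes called the $k/12$-formula), which controls the orders of vanishing of a modular form, and then to bootstrap an induction on the weight. I will denote by $v_p(f)$ the order of vanishing of $f$ at $p \in \mathfrak{H} \cup \{ \infty \}$, and work with the standard fundamental domain $\mathcal{F}$ for $\Gamma_1$ in $\mathfrak{H}$ whose corner points are $i$ and $\rho = e^{2\pi i/3}$.

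First I would establish the valence formula: for every non-zero $f \in M_k(\Gamma_1)$,
\begin{equation*}
v_\infty(f) + \tfrac{1}{2} v_i(f) + \tfrac{1}{3} v_\rho(f) + \sum_{\substack{ p \in \mathcal{F}^\circ \\ p \neq i, \rho}} v_p(f) = \frac{k}{12}.
\end{equation*}
The proof is via contour integration of $\tfrac{1}{2\pi i}\, d\log f$ around the boundary of a truncated fundamental domain, using the $SL_2(\mathbb{Z})$-equivariance of $f$ to pair up the vertical sides and the sides mapped by $S = \bigl(\begin{smallmatrix} 0 & -1 \\ 1 & 0 \end{smallmatrix}\bigr)$. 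The $k/12$ on the right arises from how $S$ twists the arc around $i$ and $\rho$ via the automorphy factor $(c\tau+d)^k$, and the fractions $\tfrac{1}{2}$ and $\tfrac{1}{3}$ reflect the orders of the isotropy groups at $i$ and $\rho$. This is the main technical step and the primary obstacle, since one must be careful when $f$ has zeros along the boundary arcs themselves (one perturbs the contour).

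With the valence formula in hand, the dimensions of $M_k(\Gamma_1)$ for small even $k \geq 0$ are forced, and in particular $\dim M_k(\Gamma_1) = 1$ for $k \in \{0,4,6,8,10,14\}$ and $\dim M_{12}(\Gamma_1) = 2$. Plugging $f = E_4$ into the formula gives $k/12 = 1/3$, which can only be realized as $v_\rho(E_4) = 1$ with no other zeros; plugging in $f = E_6$ gives $v_i(E_6) = 1$ and no other zeros. Then $E_4^3$ and $E_6^2$ are two elements of the one-dimensional space $M_{12}(\Gamma_1)$ normalized to have constant term $1$, so $E_4^3 - E_6^2$ lies in $S_{12}(\Gamma_1)$; comparing the $q^1$-coefficient shows it is a non-zero multiple of $\Delta$, in particular $\Delta$ has no zero on $\mathfrak{H}$ and a simple zero at $\infty$.

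Generation then follows by induction on $k$: given $f \in M_k(\Gamma_1)$, choose a monomial $c \, E_4^a E_6^b$ with $4a + 6b = k$ matching the constant term of $f$ (possible for all even $k \geq 4$). Then $f - c E_4^a E_6^b \in S_k(\Gamma_1)$, and by the corollary in the excerpt this equals $\Delta \cdot g$ for some $g \in M_{k-12}(\Gamma_1)$; by induction $g \in \mathbb{C}[E_4, E_6]$, and since $\Delta \in \mathbb{C}[E_4,E_6]$, so is $f$. Finally, for freeness, suppose $\sum_{4a + 6b = k} c_{a,b} E_4^a E_6^b = 0$ is a nontrivial homogeneous relation (we may reduce to a fixed weight since $E_4, E_6$ have distinct weights). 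Evaluating at $\tau = \rho$ kills every term with $a \geq 1$, leaving a relation among the $E_6^b$ alone; since $E_6(\rho) \neq 0$ by the valence formula for $E_6$, this forces all such coefficients to vanish, and symmetrically evaluating at $\tau = i$ uses $E_4(i) \neq 0$ to kill the rest, giving algebraic independence.
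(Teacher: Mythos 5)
The paper itself states this Structure Theorem without proof (it is quoted as a classical fact in a survey chapter), so there is no in-paper argument to compare against; your valence-formula route is the standard textbook proof, and the generation half of your argument is complete and correct: the valence formula pins down $v_\rho(E_4)=1$ and $v_i(E_6)=1$ with no other zeros, identifies $E_4^3-E_6^2$ as $1728\,\Delta$ (hence $\Delta$ nonvanishing on $\mathfrak{H}$), and the induction via $f\mapsto (f-cE_4^aE_6^b)/\Delta$ goes through since every even $k\geq 4$ admits a representation $4a+6b=k$.

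The one place your write-up is genuinely short is the algebraic independence step. For a homogeneous relation $\sum_{4a+6b=k}c_{a,b}E_4^aE_6^b=0$, evaluating at $\rho$ only annihilates the single pure-$E_6$ monomial (since $E_4(\rho)=0$ kills every term with $a\geq 1$, and there is at most one term with $a=0$), and evaluating at $i$ only annihilates the single pure-$E_4$ monomial; neither evaluation says anything about the mixed coefficients $c_{a,b}$ with $a,b\geq 1$. Already at weight $24$ the term $c\,E_4^3E_6^2$ survives both evaluations. The standard repair is one line: after the evaluation at $\rho$ forces the pure-$E_6$ coefficient to vanish, every remaining monomial is divisible by $E_4$, so the relation reads $E_4\cdot g=0$ with $g$ a homogeneous relation of weight $k-4$; since $M_*(\Gamma_1)$ sits inside the integral domain of holomorphic functions on $\mathfrak{H}$, $g=0$, and one concludes by induction on the weight. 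With that adjustment your proof is complete.
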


\begin{thm}
The dimension of the space of holomorphic modular forms of weight $k$ is given by
\begin{equation}
\text{dim}_{\mathbb{C}}M_{k}(\Gamma_{1}) = 
\begin{cases}
0 & k<0 \\
0 & k \,\,  \text{odd} \\
\floor{\frac{k}{12}}  & k \equiv 2 \Mod{12} \\
\floor{\frac{k}{12}}+1  & \text{otherwise} 
\end{cases}
\end{equation}
\end{thm}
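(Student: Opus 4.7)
The plan is to deduce the dimension formula directly from the Structure Theorem (Theorem \ref{eqn:ModformStrTHMMM}), which asserts that $M_*(\Gamma_1) = \mathbb{C}[E_4, E_6]$ as a polynomial ring. Since $E_4$ and $E_6$ are algebraically independent and have weights $4$ and $6$, the monomials $\{E_4^a E_6^b : a, b \in \mathbb{Z}_{\geq 0}\}$ form a homogeneous $\mathbb{C}$-basis of $M_*(\Gamma_1)$ with $\mathrm{wt}(E_4^a E_6^b) = 4a + 6b$. Consequently $\dim_{\mathbb{C}} M_k(\Gamma_1)$ equals the number of pairs $(a,b) \in \mathbb{Z}_{\geq 0}^2$ satisfying $4a + 6b = k$. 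The whole proof reduces to an elementary counting problem with this Diophantine equation.

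First I would dispatch the degenerate cases. If $k < 0$ the equation $4a + 6b = k$ has no solutions in non-negative integers, so $\dim M_k(\Gamma_1) = 0$. If $k$ is odd, then $4a + 6b$ is always even, so again there are no solutions and the dimension vanishes. This also recovers the earlier observation that modular forms of odd weight vanish.

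For $k \geq 0$ and even, set $k = 2m$ and reduce to counting $(a,b) \in \mathbb{Z}_{\geq 0}^2$ with $2a + 3b = m$. For each admissible $b$, the value $a = (m - 3b)/2$ is uniquely determined, so the count equals the number of $b \in \mathbb{Z}_{\geq 0}$ with $b \leq m/3$ and $b \equiv m \pmod{2}$ (the parity constraint ensures $m - 3b$ is even). Performing case analysis on the residue of $m$ modulo $6$ — equivalently on $k \pmod{12}$ — resolves the formula. For instance, in the exceptional case $k \equiv 2 \pmod{12}$, write $m = 6q + 1$; the admissible $b$ must be odd with $b \leq 2q + \tfrac{1}{3}$, so $b \in \{1, 3, \ldots, 2q-1\}$, yielding exactly $q = \lfloor k/12 \rfloor$ values. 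In each of the other five residue classes modulo $6$, an analogous enumeration yields $\lfloor k/12 \rfloor + 1$ values of $b$.

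The only real obstacle is the Structure Theorem itself, which is the substantive input; once it is in hand, the enumeration is a purely combinatorial exercise. An alternative strategy avoiding direct monomial counting would exploit the isomorphism $M_{k-12}(\Gamma_1) \cong S_k(\Gamma_1)$ established earlier, together with the decomposition $M_k(\Gamma_1) = S_k(\Gamma_1) \oplus \mathbb{C} \cdot E_k$ for $k \geq 4$ even, to obtain the recursion $\dim M_k(\Gamma_1) = \dim M_{k-12}(\Gamma_1) + 1$ for $k \not\equiv 2 \pmod{12}$ and $\dim M_k(\Gamma_1) = \dim M_{k-12}(\Gamma_1)$ for $k \equiv 2 \pmod{12}$; the formula would then follow by induction after verifying the base cases $k \in \{0, 2, 4, 6, 8, 10\}$ by hand. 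However, since the Structure Theorem is already stated, the monomial-counting route is the most efficient.
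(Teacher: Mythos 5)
Your proposal is correct, and in fact it supplies a proof for a statement the paper merely asserts: the dimension formula is stated in the survey without any argument, immediately after the Structure Theorem. Your route — reading off that the monomials $E_4^a E_6^b$ with $4a+6b=k$ form a $\mathbb{C}$-basis of $M_k(\Gamma_1)$ because the ring is freely generated, and then counting lattice points on $2a+3b=m$ with the parity constraint $b\equiv m \pmod{2}$ — is the standard and cleanest derivation, and your case check for $k\equiv 2 \pmod{12}$ (where $b$ ranges over the odd integers $1,3,\ldots,2q-1$, giving $q=\lfloor k/12\rfloor$ values) is the one residue class where the count drops by one; the other five classes each give $\lfloor k/12\rfloor+1$ as you say. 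The only caveat worth flagging is logical rather than mathematical: since the paper also states the Structure Theorem without proof, your argument inherits that as an unproved input, and the alternative recursion you sketch via $S_k(\Gamma_1)\cong M_{k-12}(\Gamma_1)$ and $M_k(\Gamma_1)=S_k(\Gamma_1)\oplus\mathbb{C}E_k$ would have the same status, since the splitting off of the Eisenstein series for $k\geq 4$ is likewise not established in the text. Given what the paper provides, the monomial count is the right choice and your enumeration is complete and correct.
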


These two theorems have some non-trivial consequences.  First of all, we see that $M_{2}(\Gamma_{1})=0$: there are no weight two modular forms.  Moreover, for $k=4, 6, 8, 10, 14$ we have $M_{k}(\Gamma_{1})= \mathbb{C}$, generated by the Eisenstein series $E_{k}$.  This leads to non-trivial number-theoretic identities between the Eisenstein series
\begin{equation}
E_{8}(\tau) = E_{4}(\tau)^{2}  \,\,\,\,\,\,\,\,\,\,\,  E_{10}(\tau) = E_{4}(\tau) E_{6}(\tau)  \,\,\,\,\,\,\,\,\,\,\, E_{14}(\tau) = E_{8}(\tau) E_{6}(\tau).
\end{equation}
More generally, given $f, g \in M_{k}(\Gamma_{1})$ we only have to check agreement up to at most the first $\text{dim}_{\mathbb{C}}M_{k}(\Gamma_{1})$ coefficients to conclude whether $f,g$ coincide or not.  This aspect is very powerful in practice.

In addition, we see that $M_{12}(\Gamma_{1})$ is a two-dimensional complex vector space generated by $E_{4}^{3}$ and $E_{6}^{2}$.  But we have already encountered one modular form of weight 12: the modular discriminant $\Delta$.  Therefore, $\Delta$ must be expressible in terms of the Eisenstein series.  It turns out that
\begin{equation}\label{eqn:discmodformEIS}
\Delta(\tau) = \frac{1}{1728}\big(E_{4}(\tau)^{3} - E_{6}(\tau)^{2}\big).
\end{equation}
This expression gives a nice geometrical interpretation to the modular discriminant $\Delta$, and explains the name.  An \emph{elliptic curve} is given by a homogeneous cubic equation in $\mathbb{P}^{2}$, but in an affine chart, coordinates can be chosen such that the equation takes the form $y^{2} = x^{3} + px + q$.  Interpreting $\tau$ as a coordinate on the moduli space of elliptic curves, an affine cubic curve can be parameterized in the following form \cite{husemoller_elliptic_2010}
\begin{equation}
y^{2} = x^{3} - \frac{E_{4}(\tau)}{48} x - \frac{E_{6}(\tau)}{864}.
\end{equation}
The curve is smooth if and only if a quantity called the discriminant is non-vanishing.  Up to scale, the discriminant is given simply by (\ref{eqn:discmodformEIS}), and coincides with the modular discriminant.  Recalling that $\Delta$ has a single zero (in the upper-half plane) at the cusp, this corresponds to a singular elliptic curve.  The smooth elliptic curves are parameterized up to isomorphism by $\mathcal{M}_{1,1} \cong \mathfrak{H}/ SL_{2}(\mathbb{Z})$, and the Deligne-Mumford compactification $\overline{\mathcal{M}}_{1,1}$ adds a single point at the cusp corresponding to a nodal elliptic curve with $\Delta =0$.  

The modular discriminant makes a slightly less direct appearance in a combinatorial problem.  If we let $p_{24}(n+1)$ denote the number of partitions of the non-negative integer $n+1$ into 24 distinct colors, we have
\begin{equation}\label{eqn:p24part24colorss}
\sum_{n=-1}^{\infty} p_{24}(n+1) q^{n} = \frac{1}{\Delta(\tau)}.
\end{equation}
In other words, the interesting combinatorial quantities $p_{24}(n+1)$ arise as the Fourier coefficients of the weakly holomorphic modular form $1/\Delta$ of weight -12.  We will also present a physical manifestation of the coefficients $p_{24}(n+1)$.

\subsection{Modular Forms in String Theory and Quantum Black Holes}

Modular forms (as well as the Jacobi forms and Siegel modular forms to come) often arise in physics as partition functions where the Fourier coefficients count degeneracies of certain quantum black hole configurations in superstring theory.  Black holes are a phenomena in spacetime inherently belonging to quantum gravity. They are extremely massive (so they gravitate classically in general relativity) yet they have collapsed to such tiny sizes that quantum mechanical effects cannot be neglected. One may think of a black hole as a macroscopic object in our universe acting in some sense as a microscope to the structure of spacetime at the tiniest scales.

Suppose there are observable quantum numbers $(\sigma_{1}, \ldots, \sigma_{n})$ called \emph{charges} which characterize features of a black hole (for example mass, charge, or spin).  We expect there to exist quantities $d(\sigma_{1}, \ldots, \sigma_{n})$ called \emph{degeneracies} which are integers counting the microscopic black hole states with fixed charge $(\sigma_{1}, \ldots, \sigma_{n})$.  The presence of symmetries of the physical system may imply that the degeneracies do not depend on the charges independently, but rather only on a function, or collection of functions of the charges.  Of course, $d(\sigma_{1}, \ldots, \sigma_{n})$ is defined for all possible values of the charges, but we only get a black hole in some large mass limit.  

As we have seen in Section \ref{sec:Dbranesstabstrth}, by compactifying Type IIA superstring theory on a Calabi-Yau threefold $X$, we can get BPS particles in four-dimensions by wrapping D-branes on holomorphic cycles in $X$ such that the observable quantum numbers of the particle are given by the D-brane charges.  We interpret $d(\sigma_{1}, \ldots, \sigma_{n})$ to be the number of BPS states or black hole states with fixed charge $(\sigma_{1}, \ldots, \sigma_{n})$.  For example, the Gopakumar-Vafa invariants $n_{g, \beta}(X)$ count BPS states with spin related to $g$ and charge $\beta \in H_{2}(X, \mathbb{Z})$.  

One might hope to engineer black holes in four dimensions using large mass D-brane configurations in $X$.  In certain cases, the degeneracies $d(\sigma_{1}, \ldots, \sigma_{n})$ arise as coefficients of an $n$-variable automorphic form.  This way of engineering black holes was pioneered by Strominger and Vafa\footnote{Strictly speaking, Strominger and Vafa studied black holes in five dimensions, but by \cite{gaiotto_new_2006} one can relate 4d and 5d black holes.} \cite{strominger_microscopic_1996-1}.  The fact that this often recovers a macroscopic prediction of Bekenstein-Hawking, made it one of the most tantalizing applications of string theory to potentially observable physics.

\subsubsection{Type II Compactification on $K3 \times E$}

Consider the Type II superstring compactified on the compact Calabi-Yau threefold $K3 \times E$, where $E$ is an elliptic curve.  This induces an $\mathcal{N}=4$ supersymmetric theory in four dimensions; that is a theory with 8 supercharges.  One can then study what are called \emph{half-BPS states} which are states in the Hilbert space annihilated by 4 of the 8 supercharges.  It turns out \cite{dabholkar_quantum_2012} that such states are engineered by D-brane configurations in $K3 \times E$ with only one independent D-brane charge $m$.  We can then construct the partition function $\hat{Z}(\sigma)$ with coefficients $d(m)$ counting the number of half-BPS states with fixed charge $m$.  In this case, as shown by Vafa and Witten \cite{vafa_strong_1994-1}
\begin{equation}\label{eqn:coefffdemmm}
d(m) = \chi_{\text{orb}}\big( \text{Sym}^{m+1}(K3) \big) = \chi \big( \text{Hilb}^{m+1}(K3)\big)
\end{equation}
where the orbifold Euler characteristic $\chi_{\text{orb}}$ was introduced in (\ref{eqn:obriiEulCharrr}), and the second equality in (\ref{eqn:coefffdemmm}) follows by noting the Hilbert scheme is a crepant resolution of the symmetric product.  The generating function of half-BPS states therefore takes the form
\begin{equation}
\hat{Z}(\sigma) = \sum_{m=-1}^{\infty} \chi\big(\text{Hilb}^{m+1}(K3)\big)Q^{m}
\end{equation}
where $Q = e^{2 \pi i \sigma}$.  By a result of G\"{o}ttsche \cite{gottsche_betti_1990}, this is known to be simply the inverse modular discriminant
\begin{equation} \label{eqn:TypeIIFRAME}
\hat{Z}(\sigma) = \frac{1}{\Delta(\sigma)}.  
\end{equation}

As is common in physics, this system has a dual description.  This means there are two completely different paradigms giving rise to the same physics, and no experiments can determine which paradigm one is in.  In this case, the physical dual is a chiral conformal field theory on a bosonic string \cite{dabholkar_quantum_2012}.  In superstring theory, with both bosons and fermions, the geometrical background is a ten-dimensional manifold.  In bosonic string theory however, we consider strings propagating in a 26-dimensional background.  A propagating string is modeled as an embedding of the worldsheet in the ambient 26 dimensions.  The string can only oscillate in 24 of the directions because it cannot oscillate along its own two-dimensional worldsheet.  For each dimension, there are also discrete modes labeled by an integer $n>0$ into which we can put energy.  On a closed string, for each mode $n>0$ we can have both a left and right moving field.  

In string theory we can interpret the above geometrical setting as a \emph{conformal field theory} on the worldsheet.  For our purposes, we can consider only the left-moving sector; this system is then called a \emph{chiral} conformal field theory, or sometimes a system of 24 free chiral bosons.    

When quantizing the system, we introduce raising and lowering operators $a_{\mu n}^{\dagger}$ and $a_{\mu n}$ respectively, where $\mu =1, \ldots, 24$ and $n>0$.  These operators define an algebra with commutation relations
\begin{equation} \label{eqn:commrel}
[a_{\mu n}, a_{\nu m}^{\dagger}] = \delta_{\mu \nu} \delta_{nm}.
\end{equation}
Let $|0 \rangle$ denote the unique vacuum state defined by its annihilation by all lowering operators $a_{\mu n} | 0 \rangle=0$.  The Hilbert space of states $\mathscr{H}$ corresponds to the following Fock space of representations of the algebra spanned by states of the form
\[(a_{\mu_{1} n_{1}}^{\dagger})^{m_{1}} \cdots (a_{\mu_{k} n_{k}}^{\dagger} )^{m_{k}} | 0 \rangle.\]
The Hamiltonian of the system is given by the following operator
\begin{equation}
H=\sum_{\mu =1}^{24} \sum_{n>0} n a_{\mu n}^{\dagger} a_{\mu n} -1= L_{0} -1.
\end{equation}
Using the commutation relation (\ref{eqn:commrel}) it is straightforward to see that the states above are eigenstates of the Hamiltonian with energy eigenvalue $m_{1} n_{1} + \cdots + m_{k} n_{k} -1$.  Therefore, the $-1$ appearing in the Hamiltonian corresponds to the energy of the vacuum $|0 \rangle$ in the quantum theory.  The partition function of the theory is given by the following trace\footnote{Note that $Z(\tau)$ looks similar to the Witten index (\ref{eqn:WittINDexpo}).  However, the $(-1)^{F}$ is not present as there are no fermions in the system at hand.} over the Hilbert space, where $q = e^{2 \pi i \tau}$
\begin{equation} \label{eqn:bosonpartfunc}
Z(\tau) = \text{Tr}_{\mathscr{H}}(q^{H})= q^{-1} \text{Tr}_{\mathscr{H}}(q^{L_{0}}), \,\,\,\,\,\,\,\, (q=e^{2 \pi i \tau}).
\end{equation}

We can now turn to evaluating the partition function (\ref{eqn:bosonpartfunc}).  The proper way of handling exponentiated operators is by passing to a basis of eigenstates of the operator.  In our case, the Fock space states above are eigenstates of $L_{0}$ with eigenvalue $m_{1} n_{1} + \cdots + m_{k}n_{k}$.  The commutation relations (\ref{eqn:commrel}) indicate that we can treat each mode and each of the 24 directions independently.  This allows us to write the partition function as
\begin{equation}
Z(\tau) = q^{-1} \prod_{\mu=1}^{24} \prod_{n=1}^{\infty} \text{Tr}_{\mathscr{H}^{(\mu)}_{n}}\big(q^{L_{0}}\big)
\end{equation}
where $\mathscr{H}_{n}^{(\mu)}$ is the Hilbert space of states spanned by
\[\bigg\{ \,\, (a_{\mu n_{1}}^{\dagger})^{m_{1}} \cdots (a_{\mu n_{k}}^{\dagger} )^{m_{k}} | 0 \rangle \,\,\, \bigg| \,\,\,m_{1}n_{1} + \cdots + m_{k}n_{k}=n \,\, \bigg\}. \]
These are the states with $L_{0}$ eigenvalue $n$, excited in one fixed direction.  Hence, for all $\mu$ and all $n \geq 1$
\begin{equation}
\text{Tr}_{\mathscr{H}^{(\mu)}_{n}}\big(q^{L_{0}}\big) = 1 + q^{n} + q^{2n} + \cdots = \frac{1}{1-q^{n}}.
\end{equation}
We finally see that the full partition function
\begin{equation}
Z(\tau) = q^{-1} \prod_{n=1}^{\infty} \bigg(\frac{1}{1-q^{n}}\bigg)^{24} = \frac{1}{\Delta(\tau)}
\end{equation}
is given by the inverse of the modular discriminant, in perfect agreement with (\ref{eqn:TypeIIFRAME}) upon exchanging $\tau$ and $\sigma$.  The dual conformal field theory picture gives a physical interpretation of the quantities $p_{24}(n+1)$ in (\ref{eqn:p24part24colorss}) as the number of ways a bosonic string in 26 dimensions can distribute $n+1$ units of energy.  This is an example of a duality in physics which generates a mathematical relationship or conjecture; in this case
\begin{equation}
p_{24}(n+1) = \chi\big(\text{Hilb}^{n+1}(K3)\big).  
\end{equation}

We note that there was good reason for using two different variables $q = e^{2 \pi i \tau}$ and $Q = e^{2 \pi i \sigma}$, which will become clear later in the chapter.  To give a hint, the Euler characteristic $\chi(-)$ is a specialization of the elliptic genus $\text{Ell}_{q,y}(-)$, and the two partition functions $\hat{Z}(\sigma)$ and $Z(\tau)$ essentially correspond to different specializations of a certain Siegel modular form with variables $(\tau, z, \sigma)$.

\section{Introduction to Jacobi Forms}\label{sec:IntroJACform}

Jacobi forms are automorphic forms which arise as two-variable generalizations of modular forms by replacing the modular group $\Gamma_{1} = SL_{2}(\mathbb{Z})$ by the Jacobi group $SL_{2}(\mathbb{Z}) \ltimes \mathbb{Z}^{2}$.  They are in some sense, a twisted combination of an elliptic function and a modular form in one variable.  The original, and canonical reference is Eichler and Zagier \cite{eichler_theory_2013} where Jacobi forms were elucidated for the first time.  A modern account, with applications to physics, can be found in \cite{dabholkar_quantum_2012}.  

Let $\varphi: \mathfrak{H} \times \mathbb{C} \to \mathbb{C}$ be a holomorphic function with $\tau$ a coordinate on the upper-half plane $\mathfrak{H}$ and $z$ a coordinate on $\mathbb{C}$.  In a specific sense, we want $\varphi(\tau, z)$ to be modular in $\tau$ and elliptic in $z$.  

\begin{defn}\label{defn:Jacobifrmmmm}
A Jacobi form of weight $k\in \mathbb{Z}$ and index $m \in \mathbb{Z}_{\geq0}$ is a holomorphic function $\varphi: \mathfrak{H} \times \mathbb{C} \to \mathbb{C}$ satisfying the following two conditions
\begin{equation} \label{eqn:JacFormModul}
\varphi\bigg(\frac{a \tau + b}{c \tau + d}, \frac{z}{c \tau +d}\bigg) = (c \tau + d)^{k}e^{\frac{2 \pi i m c z^{2}}{c \tau +d}}\varphi(\tau, z), \,\,\,\,\,\,\,\,\,\,
\begin{pmatrix}
a & b\\
c & d
\end{pmatrix}
\in SL(2, \mathbb{Z})
\end{equation}

\begin{equation}\label{eqn:JacFormElll}
\varphi(\tau, z + \lambda \tau + \mu) = e^{-2 \pi i m(\lambda^{2}\tau + 2 \lambda z)}\varphi(\tau, z), \,\,\,\,\,\,\,\, \lambda, \mu \in \mathbb{Z}.
\end{equation}
\end{defn}

\noindent By (\ref{eqn:JacFormElll}), if $m=0$ then $\varphi$ is independent of $z$.  Therefore by (\ref{eqn:JacFormModul}), a Jacobi form of index $m=0$ is simply an ordinary one-variable modular form.  In addition, clearly $f(\tau) \coloneqq \varphi(\tau, 0)$ is a modular form of weight $k$.  

The two defining conditions of a Jacobi form imply that $\varphi$ is periodic in both components: $\varphi(\tau + 1, z) = \varphi(\tau, z)$ and $\varphi(\tau, z+1) = \varphi(\tau, z)$.  Therefore, $\varphi$ has a Fourier expansion in terms of the variables $q=e^{2 \pi i \tau}$ and $y = e^{2 \pi i z}$,
\begin{equation} \label{eqn:JACfrmFEXP}
\varphi(\tau, z) = \sum_{n ,l \in \mathbb{Z}}c(n, l)q^{n}y^{l}.
\end{equation}
By imposing specific growth conditions on the Fourier coefficients, we will soon refine the general definition of a Jacobi form into certain classes of interest.  However, we first establish some important symmetries manifest in the Fourier coefficients of Jacobi forms with particular weight and index, independent of growth conditions.  

\begin{lemmy}\label{lemmy:evenweight}
A Jacobi form $\varphi$ with Fourier coefficients $c(n,l)$ has even weight if and only if $c(n,l) = c(n, -l)$ for all $n,l$.  
\end{lemmy}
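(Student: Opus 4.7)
The plan is to apply the modular transformation law (\ref{eqn:JacFormModul}) to the specific element $-I = \begin{pmatrix} -1 & 0 \\ 0 & -1\end{pmatrix} \in SL_2(\mathbb{Z})$, which is the natural candidate since this is precisely the element that detected the parity obstruction for ordinary modular forms earlier in the chapter. With $a=d=-1$ and $b=c=0$ we have $\frac{a\tau+b}{c\tau+d} = \tau$, $\frac{z}{c\tau+d} = -z$, the automorphy factor is $(-1)^k$, and the exponential factor $e^{2\pi i m c z^2/(c\tau+d)}$ collapses to $1$ since $c=0$. Thus the transformation law reduces cleanly to
\begin{equation*}
\varphi(\tau, -z) = (-1)^k \varphi(\tau, z).
\end{equation*}

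Next I would substitute the Fourier expansion (\ref{eqn:JACfrmFEXP}) into both sides of this identity. The left-hand side becomes $\sum_{n,l} c(n,l) q^n y^{-l}$, which after reindexing $l \mapsto -l$ is $\sum_{n,l} c(n,-l) q^n y^l$, while the right-hand side is $(-1)^k \sum_{n,l} c(n,l) q^n y^l$. Comparing coefficients of $q^n y^l$ (which is valid since the Fourier expansion is unique) yields
\begin{equation*}
c(n,-l) = (-1)^k c(n,l) \quad \text{for all } n, l \in \mathbb{Z}.
\end{equation*}

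From this single identity both directions of the lemma follow almost immediately. If the weight $k$ is even, then $(-1)^k = 1$ and we obtain $c(n,l) = c(n,-l)$ for all $n,l$. Conversely, assuming $c(n,l) = c(n,-l)$ for all $n,l$ and that $\varphi$ is not identically zero, there exists at least one pair $(n,l)$ with $c(n,l) \neq 0$; for that pair the displayed identity forces $(-1)^k = 1$, so $k$ is even. (The zero Jacobi form is the trivial case, which is consistent with either parity.)

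There is no significant obstacle here; the proof is essentially one line once one identifies $-I$ as the right element of $SL_2(\mathbb{Z})$ to test. The only small point to note is that the index $m$ plays no role because the exponential automorphy factor vanishes for the chosen matrix, which is why this parity phenomenon is inherited cleanly from the ordinary modular form case.
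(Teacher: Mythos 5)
Your proof is correct and follows exactly the same route as the paper: apply the modular transformation law to $-I \in SL_2(\mathbb{Z})$ to get $\varphi(\tau,-z) = (-1)^k\varphi(\tau,z)$, then compare Fourier coefficients. Your write-up is if anything slightly more careful than the paper's, since you make the coefficient comparison and the nonvanishing hypothesis in the converse direction explicit.
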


\begin{proof}
This follows directly from the modularity property (\ref{eqn:JacFormModul}) of Jacobi forms.  Using $-1 \in SL_{2}(\mathbb{Z})$, the Jacobi form transforms as
\begin{equation}
\varphi(\tau, -z) = (-1)^{k} \varphi(\tau, z).
\end{equation}
For even weight $k$, this happens if and only if we have the symmetry $c(n,l) = c(n, -l)$ of the Fourier coefficients for fixed $n$.  
\end{proof}
\noindent In practice, this lemma implies that upon Fourier expanding a Jacobi form of even weight, the coefficient of any fixed power of $q$ will be palindromic in $y$.  The following theorem establishes further critical symmetries of the Fourier coefficients.

\begin{thm}\label{thm:wJacfrmm}
Let $\varphi$ be a Jacobi form of weight $k$ and index $m$ with Fourier coefficients $c(n,l)$.  The coefficients depend only on the quantity $\Delta = 4nm-l^{2}$, and $l \in \mathbb{Z}/2m\mathbb{Z}$.  That is to say,
\begin{equation}
c(n,l) = c( \Delta, l), \,\,\,\,\,\,\,\, l \in \mathbb{Z}/2m \mathbb{Z}.
\end{equation}
In addition, for index $m=1$ the coefficients depend only on $\Delta$, and the weight of $\varphi$ is even.  In such a case, we will write $c(n,l) = c(\Delta)$.  
\end{thm}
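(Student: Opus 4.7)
The plan is to extract everything from the elliptic transformation law (\ref{eqn:JacFormElll}), since the modular property has essentially already been used in Lemma \ref{lemmy:evenweight}. First I would substitute the Fourier expansion (\ref{eqn:JACfrmFEXP}) into (\ref{eqn:JacFormElll}) with $\mu = 0$ and arbitrary $\lambda \in \mathbb{Z}$, and match coefficients of $q^{N} y^{L}$ on both sides. The exponential prefactor on the right shifts powers of $q$ by $-m\lambda^{2}$ and powers of $y$ by $-2m\lambda$, while the translation in $z$ on the left shifts powers of $q$ by $l\lambda$. Equating coefficients should yield the identity
\begin{equation}
c(n, l) = c\big(n + l\lambda + m\lambda^{2},\, l + 2m\lambda\big) \qquad (\lambda \in \mathbb{Z}).
\end{equation}
The periodicity in $\mu$ gives no new information, as it is already built into the existence of the Fourier expansion.

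The second step is to notice that under the shift $(n, l) \mapsto (n + l\lambda + m\lambda^{2},\, l + 2m\lambda)$, a direct algebraic check shows that the discriminant $\Delta = 4nm - l^{2}$ is invariant, while $l$ moves through a single residue class modulo $2m$. It follows that $c(n, l)$ depends only on $\Delta$ and on $l \bmod 2m$, proving the first assertion.

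For the refinement at index $m = 1$, the key observation is that the parity of $l$ is already forced by $\Delta \bmod 4$: if $l$ is even then $\Delta \equiv 0 \pmod{4}$, while if $l$ is odd then $\Delta \equiv -1 \pmod{4}$. Therefore knowledge of $\Delta$ alone determines $l \bmod 2$, so the dependence on $l \bmod 2m = l \bmod 2$ is redundant and $c(n,l) = c(\Delta)$. Finally, specializing the shift identity to $\lambda = -l$ (which is only possible when $m = 1$, since otherwise $\lambda$ must be an integer making $l + 2m\lambda$ land in the correct class) produces $c(n, l) = c(n, -l)$ directly, and combining this with the general relation $c(n, l) = (-1)^{k} c(n, -l)$ coming from the $-I \in SL_{2}(\mathbb{Z})$ calculation in Lemma \ref{lemmy:evenweight} forces $(-1)^{k} = 1$ on any nonzero $\varphi$, so the weight must be even.

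The main obstacle is really bookkeeping: making sure the index reparameterization when matching Fourier coefficients is carried out cleanly, and verifying carefully that the algebraic invariance of $\Delta$ is exact. There is no deep analytic ingredient — once the shift identity is in hand, everything else is elementary number theory on $(n, l)$. The most subtle point is the even-weight conclusion at $m = 1$, where one must remember to exclude the trivial case $\varphi \equiv 0$ before invoking Lemma \ref{lemmy:evenweight}.
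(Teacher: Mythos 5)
Your proposal is correct and follows essentially the same route as the paper's proof: derive the shift identity $c(n,l)=c(n+l\lambda+m\lambda^{2},\,l+2m\lambda)$ from the ellipticity law with $\mu=0$, observe that $\Delta$ is invariant and $l$ moves in its class mod $2m$, use the parity of $\Delta$ to eliminate the residual $l\bmod 2$ dependence when $m=1$, and conclude even weight via Lemma \ref{lemmy:evenweight}. Your extra care in excluding $\varphi\equiv 0$ before invoking that lemma is a minor refinement the paper omits, but it does not change the argument.
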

 
\begin{proof}
The first part of the theorem follows from the ellipticity property (\ref{eqn:JacFormElll}) of Jacobi forms.  Choosing $\mu=0$, we have
\begin{equation}
\setlength{\jot}{12pt}
\begin{split}
\varphi(\tau, z) = \sum_{n', l' \in \mathbb{Z}} c(n',l') q^{n'} y^{l'} & = e^{2 \pi i m(\lambda^{2} \tau + 2 \lambda z)} \varphi(\tau, z + \lambda \tau) \\
& = q^{m \lambda^{2}} y^{2 m \lambda} \sum_{n, l \in \mathbb{Z}} c(n, l) q^{n + l \lambda} y^{l}.
\end{split}
\end{equation}
By comparing terms, we see that $c(n', l') = c(n,l)$ if and only if
\begin{equation}
\setlength{\jot}{12pt}
\begin{split}
& n' = n+ l \lambda + m \lambda^{2} \\
& l' = l + 2m \lambda.
\end{split}
\end{equation}
The second condition requires $l' \equiv l \Mod{2m}$, since $\lambda \in \mathbb{Z}$ is an arbitrary integer.  By a trivial computation one can see that $4mn'-l'^{2} = 4mn-l^{2}$.  This proves the first assertion.  

Turning to the second claim, we note that for index $m=1$, by the first part of the theorem the coefficients depend only on $\Delta = 4n-l^{2}$ and $l \in \mathbb{Z}/2\mathbb{Z}$.  However, the parity of the quantity $4n-l^{2}$ itself encodes the parity of $l$.  Therefore, the coefficients depend only on $\Delta$.  Finally, the dependence on $\Delta$ implies that $c(n,l) = c(n,-l)$.  By Lemma \ref{lemmy:evenweight} we conclude that the weight must be even.  
\end{proof}

In the same spirit as ordinary modular forms, placing particular restrictions on the Fourier coefficients allow us to refine the general definition of a Jacobi form into special types, in terms of its growth at infinity of the upper-half plane.  

\begin{defn}
Let $\varphi(\tau,z)$ be a Jacobi form of arbitrary weight and index with Fourier coefficients $c(n,l)$.  We say that
\begin{enumerate}
\item $\varphi$ is a weakly holomorphic Jacobi form if $c(n,l)=0$ unless $n \geq n_{0}$, for a non-positive integer $n_{0}$.  Let $\mathbb{J}^{!}_{k,m}$ denote the vector space of weakly holomorphic Jacobi forms of weight $k$ and index $m$.  

\item $\varphi$ is a weak Jacobi form if $c(n,l)=0$ unless $n \geq 0$.  Let $\mathbb{J}^{\text{w}}_{k,m}$ denote the vector space of weak Jacobi forms of weight $k$ and index $m$.  

\item $\varphi$ is a holomorphic Jacobi form if $c(n,l)=0$ unless $4nm \geq l^{2}$.  Let $\mathbb{J}_{k,m}$ denote the vector space of holomorphic Jacobi forms of weight $k$ and index $m$.  

\item $\varphi$ is a Jacobi cusp form if $c(n,l)=0$ unless $4nm > l^{2}$.  Let $\mathbb{J}^{0}_{k,m}$ denote the vector space of Jacobi cusp forms of weight $k$ and index $m$.  
\end{enumerate}
\end{defn}

\noindent It is straightforward from the definitions to see that every Jacobi cusp form is holomorphic, every holomorphic Jacobi form is weak, and every weak Jacobi form is weakly holomorphic.  

These various Jacobi forms can be at least partly characterized by their behavior at infinity of the upper-half plane $\mathfrak{H}$.  Because $q = e^{2 \pi i \tau}$, all non-constant terms in $q$ of a weak Jacobi form vanish at the point at infinity of $\mathfrak{H}$.  By the constraint defining Jacobi cusp forms, we must have $c(0,0)=0$, which forces them to vanish identically at infinity.  Finally, a weakly holomorphic Jacobi form diverges in a controlled way at infinity.  Notice that the conditions defining holomorphic Jacobi forms and Jacobi cusp forms do not merely constrain the behavior at the point at infinity of $\mathfrak{H}$; they also constrain the allowed powers of $y$ in a fashion depending on the power of $q$ as well as the index of the Jacobi form.

\subsection{Weak Jacobi Forms of Index One}

We have seen that Jacobi forms of index 0 are simply ordinary modular forms, so the next interesting case to consider is index 1.  Given $\varphi_{k,1} \in \mathbb{J}^{\text{w}}_{k,1}$, we proved in Lemma \ref{thm:wJacfrmm} that the weight $k$ must be even.  We can give two examples of weak Jacobi forms of even weight and index 1, and it will turn out that these are all we need.  To construct both examples we will use the classical Jacobi theta functions defined by 
\begin{equation}\label{eqn:defnnJAcThetaFuncc}
\begin{split}
& \theta_{1}(\tau, z) = - \sum_{n \in \mathbb{Z}} q^{\frac{1}{2}(n + \frac{1}{2})^{2}}(-y)^{n+\frac{1}{2}} \\
& \theta_{2}(\tau, z) = \sum_{n \in \mathbb{Z}} q^{\frac{1}{2}(n + \frac{1}{2})^{2}} y^{n + \frac{1}{2}} \\
& \theta_{3}(\tau, z) = \sum_{n \in \mathbb{Z}} q^{n^{2}/2} y^{n} \\
& \theta_{4}(\tau, z) = \sum_{n \in \mathbb{Z}} q^{n^{2}/2} (-y)^{n}.  
\end{split}
\end{equation}
These are not Jacobi forms precisely in the sense of Definition \ref{defn:Jacobifrmmmm}.  We will primarily focus on $\theta_{1}(\tau, z)$ which with a suitable extension of the definition, is a Jacobi form of weight $\frac{1}{2}$ and index $\frac{1}{2}$.  The elliptic transformation law is given for all $\lambda, \mu \in \mathbb{Z}$ by 
\begin{equation}\label{eqn:jactheetaELL}
 \theta_{1}(\tau, z + \lambda \tau + \mu) = (-1)^{\lambda + \mu} e^{- i \pi( \lambda^{2} \tau + 2 \lambda z)} \theta_{1}(\tau, z)
\end{equation}
which in comparison to (\ref{eqn:JacFormElll}) is nearly how one would na\"{i}vely expect a Jacobi form of index $\frac{1}{2}$ to transform.  We record the modular transformation laws for the two generators of $SL_{2}(\mathbb{Z})$
\begin{equation}\label{eqn:JaccthetaMODDDD}
\theta_{1}(\tau + 1, z) = e^{\frac{i \pi}{4}} \theta_{1}(\tau, z), \,\,\,\,\,\,\,\,\,\,\,\,\,\,\,\, \theta_{1} \big( -\frac{1}{\tau}, \frac{z}{\tau}\big) = -i \sqrt{\frac{\tau}{i}} e^{i \pi z^{2}/\tau} \theta_{1}(\tau, z).
\end{equation} 
It turns out that $\theta_{1}(\tau, z)$ can be expressed as the following infinite product
\begin{equation}\label{eqn:ClJacFormTheta1}
\theta_{1}(\tau, z) = - i q^{\frac{1}{8}} (y^{\frac{1}{2}}-y^{-\frac{1}{2}})\prod_{n=1}^{\infty} (1-q^{n})(1-yq^{n})(1-y^{-1}q^{n}).
\end{equation}
To verify this beginning with (\ref{eqn:defnnJAcThetaFuncc}) is a straightforward computation using the \emph{Jacobi triple product}
\begin{equation}\label{eqn:JACTRIPPRODD}
\sum_{n \in \mathbb{Z}} q^{n^{2}/2}y^{n}  =  \prod_{m=1}^{\infty} (1-q^{m})(1+q^{m -\frac{1}{2}}y)(1+q^{m - \frac{1}{2}} y^{-1}).
\end{equation}

Clearly $\theta_{1}(\tau, z)$ is not itself a weak Jacobi form of index 1, but we can use it to construct one.  Recalling the Dedekind eta function $\eta(\tau)$ given in (\ref{eqn:JacobiEtaa}), we define
\begin{equation}
\Theta(\tau, z) \coloneqq i \frac{\theta_{1}(\tau, z)}{\eta(\tau)^{3}}.  
\end{equation} 
From the transformation laws of $\theta_{1}(\tau, z)$ and $\eta(\tau)$, one can show directly that the square $\Theta(\tau, z)^{2}$ is a weak Jacobi form of weight -2 and index 1.  As such, this form is often called $\varphi_{-2, 1}(\tau, z)$, but we will stick to the notation $\Theta(\tau, z)^{2}$.  By (\ref{eqn:JacobiEtaa}) and (\ref{eqn:ClJacFormTheta1}) we also have an infinite product formula for $\Theta(\tau, z)^{2}$
\begin{equation} \label{eqn:varThetaa}
\Theta(\tau, z)^{2} = y^{-1}(1-y)^{2} \prod_{n = 1}^{\infty} \frac{(1-yq^{n})^{2}(1-y^{-1}q^{n})^{2}}{(1-q^{n})^{4}}.
\end{equation}
Taking the Fourier expansion, the first few terms in powers of $q$ are
\begin{equation}
\Theta(\tau, z)^{2} = \frac{(y-1)^{2}}{y} - \frac{2(y-1)^{4}}{y^{2}} q + \frac{(y-1)^{4} (y^{2} -8y + 1)}{y^{3}}q^{2} + \cdots
\end{equation}
One can observe that the coefficient of a fixed power of $q$ is a palindromic polynomial in $y$.  In addition, one sees that the coefficients $c(n,l)= c(\Delta)$ indeed depend only on the value of $\Delta = 4n - l^{2}$ which in this case, satisfies $\Delta \geq -1$.  Finally, notice that $\Theta(\tau, z)^{2}$ is \emph{not} a holomorphic Jacobi form as there is a non-zero coefficient $c(-1)$ corresponding to $\Delta =-1$.  

It is also evident from (\ref{eqn:varThetaa}) that $\Theta(\tau, z)^{2}$ vanishes for $y=1$ or equivalently, $z=0$.  This is indeed evident from the following Taylor expansion in the variable $\lambda = 2 \pi z$
\begin{equation}\label{eqn:thetasqEXPFORM}
\Theta(\tau, z)^{2} = -\lambda^{2} \text{exp} \bigg( \sum_{g=1}^{\infty} (-1)^{g} \frac{B_{2g}}{g (2g)!} E_{2g}(\tau) \lambda^{2g}\bigg).
\end{equation}
It turns out that $\Theta(\tau, z)^{-2}$ appears in certain applications in enumerative geometry.  This is a meromorphic Jacobi form of weight 2 and index -1.  By taking the reciprocal of (\ref{eqn:thetasqEXPFORM}), we have an expansion of the form
\begin{equation}
\frac{1}{\Theta(\tau, z)^{2}} = \sum_{g=0}^{\infty} \lambda^{2g-2} \mathcal{P}_{g}(\tau) = -\frac{1}{\lambda^{2}} \text{exp} \bigg( \sum_{g=1}^{\infty} (-1)^{g+1} \frac{B_{2g}}{g (2g)!} E_{2g}(\tau) \lambda^{2g}\bigg),
\end{equation} 
where $\mathcal{P}_{g}(\tau)$ is evidently a quasi-modular form of weight $g$.  That is, it is a weighted-homogeneous polynomial of degree $g$ in $E_{2}, E_{4}, E_{6}$.  This function arises in the enumerative geometry of the trivial K3 fibration over an elliptic curve as well as Calabi-Yau threefolds fibered in K3 surfaces over $\mathbb{P}^{1}$.

In terms of the additional classical Jacobi theta functions (\ref{eqn:defnnJAcThetaFuncc}), we can also define the weak Jacobi form $\varphi_{0,1}$ of weight 0 and index 1 by
\begin{equation}\label{eqn:phi01defn}
\varphi_{0,1}(\tau, z) = 4 \bigg( \frac{\theta_{2}(\tau, z)^{2}}{\theta_{2}(\tau)^{2}} +  \frac{\theta_{3}(\tau, z)^{2}}{\theta_{3}(\tau)^{2}} + \frac{\theta_{4}(\tau, z)^{2}}{\theta_{4}(\tau)^{2}} \bigg)
\end{equation}
where $\theta_{i}(\tau) \coloneqq \theta_{i}(\tau, 0)$.  Taking the Fourier expansion, we can record some of the low-order terms in $q$
\begin{equation}\label{eqn:phi01FEXP}
\varphi_{0,1}(\tau, z) = \frac{y^{2} + 10y +1}{y} + \frac{2(y-1)^{2}(5y^{2}-22y+5)}{y^{2}}q + \cdots
\end{equation}
Just as in the case of $\Theta^{2}$, one observes that for a fixed power of $q$, the coefficients are palindromic polynomials in $y$, and that the Fourier coefficients $c(n,l) = c(\Delta)$ depend only on the value of $\Delta$.  

Up to multiplication by a scalar, $\Theta^{2}$ and $\varphi_{0,1}$ are the unique weak Jacobi forms of index 1 and weight $-2$ and $0$, respectively.  But in fact, the following theorem makes an even stronger statement.  A proof can be found in \cite{eichler_theory_2013}.  

\begin{thm}[\bfseries Structure Theorem]
The ring of weak Jacobi forms $\mathbb{J}^{\text{w}}_{k,m}$ of even weight $k$ and index $m$ is generated by $\Theta^{2}$ and $\varphi_{0,1}$ as a module over the ring $M_{*}(\Gamma_{1})$ of modular forms.  That is to say, we have
\begin{equation}
\mathbb{J}^{\text{w}}_{k,m} = \bigoplus_{j=0}^{m} M_{k+2j}(\Gamma_{1}) (\Theta^{2})^{j} \varphi_{0,1}^{m-j}.
\end{equation}
\end{thm}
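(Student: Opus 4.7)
The plan is to proceed by induction on the index $m \geq 0$. The base case $m=0$ is immediate: the ellipticity law (\ref{eqn:JacFormElll}) with $m=0$ forces $\varphi$ to be independent of $z$, so (\ref{eqn:JacFormModul}) reduces to ordinary modularity in $\tau$ and $\mathbb{J}^{\text{w}}_{k,0} = M_{k}(\Gamma_{1})$, matching the single $j=0$ summand on the right-hand side.

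For the inductive step $m-1 \Rightarrow m$, given $\varphi \in \mathbb{J}^{\text{w}}_{k,m}$ of even weight, I would first extract a canonical modular form from the restriction to $z=0$. Setting $z=0$ in (\ref{eqn:JacFormModul}) shows $\tau \mapsto \varphi(\tau,0)$ lies in $M_{k}(\Gamma_{1})$; from (\ref{eqn:phi01defn}) and (\ref{eqn:phi01FEXP}) one has $\varphi_{0,1}(\tau,0) = 12$, whereas $\Theta(\tau,0)^{2} = 0$ by (\ref{eqn:varThetaa}). Defining $f_{0}(\tau) \defeq 12^{-m}\varphi(\tau,0) \in M_{k}(\Gamma_{1})$, the difference $\psi \defeq \varphi - f_{0}\,\varphi_{0,1}^{m}$ lies in $\mathbb{J}^{\text{w}}_{k,m}$ and vanishes identically along $z=0$. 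Since $k$ is even, Lemma \ref{lemmy:evenweight} gives $\psi(\tau,-z) = \psi(\tau,z)$, so this vanishing is automatically of even order, hence at least two.

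The crux of the argument, and the step I expect to be the main obstacle, is the divisibility claim: any weak Jacobi form $\psi$ of even weight $k$ and index $m$ vanishing to order at least two along $z=0$ factors as $\psi = \Theta^{2}\cdot\psi'$ for a unique $\psi' \in \mathbb{J}^{\text{w}}_{k+2,m-1}$. The ellipticity (\ref{eqn:JacFormElll}) promotes the zero at $z=0$ to a zero at every lattice translate $\lambda\tau+\mu$, and the transformation laws (\ref{eqn:jactheetaELL}) and (\ref{eqn:JaccthetaMODDDD}) show that $\Theta^{2}$ has a double zero at each lattice point and no others. Hence the ratio $\psi/\Theta^{2}$ is holomorphic on $\mathfrak{H}\times\mathbb{C}$, and a direct computation with (\ref{eqn:jactheetaELL}) and (\ref{eqn:JaccthetaMODDDD}) shows that its automorphy factors are precisely those of weight $k+2$ and index $m-1$. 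The weakness condition is inherited by comparing Fourier expansions: the leading factor $y^{-1}(1-y)^{2}$ of $\Theta^{2}$ in (\ref{eqn:varThetaa}) introduces no negative powers of $q$, so $\psi/\Theta^{2}$ still satisfies $c(n,l)=0$ for $n<0$.

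Applying the inductive hypothesis to $\psi/\Theta^{2} \in \mathbb{J}^{\text{w}}_{k+2,m-1}$ then yields the decomposition
\[ \varphi = f_{0}\,\varphi_{0,1}^{m} + \Theta^{2}\sum_{j=1}^{m} f_{j}\,(\Theta^{2})^{j-1}\varphi_{0,1}^{m-j}, \qquad f_{j} \in M_{k+2j}(\Gamma_{1}), \]
which establishes surjectivity. Directness of the sum follows because $f_{0}$ is uniquely determined by the restriction $\varphi(\tau,0)$ and the inductive uniqueness handles $\psi/\Theta^{2}$; equivalently, any nontrivial $M_{*}(\Gamma_{1})$-linear relation among the $(\Theta^{2})^{j}\varphi_{0,1}^{m-j}$ could be divided by the largest common power of $\Theta^{2}$ and then restricted to $z=0$, producing a nontrivial relation in $M_{*}(\Gamma_{1})$ among powers of $\varphi_{0,1}(\tau,0)=12$, a contradiction.
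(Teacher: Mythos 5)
Your proof is correct. Note that the paper itself does not prove this theorem --- it only cites Eichler--Zagier --- so there is no in-text argument to compare against; what you have written is essentially the standard induction-on-index proof from that reference and the subsequent literature on weak Jacobi forms. The two pillars are exactly right: the normalization $\varphi_{0,1}(\tau,0)=12$, $\Theta(\tau,0)^{2}=0$ lets you peel off the $j=0$ component, and evenness of the weight (via Lemma \ref{lemmy:evenweight}) upgrades the vanishing of $\psi$ along $z=0$ to order two, matching the double zero of $\Theta^{2}$ at each lattice point so that $\psi/\Theta^{2}$ is holomorphic of weight $k+2$ and index $m-1$. The one step you treat a bit briskly is the weakness of the quotient: the clean justification is that for a weak Jacobi form of index $m$ the coefficient of each $q^{n}$ is a Laurent \emph{polynomial} in $y$ (since $c(n,l)\neq 0$ forces $l^{2}\leq 4nm+m^{2}$ by Theorem \ref{thm:wJacfrmm} together with weakness), and each such polynomial for $\psi$ is divisible by $(1-y)^{2}$; dividing the $q$-expansion recursively by $\Theta^{2}$, whose constant term is $y^{-1}(1-y)^{2}$, then produces Laurent polynomials at every order and in particular no negative powers of $q$. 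Your uniqueness argument --- divide a putative relation by the largest common power of $\Theta^{2}$ and restrict to $z=0$ --- is also correct and is the standard way to see the sum is direct.
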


The weak Jacobi forms $\Theta^{2}$ and $\varphi_{0,1}$ combine to produce a well-known function.  The Weierstrass $\wp$-function is a meromorphic Jacobi form of weight 2 and index 0, which is expressed in the $q$ and $y$ variables as
\begin{equation}\label{eqn:WPfunncFexppp}
\wp(\tau, z) = \frac{1}{12} + \frac{y}{(1-y)^{2}} + \sum_{k,r \geq 1} k(y^{k}-2+y^{-k})q^{rk}.
\end{equation}

\begin{rmk}\label{rmk:WPconvrmk}
Beware that the Weierstrass $\wp$-function as it is often defined is $(2 \pi i)^{2}$ times our definition.  Our convention ensures that $\wp(\tau, z)$ has rational Fourier coefficients in $q$ and $y$, but one must exercise caution when applying certain formulas.  
\end{rmk}

In terms of the polylogarithm (\ref{eqn:polylogrthm}) as well as the Eisenstein series $E_{2}(\tau)$, it is straightforward to show
\begin{equation}\label{eqn:WPFUNCEISPOLY}
\wp(\tau, z) = \frac{1}{12}E_{2}(\tau) + \text{Li}_{-1}(y) + \sum_{n=1}^{\infty} \text{Li}_{-1}(q^{n}y) + \text{Li}_{-1}(q^{n}y^{-1}).  
\end{equation}
A meromorphic Jacobi form is a ratio of Jacobi forms, and indeed we can express the Weierstrass $\wp$-function as
\begin{equation} \label{eqn:WPFuncK3}
\wp(\tau , z) = \frac{1}{12} \frac{\varphi_{0,1}(\tau, z)}{\Theta^{2}(\tau, z)}.
\end{equation}

\subsubsection{Jacobi Cusp Forms of Index One}

We saw above that the weak Jacobi forms $\Theta^{2}$ and $\varphi_{0,1}$ are indeed, not holomorphic.  Nevertheless, we can use the modular discriminant $\Delta(\tau) = \eta^{24}(\tau)$ defined in (\ref{eqn:JacobiEtaa}) to construct forms which are.  We define holomorphic Jacobi forms of index 1 and weights 10 and 12, respectively by
\begin{equation}\label{eqn:JacCuspForms}
\varphi_{10,1}(\tau, z) = \Delta(\tau) \cdot \Theta^{2}(\tau, z), \,\,\,\,\,\,\,\,\,\,\,\,\,\,\,\,\,\, \varphi_{12,1}(\tau, z) = \Delta(\tau) \cdot \varphi_{0,1}(\tau, z).
\end{equation}
In fact, $\Delta(\tau)$ being a cusp form will force $\varphi_{10,1}$ and $\varphi_{12,1}$ to be Jacobi cusp forms of index 1.

\subsection{The Elliptic Genus of Calabi-Yau Manifolds}  

Weak Jacobi forms of weight zero arise in practice as elliptic genera of Calabi-Yau manifolds.  Recall from the previous chapter that we can interpret the elliptic genus $\text{Ell}_{q,y}(X)$ as a holomorphic function on $\mathfrak{H} \times \mathbb{C}$, and we can now ask how it transforms under the Jacobi group $SL_{2}(\mathbb{Z}) \ltimes \mathbb{Z}^{2}$.  A proof of the following can be found in \cite{gritsenko_elliptic_1999}.  

\begin{thm}\label{thm:EllGenCYMAN}
For a smooth compact Calabi-Yau manifold $X$ of even complex dimension $d$, the elliptic genus $\text{Ell}_{q,y}(X)$ is a weak Jacobi form of weight 0 and index $d/2$.  
\end{thm}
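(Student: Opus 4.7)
The plan is to express the elliptic genus as an integral of a product involving the Jacobi theta function $\theta_1(\tau,z)$ and then deduce the Jacobi form transformation laws from those of $\theta_1$, with the Calabi-Yau condition $c_1(X)=0$ providing the crucial cancellation of anomalous linear phases. First, using the product formula (\ref{eqn:ClJacFormTheta1}) for $\theta_1$, a direct factor-by-factor comparison with the explicit formula (\ref{eqn:finalELLGEN}) for $\text{Ell}_{q,y}(X)$ yields the identity
\begin{equation*}
\text{Ell}_{q,y}(X) = \int_X \prod_{j=1}^d F(x_j;\tau,z), \qquad F(x;\tau,z) \coloneqq \frac{x\,\theta_1(\tau,\, z - x/(2\pi i))}{\theta_1(\tau,\, -x/(2\pi i))},
\end{equation*}
where the $x_j$ are the formal Chern roots of $T_X$ and $F$ is interpreted as a Taylor series in $x$ near $x=0$ (the simple zero of $\theta_1$ there is cancelled by the explicit factor $x$ in the numerator). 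The prefactor $y^{-d/2}$ in the definition is exactly matched by a compensating factor $y^{1/2}$ picked up from each of the $d$ factors in the product.

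Weakness is immediate from the definition (\ref{eqn:formbundleval}): $\mathbb{E}_{q,y}$ is a $q$-power series with non-negative powers whose coefficients are holomorphic vector bundles, so by (\ref{eqn:topindEllGenCoeff}) the Fourier coefficients $c(n,l)$ of (\ref{eqn:FourDecompELLGEN}) vanish for $n<0$. For the ellipticity (\ref{eqn:JacFormElll}) I apply (\ref{eqn:jactheetaELL}) to each factor $F(x_j;\tau,z+\lambda\tau+\mu)$ with $u_j = x_j/(2\pi i)$: the sign $(-1)^{d(\lambda+\mu)}$ is trivial because $d$ is even; the phases $e^{2\pi i\lambda u_j}$ collect into $\exp(\lambda \sum_j x_j) = \exp(\lambda\cdot c_1(X)) = 1$ by the Calabi-Yau hypothesis; and the residual $e^{-i\pi d(\lambda^2\tau+2\lambda z)}$ is precisely the index-$d/2$ automorphy factor.

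The technical core is modular invariance. For $\gamma = \left(\begin{smallmatrix}a&b\\c&\delta\end{smallmatrix}\right)\in SL_2(\mathbb{Z})$ and $(\tau',z')=(\gamma\tau,\, z/(c\tau+\delta))$, write $z'-u_j = (z-u_j(c\tau+\delta))/(c\tau+\delta)$ and apply (\ref{eqn:JaccthetaMODDDD}) to both numerator and denominator of $F(x_j;\tau',z')$. The eighth root $\epsilon(\gamma)$, the factor $(c\tau+\delta)^{1/2}$, and the quadratic-in-$u_j$ exponentials $e^{\pi ic u_j^2(c\tau+\delta)}$ all cancel between numerator and denominator, leaving
\begin{equation*}
F(x_j;\tau',z') = (c\tau+\delta)^{-1}\, e^{\pi i c z^2/(c\tau+\delta)}\, e^{-2\pi i c z\, u_j}\, F\!\left(x_j(c\tau+\delta);\,\tau,z\right).
\end{equation*}
Taking the product over $j$, the linear phase $\exp(-2\pi i c z \sum_j u_j)$ trivializes once more by Calabi-Yau, producing $(c\tau+\delta)^{-d}\, e^{\pi i c d z^2/(c\tau+\delta)}\, \prod_j F(x_j(c\tau+\delta);\tau,z)$. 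The last product is a symmetric formal series in the $x_j$, expressible as a polynomial in the Chern classes of $T_X$; integration over $X$ extracts its component of cohomological degree $2d$, and any such monomial of total degree $d$ in the $u_j$ scales as $(c\tau+\delta)^d$ under $u_j\mapsto u_j(c\tau+\delta)$. This exactly cancels the $(c\tau+\delta)^{-d}$ prefactor, giving $\text{Ell}_{q,y}(X)\big|_{\tau',z'} = e^{2\pi i(d/2) c z^2/(c\tau+\delta)}\,\text{Ell}_{q,y}(X)$, which is the required weight-$0$, index-$d/2$ modularity (\ref{eqn:JacFormModul}). The main obstacle will be the bookkeeping in the modular step: one must verify carefully that the $u_j^2$-anomalous phases really do cancel in the ratio before any integration is performed, and then confirm the $(c\tau+\delta)^{\pm d}$ factors match only because $X$ has complex dimension exactly $d$; the Calabi-Yau hypothesis is used twice, once in each of ellipticity and modularity, precisely to kill the linear-in-$u_j$ phases that would otherwise obstruct the Jacobi form property.
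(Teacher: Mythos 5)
Your proof is correct. The paper does not actually prove this theorem itself (it defers to Gritsenko \cite{gritsenko_elliptic_1999}), but your argument --- rewriting each Chern-root factor of (\ref{eqn:finalELLGEN}) as the theta quotient $x\,\theta_{1}(\tau, z - x/(2\pi i))/\theta_{1}(\tau, -x/(2\pi i))$ and letting $c_{1}(X)=0$ kill the linear anomalous phases in both the elliptic and modular transformations --- is precisely the standard proof found there, and it is the same technique the paper itself deploys for the equivariant $\mathbb{C}^{2}$ analogue in Proposition \ref{proppy:Phi0isWJFmatind}.
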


The elliptic genus is therefore a topological index which produces an automorphic form for all compact Calabi-Yau manifolds.  Though it contains no more data than merely all Chern numbers of $X$, the elliptic genus packages the information in an attractive way.  Because all Jacobi forms enjoy a Fourier expansion (\ref{eqn:JACfrmFEXP}), when $X$ is a compact Calabi-Yau manifold of even complex dimension, the Fourier expansion of the elliptic genus (\ref{eqn:FourDecompELLGEN}) agrees with that of a weak Jacobi form introduced above.    

Recalling the topological index interpretation (\ref{eqn:topindEllGenCoeff}) of the coefficients $c(n, l)$, on a compact Calabi-Yau manifold $X$, we can interpret $c(n, l)$ as the index of a Dirac operator twisted by $E_{n, l}$
\begin{equation}
c(n, l) = \int_{X} \text{ch}(E_{n, l}) \hat{A}(X)
\end{equation}
by noting that if $X$ is Calabi-Yau, then $X$ is a spin manifold, and the $\hat{A}(X)$-genus agrees with the Todd class.  Therefore, the elliptic genus of a compact Calabi-Yau manifold is a weak Jacobi form whose Fourier coefficients have an interpretation as twisted Dirac indices.  

\begin{Ex}[\bfseries Abelian Varieties]
The most immediate example of a Calabi-Yau manifold in each dimension is an abelian variety.  If $A$ is an abelian variety, using Atiyah-Bott localization we see that the elliptic genus $\text{Ell}_{q,y}(A)$ vanishes.  The reason of course, is that an abelian variety acts freely on itself.  Since the elliptic genus is given as an integral over $A$ of a particular class in cohomology, this localizes to the fixed point locus of the action, which is empty.  It follows that,
\begin{equation}
\text{Ell}_{q,y}(A) =0.
\end{equation}
\end{Ex}

\begin{Ex}[\bfseries K3 Surface]

If $X$ is a compact Calabi-Yau surface, then $\text{Ell}_{q,y}(X)$ is a weak Jacobi form of weight 0 and index 1.  Having already dismissed abelian surfaces, the remaining possibility is a K3 surface.  We know there is a unique weak Jacobi form of weight 0 and index 1 up to scale, which implies that $\text{Ell}_{q,y}(K3)$ is a multiple of $\varphi_{0,1}(\tau, z)$.  Because the elliptic genus is certainly a diffeomorphism invariant and all K3 surfaces are diffeomorphic, we are justified in writing $\text{Ell}_{q,y}(K3)$.  By considering the Euler characteristic, we must have $\text{Ell}_{q, 1}(K3)=24$, and from the low-order terms (\ref{eqn:phi01FEXP}) in the expansion of $\varphi_{0,1}(\tau, z)$, we can see that
\begin{equation} \label{eqn:EllgenK3}
\text{Ell}_{q,y}(K3) = 2 \varphi_{0,1}(\tau, z).
\end{equation}
Notice that the constant term in $q$ of $\text{Ell}_{q,y}(K3)$ is $2y + 20 + 2y^{1}$, which evidently contains all non-trivial entries in the Hodge diamond of a K3 surface, and is precisely $y^{-1}\chi_{-y}(K3)$.  

Finally, we note that as with all Jacobi forms of index one, the Fourier coefficients $c(n, l)$ only depend on the value of $\Delta = 4n -l^{2}$.  We will write the Fourier expansion as
\begin{equation}\label{eqn:K3EllGEnCoefff}
\text{Ell}_{q,y}(K3) = \sum_{n \geq 0, l \in \mathbb{Z}} c(4n- l^{2}) q^{n} y^{l}.  
\end{equation}
\end{Ex}

\subsection{Hecke Operators on Jacobi Forms} \label{sec:HeckeJacForm}

Hecke operators play a very large and important role in the theory of automorphic forms, which we obviously cannot do justice to here.  We will content ourselves to discussing some of the properties needed later in the thesis.  Of particular interest will be the role played by the Hecke operators in the Maass lift of Jacobi forms to Siegel modular forms.  Hecke operators (on holomorphic Jacobi forms) were originally introduced in \cite{eichler_theory_2013} with Borcherds \cite{borcherds_automorphic_1995} and Aoki \cite{aoki_notitle_2018} providing an important contribution in the weak case.  For our purposes, the exposition of \cite{kawai_string_2000} will also be helpful.  
  
\begin{defn}
Let $\varphi_{k, m} \in J^{w}_{k,m}$ be a weak Jacobi form of weight $k$ and index $m$.  The action of the Hecke operator $V_{N}$ on $\varphi_{k,m}$ is defined for all $N>0$ by
\begin{equation} \label{eqn:Heckedefn}
(\varphi_{k,m} | V_{N}) = N^{k-1} \sum_{\substack{ad =N \\ a>0}} \sum_{b=0}^{d-1} d^{-k} \varphi_{k,m}\bigg(\frac{a \tau + b}{d}, az\bigg).
\end{equation}  
\end{defn}
\noindent By setting $z=0$, we recover the definition of $V_{N}$ acting on the modular form $\varphi_{k,m}(\tau, 0)$.  From the Jacobi form transformation equations (\ref{eqn:JacFormModul}) and (\ref{eqn:JacFormElll}), one can show that 
\begin{equation} \label{eqn:Heckeindexch}
(\varphi_{k,m} | V_{N}) \in \mathbb{J}^{w}_{k, Nm}.
\end{equation} 

The following important lemma shows that the generating function of Hecke operators on weak Jacobi forms has a nice expression in terms of the polylogarithm defined in (\ref{eqn:polylogrthm}).

\begin{lemmy}\label{lemmy:genfunccheckoops}
Let $\varphi$ be a weak Jacobi form of weight $k$ with Fourier coefficents $c(n, l)$.  We then have
\begin{equation}\label{eqn: genfunccHeckkkeOpp}
\sum_{N=1}^{\infty} Q^{N} \big( \varphi \big| V_{N} \big) = \sum_{\substack{ m>0, n \geq 0, \\ l \in \mathbb{Z}}} c(nm, l) \text{Li}_{1-k}(Q^{m}q^{n}y^{l}).  
\end{equation}
\end{lemmy}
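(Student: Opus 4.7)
The plan is to prove the identity by a direct computation: expand $\varphi$ as a Fourier series, apply the definition of $V_N$, carry out the sum over $b$ using the standard orthogonality of additive characters on $\mathbb{Z}/d\mathbb{Z}$, and then rearrange the triple sum over $N$ and divisors so that the result matches the Taylor expansion of the polylogarithm.

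First I would substitute the Fourier expansion $\varphi(\tau,z) = \sum_{n',l'} c(n',l')\, q^{n'} y^{l'}$ (where $c(n',l')=0$ for $n'<0$ since $\varphi$ is weak) into the definition (\ref{eqn:Heckedefn}). For each pair $a,d$ with $ad=N$, the inner expression becomes
\begin{equation*}
\varphi\!\left(\tfrac{a\tau+b}{d}, az\right) = \sum_{n',l' \in \mathbb{Z}} c(n',l')\, e^{2\pi i n'b/d}\, q^{n'a/d}\, y^{l'a}.
\end{equation*}
The key observation is that $\sum_{b=0}^{d-1} e^{2\pi i n'b/d} = d \cdot \mathbf{1}[\, d \mid n'\,]$, so only Fourier modes with $d \mid n'$ survive. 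Writing $n' = dn$ (with $n \geq 0$) leaves
\begin{equation*}
\sum_{b=0}^{d-1}\varphi\!\left(\tfrac{a\tau+b}{d},az\right) = d \sum_{n \geq 0,\,l \in \mathbb{Z}} c(dn, l)\, q^{na}\, y^{la}.
\end{equation*}

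Next I would assemble $(\varphi|V_N)$ using the prefactor $N^{k-1}d^{-k}$; combined with the extra $d$ from the $b$-sum and the relation $N^{k-1}d^{1-k}=a^{k-1}$ (since $N=ad$), this collapses to
\begin{equation*}
(\varphi|V_N)(\tau,z) = \sum_{ad=N} a^{k-1} \sum_{n\geq 0,\,l\in\mathbb{Z}} c(dn,l)\, q^{na}\, y^{la}.
\end{equation*}
Multiplying by $Q^N$ and summing over $N \geq 1$ turns the constraint $N=ad$ into a free double sum over $a,d \geq 1$, giving
\begin{equation*}
\sum_{N=1}^{\infty} Q^N (\varphi|V_N) = \sum_{m \geq 1,\, a \geq 1} \sum_{n \geq 0,\, l \in \mathbb{Z}} c(mn,l)\, a^{k-1}\, Q^{am} q^{an} y^{al},
\end{equation*}
after renaming $d = m$.

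Finally I would recognize the $a$-sum via the definition $\mathrm{Li}_{1-k}(x) = \sum_{a \geq 1} a^{k-1} x^a$ of the polylogarithm (\ref{eqn:polylogrthm}), applied with $x = Q^m q^n y^l$, to obtain the right-hand side of (\ref{eqn: genfunccHeckkkeOpp}). The manipulation is entirely formal, but I should note two small points to check: the interchange of summations is legitimate because we work in the formal power series ring $\mathbb{C}\llbracket q, y^{\pm}, Q\rrbracket$ (and the weakness of $\varphi$ ensures $n \geq 0$ so no convergence issues arise in $q$), and the character-orthogonality identity is the only non-formal input. There is no real obstacle here; the identity is essentially bookkeeping, and the main thing to keep straight is the index juggling $N = ad \mapsto (m,a)$ and the cancellation $N^{k-1}d^{1-k} = a^{k-1}$ that produces exactly the polylogarithm weight.
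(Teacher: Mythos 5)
Your proof is correct and follows essentially the same route as the paper's: substitute the Fourier expansion into the definition of $V_{N}$, kill the $b$-sum by character orthogonality (the paper phrases it as a finite geometric series, which is the same computation), use $N^{k-1}d^{1-k}=a^{k-1}$, and recognize the inner $a$-sum as $\text{Li}_{1-k}$. The only cosmetic difference is that you make the formal-power-series justification for interchanging sums explicit, which the paper leaves implicit.
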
  

\begin{proof}
We of course begin by directly applying the definition (\ref{eqn:Heckedefn}) as well as the Fourier expansion of a weak Jacobi form
\begin{equation}
\begin{split}
\sum_{N=1}^{\infty} Q^{N} \big( \varphi \big| V_{N} \big) & = \sum_{N=1}^{\infty} Q^{N} N^{k-1} \sum_{\substack{ad = N \\ a>0}} \sum_{b=0}^{d-1} d^{-k} \varphi\bigg(\frac{a \tau + b}{d}, az\bigg) \\
& = \sum_{N=1}^{\infty} Q^{N} N^{k-1} \sum_{\substack{ad = N \\ a>0}} \sum_{b=0}^{d-1} d^{-k} \sum_{n \geq 0, l \in \mathbb{Z}} c(n, l) q^{na/d} e^{2 \pi i nb /d} y^{al}. 
\end{split}
\end{equation}
To proceed, we observe that the sum over $b$ is simply a finite geometric series with ratio $e^{2 \pi i n/d}$.  This sum vanishes unless $d|n$, in which case it equals $d$.  We therefore get
\begin{equation}
\begin{split}
\sum_{N=1}^{\infty} Q^{N} N^{k-1} & \sum_{\substack{ad = N \\ a>0}}d^{-k+1} \sum_{n \geq 0, l \in \mathbb{Z}} c(dn, l) q^{na} y^{al} \\
& = \sum_{d=1}^{\infty} \sum_{n \geq 0, l \in \mathbb{Z}} c(dn, l) \sum_{a=1}^{\infty} a^{k-1} \big(Q^{d} q^{n} y^{l} \big)^{a}
\end{split}
\end{equation}
where we have used in the final equality that $(\frac{N}{d})^{k-1} = a^{k-1}$.  We notice that up to relabeling indices, this is exactly the righthand side of (\ref{eqn: genfunccHeckkkeOpp}), completing the proof.  
\end{proof}

The Hecke operator $V_{0}$ is also important, though it is more subtle to define.  

\begin{defn}
Let $\varphi_{k,m} \in \mathbb{J}^{w}_{k,m}$ be a weak Jacobi form of even weight $k \in 2 \mathbb{Z}$ with Fourier coefficients $c(n, l)$.  Then
\begin{equation}\label{T0WEAKHOL}
(\varphi_{k,m}| V_{0}) = c(0,0) \epsilon(k) + \sum_{\substack{ n \geq 0, l \in \mathbb{Z} \\ l >0 \, \text{if} \, n=0}} c(0,l) \text{Li}_{1-k} (q^{n} y^{l})
\end{equation}
where
\begin{equation}\label{eqn:epsilonkdef}
\epsilon(k) = 
\begin{cases}
\begin{aligned}
& \frac{1}{2}\zeta(1-k), & k<0 \\[1ex]
& 0, & k=0 \\[1ex]
& \frac{1}{2}\zeta(1-k) = -\frac{B_{k}}{2k}, & k>0.
\end{aligned}
\end{cases}
\end{equation} 
\end{defn}

\noindent If $\varphi_{k,m}$ is a holomorphic Jacobi form, then $c(0,l)=0$ for all $l \neq 0$, and (\ref{T0WEAKHOL}) therefore specializes to
\begin{equation}\label{eqn:T0HOL}
(\varphi_{k,m}|V_{0}) = - c(0,0) \frac{B_{k}}{2k} E_{k}(\tau).
\end{equation}

The definition in the holomorphic case appeared in \cite{eichler_theory_2013} while Borcherds \cite{borcherds_automorphic_1995} gave the definition of $V_{0}$ more generally in the weakly holomorphic case for positive weight.  For $k<0$, this definition can be found in \cite{kawai_string_2000}, where they also include the divergent term $\frac{1}{2}\zeta(1)$ for $k=0$.  We choose to omit this term.    

From (\ref{eqn:T0HOL}) one might be worried that the quasi-modular form $E_{2}(\tau)$ makes an appearance in $(\varphi_{2,m}|V_{0})$ for $\varphi_{2,m}$ a holomorphic Jacobi form of weight two.  However, since $\varphi_{2,m}(\tau, 0)$ must be a weight two holomorphic modular form, we must have $\varphi_{2,m}(\tau, 0) = 0$, which implies that $c(0,0)=0$.   

For $\varphi_{k,m} \in J^{w}_{k,m}$ with \emph{positive} even weight $k$, it was the idea of Borcherds \cite{borcherds_automorphic_1995} to use derivatives of the Weierstrass $\wp$-function to express $(\varphi_{k,m}|V_{0})$ as a meromorphic Jacobi form.  We define 
\begin{equation}
\wp^{(r)}(\tau, z) \coloneqq \frac{1}{(2 \pi i)^{r}} \frac{ \partial^{r}}{\partial z^{r}} \wp(\tau, z)
\end{equation}
and we remind the reader of the warning in Remark \ref{rmk:WPconvrmk}.  It is straightforward to see from (\ref{eqn:WPFUNCEISPOLY}) that for all even integers $k \geq 2$
\begin{equation}\label{eqn:WPderrr}
\wp^{(k-2)}(\tau, z) = \frac{\delta_{2,k}}{12} E_{2}(\tau) + \bigg(\text{Li}_{1-k}(y) + \sum_{n=1}^{\infty} \text{Li}_{1-k}(q^{n}y) + \text{Li}_{1-k}(q^{n}y^{-1})\bigg).
\end{equation}
Consistent with the above expression, it is clear from (\ref{eqn:WPFUNCEISPOLY}) that the Eisenstein series $E_{2}(\tau)$ only appears for $k=2$.  The derivatives $\wp^{(k-2)}(\tau, z)$ are meromorphic Jacobi forms of weight $k$ and index zero.  

Notice that by (\ref{eqn:T0HOL}) if $\varphi_{k,m}$ is holomorphic, then $(\varphi_{k,m}|V_{0})$ is a holomorphic Jacobi form of index zero, i.e. a modular form.  This is consistent with the behavior in (\ref{eqn:Heckeindexch}).  If $\varphi_{k,m}$ is not holomorphic, one might still expect $(\varphi_{k,m}|V_{0})$ to have index zero.  By the following proposition, this is indeed the case.  
\begin{proppy}
If $\varphi_{k,m}$ is a weak Jacobi form of even weight $k>0$, then $(\varphi_{k,m}| V_{0})$ is a meromorphic Jacobi form of weight $k$ and index 0.  More specifically, we have
\begin{equation}\label{eqn:WPV0wholjacfrm}
(\varphi_{k,m}|V_{0}) = - c(0,0) \frac{B_{k}}{2k} E_{k}(\tau) + \sum_{l > 0} c(0, l) \bigg( \wp^{(k-2)}(\tau, l z) - \frac{\delta_{2,k}}{12} E_{2}(\tau)\bigg).  
\end{equation}
\end{proppy}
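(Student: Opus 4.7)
The plan is to directly unfold the defining series (\ref{T0WEAKHOL}) of $(\varphi_{k,m}|V_{0})$ as a double sum of polylogarithms and then repackage the $l=0$ and $l\neq 0$ contributions separately, using the polylogarithm formulas (\ref{eqn:EISserexpprPOLY}) for the Eisenstein series and (\ref{eqn:WPderrr}) for derivatives of the Weierstrass $\wp$-function.

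First I would isolate the $l=0$ part of (\ref{T0WEAKHOL}), namely $c(0,0)\epsilon(k) + c(0,0)\sum_{n\geq 1}\text{Li}_{1-k}(q^{n})$. Since $k>0$ is even, $\epsilon(k) = -B_{k}/(2k)$, and comparing with (\ref{eqn:EISserexpprPOLY}) immediately identifies this contribution as $-c(0,0)\tfrac{B_{k}}{2k}E_{k}(\tau)$, the first term on the right-hand side of (\ref{eqn:WPV0wholjacfrm}). Next I would handle the $l\neq 0$ part. Evenness of the weight and Lemma \ref{lemmy:evenweight} give the palindrome $c(0,l)=c(0,-l)$, which lets me fold the sum over $l\in\mathbb{Z}\setminus\{0\}$ (with the special $n=0$ condition restricting to $l>0$) into a sum over $l>0$ of the three-term bracket $\text{Li}_{1-k}(y^{l}) + \sum_{n\geq 1}\bigl[\text{Li}_{1-k}(q^{n}y^{l}) + \text{Li}_{1-k}(q^{n}y^{-l})\bigr]$. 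Applying (\ref{eqn:WPderrr}) with $z$ replaced by $lz$, this bracket is exactly $\wp^{(k-2)}(\tau,lz) - \tfrac{\delta_{2,k}}{12}E_{2}(\tau)$, yielding the second term on the right-hand side of (\ref{eqn:WPV0wholjacfrm}).

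It remains to verify that the resulting expression is a meromorphic Jacobi form of weight $k$ and index $0$. For $k\geq 4$ this is immediate: $E_{k}(\tau) \in M_{k}(\Gamma_{1})$ is a weight $k$ modular form (hence an index-$0$ Jacobi form), while $\wp(\tau,z)$ is a meromorphic Jacobi form of weight $2$ and index $0$, so differentiating $k-2$ times in $z$ raises the weight to $k$ and preserves index $0$; the rescaling $z\mapsto lz$ only multiplies the index by $l^{2}\cdot 0=0$, so each $\wp^{(k-2)}(\tau,lz)$ has the required type. The one delicate case is $k=2$, where the individual summands involve the quasi-modular $E_{2}(\tau)$. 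Here I would exploit the fact that $M_{2}(\Gamma_{1})=0$ forces $\varphi_{2,m}(\tau,0)\equiv 0$, so the $q^{0}$-coefficient identity $c(0,0) + 2\sum_{l>0}c(0,l)=0$ holds; a brief computation then shows that the $E_{2}$ appearing from the $l=0$ Eisenstein piece and the $E_{2}$ appearing from $\sum_{l>0}c(0,l)\cdot\tfrac{1}{12}E_{2}$ cancel exactly, leaving the genuine index-$0$ Jacobi form $\sum_{l>0}c(0,l)\,\wp(\tau,lz)$.

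The main obstacle is bookkeeping rather than conceptual: one must be careful about the $k=2$ case, verifying the Fourier-coefficient identity that makes the two $E_{2}$ contributions cancel, since otherwise the statement would fail on quasi-modular grounds. Once this cancellation is in place, the identification of each piece with $E_{k}$ and $\wp^{(k-2)}$ reduces the automorphy claim to standard facts, and the rest is a direct reindexing of polylogarithms.
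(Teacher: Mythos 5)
Your proposal is correct and follows essentially the same route as the paper's own proof: split off the $l=0$ contribution of (\ref{T0WEAKHOL}) and identify it with $-c(0,0)\tfrac{B_{k}}{2k}E_{k}(\tau)$ via (\ref{eqn:EISserexpprPOLY}), then use $c(0,l)=c(0,-l)$ to fold the remaining polylogarithms into $\wp^{(k-2)}(\tau,lz)-\tfrac{\delta_{2,k}}{12}E_{2}(\tau)$ via (\ref{eqn:WPderrr}). Your additional treatment of the $k=2$ cancellation matches the remark the paper places immediately before its proof, so there is nothing substantively different here.
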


\noindent Before beginning the proof, we make an important remark.  For $k=2$, because $\sum_{l \in \mathbb{Z}} c(0,l) =0$ for a weight two Jacobi form, by Lemma \ref{lemmy:evenweight} we know
\[\sum_{l >0} c(0,l) = -\frac{1}{2}c(0,0).\]
It follows that the terms proportional to $E_{2}(\tau)$ in (\ref{eqn:WPV0wholjacfrm}) cancel.  The full expression for $k=2$ is simply
\begin{equation}\label{eqn:weight2V0expr}
(\varphi_{2,m}|V_{0}) = \sum_{ l >0} c(0, l) \wp(\tau, l z).  
\end{equation}

\begin{proof}
We begin by rewriting the sum (\ref{T0WEAKHOL}) defining $(\varphi_{k,m}|V_{0})$ as follows
\begin{equation}\label{eqm:intermedddeqn}
\setlength{\jot}{8pt}
\begin{split}
(\varphi_{k,m}|V_{0}) & = -c(0,0) \frac{B_{k}}{2k} \bigg( 1 - \frac{2k}{B_{k}} \sum_{n=1}^{\infty}\text{Li}_{1-k}(q^{n})\bigg) \\
& + \sum_{n>0, l \in \mathbb{Z} \setminus \{ 0 \}} c(0, l) \text{Li}_{1-k}(q^{n} y^{l}) + \sum_{l > 0} c(0, l) \text{Li}_{1-k}(y^{l}).  
\end{split}
\end{equation}
By (\ref{eqn:EISserexpprPOLY}), the quantity in parentheses in the first term above, is simply the Eisenstein series $E_{k}(\tau)$.  Because the weight of $\varphi_{k,m}$ is even, we have $c(0, l) = c(0, -l)$ for all $l \in \mathbb{Z}$.  Therefore, using (\ref{eqn:WPderrr}) to provide an expression for $\wp^{(k-2)}(\tau, l z)$, it is straightforward to see that (\ref{eqm:intermedddeqn}) takes the desired form of (\ref{eqn:WPV0wholjacfrm}).  
\end{proof}

So far, we have not really motivated our interest in Hecke operators.  They are of great importance in many parts of number theory and automorphic forms, but for the purposes of this thesis, the main interest lies in the definition of the following object.

\begin{defn}
The Maass lift $\text{ML}(\varphi)$ of a weak Jacobi form $\varphi$ is defined by
\begin{equation}\label{eqn:maassriftttt}
\text{ML}(\varphi) = \sum_{m=0}^{\infty} Q^{m} \big( \varphi \big| V_{m}\big).
\end{equation}
The Maass lift is sometimes called the additive lift, because $\text{ML}(\varphi_{1} + \varphi_{2}) = \text{ML}(\varphi_{1}) + \text{ML}(\varphi_{2})$.  
\end{defn}

\noindent We will tend to think of $\text{ML}(\varphi)$ as a function of the three variables $(Q,q,y)$, though one can see from (\ref{eqn:maassriftttt}) that it is really a power series in $Q$ whose coefficients are holomorphic functions on $\mathfrak{H} \times \mathbb{C}$.  

The following lemma expresses the Maass lift of a weak Jacobi form in terms of its Fourier coefficients and the polylogarithm.  The proof follows immediately from Lemma \ref{lemmy:genfunccheckoops} and (\ref{T0WEAKHOL}).  
\begin{lemmy}
Let $\varphi_{k, m} \in \mathbb{J}^{w}_{k,m}$ be a weak Jacobi form of weight $k \in 2 \mathbb{Z}$ with Fourier coefficients $c(n, l)$.  Then
\begin{equation}\label{eqn:fullHecke}
\text{ML} (\varphi_{k,m}) = c(0,0) \epsilon(k) + \sum_{(m, n, l) >0} c(nm, l) \text{Li}_{1-k}\big(Q^{m} q^{n} y^{l}\big) 
\end{equation}
where $\epsilon(k)$ is defined in (\ref{eqn:epsilonkdef}).  The notation $(m, n, l)>0$ means any of the following conditions hold
\[ (i) \,\,\, m>0, \,\,\,\,\,\,\,\,\,\,\, (ii) \,\,\, m=0, \,\, n>0, \,\,\,\,\,\,\,\,\,\,\, (iii) \,\,\, m=n=0, \,\, l>0.\]
\end{lemmy}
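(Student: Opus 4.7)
The proof plan is to split the Maass lift into the $m=0$ contribution and the $m \geq 1$ contribution, and then match each piece against the three cases in the indexing convention $(m,n,l) > 0$.

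First I would write
\begin{equation*}
\text{ML}(\varphi_{k,m}) = (\varphi_{k,m} | V_0) + \sum_{N=1}^{\infty} Q^N (\varphi_{k,m} | V_N),
\end{equation*}
treating the two pieces separately. For the second sum, I would apply Lemma \ref{lemmy:genfunccheckoops} directly, which gives
\begin{equation*}
\sum_{N=1}^{\infty} Q^N (\varphi_{k,m} | V_N) = \sum_{\substack{M > 0,\, n \geq 0 \\ l \in \mathbb{Z}}} c(nM, l) \, \text{Li}_{1-k}(Q^M q^n y^l).
\end{equation*}
(I have relabeled the summation index $M$ to avoid collision with the index $m$ of the Jacobi form.) These terms account precisely for case $(i)$ of the convention $(m,n,l) > 0$, namely $m > 0$.

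Next I would substitute the definition (\ref{T0WEAKHOL}) of $V_0$ on a weak Jacobi form of even weight, which produces
\begin{equation*}
(\varphi_{k,m} | V_0) = c(0,0) \epsilon(k) + \sum_{\substack{ n \geq 0,\, l \in \mathbb{Z} \\ l > 0 \text{ if } n=0}} c(0,l) \, \text{Li}_{1-k}(q^n y^l).
\end{equation*}
The constraint on the inner sum (either $n > 0$ with $l$ arbitrary, or $n=0$ with $l > 0$) corresponds exactly to cases $(ii)$ and $(iii)$ of the convention $(m,n,l) > 0$ restricted to $m=0$, and in both cases the polylogarithm argument is $Q^0 q^n y^l = q^n y^l$ with Fourier coefficient $c(n \cdot 0, l) = c(0,l)$, consistent with the general formula.

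Combining the two contributions and collecting the $\text{Li}_{1-k}$ terms under the single condition $(m,n,l) > 0$ yields the claimed identity (\ref{eqn:fullHecke}). There is no substantive obstacle here, since the lemma is essentially a bookkeeping statement: Lemma \ref{lemmy:genfunccheckoops} handles the Hecke operators $V_N$ for $N \geq 1$, equation (\ref{T0WEAKHOL}) handles $V_0$, and the only thing to verify is that the indexing conventions align — which they do by construction.
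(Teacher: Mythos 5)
Your proposal is correct and matches the paper's own (one-line) proof, which simply observes that the identity follows immediately from Lemma \ref{lemmy:genfunccheckoops} together with the definition (\ref{T0WEAKHOL}) of $V_{0}$. Your explicit check that the $m=0$ coefficient $c(0,l)$ agrees with the general pattern $c(nm,l)$ is the only bookkeeping point worth spelling out, and you handle it correctly.
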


This result has a purely formal proof because we are not yet making any claims about the automorphy of $\text{ML}(\varphi_{k,m})$.  It is a rather deep collection of results that the Maass lift of certain Jacobi forms indeed has automorphic properties.  For holomorphic or weak Jacobi forms, the Maass lift is a possibly meromorphic Siegel modular form.  We now take the opportunity to introduce these objects.

\section{A Brief Foray into Siegel Modular Forms and Maass Lifting}  

Recall that a Jacobi form is a two-variable generalization of a modular form given by replacing $\Gamma_{1} = SL_{2}(\mathbb{Z})$ by the Jacobi group $SL_{2}(\mathbb{Z}) \ltimes \mathbb{Z}^{2}$.  \emph{Siegel modular forms} provide an additional generalization of ordinary modular forms by replacing the upper-half plane $\mathfrak{H}$ by the Siegel upper-half plane $\mathfrak{H}_{g}$, and replacing $\Gamma_{1} $ with the group $\Gamma_{g} = Sp_{2g}(\mathbb{Z})$.  We call the integer $g \geq 1$ the \emph{degree} or \emph{genus} of the form.  A Siegel modular form of genus $g=1$ is simply an ordinary modular form.  Therefore, genus two Siegel modular forms are the next interesting case to consider.  We will reserve ourselves to $g=2$ in what follows, and refer the reader to \cite{van_der_geer_siegel_2006} for a more systematic, and complete account for all genus.

\subsection{Siegel Modular Forms of Genus Two}\label{sec:SMFrrrrmSs}

We first generalize the upper-half plane $\mathfrak{H}$ to the Siegel upper-half plane $\mathfrak{H}_{2}$, which is defined to consist of complex symmetric $2 \times 2$ matrices with positive definite imaginary part.\footnote{We define the imaginary part of a complex matrix to be the matrix of imaginary components of all entries.}  This can be given explicitly as
\begin{equation}\label{eqn:SiegelUPPHarfprane}
\mathfrak{H}_{2} = \bigg\{ \Omega = 
\begin{pmatrix}
\tau & z\\
z & \sigma
\end{pmatrix}
\in \text{Mat}_{2}(\mathbb{C}) \, \bigg| \, \mathfrak{Im}(\tau), \mathfrak{Im}(\sigma) >0, \text{det}\big(\mathfrak{Im}(\Omega)\big)>0 \bigg\}.
\end{equation}
This is clearly a generalization of the ordinary upper-half plane $\mathfrak{H}$ in genus one.  We know $\mathfrak{H}$ carries an action by the modular group $SL_{2}(\mathbb{Z})$, so the next order of business is to generalize the ordinary modular group to higher genus.  We define the real symplectic group
\begin{equation}
Sp_{4}(\mathbb{R}) = \big\{ M \in \text{Mat}_{4}(\mathbb{R}) \, \big| \, M J_{4} M^{T} = J_{4} \big\}, \,\,\,\,\,\, J_{4} = 
\begin{pmatrix}
0 & -1_{2}\\
1_{2} & 0
\end{pmatrix},
\end{equation}
to be the group of real $4 \times 4$ matrices preserving the symplectic form $J_{4}$.  This group can be given more concretely in the following block form
\begin{equation}
Sp_{4}(\mathbb{R}) = 
\bigg\{ M = 
\begin{pmatrix}
A & B\\
C & D
\end{pmatrix}
\, \bigg| \, AB^{T}=BA^{T}, \, CD^{T} = DC^{T}, \, AD^{T}-BC^{T}=1_{2} \bigg\}
\end{equation}
where $A, B, C, D$ are $2 \times 2$ real matrices.  Considering elements in this block form, we have a transitive action of $Sp_{4}(\mathbb{R})$ on $\mathfrak{H}_{2}$ defined by
\begin{equation}\label{eqn:SMGAction}
M \cdot \Omega = (A \Omega + B)(C \Omega + D)^{-1}, \,\,\,\,\,\, \Omega \in \mathfrak{H}_{2}.
\end{equation}
One must then check this is well-defined; for example $C \Omega + D$ must be invertible \cite{van_der_geer_siegel_2006}.  

Generalizing Remark \ref{rmk:CosetDescrrr} for ordinary modular forms, the coset description of the Siegel upper-half plane is the biholomorphism 
\begin{equation}\label{eqn:cosdestrcSIEGL}
\mathfrak{H}_{2} \cong Sp_{4}(\mathbb{R}) \big/ U(2)
\end{equation}
where the unitary group $U(2) \subset Sp_{4}(\mathbb{R})$ is a maximal compact subgroup.  It is clear that (\ref{eqn:cosdestrcSIEGL}) is a diffeomorphism, as $U(2)$ is the stabilizer of $i \cdot 1_{2} \in \mathfrak{H}_{2}$, but we refer to \cite{milne_introduction_2005} for understanding the complex structure on $Sp_{4}(\mathbb{R}) \big/ U(2)$, which is not at all obvious.  

The \emph{Siegel modular group} $\Gamma_{2} = Sp_{4}(\mathbb{Z}) \subset Sp_{4}(\mathbb{R})$ is the subgroup such that the matrices have integer entries.  The Siegel upper-half plane $\mathfrak{H}_{2}$ carries an action of $Sp_{4}(\mathbb{Z})$ by (\ref{eqn:SMGAction}) which is evidently a generalization of the $SL_{2}(\mathbb{Z})$ action on $\mathfrak{H}$.  Recalling that we have reserved ourselves to genus two, note that one can more generally define $\mathfrak{H}_{g}$ and $Sp_{2g}(\mathbb{Z})$, which we will not do.

\begin{defn}
A Siegel modular form of weight $k$ and genus two is a holomorphic function $F: \mathfrak{H}_{2} \to \mathbb{C}$ such that 
\begin{equation}\label{eqn:defnnnSMFDS}
F(M \cdot \Omega) = \text{det}(C \Omega + D)^{k} F(\Omega)
\end{equation}
for all $\Omega = \bigl( \begin{smallmatrix}\tau & z\\ z & \sigma \end{smallmatrix}\bigr) \in \mathfrak{H}_{2}$ and $M = \bigl( \begin{smallmatrix}A & B\\ C & D\end{smallmatrix}\bigr) \in Sp_{4}(\mathbb{Z})$.  We will write either $F(\Omega)$ or $F(\tau, z, \sigma)$.  
\end{defn}
 
\noindent Such an object is sometimes called a \emph{holomorphic} Siegel modular form.  These are automorphic forms on 
\begin{equation}
Sp_{4}(\mathbb{Z}) \big\backslash Sp_{4}(\mathbb{R}) \big/ U(2).  
\end{equation}  
We denote the vector space of genus two Siegel modular forms of weight $k$ by $\mathfrak{M}_{k}(\Gamma_{2})$.  The full ring of Siegel modular forms $\mathfrak{M}_{*}(\Gamma_{2})$ is a graded ring in the obvious way.

\subsection{The Fourier-Jacobi Expansion}  \label{subsec:TheFJEXPPP}

As with Jacobi forms, we introduce the parameters $q = e^{2 \pi i \tau}$, $y = e^{2 \pi i z}$, and we now additionally define $Q = e^{2 \pi i \sigma}$.  A Siegel modular form $F$ has the following Fourier expansion
\begin{equation}
F(\Omega) = \sum_{\substack{m, n, l \in \mathbb{Z} \\ m, n, 4nm -l^{2} \geq 0}} A(m, n, l)Q^{m} q^{n} y^{l}.
\end{equation}
From the above expansion, it is tempting to hope that the coefficient of a fixed power of $Q$ is a Jacobi form in variables $q$ and $y$.  This is indeed the case, and provides a nice connection to the theory of Jacobi forms.  

\begin{thm}[\bfseries {\cite[Thm. 6.1]{eichler_theory_2013}}]
Given a Siegel modular form $F$ of weight $k$ and genus two, we have an expansion
\begin{equation}\label{eqn:FJExpp}
F(\Omega) = \sum_{m=0}^{\infty} Q^{m} \varphi_{k,m}(\tau, z)
\end{equation}
where $\varphi_{k,m}$ is a (holomorphic) Jacobi form of weight $k$ and index $m$.  
\end{thm}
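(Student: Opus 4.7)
The plan is to obtain the $\varphi_{k,m}$ directly as the Fourier coefficients of $F$ in the variable $Q$, and then verify the two defining axioms of a Jacobi form of weight $k$ and index $m$ by specializing the Siegel modularity (\ref{eqn:defnnnSMFDS}) to carefully chosen subgroups of $\Gamma_2 = Sp_4(\mathbb{Z})$. The $(Q,q,y)$-Fourier expansion of $F$ stated just above the theorem, whose support on triples with $m, n \geq 0$ and $4nm - l^2 \geq 0$ is Koecher's principle for genus two, regroups as $F(\Omega) = \sum_{m \geq 0} Q^m \varphi_m(\tau, z)$ with $\varphi_m(\tau, z) = \sum_{n, l} A(m, n, l)\, q^n y^l$. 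The Koecher support condition automatically forces $A(m, n, l) = 0$ unless $n \geq 0$ and $4nm \geq l^2$, which is exactly the growth condition needed for $\varphi_m$ to be a \emph{holomorphic} Jacobi form of index $m$; all that remains is to check the two covariance properties.

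For the modular transformation (\ref{eqn:JacFormModul}), I use the embedding $\iota : SL_2(\mathbb{Z}) \hookrightarrow \Gamma_2$ sending $\gamma = \bigl(\begin{smallmatrix} a & b \\ c & d\end{smallmatrix}\bigr)$ to the element with blocks $A = \bigl(\begin{smallmatrix} a & 0 \\ 0 & 1\end{smallmatrix}\bigr)$, $B = \bigl(\begin{smallmatrix} b & 0 \\ 0 & 0\end{smallmatrix}\bigr)$, $C = \bigl(\begin{smallmatrix} c & 0 \\ 0 & 0\end{smallmatrix}\bigr)$, $D = \bigl(\begin{smallmatrix} d & 0 \\ 0 & 1\end{smallmatrix}\bigr)$; the symplectic relations listed in Section \ref{sec:SMFrrrrmSs} reduce to $ad - bc = 1$. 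A direct computation of $(A\Omega + B)(C\Omega + D)^{-1}$ yields the action
\begin{equation*}
(\tau, z, \sigma) \longmapsto \Bigl(\tfrac{a\tau + b}{c\tau + d},\; \tfrac{z}{c\tau + d},\; \sigma - \tfrac{cz^2}{c\tau + d}\Bigr),
\end{equation*}
together with $\det(C\Omega + D) = c\tau + d$. Substituting the Fourier-Jacobi expansion into (\ref{eqn:defnnnSMFDS}) and matching coefficients of $Q^m$ on both sides produces exactly (\ref{eqn:JacFormModul}).

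For the elliptic transformation (\ref{eqn:JacFormElll}), I use the unipotent element of $\Gamma_2$ indexed by $\lambda, \mu \in \mathbb{Z}$ with blocks $A = \bigl(\begin{smallmatrix} 1 & 0 \\ \lambda & 1\end{smallmatrix}\bigr)$, $B = \bigl(\begin{smallmatrix} 0 & \mu \\ \mu & 0\end{smallmatrix}\bigr)$, $C = 0$, $D = \bigl(\begin{smallmatrix} 1 & -\lambda \\ 0 & 1\end{smallmatrix}\bigr)$. One checks the symplectic conditions, computes $\det(C\Omega + D) = 1$, and finds the action
\begin{equation*}
(\tau, z, \sigma) \longmapsto \bigl(\tau,\; z + \lambda \tau + \mu,\; \sigma + \lambda^2 \tau + 2\lambda z + \lambda \mu\bigr).
\end{equation*}
Feeding this into (\ref{eqn:defnnnSMFDS}), comparing coefficients of $Q^m$, and using $e^{2\pi i m \lambda \mu} = 1$ recovers (\ref{eqn:JacFormElll}) for each $\varphi_m$, completing the verification.

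The matrix manipulations above are entirely routine, so the genuine substance of the theorem is packaged in the Fourier expansion used at the start: Koecher's principle simultaneously truncates the expansion to $m \geq 0$ \emph{and} supplies the growth condition $4nm \geq l^2$ needed to promote each $\varphi_m$ from a weak to a holomorphic Jacobi form. The only mild subtlety in the plan is that the two subgroup embeddings above generate a proper subgroup of $\Gamma_2$ (the Jacobi group sitting inside $\Gamma_2$), so one should remark that additional $\Gamma_2$-symmetries -- notably the involution interchanging $\tau$ and $\sigma$ -- impose relations among the $\varphi_m$ beyond the Jacobi-form axioms; these extra relations are the key input for constructing the Maass lift later, but are unnecessary for the present statement.
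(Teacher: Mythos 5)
Your proof is correct and is essentially the standard Eichler--Zagier argument; the paper itself only cites \cite[Thm.~6.1]{eichler_theory_2013} and gives no proof, so there is nothing to compare against beyond noting that your route (Koecher support condition for the growth/holomorphy of each $\varphi_{k,m}$, plus restriction of Siegel modularity to the embedded Jacobi group $SL_2(\mathbb{Z})\ltimes\mathbb{Z}^2$) is the intended one, and your matrix computations check out. One peripheral inaccuracy in your closing remark: the extra relations coming from the $\tau\leftrightarrow\sigma$ involution (e.g.\ $A(m,n,l)=A(n,m,l)$) hold for \emph{every} genus two Siegel modular form and are strictly weaker than the Maass `Spezialschar' relations (\ref{eqn:MaassSPEZFCOES}), which characterize only the image of the Maass lift -- but this does not affect the proof of the stated theorem.
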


\noindent We refer to (\ref{eqn:FJExpp}) as the \emph{Fourier-Jacobi expansion} of $F$.  This theorem is special to genus two Siegel modular forms.  The coefficient $\varphi_{k,0}$ of the Fourier-Jacobi expansion of $F$ is a Jacobi form of index 0, which we have previously seen to be independent of $z$, and is in fact simply a modular form of weight $k$.  

Given an ordinary modular form, there is a simple procedure to extract the constant term in the Fourier expansion.  The analogous procedure for Siegel modular forms is applying the Siegel operator.

\begin{defn}
The Siegel operator $\Phi$ is a lowering operator on the genus $g$ of a Siegel modular form.  In the case of $g=2$, the linear map
\begin{equation}\label{eqn:SiegelOPPP}
\Phi : \mathfrak{M}_{k}(\Gamma_{2}) \longrightarrow M_{k}(\Gamma_{1})
\end{equation}
takes a Siegel modular form $F(\Omega) = F(\tau, z, \sigma)$ of weight $k$ and produces a modular form $\Phi(F)$ of weight $k$ defined by
\begin{equation}
\Phi(F) \coloneqq \lim_{t \to \infty} F(\tau, 0, it).
\end{equation}
\end{defn}

\noindent It is clear that $\Phi$ maps $F$ to the coefficient $\varphi_{k,0}$ of the Fourier-Jacobi expansion of $F$.  By the above discussion, $\varphi_{k,0}$ is independent of $z$ and is a modular form of weight $k$.  

\begin{defn}
We define the ideal of Siegel cusp forms of genus two and weight $k$ by
\begin{equation}
\mathfrak{S}_{k}(\Gamma_{2}) \coloneqq \text{ker}(\Phi).
\end{equation}  
\end{defn}

\noindent Note that this is entirely analogous to the definition of cusp modular forms $S_{k}(\Gamma_{1})$.  It is clear that a genus two Siegel modular form is a cusp form if and only if the coefficient $\varphi_{k,0}$ of the Fourier-Jacobi expansion vanishes.

\subsection{Maass `Spezialschar' and Index One Jacobi Forms} \label{subsec: MaassliftIndoneJFOR}

From the discussion of Hecke operators, given a holomorphic Jacobi form $\varphi_{k,m}$, for all $N \geq 0$, we have seen that $\big(\varphi_{k,m}\big| V_{N}\big)$ is a holomorphic Jacobi form of weight $k$ and index $Nm$.  Considering the special case of index 1 Jacobi forms, we have $(\varphi_{k,1} | V_{m}) \in \mathbb{J}_{k,m}$ and the Maass lift (\ref{eqn:maassriftttt}) appears to be the Fourier-Jacobi expansion of a Siegel modular form!  This is indeed the case.  

\begin{thm}[\bfseries {\cite[Thm. 6.2]{eichler_theory_2013}}]
Let $\varphi_{k,1} \in \mathbb{J}_{k,1}$ be a holomorphic Jacobi form of weight $k$ and index 1.  The Maass lift
\begin{equation}
\text{ML}(\varphi_{k,1}) = \sum_{m=0}^{\infty}Q^{m}\big(\varphi_{k,1}\big|V_{m}\big)
\end{equation}
is the Fourier-Jacobi expansion of a Siegel modular form $\text{ML}(\varphi_{k,1})$ of weight $k$ and genus two.  Defining the Maass `Spezialschar' as the image of the Maass lift in $\mathfrak{M}_{k}(\Gamma_{2})$, we have the following isomorphism
\begin{equation}
\mathbb{J}_{k,1} \overset{\sim}{\longleftrightarrow} \big\{ \text{Maass Spezialschar}  \big\} \subset \mathfrak{M}_{k}(\Gamma_{2})   
\end{equation}
where given $\varphi_{k,1} \in \mathbb{J}_{k,1}$ one associates the Maass lift $\text{ML}(\varphi_{k,1})$ and conversely, given a weight $k$ Siegel modular form $F$ in the Maass Spezialschar, one recovers a weight $k$ index 1 holomorphic Jacobi form as the coefficient of $Q$ in the Fourier-Jacobi expansion.  
\end{thm}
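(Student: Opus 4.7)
The plan is to reduce Siegel modularity to a symmetry property of the Fourier coefficients, exploiting the fact that $\Gamma_2 = Sp_4(\mathbb{Z})$ is generated by a Jacobi-type subgroup together with a single additional involution. I would first compute the Fourier expansion of $(\varphi_{k,1}|V_m)$ explicitly. If $\varphi_{k,1}(\tau,z) = \sum_{n,l} c(n,l)\, q^{n}y^{l}$, then unwinding the definition (\ref{eqn:Heckedefn}) — the sum over $b$ produces a $d$-divisibility constraint via a geometric sum, exactly as in the proof of Lemma~\ref{lemmy:genfunccheckoops} — yields
\begin{equation}
(\varphi_{k,1}|V_m)(\tau,z) \;=\; \sum_{n,l}\Bigg(\sum_{a\mid\gcd(m,n,l)} a^{k-1}\, c\!\left(\tfrac{mn}{a^{2}},\tfrac{l}{a}\right)\Bigg) q^{n}y^{l}.
\end{equation}
Therefore the Fourier coefficients of $\mathrm{ML}(\varphi_{k,1})$ are
\begin{equation}
A(m,n,l) \;=\; \sum_{a\mid \gcd(m,n,l)} a^{k-1}\, c\!\left(\tfrac{mn}{a^{2}},\tfrac{l}{a}\right),
\end{equation}
and the two crucial features leap out: (i) $A(m,n,l)$ depends on $(m,n)$ only through the product $mn$ (up to the divisibility data, which is also $(m,n)$-symmetric), giving $A(m,n,l) = A(n,m,l)$; and (ii) $A$ depends on $l$ only through quantities controlled by $c$, so holomorphy at the cusp (i.e.\ $4mn - l^{2} \geq 0$) is inherited from the holomorphy of $\varphi_{k,1}$.

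Next I would invoke (\ref{eqn:Heckeindexch}) to conclude that each $(\varphi_{k,1}|V_m)$ is a holomorphic Jacobi form of weight $k$ and index $m$, and use standard estimates on $c(n,l)$ for a holomorphic Jacobi form to check that the series $\sum_{m}Q^{m}(\varphi_{k,1}|V_m)$ converges absolutely and locally uniformly on $\mathfrak{H}_{2}$, so $\mathrm{ML}(\varphi_{k,1})$ is holomorphic on $\mathfrak{H}_{2}$. Then I would verify the Siegel transformation law (\ref{eqn:defnnnSMFDS}) on generators of $Sp_{4}(\mathbb{Z})$. A convenient generating set consists of the parabolic subgroup $P \subset Sp_{4}(\mathbb{Z})$ fixing the cusp at infinity (containing the translations $\Omega \mapsto \Omega + S$ for $S$ integral symmetric, the Levi $GL_{2}(\mathbb{Z})$-action $\Omega \mapsto U\Omega U^{T}$, and the Jacobi-type transformations acting on $(\tau,z)$ while fixing $\sigma$ up to the elliptic twist) together with the single involution $\iota: (\tau,z,\sigma) \mapsto (\sigma,z,\tau)$ coming from $\bigl(\begin{smallmatrix}0 & -1_{2}\\ 1_{2} & 0\end{smallmatrix}\bigr)$ conjugated appropriately. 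Transformation under $P$ is automatic: translations in $\sigma$ give the $Q$-periodicity, and for each fixed $m$ the Jacobi-form transformation law (\ref{eqn:JacFormModul})--(\ref{eqn:JacFormElll}) of $(\varphi_{k,1}|V_m)$ delivers precisely the right automorphy factor, with the elliptic twist $e^{2\pi i m c z^{2}/(c\tau+d)}$ being absorbed into the $Q^{m}$ term after the standard completion-of-the-square trick.

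The main obstacle, as I see it, is verifying invariance under the involution $\iota$. This cannot be read off from Jacobi form transformations; it is exactly the extra rigidity that selects the Maass Spezialschar. But the formula for $A(m,n,l)$ above gives it essentially for free: since $A(m,n,l) = A(n,m,l)$, swapping the roles of $Q^{m}$ and $q^{n}$ simply permutes terms in the Fourier series, which is precisely the analytic content of $F(\iota\cdot\Omega) = F(\Omega)$ (the automorphy factor $\det(C\Omega+D)^{k}$ becomes $(-1)^{k}\det(\Omega)^{-k}(-\det\Omega)^{k}=1$ on this involution, modulo a careful computation). Together with $P$-invariance this establishes the full $Sp_{4}(\mathbb{Z})$-invariance, since $P$ and $\iota$ generate $\Gamma_{2}$.

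Finally, for the isomorphism, I would set up the two maps: $\mathrm{ML}: \mathbb{J}_{k,1} \to \mathfrak{M}_{k}(\Gamma_{2})$ just constructed, and the inverse $F \mapsto \varphi_{k,1}$ obtained by extracting the coefficient of $Q^{1}$ in the Fourier--Jacobi expansion (\ref{eqn:FJExpp}). That this coefficient is a holomorphic index-one Jacobi form follows from the theorem quoted after (\ref{eqn:FJExpp}). The composition $F \mapsto \varphi \mapsto \mathrm{ML}(\varphi)$ recovers $F$ for $F$ in the Spezialschar because the Hecke-type relation $\varphi_{k,m} = (\varphi_{k,1}|V_{m})$ is, by construction, the defining property of the Spezialschar image; and $\mathrm{ML}(\varphi)|_{Q^{1}} = (\varphi|V_{1}) = \varphi$ since $V_{1}$ is the identity operator. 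Injectivity and surjectivity onto the Spezialschar then follow formally, completing the proof.
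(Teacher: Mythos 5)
The paper does not actually prove this theorem; it is quoted verbatim from Eichler--Zagier \cite[Thm.\ 6.2]{eichler_theory_2013}, so there is no in-paper argument to compare against line by line. Your reconstruction is essentially the classical Maass/Eichler--Zagier proof and is correct in outline. The Fourier-coefficient computation is right: unwinding (\ref{eqn:Heckedefn}) exactly as in the proof of Lemma \ref{lemmy:genfunccheckoops} gives $A(m,n,l)=\sum_{a\mid\gcd(m,n,l)}a^{k-1}c(mn/a^{2},l/a)$, which is precisely the characterization (\ref{eqn:MaassSPEZFCOES}) the paper records after the theorem, and the pair of inputs you isolate --- the $(m\leftrightarrow n)$-symmetry of $A$ plus the weight-$k$, index-$m$ Jacobi transformation law of each $(\varphi_{k,1}|V_m)$ --- is exactly what is needed, since the embedded Jacobi group, the translations in $\sigma$, and the swap $(\tau,z,\sigma)\mapsto(\sigma,z,\tau)$ together generate $Sp_{4}(\mathbb{Z})$.

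Two details need repair before the argument closes. First, the swap involution is not a conjugate of $\bigl(\begin{smallmatrix}0&-1_{2}\\1_{2}&0\end{smallmatrix}\bigr)$: that element acts by $\Omega\mapsto-\Omega^{-1}$, which mixes all three coordinates nonlinearly. The swap is the Levi element $\Omega\mapsto U\Omega U^{T}$ with $U=\bigl(\begin{smallmatrix}0&1\\1&0\end{smallmatrix}\bigr)$, embedded as $\mathrm{diag}\bigl(U,(U^{-1})^{T}\bigr)$, and its automorphy factor is $\det\bigl((U^{-1})^{T}\bigr)^{k}=(-1)^{k}$. This equals $1$ only because index-one Jacobi forms necessarily have even weight (Theorem \ref{thm:wJacfrmm}); your ``$=1$ modulo a careful computation'' should be replaced by this one-line observation, without which the step would fail for odd $k$. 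Second, your coefficient formula is derived for $m\geq 1$, while the $m=0$ term of the lift is governed by the separate definition (\ref{eqn:T0HOL}) of $V_{0}$; one must verify that the swap symmetry persists on the boundary, which it does since holomorphy forces $A(0,n,l)=0$ for $l\neq 0$ and $A(0,n,0)=c(0,0)\sigma_{k-1}(n)=A(n,0,0)$, with the constant term $A(0,0,0)=-c(0,0)B_{k}/2k$ unconstrained by the swap. Neither point is a gap in the approach, but both must be stated for the proof to be complete.
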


The Maass lift which we defined purely formally in (\ref{eqn:maassriftttt}) is now seen to have nice automorphic properties when the Jacobi form is holomorphic and index 1.  By results of H. Aoki \cite{aoki_formal_2014,aoki_notitle_2018}, the same is true for weak Jacobi forms of index 1, a fact which will be crucial for our results in the final chapter.  We should remark that what we are calling the Maass lift $\text{ML}(\varphi_{k,1})$ is often referred to as the Saito-Kurokawa lift or the additive lift of $\varphi_{k,1}$.

One should think of the \emph{Maass `Spezialschar'} as consisting of those special Siegel modular forms which are the Maass lift of an index 1 Jacobi form.  This space may be equivalently characterized by the behavior of Fourier coefficients \cite{dabholkar_quantum_2012}.  By writing the action of a Hecke operator in terms of Fourier coefficients, one can show that $F \in \mathfrak{M}_{k}(\Gamma_{2})$ lies in the Maass `Spezialschar' if and only if the coefficients $A(m, n, l)$ satisfy
\begin{equation}\label{eqn:MaassSPEZFCOES}
A(m,n,l) = \sum_{\substack{r | (m,n,l) \\ r>0}} r^{k-1} c\bigg( \frac{4nm-l^{2}}{r^{2}}\bigg), \,\,\,\,\,\,\,\,\,\,\,\,\,\, (m,n,l) \neq (0,0,0)
\end{equation}
where $c(\cdot)$ are the Fourier coefficients of the weight $k$ index 1 Jacobi form arising as the coefficient of $Q$ in the Fourier-Jacobi expansion of $F$.  Recall that the Fourier coefficients of index 1 Jacobi forms depend only on the quantity $4n-l^{2}$.  However, the Fourier coefficients $A(m, n, l)$ of the Maass lift depend on not only $4nm-l^{2}$, but also the divisibility of the triple $(m, n, l)$.

\subsection{The Ring of Genus Two Siegel Modular Forms}\label{subsec:RingofSMF}

We should now present the four classical examples of genus two Siegel modular forms, as introduced by Igusa.  It happens that these are the examples of interest to us in the final chapter, and they turn out to generate the ring of genus two Siegel modular forms of even weight.  

Let us refer the reader to (\ref{eqn:JacCuspForms}) where we introduced the Jacobi cusp forms $\varphi_{10, 1}$ and $\varphi_{12,1}$ of index 1.  The two most famous Siegel modular forms are defined as the Maass lifts of $\varphi_{10,1}$ and $\varphi_{12,1}$ respectively
\begin{equation}
\chi_{10}(\Omega) = \text{ML}(\varphi_{10,1}) = \sum_{m=1}^{\infty} Q^{m} \big(\varphi_{10,1} \big| V_{m}\big)
\end{equation}
\begin{equation}
\chi_{12}(\Omega) = \text{ML}(\varphi_{12,1}) = \sum_{m=1}^{\infty} Q^{m} \big(\varphi_{12,1} \big| V_{m}\big).
\end{equation}
We call $\chi_{10}$ and $\chi_{12}$ \emph{Igusa cusp forms} of weight 10 and 12, respectively.  Recalling (\ref{eqn:T0HOL}), the action of $V_{0}$ on a Jacobi cusp form vanishes, which is why the Igusa cusp forms have no $Q^{0}$ term.  They are the unique genus two Siegel cusp forms of their weight up to scale -- they respectively generate $\mathfrak{S}_{10}(\Gamma_{2})$ and $\mathfrak{S}_{12}(\Gamma_{2})$.  

The second collection of examples generalize the ordinary Eisenstein series, and are defined as follows for all $k >2$
\begin{equation}\label{eqn:defnS-EEEseriess}
\mathcal{E}_{k}(\Omega)  =  \sum_{(C, D)} \text{det}\big( C \Omega + D\big)^{-k}.
\end{equation}
The sum is over pairs of coprime symmetric integral matrices, non-associated with respect to multiplication on the left by $GL_{2}(\mathbb{Z})$.  We call $\mathcal{E}_{k}$ the \emph{Siegel-Eisenstein series} of weight $k$.  It is a holomorphic Siegel modular form of weight $k$ and genus two.  The normalization is chosen such that \cite{van_der_geer_siegel_2006}
\begin{equation}
\Phi(\mathcal{E}_{k}) = E_{k}
\end{equation}
where $\Phi$ is the Siegel operator defined in (\ref{eqn:SiegelOPPP}).  This simply says that the $Q^{0}$ term in the Fourier-Jacobi expansion of $\mathcal{E}_{k}$ is the Eisenstein series $E_{k}$ of weight $k$, normalized such that the constant Fourier coefficient is 1.  The Siegel operator is clearly not only a linear map, but a ring homomorphism, which means that $\Phi(\mathcal{E}_{k_{1}} \mathcal{E}_{k_{2}}) = E_{k_{1}} E_{k_{2}}$.  For example, we have
\begin{equation}
\Phi(\mathcal{E}_{10} - \mathcal{E}_{4}\mathcal{E}_{6}) = E_{10} - E_{4}E_{6} =0
\end{equation}
since $M_{10}(\Gamma_{1})$ is one-dimensional, and the Eisenstein series are normalized to have constant Fourier coefficient 1.  Therefore, $\mathcal{E}_{10} - \mathcal{E}_{4}\mathcal{E}_{6}$ is a Siegel cusp form of weight 10, and the space of such forms is one-dimensional.  We must then have that $\mathcal{E}_{10}-\mathcal{E}_{4}\mathcal{E}_{6}$ is proportional to $\chi_{10}$.  Similarly, one can show that $\mathcal{E}_{12}-\mathcal{E}_{6}^{2}$ is a Siegel cusp form of weight 12, proportional to $\chi_{12}$.  

The following is the structure theorem for genus two Siegel modular forms of even weight, analogous to Theorem \ref{eqn:ModformStrTHMMM} in the case of $M_{*}(\Gamma_{1})$.  It was proven by Igusa \cite{igusa_siegel_1962, igusa_modular_1967} in the 1960s.  

\begin{thm}\label{IgusacrassTHMMM}
The graded ring $\mathfrak{M}_{2*}(\Gamma_{2})$ of Siegel modular forms of even weight and genus two is given as the polynomial ring
\begin{equation}
\mathfrak{M}_{2*}(\Gamma_{2}) = \mathbb{C} \big[ \chi_{10}, \chi_{12}, \mathcal{E}_{4}, \mathcal{E}_{6} \big].
\end{equation}
In other words, it is freely generated over $\mathbb{C}$ by the Igusa cusp forms $\chi_{10}$ and $\chi_{12}$ as well as the Siegel-Eisenstein series $\mathcal{E}_{4}$ and $\mathcal{E}_{6}$.  
\end{thm}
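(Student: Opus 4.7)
The plan is to prove Igusa's structure theorem by combining the Siegel operator with the Maass lift construction and a divisibility analysis of Siegel cusp forms. The four candidate generators $\mathcal{E}_4, \mathcal{E}_6, \chi_{10}, \chi_{12}$ live in weights $4, 6, 10, 12$, so I need to prove both their algebraic independence and that they generate the entire even-weight ring; I would handle generation first via induction on weight, then come back to independence.

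First I would set up the fundamental exact sequence given by the Siegel operator (\ref{eqn:SiegelOPPP}):
\begin{equation*}
0 \longrightarrow \mathfrak{S}_k(\Gamma_2) \longrightarrow \mathfrak{M}_k(\Gamma_2) \xrightarrow{\; \Phi \;} M_k(\Gamma_1).
\end{equation*}
Because $\Phi(\mathcal{E}_k) = E_k$ for even $k > 2$ and $M_{2*}(\Gamma_1) = \mathbb{C}[E_4, E_6]$ by Theorem \ref{eqn:ModformStrTHMMM}, the subring $\mathbb{C}[\mathcal{E}_4, \mathcal{E}_6] \subset \mathfrak{M}_{2*}(\Gamma_2)$ surjects onto the image of $\Phi$ in every even weight. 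Thus for any $F \in \mathfrak{M}_k(\Gamma_2)$ I can subtract a suitable weighted-homogeneous polynomial in $\mathcal{E}_4, \mathcal{E}_6$ to reduce to the case $F \in \mathfrak{S}_k(\Gamma_2)$. The generation statement then boils down to showing the ideal $\mathfrak{S}_{2*}(\Gamma_2)$ is generated over $\mathbb{C}[\mathcal{E}_4, \mathcal{E}_6]$ by $\chi_{10}$ and $\chi_{12}$.

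To handle cusp forms, I would exploit the Fourier-Jacobi expansion (\ref{eqn:FJExpp}). A cusp form $F$ has vanishing $Q^0$ piece, so its leading term $\varphi_{k,1}$ is a Jacobi cusp form of weight $k$ and index $1$. Since $\mathbb{J}_{k,1}^{0} = \Delta \cdot \mathbb{J}_{k-12,1}^{\mathrm{w}}$ and weak Jacobi forms of index $1$ are $M_*(\Gamma_1)$-generated by $\Theta^2$ and $\varphi_{0,1}$, every such $\varphi_{k,1}$ is of the form $\alpha \cdot \varphi_{10,1} + \beta \cdot \varphi_{12,1}$ for modular forms $\alpha, \beta$ of appropriate weights; lifting $\alpha, \beta$ to polynomials $A, B$ in $\mathcal{E}_4, \mathcal{E}_6$ via Step 1, the combination $A \cdot \chi_{10} + B \cdot \chi_{12}$ matches $F$ at order $Q^1$. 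The difference $F - A\chi_{10} - B\chi_{12}$ is then a cusp form starting at order $Q^2$, and one has to push this further: classically this is done via Igusa's divisor-theoretic argument, using that $\chi_{10}$ vanishes identically on the Humbert divisor $\{z = 0\} \subset \mathfrak{H}_2$ (since $\varphi_{10,1}(\tau,0) = \Delta(\tau)\Theta^2(\tau,0) = 0$) and that a Siegel modular form vanishing on this divisor is divisible by $\chi_{10}$, combined with Koecher's principle to control Fourier expansions. Iterating this divisibility reduction and inducting on weight completes the generation argument.

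The algebraic independence of the four generators I would establish by restriction to the diagonal $\mathrm{diag}(\tau, \sigma) \in \mathfrak{H}_2$: the Siegel-Eisenstein series $\mathcal{E}_4, \mathcal{E}_6$ restrict to nonzero elements of $M_*(\Gamma_1) \otimes M_*(\Gamma_1)$ built from products of $E_4, E_6$, while $\chi_{10}$ vanishes identically on this locus (from the vanishing of $\varphi_{10,1}$ at $z=0$) and $\chi_{12}$ restricts to a nonzero multiple of $\Delta(\tau)\Delta(\sigma)$; combined with a comparison of Fourier coefficients at generic points this forces any polynomial relation to be trivial. The main obstacle is undoubtedly the divisibility step in the cusp form analysis: it is the genuine geometric content of the theorem and constitutes the heart of Igusa's original work. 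An alternative route would be to match the Hilbert series of $\mathbb{C}[\mathcal{E}_4, \mathcal{E}_6, \chi_{10}, \chi_{12}]$, namely $\prod (1-t^{2k_i})^{-1}$ for $2k_i \in \{4,6,10,12\}$, against the dimensions $\dim \mathfrak{M}_{2k}(\Gamma_2)$ computed independently (for example via the Selberg trace formula or Riemann-Roch on the Satake compactification), which would bypass the divisibility argument at the cost of importing a deep dimension formula.
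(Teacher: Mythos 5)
The paper does not prove this theorem at all: it records it as a classical result and cites Igusa's original papers, so there is no in-text argument to compare yours against. Judged on its own terms, your outline has the right architecture (Siegel operator to peel off the Eisenstein part, Fourier--Jacobi expansion to attack the cusp ideal, restriction to a divisor for independence), and the index-one step is correct --- every Jacobi cusp form of index $1$ and even weight is indeed $\alpha\,\varphi_{10,1}+\beta\,\varphi_{12,1}$ with $\alpha,\beta$ liftable through $\mathbb{C}[\mathcal{E}_4,\mathcal{E}_6]$, so you can always arrange that $G=F-A\chi_{10}-B\chi_{12}$ starts at order $Q^2$.

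The genuine gap is what happens next. Your iteration does not close: the $Q^2$ coefficient of $G$ is a Jacobi cusp form of index $2$, and $\mathbb{J}^{0}_{k,2}$ is \emph{not} generated over $M_*(\Gamma_1)$ by the three products $\varphi_{10,1}^2,\ \varphi_{10,1}\varphi_{12,1},\ \varphi_{12,1}^2$ (for instance $\varphi_{10,1}|V_2$ is a nonzero weight-$10$ index-$2$ cusp form, while $\Delta^2\cdot\mathbb{J}^{\mathrm{w}}_{-14,2}=0$), so matching order by order in $Q$ requires ever more Jacobi generators and never terminates. The mechanism that actually finishes the proof --- that a form vanishing on the $Sp_4(\mathbb{Z})$-orbit of $\{z=0\}$ is divisible by $\chi_{10}$, together with an identification of the image of the restriction map $\mathfrak{M}_k(\Gamma_2)\to M_k(\Gamma_1)\otimes M_k(\Gamma_1)$ so that one can induct on weight --- is a different reduction from the $Q$-order one, and a form whose Fourier--Jacobi expansion starts at $Q^2$ need not vanish on $\{z=0\}$, so the two cannot be spliced the way you describe. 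You correctly name this divisibility step as the heart of the theorem, but naming it is not proving it; as written the generation argument is incomplete. (Your independence argument via restriction to $\mathrm{diag}(\tau,\sigma)$, by contrast, can be made to work: $\mathcal{E}_4,\mathcal{E}_6,\chi_{12}$ restrict to $E_4(\tau)E_4(\sigma)$, $E_6(\tau)E_6(\sigma)$, and a multiple of $\Delta(\tau)\Delta(\sigma)$, which are algebraically independent, so any relation is divisible by $\chi_{10}$ and one inducts --- but this too should be said explicitly. The Hilbert-series alternative you mention is legitimate provided you import the dimension formula for $\dim\mathfrak{M}_{2k}(\Gamma_2)$, which is itself a nontrivial input.)
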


\noindent Igusa also introduced a cusp form $\chi_{35}$ of odd weight whose square is an explicit polynomial in $\chi_{10}, \chi_{12}, \mathcal{E}_{4}$, and $\mathcal{E}_{6}$.  In this thesis, we will only be interested in the ring of even weight Siegel modular forms so we will not discuss $\chi_{35}$ further.

\subsection{Infinite Products and Borcherds Lifts}  

By a result of Gritsenko and Nikulin \cite{gritsenko_siegel_1997} the Igusa cusp form of weight 10 has the following \emph{infinite product} representation
\begin{equation}\label{eqn:GNIKULinfprodd}
\chi_{10}(\Omega) = Qqy\prod_{(m,n, l)>0} \big(1-Q^{m} q^{n} y^{l}\big)^{c(4nm-l^{2})}
\end{equation}
where in this case, the notation $(m,n, l)>0$ means either $m>0$, or $m=0, n>0$, or $m=n=0, l<0$.  The exponents $c(4nm-l^{2})$ in the product representation are the Fourier coefficients of the elliptic genus $\text{Ell}_{q,y}(K3)$ of a K3 surface (\ref{eqn:K3EllGEnCoefff}).  Using the definition of $(m,n, l)>0$ as well as some of the low-order Fourier coefficients of $\text{Ell}_{q,y}(K3) = 2 \varphi_{0,1}$ (\ref{eqn:phi01FEXP}), it is a simple exercise to show
\begin{equation}\label{eqn:GNikchi10prod}
\chi_{10}(\Omega) = Q \, \varphi_{10,1}(\tau, z) \prod_{\substack{m>0, n \geq 0 \\ l \in \mathbb{Z}}} \big(1 - Q^{m}q^{n}y^{l}\big)^{c(4nm-l^{2})}.
\end{equation}
One should recognize the infinite product in (\ref{eqn:GNikchi10prod}) as (the inverse of) the second quantized elliptic genus of a $K3$ surface from the DMVV formula.  Indeed, by (\ref{eqn:DMVVsmmsurfSS}) we have the following formula for the inverse of the Igusa cusp form
\begin{equation}\label{eqn:chi10prodDMVVform}
\frac{1}{\chi_{10}(\Omega)} = \frac{1}{Q \, \varphi_{10,1}(\tau, z)} \sum_{m=0}^{\infty} Q^{m} \text{Ell}_{q,y}(\text{Hilb}^{m}(K3)).
\end{equation}
The inverse of the Igusa cusp form is a \emph{meromorphic} Siegel modular form of weight $-10$, and it plays a leading role in the enumerative geometry of $K3 \times E$ \cite{oberdieck_holomorphic_2018}.  

We can also write $\chi_{10}$ as a \emph{Borcherd's lift} or \emph{multiplicative lift} of the elliptic genus $\text{Ell}_{q,y}(K3)$
\begin{equation}
\chi_{10}(\Omega) = Q \varphi_{10,1} \text{exp}\bigg( - \sum_{m=1}^{\infty} Q^{m} \big(\text{Ell}_{q,y}(K3) \big| V_{m}\big)\bigg).
\end{equation}
One can roughly think of a Borcherds lift \cite{borcherds_automorphic_1995} as the exponentiation of a Maass lift, up to a prefactor.  To summarize, $\chi_{10}$ is defined as the Maass lift of $\varphi_{10,1}$, and has an infinite product representation.  It is also the Borcherds lift of $\text{Ell}_{q,y}(K3)$ and is clearly related to the second quantized elliptic genus of a $K3$ surface.

\chapter{The Automorphic Properties of the Banana Manifold Partition Functions} \label{ch:AutGWPotBanana}

In this final chapter, much of the background material presented previously will culminate in the original results of this thesis.  We study the enumerative geometry of the banana manifold and find surprising connections to Hecke operators, Borcherds and Maass lifts, as well as Siegel modular forms.  Let us briefly summarize the results of the chapter, necessarily allowing some details and definitions to follow later.  For the reader's convenience, we refer back to previous sections in the thesis containing the necessary background material.  

The \emph{banana manifold} $X_{\text{ban}}$ is a smooth projective Calabi-Yau threefold fibered over $\mathbb{P}^{1}$ with generic fiber a smooth abelian surface.  One can construct $X_{\text{ban}}$ as follows: let $r : S \to \mathbb{P}^{1}$ be a generic rational elliptic surface, and form the self-fibered product $S \times_{\mathbb{P}^{1}} S$.  Because there are 12 singular fibers of $r$, each of which is a nodal elliptic curve, there are 12 conifold singularities in the fibered product.  All singularities are contained in the diagonal $\Delta \subset S \times_{\mathbb{P}^{1}} S$, which is a Weil divisor.  We define the banana manifold to be the full conifold resolution of singularities 
\begin{equation}\label{eqn:origdefnBAN}
X_{\text{ban}} \coloneqq \text{Bl}_{\Delta}(S \times_{\mathbb{P}^{1}} S).
\end{equation}
Defining the natural projection $\pi : X_{\text{ban}} \to \mathbb{P}^{1}$, the smooth fibers are isomorphic to $E \times E$, where $E$ is a smooth elliptic curve.    
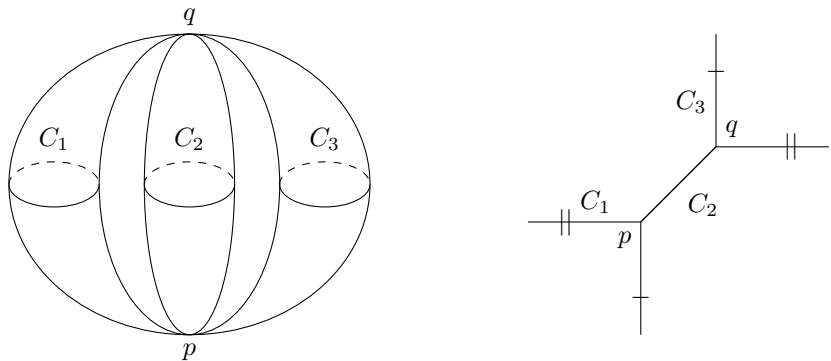
\begin{figure}[h] 
\centering
\begin{tikzpicture}[xshift=5cm,
		    scale = 1.0
		    ]

\begin{scope}  
\draw (0,0) ellipse (2.4 and 2);
\draw (0,0) ellipse (0.6cm and 2cm);
\draw (0,0) ellipse (1.2cm and 2cm);
\draw (-0.6,0) arc(180:360:0.6 and 0.3);
\draw[dashed](-0.6,0) arc(180:0:0.6cm and 0.3cm);
\draw (1.2,0) arc(180:360:0.6cm and 0.3cm);
\draw[dashed](1.2,0) arc(180:0:0.6cm and 0.3cm);
\draw (-2.4,0) arc(180:360:0.6cm and 0.3cm);
\draw[dashed](-2.4,0) arc(180:0:0.6cm and 0.3cm);

\draw (0,-2) node[below] {$p$};
\draw(0,2) node[above] {$q$};
\draw(0,0.6) node {$C_{2}$};
\draw(-1.8,0.6) node {$C_{1}$};
\draw(1.8,0.6)node {$C_{3}$};
\end{scope}


\begin{scope}[xshift=4.5cm,yshift=-2cm]
\draw (0,1.5)--(1.5,1.5)--(2.5,2.5)--(4,2.5);
\draw (1.5,0)--(1.5,1.5)--(2.5,2.5)--(2.5,4);
\draw (0.5,1.5)node{$||$};
\draw (3.5,2.5)node{$||$};
\draw (1.5,0.5)node{$-$};
\draw (2.5,3.5)node{$-$};
\draw (2.5,3.1)node[left]{$C_{3}$};
\draw (2,2)node[below right]{$C_{2}$};
\draw (0.9,1.5)node[above]{$C_{1}$};
\draw (1.5,1.5)node[below left]{$p$};
\draw (2.5,2.5)node[above right]{$q$};
\end{scope}
\end{tikzpicture}
\caption{A banana configuration of curves}
\label{fig: banana configuration}
\end{figure}

\noindent There are 12 singular fibers of $\pi$, each containing a \emph{banana configuration} of curves -- this consists of three rational curves $C_{1}, C_{2}, C_{3}$ all meeting in two distinct points $p,q \in X_{\text{ban}}$ (see Figure \ref{fig: banana configuration}).  The classes in homology of $C_{1}, C_{2}, C_{3}$ generate the three-dimensional lattice of fiber curve classes
\begin{equation}
\Gamma = \text{ker}(\pi_{*}) \subset H_{2}(X_{\text{ban}}, \mathbb{Z}).
\end{equation}
Let $Q_{1}, Q_{2}, Q_{3}$ be formal variables tracking degrees along the three banana curves.  It turns out there is a change of variables (\ref{eqn:changeofvarss}) into those defined by
\begin{equation}\label{eqn:innchvarsintrod}
Q=e^{2 \pi i \sigma}, \,\,\,\,\,\, q = e^{2 \pi i \tau}, \,\,\,\,\,\, y=e^{2 \pi i z} \,\,\,\,\,\,\,\,\,\,\,\,\,\,\,\,\,\, \Omega \coloneqq \begin{pmatrix} \tau & z \\ z & \sigma \end{pmatrix} \in \mathfrak{H}_{2}
\end{equation}
where $\Omega$ is an element of the Siegel upper-half plane $\mathfrak{H}_{2}$ which we introduced in Section \ref{sec:SMFrrrrmSs}.  For more details on the geometry of $X_{\text{ban}}$, see \cite{bryan_donaldson-thomas_2018,leigh_enumerative_2019,leigh_donaldson_2019}.

In this chapter, we will be studying the standard generating functions of Gromov-Witten, Donaldson-Thomas, and Gopakumar-Vafa invariants of the banana manifold.  These three curve-counting theories were introduced in Sections \ref{sec:GWThhhh}, \ref{eqn:DTThhhh}, and \ref{sec:GVinvvvv} respectively.  In terms of variables tracking curve classes, we will switch between $Q_{1}, Q_{2}, Q_{3}$ and those defined in (\ref{eqn:innchvarsintrod}) related to the Siegel upper-half plane.  The central object in the construction of the relevant generating functions is the equivariant elliptic genus of $\mathbb{C}^{2}$ (Section \ref{sec:equivvvindiccsec}) which we denote in this chapter for convenience as
\begin{equation}
\Phi_{0}(\tau, z, x) \coloneqq \text{Ell}_{q,y}(\mathbb{C}^{2}; t).
\end{equation}
We will prove that $\Phi_{0}$ is a weak Jacobi form of weight 0 and matrix index.  The Gopakumar-Vafa invariants of $X_{\text{ban}}$ are encoded non-trivially into the coefficients of $12 \Phi_{0}$, and have the interesting property that they depend for all genus \emph{only} on the value of the quadratic form $4nm-l^{2}$, where classes $(m,n,l)$ are those tracked by the variables $Q, q, y$.    

As a corollary to a theorem of J. Bryan \cite{bryan_donaldson-thomas_2018}, we prove that under a change of variables, the Donaldson-Thomas partition function of $X_{\text{ban}}$ restricted to the lattice of fiber classes $\Gamma$ can be identified as the formal Borcherds lift $\text{BL}(-)$ of $12 \Phi_{0}$ (see Proposition \ref{proppy:DTBryanchvarss}) 
\begin{equation}
Z_{\text{DT}}(X_{\text{ban}})_{\Gamma} = \text{BL}(12 \Phi_{0}) = \prod_{(m,n,l,k)>0} \big(1-Q^{m}q^{n}y^{l}t^{k}\big)^{-12 c(4nm-l^{2}, k)}.
\end{equation}
The formal Borcherds lift is related to the second quantized equivariant elliptic genus of $\mathbb{C}^{2}$.  We choose to emphasize the Borcherds lift perspective because it is closely analogous to other models.  For example, Kawai-Yoshioka \cite{kawai_string_2000} write the topological string partition function of certain $K3$ fibrations as infinite products by Borcherds lifting a weight 0 Weyl-invariant Jacobi form.  In addition, the Borcherds lift of $\text{Ell}_{q,y}(K3)$ -- the unique weak Jacobi form of weight 0 and index 1 -- gives the full Donaldson-Thomas partition function for $K3 \times E$ as an infinite product \cite{oberdieck_holomorphic_2018}.  Also, the formal Borcherds lift contains factors relevant to the Donaldson-Thomas partition function not encoded by the second quantized equivariant elliptic genus (\ref{eqn:rerrrnBLandSQEEG}).  

With this result in hand, we find very nice arithmetic and automorphic structure by passing to the Gromov-Witten theory of $X_{\text{ban}}$, assuming the GW/DT correspondence holds.  The equivariant elliptic genus admits the following Laurent expansion in the variable $\lambda = 2 \pi x$, where $t = e^{i \lambda}$
\begin{equation}
\Phi_{0}(\tau, z, x) = \sum_{g=0}^{\infty} \lambda^{2g-2} \psi_{2g-2}(\tau, z).
\end{equation} 
The functions $\psi_{2g-2}$ are explicit weak Jacobi forms of weight $2g-2$ and index 1 given below (\ref{eqn:defnofphys}).  Recalling the Maass lift $\text{ML}(-)$ of weak Jacobi forms introduced in Section \ref{sec:HeckeJacForm}, one of our main results is the following.  
\begin{thm}
For all genus $g \geq 2$, the Gromov-Witten potential $F_{g}$ of the banana manifold restricted to fiber classes is the Maass lift of $12 \psi_{2g-2}$
\begin{equation}
F_{g}(\Omega) = \text{ML}(12 \psi_{2g-2}).  
\end{equation}
\end{thm}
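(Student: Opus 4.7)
My plan is to derive the theorem by combining the Borcherds-lift expression for $Z_{\text{DT}}(X_{\text{ban}})_{\Gamma}$ with the (assumed) GW/DT correspondence and then matching the $\lambda^{2g-2}$ coefficients of the two sides. Starting from Proposition~\ref{proppy:DTBryanchvarss}, one has
\[
Z_{\text{DT}}(X_{\text{ban}})_{\Gamma} \,=\, \text{BL}(12\Phi_0) \,=\, \exp\bigl(\text{ML}(12\Phi_0)\bigr),
\]
where the second equality is the formal definition of the Borcherds lift at weight zero. Taking logarithms gives $\log Z_{\text{DT}}(X_{\text{ban}})_{\Gamma} = \text{ML}(12\Phi_0)$, and the GW/DT correspondence applied to the fiber lattice $\Gamma$, after the change of variables \ref{eqn:changeofvarss} and the GW/DT substitution relating $t$ with $e^{i\lambda}$, identifies this with $F_{\text{GW}}(X_{\text{ban}})_{\Gamma} = \sum_{g \geq 0}\lambda^{2g-2}F_g(\Omega)$ up to degree-zero bookkeeping.

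The key structural step is then to commute the Maass lift with the Laurent expansion $\Phi_0 = \sum_{g\geq 0}\lambda^{2g-2}\psi_{2g-2}$. The Hecke operators $V_m$ defining $\text{ML}$ act only on the elliptic variables $(\tau,z)$ and are $\mathbb{C}$-linear in the coefficient ring, so by additivity of the Maass lift
\[
\text{ML}(12\Phi_0) \,=\, \sum_{g\geq 0}\lambda^{2g-2}\,\text{ML}(12\psi_{2g-2}).
\]
Equating $\lambda^{2g-2}$ coefficients for $g\geq 2$ then yields $F_g(\Omega) = \text{ML}(12\psi_{2g-2})$, which by the theorem of Aoki cited in the introduction is automatically a meromorphic genus two Siegel modular form of weight $2g-2$.

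The main obstacle I anticipate is the careful matching of the degree-zero (constant) contributions on both sides. On the Jacobi side, $V_0$ acting on $\psi_{2g-2}$ contributes the constant $12\,c(0,0)\,\epsilon(2g-2)$; using $\Theta^{2}(\tau,z) = y^{-1}-2+y+O(q)$ and $E_{2g}=1+O(q)$, one computes $c(0,0) = -2|B_{2g}|/\bigl(2g(2g-2)!\bigr)$, while $\epsilon(2g-2) = -B_{2g-2}/\bigl(2(2g-2)\bigr)$ for $g \geq 2$. On the geometric side, $F_g$ receives the classical degree-zero Hodge-integral term $F_g^{(0)}$ from \ref{eqn:Deg0gCY3}, proportional to $\chi(X_{\text{ban}})$. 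Equating the two reduces to the topological identity $\chi(X_{\text{ban}}) = 24$, which one verifies by inclusion-exclusion on a banana fiber: three rational curves meeting in two points give $\chi = 3\cdot 2 - 3 \cdot 2 + 2 = 2$, and summing over the 12 singular fibers (smooth abelian fibers contributing zero) yields $12\cdot 2 = 24$. The only remaining work is to track carefully the GW/DT sign convention in $t \leftrightarrow e^{i\lambda}$ and the change of variables relating $(Q_1,Q_2,Q_3)$ to $(Q,q,y)$ through the $\lambda$-expansion; once this is done, the coefficient match for $g \geq 2$ completes the proof.
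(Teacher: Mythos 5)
Your overall architecture is the same as the paper's: take the logarithm of the Borcherds-lift form of $Z_{\text{DT}}(X_{\text{ban}})_{\Gamma}$, invoke the (asymptotic) GW/DT correspondence, and match coefficients of $\lambda^{2g-2}$ against $\text{ML}(12\psi_{2g-2})$, treating the degree-zero constants separately. Your degree-zero bookkeeping for $g\geq 2$ is correct and reproduces the paper's: $12\,c_{2g-2}(0)\,\epsilon(2g-2)=F_g^{(0)}$, using (\ref{eqn:psissconstcoef}), (\ref{eqn:epsilonkdef}) and (\ref{eqn:degzeroGWgrtwo}).

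The gap is in your justification of the central interchange $\text{ML}(12\Phi_0)=\sum_{g\geq 0}\lambda^{2g-2}\,\text{ML}(12\psi_{2g-2})$. You assert that the Hecke operators ``act only on the elliptic variables $(\tau,z)$'' and conclude by $\mathbb{C}$-linearity. This is false: for $\Phi_0$ one has
\begin{equation*}
\big(\Phi_0\big|V_m\big)=\frac{1}{m}\sum_{ad=m}\sum_{b=0}^{d-1}\Phi_0\Big(\frac{a\tau+b}{d},az,ax\Big),
\end{equation*}
so $V_m$ rescales the equivariant variable $x\mapsto ax$. Moreover, the operators $V_m$ appearing on the right-hand side of your identity are \emph{different} operators for each $g$, carrying the weight-dependent normalization $m^{2g-3}d^{-(2g-2)}$ from (\ref{eqn:Heckedefn}); a weight-blind linearity argument cannot produce these varying normalizations, and if $V_m$ really acted only on $(\tau,z)$ the coefficient of $\lambda^{2g-2}$ in $\text{ML}(12\Phi_0)$ would be $\tfrac{1}{m}\sum_{ad=m}\sum_b\psi_{2g-2}(\tfrac{a\tau+b}{d},az)$, which is not $(\psi_{2g-2}|V_m)$ for $g\neq 1$. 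The interchange is nonetheless true, but only because the substitution $x\mapsto ax$ contributes a factor $a^{2g-2}$ to the $\lambda^{2g-2}$ coefficient, and $\tfrac{1}{m}a^{2g-2}=a^{2g-3}d^{-1}=m^{2g-3}d^{-(2g-2)}$ is exactly the weight-$(2g-2)$ Hecke normalization; you must also check $V_0$ separately, where matching $\sum_{k>0}c(0,k)\,\mathrm{Li}_1(t^k)$ against $\sum_g\lambda^{2g-2}c_{2g-2}(0)\epsilon(2g-2)$ requires zeta regularization and is the reason the degree-zero comparison is only asymptotic (Theorem \ref{thm:asympGWDTcorrr}). The paper sidesteps the operator-level interchange entirely by working with Fourier coefficients: it expands $\log Z'_{\text{DT}}$ via $\mathrm{Li}_1$, resums over $k$ using (\ref{eqn:fourierrefTWOOO}) so that $t^{kh}$ becomes $\sum_g(h\lambda)^{2g-2}c_{2g-2}$, and thereby produces the $\mathrm{Li}_{3-2g}$ series of (\ref{eqn:MLgengnew}) directly (Proposition \ref{proppy:genfunscML}). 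Supply the verification above and your argument closes.
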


By results of H. Aoki, the Maass lift of a weak Jacobi form of weight $k$ and index 1 is a meromorphic Siegel modular form of weight $k$ and genus two.  The image of the Maass lift is known as the \emph{Maass `Spezialcshar'} inside the ring of meromorphic Siegel modular forms of genus two.  Following a suggestion of G. Oberdieck, we combine our results with those of H. Aoki to conclude the following.

\begin{thm}
For all genus $g \geq 2$, the Gromov-Witten potentials $F_{g}$ of $X_{\text{ban}}$ restricted to fiber classes are meromorphic Siegel modular forms of weight $2g-2$ and genus two lying in the Maass `Spezialschar.'  Moreover, $\chi_{10}^{g-1}F_{g}$ is a holomorphic Siegel modular form of genus two.  
\end{thm}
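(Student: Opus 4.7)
The plan is to combine the previous theorem ($F_g = \text{ML}(12\psi_{2g-2})$) with Aoki's analytic theory of Maass lifts of weak Jacobi forms of index 1 \cite{aoki_formal_2014, aoki_notitle_2018}. Aoki's theorem asserts that $\text{ML}(\varphi_{k,1})$ is a meromorphic genus two Siegel modular form of weight $k$ whenever $\varphi_{k,1}$ is a weak Jacobi form of weight $k$ and index 1. Applied with $k = 2g-2$ and $\varphi_{k,1} = 12\psi_{2g-2}$, this immediately yields the first claim of the theorem, and membership in the Maass `Spezialschar' is automatic, since Aoki's extension of the Eichler-Zagier isomorphism identifies the meromorphic Maass `Spezialschar' as exactly the image of the Maass lift from $\mathbb{J}^{\text{w}}_{k,1}$.

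The main work is to show $\chi_{10}^{g-1}F_g$ is holomorphic, and the strategy is to locate the pole divisor of $F_g$ in $\mathfrak{H}_2$ and match its multiplicity against the zero divisor of $\chi_{10}^{g-1}$. The $Q^0$ Fourier-Jacobi coefficient of $F_g$ is $12(\psi_{2g-2}|V_0)$, and since the $q^0$-coefficient of $\psi_{2g-2}$ equals $c_0(y^{-1}-2+y)$ with $c_0 = \tfrac{|B_{2g}|}{2g(2g-2)!}$, formula (\ref{eqn:WPV0wholjacfrm}) applied with weight $k = 2g-2 \geq 2$ produces a term proportional to $\wp^{(2g-4)}(\tau,z)$, which has a pole of order exactly $2g-2$ along $\{z \in \mathbb{Z}\tau + \mathbb{Z}\}$. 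Thus $F_g$ has a pole of order at least $2g-2$ along the Humbert surface $H_1$ obtained by $Sp_4(\mathbb{Z})$-propagation of $\{z=0\}$. The Gritsenko-Nikulin product (\ref{eqn:GNIKULinfprodd}) gives $\chi_{10}$ a zero of order $2$ along $H_1$: the factor corresponding to $(m,n,l)=(0,0,-1)$ is $(1-y^{-1})^{c(-1)} = (1-y^{-1})^2$ since $c(-1) = 2$ for $\text{Ell}_{q,y}(K3) = 2\varphi_{0,1}$, producing a double zero as $z \to 0$. Consequently $\chi_{10}^{g-1}$ has a zero of order $2g-2$ along $H_1$, matching the expected pole order of $F_g$; the resulting product has weight $2g-2 + 10(g-1) = 12g - 12$.

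The main obstacle is to show that $H_1$ is the \emph{only} pole divisor of $F_g$ and that the pole multiplicity there is exactly $2g-2$ rather than larger. This cannot be extracted from the $Q^0$ coefficient alone: the higher Hecke images $(\psi_{2g-2}|V_m)$ are individually holomorphic weak Jacobi forms, but their $Q$-sum could in principle sharpen the pole or introduce new pole divisors on $\mathfrak{H}_2$. Here one appeals to Aoki's detailed analysis, which shows that for a weak Jacobi form of index 1 whose polar Fourier coefficients $c(n,l)$ are supported on $4n - l^2 \geq -1$, the pole divisor of the Maass lift is contained in $H_1$ with multiplicity controlled by the leading polar coefficient. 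Applying this to $\psi_{2g-2}$, whose only polar coefficients sit at $4n - l^2 = -1$ (coming from the $y^{\pm 1}$ terms in $\Theta^2$), confirms that the pole divisor of $F_g$ is exactly $H_1$ with multiplicity $2g-2$. It follows that $\chi_{10}^{g-1}F_g$ extends across $H_1$ and is a holomorphic Siegel modular form of weight $12g-12$ and genus two.
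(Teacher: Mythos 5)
Your proposal follows essentially the same route as the paper: the first claim is Corollary-level once $F_g = \text{ML}(12\psi_{2g-2})$ is combined with Aoki's theorem that the Maass lift of a weak Jacobi form of even positive weight and index $1$ is a meromorphic genus two Siegel modular form in the Maass `Spezialschar', and the denominator statement $\chi_{10}^{g-1}F_g \in \mathfrak{M}_{12g-12}(\Gamma_2)$ is exactly the lemma the paper cites from Aoki (Sec.\ 3.4 of \cite{aoki_notitle_2018}). Your additional consistency check -- reading the pole of order $2g-2$ along $z=0$ off the $\wp^{(2g-4)}$ term in $(12\psi_{2g-2}|V_0)$ and matching it against the order-two vanishing of $\chi_{10}$ along the Humbert surface from the Gritsenko--Nikulin product -- is correct and a nice supplement, but you rightly defer the full multiplicity/pole-divisor claim to Aoki's analysis, just as the paper does.
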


\noindent In Remark \ref{rmk:MIRRORSYYMM} we give a conjectural explanation in terms of mirror symmetry of why genus two Siegel modular forms are arising as the Gromov-Witten potentials in this particular geometry.  

For genus $2 \leq g < 6$, we are able to give a closed form expression for the potential $F_{g}$ using derivatives of the Weierstrass $\wp$-function.  The result is a polynomial in the ratio $\chi_{12}/ \chi_{10}$ as well as $\mathcal{E}_{4}$ and $\mathcal{E}_{6}$, where these Siegel modular forms were defined in Section \ref{subsec:RingofSMF}.  The first few potentials are:
\begin{equation}
\setlength{\jot}{10pt}
\begin{split}
& F_{2}(\Omega) = \frac{1}{240} \frac{\chi_{12}}{\chi_{10}} \\
& F_{3}(\Omega) = \frac{1}{60480}\bigg(5 \bigg(\frac{\chi_{12}}{\chi_{10}}\bigg)^{2}-6 \, \mathcal{E}_{4}\bigg) \\
& F_{4}(\Omega) = \frac{1}{7257600}\bigg( 35 \bigg( \frac{\chi_{12}}{\chi_{10}}\bigg)^{3} -63 \frac{\chi_{12}}{\chi_{10}} \mathcal{E}_{4} + 30 \mathcal{E}_{6}\bigg) \\
& F_{5}(\Omega) = \frac{1}{319334400} \bigg( 175 \bigg(\frac{\chi_{12}}{\chi_{10}}\bigg)^{4} - 420 \bigg(\frac{\chi_{12}}{\chi_{10}}\bigg)^{2} \mathcal{E}_{4} + 200 \frac{\chi_{12}}{\chi_{10}} \mathcal{E}_{6} + 42 \mathcal{E}_{8}\bigg). \\
\end{split}
\end{equation}

The diagram below nicely summarizes our results in this chapter: the bottom corner shows the equivariant elliptic genus of $\mathbb{C}^{2}$ which is related to the Gopakumar-Vafa invariants, as we will show.  The top left corner shows the Donaldson-Thomas partition function, and the top right is essentially the Gromov-Witten free energy.  We regard the banana manifold as an interesting example whereby a weight zero automorphic object encoding the Gopakumar-Vafa invariants (the elliptic genus) has standard arithmetic lifts producing the Donaldson-Thomas and Gromov-Witten theories in fiber classes.  

\begin{equation}
\begin{tikzcd}
Z_{\text{DT}}(X_{\text{ban}})_{\Gamma} \arrow[dashed, swap]{rrr}{\text{(Asymptotic) GW/DT Corresopndence}}     &     &      &      \sum_{g=0}^{\infty} \lambda^{2g-2} \text{ML}(12 \psi_{2g-2}) \\
                                                            &      &      &        \\
                                                            &      &      &        \\
                                                            &      &      &        \\
                                                            &      &  12 \Phi_{0}(\tau, z, x) = \sum_{g=0}^{\infty} \lambda^{2g-2} 12\psi_{2g-2}(\tau, z) \arrow{uuuull}{\text{Formal Borcherds Lift of}\, 12\Phi_{0}} \arrow[swap]{uuuur}{\text{Maass Lift of the}\, 12 \psi_{2g-2}}    &
\end{tikzcd}
\end{equation}

We will focus only on fiber classes, but O. Leigh has made progress on understanding the structure of invariants incorporating section classes of the banana manifold \cite{leigh_enumerative_2019,leigh_donaldson_2019}.

\section{Return to the Equivariant Elliptic Genus of \boldmath{$\mathbb{C}^{2}$}}

As mentioned in our introductory discussion above, the central object in the analysis of the partition functions of the banana manifold $X_{\text{ban}}$ will be the equivariant elliptic genus of $\mathbb{C}^{2}$.  This was computed in (\ref{eqn:equivELLC2diagspecc}) by equivariant localization.  Changing notations as indicated above, we recall the result here, along with a definition of the Fourier coefficients
\begin{equation}\label{eqn:copiedEQELLGENC2}
\setlength{\jot}{10pt}
\begin{split}
\Phi_{0}(\tau, z, x) & \coloneqq \text{Ell}_{q, y}(\mathbb{C}^{2}; t) =  \sum_{\substack{n \geq 0 \\ l,k \in \mathbb{Z}}} c(n, l, k) q^{n}y^{l}t^{k} \\
&= y^{-1} \prod_{n=1}^{\infty} \frac{(1-yq^{n-1}t)(1-y^{-1}q^{n}t^{-1})(1-yq^{n-1}t^{-1})(1-y^{-1}q^{n}t)}{(1-q^{n-1}t)(1-q^{n}t^{-1})(1-q^{n-1}t^{-1})(1-q^{n}t)}.
\end{split}
\end{equation}
Here $t$ is the equivariant parameter, and we will employ the change of variables $t = e^{2 \pi i x}$.  Notice that $\Phi_{0}$ has a pole at $t=1$ or equivalently, $x =0$.  This fact will prove to be meaningful in the enumerative geometry, and should be thought of as a manifestation of the non-compactness of $\mathbb{C}^{2}$.    

The equivariant elliptic genus $\Phi_{0}$ actually has automorphic properties.  In order to understand these, we must introduce the following notion of a weak Jacobi form of matrix index \cite{oberdieck_gromov-witten_2017}.  Let $L$ be a rational $r \times r$ symmetric matrix such that $2L$ is integral and has even diagonals.  Choose variables $\bm{w} = (w_{1}, \ldots, w_{r}) \in \mathbb{C}^{r}$, and define the symmetric bilinear form $\langle \bm{w}, \bm{w'} \rangle = \bm{w}^{t}L\bm{w'}$ on $\mathbb{C}^{r}$ with associated quadratic form $\mathcal{Q}(\bm{w}) = \langle \bm{w}, \bm{w} \rangle$.  Note that we allow $L$ to be non-degenerate, which will be important.

\begin{defn}
Given $L$ as above, a weak Jacobi form of weight $k \in \mathbb{Z}$ and matrix index $L$ is a meromorphic function $\Phi_{k} : \mathfrak{H} \times \mathbb{C}^{r} \to \mathbb{C}$ satisfying the following modularity and ellipticity conditions
\begin{equation}\label{eqn:MODtransWJFrm}
\Phi_{k}\bigg( \frac{a \tau + b}{c \tau + d}, \frac{\bm{w}}{c \tau +d}\bigg) = (c \tau + d)^{k} \text{exp}\bigg(\frac{2 \pi i c \, \mathcal{Q}(\bm{w})}{c \tau + d}\bigg) \Phi_{k}(\tau, \bm{w}) 
\end{equation}

\begin{equation}\label{eqn:ELLLtransWJFrm}
\Phi_{k}\big(\tau, \bm{w} + \bm{\alpha} \tau + \bm{\mu}) = \text{exp}\bigg(-2 \pi i \tau \mathcal{Q}(\bm{\alpha}) - 4 \pi i \langle \bm{w}, \bm{\alpha} \rangle\bigg) \Phi_{k}(\tau, \bm{w})
\end{equation}
for all $\begin{psmallmatrix}a & b\\ c & d\end{psmallmatrix} \in SL_{2}(\mathbb{Z})$, and $\bm{\alpha}, \bm{\mu} \in \mathbb{Z}^{r}$.  We also require the existence of the Fourier expansion
\begin{equation}\label{eqn:Fourexpmatind}
\Phi_{k}(\tau, \bm{w}) = \sum_{n=0}^{\infty} \sum_{\bm{\gamma} \in \mathbb{Z}^{r}} c(n, \bm{\gamma}) q^{n} \bm{\zeta^{\gamma}}
\end{equation}
where $q = e^{2 \pi i \tau}$ and we define $\bm{\zeta^{\gamma}} = \zeta_{1}^{\gamma_{1}} \cdots \zeta_{r}^{\gamma_{r}}$, where $\zeta_{i} = e^{2 \pi i w_{i}}$.  
\end{defn}

An ordinary weak Jacobi form (see Definition \ref{defn:Jacobifrmmmm}) of weight $k$ and index $m \in \mathbb{Z}_{\geq 0}$ is a weak Jacobi form of matrix index where $L = (m)$ is a $1 \times 1$ matrix and $\mathcal{Q}(z) = mz^{2}$ for all $z \in \mathbb{C}$.  When we refer to a weak Jacobi form without mentioning a matrix index, we will mean an ordinary weak Jacobi form.  

Recall from (\ref{eqn:ClJacFormTheta1}) that $\theta_{1}$ is a Jacobi form of weight $\frac{1}{2}$ and index $\frac{1}{2}$.  A straightforward calculation comparing (\ref{eqn:ClJacFormTheta1}) and (\ref{eqn:copiedEQELLGENC2}) verifies that the equivariant elliptic genus $\Phi_{0}$ can be written in the following way
\begin{equation}\label{eqn:ThetaEllGenct2o}
\Phi_{0}(\tau, z, x) = - \frac{\theta_{1}(\tau, z+x) \theta_{1}(\tau, z-x)}{\theta_{1}(\tau, x)^{2}}.
\end{equation}
As a consistency check, because $\theta_{1}(\tau, -x) = - \theta_{1}(\tau, x)$ we indeed have $\Phi_{0}(\tau, 0, x) =1$, which is the topological Euler characteristic of $\mathbb{C}^{2}$.  With this relationship, and using the known automorphic properties of $\theta_{1}$ provided in (\ref{eqn:jactheetaELL}) and (\ref{eqn:JaccthetaMODDDD}), we can now prove the following proposition.

\begin{proppy}\label{proppy:Phi0isWJFmatind}
The equivariant elliptic genus $\Phi_{0}(\tau, z, x)$ of $\mathbb{C}^{2}$ is a weak Jacobi form of weight 0 and matrix index
\begin{equation}\label{eqn:deggmatrPhi0zrp}
L = \begin{pmatrix}1 & 0\\ 0 & 0 \end{pmatrix}.
\end{equation}
\end{proppy}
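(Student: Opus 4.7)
The plan is to leverage the factorization (\ref{eqn:ThetaEllGenct2o}) of $\Phi_{0}$ in terms of the classical Jacobi theta function $\theta_{1}$ and to propagate its known automorphy through the ratio. Since $\theta_{1}$ satisfies explicit modular, elliptic, and product expansions (recalled in (\ref{eqn:ClJacFormTheta1}), (\ref{eqn:jactheetaELL}), (\ref{eqn:JaccthetaMODDDD})), the three conditions defining a weak Jacobi form of matrix index for $\Phi_{0}$ reduce to straightforward bookkeeping.

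First I would verify the modular transformation (\ref{eqn:MODtransWJFrm}) on the two generators of $SL_{2}(\mathbb{Z})$. Under $T:\tau\mapsto\tau+1$, each of the four $\theta_{1}$ factors acquires a phase $e^{i\pi/4}$; two in the numerator and two in the denominator cancel, so $\Phi_{0}(\tau+1,z,x)=\Phi_{0}(\tau,z,x)$. Under $S:\tau\mapsto -1/\tau$ with $z\mapsto z/\tau$ and $x\mapsto x/\tau$, each $\theta_{1}$ factor produces a common prefactor $-i\sqrt{\tau/i}$ together with an exponential of a quadratic in its elliptic argument. The four $\sqrt{\tau/i}$ prefactors cancel in the ratio (confirming the weight is $0$), while the exponential contributions combine using the algebraic identity $(z+x)^{2}+(z-x)^{2}-2x^{2}=2z^{2}$ to yield the single automorphy factor $\exp(2\pi i z^{2}/\tau)$. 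Since the quadratic form associated to $L$ in (\ref{eqn:deggmatrPhi0zrp}) is $\mathcal{Q}(z,x)=z^{2}$, this is exactly the required factor $\exp(2\pi i c\,\mathcal{Q}(\bm{w})/(c\tau+d))$ at the generator with $c=1, d=0$.

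Next I would verify the ellipticity property (\ref{eqn:ELLLtransWJFrm}) by translating $\bm{w}=(z,x)$ by $\bm{\alpha}\tau+\bm{\mu}$ with $\bm{\alpha}=(a,b)$ and $\bm{\mu}=(\mu_{1},\mu_{2})$ in $\mathbb{Z}^{2}$. Applying (\ref{eqn:jactheetaELL}) to each of the four $\theta_{1}$ factors, the total sign from the $(-1)^{\lambda+\mu}$ contributions works out to $(-1)^{2a+2\mu_{1}-2b-2\mu_{2}}=1$. The accumulated exponential factors collapse, via a short expansion, to $\exp(-2\pi i\,a^{2}\tau-4\pi i\,az)$. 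This matches the required $\exp(-2\pi i \tau\mathcal{Q}(\bm{\alpha})-4\pi i\langle \bm{w},\bm{\alpha}\rangle)$ precisely, since $\mathcal{Q}(\bm{\alpha})=a^{2}$ and $\langle \bm{w},\bm{\alpha}\rangle=az$ for the matrix $L$ in question. The vanishing of all $b$-dependent terms is the analytic reflection of the degeneracy of $L$ along the $x$-direction.

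Finally, for the Fourier expansion (\ref{eqn:Fourexpmatind}), I would invoke directly the product formula (\ref{eqn:copiedEQELLGENC2}) obtained by equivariant localization and expand each factor geometrically in $q$, giving an expansion $\sum_{n\geq 0}\sum_{l,k\in\mathbb{Z}}c(n,l,k)q^{n}y^{l}t^{k}$ with $\bm{\gamma}=(l,k)\in\mathbb{Z}^{2}$, as required (the pole at $t=1$ is permitted since the definition allows $\Phi_{k}$ to be meromorphic). The main obstacle is purely the arithmetic bookkeeping in the $S$ and shift calculations: one must track a combination of phase and Gaussian exponential factors from four different theta factors and verify that the identity $(z+x)^{2}+(z-x)^{2}-2x^{2}=2z^{2}$ produces exactly the quadratic form encoded by $L$. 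All cancellations are elementary but must be executed carefully, and the fact that they work out is really the content of the proposition.
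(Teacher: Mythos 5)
Your proposal is correct and follows essentially the same route as the paper: both express $\Phi_{0}$ via the ratio of $\theta_{1}$ factors in (\ref{eqn:ThetaEllGenct2o}) and verify the modular law on the two generators of $SL_{2}(\mathbb{Z})$ (using the identity $(z+x)^{2}+(z-x)^{2}-2x^{2}=2z^{2}$) together with the elliptic shift law, arriving at the same automorphy factors. Your extra remarks on the sign bookkeeping and on the Fourier-expansion condition via the product formula are minor additions to, not departures from, the paper's argument.
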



Before proving this assertion, let us comment on why this is a satisfying way to view $\Phi_{0}$.  Recall from Theorem \ref{thm:EllGenCYMAN} that the elliptic genus of a compact Calabi-Yau surface is a weak Jacobi form of weight 0 and index 1.  Indeed, $\text{Ell}_{q,y}(K3)$ is the unique (up to scale) weak Jacobi form of this type.  The content of the above proposition is that the equivariant elliptic genus of the \emph{non-compact} Calabi-Yau surface $\mathbb{C}^{2}$ is a weight zero weak Jacobi form (consistent with the compact case) and it is nearly of index 1 by (\ref{eqn:deggmatrPhi0zrp}), though it has matrix index to account for the additional equivariant parameter.  The equivariant elliptic genus was called a ``generalized weak Jacobi form" in \cite{zhou_equivariant_2015}, though it was not regarded as having matrix index.   

Another reason which is not so transparent at the moment, is that there are a handful of models sharply analogous to our proposal later in this chapter.  The object playing the role of $\Phi_{0}$ in these models is a weight 0 automorphic object.

\begin{proof}
We will prove the proposition by directly verifying the modular (\ref{eqn:MODtransWJFrm}) and elliptic (\ref{eqn:ELLLtransWJFrm}) transformation laws of weak Jacobi forms using the modular (\ref{eqn:JaccthetaMODDDD}) and elliptic (\ref{eqn:jactheetaELL}) transformation laws of $\theta_{1}$.  We begin by noting that from (\ref{eqn:JaccthetaMODDDD}) it is clear that $\Phi_{0}(\tau + 1, z, x) = \Phi_{0}(\tau, z, x)$, which is the expected transformation of a weak Jacobi form.  We verify the transformation behavior under the other generator of $SL_{2}(\mathbb{Z})$ with the following computation
\begin{equation}
\setlength{\jot}{12pt}
\begin{split}
\Phi_{0} \big( - \frac{1}{\tau}, & \frac{z}{\tau}, \frac{x}{\tau} \big) = - \frac{\theta_{1}\big(-\frac{1}{\tau}, \frac{z+x}{\tau}\big) \theta_{1}\big(-\frac{1}{\tau}, \frac{z-x}{\tau} \big)}{\theta_{1}\big(-\frac{1}{\tau}, \frac{x}{\tau}\big)^{2}}\\
& = \text{exp} \bigg(i \pi \frac{(z+x)^{2}}{\tau} + i \pi \frac{(z-x)^{2}}{\tau}-2 \pi i \frac{x^{2}}{\tau}\bigg) \Phi_{0}(\tau, z, x) = \text{exp}\bigg( \frac{2 \pi i z^{2}}{\tau}\bigg) \Phi_{0}(\tau, z, x). 
\end{split}
\end{equation}
By (\ref{eqn:MODtransWJFrm}) this is indeed how a weight zero weak Jacobi form should transform where the quadratic form associated to the matrix (\ref{eqn:deggmatrPhi0zrp}) is $\mathcal{Q}(z, w) = z^{2}$.  We now verify the elliptic transformation law as follows
\begin{equation}
\setlength{\jot}{12pt}
\begin{split}
\Phi_{0}\big(\tau, z + & \alpha_{1} \tau + \mu_{1}, x + \alpha_{2}\tau + \mu_{2}\big) \\
& = - \frac{ \theta_{1}\big(\tau, z+x +\tau(\alpha_{1} + \alpha_{2}) + \mu_{1} + \mu_{2}\big) \theta_{1}\big(\tau, z-x +\tau(\alpha_{1} - \alpha_{2}) + \mu_{1} - \mu_{2}\big)}{\theta_{1}\big(\tau, x + \tau \alpha_{2} + \mu_{2}\big)^{2}}\\ 
&= \text{exp}\big(-2 \pi i\big( \tau \alpha_{1}^{2}  + 2 \alpha_{1} z\big)\big) \Phi_{0}(\tau, z, x).
\end{split}
\end{equation}
By (\ref{eqn:ELLLtransWJFrm}) this matches the expected transformation given the quadratic form associated to the matrix (\ref{eqn:deggmatrPhi0zrp}).

\end{proof}

We have already mentioned the change of variables $t = e^{2 \pi i x}$.  To avoid dealing with factors of $2 \pi$, let us define $\lambda = 2 \pi x$.  The Laurent expansion of $\Phi_{0}(\tau, z, x)$ in the variable $\lambda$ will play a crucial role on the Gromov-Witten side of our proposal.  By computations of Zhao \cite{zhou_regularized_2015}, this expansion takes the form
\begin{equation} \label{eqn:genexp}
\Phi_{0}(\tau, z, x) = \sum_{g = 0}^{\infty} \lambda^{2g-2} \psi_{2g-2}(\tau, z)
\end{equation}
where the $\psi_{2g-2}$ are weak Jacobi forms of weight $2g-2$ and index 1 for all $g \geq 0$.  Modifying results of \cite{zhou_regularized_2015} to match our conventions, they are given explicitly as
\begin{equation} \label{eqn:defnofphys}
\psi_{2g-2}(\tau, z) = \Theta^{2}(\tau, z) \cdot 
\begin{cases}
\begin{aligned}
& 1, & g=0 \\[1ex]
& \wp(\tau, z), & g=1 \\[1ex]
& \frac{|B_{2g}|}{2g (2g-2)!} \, E_{2g}(\tau), & g \geq 2
\end{aligned}
\end{cases}
\end{equation}
where $B_{2g}$ are the Bernoulli numbers, and where we recall that $\Theta^{2}$ presented in (\ref{eqn:varThetaa}) is the unique (up to scale) weak Jacobi form of weight -2 and index 1.  In addition, by (\ref{eqn:WPFuncK3}), $\Theta^{2} \wp$ is an equivalent way of writing the unique weak Jacobi form of weight 0 and index 1 up to scale.  It is apparent from (\ref{eqn:defnofphys}) that $\psi_{0}$ does not fit the pattern held by all other $\psi_{2g-2}$, a fact which arises in a sense due to the lack of modularity of $E_{2}$.  For all $g \neq 1$, we have
\[ \psi_{2g-2}(\tau, z) \in \Theta^{2}(\tau, z) \, M_{2g} (\Gamma_{1}), \,\,\,\,\,\,\,\,\,\,\,\,\,\,\, (g \neq 1).\]

Let us define the Fourier coefficients of $\psi_{2g-2}$ by
\begin{equation}
\psi_{2g-2}(\tau, z) = \sum_{n \geq 0, l \in \mathbb{Z}} c_{2g-2}(4n- l^{2}) q^{n} y^{l}.
\end{equation}
Because the $\psi_{2g-2}$ are weak Jacobi forms of index 1, by Theorem \ref{thm:wJacfrmm} the Fourier coefficients depend only on $4n-l^{2}$, as our notation suggests.  Comparing (\ref{eqn:copiedEQELLGENC2}) and (\ref{eqn:genexp}), we clearly have the following relationship between the Fourier coefficients $c(n, l, k)$ of $\Phi_{0}$ and the $c_{2g-2}(4n-l^{2})$ for all $n, l$
\begin{equation} \label{eqn:fourierref}
\sum_{g \geq 0} \lambda^{2g-2} c_{2g-2}(4n-l^{2}) = \sum_{k \in \mathbb{Z}} c(n, l, k) t^{k}.  
\end{equation}

\begin{proppy}\label{proppy:FcoeffEllgens23}
For all $n, l, k$, the coefficients $c(n, l, k)$ depend only on $4n - l^{2}$ and $k$.  Equivalently, if $4n - l^{2} = 4n' - l'^{2}$, then $c(n, l, k) = c(n', l', k)$ for all $k \in \mathbb{Z}$.  In addition, $c(n, l, k)$ vanishes unless $4n - l^{2} \geq -1$.    
\end{proppy}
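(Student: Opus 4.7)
The plan is to reduce both claims to the index-one structure of the coefficient Jacobi forms $\psi_{2g-2}$ from the Laurent expansion (\ref{eqn:genexp}), together with the identity (\ref{eqn:fourierref}) that transfers information from that expansion into the Fourier expansion of $\Phi_{0}$ in $t$.

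First I would invoke Theorem \ref{thm:wJacfrmm} for each $\psi_{2g-2}$. By Zhao's result recalled in (\ref{eqn:genexp})--(\ref{eqn:defnofphys}), every $\psi_{2g-2}$ is a weak Jacobi form of weight $2g-2$ and index $1$; hence its Fourier coefficient at $q^{n}y^{l}$ depends only on the discriminant $\Delta := 4n-l^{2}$ (and the notation $c_{2g-2}(\Delta)$ already records this). Moreover, any index-one weak Jacobi form has Fourier support contained in $\{\Delta \geq -1\}$: this is manifest for $\Theta^{2}$ from its explicit expansion, and the factorizations $\psi_{0} = \Theta^{2}\wp = \tfrac{1}{12}\varphi_{0,1}$ and $\psi_{2g-2} \in \Theta^{2} M_{2g}(\Gamma_{1})$ for $g\geq 2$ reduce the general case to multiplying $\Theta^{2}$ by a holomorphic $q$-series, which only shifts the $q$-degree upward and therefore preserves the bound $\Delta \geq -1$. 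Consequently $c_{2g-2}(\Delta)=0$ whenever $\Delta < -1$, for every $g\geq 0$.

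Next I would combine this with the relation (\ref{eqn:fourierref}), which expresses, for each fixed $(n,l)$, the Laurent series in $\lambda$ with coefficients $c_{2g-2}(4n-l^{2})$ as the expansion $\sum_{k}c(n,l,k)\,t^{k}$ under $t=e^{i\lambda}$. Since the left-hand side depends on $(n,l)$ only through $\Delta$, so does the right-hand side as a function of $t$ (equivalently, of $\lambda$). Expanding $t^{k}=e^{ik\lambda}$ and matching powers of $\lambda$ on both sides of (\ref{eqn:fourierref}) yields a triangular linear system that determines each $c(n,l,k)$ from the sequence $\{c_{2g-2}(\Delta)\}_{g\geq 0}$; in particular the $c(n,l,k)$ depend only on $(\Delta,k)$, giving the first assertion. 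For the vanishing statement, if $\Delta<-1$ then the previous paragraph shows the left-hand side of (\ref{eqn:fourierref}) vanishes identically in $\lambda$, forcing $c(n,l,k)=0$ for every $k\in\mathbb{Z}$.

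The main delicate point I anticipate is justifying that (\ref{eqn:fourierref}) is a genuine identity rather than a heuristic, since $\Phi_{0}$ has poles at $t=1$ (i.e.\ at $\lambda=0$) coming from the $n=1$ factors of the product (\ref{eqn:copiedEQELLGENC2}); the two sides of (\ref{eqn:fourierref}) must be compared in a common formal framework. The natural convention, already implicit in the expansion (\ref{eqn:genexp}) into weak Jacobi forms of index $1$, is to treat these singular denominators as formal symmetric Laurent series in $t^{\pm 1}$, which is exactly what the Laurent expansion in $\lambda$ produces on the left-hand side. Once this convention is fixed, (\ref{eqn:fourierref}) holds term-by-term in $q^{n}y^{l}$ and the argument above closes; the rest of the proof is essentially bookkeeping.
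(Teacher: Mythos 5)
Your proposal is correct and follows essentially the same route as the paper: both arguments rest on the identity (\ref{eqn:fourierref}) together with the fact that the $\psi_{2g-2}$ are index-one weak Jacobi forms, so that the left-hand side depends only on $4n-l^{2}$ and its coefficients vanish for $4n-l^{2}<-1$. Your additional justification of the support bound via the factorization through $\Theta^{2}$, and your remark on the formal interpretation of the $t$-expansion near the pole at $t=1$, simply fill in details the paper leaves implicit.
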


\begin{proof}
For the first claim, because the lefthand side of (\ref{eqn:fourierref}) explicitly depends only on $4n-l^{2}$ we must have
\[ \sum_{k \in \mathbb{Z}} c(n, l, k) t^{k} = \sum_{k \in \mathbb{Z}} c(n', l', k) t^{k}.\]
But this implies that coefficients of corresponding powers of $t$ agree, which proves the first statement.  To show the second claim, we again use (\ref{eqn:fourierref}) along with the fact that the coefficients $c_{2g-2}(4n-l^{2})$ vanish unless $4n-l^{2} \geq -1$ since $\psi_{2g-2}$ are index 1 weak Jacobi forms of even weight.    
\end{proof}

We therefore change notation, and denote the Fourier coefficients of $\Phi_{0}$ as $c(4n-l^{2}, k)$.  We rewrite equation (\ref{eqn:fourierref}) in the new notation 
\begin{equation} \label{eqn:fourierrefTWOOO}
\sum_{g \geq 0} \lambda^{2g-2} c_{2g-2}(4n-l^{2}) = \sum_{k \in \mathbb{Z}} c(4n-l^{2}, k) t^{k}
\end{equation}
as this relation will be crucial when we pass from the Donaldson-Thomas to the Gromov-Witten theory of the banana manifold.  We will make use of the following closed form expressions for $c(4n-l^{2}, k)$ for a few of the smallest values of $4n-l^{2}$.  These can be computed directly from (\ref{eqn:copiedEQELLGENC2})
\begin{equation}\label{eqn:lowvalkcoeff}
c(-1,k)=
\begin{cases}
\begin{aligned}
& 0 , & k \leq 0 \\[1ex]
& -k , & k>0 \\[1ex]
\end{aligned}
\end{cases}    
\,\,\,\,\,\,\,\,\,\,\,\,\,\,\,\,\,\,\,\,\,\, c(0, k) =
\begin{cases}
\begin{aligned}
& 0 & k<0  \\[1ex]
& 1, & k=0 \\[1ex]
& 2k, & k >0
\end{aligned}
\end{cases}
\end{equation}
In addition, we will also make use of the following explicit values for the constant coefficients of the Jacobi forms $\psi_{2g-2}$ which can be computed directly from (\ref{eqn:defnofphys})
\begin{equation}\label{eqn:psissconstcoef}
c_{-2}(0) = -2, \,\,\,\,\,\,\,\,\,\,\,\,\,\,\,\,\,\,  c_{2g-2}(0) = -2 c_{2g-2}(-1) = - \frac{|B_{2g}|}{g (2g-2)!} \,\,\,\,\,\,\,\, (g \geq 2).
\end{equation}

\subsection{The Formal Borcherds Lift of the Equivariant Elliptic Genus}

In Section \ref{sec:HeckeJacForm} we gave an introduction to the theory of Hecke operators acting on ordinary weak Jacobi forms.  An important part of our work will be applying the results of that section to Hecke operators acting on the weak Jacobi forms $\psi_{2g-2}$.  This will come later but for now, we want to study their action on the equivariant elliptic genus $\Phi_{0}$.  Using Proposition \ref{proppy:Phi0isWJFmatind} where we established that $\Phi_{0}$ is a weak Jacobi form of weight 0 and matrix index, we need to slightly adjust the definitions and results on ordinary Jacobi forms in an obvious way.  

\begin{defn}
The action of the Hecke operator $V_{m}$ on $\Phi_{0}$ for $m>0$ is defined by
\begin{equation}
\big(\Phi_{0} \big| V_{m} \big) = \frac{1}{m} \sum_{\substack{ad = m \\ a > 0}} \sum_{b=0}^{d-1} \Phi_{0}\bigg( \frac{a \tau + b}{d}, az, ax\bigg).  
\end{equation}
\end{defn}

\noindent Notice this definition is the natural extension of (\ref{eqn:Heckedefn}) to matrix index Jacobi forms of weight 0.  For $m>0$, it is straightforward to show that the action of $V_{m}$ on a weak Jacobi form of weight $k$ and matrix index $L$ produces a weak Jacobi form of the same weight $k$ and matrix index $mL$.  

The following result is the natural generalization of Lemma \ref{lemmy:genfunccheckoops} to the case of $\Phi_{0}$.  We want give an expression for the generating function of the Hecke operators in terms of the Fourier coefficients and the polylogarithm defined in (\ref{eqn:polylogrthm}).  We omit the proof since it is completely formal and identical to that of Lemma \ref{lemmy:genfunccheckoops}.  

\begin{lemmy}
Let $c(4n-l^{2}, k)$ be the Fourier coefficients of the equivariant elliptic genus $\Phi_{0}$.  We have
\begin{equation}\label{eqn:genfunncHeccke}
\sum_{m=1}^{\infty} Q^{m} \big( \Phi_{0} \big| V_{m} \big) = \sum_{\substack{ m>0, n \geq 0, \\ l,k \in \mathbb{Z}}} c(4nm-l^{2}, k) \text{Li}_{1}(Q^{m} q^{n}y^{l}t^{k}).
\end{equation}
\end{lemmy}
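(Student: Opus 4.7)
The plan is to mimic the proof of Lemma \ref{lemmy:genfunccheckoops} essentially verbatim, with the only substantive bookkeeping being the extra equivariant variable $t$ and the weight-zero specialization of the general prefactor $N^{k-1}d^{-k}$ (which becomes simply $1/m$). First I would substitute the Fourier expansion (\ref{eqn:copiedEQELLGENC2}) of $\Phi_0$ into the defining formula
\[
\big(\Phi_{0}\big|V_{m}\big) = \frac{1}{m}\sum_{\substack{ad=m\\a>0}}\sum_{b=0}^{d-1}\Phi_{0}\!\left(\tfrac{a\tau+b}{d},az,ax\right),
\]
obtaining a triple sum whose $b$-dependence is the geometric factor $\sum_{b=0}^{d-1}e^{2\pi i nb/d}$. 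This sum equals $d$ when $d\mid n$ and vanishes otherwise, exactly as in the ordinary case.

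Next I would make the change of variable $n=dn'$ to eliminate the divisibility constraint, and combine with the overall $Q^m$ and the $1/m$ prefactor. Writing $Q^m=Q^{ad}=(Q^d)^a$ and using $d/m=1/a$, the doubly indexed sum over $(a,d)$ with $ad=m$, once summed over $m\geq 1$, becomes an unconstrained sum over independent $a\geq 1$ and $d\geq 1$. After collecting the $a$-dependence one arrives at
\[
\sum_{m\geq 1}Q^{m}\big(\Phi_{0}\big|V_{m}\big)=\sum_{\substack{d\geq 1,\,n'\geq 0\\ l,k\in\mathbb{Z}}} c(4dn'-l^{2},k)\sum_{a=1}^{\infty}\frac{1}{a}\big(Q^{d}q^{n'}y^{l}t^{k}\big)^{a},
\]
where I have already used Proposition \ref{proppy:FcoeffEllgens23} to write the Fourier coefficients of $\Phi_0$ in the form $c(4n-l^{2},k)$. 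Recognising the inner sum as $\mathrm{Li}_{1}(Q^{d}q^{n'}y^{l}t^{k})$ by the definition (\ref{eqn:polylogrthm}) and relabelling $d\mapsto m$, $n'\mapsto n$, yields exactly the right-hand side of (\ref{eqn:genfunncHeccke}).

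Since the argument is purely formal manipulation of Fourier series, there is no real obstacle; the only point deserving some care is the interchange of the summations over $m$ and over divisor pairs $(a,d)$, which is justified because the resulting series converges absolutely on the appropriate domain of the variables $(Q,q,y,t)$ (e.g.\ in the regime where each satisfies $|\cdot|<1$ and $|t|,|y|$ are suitably bounded so that the Fourier expansion of $\Phi_0$ itself converges absolutely). The weight-zero specialization is what produces $\mathrm{Li}_1$ in place of the $\mathrm{Li}_{1-k}$ appearing in Lemma \ref{lemmy:genfunccheckoops}, and the equivariant variable $t$ simply rides along as an inert bookkeeping parameter throughout the calculation.
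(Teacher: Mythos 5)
Your proposal is correct and follows exactly the route the paper intends: the paper explicitly omits the proof of this lemma on the grounds that it is identical to the proof of Lemma \ref{lemmy:genfunccheckoops}, and your adaptation (the $b$-sum as a geometric series forcing $d\mid n$, the reindexing $n=dn'$, the weight-zero prefactor $1/m$ collapsing $N^{k-1}d^{-k}$ so that $\mathrm{Li}_{1}$ appears, and the inert variable $t$) is precisely that argument. The added remark on absolute convergence is harmless but not needed, since the identity is asserted only as a formal power series manipulation.
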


We must also define the Hecke operator $V_{0}$ and its action on $\Phi_{0}$.  Recall that this is more subtle (\ref{T0WEAKHOL}), and in the case of weight 0 one must be careful to not allow ill-defined terms proportional to $\zeta(1)$.  We define
\begin{equation}\label{eqn:V0actionPhi0}
\big( \Phi_{0} \big| V_{0} \big) \coloneqq \sum_{(n, l,k)>0} c(-l^{2}, k) \text{Li}_{1}(q^{n}y^{l}t^{k})
\end{equation}
where $(n, l, k)>0$ means either $n>0$, or $n=0, l>0$, or $n=l=0, k>0$.  This is completely analogous to (\ref{T0WEAKHOL}) in the case of weight 0.  

\begin{defn}
We define the formal Maass lift of $\Phi_{0}$ to be
\begin{equation}
\text{ML}(\Phi_{0}) \coloneqq \sum_{m=0}^{\infty} Q^{m} \big( \Phi_{0} \big| V_{m} \big)
\end{equation}
as well as the formal Borcherds lift of $\Phi_{0}$ to be the exponential of the Maass lift
\begin{equation}
\text{BL}(\Phi_{0}) = \text{exp}\big( \text{ML}(\Phi_{0}) \big).
\end{equation}
We call these lifts ``formal" as we cannot make conclusive claims about their automorphy.  
\end{defn}

Combining (\ref{eqn:genfunncHeccke}) and (\ref{eqn:V0actionPhi0}), the formal Maass lift of $\Phi_{0}$ is easily seen to take the following form
\begin{equation}\label{eqn:fullformmMaassriftt}
\text{ML}(\Phi_{0}) = \sum_{(m,n,l,k)>0} c(4nm-l^{2}, k) \text{Li}_{1}(Q^{m} q^{n} y^{l} t^{k})
\end{equation}
where we define the notation $(m,n,l,k)>0$ to mean that any of the following hold
\begin{equation}\label{eqn:possscondds2}
(i) \,\, m>0  \,\,\,\,\,\,\, (ii) \,\, m=0, \, n>0  \,\,\,\,\,\,\, (iii) \,\, m=n=0, \, l>0 \,\,\,\,\,\,\,  (iv) \,\, m=n=l=0, \, k>0.
\end{equation}
An immediate observation one should make from (\ref{eqn:fullformmMaassriftt}) is that because $\text{Li}_{1}(x) = -\log(1-x)$, the exponential of the formal Maass lift of $\Phi_{0}$ can be written as an infinite product.  Because the formal Maass lift of an object of weight k contains $\text{Li}_{1-k}$, this phenomenon is special to the case of weight 0.  The upshot is that the formal Borcherds lift of $\Phi_{0}$ can be written as
\begin{equation}\label{eqn:formBORRRriftt}
\text{BL}(\Phi_{0}) = \prod_{(m,n,l,k)>0} \big(1-Q^{m}q^{n} y^{l} t^{k}\big)^{-c(4nm-l^{2}, k)}.
\end{equation}
It is this expression (raised to the power of 12) which we will soon prove to exactly coincide with the Donaldson-Thomas partition function of the banana manifold after a simple change of variables.

\subsection{Relation to Second Quantization and Analogy to $\boldmath{\chi_{10}}$}

We remark that the formal Borcherds lift (\ref{eqn:formBORRRriftt}) looks like an equivariant analog of the inverse of the Igusa cusp form $1/\chi_{10}$.  The infinite product form of Gritsenko-Nikulin (\ref{eqn:GNIKULinfprodd}) gives
\begin{equation}\label{eqn:GNIKULinfprodd222}
\frac{Qqy}{\chi_{10}(\Omega)} = \prod_{(m,n,l)>0} \big(1-Q^{m}q^{n}y^{l}\big)^{-c(4nm-l^{2})}
\end{equation}
where here, $c(4nm-l^{2})$ are the Fourier coefficients of $\text{Ell}_{q,y}(K3)$.  This looks nearly identical to (\ref{eqn:formBORRRriftt}) replacing the elliptic genus of $K3$ with $\Phi_{0}(\tau, z, x) = \text{Ell}_{q,y}(\mathbb{C}^{2}; t)$ and accounting for the equivariant parameter.  The analogy is even tighter noting that both $K3$ and $\mathbb{C}^{2}$ are Calabi-Yau surfaces, and both of their elliptic genera are weight 0 weak Jacobi forms, of matrix index in the case of $\mathbb{C}^{2}$.

Recall that starting with the Gritsenko-Nikulin form (\ref{eqn:GNIKULinfprodd222}), the inverse of the Igusa cusp form can be written as a prefactor multiplied by the generating function of the elliptic genera of the Hilbert schemes of points on a $K3$ surface.  This was recorded in (\ref{eqn:chi10prodDMVVform}) which we recall here for convenience
\begin{equation}
\frac{1}{\chi_{10}(\Omega)} = \frac{1}{Q \varphi_{10,1}(\tau, z)} \sum_{m=0}^{\infty} Q^{m} \text{Ell}_{q,y}\big( \text{Hilb}^{m}(K3) \big).
\end{equation}
The infinite sum above is known as the \emph{second quantization of the elliptic genus} $\text{Ell}_{q,y}(K3)$.  By the following proposition, $\text{BL}(\Phi_{0})$ has the same property.    

\begin{proppy}
The formal Borcherds lift of the equivariant elliptic genus of $\mathbb{C}^{2}$ can be written
\begin{equation}\label{eqn:rerrrnBLandSQEEG}
\begin{split}
& \,\,\,\,\,\,\,\,\,\,\,\,\,\,  \text{BL}(\Phi_{0}) = \mathcal{Z}_{0}(q,y,t) \sum_{n=0}^{\infty} Q^{n} \text{Ell}_{q, y} \big(\text{Hilb}^{n}(\mathbb{C}^{2}) ; t \big) \\
& \mathcal{Z}_{0}(q,y,t) = \frac{M(1,t)^{2}}{M(y,t)} \prod_{n=1}^{\infty} \frac{M(q^{n},t)^{2}}{(1-q^{n}) M(q^{n}y, t)M(q^{n}y^{-1},t)}
\end{split}
\end{equation}
where $M(x,t) = \prod_{n=1}^{\infty} (1-x t^{n})^{-n}$ is the weighted MacMahon function.  The infinite sum in the above expression is called the second quantization of the equivariant elliptic genus $\Phi_{0}$.    
\end{proppy}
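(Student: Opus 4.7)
The plan is to split the infinite product defining $\text{BL}(\Phi_0)$ according to whether $m \geq 1$ or $m = 0$ and handle the two pieces separately. For the $m \geq 1$ piece, I would directly invoke the equivariant DMVV formula of Waelder (\ref{eqn:WaelderDMVVFORM}): since the product condition $(m,n,l,k) > 0$ is automatic once $m \geq 1$, this piece is literally
\[
\prod_{\substack{m > 0, n \geq 0 \\ l, k \in \mathbb{Z}}} \big(1 - Q^m q^n y^l t^k\big)^{-c(4nm - l^2, k)} = \sum_{n=0}^{\infty} Q^n \, \text{Ell}_{q,y}\big(\text{Hilb}^n(\mathbb{C}^2); t\big).
\]
So the entire content of the theorem lies in showing that the $m = 0$ piece equals the claimed prefactor $\mathcal{Z}_0(q,y,t)$.

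For the $m=0$ piece, the key simplification is Proposition \ref{proppy:FcoeffEllgens23}: the Fourier coefficients $c(-l^2, k)$ vanish unless $-l^2 \geq -1$, which restricts $l \in \{-1, 0, 1\}$. The product then breaks into three manageable subproducts indexed by $l$, together with the boundary contributions from $(n, l, k) = (0, 1, k > 0)$ and $(0, 0, k > 0)$ prescribed by the definition of $(m,n,l,k)>0$ in (\ref{eqn:possscondds2}). I would then substitute the explicit closed-form values (\ref{eqn:lowvalkcoeff}) of $c(-1, k)$ (equal to $-k$ for $k > 0$ and $0$ otherwise) and $c(0, k)$ (equal to $1$ at $k = 0$, $2k$ for $k > 0$, and $0$ for $k < 0$). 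This immediately produces products of the shape $\prod_{k \geq 1}(1 - x t^k)^{\pm k}$, which are exactly positive and negative powers of the weighted MacMahon function $M(x, t) = \prod_{k \geq 1}(1 - x t^k)^{-k}$.

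Concretely, the $l = 0$ contribution yields $M(1, t)^2 \prod_{n \geq 1}(1 - q^n)^{-1} M(q^n, t)^2$ (where the isolated $\prod_{n \geq 1}(1-q^n)^{-1}$ comes from the $k = 0$ mode using $c(0, 0) = 1$); the $l = 1$ contribution yields $M(y, t)^{-1} \prod_{n \geq 1} M(q^n y, t)^{-1}$; and the $l = -1$ contribution, which requires $n \geq 1$, yields $\prod_{n \geq 1} M(q^n y^{-1}, t)^{-1}$. Multiplying these together reproduces $\mathcal{Z}_0(q,y,t)$ exactly as stated.

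There is no serious obstacle here — the proof is essentially a careful bookkeeping of index ranges — but the one place demanding attention is the boundary arithmetic: correctly enforcing the definition of $(m,n,l,k)>0$ at $n = 0$ (which excludes $l < 0$ and also excludes $l = 0$ with $k \leq 0$), while simultaneously respecting the support condition $4nm - l^2 \geq -1$. Keeping these two constraints straight is what isolates the nontrivial prefactor $\mathcal{Z}_0$ from the second-quantized Hilbert scheme sum.
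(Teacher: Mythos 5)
Your proposal is correct and follows essentially the same route as the paper: the $m\geq 1$ factors are identified with the second-quantized sum via Waelder's equivariant DMVV formula (\ref{eqn:resultofWaelderr}), and the $m=0$ factors are evaluated using the support condition $4nm-l^{2}\geq -1$ together with the explicit coefficients in (\ref{eqn:lowvalkcoeff}). Your bookkeeping of the boundary cases in (\ref{eqn:possscondds2}) and the resulting MacMahon factors matches the paper's (terser) computation exactly.
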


\begin{proof}
By the result of Waelder (\ref{eqn:resultofWaelderr}) we know that the factors of the infinite product (\ref{eqn:formBORRRriftt}) with $m>0$ produce the generating function of the equivariant elliptic genera of $\text{Hilb}^{m}(\mathbb{C}^{2})$.  We therefore must verify that all the factors with $m=0$ takes the desired form.  Letting $\mathcal{Z}_{0}(q, y, t)$ denote the product of these terms, using (\ref{eqn:possscondds2}) we see
\begin{equation}
\mathcal{Z}_{0}(q,y,t) = \prod_{\substack{n \geq 1, \\ l,k \in \mathbb{Z}}} \big(1-q^{n}y^{l}t^{k}\big)^{-c(-l^{2}, k)} \prod_{k \in \mathbb{Z}}\big(1-yt^{k}\big)^{-c(-1, k)} \prod_{k \geq 1}\big(1-t^{k}\big)^{-c(0,k)}.
\end{equation}
Using the fact that $c(4nm-l^{2}, k)=0$ for $4nm-l^{2} < -1$, as well as some values of the coefficients (\ref{eqn:lowvalkcoeff}), it is straightforward to see that $\mathcal{Z}_{0}(q, y, t)$ takes the desired form.    
\end{proof}

Though the formal Borcherds lift is evidently very closely related to the second quantization of $\Phi_{0}$, the prefactors combining to give $\mathcal{Z}_{0}(q, y, t)$ are indeed important in the Donaldson-Thomas partition function of the banana manifold.  We therefore choose to emphasize the Borcherds lift in what follows.  Note that the second quantization of $\Phi_{0}$ is identified with the partition function of rank one (framed) instantons on $\mathbb{C}^{2}$.

\section{Donaldson-Thomas Partition Function of the Formal Banana Manifold}

As we defined in (\ref{eqn:origdefnBAN}), let $X_{\text{ban}}$ be the banana manifold, and let $C_{1}, C_{2}, C_{3}$ be the banana curves.  We will denote by $\underline{\bf{d}} = (d_{1}, d_{2}, d_{3})$ the classes $d_{1}C_{1} + d_{2}C_{2} + d_{3}C_{3}$ in the lattice of fiber classes $\Gamma \subset H_{2}(X_{\text{ban}}, \mathbb{Z})$, and we introduce formal variables $Q_{1}, Q_{2}, Q_{3}$ tracking degrees along the respective curves.  The following is a theorem of J. Bryan \cite{bryan_donaldson-thomas_2018}.  

\begin{thm} \label{thm:JimTHM}
The Donaldson-Thomas partition function of the banana manifold $X_{\text{ban}}$ restricted to fiber classes is given by the infinite product
\begin{equation} \label{eqn:DTBanpartfunc}
Z_{\text{DT}}(X_{\text{ban}})_{\Gamma}= \prod_{d_{1}, d_{2}, d_{3} \geq 0} \prod_{k \in \mathbb{Z}} \big(1-Q_{1}^{d_{1}} Q_{2}^{d_{2}} Q_{3}^{d_{3}}t^{k} \big)^{-12 c(||\underline{\bf{d}}||, k)}
\end{equation}
where we require $k \geq 1$ if $\underline{\bf{d}} = (0,0,0)$.  In this formula, $||\underline{\bf{d}}||$ is the quadratic form
\begin{equation}\label{eqn:quaddformdsss}
||\underline{\bf{d}}|| \coloneqq 2d_{1}d_{2} + 2d_{1}d_{3} + 2d_{2}d_{3} - d_{1}^{2}-d_{2}^{2}-d_{3}^{2}
\end{equation}
and $c(||\underline{\bf{d}}||, k)$ are the coefficients of the equivariant elliptic genus of $\mathbb{C}^{2}$, which we denote $\Phi_{0}$.       
\end{thm}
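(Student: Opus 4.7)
The theorem compresses several deep mechanisms of modern curve counting into one product formula, so the plan is to attack it by separating out three independent sources of structure: the multiplicative factor of 12, the quadratic form $||\underline{\bf{d}}||$, and the Fourier coefficients $c(||\underline{\bf{d}}||, k)$ of the equivariant elliptic genus. First I would try to localize globally. Since we are only summing over fiber classes $\beta \in \Gamma$, a fiber-class ideal sheaf is supported set-theoretically on fibers of $\pi : X_{\text{ban}} \to \mathbb{P}^{1}$. Using a degeneration (or motivic/constructible) argument, the contribution of generic abelian-surface fibers should vanish (abelian varieties have trivial Behrend-weighted contributions because of their free translation action, analogous to $\text{Ell}_{q,y}(A) = 0$) and the partition function should factor over the twelve singular fibers, producing
\begin{equation}
Z_{\text{DT}}(X_{\text{ban}})_{\Gamma} = Z_{\text{DT}}^{\text{loc}}(B)^{12}
\end{equation}
where $Z_{\text{DT}}^{\text{loc}}(B)$ is a local partition function associated with the formal neighborhood of a single banana configuration. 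This immediately accounts for the exponent $12$.

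Next I would construct the formal banana $\hat{B}$ explicitly as a non-compact Calabi-Yau threefold containing three $\mathbb{P}^{1}$'s meeting in two nodal points $p,q$, each with normal bundle compatible with the conifold resolution description. Although $\hat{B}$ is not toric, its symmetry group contains a large torus $T$ (at least a $(\mathbb{C}^{*})^{2}$ acting on the transverse directions at $p,q$, plus the three scalings of the individual $C_{i}$). The quadratic form $||\underline{\bf{d}}||$ should appear in this step as the intersection self-pairing of the class $\underline{\bf{d}}$ against itself on the formal banana: with $C_{i}^{2} = -2$ and $C_{i}\cdot C_{j} = 2$ for $i \neq j$ (reflecting two intersection points), the self-intersection of $d_{1}C_{1}+d_{2}C_{2}+d_{3}C_{3}$ is exactly $2\|\underline{\bf{d}}\|$. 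This is the discriminant-like invariant that should control the obstruction theory, analogous to how $4nm - l^{2}$ appears as the index-one quadratic form for Jacobi-form coefficients.

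The heart of the argument is identifying $Z_{\text{DT}}^{\text{loc}}(B)$ with an instanton-type generating function on $\mathbb{C}^{2}$. I would use torus localization on the moduli space of ideal sheaves of subschemes in $\hat{B}$ with class $\underline{\bf{d}}$ and Euler characteristic $n$. The $T$-fixed ideal sheaves should be combinatorially parameterized by data resembling colored box configurations attached to the vertices $p,q$ and the three edges $C_{1},C_{2},C_{3}$. After writing the equivariant vertex/edge contributions and resumming, I would aim to match the generating function, via geometric engineering, to the rank-one Nekrasov instanton partition function on $\mathbb{C}^{2}$ with equivariant elliptic genus insertion -- precisely the expression that by Waelder's equivariant DMVV formula (\ref{eqn:WaelderDMVVFORM})--(\ref{eqn:resultofWaelderr}) produces the infinite product $\prod(1-Q^{m}q^{n}y^{l}t^{k})^{-c(4nm-l^{2},k)}$. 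A change of variables from $(Q,q,y)$ to $(Q_{1},Q_{2},Q_{3})$ compatible with $4nm - l^{2} \mapsto \|\underline{\bf{d}}\|$ then delivers the stated product.

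The main obstacle will be the third step: without a genuine toric structure on $\hat{B}$, the localization must be performed on a more delicate moduli space, and pulling the combinatorics of fixed ideal sheaves through three intersecting curves (rather than two, as in the standard conifold) requires a careful refinement of the topological-vertex formalism. In particular, justifying why the vertex contributions at the two nodes $p,q$ recombine into the equivariant elliptic genus of $\mathbb{C}^{2}$ -- rather than, say, a more complicated equivariant $K$-theoretic class on a rank-three instanton moduli space -- is the nontrivial input. I expect this identification to come from a symmetry of the banana configuration exchanging $(C_{1},C_{2},C_{3})$ with $p \leftrightarrow q$, reducing the bookkeeping to a single $\mathbb{C}^{2}$ worth of data decorated by the three curve degrees, which is precisely what (\ref{eqn:WaelderDMVVFORM}) is designed to sum.
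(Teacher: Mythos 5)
The first thing to say is that the thesis does not prove this statement at all: it is quoted verbatim as a theorem of J.~Bryan \cite{bryan_donaldson-thomas_2018} and used as the input for the original results of Chapter~7. So there is no in-paper proof to compare against; your proposal has to be judged against what is actually needed to establish the formula. Your overall architecture is the right one and, as far as the global reductions go, it matches the known strategy: (i) the Behrend-weighted Euler characteristic of the strata of $\operatorname{Hilb}(X_{\text{ban}})$ meeting the smooth abelian-surface fibers vanishes because of the free translation action, (ii) the remaining contribution factors multiplicatively over the $12$ singular fibers (disjoint supports, multiplicativity of $\nu$), which produces the exponent $12$, and (iii) everything reduces to a local partition function of a formal neighborhood of one banana configuration. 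Your heuristic that $\|\underline{\bf{d}}\|$ is (half) a self-pairing with $C_{i}\cdot C_{i}=-2$, $C_{i}\cdot C_{j}=2$ is also consistent with the identification $\|\underline{\bf{d}}\|=2\beta^{2}$ on the smooth fibers made later in the thesis.

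The genuine gap is your third step, which is where the entire content of the theorem lives. You propose to "match, via geometric engineering," the local partition function of the formal banana with the rank-one Nekrasov partition function carrying an equivariant elliptic genus insertion, and then invoke Waelder's equivariant DMVV formula. But geometric engineering is a physical dictionary between instantons on a surface and curves on a threefold; it is not a proof, and there is no a priori reason the DT vertex of a non-toric configuration of three rational curves meeting in two points should coincide with $\sum_{m}Q^{m}\operatorname{Ell}_{q,y}(\operatorname{Hilb}^{m}(\mathbb{C}^{2});t)$ — note also that the formal Borcherds lift differs from that second-quantized series by the nontrivial prefactor $\mathcal{Z}_{0}(q,y,t)$ of (\ref{eqn:rerrrnBLandSQEEG}), which your matching would silently have to produce. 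What is actually required is an explicit computation: the $(\mathbb{C}^{*})^{2}$-action on the singular fiber has the banana configuration as the complement of its open orbit, so the weighted Euler characteristic localizes to subschemes built from vertex data at $p,q$ and edge data along the $C_{i}$; one must then prove a concrete combinatorial/partition identity showing that the resulting sum is the stated infinite product with exponents equal to the Fourier coefficients $c(\|\underline{\bf{d}}\|,k)$ of $\operatorname{Ell}_{q,y}(\mathbb{C}^{2};t)$. Your appeal to a symmetry of the banana "reducing the bookkeeping to a single $\mathbb{C}^{2}$ worth of data" gestures at this but does not supply the identity, so as written the proposal establishes the factor of $12$ and the shape of the answer but not the formula itself.
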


We want to identify this Donaldson-Thomas partition function with the Borcherds lift of $12 \Phi_{0}$ presented in (\ref{eqn:formBORRRriftt}).  This will of course require changing variables from the geometric variables $Q_{1}, Q_{2}, Q_{3}$ to $Q, q, y$.  It turns out the correct change of variables is 
\begin{equation}\label{eqn:changeofvarss}
Q = Q_{1}Q_{3}, \,\,\,\,\,\,\,\,\,\,\,\,\,\,\,\, q = Q_{2} Q_{3}, \,\,\,\,\,\,\,\,\,\,\,\,\,\,\,\, y=Q_{3}.
\end{equation}

\begin{proppy}\label{proppy:DTBryanchvarss}
Under the change of variables (\ref{eqn:changeofvarss}), the Donaldson-Thomas partition function (\ref{eqn:DTBanpartfunc}) can be written as the formal Borcherds lift of $12 \Phi_{0}$
\begin{equation}\label{eqn:DTpartfuncBorLiftTWOO}
Z_{\text{DT}}(X_{\text{ban}})_{\Gamma} = \text{BL}(12 \Phi_{0}) = \prod_{(m,n,l,k)>0} \big(1-Q^{m}q^{n}y^{l}t^{k}\big)^{-12 c(4nm-l^{2}, k)}
\end{equation}
where we recall that the notation $(m,n,l,k)>0$ was defined in (\ref{eqn:possscondds2}).  
\end{proppy}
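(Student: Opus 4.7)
The plan is to prove this by direct substitution of the change of variables $(Q_1,Q_2,Q_3) \mapsto (Q,q,y)$ into Bryan's product formula and then matching it term-by-term with the infinite product expression for $\mathrm{BL}(12\Phi_0)$ from (\ref{eqn:formBORRRriftt}). Inverting the relations $Q = Q_1 Q_3$, $q = Q_2 Q_3$, $y = Q_3$ on exponents, one sees immediately that $Q^m q^n y^l = Q_1^m Q_2^n Q_3^{m+n+l}$, so a monomial $Q_1^{d_1} Q_2^{d_2} Q_3^{d_3}$ corresponds to $(m,n,l) = (d_1, d_2, d_3 - d_1 - d_2)$. The first concrete step is to verify the quadratic-form identity
\begin{equation*}
4nm - l^2 \;=\; 4 d_1 d_2 - (d_3 - d_1 - d_2)^2 \;=\; 2d_1 d_2 + 2 d_1 d_3 + 2 d_2 d_3 - d_1^2 - d_2^2 - d_3^2 \;=\; \|\underline{\mathbf{d}}\|,
\end{equation*}
which is a one-line expansion and shows that the exponents $-12\,c(\|\underline{\mathbf{d}}\|,k)$ and $-12\,c(4nm-l^2,k)$ agree.

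The next step is matching indexing sets, which I expect to be the only genuine subtlety. Bryan's product ranges over $(d_1,d_2,d_3,k) \in \mathbb{Z}_{\geq 0}^3 \times \mathbb{Z}$, with the restriction $k \geq 1$ when $\underline{\mathbf{d}} = 0$. Under $(m,n,l) = (d_1, d_2, d_3 - d_1 - d_2)$ this image is exactly $\{(m,n,l,k) : m,n \geq 0,\ l \geq -(m+n)\}$ with the analogous restriction at $(0,0,0)$. On the other hand, the Borcherds lift is indexed by $(m,n,l,k) > 0$ in the sense of (\ref{eqn:possscondds2}), allowing all $l \in \mathbb{Z}$ once $m \geq 0, n \geq 0$ are fixed (with the usual lex-like boundary conditions). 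I would observe that each of the four boundary cases in (\ref{eqn:possscondds2}) matches the condition that $(d_1,d_2,d_3) \neq 0$ or $k \geq 1$, so the ``positivity condition'' portion of the matching is essentially trivial.

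The real content is therefore showing that the terms in $\mathrm{BL}(12\Phi_0)$ with $l < -(m+n)$ (which lie outside the image of the change of variables) contribute trivially, i.e.\ have vanishing coefficient $c(4nm-l^2,k)$ for every $k$. The key tool is Proposition \ref{proppy:FcoeffEllgens23}, which forces $c(\Delta,k) = 0$ for $\Delta < -1$. Suppose $l < -(m+n)$ with $m,n \geq 0$; then $l^2 > (m+n)^2 = 4mn + (m-n)^2$. If $m \neq n$ then $(m-n)^2 \geq 1$ gives $4mn - l^2 < -1$ and the coefficient vanishes. If $m = n \geq 1$ then $l^2 > 4m^2$ and integrality forces $l^2 \geq 4m^2 + 2 > 4mn + 1$, again yielding $4mn - l^2 < -1$. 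The residual cases $m=n=0$ and $m=0$ (or $n=0$) with $n \geq 1$ are handled by direct enumeration using (\ref{eqn:lowvalkcoeff}), confirming that every $(m,n,l,k)>0$ with nonvanishing coefficient lies in the image.

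Putting these pieces together, the map $(d_1,d_2,d_3,k) \mapsto (d_1, d_2, d_3-d_1-d_2, k)$ gives a bijection between the nontrivial factors of the two infinite products, and each pair of corresponding factors is equal. This yields the identification $Z_{\mathrm{DT}}(X_{\mathrm{ban}})_\Gamma = \mathrm{BL}(12\Phi_0)$, completing the proof. I expect the combinatorial case analysis of the previous paragraph — showing the vanishing of coefficients outside the image of the change of variables — to be the only step requiring care; everything else is bookkeeping.
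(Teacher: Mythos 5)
Your proposal is correct and follows essentially the same route as the paper's proof: verify the quadratic-form identity $\|\underline{\mathbf{d}}\| = 4nm - l^{2}$, match the positivity conditions, and use the constraint $c(\Delta,k)=0$ for $\Delta<-1$ (Proposition \ref{proppy:FcoeffEllgens23}) to show that the factors with $d_{3}=l+m+n<0$ either have vanishing exponent or are excluded by the condition $(m,n,l,k)>0$. The paper packages your case analysis as a single contradiction argument (writing $d_{3}=-\epsilon$ and forcing $\epsilon=1$, $m=n=0$, hence $l=-1$, contradicting condition (iii)), but the content is identical.
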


\begin{proof}
A simple computation verifies that under the change of variables,
\begin{equation}
||\underline{\bf{d}}|| = 4nm - l^{2} \geq -1.  
\end{equation}
The second equality in (\ref{eqn:DTpartfuncBorLiftTWOO}) is simply the expression (\ref{eqn:formBORRRriftt}) of the formal Borcherds lift of $12 \Phi_{0}$.  Therefore, we have only to show that the infinite products are over the same ranges.  Performing the change of variables we clearly have 
\begin{equation}
Q^{m}q^{n} y^{l}t^{k} = Q_{1}^{m} Q_{2}^{n} Q_{3}^{l+m+n} t^{k}
\end{equation}
which motivates us to define $d_{1} = m$, $d_{2}=n$, and $d_{3}=l+m+n$.  The parameter $t$ tracking holomorphic Euler characteristic in Donaldson-Thomas theory is precisely the equivariant parameter of the elliptic genus.  Because $n,m \geq 0$ it follows that $d_{1}, d_{2} \geq 0$.  

We need to show that $d_{3} \geq 0$.  As we have defined it just above, this clearly holds if $l \geq 0$, but $l$ can be negative.  It turns out that the coefficients of $\Phi_{0}$ conspire to enforce the non-negativity of $d_{3}$.  To get a contradiction, let us assume $d_{3}<0$.  This means there exists an integer $\epsilon \geq 1$ such that 
\begin{equation}
d_{3} + \epsilon = l + m + n + \epsilon = 0.
\end{equation}
Solving for $l$, and using Proposition \ref{proppy:FcoeffEllgens23} we know that we must have
\begin{equation}
\begin{split}
-1 \leq 4nm - l^{2} & = 4nm - (m+n+ \epsilon)^{2} \\
& = -(n-m)^{2} - \epsilon^{2} - 2 \epsilon(n+m).
\end{split}
\end{equation}
Because $\epsilon \geq 1$, the only way for this inequality to be satisfied is if $\epsilon =1$ and $m=n=0$.  But by (\ref{eqn:possscondds2}), this means $l >0$ which contradicts our assumption.  We therefore have $d_{3} \geq 0$.  Finally, if all $d_{i}$ vanish, then $m=n=l=0$ which means $k>0$, also by (\ref{eqn:possscondds2}).  
\end{proof}

With this result in hand, we get the \emph{reduced} Donaldson-Thomas partition function by simply omitting option $(iv)$ in the definition (\ref{eqn:possscondds2}).  Indeed, using (\ref{eqn:lowvalkcoeff}) the degree zero terms combine to produce $M(t)^{24}$ which is consistent with (\ref{eqn:DTdegggzeroo}) noting that $\chi(X_{\text{ban}})=24$.  We therefore get
\begin{equation}\label{eqn:redDTparttsfung}
Z'_{\text{DT}}(X_{\text{ban}})_{\Gamma} = \prod_{(m,n,l)>0} \prod_{k \in \mathbb{Z}} \big(1- Q^{m} q^{n} y^{l} t^{k}\big)^{-12 c(4nm-l^{2}, k)}.  
\end{equation}

\section{The Automorphy of the Gromov-Witten Potentials}

Using the results detailed above, and assuming the GW/DT correspondence holds for the banana manifold $X_{\text{ban}}$, the goal of this section is to prove that the Gromov-Witten potentials are Siegel modular forms and to compute them explicitly for $g < 6$.  We will do this by identifying the genus $g$ potential as the Maass lift of $12 \psi_{2g-2}$ appearing in the expansion (\ref{eqn:genexp}) of $\Phi_{0}$.  Recall that the $\psi_{2g-2}$ are weak Jacobi forms of weight $2g-2$ and index 1, and are shown explicitly in (\ref{eqn:defnofphys}).  We will rely heavily on the results of Section \ref{sec:HeckeJacForm} where we studied Maass lifts and Hecke operators on ordinary weak Jacobi forms.  

Throughout this section we will assume the GW/DT correspondence holds for the banana manifold, and we will use the results above on the Donaldson-Thomas theory.  There is one major subtlety to contend with -- the GW/DT correspondence only relates the two \emph{reduced} partition functions, whereas the degree zero contributions will play a non-trivial role in our proposal.  We therefore must exercise caution when passing from Donaldson-Thomas to Gromov-Witten theory.  

We recall from (\ref{eqn:Deg0gCY3}) the form of the degree zero contributions to the Gromov-Witten potentials of the banana manifold in genus $g \geq 2$
\begin{equation}\label{eqn:degzeroGWgrtwo}
F_{g}^{(0)} = \frac{1}{2} \frac{|B_{2g}|}{2g} \frac{B_{2g-2}}{2g-2} \frac{\chi(X)}{(2g-2)!} = \frac{12}{(2g-2)!}\frac{|B_{2g}|}{2g} \frac{B_{2g-2}}{2g-2}
\end{equation}
which we will need for the following proposition.  In the second equality we have used that $\chi(X_{\text{ban}})=24$.  Let us also record the results of applying (\ref{eqn:fullHecke}) to the Maass lift of $12\psi_{2g-2}$
\begin{equation}\label{eqn:MLgengnew}
\text{ML}(12 \psi_{2g-2}) = 
\begin{cases}
\begin{aligned}
& -12\zeta(3) + \sum_{(m,n,l)>0} 12c_{-2}(4nm-l^{2})\text{Li}_{3}(Q^{m}q^{n}y^{l}), & g=0 \\[1ex]
& \sum_{(m,n,l)>0} 12 c_{0}(4nm-l^{2})\text{Li}_{1}(Q^{m}q^{n}y^{l}), & g=1 \\[1ex]
& F_{g}^{(0)} + \sum_{(m,n,l)>0} 12 c_{2g-2}(4nm-l^{2})\text{Li}_{3-2g}(Q^{m}q^{n}y^{l}), & g \geq 2
\end{aligned}
\end{cases}
\end{equation}
where $c_{2g-2}(4nm-l^{2})$ are the Fourier coefficients.  The constant terms in the above formula were easily computed using (\ref{eqn:psissconstcoef}) and (\ref{eqn:epsilonkdef}).  

\begin{proppy}\label{proppy:genfunscML}
Applying the change of variables $t=e^{i \lambda}$, we have the following equality 
\begin{equation}\label{eqn:genfuncMassssriftts}
\sum_{g=0}^{\infty} \lambda^{2g-2} \text{ML}(12 \psi_{2g-2}) = -\frac{12 \zeta(3)}{\lambda^{2}} + \sum_{g=2}^{\infty} \lambda^{2g-2} F_{g}^{(0)} + \text{log} \, Z'_{\text{DT}}(X_{\text{ban}})_{\Gamma}
\end{equation}
in the ring $\lambda^{-2} \mathbb{R} (y)^{y \leftrightarrow y^{-1}} \llbracket \lambda^{2}, Q, q \rrbracket$.  The coefficients are all in $\mathbb{Q}$ except for the term with the irrational number $\zeta(3)$.  Here, $Z'_{\text{DT}}(X_{\text{ban}})_{\Gamma}$ is the reduced Donaldson-Thomas partition function given in (\ref{eqn:redDTparttsfung}).   
\end{proppy}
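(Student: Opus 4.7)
The plan is to expand both sides as formal series in $(Q,q,\lambda^{2})$ with coefficients in $\mathbb{R}(y)^{y \leftrightarrow y^{-1}}$ and verify the identity by matching like coefficients. First I will take the logarithm of the infinite product (\ref{eqn:redDTparttsfung}) using $-\log(1-x)=\text{Li}_{1}(x)$, then expand $\text{Li}_{1}$ as a power series in $r \geq 1$. With $t=e^{i\lambda}$ this gives
\begin{equation*}
\log Z'_{\text{DT}}(X_{\text{ban}})_{\Gamma} = \sum_{(m,n,l)>0}\sum_{r\geq 1}\frac{(Q^{m}q^{n}y^{l})^{r}}{r}\bigg( 12\sum_{k\in\mathbb{Z}} c(4nm-l^{2},k)\,e^{ir\lambda k}\bigg).
\end{equation*}

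The crux of the argument is the Laurent-coefficient identity (\ref{eqn:fourierrefTWOOO}). Substituting $t \mapsto t^{r}$, or equivalently $\lambda \mapsto r\lambda$, converts the parenthetical inner sum into $12\sum_{g \geq 0}(r\lambda)^{2g-2}\,c_{2g-2}(4nm-l^{2})$. Swapping the orders of summation (legitimate in the target ring because every $(Q^{m}q^{n},\lambda^{2g})$-coefficient collects only finitely many terms) and absorbing the factor $r^{2g-3}$ together with $(Q^{m}q^{n}y^{l})^{r}/r$ into the polylogarithm $\text{Li}_{3-2g}(Q^{m}q^{n}y^{l})$ produces
\begin{equation*}
\log Z'_{\text{DT}}(X_{\text{ban}})_{\Gamma} = \sum_{g\geq 0}\lambda^{2g-2}\sum_{(m,n,l)>0} 12\,c_{2g-2}(4nm-l^{2})\,\text{Li}_{3-2g}(Q^{m}q^{n}y^{l}).
\end{equation*}

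Next I will compare this, genus by genus, with the explicit formulas (\ref{eqn:MLgengnew}) for $\text{ML}(12\psi_{2g-2})$. For $g \geq 2$ the Maass lift carries an additional constant term $12\, c_{2g-2}(0)\,\epsilon(2g-2)$; using $c_{2g-2}(0)=-|B_{2g}|/(g(2g-2)!)$ from (\ref{eqn:psissconstcoef}) together with $\epsilon(2g-2)=-B_{2g-2}/(2(2g-2))$ from (\ref{eqn:epsilonkdef}), one checks this constant equals $F_{g}^{(0)}$ as in (\ref{eqn:degzeroGWgrtwo}). For $g=0$ the analogous computation gives $12\,c_{-2}(0)\,\epsilon(-2)=12(-2)(\tfrac{1}{2}\zeta(3))=-12\zeta(3)$, producing the $-12\zeta(3)/\lambda^{2}$ contribution after multiplication by $\lambda^{-2}$. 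For $g=1$ no constant appears because $\epsilon(0)=0$. Transferring these constants from the Maass-lift side to the other side of the identity yields precisely the claimed equation.

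The main obstacle is bookkeeping in this final matching step, particularly tracking the signs arising from the absolute values on the Bernoulli numbers $|B_{2g}|$ and from the ratio $B_{2g-2}/(2g-2)$, and verifying that the two combine to give exactly the rational number $F_{g}^{(0)}$; every other manipulation is a formal rearrangement in the ring $\lambda^{-2}\mathbb{R}(y)^{y \leftrightarrow y^{-1}}\llbracket \lambda^{2},Q,q\rrbracket$.
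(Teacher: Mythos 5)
Your proposal is correct and follows essentially the same route as the paper's proof: take the logarithm of the reduced Donaldson--Thomas product, expand $\text{Li}_{1}$, resum over $k$ via (\ref{eqn:fourierrefTWOOO}) with $\lambda \mapsto r\lambda$, swap summation orders to produce $\sum_{g}\lambda^{2g-2}\sum_{(m,n,l)>0}12\,c_{2g-2}(4nm-l^{2})\,\text{Li}_{3-2g}(Q^{m}q^{n}y^{l})$, and match against (\ref{eqn:MLgengnew}). Your explicit verification of the constant terms $12\,c_{2g-2}(0)\,\epsilon(2g-2)=F_{g}^{(0)}$ and $12\,c_{-2}(0)\,\epsilon(-2)=-12\zeta(3)$ is exactly the computation the paper performs when deriving (\ref{eqn:MLgengnew}) from (\ref{eqn:psissconstcoef}) and (\ref{eqn:epsilonkdef}), so nothing is missing.
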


\begin{proof}
We begin by taking the logarithm of the reduced partition function in (\ref{eqn:redDTparttsfung})
\begin{equation}
\begin{split}
\log Z'_{\text{DT}}(X_{\text{ban}})_{\Gamma} & = \sum_{(m,n,l)>0} \sum_{k \in \mathbb{Z}} -12c(4nm-l^{2}, k) \log\big(1-Q^{m}q^{n}y^{l}t^{k}\big) \\
& = \sum_{(m,n,l)>0} \sum_{k \in \mathbb{Z}} 12 c(4nm-l^{2}, k) \text{Li}_{1}\big(Q^{m}q^{n}y^{l}t^{k}\big) \\
& = \sum_{(m,n,l)>0} \sum_{k \in \mathbb{Z}} 12 c(4nm-l^{2}, k) \sum_{h=1}^{\infty} \frac{1}{h} Q^{mh} q^{nh} y^{lh}t^{kh}.
\end{split}
\end{equation}
We can perform the sum over $k \in \mathbb{Z}$ using (\ref{eqn:fourierrefTWOOO}), which results in 
\begin{equation}
\begin{split}
\log Z'_{\text{DT}}(X_{\text{ban}})_{\Gamma} & = \sum_{(m,n,l)>0} \sum_{h=1}^{\infty} \frac{1}{h} Q^{mh} q^{nh} y^{lh} \sum_{g=0}^{\infty} (h \lambda)^{2g-2} 12 c_{2g-2}(4nm-l^{2}) \\
& = \sum_{g=0}^{\infty} \lambda^{2g-2} \sum_{(m,n,l)>0} 12 c_{2g-2}(4nm-l^{2}) \text{Li}_{3-2g}(Q^{m}q^{n}y^{l}).
\end{split}
\end{equation}
By (\ref{eqn:MLgengnew}) we know the form of $\text{ML}(12 \psi_{2g-2})$ from which we can compute
\begin{equation}
\begin{split}
\sum_{g=0}^{\infty} \lambda^{2g-2} & \text{ML}(12 \psi_{2g-2}) = \\
& -\frac{12 \zeta(3)}{\lambda^{2}} + \sum_{g=2}^{\infty} \lambda^{2g-2} F_{g}^{(0)} + \sum_{g=0}^{\infty} \lambda^{2g-2} \sum_{(m,n,l)>0} 12 c_{2g-2}(4nm-l^{2}) \text{Li}_{3-2g}(Q^{m}q^{n}y^{l})\\
& = -\frac{12 \zeta(3)}{\lambda^{2}} + \sum_{g=2}^{\infty} \lambda^{2g-2} F_{g}^{(0)}  + \log Z'_{\text{DT}}(X_{\text{ban}})_{\Gamma} 
\end{split}
\end{equation}
which completes the proof.  
\end{proof}

There are a few important observations to make at this stage.  Notice that the righthand side of (\ref{eqn:genfuncMassssriftts}) is very close to the Gromov-Witten free energy $F_{\text{GW}}(X_{\text{ban}})_{\Gamma}$.  Indeed, the GW/DT correspondence says that
\begin{equation}
F'_{\text{GW}}(X_{\text{ban}})_{\Gamma} = \log Z'_{\text{DT}}(X_{\text{ban}})_{\Gamma}
\end{equation}
under the change of variables $t = e^{i \lambda}$.  For $g \geq 2$, the degree zero contributions are all accounted for in the righthand side of (\ref{eqn:genfuncMassssriftts}).  The degree zero contributions for $g=0$ and $g=1$ are not constants and are naturally not incorporated.  The GW/DT correspondence in the presence of degree zero contributions only gives an asymptotic equivalence.  Therefore, by Proposition \ref{proppy:genfunscML} we know the following.  

\begin{thm}\label{thm:asympGWDTcorrr}
Assuming the GW/DT correspondence holds for the formal banana manifold, we have the equivalences 
\begin{equation}\label{eqn:GWDTcorraympps}
\text{exp}\bigg(\sum_{g=0}^{\infty} \lambda^{2g-2} \text{ML}(12 \psi_{2g-2})\bigg) \sim Z_{\text{GW}}(X_{\text{ban}})_{\Gamma} \sim M(e^{i \lambda})^{12} Z'_{\text{DT}}(X_{\text{ban}})_{\Gamma}
\end{equation}
under the change of variables $t=e^{i \lambda}$.  The precise meaning of the equivalence $\sim$ is as follows: the logarithm of all three quantities has an expansion in $Q, q, y$ as well as a Laurent expansion in $\lambda$ with only a second order pole.  For terms non-constant in at least one of $Q, q, y$, $\sim$ is an equality of series in $\lambda$.  For terms constant in all of $Q, q, y$, $\sim$ is an equality of the $\lambda$-expansions in order $\lambda^{2}$ and above.  
\end{thm}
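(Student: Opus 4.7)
The plan is to obtain both asymptotic equivalences by combining the already-established Proposition~\ref{proppy:genfunscML} with two ingredients supplied earlier in the thesis: the asymptotic expansion (\ref{eqn:asympEXPMcMahon}) of $\tfrac{1}{2}\chi(X)\log M(e^{i\lambda})$ in genus, and the assumed GW/DT correspondence together with the degree zero identity (\ref{eqn:AsymptExprGWW}) relating $Z_{\text{GW}}(X)_{0}$ to $M(e^{i\lambda})^{\chi(X)/2}$. Specializing both to the banana manifold uses $\chi(X_{\text{ban}})=24$, producing the factor $M(e^{i\lambda})^{12}$ that appears in the theorem.

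First I would exponentiate the identity in Proposition~\ref{proppy:genfunscML} and rewrite the leftmost expression as
\[
\exp\!\bigg(\sum_{g \geq 0}\lambda^{2g-2}\,\text{ML}(12\psi_{2g-2})\bigg)
 = \exp\!\bigg(-\frac{12\zeta(3)}{\lambda^{2}}+\sum_{g\geq 2}\lambda^{2g-2}F_{g}^{(0)}\bigg)\cdot Z'_{\text{DT}}(X_{\text{ban}})_{\Gamma}.
\]
Next, comparing the scalar prefactor to (\ref{eqn:asympEXPMcMahon}) with $\chi(X)=24$ gives
\[
12\log M(e^{i\lambda}) \sim -\frac{12\zeta(3)}{\lambda^{2}}+\log(-i\lambda)+12\zeta'(1)+\sum_{g\geq 2}\lambda^{2g-2}F_{g}^{(0)},
\]
so that the two prefactors agree exactly in the $\lambda^{-2}$ pole and in every $\lambda^{2g-2}$ with $g\geq 2$; the discrepancy $\log(-i\lambda)+12\zeta'(1)$ is a $(Q,q,y)$-constant series supported at orders $\log\lambda$ and $\lambda^{0}$, which is precisely the range excluded by the relation $\sim$ as defined in the statement. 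This establishes the rightmost equivalence.

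For the middle equivalence I would factor the full Gromov-Witten partition function as $Z_{\text{GW}}(X_{\text{ban}})_{\Gamma}=Z_{\text{GW}}(X_{\text{ban}})_{\Gamma,0}\cdot Z'_{\text{GW}}(X_{\text{ban}})_{\Gamma}$, apply the assumed GW/DT correspondence under $t=e^{i\lambda}$ to identify the reduced factor with $Z'_{\text{DT}}(X_{\text{ban}})_{\Gamma}$, and invoke (\ref{eqn:AsymptExprGWW}) to get $Z_{\text{GW}}(X_{\text{ban}})_{\Gamma,0}\sim M(e^{i\lambda})^{12}$. Multiplying these yields $Z_{\text{GW}}(X_{\text{ban}})_{\Gamma}\sim M(e^{i\lambda})^{12}\,Z'_{\text{DT}}(X_{\text{ban}})_{\Gamma}$, completing the chain of equivalences.

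The main obstacle is the careful bookkeeping around the equivalence relation $\sim$, since all three quantities differ at most by $(Q,q,y)$-constant terms of order $\log\lambda$, $\lambda^{0}$, or isolated orders in negative powers of $\lambda^{2}$ other than $\lambda^{-2}$. One must verify that no unexpected contribution disturbs the agreement in the $(Q,q,y)$-non-constant sector; in particular, the classical genus $0$ and $1$ degree zero pieces $F_{0}^{(0)}(t)$ and $F_{1}^{(0)}(t)$ recalled in Section 4 are genuine functions of formal cohomology variables rather than of the K\"{a}hler variables $Q,q,y$ tracking fiber classes, so their restriction to $Z_{\text{GW}}(X_{\text{ban}})_{\Gamma}$ contributes only to the $(Q,q,y)$-constant locus and is absorbed into $\sim$. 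Once this verification is made, the identity of Proposition~\ref{proppy:genfunscML} together with the two asymptotic inputs yield the theorem directly, and no further computation is required.
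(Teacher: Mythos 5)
Your argument is correct and is essentially the paper's own: the theorem is obtained by exponentiating Proposition~\ref{proppy:genfunscML}, matching the scalar prefactor against the MacMahon asymptotics (\ref{eqn:asympEXPMcMahon}) with $\chi(X_{\text{ban}})=24$, and invoking the GW/DT correspondence together with (\ref{eqn:AsymptExprGWW}) for the degree-zero factor. Your write-up is, if anything, more explicit than the paper's discussion about which $(Q,q,y)$-constant terms (the $\log(-i\lambda)$, $\zeta'(1)$, and genus $0$, $1$ degree-zero pieces) fall outside the range where $\sim$ demands agreement.
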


\begin{rmk}
Despite the fact that it is forgotten in the above asymptotic equivalences, the term proportional to $\zeta(3)$ in $\text{ML}(\psi_{-2})$ is well-known to arise in the asymptotic expansion (\ref{eqn:asympEXPMcMahon}) of the logarithm of the MacMahon function $M(e^{i \lambda})$.  Though it is mysterious from an enumerative point of view.  
\end{rmk}

\subsection{The Gromov-Witten Potentials as Siegel Modular Forms}

Using Theorem \ref{thm:asympGWDTcorrr}, we can relate the Maass lift of $12 \psi_{2g-2}$ with the genus $g$ Gromov-Witten potential of the banana manifold $X_{\text{ban}}$ in the variables $Q, q, y$.  This is already an interesting arithmetic property, despite not yet expressing the full automorphy.

\begin{cory}
Assuming the GW/DT correspondence for $X_{\text{ban}}$, we identify the Gromov-Witten potential $F_{g}$ for $g \geq 2$ as the Maass lift of $12 \psi_{2g-2}$
\begin{equation}\label{eqn:formmassliftggeq2}
F_{g}(Q, q, y) = \text{ML}(12 \psi_{2g-2})
\end{equation}
where $\text{ML}(12 \psi_{2g-2})$ is shown in (\ref{eqn:MLgengnew}).  For $g=1$, the Maass lift of $12 \psi_{0}$ is the reduced potential $F'_{1}$, but of course does not encode the degree zero terms.  Moreover, we have the formula 
\begin{equation}
F'_{1}(Q, q, y) = \text{ML}( 12 \psi_{0}) = \frac{1}{2} \log \bigg(\frac{Qqy}{\chi_{10}}\bigg).
\end{equation}
For $g=0$, the Maass lift $\text{ML}(12 \psi_{-2})$ encodes the full genus zero Gromov-Witten potential outside some degree zero terms -- it certainly encodes all non-degree zero terms, as well as the term proportional to $\zeta(3)$ which is well-known to arise in genus zero Gromov-Witten theory.  
\end{cory}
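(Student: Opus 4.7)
The plan is to combine the exact formal identity from Proposition \ref{proppy:genfunscML} with the GW/DT correspondence for $X_{\text{ban}}$ and then match coefficients of $\lambda^{2g-2}$ genus by genus. Starting from
\[\sum_{g=0}^{\infty} \lambda^{2g-2} \text{ML}(12 \psi_{2g-2}) = -\frac{12 \zeta(3)}{\lambda^{2}} + \sum_{g=2}^{\infty} \lambda^{2g-2} F_{g}^{(0)} + \log Z'_{\text{DT}}(X_{\text{ban}})_{\Gamma},\]
I would invoke the assumed GW/DT correspondence to identify $\log Z'_{\text{DT}}(X_{\text{ban}})_{\Gamma}$ with the reduced Gromov-Witten free energy $F'_{\text{GW}}(X_{\text{ban}})_{\Gamma} = \sum_{g \geq 0} \lambda^{2g-2} F'_g$ and then compare the two sides coefficient-by-coefficient in $\lambda$.

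For $g \geq 2$, the degree-zero contribution $F_g^{(0)}$ in (\ref{eqn:degzeroGWgrtwo}) is a constant, independent of $Q,q,y$, and it matches exactly the constant term of $\text{ML}(12\psi_{2g-2})$ already appearing in (\ref{eqn:MLgengnew}); combining this with $F_g = F_g^{(0)} + F'_g$ immediately gives $F_g = \text{ML}(12 \psi_{2g-2})$. For $g=1$, the degree-zero contribution is linear in the K\"ahler parameters and thus cannot arise as a constant in the Maass lift, so matching the $\lambda^0$ coefficient yields $F'_1 = \text{ML}(12\psi_0)$. The $g=0$ case is similar: the cubic $F_0^{(0)}$ is not captured, but the non-degree-zero part of $F_0$ and the constant $-12\zeta(3)$ are accounted for by $\text{ML}(12\psi_{-2})$.

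The explicit formula $\text{ML}(12\psi_0) = \tfrac{1}{2}\log(Qqy/\chi_{10})$ would then follow from a direct computation. By (\ref{eqn:WPFuncK3}) one has $\psi_0 = \Theta^2 \wp = \tfrac{1}{12}\varphi_{0,1}$, and combining with (\ref{eqn:EllgenK3}) gives $12\psi_0 = \tfrac{1}{2}\text{Ell}_{q,y}(K3)$, so the Fourier coefficients of $12\psi_0$ are $\tfrac{1}{2}c_{K3}(4nm-l^2)$ where $c_{K3}$ are the coefficients of $\text{Ell}_{q,y}(K3)$. Applying the $g=1$ case of (\ref{eqn:MLgengnew}) with $\text{Li}_1(x) = -\log(1-x)$ gives
\[\text{ML}(12\psi_0) = -\tfrac{1}{2}\sum_{(m,n,l)>0} c_{K3}(4nm-l^2)\log(1-Q^m q^n y^l),\]
and taking the logarithm of the Gritsenko-Nikulin product (\ref{eqn:GNIKULinfprodd}) expresses the same sum as $\log\chi_{10} - \log(Qqy)$, producing the desired identity.

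The main obstacle is bookkeeping: the GW/DT correspondence is only an asymptotic equivalence in the presence of degree-zero terms, so one must carefully track which pieces of $F_g$ are visible to the Maass lift. The case $g \geq 2$ works out cleanly precisely because $F_g^{(0)}$ is a constant already predicted by (\ref{eqn:MLgengnew}); the genuine subtlety is at $g = 0, 1$, where the non-constant geometric degree-zero corrections lie outside the Maass lift, which is exactly the interpretation reflected in the statement of the corollary.
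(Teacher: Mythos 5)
Your proposal is correct and follows essentially the same route as the paper: the paper likewise deduces the $g=0$ and $g\geq 2$ cases directly from the asymptotic GW/DT statement (Theorem \ref{thm:asympGWDTcorrr}, which is itself just Proposition \ref{proppy:genfunscML} combined with the correspondence), and handles $g=1$ by the identity $12\psi_{0}=\tfrac{1}{2}\,\text{Ell}_{q,y}(K3)$ together with the Gritsenko--Nikulin product for $\chi_{10}$. Your sign bookkeeping in the $g=1$ computation checks out, so there is nothing to add.
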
 

\begin{proof}
The statements for $g=0$ and $g \geq 2$ follow directly from Theorem \ref{thm:asympGWDTcorrr} while for $g=1$, we need a short computation.  Also by Theorem \ref{thm:asympGWDTcorrr} we know
\begin{equation}
F'_{1}(Q, q, y) = \text{ML}(12 \psi_{0}) = \sum_{(m,n,l)>0} 12 c_{0}(4nm-l^{2}) \text{Li}_{1}(Q^{m} q^{n} y^{l})
\end{equation}
where the second equality follows from (\ref{eqn:MLgengnew}).  But we have
\begin{equation}
12 \psi_{0}(\tau, z) = 12 \Theta^{2}(\tau, z) \wp(\tau, z) = \frac{1}{2} \text{Ell}_{q,y}(K3).
\end{equation}
The claim then follows immediately from (\ref{eqn:GNIKULinfprodd222}).  
\end{proof}

We saw in Section \ref{subsec: MaassliftIndoneJFOR} that the Maass lift of a holomorphic Jacobi form of positive weight and index 1 is a holomorphic Siegel modular form of genus two.  Recall that $\mathfrak{M}_{*}(\Gamma_{2})$ is the graded ring of Siegel modular forms of genus two, and the \emph{Maass `Spezialschar'} is the image in $\mathfrak{M}_{*}(\Gamma_{2})$ of the Maass lift.  From (\ref{eqn:MLgengnew}) it is clear that we are interested in the Maass lift of \emph{weak} (not holomorphic) Jacobi forms of positive even weight and index 1.  By results of Hiroki Aoki \cite{aoki_formal_2014,aoki_notitle_2018} we still get Siegel modular forms of genus two, though they are meromorphic.  I am extremely grateful \cite{oberdieck_notitle_2018} to Georg Oberdieck for initially pointing out to me that the Maass lift of a weak Jacobi form should still be a Siegel modular form, as well as to Hiroki Aoki for graciously discussing the following result of his.    

\begin{thm}[\bfseries H. Aoki \cite{aoki_formal_2014,aoki_notitle_2018}]
Let $\varphi \in \mathbb{J}_{k,1}^{w}$ be a weak Jacobi form of even weight $k>0$ and index 1.  The Maass lift $\text{ML}(\varphi)$ is a meromorphic Siegel modular form of weight $k$ and genus two.  We get an isomorphism of vector spaces
\begin{equation}
\mathbb{J}_{k,1}^{w} \overset{\sim}{\longrightarrow} \big\{ \text{Maass `Spezialschar'} \big\} \subset \widetilde{\mathfrak{M}}_{k}(\Gamma_{2})
\end{equation}
where $\widetilde{\mathfrak{M}}_{k}(\Gamma_{2})$ is the vector space of meromorphic Siegel modular forms of weight $k$ and genus two, and the Maass `Spezialschar' is the image of the Maass lift.  
\end{thm}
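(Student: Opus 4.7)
The plan is to reduce the weak case to the classical Eichler-Zagier theorem for holomorphic index-one Jacobi forms by multiplying against a suitable Siegel cusp form. The key ingredients are the structure theorem for weak Jacobi forms, which lets me decompose any $\varphi\in\mathbb{J}^{w}_{k,1}$ as $\varphi=A(\tau)\,\Theta^{2}(\tau,z)+B(\tau)\,\varphi_{0,1}(\tau,z)$ with $A\in M_{k+2}(\Gamma_{1})$ and $B\in M_{k}(\Gamma_{1})$, the linearity of the Maass lift in $\varphi$, and the fact that multiplication by $\Delta(\tau)$ realizes an injection $\mathbb{J}^{w}_{k,1}\hookrightarrow \mathbb{J}_{k+12,1}$ into the holomorphic subspace, where Eichler-Zagier already gives a holomorphic Siegel modular form.

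First I would define the candidate $F(\Omega)$ as the formal Fourier-Jacobi series $\sum_{m\geq 0}Q^{m}\,(\varphi|V_{m})(\tau,z)$. Each $\varphi|V_{m}$ is a weak Jacobi form of weight $k$ and index $m$ (with $\varphi|V_{0}$ the meromorphic index-zero form from (\ref{eqn:WPV0wholjacfrm})), so the series converges on the open subset of $\mathfrak{H}_{2}$ where $\mathfrak{Im}(\sigma)$ is sufficiently large. Next, I would derive the explicit coefficient formula (\ref{eqn:MaassSPEZFCOES}) from the Hecke-operator definition (\ref{eqn:Heckedefn}); this exhibits in particular the symmetry $A(m,n,l)=A(n,m,l)$, which will be needed to verify the modular transformation exchanging $\tau$ and $\sigma$.

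The central technical step is to show that the product $\chi_{10}(\Omega)\cdot F(\Omega)$ extends to a holomorphic Siegel modular form of weight $k+10$ on all of $\mathfrak{H}_{2}$. My approach is to check this on two sides: on the one hand, combining the convergent expansion of $F$ with the Gritsenko-Nikulin product formula (\ref{eqn:GNIKULinfprodd}) for $\chi_{10}$ (whose exponents are the $K3$ elliptic genus coefficients) shows that the Fourier coefficients of $\chi_{10}\cdot F$ vanish outside the semi-positive discriminant cone $4nm-l^{2}\geq 0$, and hence define a bona fide holomorphic Fourier series. On the other hand, $Sp_{4}(\mathbb{Z})$ is generated by the Jacobi group $SL_{2}(\mathbb{Z})\ltimes \mathbb{Z}^{2}$ together with the symplectic involution swapping $\tau$ and $\sigma$; modularity of $F$ under the Jacobi group is automatic from the Jacobi-form property of each $\varphi|V_{m}$, and modularity under the involution follows from the $(m,n,l)$-symmetry of the coefficients. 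Combined with the known modularity of $\chi_{10}$, this yields the meromorphic modularity of $F$ with divisor of poles supported on the zero set of $\chi_{10}$.

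For the isomorphism, injectivity follows immediately since $\varphi$ is recovered as the $Q^{1}$-coefficient of the Fourier-Jacobi expansion of $\mathrm{ML}(\varphi)$ (as $V_{1}$ acts as the identity on $\mathbb{J}^{w}_{k,1}$), and surjectivity onto the Spezialschar holds by definition. The main obstacle will be the Fourier-coefficient vanishing assertion for $\chi_{10}\cdot F$: this reduces to a combinatorial identity reconciling the divisor-sum formula (\ref{eqn:MaassSPEZFCOES}) (including the ``weak'' contributions from $c(-1)$ and from the $V_{0}$-action described in (\ref{T0WEAKHOL}) and (\ref{eqn:WPV0wholjacfrm})) with the $K3$-indexed exponents in the Gritsenko-Nikulin expansion. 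Making this cancellation explicit, rather than merely arguing it via holomorphicity on a dense open set, will require careful bookkeeping; the cleanest route I envisage is to compare $\chi_{10}\cdot \mathrm{ML}(\varphi)$ with $\mathrm{ML}(\Delta\cdot\varphi)$ (which is holomorphic by Eichler-Zagier) modulo an appropriate correction term arising from the Hecke algebra acting on the weak contributions.
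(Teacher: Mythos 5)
The paper does not prove this theorem: it is quoted from Aoki's work \cite{aoki_formal_2014,aoki_notitle_2018} without proof, so there is no internal argument to compare against. Evaluating your proposal on its own merits, the overall architecture (Jacobi-group covariance of each Fourier--Jacobi coefficient, the $m\leftrightarrow n$ symmetry of $A(m,n,l)$ to handle the involution swapping $\tau$ and $\sigma$, and clearing denominators by a cusp form) is the standard and reasonable route, and the injectivity/surjectivity discussion is fine since $V_{1}$ acts as the identity.

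However, your central technical claim --- that $\chi_{10}\cdot F$ extends to a \emph{holomorphic} Siegel modular form of weight $k+10$ --- is false for $k>2$, and this is a genuine gap rather than bookkeeping. The polar part of $\mathrm{ML}(\varphi)$ along the diagonal $z=0$ comes from the $V_{0}$-term (\ref{eqn:WPV0wholjacfrm}), which for an index-one form of weight $k$ contains $c(-1)\,\wp^{(k-2)}(\tau,z)$; since $\wp$ has a double pole at $z=0$ and each derivative raises the order by one, this term has a pole of order $k$ there. On the other hand $\chi_{10}$ vanishes only to order $2$ along $z=0$ (its first Fourier--Jacobi coefficient is $\varphi_{10,1}=\Delta\cdot\Theta^{2}$, and $\Theta^{2}$ has a double zero at $y=1$). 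So one needs $\chi_{10}^{k/2}$, not $\chi_{10}$, to clear the pole --- exactly what the paper records later in the lemma $\chi_{10}^{g-1}F_{g}\in\mathfrak{M}_{12g-12}(\Gamma_{2})$ with $k=2g-2$, and what you can check directly against the displayed formula for $F_{3}$, which has $\chi_{10}^{2}$ in its denominator. Relatedly, the proposed comparison of $\chi_{10}\cdot\mathrm{ML}(\varphi)$ with $\mathrm{ML}(\Delta\cdot\varphi)$ cannot close this gap: the Hecke operators do not commute with multiplication by $\Delta$, so $(\Delta\varphi)|V_{m}$ bears no simple relation to $\Delta\cdot(\varphi|V_{m})$ or to the $m$-th Fourier--Jacobi coefficient of $\chi_{10}\cdot\mathrm{ML}(\varphi)$, and the ``correction term'' you defer is the whole content of the statement. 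A workable repair is to keep your Jacobi-group-plus-involution argument for modularity on the locus of convergence, but to control the singularities by subtracting the explicit polar contribution $c(-1)\sum_{m}Q^{m}(\wp^{(k-2)}|V_{m})$ (or, equivalently, to multiply by $\chi_{10}^{k/2}$) and verify holomorphy of the result, which is essentially Aoki's argument.
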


Combining our results above with those of Aoki, we have the following remarkable conclusion on the automorphy of the Gromov-Witten potentials.  

\begin{cory}\label{cory:GWpotssSMFds}
For all $g \geq 2$ the Gromov-Witten potentials of the banana manifold $X_{\text{ban}}$
\begin{equation}
F_{g}(\Omega) = \text{ML}(12 \psi_{2g-2}) = \sum_{m=0}^{\infty} Q^{m} \big(12 \psi_{2g-2} \big| V_{m} \big) 
\end{equation}
are meromorphic Siegel modular forms of weight $2g-2$ and genus two, where $\Omega$ is an element of the Siegel upper-half plane related to $Q, q, y$ as in (\ref{eqn:innchvarsintrod}).  In other words, the genus $g$ Gromov-Witten potential is an element of the Maass `Spezialschar' inside of $\widetilde{\mathfrak{M}}_{2g-2}(\Gamma_{2})$.  
\end{cory}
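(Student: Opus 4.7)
The plan is to identify this corollary as an immediate synthesis of two results already established in the excerpt: the identification of the Gromov-Witten potential with a Maass lift (the corollary preceding this one, derived from the asymptotic GW/DT correspondence in Theorem~\ref{thm:asympGWDTcorrr}) and Aoki's theorem on the automorphy of Maass lifts of weak Jacobi forms of positive even weight and index one. So the proof is essentially a short chain of citations, and the work is just verifying that the hypotheses of Aoki's theorem are met by $12\psi_{2g-2}$.

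First, I would recall from equation (\ref{eqn:defnofphys}) that for $g \geq 2$,
\[
\psi_{2g-2}(\tau,z) = \frac{|B_{2g}|}{2g(2g-2)!}\, E_{2g}(\tau)\, \Theta^{2}(\tau,z).
\]
Since $E_{2g}(\tau) \in M_{2g}(\Gamma_{1})$ is a holomorphic modular form of weight $2g$ and $\Theta^{2} \in \mathbb{J}^{w}_{-2,1}$ is a weak Jacobi form of weight $-2$ and index $1$, the product lies in $\mathbb{J}^{w}_{2g-2,1}$. Multiplying by the scalar $12$ leaves us still in this space, so $12\psi_{2g-2}$ is a weak Jacobi form of even, \emph{positive} weight $2g-2 \geq 2$ and index $1$—exactly the class of input required by Aoki's theorem.

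Next, I would invoke Aoki's theorem directly: since $12\psi_{2g-2} \in \mathbb{J}^{w}_{2g-2,1}$ with $2g-2 > 0$ even, its Maass lift $\mathrm{ML}(12\psi_{2g-2})$ is a meromorphic Siegel modular form of weight $2g-2$ and genus two, and by construction lies in the Maass Spezialschar, viewed as a subspace of $\widetilde{\mathfrak{M}}_{2g-2}(\Gamma_{2})$ via the Fourier--Jacobi expansion in $Q = e^{2\pi i \sigma}$. The preceding corollary identifies $F_{g}(\Omega) = \mathrm{ML}(12\psi_{2g-2})$ as functions of $(\tau,z,\sigma)$ under the change of variables (\ref{eqn:changeofvarss}), (\ref{eqn:innchvarsintrod}), so the modular transformation law of $F_{g}$ under $Sp_{4}(\mathbb{Z})$ is inherited from that of the Maass lift.

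There is no real obstacle here; the only subtlety worth flagging is bookkeeping: one must check that the change of variables (\ref{eqn:changeofvarss}) from the geometric K\"ahler parameters $Q_{1},Q_{2},Q_{3}$ to $(Q,q,y) = (e^{2\pi i\sigma}, e^{2\pi i \tau}, e^{2\pi i z})$ is precisely the one that matches the Fourier--Jacobi expansion coordinates on $\mathfrak{H}_{2}$, so that the Maass lift genuinely produces a function on the Siegel upper-half plane rather than on some other $3$-fold cover. This is handled by Proposition~\ref{proppy:DTBryanchvarss}, which established exactly this identification on the Donaldson--Thomas side, and the assumed GW/DT correspondence then transports it to the Gromov--Witten side via Theorem~\ref{thm:asympGWDTcorrr}. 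Once this is noted, the statement of the corollary follows by concatenating the two cited results.
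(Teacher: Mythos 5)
Your proposal is correct and follows exactly the paper's own route: the corollary is obtained by combining the preceding identification $F_{g}(\Omega) = \text{ML}(12\psi_{2g-2})$ (via the GW/DT correspondence) with Aoki's theorem on Maass lifts of weak Jacobi forms of positive even weight and index one, after noting from (\ref{eqn:defnofphys}) that $12\psi_{2g-2} \in \mathbb{J}^{w}_{2g-2,1}$ with $2g-2>0$. Nothing further is needed.
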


\begin{rmk}\label{rmk:MIRRORSYYMM}
We can interpret Corollary \ref{cory:GWpotssSMFds} using mirror symmetry.  The Gromov-Witten potentials are functions of $Q_{1}, Q_{2}, Q_{3}$ (or $Q, q, y$) which are coordinates on the K\"{a}hler moduli space of $X_{\text{ban}}$.  Under the mirror correspondence $Q, q, y$ are interpreted as coordinates on the moduli space of complex structures on the mirror $\widetilde{X}_{\text{ban}}$.  But genus two Siegel modular forms are sections of line bundles over the moduli space of genus two curves.  As a consequence, and since we are only using fiberwise K\"{a}hler parameters, we conjecture that the moduli space of complex structures on $\widetilde{X}_{\text{ban}}$ contains a subspace isomorphic to the moduli space of genus two curves.  Indeed, it has been shown that the mirror of a local banana configuration is a genus two curve \cite{kanazawa_local_2016}.
\end{rmk}

We will spend much of the remainder of this section attempting to understand precise features of the $F_{g}$ as meromorphic Siegel modular forms.  The first step is to understand exactly what the denominator of the meromorphic function is.  To do this, we note that for $g \geq 2$ the weight $2g-2$ is positive so we can use (\ref{eqn:WPV0wholjacfrm}) to compute the action of the Hecke operator $V_{0}$ on $12 \psi_{2g-2}$.  In the case of $g=2$, by (\ref{eqn:weight2V0expr}) we have
\begin{equation}\label{eqn:psi2V0Maassrift}
\big( 12 \psi_{2}  \big| V_{0} \big) = 12 c_{2}(-1) \wp(\tau, z) = \frac{1}{20} \wp(\tau, z)
\end{equation}
where we use the coefficients recorded in (\ref{eqn:psissconstcoef}).  Since the constant term of $\wp$ is $\frac{1}{12}$, one should note that the constant term above agrees with the degree zero Gromov-Witten term $F_{2}^{(0)}$ in (\ref{eqn:degzeroGWgrtwo}).  For $g \geq 3$,
\begin{equation}\label{eqn:VzeroPSI2g-2}
\begin{split}
\big(12 \psi_{2g-2} \big| V_{0} \big) & = -12c_{2g-2}(0)\frac{B_{2g-2}}{2(2g-2)}E_{2g-2}(\tau) + 12 c_{2g-2}(-1) \wp^{(2g-4)}(\tau, z)\\
& = F_{g}^{(0)} E_{2g-2}(\tau) + \frac{6 |B_{2g}|}{g(2g-2)!}\wp^{(2g-4)}(\tau, z).
\end{split}
\end{equation}  

\begin{rmk}
In writing $F_{g}(\Omega) = \text{ML}(12 \psi_{2g-2})$, it is clear that the degree zero term $F_{g}^{(0)}$ must come entirely from $\big( 12 \psi_{2g-2} \big| V_{0} \big)$.  Indeed, one can check from (\ref{eqn:psi2V0Maassrift}) and (\ref{eqn:VzeroPSI2g-2}) that the constant terms agree with $F_{g}^{(0)}$ recorded in (\ref{eqn:degzeroGWgrtwo}).  
\end{rmk}

Because $\wp^{(2g-4)}$ is a meromorphic Jacobi form of weight $2g-2$ and index 0, we see from (\ref{eqn:psi2V0Maassrift}) and (\ref{eqn:VzeroPSI2g-2}) that for $g \geq 2$, $\big( 12 \psi_{2g-2} \big| V_{0} \big)$ is also a meromorphic Jacobi form of weight $2g-2$ and index 0.  It is this fact which allows one to determine the denominators of $F_{g}$ in the following lemma.   

\begin{lemmy}[\bfseries H. Aoki \cite{aoki_notitle_2018}, Sec. 3.4]
For all $g \geq 2$, the denominator of the Gromov-Witten potential $F_{g}$ is $\chi_{10}^{g-1}$.  In other words, 
\begin{equation}\label{eqn:defnnofSgeee}
\mathcal{S}_{g}(\Omega) \coloneqq \chi_{10}^{g-1} F_{g}(\Omega) \in \mathfrak{M}_{12g-12}(\Gamma_{2})
\end{equation}
is a holomorphic Siegel modular form of weight $12g-12$ and genus two.  We therefore have for $g \geq 2$
\begin{equation}
F_{g}(\Omega) \in \frac{1}{\chi_{10}^{g-1}} \mathfrak{M}_{12g-12}(\Gamma_{2}) \subset \widetilde{\mathfrak{M}}_{2g-2}(\Gamma_{2}).
\end{equation}
\end{lemmy}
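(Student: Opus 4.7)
The plan is to locate the pole divisor of $F_g = \text{ML}(12\psi_{2g-2})$ on $\mathfrak{H}_2$, compute its order, and compare with the zero divisor of $\chi_{10}$. First I would isolate the source of the poles. For $m \geq 1$, the Hecke action $\bigl(12\psi_{2g-2}\big|V_m\bigr)$ is a finite sum of translates of $\psi_{2g-2}$, and since $\psi_{2g-2} = \Theta^2 \cdot \tfrac{|B_{2g}|}{2g(2g-2)!} E_{2g}$ for $g \geq 2$ is honestly holomorphic on $\mathfrak{H} \times \mathbb{C}$, none of these coefficients contributes a pole. Therefore the entire singular locus of $F_g$ sits inside the $Q^0$ term $\bigl(12\psi_{2g-2}\big|V_0\bigr)$, which by (\ref{eqn:VzeroPSI2g-2}) equals $F_g^{(0)} E_{2g-2}(\tau) + \tfrac{6|B_{2g}|}{g(2g-2)!}\wp^{(2g-4)}(\tau,z)$.

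Next I would compute the order of the pole at $z=0$ using the Laurent expansion of $\wp$. Since $\wp(\tau,z) = \tfrac{1}{(2\pi i)^2 z^2} + O(1)$ near $z=0$, differentiating $2g-4$ times in $z$ (with the $(2\pi i)^{-1}$ convention of the paper) shows that $\wp^{(2g-4)}(\tau,z)$ has a pole at $z=0$ of order exactly $2g-2$, with nonvanishing leading coefficient. Thus, as a function on the Siegel upper-half plane in coordinates $(\tau, z, \sigma)$, the Maass lift $F_g$ has a pole of order exactly $2g-2$ along the Humbert divisor $\mathcal{H} = \{z = 0\}$ and is otherwise holomorphic. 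The key point is that the whole $Q$-dependence of $F_g$ is regular, so only the $Q^0$-piece can create poles.

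I would then import the well-known vanishing of $\chi_{10}$ along $\mathcal{H}$. From the Gritsenko--Nikulin product (\ref{eqn:GNIKULinfprodd}), the factor $(1-y)^2$ appearing as the $m=0, n=0, l=-1$ contribution shows $\chi_{10}$ vanishes to order exactly $2$ in $z$ along $\{z=0\}$, hence $\chi_{10}^{g-1}$ vanishes to order exactly $2g-2$ there. Consequently $\mathcal{S}_g = \chi_{10}^{g-1} F_g$ extends holomorphically across $\mathcal{H}$. Weight and modularity are immediate: $F_g$ has weight $2g-2$ and $\chi_{10}^{g-1}$ has weight $10(g-1)$, so $\mathcal{S}_g$ has weight $12g-12$, and both factors transform covariantly under $Sp_4(\mathbb{Z})$.

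The last step, which I expect to be the main obstacle, is verifying that there are no other pole divisors of $F_g$ on $\mathfrak{H}_2$. Since $F_g$ is $Sp_4(\mathbb{Z})$-invariant up to the automorphy factor, its pole divisor must be a union of $Sp_4(\mathbb{Z})$-orbits of prime divisors. One would check that the pole located in the Fourier--Jacobi chart lies entirely in the $Sp_4(\mathbb{Z})$-orbit of $\mathcal{H}$ and does not hit any other reflective divisor; equivalently, one must rule out hidden poles arising from the summation $\sum_{m \geq 0} Q^m (12\psi_{2g-2}|V_m)$ at boundary cusps of $\mathfrak{H}_2$ other than the standard Fourier--Jacobi cusp used to define the Maass lift. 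This is controlled by Aoki's convergence and modularity results for Maass lifts of weak Jacobi forms, which show that the only reflective pole divisor produced is precisely $\mathcal{H}$ and its $Sp_4(\mathbb{Z})$-translates, completing the argument that $\mathcal{S}_g \in \mathfrak{M}_{12g-12}(\Gamma_2)$.
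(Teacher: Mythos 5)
Your proposal is correct and follows essentially the same route as the paper, which does not prove this lemma itself but cites it from Aoki after setting up exactly the local computation you perform: the pole of $\wp^{(2g-4)}$ of order $2g-2$ along $z=0$ in the $Q^{0}$ Fourier--Jacobi coefficient $\bigl(12\psi_{2g-2}\big|V_{0}\bigr)$, matched against the order-$(2g-2)$ vanishing of $\chi_{10}^{g-1}$ there. You also correctly identify that the only genuinely nontrivial input --- that the full pole divisor of the Maass lift of a weak Jacobi form is confined to the orbit of the Humbert divisor and that $\chi_{10}^{g-1}$ suffices to clear it globally --- is precisely the content of Aoki's theorem, which is the same external result the paper relies on.
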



Recall from Theorem \ref{IgusacrassTHMMM} that Igusa proved the ring $\mathfrak{M}_{2*}(\Gamma_{2})$ of Siegel modular forms of even weight and genus two is generated freely over $\mathbb{C}$ by $\chi_{12}, \chi_{10}, \mathcal{E}_{4}$, and $\mathcal{E}_{6}$.  Therefore, the power of the above lemma is evidently that only a finite number of computations are needed to determine $\mathcal{S}_{g}$ explicitly in the ring $\mathfrak{M}_{2*}(\Gamma_{2})$.  However, these computations are not so easy to carry out for large $g$.  

By (\ref{eqn:defnnofSgeee}), the lowest power of $Q$ appearing in the Fourier-Jacobi expansion of $\mathcal{S}_{g}$ is $Q^{g-1}$ from which we can at least conclude
\begin{equation}\label{eqn:exricitSgeefr2222}
\mathcal{S}_{g}(\Omega) = \sum_{i+j \geq g-1} f_{i,j}(\mathcal{E}_{4}, \mathcal{E}_{6}) \chi_{10}^{i} \chi_{12}^{j}
\end{equation}
where $f_{i,j}(\mathcal{E}_{4}, \mathcal{E}_{6})$ is a holomorphic Siegel modular form of weight $12(g-j-1)-10i$ which is a polynomial in $\mathcal{E}_{4}$ and $\mathcal{E}_{6}$.  In other words, the Fourier-Jacobi expansion of $\mathcal{S}_{g}$ indicates that it must consist of a sum of monomials having at least $g-1$ factors of the cusp forms $\chi_{10}$ and $\chi_{12}$.  The following lemma can be easily checked directly.  

\begin{lemmy}
For $2 \leq g <6$, the sum in (\ref{eqn:exricitSgeefr2222}) is over $i + j = g-1$.  In other words, we have
\begin{equation}
\mathcal{S}_{g}(\Omega) = \sum_{k=0}^{g-1}f_{k}(\mathcal{E}_{4}, \mathcal{E}_{6}) \chi_{10}^{k} \chi_{12}^{g-1-k}
\end{equation}
where $f_{k}$ is a Siegel modular form of weight $2k$ which is a polynomial in $\mathcal{E}_{4}$ and $\mathcal{E}_{6}$.  It follows that the Gromov-Witten potential in such genera is a polynomial in $\chi_{12}/\chi_{10}, \mathcal{E}_{4},$ and $\mathcal{E}_{6}$
\begin{equation}\label{eqn:GWpotg2345}
F_{g}(\Omega) = \sum_{k=0}^{g-1} f_{k}(\mathcal{E}_{4}, \mathcal{E}_{6}) \bigg(\frac{\chi_{12}}{\chi_{10}}\bigg)^{g-1-k}.  
\end{equation}
\end{lemmy}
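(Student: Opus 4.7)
The plan is to run a weight-counting argument against the structure theorem for $\mathfrak{M}_{2*}(\Gamma_2)$. By Theorem \ref{IgusacrassTHMMM}, any element of $\mathfrak{M}_{12g-12}(\Gamma_2)$ is uniquely a finite $\mathbb{C}$-linear combination of monomials $\chi_{10}^{i} \chi_{12}^{j} \mathcal{E}_4^{a} \mathcal{E}_6^{b}$ with non-negative exponents satisfying the weight constraint
\begin{equation*}
10i + 12j + 4a + 6b \;=\; 12g - 12.
\end{equation*}
Applying this to $\mathcal{S}_g \in \mathfrak{M}_{12g-12}(\Gamma_2)$ and grouping monomials by the pair $(i,j)$ recovers the expansion (\ref{eqn:exricitSgeefr2222}), with $f_{i,j}(\mathcal{E}_4, \mathcal{E}_6)$ identified as a polynomial of weight $12(g-j-1)-10i$ in the Eisenstein generators.

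The crux is then to rule out every pair $(i,j)$ with $i+j \geq g$ when $2 \leq g < 6$. I would simply bound $10i + 12j \geq 10(i+j) \geq 10g$ whenever $i + j \geq g$, so that the weight constraint forces
\begin{equation*}
4a + 6b \;=\; 12g - 12 - 10i - 12j \;\leq\; 2g - 12,
\end{equation*}
and the right-hand side is strictly negative for $g \leq 5$. Since $a, b$ are non-negative integers this is impossible, so every surviving pair satisfies $i + j = g - 1$. Combined with the lower bound $i + j \geq g - 1$ from (\ref{eqn:exricitSgeefr2222}), the expansion collapses to the single-parameter family $\chi_{10}^{k} \chi_{12}^{g-1-k}$ with $k = 0, 1, \ldots, g-1$, and dividing by $\chi_{10}^{g-1}$ gives the asserted closed form for $F_g$ as a polynomial in $\chi_{12}/\chi_{10}$, $\mathcal{E}_4$, and $\mathcal{E}_6$.

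There is no serious obstacle; the main point worth flagging is that the bound is sharp, failing precisely at $g = 6$, where the monomial $\chi_{10}^{6}$ has weight $60 = 12 \cdot 6 - 12$ realized with $(a,b) = (0,0)$. This signals the onset of a genuine new contribution with $i + j > g - 1$ and explains why the closed form cannot persist unchanged into $g \geq 6$ without additional terms of this type being incorporated.
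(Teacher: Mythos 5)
Your proof is correct and is precisely the direct check the paper alludes to but omits: the weight constraint $10i+12j+4a+6b=12g-12$ combined with $i+j\ge g$ forces $4a+6b\le 2g-12<0$ for $g\le 5$, which together with the lower bound $i+j\ge g-1$ from the Fourier--Jacobi expansion collapses the sum to $i+j=g-1$ and gives $f_k$ weight $2k$. Your closing observation that the bound fails exactly at $g=6$ via the monomial $\chi_{10}^{6}$ of weight $60$ correctly explains the paper's restriction to $2\le g<6$.
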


It turns out that for genus $2 \leq g < 6$, the forms (\ref{eqn:VzeroPSI2g-2}) and (\ref{eqn:psi2V0Maassrift}) of $\big(12 \psi_{2g-2} \big| V_{0} \big)$ allow us to determine the polynomial explicitly.  Using the well-known formula 
\begin{equation}
\wp^{(1)}(\tau, z)^{2} = 4 \wp(\tau, z)^{3} - \frac{1}{12} E_{4}(\tau) \wp(\tau, z) + \frac{1}{216} E_{6}(\tau)
\end{equation}
it is straightforward to show the following lemma.  

\begin{lemmy}
For all $g \geq 2$ there exists a weighted homogeneous polynomial $\mathcal{P}_{g}(X, Y, Z)$ of degree $2g-2$ with weight 2 in $X$, weight 4 in $Y$, and weight 6 in $Z$ such that 
\begin{equation}\label{eqn:defnnnPgg}
\wp^{(2g-4)}(\tau, z) = \mathcal{P}_{g}\big( 12 \wp, E_{4}, E_{6} \big). 
\end{equation}
In other words, all even derivatives of the Weierstrass $\wp$-functions can be expressed as a polynomial in $\wp$, $E_{4}$, and $E_{6}$.  The first few examples are given by 
\begin{equation}
\setlength{\jot}{10pt}
\begin{split}
& \mathcal{P}_{2}(X, Y, Z) = \frac{1}{12}X \\
& \mathcal{P}_{3}(X, Y, Z) = \frac{1}{24}\big(X^{2} - Y\big) \\
& \mathcal{P}_{4}(X, Y, Z) = \frac{1}{72} \big(5X^{3}-9XY+4Z\big) \\
& \mathcal{P}_{5}(X, Y, Z) = \frac{1}{144} \big( 35X^{4} -84X^{2} Y + 40 XZ + 9 Y^{2}\big).
\end{split}
\end{equation}
\end{lemmy}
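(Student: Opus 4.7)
The plan is to proceed by induction on $g$, using the differential relation $\wp^{(1)}(\tau,z)^{2}=4\wp(\tau,z)^{3}-\tfrac{1}{12}E_{4}(\tau)\wp(\tau,z)+\tfrac{1}{216}E_{6}(\tau)$ recalled just before the lemma statement as the closing identity that keeps every derivative inside the ring $\mathbb{Q}[E_{4},E_{6}][\wp]$. The base case $g=2$ is immediate: $\wp^{(0)}=\wp=\tfrac{1}{12}(12\wp)$, so one takes $\mathcal{P}_{2}(X,Y,Z)=\tfrac{1}{12}X$, which is weighted homogeneous of degree $2=2g-2$.

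The key preliminary calculation will be to differentiate the displayed cubic relation once in $z$ (and divide by $2\wp^{(1)}\cdot 2\pi i$) to obtain
\begin{equation*}
\wp^{(2)}(\tau,z)=6\wp(\tau,z)^{2}-\tfrac{1}{24}E_{4}(\tau),
\end{equation*}
which is already polynomial in $\wp$ with coefficients in $\mathbb{Q}[E_{4},E_{6}]$, and weighted homogeneous of degree $4$. Combined with the cubic relation for $(\wp^{(1)})^{2}$ itself, this shows that the operator $D:=\tfrac{1}{(2\pi i)^{2}}\tfrac{\partial^{2}}{\partial z^{2}}$ sends $\mathbb{Q}[E_{4},E_{6}][\wp]$ into itself: for any $f\in\mathbb{Q}[E_{4},E_{6}][\wp]$, the Leibniz rule gives $D f=f''(\wp)\,(\wp^{(1)})^{2}+f'(\wp)\,\wp^{(2)}$, and both $(\wp^{(1)})^{2}$ and $\wp^{(2)}$ are already expressed as elements of $\mathbb{Q}[E_{4},E_{6}][\wp]$.

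The inductive step is then straightforward: assume $\wp^{(2g-4)}=\mathcal{P}_{g}(12\wp,E_{4},E_{6})$ with $\mathcal{P}_{g}$ weighted homogeneous of degree $2g-2$ in the weights $(2,4,6)$. Since $\wp^{(2g-2)}=D\,\wp^{(2g-4)}$, the previous paragraph produces an element of $\mathbb{Q}[E_{4},E_{6}][\wp]$, hence a polynomial $\mathcal{P}_{g+1}(X,Y,Z)$ with $\wp^{(2g-2)}=\mathcal{P}_{g+1}(12\wp,E_{4},E_{6})$. To see weighted homogeneity, observe that $\wp$ transforms with weight $2$ under the $SL_{2}(\mathbb{Z})$-action (it is a meromorphic Jacobi form of weight $2$ and index $0$), while $E_{4}$ and $E_{6}$ have weights $4$ and $6$; each application of $D$ raises the modular weight by $2$ (equivalently, each use of $(\wp^{(1)})^{2}$ contributes weight $6=2+4$ while each use of $\wp^{(2)}$ contributes weight $4=2+2$, consistent with differentiating twice a weight-$(2g-2)$ expression to obtain weight $2g$). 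Therefore $\mathcal{P}_{g+1}$ is weighted homogeneous of degree $2g=2(g+1)-2$.

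The argument is essentially mechanical; the only place requiring care is the bookkeeping of weights and signs when repeatedly invoking the cubic relation for $(\wp^{(1)})^{2}$, and checking the normalization factor $\tfrac{1}{(2\pi i)^{2}}$ accounting for the author's convention for $\wp^{(r)}$ noted in Remark \ref{rmk:WPconvrmk}. As a sanity check, applying $D$ to $\mathcal{P}_{3}(12\wp,E_{4},E_{6})=6\wp^{2}-\tfrac{1}{24}E_{4}$ yields $12\wp\,\wp^{(2)}+12(\wp^{(1)})^{2}=120\wp^{3}-\tfrac{3}{2}E_{4}\wp+\tfrac{1}{18}E_{6}$, which agrees with $\mathcal{P}_{4}(12\wp,E_{4},E_{6})=\tfrac{1}{72}(5\cdot 1728\,\wp^{3}-108E_{4}\wp+4E_{6})$ as listed. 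I expect no genuine obstacle; the proof is really a recursion once the identity $\wp^{(2)}=6\wp^{2}-\tfrac{1}{24}E_{4}$ is in hand.
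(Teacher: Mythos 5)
Your proof is correct and is exactly the argument the paper intends: the paper gives no written proof beyond citing the cubic relation for $(\wp^{(1)})^{2}$ and calling the lemma straightforward, and your induction via $\wp^{(2)}=6\wp^{2}-\tfrac{1}{24}E_{4}$ together with the observation that $D=\tfrac{1}{(2\pi i)^{2}}\partial_{z}^{2}$ preserves $\mathbb{Q}[E_{4},E_{6}][\wp]$ and raises weight by $2$ is precisely that recursion, with the sanity check for $\mathcal{P}_{4}$ computed correctly.
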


\begin{thm}
First for genus $g=2$, the Gromov-Witten potential of the banana manifold is the following meromorphic Siegel modular form of genus 2 and weight 2 
\begin{equation}
F_{2}(\Omega) = \frac{1}{240} \frac{\chi_{12}}{\chi_{10}}.
\end{equation}
For $g = 3, 4, 5$, the Gromov-Witten potentials are meromorphic Siegel modular forms of weight $2g-2$ given by the formula
\begin{equation}\label{eqn:explformGWpotts}
F_{g}(\Omega) = F_{g}^{(0)} \mathcal{E}_{2g-2} +  \frac{6 |B_{2g}|}{g (2g-2)!} \mathcal{P}_{g} \bigg( \frac{\chi_{12}}{\chi_{10}}, \mathcal{E}_{4}, \mathcal{E}_{6} \bigg)
\end{equation}
where the $F_{g}^{(0)}$ are the degree zero contributions to the Gromov-Witten potentials (\ref{eqn:degzeroGWgrtwo}).  Explicitly, we have
\begin{equation}
\setlength{\jot}{10pt}
\begin{split}
& F_{3}(\Omega) = \frac{1}{60480}\bigg(5 \bigg(\frac{\chi_{12}}{\chi_{10}}\bigg)^{2}-6 \, \mathcal{E}_{4}\bigg) \\
& F_{4}(\Omega) = \frac{1}{7257600}\bigg( 35 \bigg( \frac{\chi_{12}}{\chi_{10}}\bigg)^{3} -63 \frac{\chi_{12}}{\chi_{10}} \mathcal{E}_{4} + 30 \mathcal{E}_{6}\bigg) \\
& F_{5}(\Omega) = \frac{1}{319334400} \bigg( 175 \bigg(\frac{\chi_{12}}{\chi_{10}}\bigg)^{4} - 420 \bigg(\frac{\chi_{12}}{\chi_{10}}\bigg)^{2} \mathcal{E}_{4} + 200 \frac{\chi_{12}}{\chi_{10}} \mathcal{E}_{6} + 42 \mathcal{E}_{8}\bigg). \\
\end{split}
\end{equation}
\end{thm}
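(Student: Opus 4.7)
The plan is to identify both sides of the claimed identity as specific elements in the same finite-dimensional space of meromorphic Siegel modular forms, and then pin them down by matching a single Fourier-Jacobi coefficient. By the preceding lemma and Corollary \ref{cory:GWpotssSMFds}, the Gromov-Witten potential admits a unique decomposition $F_g = \sum_{k=0}^{g-1} f_k(\mathcal{E}_4, \mathcal{E}_6)\big(\chi_{12}/\chi_{10}\big)^{g-1-k}$ for $2 \leq g < 6$, where each $f_k$ is a weight-$2k$ polynomial in $\mathcal{E}_4, \mathcal{E}_6$. First I would check that the proposed right-hand side of (\ref{eqn:explformGWpotts}) also lies in this space: for $g = 3, 4, 5$, the Siegel-Eisenstein series $\mathcal{E}_{2g-2}$ sits in a one-dimensional weight-$(2g-2)$ subspace of $\mathfrak{M}_{2*}(\Gamma_2)$ and therefore equals (by normalization $\Phi(\mathcal{E}_k) = E_k$ and the ring relations $E_8 = E_4^2$, etc.) the unique monomial $\mathcal{E}_4, \mathcal{E}_6, \mathcal{E}_4^2$ respectively.

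Next I would extract the $Q^0$ Fourier-Jacobi coefficient of both sides. The Maass-lift expansion $F_g = \sum_{m \geq 0} Q^m (12\psi_{2g-2}|V_m)$ gives this coefficient as $(12\psi_{2g-2}|V_0)$, which by (\ref{eqn:psi2V0Maassrift})-(\ref{eqn:VzeroPSI2g-2}) equals $\tfrac{1}{20}\wp$ in genus two and $F_g^{(0)} E_{2g-2}(\tau) + \tfrac{6|B_{2g}|}{g(2g-2)!} \wp^{(2g-4)}(\tau,z)$ for $g \geq 3$. For the proposed right-hand side, the Fourier-Jacobi expansions of the generators give $\mathcal{E}_k|_{Q=0} = E_k(\tau)$ and, using (\ref{eqn:JacCuspForms}), $(\chi_{12}/\chi_{10})|_{Q=0} = \varphi_{12,1}/\varphi_{10,1} = \varphi_{0,1}/\Theta^2 = 12\wp(\tau, z)$ by (\ref{eqn:WPFuncK3}); so the $Q^0$ coefficient becomes $F_g^{(0)} E_{2g-2} + \tfrac{6|B_{2g}|}{g(2g-2)!}\mathcal{P}_g(12\wp, E_4, E_6)$, which matches the Maass-lift side by the very definition (\ref{eqn:defnnnPgg}) of $\mathcal{P}_g$.

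The final step is an algebraic independence argument showing that the $Q^0$ coefficient uniquely determines the polynomial decomposition. If two such sums $\sum_k f_k^{(i)}(\mathcal{E}_4, \mathcal{E}_6)(\chi_{12}/\chi_{10})^{g-1-k}$ have identical $Q^0$ coefficients, then $\sum_k \big(f_k^{(1)}-f_k^{(2)}\big)(E_4, E_6)(12\wp)^{g-1-k}$ vanishes identically on $\mathfrak{H} \times \mathbb{C}$. Since $\wp(\tau, z)$ is transcendental over $\mathbb{C}(E_4, E_6)$ (as a function of $z$), each coefficient of $\wp^j$ must vanish, and algebraic independence of $E_4, E_6$ forces $f_k^{(1)} = f_k^{(2)}$. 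This yields equality in (\ref{eqn:explformGWpotts}). The $g = 2$ case is handled in the same spirit, with the mild subtlety that there is no weight-$2$ Siegel modular form, so the $F_2^{(0)}\mathcal{E}_2$ term is absent and $F_2^{(0)}$ is instead absorbed into $\tfrac{1}{240}\cdot 12\wp|_{q^0 y^0} = \tfrac{1}{240}$.

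The main obstacle is not a computation but a structural one: the whole argument works because the hypothesis $g < 6$ ensures, via the preceding lemma, that every monomial in the decomposition of $\mathcal{S}_g = \chi_{10}^{g-1} F_g$ has \emph{total} $\chi_{10}$-$\chi_{12}$-degree exactly $g-1$, so that a single Fourier-Jacobi coefficient detects all of the $f_k$ simultaneously. For $g \geq 6$ the decomposition in (\ref{eqn:exricitSgeefr2222}) allows mixed terms with strictly larger $\chi_{10}$-$\chi_{12}$-degree, and the $Q^0$ coefficient no longer suffices; one would need to incorporate higher Fourier-Jacobi coefficients (equivalently, the action of $V_1, V_2, \ldots$ on $12\psi_{2g-2}$). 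Once the structural argument is in place, the explicit closed-form polynomials for $F_2, F_3, F_4, F_5$ follow by substituting the explicit $\mathcal{P}_g$ from the preceding lemma and the numerical values of $F_g^{(0)}$ from (\ref{eqn:degzeroGWgrtwo}), using $|B_4| = 1/30, |B_6| = 1/42, |B_8| = 1/30, |B_{10}| = 5/66$.
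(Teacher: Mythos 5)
Your proposal is correct and follows essentially the same route as the paper: use the constrained form $F_g=\sum_{k}f_k(\mathcal{E}_4,\mathcal{E}_6)(\chi_{12}/\chi_{10})^{g-1-k}$ valid for $2\le g<6$, apply the Siegel operator to extract the $Q^0$ Fourier--Jacobi coefficient, and match it against $(12\psi_{2g-2}|V_0)$ via $\wp=\tfrac{1}{12}\varphi_{12,1}/\varphi_{10,1}$ and the definition of $\mathcal{P}_g$. The only difference is that you spell out the uniqueness step (transcendence of $\wp$ over $z$ plus algebraic independence of $E_4,E_6$) that the paper asserts without proof; this is a welcome but minor addition.
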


\begin{proof}
The method of the proof is to use the known form (\ref{eqn:GWpotg2345}) of the $F_{g}$, apply the Siegel operator $\Phi$ defined in (\ref{eqn:SiegelOPPP}) to produce the $Q^{0}$ coefficient in the Fourier-Jacobi expansion, and require this to agree with the known formulas (\ref{eqn:psi2V0Maassrift}) and (\ref{eqn:VzeroPSI2g-2}) for $\big( 12 \psi_{2g-2} \big| V_{0} \big)$.  We will use the relationship between the Weierstrass $\wp$-function and the Jacobi cusp forms
\begin{equation}
\wp(\tau, z) = \frac{1}{12} \frac{\varphi_{12,1}(\tau, z)}{\varphi_{10,1}(\tau, z)}.  
\end{equation}
In the case of $g=2$, we know from (\ref{eqn:GWpotg2345}) that the Gromov-Witten potential takes the form
\begin{equation}
F_{2}(\Omega) = \sum_{k=0}^{1} f_{k}(\mathcal{E}_{4}, \mathcal{E}_{6}) \bigg( \frac{\chi_{12}}{\chi_{10}} \bigg)^{1-k} = c \, \frac{\chi_{12}}{\chi_{10}} 
\end{equation}
for a constant $c$.  We cannot have a term $f_{1}(\mathcal{E}_{4}, \mathcal{E}_{6})$ because it would have to have weight 2.  Applying the Siegel operator, we find
\begin{equation}
\Phi(F_{2}) = c \, \frac{\varphi_{12,1}(\tau, z)}{\varphi_{10,1}(\tau, z)} = 12 \, c \, \wp(\tau, z).
\end{equation}
But since we must have $\Phi(F_{2}) = \big( 12 \psi_{2} \big| V_{0} \big)$, we know from (\ref{eqn:psi2V0Maassrift}) that $c=1/240$.  This completes the proof in the case of $g=2$.  

For $g=3,4,5$ we know the Gromov-Witten potentials take the form presented in (\ref{eqn:GWpotg2345}), and we apply the Siegel operator to get  
\begin{equation}
\Phi(F_{g}) = \sum_{k=0}^{g-1} f_{k}(E_{4}, E_{6})(12 \wp)^{g-1-k}.
\end{equation}
This is a weighted-homogeneous polynomial of degree $2g-2$ in $12 \wp, E_{4}$, and $E_{6}$ with weights $2,4,6$ respectively.  By (\ref{eqn:VzeroPSI2g-2}), since $g >2$ we must have
\begin{equation}
\Phi(F_{g}) = \big( 12 \psi_{2g-2} \big| V_{0} \big) = F_{g}^{(0)} E_{2g-2}(\tau) + \frac{6 |B_{2g}|}{g (2g-2)!} \wp^{(2g-4)}(\tau, z).
\end{equation}
Using the definition $\wp^{(2g-4)}(\tau, z) = \mathcal{P}_{g}(12 \wp, E_{4}, E_{6})$ of $\mathcal{P}_{g}$ in (\ref{eqn:defnnnPgg}), this uniquely fixes the $f_{k}$ and verifies that $F_{g}$ takes the form (\ref{eqn:explformGWpotts}) as claimed.   
\end{proof}

\subsection{Symmetries of the Gromov-Witten Potentials}\label{subsec:SyymmmGWpotss}

Because they are more convenient for studying the Hecke operators and Maass lifting, we have been working with the $Q, q, y$ variables for some time.  However, recall that the geometric variables on the banana curves are $Q_{1}, Q_{2}, Q_{3}$ with the relationship given by (\ref{eqn:changeofvarss}).  Applying the change of variables, from (\ref{eqn:MLgengnew}) and the definition of the polylogarithm, it is easy to see that the Gromov-Witten potentials in the geometric variables for genus $g \geq 2$ are given by 
\begin{equation}
F_{g}(Q_{1}, Q_{2}, Q_{3}) = F_{g}^{(0)} +  \sum_{\substack{ d_{1}, d_{2}, d_{3} \geq 0 \\ (d_{1}, d_{2}, d_{3}) \neq (0,0,0)}} 12 c_{2g-2}(||\underline{\bf{d}}||) \sum_{r=1}^{\infty} r^{2g-3} Q_{1}^{rd_{1}} Q_{2}^{rd_{2}}Q_{3}^{rd_{3}}
\end{equation}
where the quadratic form $||\underline{\bf{d}}||$ was defined in (\ref{eqn:quaddformdsss}).  From this formula above, we can extract the Gromov-Witten invariants for all classes $\underline{\bf{d}} \neq (0,0,0)$.  We find
\begin{equation}
\text{GW}_{g, \underline{\bf{d}}}(X) = \sum_{\substack{r | (d_{1}, d_{2}, d_{3}) \\ r >0}} r^{2g-3} 12 c_{2g-2} \bigg( \frac{||\underline{\bf{d}}||}{r^{2}} \bigg).
\end{equation}
(The invariants for $g=0,1$ of course look similar, but we will not discuss them here due to the subtleties in degree zero.) 

Notice that the Gromov-Witten invariants depend on not only the value of the quadratic form, but also the divisibility of the class.  We will soon find that the Gopakumar-Vafa invariants on the other hand, depend only on the quadratic form.  In either set of variables, the Gromov-Witten invariants exactly match the form (\ref{eqn:MaassSPEZFCOES}) for the Fourier coefficients of an object in the Maass `Spezialschar.'  This is of course to be expected from our results in the previous section.   

There are geometrical symmetries of the formal banana manifold which should induce symmetries of the Gromov-Witten potentials in the variables $Q_{1}, Q_{2}, Q_{3}$.  The banana curves are each identical in $X_{\text{ban}}$, so there is an obvious $S_{3}$-symmetry permuting the $Q_{i}$.  Less obvious, one can perform a \emph{flop} through the curve $C_{3}$ which should yield a symmetry of the potentials under the transformation
\begin{equation}\label{eqn:froptransformm}
\big(Q_{1}, Q_{2}, Q_{3} \big) \longmapsto \big( Q_{1}Q_{3}^{2}, Q_{2}Q_{3}^{2}, Q_{3}^{-1}\big).  
\end{equation}
By the permutation symmetry, one can of course equally well flop through the two other banana curves.  

It turns out that in the $Q, q, y$ variables, these symmetries come for free after identifying the $F_{g}$ as Siegel modular forms.  There is a natural embedding $GL_{2}(\mathbb{Z}) \hookrightarrow Sp_{4}(\mathbb{Z})$ defined by 
\begin{equation}
M \longmapsto
\begin{pmatrix}
M & 0 \\
0 & (M^{-1})^{T}
\end{pmatrix}.
\end{equation}
By (\ref{eqn:SMGAction}), the resulting action on the Siegel upper-half plane $\mathfrak{H}_{2}$ is by the adjoint action
\begin{equation}
\Omega \longmapsto M \Omega M^{T}, \,\,\,\,\,\,\,\,\,\,\,\,\,\,\,\,\,\,\, \Omega = \begin{pmatrix} \tau & z \\ z & \sigma \end{pmatrix} \in \mathfrak{H}_{2}
\end{equation}
where we recall the standard conventions $Q=e^{2 \pi i \sigma}, q = e^{2 \pi i \tau}$, and $y=e^{2 \pi i z}$.  Since $\text{det}(M)= \pm 1$, if $F$ is a genus two Siegel modular form of even weight, we have by (\ref{eqn:defnnnSMFDS})
\begin{equation}
F(M \Omega M^{T}) = F(\Omega).
\end{equation}
Therefore, all even weight genus two Siegel modular forms have a $GL_{2}(\mathbb{Z})$ invariance acting by the adjoint action on the Siegel upper-half plane.  In fact, because the action is by the adjoint, $-1$ clearly acts trivially, so we get a $PGL_{2}(\mathbb{Z})$ invariance.  The proof of the following proposition is an easy check.  

\begin{proppy}\label{proppy:chvarsssingfibrrl}
Let us write $GL_{2}(\mathbb{Z})$ with the following set of ordered generators
\begin{equation}
GL_{2}(\mathbb{Z}) = \langle \gamma_{1}, \gamma_{2}, \gamma_{3} \rangle \coloneqq \bigg\langle  
\begin{pmatrix}
1 & 0 \\ 0 & -1
\end{pmatrix},
\begin{pmatrix}
0 & 1 \\ 1 & 0
\end{pmatrix},
\begin{pmatrix}
1 & -1 \\ 0 & -1
\end{pmatrix}
\bigg\rangle.
\end{equation}
Under the change of variables (\ref{eqn:changeofvarss}), the adjoint action of $\gamma_{1}$ on the Siegel upper-half plane $\mathfrak{H}_{2}$ induces the flop transformation (\ref{eqn:froptransformm}).  The adjoint action of $\gamma_{2}$ on $\mathfrak{H}_{2}$ induces the permutation
\begin{equation}
\big(Q_{1}, Q_{2}, Q_{3} \big) \longmapsto \big( Q_{2}, Q_{1}, Q_{3} \big)
\end{equation}
and the adjoint action of $\gamma_{3}$ on $\mathfrak{H}_{2}$ induces another permutation 
\begin{equation}
\big(Q_{1}, Q_{2}, Q_{3} \big) \longmapsto \big( Q_{3}, Q_{2}, Q_{1} \big).
\end{equation}
Therefore, identifying the Gromov-Witten potential $F_{g}$ for $g \geq 2$, as an even weight genus two Siegel modular form, the $S_{3}$ and flop symmetries of $F_{g}$ correspond to the adjoint action of $PGL_{2}(\mathbb{Z})$ on $\mathfrak{H}_{2}$. 
\end{proppy}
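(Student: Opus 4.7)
The plan is to verify the three assertions by direct linear-algebra computation: compute the adjoint action $\Omega \mapsto M \Omega M^T$ explicitly for each generator $\gamma_{i}$, then translate back via the substitution (\ref{eqn:changeofvarss}) and compare. The main preliminary step is to invert (\ref{eqn:changeofvarss}). Writing $\sigma, \tau, z$ in terms of $\log(Q_i)/(2\pi i)$, I get $\sigma = \log(Q_1 Q_3)/(2\pi i)$, $\tau = \log(Q_2 Q_3)/(2\pi i)$, $z = \log Q_3 /(2\pi i)$, so that at the level of exponentials $Q_1 = Q y^{-1}$, $Q_2 = q y^{-1}$, $Q_3 = y$. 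This dictionary is what lets me convert between the two pictures.

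For a general $M = \begin{psmallmatrix} a & b \\ c & d \end{psmallmatrix} \in GL_2(\mathbb{Z})$, a short computation gives the adjoint action on entries of $\Omega$:
\begin{equation*}
\tau' = a^2 \tau + 2ab z + b^2 \sigma, \qquad z' = ac\, \tau + (ad+bc)z + bd\, \sigma, \qquad \sigma' = c^2 \tau + 2cd\, z + d^2 \sigma.
\end{equation*}
I would first specialize this to $\gamma_1 = \begin{psmallmatrix} 1 & 0 \\ 0 & -1 \end{psmallmatrix}$, yielding $(\tau, z, \sigma) \mapsto (\tau, -z, \sigma)$, i.e.\ $(Q, q, y) \mapsto (Q, q, y^{-1})$. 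Using $Q_1 = Q y^{-1}$ etc., this reads $(Q_1, Q_2, Q_3) \mapsto (Q_1 Q_3^2, Q_2 Q_3^2, Q_3^{-1})$, which is precisely the flop (\ref{eqn:froptransformm}). Next, for $\gamma_2 = \begin{psmallmatrix} 0 & 1 \\ 1 & 0 \end{psmallmatrix}$ the formulas give $(\tau, z, \sigma) \mapsto (\sigma, z, \tau)$, so $(Q, q, y) \mapsto (q, Q, y)$, which translates into the transposition $(Q_1, Q_2, Q_3) \mapsto (Q_2, Q_1, Q_3)$. Finally, for $\gamma_3 = \begin{psmallmatrix} 1 & -1 \\ 0 & -1 \end{psmallmatrix}$ one gets $\tau' = \tau - 2z + \sigma$, $z' = \sigma - z$, $\sigma' = \sigma$; exponentiating and applying the dictionary yields $Q_1' = Q_3$, $Q_2' = Q_2$, $Q_3' = Q_1$, the second transposition.

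Since the flop and the two transpositions in $S_3$ generate the full $S_3$ together with the flop symmetry (and flops through the other banana curves arise by conjugation by the permutations), this establishes the claim at the level of transformations. As a final small observation, I would note that $-I \in GL_2(\mathbb{Z})$ acts trivially by the adjoint action on $\mathfrak{H}_2$, so the induced action factors through $PGL_2(\mathbb{Z})$, and that by Corollary \ref{cory:GWpotssSMFds} the $F_g$ for $g \geq 2$ are genus-two Siegel modular forms of \emph{even} weight $2g-2$, so invariance under the full $GL_2(\mathbb{Z}) \subset Sp_4(\mathbb{Z})$ action follows at once from $\det(M) = \pm 1$ applied in (\ref{eqn:defnnnSMFDS}).

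There is no real obstacle here; everything reduces to an explicit $2\times 2$ matrix computation and bookkeeping of the variable change. The only point that requires mild care is making sure that the flop symmetry recovered on the K\"ahler side matches the geometric flop through $C_3$ (rather than $C_1$ or $C_2$), which is fixed by the asymmetric role of $Q_3 = y$ in the substitution (\ref{eqn:changeofvarss}); the $S_3$ permutation symmetry then supplies the flops through $C_1$ and $C_2$ by conjugation.
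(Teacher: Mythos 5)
Your proposal is correct: the paper itself only remarks that the proposition is ``an easy check,'' and your explicit computation of $\Omega \mapsto M\Omega M^{T}$ for each generator, combined with the dictionary $Q_{1} = Qy^{-1}$, $Q_{2} = qy^{-1}$, $Q_{3} = y$ inverted from (\ref{eqn:changeofvarss}), is exactly that check carried out in full (I verified all three cases, including the slightly less obvious $\gamma_{3}$). Your closing observations about $-I$ acting trivially and the even-weight invariance via (\ref{eqn:defnnnSMFDS}) match the paper's surrounding discussion.
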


Equivalently, we can define a rank three lattice $\big( V, ||\underline{\bf{d}}|| \big)$ generated by the banana curves $C_{1}, C_{2}, C_{3}$ and interpret the $\gamma_{i}$ as lattice automorphisms (automorphisms of $V$, preserving $||\underline{\bf{d}}||$).  From this perspective, the action of the $\gamma_{i}$ on the generators is
\begin{equation}\label{eqn:actonbanncurvves}
\begin{split}
& \gamma_{1} : \big(C_{1}, C_{2}, C_{3}\big) \longmapsto  \big(C_{1}+2C_{3}, C_{2}+2C_{3}, -C_{3}\big)  \\
&\gamma_{2} : \big(C_{1}, C_{2}, C_{3}\big) \longmapsto  \big(C_{2}, C_{1}, C_{3} \big)   \\
&\gamma_{3} : \big(C_{1}, C_{2}, C_{3}\big) \longmapsto \big( C_{3}, C_{2}, C_{1} \big).
\end{split}
\end{equation}

\section{The Gopakumar-Vafa (BPS) Invariants}

Recall from (\ref{eqn:cbetanGVrelll}) and the surrounding discussion that if the Donaldson-Thomas partition function is written as an infinite product, then there is a nice relationship between the Gopakumar-Vafa invariants and the exponents in the product.  We use the Donaldson-Thomas partition function $Z_{\text{DT}}(X_{\text{ban}})_{\Gamma}$ computed in (\ref{eqn:DTBanpartfunc}) and recall that a fiber class in the banana manifold is described as $\underline{\bf{d}} = (d_{1}, d_{2}, d_{3})$ with quadratic form
\begin{equation}\label{eqn:quaddfrmmmm}
||\underline{\bf{d}}|| = 2d_{1}d_{2} + 2d_{1}d_{3} + 2d_{2}d_{3} -d_{1}^{2}-d_{2}^{2}-d_{3}^{2}.
\end{equation}
We therefore have for all fiber classes $\underline{\bf{d}} \neq (0,0,0)$
\begin{equation}\label{eqn:GVcoeffEllGen1}
\sum_{g=0}^{\infty} n_{g, ||\underline{\bf{d}}||}(X_{\text{ban}})\big(t ^{\frac{1}{2}} + t^{-\frac{1}{2}}\big)^{2g-2} = \sum_{k \in \mathbb{Z}} 12 c(||\underline{\bf{d}}||, k) (-t)^{k}
\end{equation} 
where $n_{g, ||\underline{\bf{d}}||}(X_{\text{ban}})$ are the Gopakumar-Vafa invariants of the banana manifold $X_{\text{ban}}$ in fiber classes, and $c(||\underline{\bf{d}}||, k)$ are the coefficients of the equivariant elliptic genus of $\mathbb{C}^{2}$.  

\begin{cory}
The Gopakumar-Vafa invariants of $X_{\text{ban}}$ in fiber classes $\underline{\bf{d}}$ depend only on the quadratic form $||\underline{\bf{d}}||$.  In particular, they do not depend on the divisibility of the class.  
\end{cory}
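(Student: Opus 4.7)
The plan is to derive the corollary as an immediate consequence of two inputs we already have in hand: Bryan's product formula for the Donaldson--Thomas partition function of $X_{\text{ban}}$ in fiber classes (Theorem \ref{thm:JimTHM}), and the general GV/DT relation recorded in equation (\ref{eqn:cbetanGVrelll}). First I would invoke (\ref{eqn:cbetanGVrelll}) applied to $X = X_{\text{ban}}$ with a fixed non-zero fiber class $\underline{\bf d}$: writing the Donaldson--Thomas partition function as an infinite product with exponents $c(\underline{\bf d}, k)$, one has
\begin{equation*}
\sum_{g=0}^{\infty} n_{g, \underline{\bf d}}(X_{\text{ban}})\bigl(t^{\frac{1}{2}} + t^{-\frac{1}{2}}\bigr)^{2g-2} = \sum_{k \in \mathbb{Z}} c(\underline{\bf d}, k)(-t)^{k}.
\end{equation*}
At this stage the GV invariants are labelled by the full class $\underline{\bf d}$, not by the quadratic form.

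The second step is to identify the exponents $c(\underline{\bf d}, k)$ with the Fourier coefficients of $\Phi_0$. Comparing the infinite product form in Theorem \ref{thm:JimTHM} with the general infinite product representation of $Z_{\text{DT}}$ gives $c(\underline{\bf d}, k) = 12\, c(||\underline{\bf d}||, k)$, where on the right we have the equivariant elliptic genus coefficients which, by Proposition \ref{proppy:FcoeffEllgens23}, depend on the class $\underline{\bf d}$ only through the value of the quadratic form $||\underline{\bf d}||$. Substituting this identification into the previous display recovers exactly equation (\ref{eqn:GVcoeffEllGen1}).

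The final step is to argue that the system of equations (\ref{eqn:GVcoeffEllGen1}), one for each genus coefficient, uniquely determines the sequence $\{n_{g, \underline{\bf d}}\}_{g \geq 0}$ from the right-hand side. This uses the fact that the collection $\{(t^{\frac{1}{2}} + t^{-\frac{1}{2}})^{2g-2}\}_{g \geq 0}$ is linearly independent in the appropriate space of Laurent expansions in $t$: the $g \geq 1$ terms are palindromic Laurent polynomials in $t$ of strictly increasing degree, while the $g=0$ term $(t^{\frac{1}{2}} + t^{-\frac{1}{2}})^{-2} = t/(1+t)^2$ contributes a distinguished infinite series. Consequently the assignment $(n_{g, \underline{\bf d}})_g \mapsto \text{LHS of }(\ref{eqn:GVcoeffEllGen1})$ is injective, so $n_{g, \underline{\bf d}}$ is determined by the right-hand side; since the right-hand side depends on $\underline{\bf d}$ only through $||\underline{\bf d}||$, so does each $n_{g, \underline{\bf d}}$.

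The main obstacle, if any, is the careful bookkeeping of the final uniqueness/linear independence step — one must fix a precise ambient space of Laurent series in $t$ in which the equation (\ref{eqn:GVcoeffEllGen1}) is to be interpreted (matching the finite-genus behavior expected of $n_{g, \underline{\bf d}}$) and verify injectivity there. Beyond this essentially formal check, the corollary is an immediate translation of the strongly restrictive form of $Z_{\text{DT}}(X_{\text{ban}})_{\Gamma}$ into a statement about the GV generating series, reflecting the fact that the entire fiberwise BPS spectrum of $X_{\text{ban}}$ is controlled by a single universal function, the equivariant elliptic genus $\Phi_0$, evaluated on the quadratic form $||\underline{\bf d}||$.
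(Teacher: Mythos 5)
Your proof is correct and follows essentially the same route as the paper: combine Bryan's product formula with the GV/DT relation (\ref{eqn:cbetanGVrelll}) to obtain (\ref{eqn:GVcoeffEllGen1}), then use Proposition \ref{proppy:FcoeffEllgens23} to see the right-hand side depends only on $||\underline{\bf{d}}||$. Your final linear-independence/injectivity step is a point the paper leaves implicit (it simply writes $n_{g,||\underline{\bf d}||}$ in (\ref{eqn:GVcoeffEllGen1})), and spelling it out is a harmless, correct addition.
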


\noindent This is an unusual property of $X_{\text{ban}}$, which also holds for a local $K3$ surface by the Katz-Klemm-Vafa conjecture from physics \cite{katz_m-theory_1999}, proved by Pandharipande-Thomas \cite{pandharipande_katz-klemm-vafa_2014}.  

Using the change of variables (\ref{eqn:changeofvarss}), we can equivalently give a fiber class $\underline{\bf{d}}$ in terms of K\"{a}hler parameters $(m,n,l)$ corresponding to variables $Q, q, y$.  The quadratic form transforms as
\begin{equation}\label{eqn:qfrmtranny}
||\underline{\bf{d}}|| = 4nm - l^{2}.  
\end{equation}

\begin{cory}
The Gopakumar-Vafa invariants of $X_{\text{ban}}$ are encoded into the equivariant elliptic genus of $\mathbb{C}^{2}$ as follows
\begin{equation}\label{eqn:relnGVtoELLGEN}
\sum_{a,g \geq 0,l \in \mathbb{Z}} \tfrac{1}{12} n_{g, 4a-l^{2}}(X_{\text{ban}}) q^{a}y^{l}\big(t^{\frac{1}{2}} + t^{-\frac{1}{2}}\big)^{2g-2} = \text{Ell}_{q,y}(\mathbb{C}^{2}; -t).  
\end{equation}
\end{cory}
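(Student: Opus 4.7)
The plan is to deduce this corollary essentially as an algebraic repackaging of equation (\ref{eqn:GVcoeffEllGen1}) combined with the Fourier expansion of $\text{Ell}_{q,y}(\mathbb{C}^{2}; t)$ from (\ref{eqn:copiedEQELLGENC2}). The previous corollary already tells us that $n_{g, \underline{\bf{d}}}(X_{\text{ban}})$ depends only on the value of the quadratic form $||\underline{\bf{d}}||$, so the symbol $n_{g, v}$ is well-defined for any integer $v$ realized as $||\underline{\bf{d}}||$ by some fiber class. Under the change of variables (\ref{eqn:changeofvarss}), the quadratic form becomes $||\underline{\bf{d}}|| = 4nm - l^{2}$, so setting $m=1$ (equivalently, parametrizing all achievable values by a single pair) identifies the range of $||\underline{\bf{d}}||$ with the set of integers of the form $4a - l^{2}$ for $a \geq 0$, $l \in \mathbb{Z}$.

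From here I would simply divide both sides of (\ref{eqn:GVcoeffEllGen1}) by $12$, multiply by $q^{a} y^{l}$, and sum over $a \geq 0$ and $l \in \mathbb{Z}$. The left-hand side immediately becomes the series
\[
\sum_{a \geq 0,\, g \geq 0,\, l \in \mathbb{Z}} \tfrac{1}{12}\, n_{g, 4a-l^{2}}(X_{\text{ban}}) \, q^{a} y^{l} \big(t^{\frac{1}{2}} + t^{-\frac{1}{2}}\big)^{2g-2},
\]
which is exactly the left side of the desired identity. The right-hand side becomes
\[
\sum_{a \geq 0,\, l,\, k \in \mathbb{Z}} c(4a - l^{2},\, k)\, q^{a} y^{l} (-t)^{k}.
\]
Invoking Proposition \ref{proppy:FcoeffEllgens23}, which asserts that the Fourier coefficient of $\Phi_{0}(\tau, z, x)$ at $q^{n} y^{l} t^{k}$ depends only on $4n - l^{2}$ and $k$, this last expression is precisely the Fourier expansion of $\text{Ell}_{q,y}(\mathbb{C}^{2}; -t)$ read off from (\ref{eqn:copiedEQELLGENC2}), with $t$ replaced by $-t$ in the $t$-variable only.

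The only minor subtlety I anticipate is the degenerate term $a = l = 0$, corresponding to $||\underline{\bf{d}}|| = 0$, which in the geometric interpretation includes the zero class where Gopakumar--Vafa invariants are not strictly defined. However, (\ref{eqn:GVcoeffEllGen1}) is a purely formal power-series identity in $t$ determined by the coefficients $c(0, k)$ of the equivariant elliptic genus, and Proposition \ref{proppy:FcoeffEllgens23} together with the explicit values (\ref{eqn:lowvalkcoeff}) ensures these are all well-defined; one may therefore simply take (\ref{eqn:GVcoeffEllGen1}) as the definition of $n_{g, 0}(X_{\text{ban}})$ to make the identity hold as a formal power series. There is no hard step in this proof: it is a purely formal bookkeeping argument, and the substantive content has already been established in the construction of the Donaldson--Thomas partition function as a Borcherds lift in Proposition \ref{proppy:DTBryanchvarss}.
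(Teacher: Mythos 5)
Your proposal is correct and follows essentially the same route as the paper's proof: introduce formal parameters $q$ and $y$ tracking $a$ and $l$, multiply (\ref{eqn:GVcoeffEllGen1}) (divided by $12$) by $q^{a}y^{l}$, sum over the parameter values, and identify the resulting right-hand side with the Fourier expansion of $\text{Ell}_{q,y}(\mathbb{C}^{2};-t)$. The one subtlety you flag, the $a=l=0$ term, does not actually require taking the identity as a definition, since the value $4a-l^{2}=0$ is attained by nonzero fiber classes (e.g.\ $(m,n,l)=(1,0,0)$), so $n_{g,0}(X_{\text{ban}})$ is already well-defined by the preceding corollary.
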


\begin{proof}
First re-write (\ref{eqn:GVcoeffEllGen1}) replacing $||\underline{\bf{d}}||$ with $4nm-l^{2}$.  Noting that $4nm-l^{2}$ only depends on $n, m$ through $a=nm$, we let $q$ be a formal parameter tracking $a \geq 0$, and $y$ a formal parameter tracking $l \in \mathbb{Z}$.  Multiplying both sides of (\ref{eqn:GVcoeffEllGen1}) by $q^{a}y^{l}$ and summing over the parameter values, we have
\begin{equation}
\sum_{a,g \geq 0, l \in \mathbb{Z}} \tfrac{1}{12} n_{g, 4a-l^{2}}(X_{\text{ban}}) q^{a}y^{l}\big(t^{\frac{1}{2}} + t^{-\frac{1}{2}}\big)^{2g-2}  = \sum_{\substack{a \geq 0 \\ l,k \in \mathbb{Z}}}  c(4a-l^{2}, k)q^{a}y^{l}(-t)^{k}.
\end{equation}
The righthand side is precisely $\text{Ell}_{q, y}(\mathbb{C}^{2}; -t)$, which completes the proof.
\end{proof}

Let us now return to the diagram presented in the introduction, which we now are able to understand fully.  
\begin{equation}\label{eqn:DT/GW/GVdiaggs}
\begin{tikzcd}
Z_{\text{DT}}(X_{\text{ban}})_{\Gamma} \arrow[dashed, swap]{rrr}{\text{(Asymptotic) GW/DT Corresopndence}}     &     &      &      \sum_{g=0}^{\infty} \lambda^{2g-2} \text{ML}(12 \psi_{2g-2}) \\
                                                            &      &      &        \\
                                                            &      &      &        \\
                                                            &      &      &        \\
                                                            &      &  12\Phi_{0}(\tau, z, x) = \sum_{g=0}^{\infty} \lambda^{2g-2} 12\psi_{2g-2}(\tau, z) \arrow{uuuull}{\text{Formal Borcherds Lift of}\, 12 \Phi_{0}} \arrow[swap]{uuuur}{\text{Maass Lift of the}\, 12 \psi_{2g-2}}    &
\end{tikzcd}
\end{equation}
The top left corner shows the Donaldson-Thomas partition function, which by (\ref{eqn:DTpartfuncBorLiftTWOO}) we identify as the formal Borcherds lift of the equivariant elliptic genus $\Phi_{0}(\tau, z, x) = \text{Ell}_{q,y}(\mathbb{C}^{2}; t)$.  The top right corner shows the generating function of the Maass lifts of the $\psi_{2g-2}$ which by (\ref{eqn:GWDTcorraympps}) is essentially the Gromov-Witten free energy.  The two corners are related by the GW/DT correspondence, which is technically only an asymptotic statement since the degree zero contributions are important in this case.  Finally, the bottom corner shows $\Phi_{0}$ which by the above discussion, gives rise to the Gopakumar-Vafa invariants as in (\ref{eqn:relnGVtoELLGEN}).  

\begin{rmk}
It has long been expected that the Gopakumar-Vafa invariants underly both of the Donaldson-Thomas and Gromov-Witten theories on a Calabi-Yau threefold.  We regard diagram (\ref{eqn:DT/GW/GVdiaggs}) as an nice illustration of this whereby a modular object encoding the Gopakumar-Vafa invariants produces the Donaldson-Thomas and Gromov-Witten partition functions via nice arithmetic lifting procedures.
\end{rmk}

Performing the change of variables $t = e^{i \lambda}$ in (\ref{eqn:relnGVtoELLGEN}), it is a straightforward simplification to see that
\begin{equation}
\sum_{n, g \geq 0, l \in \mathbb{Z}} \tfrac{1}{12} n_{g,4n-l^{2}}(X_{\text{ban}})q^{n}y^{l} \lambda^{2g-2} \bigg( \frac{2 \sin(\lambda/2)}{\lambda}\bigg)^{2g-2} = \sum_{g=0}^{\infty} \lambda^{2g-2} \psi_{2g-2}(\tau, z)
\end{equation}
where for the righthand side, we have used that $\Phi_{0}(\tau, z, x) = \text{Ell}_{q,y}(\mathbb{C}^{2}; t)$ admits the $\lambda$-expansion (\ref{eqn:genexp}).  Recall that the weak Jacobi forms $\psi_{2g-2}$ for all $g \geq 0$ are shown in (\ref{eqn:defnofphys}).  By equating corresponding powers of $\lambda$ on each side, we can produce formulas for the fixed-genus generating functions
\[ \sum_{n \geq 0, l \in \mathbb{Z}} \tfrac{1}{12} n_{g, 4n-l^{2}}(X_{\text{ban}}) q^{n}y^{l}\]  
for all genus as a linear combination of the $\psi_{2g-2}$.  The results up to genus $g=5$ are as follows
\begin{equation} \label{eqn:lowgenGV}
\sum_{n \geq 0, l \in \mathbb{Z}} \tfrac{1}{12} n_{g,4n-l^{2}}(X_{\text{ban}}) q^{n} y^{l} =
\begin{cases}
\Theta^{2} & g=0 \\[1ex]
\Theta^{2}\big(\wp-\frac{1}{12}\big)  & g=1 \\[1ex]
\frac{\Theta^{2}}{240}\big(E_{4}-1\big)  & g=2 \\[1ex]
\frac{\Theta^{2}}{288}\big(\frac{E_{6}}{21}+\frac{E_{4}}{10}-\frac{31}{210}\big)   & g=3 \\[1ex]
\frac{\Theta^{2}}{864}\big(\frac{E_{8}}{200} + \frac{E_{6}}{42} + \frac{E_{4}}{25}-\frac{289}{4200}\big) & g=4 \\[1ex]
\frac{\Theta^{2}}{3840} \big(\frac{E_{10}}{1386} + \frac{E_{8}}{180} + \frac{E_{6}}{54} + \frac{E_{4}}{35} - \frac{317}{5940} \big) & g=5
\end{cases}
\end{equation}
These quantities are evidently weak Jacobi forms of index 1, but of non-homogeoeous weight.  We will return to these formulae shortly.  

Though it is an interesting structural result in the theory, the fact that the Gopakumar-Vafa invariants depend only on the value of the quadratic form (\ref{eqn:qfrmtranny}) means that formulae such as (\ref{eqn:relnGVtoELLGEN}) and (\ref{eqn:lowgenGV}) contain a lot of redundancy.  The quadratic form can only take values $-1,0 \Mod{4}$, so we can find a pair of one-index families of classes taking each value of the quadratic form once.  In the K\"{a}hler parameters $\underline{\bf{d}} = (d_{1}, d_{2}, d_{3})$, we choose for $n \geq 0$
\begin{equation}
\begin{split}
& (d_{1}, d_{2}, d_{3}) = (1, n, n), \,\,\,\,\,\,\,\,\,\,\,\,\,\,\,\,\,\,\,\,\,\,\,\, ||\underline{\bf{d}}|| = 4n-1 \equiv -1\Mod4 \\
&(d_{1}, d_{2}, d_{3}) = (1, n, n+1), \,\,\,\,\,\,\,\,\,\,\,\,\,\,   ||\underline{\bf{d}}|| = 4n \equiv  0\Mod4.
\end{split}
\end{equation}
Under the change of variables (\ref{eqn:changeofvarss}), these classes correspond to 
\begin{equation}
\begin{split}
& (m, n, l) = (1, n, 1), \,\,\,\,\,\,\,\,\,\,\,\,\,\,\,\,\, 4nm-l^{2} = 4n-1 \equiv -1\Mod4 \\
&(m, n, l) = (1, n, 0), \,\,\,\,\,\,\,\,\,\,\,\,\,\,\,\,\,  4nm-l^{2}  = 4n \equiv  0\Mod4
\end{split}
\end{equation}
meaning we can restrict attention to just coefficients of $y^{0}$ and $y^{1}$, with the power of $q$ parameterizing the value of the quadratic form.

Returning to formulae (\ref{eqn:lowgenGV}), let us consider the coefficient of $q^{n}$ or $q^{n}y$, depending on whether the quadratic form is even or odd.  One can check that in (\ref{eqn:lowgenGV}) for $g \geq 2$, as the genus increases the coefficients of the Eisenstein series conspire to eventually make the coefficient of $q^{n}$ or $q^{n}y$ vanish.  In other words, as the genus increases, the lowest power of $q$ in the quantities (\ref{eqn:lowgenGV}) does as well.  We are observing that for a fixed class, the Gopakumar-Vafa invariants vanish for large enough genus.  

In fact, there are some very nice features and patterns in the Gopakumar-Vafa invariants.  Let us now use (\ref{eqn:lowgenGV}) to present them numerically, differentiating quadratic form values $4n$ and $4n-1$.    

\begin{center}
\begin{tabular}{c|ccccccc} \toprule
    $\tfrac{1}{12} n_{g, 4n-1}(X)$ & {$g=0$} & {$g=1$} & {$g=2$} & {$g=3$} & {$g=4$} & {$g=5$} & {$g=6$} \\ \midrule
    $n=0$  & 1     &    0     &  0    &    0     &    0     &    0   & 0   \\
    $n=1$  &  8    &   -6     &  1    &    0     &    0     &    0   &  0 \\
    $n=2$  & 39 &   -46    &  17   &    -2     &    0     &     0    &   0 \\
    $n=3$  & 152      &    -242    &   139    &    -34     &     3    &     0     &    0 \\
    $n=4$  & 513  &    -1024  & 800  &      -304   &    56     &      -4    &  0 \\ 
    $n=5$  & 1560  & -3730 & 3683  &    -1912 &    548      &      -82    &  5 \\ 
\end{tabular}
\end{center}

\begin{center}
\begin{tabular}{c|ccccccc} \toprule
    $\tfrac{1}{12} n_{g,4n}(X)$ & {$g=0$} & {$g=1$} & {$g=2$} & {$g=3$} & {$g=4$} & {$g=5$} & {$g=6$} \\ \midrule
    $n=0$  & -2    &    1     &  0    &    0     &    0     &    0   &  0 \\
    $n=1$  &  -12    &    10     &  -2    &    0     &     0    &     0    &  0    \\ 
    $n=2$  &  -56  &   72  &  -30  &   4    &      0     &     0    &   0     \\
    $n=3$  & -208      &   352    &    -220  &      60     &     -6    &     0      &   0 \\ 
    $n=4$  & -684  & 1434 & -1194  &   492  &    -100      &      8    &  0 \\ 
    $n=5$  & -2032  & 5056 & -5252  &    2908 &    -902      &      148    &  -10 \\ \bottomrule
\end{tabular}
\end{center}
Let us record some of the notable features of these invariants.  As we will discuss in more detail below, the classes with quadratic form value $4n$ deform into the smooth fibers while those taking value $4n-1$ are fixed within the singular fibers.  

\begin{itemize}

\item Notice from the above tables, that for $n \geq 1$ both for classes which deform into the smooth fibers and those that do not, we have the following very simple expression for the maximal genus as a function of $n$
\begin{equation}
g_{\text{max}} = n+1.  
\end{equation}
For the classes which deform into the smooth fibers, this evidently holds for $n=0$ as well.  

\item Again for $n \geq 1$, in both cases the corresponding Gopakumar-Vafa invariant in the maximal genus exhibit the following behavior  
\begin{equation}
\tfrac{1}{12} n_{g_{\text{max}}, 4n-1}(X_{\text{ban}}) = (-1)^{n+1}n, \,\,\,\,\,\,\,\,\,\,\,\, \tfrac{1}{12} n_{g_{\text{max}}, 4n}(X_{\text{ban}}) = (-1)^{n}2n.
\end{equation}
\item For a fixed genus, the Gopakumar-Vafa invariants have the same sign for all $n$ whereas for a fixed class, the finitely many invariants alternate in sign as one increases the genus.  
\end{itemize}

\section{The Perspective from the Smooth Fibers}

The smooth fibers of the banana manifold can all be identified as the product $E \times E$ of some elliptic curve with itself.  We choose the following ordered set of generators for the Neron-Serveri group
\begin{equation}
\text{NS}(E \times E) = \big\langle E_{1}, E_{2}, E_{3} \coloneqq \Delta-E_{1}-E_{2} \big\rangle  \cong \mathbb{Z}^{3}
\end{equation}
where $E_{1}$ and $E_{2}$ are the classes corresponding to the two factors in the product while $\Delta$ is the class of the diagonal.  The Neron-Severi group is isomorphic to $\mathbb{Z}^{3}$ as an abelian group but the intersection form gives it the structure of a rank-three lattice.  The square of the class $\beta = mE_{1} + nE_{2} + \xi E_{3}$ is easily computed to be
\begin{equation}\label{eqn:intformsmmmmfibrr}
\beta^{2} = 2 nm -2 \xi^{2}
\end{equation}
since $E_{1}$ and $E_{2}$ are orthogonal to $E_{3}$, and $E_{3}^{2} = -2$.  We will see that it is not coincidental that $\beta^{2}$ looks similar to the quadratic form $4mn-l^{2}$ in the variables $Q, q, y$.  We additionally have the anti-diagonal class $\Delta^{op}$, which in terms of the generators is
\begin{equation}\label{eqn:ANTIdiagggg}
\Delta^{op} = 2 E_{1} + 2E_{2} - \Delta.  
\end{equation}

Recall that we denote by $C_{1}, C_{2}, C_{3}$ the banana curves in the banana manifold, and that they generate the cone of effective fiber classes of $X_{\text{ban}}$.  Certainly, the effective classes on the smooth fibers deform to some effective class on the singular fiber.  One can compute that 
\begin{equation}\label{eqn:relnsmsingfibbrr}
E_{1} = C_{1} + C_{3} \,\,\,\,\,\,\,\,\,\,\,\,\,\,\,\, E_{2} = C_{2} + C_{3} \,\,\,\,\,\,\,\,\,\,\,\,\,\,\,\, \Delta = C_{1} + C_{2} + 4 C_{3}.   
\end{equation}
It follows that $E_{3} = 2C_{3}$.  

We can introduce formal variables $v_{1}, v_{2}, v_{3}$ tracking degrees of classes along the curves $E_{1}, E_{2}, E_{3}$ respectively.  By (\ref{eqn:relnsmsingfibbrr}) we see the following relationship to the $Q_{1}, Q_{2}, Q_{3}$ variables
\begin{equation}
v_{1} = Q_{1} Q_{3}  \,\,\,\,\,\,\,\,\,\,\,\,\,\,\,\,   v_{2} = Q_{2} Q_{3} \,\,\,\,\,\,\,\,\,\,\,\,\,\,\,\, v_{3} = Q_{3}^{2}. 
\end{equation}
We notice that this is very close to (\ref{eqn:changeofvarss}).  We can therefore also transform into variables $Q, q, y$ by
\begin{equation}\label{eqn:relnsmsingfibrvarss}
v_{1} = Q \,\,\,\,\,\,\,\,\,\,\,\,\,\,\,\, v_{2} = q \,\,\,\,\,\,\,\,\,\,\,\,\,\,\,\, v_{3} = y^{2}.
\end{equation}

Recall that $(m,n, l)$ are the classes tracked by $Q, q, y$.  The quadratic form $4nm-l^{2}$ is even if and only if $l$ (the power of $y$) is even.  By (\ref{eqn:relnsmsingfibrvarss}), in order to get an integral power of $v_{3}$, we need the power of $y$ to be even.  In other words, only classes for which $4nm-l^{2} \equiv 0 \Mod4$ deform into the smooth fiber.  Note also that for $4nm-l^{2} \equiv 0 \Mod 4$, we have
\begin{equation}
||\underline{\bf{d}}|| = 4nm-l^{2} = 2 \beta^{2}
\end{equation}
where $\beta^{2}$ is the intersection form on the smooth fiber (\ref{eqn:intformsmmmmfibrr}), and $||\underline{\bf{d}}||$ is the quadratic form (\ref{eqn:quaddfrmmmm}) in the $Q_{1}, Q_{2}, Q_{3}$ variables.  This gives a nice interpretation of the quadratic form we have been studying: for classes which deform to the smooth fibers, it is simply (twice) the intersection form.

\subsection{The Automorphism Group of the Smooth Fibers}  

Recall that the main theme from Section \ref{subsec:SyymmmGWpotss} was the following: in the variables $(Q_{1}, Q_{2}, Q_{3})$ the Gromov-Witten potentials carry an $S_{3}$-symmetry by permuting the banana curves, as well as a flop symmetry (\ref{eqn:froptransformm}).  We proved that in the $(Q, q, y)$ variables these symmetries correspond to the $PGL_{2}(\mathbb{Z})$ invariances of even weight genus two Siegel modular forms.  It turns out that there is a $PGL_{2}(\mathbb{Z})$ arising naturally in the geometry of the smooth fibers $E \times E$ as well.  One should periodically refer back to Section \ref{subsec:SyymmmGWpotss}, as here we will be describing the symmetries on the singular fibers from the perspective of the smooth fibers.  

Let us denote points as $(p_{1}, p_{2}) \in E \times E$.  We have the following isomorphism 
\begin{equation}
GL_{2}(\mathbb{Z}) \overset{\sim}{\longrightarrow} \text{Aut}(E \times E)
\end{equation}
defined by producing automorphisms $\sigma$ of $E \times E$ as
\begin{equation}
\sigma(p_{1}, p_{2}) = (ap_{1} + bp_{2}, cp_{1} + dp_{2}) \,\,\,\,\,\,\,\,\,\,\,\,\,\,
\begin{pmatrix}
a & b\\
c & d
\end{pmatrix}
\in GL_{2}(\mathbb{Z}).
\end{equation}
The determinantal condition is necessary and sufficient for the above map to be invertible.  Strictly speaking, we get an isomorphism for generic elliptic curves $E$ without enhanced discrete automorphisms.  Just as in Section \ref{subsec:SyymmmGWpotss}, let us choose the following set of ordered generators of $GL_{2}(\mathbb{Z})$
\begin{equation}\label{eqn:GENGL2Z}
GL_{2}(\mathbb{Z}) = \langle \gamma_{1}, \gamma_{2}, \gamma_{3} \rangle \coloneqq \bigg\langle  
\begin{pmatrix}
1 & 0 \\ 0 & -1
\end{pmatrix},
\begin{pmatrix}
0 & 1 \\ 1 & 0
\end{pmatrix},
\begin{pmatrix}
1 & -1 \\ 0 & -1
\end{pmatrix}
\bigg\rangle
\end{equation}
and let us define the following automorphisms of $E \times E$ as the images of the generators under the above map
\begin{equation}
\sigma_{1}(p_{1},p_{2}) =(p_{1}, -p_{2}) , \,\,\,\,\,\, \sigma_{2}(p_{1}, p_{2}) = (p_{2}, p_{1}), \,\,\,\,\,\, \sigma_{3}(p_{1}, p_{2}) = (p_{1} - p_{2}, -p_{2}).
\end{equation}

Inside $E \times E$ we can use the locus $\{ (p,0) | p \in E \}$ as a representative of the class $E_{1}$, and similarly for $E_{2}$.  The diagonal class $\Delta$ can be represented by $\{(p,p) | p \in E \}$, and the anti-diagonal class $\Delta^{op}$ by the set $\{(p, -p) | p \in E \}$.  The isomorphism $GL_{2}(\mathbb{Z}) \cong \text{Aut}(E \times E)$ naturally induces a representation of $GL_{2}(\mathbb{Z})$ on $\text{NS}(E \times E)$ in the obvious way.  We interpret the $\sigma_{i}$ as lattice morphisms
\begin{equation}
\sigma_{i} : \text{NS}(E \times E) \longrightarrow \text{NS}(E \times E)
\end{equation}
which act in the following way on the generators of the Neron-Severi lattice 
\begin{equation}\label{eqn:sigmautEcE}
\begin{split}
& \sigma_{1} : \big(E_{1}, E_{2}, E_{3}\big) \longmapsto \big(E_{1}, E_{2}, -E_{3}\big) \\
& \sigma_{2} : \big(E_{1}, E_{2}, E_{3}\big) \longmapsto \big(E_{2}, E_{1}, E_{3} \big)\\
& \sigma_{3} : \big(E_{1}, E_{2}, E_{3}\big) \longmapsto \big( E_{1}, E_{1} + E_{2} + E_{3}, -2E_{1} -E_{3} \big).
\end{split}
\end{equation}

\begin{lemmy}
The action of $GL_{2}(\mathbb{Z})$ on $\text{NS}(E \times E)$ descends to a three-dimensional representation of $PGL_{2}(\mathbb{Z})$ on $\text{NS}(E \times E)$ by lattice automorphisms.  
\end{lemmy}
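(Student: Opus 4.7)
The plan is to verify two independent assertions: that the element $-I \in GL_2(\mathbb{Z})$ acts as the identity on $\text{NS}(E \times E)$, and that every element of $GL_2(\mathbb{Z})$ induces a lattice isomorphism with respect to the intersection pairing (\ref{eqn:intformsmmmmfibrr}). Both will follow from general facts, with no serious computation required.

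First I would show that $-I$ acts trivially on NS. Under the isomorphism $GL_2(\mathbb{Z}) \cong \text{Aut}(E \times E)$ from the previous section, the element $-I$ corresponds to the inversion automorphism $[-1](p_1, p_2) = (-p_1, -p_2)$ of the abelian surface. A standard fact about abelian varieties $A$ is that $[-1]^*$ acts as multiplication by $(-1)^k$ on $H^k(A, \mathbb{Z})$; in particular it is the identity on $H^2$, and hence on the sublattice $\text{NS}(E \times E) \subset H^2(E \times E, \mathbb{Z})$. Consequently the kernel of the representation $GL_2(\mathbb{Z}) \to GL\bigl(\text{NS}(E \times E)\bigr)$ contains $\{\pm I\}$, and the representation descends to $PGL_2(\mathbb{Z})$.

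Second, preservation of the intersection pairing is automatic: each $\sigma \in \text{Aut}(E \times E)$ is a biholomorphism of a smooth projective surface, so the pullback $\sigma^*$ on divisor classes satisfies $\sigma^*(D_1) \cdot \sigma^*(D_2) = D_1 \cdot D_2$ for all $D_1, D_2$. As a sanity check one may verify this directly on the generators using (\ref{eqn:sigmautEcE}); for instance $\sigma_3(E_3) = -2E_1 - E_3$ has self-intersection $-2 = E_3^2$ with respect to $\beta^2 = 2nm - 2\xi^2$, and analogous computations on the remaining self- and cross-intersections confirm that the full Gram matrix with respect to $(E_1, E_2, E_3)$ is preserved by each of $\sigma_1, \sigma_2, \sigma_3$.

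The main (and in fact only) obstacle is the invocation of the abelian-variety fact that $[-1]^*$ acts as the identity on $H^2$; beyond that the argument is formal. One could alternatively bypass the cohomological statement by checking directly on a set of representative divisors (the graphs $\{(p,0)\}$, $\{(0,p)\}$, and the diagonal), noting that each is sent by $[-1]$ to a linearly equivalent divisor — but the cohomological route is cleaner.
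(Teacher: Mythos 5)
Your proof is correct and follows essentially the same two-step route as the paper: show $-I$ acts trivially on $\text{NS}(E \times E)$ and that the induced maps preserve the intersection form. The only difference is that you supply the standard justifications (that $[-1]^*$ is the identity on $H^2$ of an abelian variety, and that automorphisms of a smooth projective surface automatically preserve intersection numbers) where the paper merely asserts triviality "at the level of homology" and leaves the form-preservation as an easy direct check on (\ref{eqn:sigmautEcE}).
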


\begin{proof}
It is clear that $-1 \in GL_{2}(\mathbb{Z})$ acts on $E \times E$ by $(p_{1}, p_{2}) \mapsto (-p_{1}, -p_{2})$.  This is non-trivial as an automorphism, but is trivial at the level of homology.  The $PGL_{2}(\mathbb{Z})$ representation is defined by (\ref{eqn:sigmautEcE}) and it is easy to verify that these maps preserve the intersection form (\ref{eqn:intformsmmmmfibrr}).  
\end{proof}

Recall at the end of Section \ref{subsec:SyymmmGWpotss} we introduced the lattice $\big(V, ||\underline{\bf{d}}|| \big)$ generated by the banana curves $C_{1}, C_{2}, C_{3}$.  We define the lattice morphism
\begin{equation}
\alpha : \text{NS}(E \times E) \longrightarrow V
\end{equation}
via the relations (\ref{eqn:relnsmsingfibbrr}).  Equivalently, we associate a class in $E \times E$ to its image under a degeneration to a singular fiber.  It is easy to check that $\alpha$ preserves the quadratic forms, and is an injection.  Recalling that only half the classes in $V$ deform away from the singular fiber depending on the parity of $||\underline{\bf{d}}||$, we identify $\text{NS}(E \times E)$ as the index 2 sublattice of $V$ defined by those classes with $||\underline{\bf{d}}|| \equiv 0 \Mod 4$.  

The following theorem is a compatibility result between the $PGL_{2}(\mathbb{Z})$ representations on $\text{NS}(E \times E)$ and $\big(V, ||\underline{\bf{d}}|| \big)$ via the map $\alpha$.  

\begin{thm}
For $i=1,2$ the following diagram commutes    
\begin{equation}
\begin{tikzcd}
\text{NS}(E \times E) \arrow{r}{\alpha}\arrow[swap]{d}{\sigma_{i}}     &         V \arrow{d}{\gamma_{i}} \\
\text{NS}(E \times E) \arrow[swap]{r}{\alpha}                          &                   V
\end{tikzcd}
\end{equation}
For $i=3$, we must conjugate $\gamma_{3}$ by the flop symmetry $\gamma_{1}$.  Hence, if $\tilde{\gamma} \coloneqq \gamma_{1}^{-1} \circ \gamma_{3} \circ \gamma_{1}$, the following diagram commutes
\begin{equation}
\begin{tikzcd}
\text{NS}(E \times E) \arrow{r}{\alpha}\arrow[swap]{d}{\sigma_{3}}           &   V \arrow{d}{\tilde{\gamma}} \\
\text{NS}(E \times E) \arrow[swap]{r}{\alpha}                          &                   V
\end{tikzcd}
\end{equation}
\end{thm}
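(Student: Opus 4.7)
My plan is to verify the two commutative diagrams by direct computation on the ordered basis $(E_1,E_2,E_3)$ of $\mathrm{NS}(E\times E)$. Since every map involved is $\mathbb{Z}$-linear, checking commutativity on generators suffices, and all ingredients are already fully explicit: $\alpha$ is determined by the relations (\ref{eqn:relnsmsingfibbrr}), namely $\alpha(E_1)=C_1+C_3$, $\alpha(E_2)=C_2+C_3$, $\alpha(E_3)=2C_3$; the $\sigma_i$ act on $(E_1,E_2,E_3)$ as in (\ref{eqn:sigmautEcE}); and the $\gamma_i$ act on $(C_1,C_2,C_3)$ as in (\ref{eqn:actonbanncurvves}). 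The entire proof is therefore a short, algorithmic verification; the real content of the theorem is conceptual (identifying the automorphism group of the smooth fibers with the automorphism group responsible for the Siegel modular invariances on the singular fibers), not computational.

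First I would handle $i=1$ and $i=2$, which are immediate. For $\gamma_1$, I would compute $\gamma_1(C_1+C_3)=(C_1+2C_3)+(-C_3)=C_1+C_3$ and $\gamma_1(C_2+C_3)=C_2+C_3$ and $\gamma_1(2C_3)=-2C_3$, which match $\alpha\sigma_1(E_1)=C_1+C_3$, $\alpha\sigma_1(E_2)=C_2+C_3$, and $\alpha\sigma_1(E_3)=-2C_3$. The case $i=2$ is even more transparent, since both $\sigma_2$ and $\gamma_2$ are the transposition swapping the first two generators while fixing the third, and $\alpha$ is manifestly equivariant under this swap (compare the formulas for $\alpha(E_1)$ and $\alpha(E_2)$, while $\alpha(E_3)=2C_3$ is fixed).

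The case $i=3$ is where the real content lies, and it is the only step that requires any thought. The obstacle is essentially bookkeeping: one must first compute the conjugate $\tilde{\gamma}=\gamma_1^{-1}\gamma_3\gamma_1$ on the basis $(C_1,C_2,C_3)$, noting that $\gamma_1^2=1$ so $\gamma_1^{-1}=\gamma_1$. A direct computation using (\ref{eqn:actonbanncurvves}) yields
\begin{equation*}
\tilde{\gamma}(C_1)=2C_1+3C_3,\qquad \tilde{\gamma}(C_2)=2C_1+C_2+6C_3,\qquad \tilde{\gamma}(C_3)=-C_1-2C_3.
\end{equation*}
Then $\tilde{\gamma}\circ\alpha(E_1)=\tilde{\gamma}(C_1+C_3)=C_1+C_3=\alpha\sigma_3(E_1)$, while $\tilde{\gamma}\circ\alpha(E_2)=\tilde{\gamma}(C_2+C_3)=C_1+C_2+4C_3$, which by (\ref{eqn:relnsmsingfibbrr}) equals $\alpha(\Delta)=\alpha(E_1+E_2+E_3)=\alpha\sigma_3(E_2)$, and finally $\tilde{\gamma}\circ\alpha(E_3)=\tilde{\gamma}(2C_3)=-2C_1-4C_3=\alpha(-2E_1-E_3)=\alpha\sigma_3(E_3)$.

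Conceptually, I would emphasize in the write-up why the conjugation by $\gamma_1$ is forced rather than a nuisance. The automorphism $\sigma_3$ on $E\times E$ does not preserve the effective cone of $\mathrm{NS}(E\times E)$ (it sends $E_3=\Delta-E_1-E_2$ to $-2E_1-E_3$, which is a negative class in the smooth-fiber basis), so after degeneration to a singular fiber it is not realized directly by the permutation symmetries of the banana configuration, but only modulo a flop through $C_3$. Conjugating by $\gamma_1$ is exactly the operation that undoes this flop, so the commutativity of the $i=3$ square is the precise lattice-theoretic statement that $\sigma_3$ corresponds, on singular fibers, to the composite of a flop, a permutation of the banana curves, and a flop back. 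This also explains, a posteriori, why in Proposition \ref{proppy:chvarsssingfibrrl} the flop transformation $\gamma_1$ had to be included alongside the permutations $\gamma_2,\gamma_3$ to generate the full symmetry group of the $F_g$.
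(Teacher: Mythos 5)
Your proposal is correct and follows essentially the same route as the paper's proof: a direct verification on the ordered generators using the explicit formulas (\ref{eqn:relnsmsingfibbrr}), (\ref{eqn:sigmautEcE}), and (\ref{eqn:actonbanncurvves}), with all intermediate computations (including $\tilde{\gamma}(C_1)=2C_1+3C_3$, $\tilde{\gamma}(C_2)=2C_1+C_2+6C_3$, $\tilde{\gamma}(C_3)=-C_1-2C_3$) agreeing with the paper's step-by-step evaluation of $\gamma_1^{-1}\circ\gamma_3\circ\gamma_1$ on the images $\alpha(E_i)$. The only cosmetic difference is that you tabulate $\tilde{\gamma}$ on the basis $(C_1,C_2,C_3)$ first and then apply it, whereas the paper chains the three maps directly; your added remark on why the conjugation by the flop is geometrically forced is a nice supplement but not part of the paper's argument.
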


\begin{proof}
The proof follows easily from (\ref{eqn:actonbanncurvves}), (\ref{eqn:relnsmsingfibbrr}), and (\ref{eqn:sigmautEcE}).  First for $i=1$, we have
\[ \big(E_{1}, E_{2}, E_{3} \big) \xmapsto{\alpha} \big(C_{1} + C_{3}, C_{2} + C_{3}, 2C_{3} \big) \xmapsto{\gamma_{1}} \big(C_{1} + C_{3}, C_{2} + C_{3}, -2C_{3} \big)\]
and 
\[ \big(E_{1}, E_{2}, E_{3} \big) \xmapsto{\sigma_{1}} \big(E_{1}, E_{2}, -E_{3}\big) \xmapsto{\alpha} \big(C_{1} + C_{3}, C_{2} + C_{3}, -2C_{3} \big).\]
The argument is completely analogous for $i=2$.  For $i=3$, we follow the same direct method
\[\big(E_{1}, E_{2}, E_{3} \big) \xmapsto{\sigma_{3}} \big(E_{1}, E_{1}+E_{2}+E_{3}, -2E_{1}-E_{3}\big) \xmapsto{\alpha} \big(C_{1} + C_{3}, C_{1}+C_{2} + 4C_{3}, -2C_{1}-4C_{3} \big)\] 
and
\[
\begin{split}
\big(E_{1}, E_{2}, E_{3} \big) & \xmapsto{\alpha} \big( C_{1} + C_{3}, C_{2} + C_{3}, 2C_{3} \big) \xmapsto{\gamma_{1}} \big( C_{1} + C_{3}, C_{2} + C_{3}, -2C_{3} \big) \\
& \xmapsto{\gamma_{3}} \big(C_{1} + C_{3}, C_{1}+C_{2}, -2C_{1} \big) \xmapsto{\gamma_{1}^{-1}} \big(C_{1} + C_{3}, C_{1}+C_{2} + 4C_{3}, -2C_{1}-4C_{3} \big).
\end{split}
\]
\end{proof}

\section{The Case of the Schoen Calabi-Yau Threefold}

One takeaway from the previous sections is that the banana manifold fiberwise partition functions are given by certain standard arithmetic lifts of a modular object encoding the Gopakumar-Vafa invariants (the elliptic genus of $\mathbb{C}^{2}$).  We will show in this section that this phenomenon arises also for the Schoen manifold.  The Schoen manifold $X_{\text{Sch}}$ is defined as the fibered product
\begin{equation}
X_{\text{Sch}} = S \times_{\mathbb{P}^{1}} S'
\end{equation} 
of two distinct generic rational elliptic surfaces $S$ and $S'$.  There exists a natural projection $\pi: X_{\text{Sch}} \to \mathbb{P}^{1}$, and since every fiber of $\pi$ admits a free torus action, $\chi(X_{\text{Sch}})=0$.  There are 24 singular fibers of $\pi$, and the fiber classes $\Gamma = \text{ker}(\pi_{*}) \subset H_{2}(X_{\text{Sch}}, \mathbb{Z})$ form a rank 2 lattice generated by the classes $C_{1}, C_{2}$ of the two elliptic fibers.  

The Schoen manifold is related to the banana manifold through a conifold transition, and the Donaldson-Thomas partition function in fiber classes can be computed to be
\begin{equation}\label{eqn:SchhDT}
Z_{\text{DT}}(X_{\text{Sch}})_{\Gamma} = Z'_{\text{DT}}(X_{\text{Sch}})_{\Gamma} = \prod_{n=1}^{\infty}\big(1-Q^{n}\big)^{-12} \big( 1-q^{n} \big)^{-12}
\end{equation}
where $Q$ and $q$ are variables tracking degrees along the two fiber classes.  Because $\chi(X_{\text{Sch}})=0$, there is no factor of the MacMahon function.  Assuming the GW/DT correspondence, and taking the logarithm of (\ref{eqn:SchhDT}), we can extract the reduced Gromov-Witten potentials of $X_{\text{Sch}}$, all of which vanish except
\begin{equation}\label{eqn:Gen1GWpotSch}
F'_{1}(Q,q) = 12 \sum_{n=1}^{\infty} \bigg( \text{Li}_{-1}(1-Q^{n}) + \text{Li}_{-1}(1-q^{n})\bigg).
\end{equation} 
We denote by $(d_{1}, d_{2})$ the fiber class $d_{1} C_{1} + d_{2} C_{2}$.  All Gopakumar-Vafa invariants of $X_{\text{Sch}}$ vanish except in genus one, and in the class of the two elliptic fibers
\begin{equation}
n_{1, (1,0)}(X_{\text{Sch}}) = n_{1, (0,1)}(X_{\text{Sch}}) = 12.  
\end{equation}

Let us now make a seemingly vacuous observation.  We regard the Gopakumar-Vafa invariant 12 in a trivial sense, as the weight zero modular form 
\begin{equation}
12 = \sum_{n=0}^{\infty} c(n) q^{n}, \,\,\,\,\,\,\,\,\,\,\,\,\,\,\,\, c(n) = 12 \cdot \delta_{n,0}
\end{equation}
where $\delta_{n,0}$ is the Kronecker delta function.  Identifying the modular form 12 as a weight zero, index zero Jacobi form, we can compute the Maass lift $\text{ML}(12)$ via (\ref{eqn:fullHecke}) to be
\begin{equation}
\text{ML}(12) = \sum_{\substack{m,n \geq 0 \\ (m,n) \neq (0,0)}} c(mn) \text{Li}_{-1}\big(1-Q^{m}q^{n}\big) = 12 \sum_{n=1}^{\infty} \bigg( \text{Li}_{-1}(1-Q^{n}) + \text{Li}_{-1}(1-q^{n})\bigg).  
\end{equation}
Comparing with (\ref{eqn:Gen1GWpotSch}) and (\ref{eqn:SchhDT}), we conclude that 
\begin{equation}
F'_{1}(Q, q) = \text{ML}(12)
\end{equation}
and 
\begin{equation}
Z_{\text{DT}}(X_{\text{Sch}})_{\Gamma} =\text{BL}(12) = \text{exp} \big( \text{ML}(12) \big).  
\end{equation}
Therefore, the only non-vanishing Gromov-Witten potential of $X_{\text{Sch}}$ is the Maass lift of the constant 12, while the Donaldson-Thomas partition function is the formal Borcherds lift of 12.  

\emph{The main takeaway, and the key analogy with the banana manifold is the following:} the equivariant elliptic genus $12 \, \text{Ell}_{q,y}(\mathbb{C}^{2}; t)$ of $\mathbb{C}^{2}$ and 12 are both weight zero modular objects, and encode the fiberwise Gopakumar-Vafa invariants of $X_{\text{ban}}$ and $X_{\text{Sch}}$, respectively.  The formal Borcherds lifts give the respective Donaldson-Thomas partition functions.  Taking the $\lambda$-expansion of the elliptic genus (\ref{eqn:genexp}), the Maass lift of the component Jacobi forms give the Gromov-Witten potentials of $X_{\text{ban}}$, and the Maass lift of 12 gives the only non-vanishing Gromov-Witten potential of $X_{\text{Sch}}$.  


\bibliographystyle{plain}
\bibliography{MyLibrary}



\backmatter


\end{document}